\documentclass[oneside,english]{amsart}

\usepackage[T1]{fontenc}
\usepackage[latin9]{inputenc}
\usepackage{refstyle}
\usepackage{textcomp}
\usepackage{amstext}
\usepackage{amsthm}
\usepackage{amssymb}

\makeatletter


\AtBeginDocument{\providecommand\eqref[1]{\ref{eq:#1}}}
\AtBeginDocument{\providecommand\thmref[1]{\ref{thm:#1}}}
\AtBeginDocument{\providecommand\defref[1]{\ref{def:#1}}}
\AtBeginDocument{\providecommand\secref[1]{\ref{sec:#1}}}
\AtBeginDocument{\providecommand\remref[1]{\ref{rem:#1}}}
\AtBeginDocument{\providecommand\propref[1]{\ref{prop:#1}}}
\AtBeginDocument{\providecommand\corref[1]{\ref{cor:#1}}}
\AtBeginDocument{\providecommand\exaref[1]{\ref{exa:#1}}}
\AtBeginDocument{\providecommand\subsecref[1]{\ref{subsec:#1}}}
\AtBeginDocument{\providecommand\lemref[1]{\ref{lem:#1}}}
\RS@ifundefined{subsecref}
  {\newref{subsec}{name = \RSsectxt}}
  {}
\RS@ifundefined{thmref}
  {\def\RSthmtxt{theorem~}\newref{thm}{name = \RSthmtxt}}
  {}
\RS@ifundefined{lemref}
  {\def\RSlemtxt{lemma~}\newref{lem}{name = \RSlemtxt}}
  {}

\numberwithin{equation}{section}
\numberwithin{figure}{section}
\theoremstyle{plain}
\newtheorem{thm}{\protect\theoremname}[section]
  \theoremstyle{definition}
  \newtheorem{defn}[thm]{\protect\definitionname}
  \theoremstyle{remark}
  \newtheorem{rem}[thm]{\protect\remarkname}
  \theoremstyle{definition}
  \newtheorem{example}[thm]{\protect\examplename}
  \theoremstyle{plain}
  \newtheorem{lem}[thm]{\protect\lemmaname}
  \theoremstyle{plain}
  \newtheorem{prop}[thm]{\protect\propositionname}
  \theoremstyle{plain}
  \newtheorem{cor}[thm]{\protect\corollaryname}

\usepackage{amsthm, amssymb, amsmath, color, marvosym,
rotating, graphicx, mathrsfs}

\usepackage{epsfig}
\usepackage{amscd}

\makeatletter
\newcommand*\leftdash{\rotatebox[origin=c]{-45}{$\dabar@\dabar@\dabar@$}}
\newcommand*\rightdash{\rotatebox[origin=c]{45}{$\dabar@\dabar@\dabar@$}}
\makeatother

\makeatother

\usepackage{babel}
  \providecommand{\corollaryname}{Corollary}
  \providecommand{\definitionname}{Definition}
  \providecommand{\examplename}{Example}
  \providecommand{\lemmaname}{Lemma}
  \providecommand{\propositionname}{Proposition}
  \providecommand{\remarkname}{Remark}
\providecommand{\theoremname}{Theorem}

\begin{document}

\title{Differential Operators, Gauges, and Mixed Hodge Modules}

\author{Christopher Dodd}
\begin{abstract}
The purpose of this paper is to develop a new theory of gauges in
mixed characteristic. Namely, let $k$ be a perfect field of characteristic
$p>0$ and $W(k)$ the $p$-typical Witt vectors. Making use of Berthelot's
arithmetic differential operators, we define for a smooth formal scheme
$\mathfrak{X}$ over $W(k)$, a new sheaf of algebras $\widehat{\mathcal{D}}_{\mathfrak{X}}^{(0,1)}$
which can be considered a higher dimensional analogue of the (commutative)
Dieudonne ring. Modules over this sheaf of algebras can be considered
the analogue (over $\mathfrak{X}$) of the gauges of Ekedahl and Fontain-Jannsen.
We show that modules over $\widehat{\mathcal{D}}_{\mathfrak{X}}^{(0,1)}$
admit all of the usual $\mathcal{D}$-module operations, and we prove
a robust generalization of Mazur's theorem in this context. Finally,
we show that an integral form of a mixed Hodge module of geometric
origin admits, after a suitable $p$-adic completion, the structure
of a module over $\widehat{\mathcal{D}}_{\mathfrak{X}}^{(0,1)}$.
This allows us to prove a version of Mazur's theorem for the intersection
cohomology and the ordinary cohomology of an arbitrary quasiprojective
variety defined over a number field. 
\end{abstract}

\maketitle
\tableofcontents{}

\section{Introduction}

In this work, we will develop the technology needed to state and prove
\emph{Mazur's theorem for a mixed Hodge module}. In order to say what
this means, we begin by recalling the original Mazur's theorem. Fix
a perfect field $k$ of positive characteristic; let $W(k)$ denote
the $p$-typical Witt vectors. Let $X$ be a smooth proper scheme
over $k$. To $X$ is attached its crystalline cohomology groups $\mathbb{H}_{crys}^{i}(X)$,
which are finite type $W(k)$-modules; the complex $\mathbb{H}_{crys}^{\cdot}(X)$
has the property that $\mathbb{H}_{crys}^{\cdot}(X)\otimes_{W(k)}^{L}k\tilde{\to}\mathbb{H}_{dR}^{\cdot}(X)$
(the de Rham cohomology of $X$ over $k$). Furthermore, if $\mathfrak{X}$
is a smooth, proper formal scheme over $W(k)$, whose special fibre
is $X$, then there is a canonical isomorphism
\[
\mathbb{H}_{crys}^{i}(X)\tilde{\to}\mathbb{H}_{dR}^{i}(\mathfrak{X})
\]
for any $i$. In particular, the action of the Frobenius endomorphism
on $X$ endows $\mathbb{H}_{dR}^{i}(\mathfrak{X})$ with an endomorphism
$\Phi$ which is semilinear over the Witt-vector Frobenius $F$. It
is known that $\Phi$ becomes an automorphism after inverting $p$;
the ``shape'' of the map $\Phi$ is an interesting invariant of
the pair $(\mathbb{H}_{crys}^{i}(X),\Phi)$. To make this precise,
one attaches, to any $r\in\mathbb{Z}$, the submodule $(\mathbb{H}_{crys}^{i}(X))^{r}=\{m\in\mathbb{H}_{crys}^{i}(X)|\Phi(m)\in p^{r}\mathbb{H}_{crys}^{i}(X)\}$
(the equality takes place in $\mathbb{H}_{crys}^{i}(X)[p^{-1}]$).
Thus we have a decreasing, exhaustive filtration, whose terms measure
how far $\Phi$ is from being an isomorphism. 

On the other hand, the de Rham cohomology of $X$ comes with another
filtration, the Hodge filtration, which comes from the Hodge to de
Rham spectral sequence $E_{1}^{r,s}=\mathbb{H}^{s}(X,\Omega_{X}^{r})\Rightarrow\mathbb{H}_{dR}^{r+s}(X)$.
Then we have the following remarkable
\begin{thm}
\label{thm:(Mazur)}(Mazur) Suppose that, for each $i$, the group
$\mathbb{H}_{crys}^{i}(X)$ is $p$-torsion-free, and that the Hodge
to de Rham spectral sequence of $X$ degenerates at $E_{1}$. Then
the image of the filtration $(\mathbb{H}_{crys}^{i}(X))^{r}$ in $\mathbb{H}_{dR}^{i}(X)$
is the Hodge filtration. 
\end{thm}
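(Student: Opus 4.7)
The plan is to recast both filtrations in terms of the de Rham-Witt complex $W\Omega_{X}^{\bullet}$ of Bloch-Deligne-Illusie. By Illusie's comparison theorem, there is a canonical isomorphism $\mathbb{H}_{crys}^{i}(X)\simeq\mathbb{H}^{i}(X,W\Omega_{X}^{\bullet})$ compatible with the Frobenius action, and $W\Omega_{X}^{\bullet}/p$ is canonically quasi-isomorphic to $\Omega_{X/k}^{\bullet}$. Thus both pieces of data appearing in the theorem---the Frobenius-divisibility filtration on $\mathbb{H}_{crys}^{i}(X)$ and the Hodge filtration on $\mathbb{H}_{dR}^{i}(X)$---can be studied by means of suitable subcomplexes of $W\Omega_{X}^{\bullet}$.

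The central construction is the Nygaard-type subcomplex $N^{r}W\Omega_{X}^{\bullet}\subseteq W\Omega_{X}^{\bullet}$, whose degree-$q$ term is $p^{\max(r-q,0)}W\Omega_{X}^{q}$; concretely,
\[
p^{r}W\Omega_{X}^{0}\xrightarrow{d}p^{r-1}W\Omega_{X}^{1}\xrightarrow{d}\cdots\xrightarrow{d}pW\Omega_{X}^{r-1}\xrightarrow{d}W\Omega_{X}^{r}\xrightarrow{d}W\Omega_{X}^{r+1}\to\cdots.
\]
The de Rham-Witt identity $Fd=pdF$ implies that the crystalline Frobenius $\Phi$ carries $N^{r}$ into $p^{r}W\Omega_{X}^{\bullet}$, producing a divided Frobenius $\Phi_{r}:N^{r}W\Omega_{X}^{\bullet}\to W\Omega_{X}^{\bullet}$ which is a quasi-isomorphism after inverting $p$. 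Moreover, since $p^{r-q}W\Omega_{X}^{q}\subseteq pW\Omega_{X}^{q}$ for $q<r$, the composition $N^{r}\hookrightarrow W\Omega_{X}^{\bullet}\twoheadrightarrow\Omega_{X/k}^{\bullet}$ has image precisely the stupid truncation $\sigma^{\geq r}\Omega_{X/k}^{\bullet}$.

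Let $\mathrm{Fil}^{r}\mathbb{H}_{crys}^{i}(X)$ denote the image of the map $\mathbb{H}^{i}(X,N^{r}W\Omega_{X}^{\bullet})\to\mathbb{H}_{crys}^{i}(X)$. The divisibility $\Phi(N^{r})\subseteq p^{r}W\Omega_{X}^{\bullet}$ yields the easy inclusion $\mathrm{Fil}^{r}\mathbb{H}_{crys}^{i}(X)\subseteq(\mathbb{H}_{crys}^{i}(X))^{r}$. For the reverse inclusion, a class $m$ with $\Phi(m)=p^{r}m'$ admits, via the rational quasi-isomorphism $\Phi_{r}$ and $p$-torsion-freeness, an integral lift to $\mathbb{H}^{i}(X,N^{r}W\Omega_{X}^{\bullet})$. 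On the mod-$p$ side, $E_{1}$-degeneration identifies the image of $\mathbb{H}^{i}(\sigma^{\geq r}\Omega_{X/k}^{\bullet})$ in $\mathbb{H}_{dR}^{i}(X)$ with the Hodge filtration $F^{r}\mathbb{H}_{dR}^{i}(X)$; combined with torsion-freeness and the previous observation, the image of $\mathrm{Fil}^{r}\mathbb{H}_{crys}^{i}(X)$ in $\mathbb{H}_{dR}^{i}(X)$ coincides with $F^{r}\mathbb{H}_{dR}^{i}(X)$, which is Mazur's theorem.

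The main obstacle will be the integral lifting step: passing from a class in $\mathbb{H}_{crys}^{i}(X)$ whose Frobenius is divisible by $p^{r}$ to an actual preimage in $\mathbb{H}^{i}(X,N^{r})$. The rational quasi-isomorphism $\Phi_{r}[p^{-1}]$ produces such a lift after inverting $p$; controlling the resulting $p$-denominators and absorbing them integrally rests crucially on $p$-torsion-freeness of the crystalline cohomology together with a careful analysis of the long exact sequence for $N^{r}\hookrightarrow W\Omega_{X}^{\bullet}$. Once this identification is established, $E_{1}$-degeneration plays the complementary role on the Hodge side, ensuring that the stupid filtration on $\Omega_{X/k}^{\bullet}$ computes the genuine Hodge filtration, and Mazur's theorem follows as a direct comparison of the two descriptions of $\mathrm{Fil}^{r}$.
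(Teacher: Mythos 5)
Your strategy---recasting both filtrations through the de Rham--Witt complex and a Nygaard-type subcomplex---is a legitimate and well-trodden route (it is essentially the Nygaard/Illusie--Raynaud/Ekedahl approach, which the introduction alludes to when discussing \cite{key-20} and \cite{key-37}), and it is genuinely different from both Mazur's original argument and from the mechanism this paper uses to recover the statement, namely the abstract gauge-theoretic Mazur theorem (\thmref{Mazur!}, \thmref{F-Mazur}) applied to the pushforward gauge over a point. However, as written the proposal has a genuine gap: the two surjectivity statements on which everything rests are asserted rather than proved, and the role of the $E_{1}$-degeneration hypothesis in them is misattributed.

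Concretely: (i) for the inclusion $(\mathbb{H}_{crys}^{i}(X))^{r}\subseteq\text{Fil}^{r}$ you need that a class $m$ with $\Phi(m)=p^{r}m'$ lies in the image of $\mathbb{H}^{i}(X,N^{r}W\Omega_{X}^{\bullet})$. The rational inverse of $\Phi_{r}$ only produces a class in $\mathbb{H}^{i}(N^{r})[p^{-1}]$; to descend it integrally you must show that $\Phi_{r}:\mathbb{H}^{i}(N^{r})\to\mathbb{H}_{crys}^{i}(X)$ is surjective (equivalently, control $\mathbb{H}^{i}$ of the torsion quotient $W\Omega^{\bullet}/N^{r}$), and this is precisely where the degeneration hypothesis---not just $p$-torsion-freeness---must enter; it is the entire content of Nygaard's theorem and is not a formal consequence of the long exact sequence. (ii) For the identification of the image of $\text{Fil}^{r}$ in $\mathbb{H}_{dR}^{i}(X)$ with $F^{r}$, the surjection of complexes $N^{r}\twoheadrightarrow\sigma^{\geq r}\Omega_{X/k}^{\bullet}$ does not by itself give surjectivity on hypercohomology, so you only get one inclusion for free; the reverse inclusion again requires an argument using both hypotheses. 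A smaller point: your $N^{r}$ (with terms $p^{\max(r-q,0)}W\Omega^{q}$) is strictly contained in the standard Nygaard filtration (whose terms involve $p^{r-q-1}VW\Omega^{q}$, using $p=FV$), and the identity $Fd=pdF$ that powers the divided Frobenius is adapted to the latter; you should either switch to the $V$-version or verify that your coarser subcomplex still has the required cohomological surjectivity. Until steps (i) and (ii) are supplied, the argument is an outline of the known strategy rather than a proof.
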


This is (the first half of) \cite{key-13}, theorem 3 (in fact, under
slightly weaker hypotheses; compare \cite{key-14} corollary 3.3,
and \cite{key-10}, theorem 8.26). The theorem also includes a similar
description of the conjugate filtration (the filtration coming from
second spectral sequence of hypercohomology) on $\mathbb{H}_{dR}^{i}(X)$;
we will address this as part of the more general theorem 1.2 below.
This result allowed Mazur to prove Katz's conjecture relating the
slopes of $\Phi$ to the Hodge numbers of $X$. 

In the years following \cite{key-13}, it was realized that the theorem
can be profitably rephrased in terms of certain additional structures
on $\mathbb{H}_{crys}^{i}(X)$. Let $A$ be a commutative ring. Denote
by $D(A)$ the commutative ring $A[f,v]/(fv-p)$; put a grading on
this ring by placing $A$ in degree $0$, $f$ in degree $1$, and
$v$ in degree $0$. Then a\emph{ gauge }(over \emph{$A$}) is a graded
module over $D(A)$, ${\displaystyle M=\bigoplus_{i\in\mathbb{Z}}M^{i}}$.
Set ${\displaystyle M^{\infty}:=M/(f-1)\tilde{=}\lim_{\to}M^{i}}$,
and ${\displaystyle M^{-\infty}:=M/(v-1)\tilde{=}\lim_{\to}M^{-i}}$.
One says that $M$ is an $F$-gauge if there is an isomorphism $F^{*}M^{\infty}\tilde{\to}M^{-\infty}$
(c.f. \cite{key-20}, definition 2.1, \cite{key-5}, chapter 1, or
section 2.1 below). 

Then, in the above situation, one associates the $W(k)$- gauge 
\begin{equation}
\mathbb{H}_{\mathcal{G}}^{i}(X):=\bigoplus_{r\in\mathbb{Z}}(\mathbb{H}_{crys}^{i}(X))^{r}\label{eq:Basic-Gauge-defn}
\end{equation}
where $f:(\mathbb{H}_{crys}^{i}(X))^{r}\to(\mathbb{H}_{crys}^{i}(X))^{r+1}$
acts by multiplication by $p$, and $v:(\mathbb{H}_{crys}^{i}(X))^{r}\to(\mathbb{H}_{crys}^{i}(X))^{r-1}$
acts as the inclusion. One has $\mathbb{H}_{\mathcal{G}}^{i}(X)^{\infty}\tilde{=}\mathbb{H}_{crys}^{i}(X)$,
and the isomorphism\linebreak{}
 $F^{*}(\mathbb{H}_{crys}^{i}(X))^{\infty}\to(\mathbb{H}_{crys}^{i}(X))^{-\infty}$
comes from the action of $\Phi$. 

Remarkably, it turns out that there is a reasonable definition of
$\mathbb{H}_{\mathcal{G}}^{i}(X)$ for any $X$, even without the
assumption that each group $\mathbb{H}_{crys}^{i}(X)$ is $p$-torsion-free,
or that the Hodge to de Rham spectral sequence degenerates at $E_{1}$.
To state the result, note that for any gauge $M$ (over any $A$),
$M^{-\infty}$ carries a decreasing filtration defined by $F^{i}(M^{-\infty})=\text{image}(M^{i}\to M^{\infty})$.
Passing to derived categories, we obtain a functor $D(\mathcal{G}(D(A)))\to D((A,F)-\text{mod})$
(here $\mathcal{G}(D(A))$ is the category of gauges, and $D((A,F)-\text{mod})$
is the filtered derived category of $A$); we will denote this functor
$M^{\cdot}\to M^{\cdot,-\infty}$. The analogous construction can
be carried out for $+\infty$ as well using the increasing filtration
$C^{i}(M^{\infty})=\text{image}(M^{i}\to M^{\infty})$. In particular,
if $M^{\cdot}\in D(\mathcal{G}(D(A)))$, then each $H^{i}(M^{\cdot,-\infty})$
and each $H^{i}(M^{\cdot,\infty})$ is a filtered $A$-module.
\begin{thm}
\label{thm:=00005BFJ=00005D} For any smooth $X$ over $k$, there
is a functorially attached complex of $W(k)$-gauges, $\mathbb{H}_{\mathcal{G}}^{\cdot}(X)$,
such that $\mathbb{H}_{\mathcal{G}}^{i}(X)^{\infty}\tilde{=}\mathbb{H}_{crys}^{i}(X)$
for all $i$. Further, there is an $F$-semilinear isomorphism $H^{i}((\mathbb{H}_{\mathcal{G}}^{\cdot}(X)\otimes_{W(k)}^{L}k))^{-\infty}\tilde{\to}(\mathbb{H}_{dR}^{i}(X),F)$
and a linear isomorphism $H^{i}((\mathbb{H}_{\mathcal{G}}^{\cdot}(X)\otimes_{W(k)}^{L}k))^{\infty}\tilde{\to}(\mathbb{H}_{dR}^{i}(X),C)$,
where $F$ and $C$ denote the Hodge and conjugate filtrations, respectively.

When $\mathbb{H}_{crys}^{i}(X)$ is torsion-free for all $i$ and
the Hodge to de Rham spectral sequence degenerates at $E_{1}$, then
this functor agrees with the gauge constructed above in \eqref{Basic-Gauge-defn}. 
\end{thm}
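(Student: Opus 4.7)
The plan is to construct $\mathbb{H}_{\mathcal{G}}^{\cdot}(X)$ as the derived pushforward (to a point) of $\mathcal{O}_{\mathfrak{X}}$, viewed as a module over $\widehat{\mathcal{D}}_{\mathfrak{X}}^{(0,1)}$, and then to read off all four claims from the algebraic structure of this sheaf of algebras together with the robust generalization of Mazur's theorem promised in the abstract. Assuming first that $X$ admits a smooth formal lift $\mathfrak{X}/W(k)$, the structure sheaf $\mathcal{O}_{\mathfrak{X}}$ should inherit a canonical $\widehat{\mathcal{D}}_{\mathfrak{X}}^{(0,1)}$-action in which $f$ and $v$ act on graded pieces as multiplication by $p$ and by $1$ respectively, and the arithmetic differential operators act in the usual way. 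Derived pushforward, available because $\widehat{\mathcal{D}}^{(0,1)}$-modules admit all the standard $\mathcal{D}$-module operations, then yields a complex of graded $D(W(k))$-modules $\mathbb{H}_{\mathcal{G}}^{\cdot}(X)$. When no global lift exists I would cover $X$ by liftable opens and glue via a \v{C}ech or crystalline-style construction using the inverse-image functoriality of $\widehat{\mathcal{D}}^{(0,1)}$; independence from the choice of lift and cover is then a separate functoriality check.

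The three comparison statements correspond to the three ``fibers'' of this gauge. At $\infty$, the quotient $\widehat{\mathcal{D}}_{\mathfrak{X}}^{(0,1)}/(f-1)$ should recover Berthelot's $p$-adically completed arithmetic operators $\widehat{\mathcal{D}}_{\mathfrak{X}}$ (essentially by the design of the algebra), so the $\infty$-specialization of the pushforward of $\mathcal{O}_{\mathfrak{X}}$ becomes the $p$-adic de Rham complex of $\mathfrak{X}$, which computes $\mathbb{H}_{crys}^{\cdot}(X)$ by the standard comparison. For the $-\infty$ and $\infty$ fibers after reduction modulo $p$, I would invoke Mazur's theorem for $\widehat{\mathcal{D}}^{(0,1)}$-modules: its content in the case of the specific gauge $\mathcal{O}_{\mathfrak{X}}$ is precisely that the induced filtrations on $\mathbb{H}_{dR}^{i}(X)$ are the Hodge and conjugate filtrations. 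The Frobenius semilinearity on the $-\infty$ side will come from a Frobenius twist intrinsic to the $v$-localization of $\widehat{\mathcal{D}}^{(0,1)}$, while the $\infty$ side is $F$-linear because the $f$-localization carries no such twist.

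The hardest step will be promoting these module-level statements to the level of the derived pushforward. Mazur's theorem, as advertised, is a statement about modules; here I need to know that the filtrations induced on each $H^{i}$ of the pushforward of the gauge $\mathcal{O}_{\mathfrak{X}}$ coincide with the pushforward of the filtrations on $\mathcal{O}_{\mathfrak{X}}$ itself — a strictness/degeneration statement of Hodge-theoretic flavor. I expect this to follow from the graded flatness built into the definition of a gauge over $D(A)$, together with a careful analysis of how the filtered derived category $D((A,F)\text{-mod})$ interacts with derived pushforward, but this is where the real work lies.

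Finally, under the torsion-free and $E_{1}$-degeneration hypotheses of Mazur's original theorem, the agreement with \eqref{Basic-Gauge-defn} should be essentially a formality: torsion-freeness kills all higher Tor contributions in the mod-$p$ reduction, so the graded pieces $(\mathbb{H}_{crys}^{i}(X))^{r}$ of the $\Phi$-divisibility filtration are exactly the graded components of the pushforward gauge, with $f$ acting as multiplication by $p$ and $v$ as inclusion, while $E_{1}$-degeneration ensures that the image of each $M^{r}$ in $M^{\infty}$ is precisely the Hodge filtration — recovering the gauge in \eqref{Basic-Gauge-defn} from within the $\widehat{\mathcal{D}}^{(0,1)}$ formalism.
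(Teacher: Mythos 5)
You should first be aware that the paper does not prove this theorem at all: it is quoted from Ekedahl \cite{key-20} and Fontaine--Jannsen \cite{key-5}, whose proofs go through the de Rham--Witt complex and the Raynaud ring (respectively a direct construction in \cite{key-5}, section 7), and the author explicitly defers the comparison between the gauge of this theorem and the one produced by the $\widehat{\mathcal{D}}_{\mathfrak{X}}^{(0,1)}$-machinery to a later paper. So your proposal is not a reconstruction of the paper's argument but an attempt to reprove the theorem by setting $\mathbb{H}_{\mathcal{G}}^{\cdot}(X)={\displaystyle \int_{\varphi}}D(\mathcal{O}_{\mathfrak{X}})$ for a smooth formal lift $\mathfrak{X}$. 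In the liftable proper case that is exactly the content of the paper's Theorem 1.4 together with \thmref{F-Mazur}, so the strategy is sensible as an existence proof; but it is not what the paper does, and it has real gaps.

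Concretely: (1) The non-liftable case and functoriality in $X$ (rather than in a chosen $\mathfrak{X}$) are not ``a separate functoriality check.'' Morphisms of smooth $k$-schemes need not lift to morphisms of chosen formal lifts, and the paper states explicitly that the non-liftable case is deferred to future work; your \v{C}ech/crystalline gluing is precisely the missing content. (2) You have the two fibers assigned backwards relative to the algebra you are invoking: since $\widehat{\mathcal{D}}_{\mathfrak{X}}^{(0,1)}/(v-1)\tilde{=}\widehat{\mathcal{D}}_{\mathfrak{X}}^{(0)}$, it is the $-\infty$ fiber of ${\displaystyle \int_{\varphi}}D(\mathcal{O}_{\mathfrak{X}})$ that computes $\mathbb{H}_{dR}^{\cdot}(\mathfrak{X})\tilde{=}\mathbb{H}_{crys}^{\cdot}(X)$ and carries the conjugate filtration mod $p$, while the $\infty$ fiber ($f\mapsto1$, completing to $\widehat{\mathcal{D}}_{\mathfrak{X}}^{(1)}$) computes $F^{*}\mathbb{H}_{dR}^{\cdot}(\mathfrak{X})$ and carries the Frobenius pullback of the Hodge filtration. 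Matching this output with the statement of the theorem requires the $F$-gauge versus $F^{-1}$-gauge interchange of \remref{basic-equiv}, which swaps $f$ with $v$ and moves the semilinear twist from one end to the other; your proposal puts the de Rham complex on the $f=1$ side and the semilinearity on the $v=1$ side, which is inconsistent with the construction you are using. (3) The final comparison with \eqref{Basic-Gauge-defn} is not ``essentially a formality'': in this framework it is exactly the assertion that each cohomology gauge of the pushforward is standard, which is the content of \thmref{F-Mazur} and genuinely uses both hypotheses --- $p$-torsion-freeness of $\mathcal{H}^{n}(\mathcal{M}^{\cdot})^{-\infty}$ and $E_{1}$-degeneration, the latter entering as $f$-torsion-freeness of $\mathcal{H}^{n}((\mathcal{M}^{\cdot}\otimes_{W(k)}^{L}k)\otimes_{D(k)}^{L}k[f])$ --- rather than following from ``graded flatness.''
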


As far as I am aware, the first proof of this theorem appears in Ekedahl's
book \cite{key-20}. This is also the first place that the above notion
of gauge is defined; Ekedahl points out that Fontaine discovered the
notion independantly. Ekedahl's proof relies on deep properties of
the de Rham-Witt complex and on the results of the paper \cite{key-37};
in that paper, it is shown that there is attached to $X$ a complex
inside another category $D^{b}(\mathcal{R}-\text{mod})$ where $\mathcal{R}$
is the so-called Raynaud ring; then, in definition 2.3.1 of \cite{key-20}
Ekehahl constructs a functor from $D^{b}(\mathcal{R}-\text{mod})$
to the derived category $D^{b}(\mathcal{G}(D(A)))$; the composition
of these two functors yeilds the construction of the theorem. Another,
rather different proof of the theorem is given in \cite{key-5}, section
7.

Now let us turn to $\mathcal{D}$-modules and Hodge modules. From
at least the time of Laumon's work (\cite{key-19}), it has been understood
that the filtered complex $\mathbb{H}_{dR}^{\cdot}(X)$ (with its
Hodge filtration) can be understood as an object of filtered $\mathcal{D}$-module
theory. To explain this, let $\mathcal{D}_{X}^{(0)}$ denote the level
zero PD-differential operators on $X$. Then $\mathcal{D}_{X}^{(0)}$
acts on $\mathcal{O}_{X}$, and we have a canonical isomorphism 
\[
\int_{\varphi}\mathcal{O}_{X}[d_{X}]\tilde{\to}\mathbb{H}_{dR}^{\cdot}(X)
\]
where $\varphi$ denotes the map $X\to\text{Spec}(k)$, $d_{X}=\text{dim}(X)$,
and ${\displaystyle \int_{\varphi}}$ is the push-forward for $\mathcal{D}_{X}^{(0)}$-modules.
In addition, $\mathcal{D}_{X}^{(0)}$ comes equipped with a natural
\emph{increasing }filtration, the symbol filtration. Laumon's work\footnote{Strictly speaking, Laumon works in characteristic zero. But the same
formalism works for $\mathcal{D}_{X}^{(0)}$ in positive characteristic;
I'll address this below in the paper} upgrades the push-forward functor to a functor from filtered $\mathcal{D}_{X}^{(0)}$-modules
to filtered $k$-vector spaces; and we have that\emph{ 
\[
\int_{\varphi}\mathcal{O}_{X}[d_{X}]\tilde{\to}(\mathbb{H}_{dR}^{\cdot}(X),F')
\]
}where $F'$ is the Hodge filtration, suitably re-indexed to make
it an increasing filtration. Furthermore, Laumon works in the relative
setting; i.e., he constructs a filtered push-forward for any morphism
$\varphi:X\to Y$ of smooth varieties. 

This leads to the question, of weather the construction of \thmref{=00005BFJ=00005D}
can be understood in terms of some sort of upgrade of filtered $\mathcal{D}$-modules
to a category of graded modules. The main body of this work shows
that, at least when the schemes in question lift to smooth formal
schemes over $W(k)$, the answer is yes\footnote{In fact, the answer is always yes. But we will adress the non-liftable
case in future work}. To state the first result, recall that, in addition to the symbol
filtration, the algebra $\mathcal{D}_{X}^{(0)}$ carries a decreasing
filtration by two sided ideals, the conugate filtration, denoted $\{C^{i}(\mathcal{D}_{X}^{(0)})\}_{i\in\mathbb{Z}}$
(it was first defined in \cite{key-11}, section 3.4, c.f. also \defref{Hodge-and-Con}
below). 
\begin{thm}
\label{thm:D01}Let $\mathfrak{X}$ be a smooth formal scheme over
$W(k)$. Then there is a locally noetherian sheaf of algebras $\widehat{\mathcal{D}}_{\mathfrak{X}}^{(0,1)}$
with the following properties: 

1) ${\displaystyle \widehat{\mathcal{D}}_{\mathfrak{X}}^{(0,1)}=\bigoplus_{i}\widehat{\mathcal{D}}_{\mathfrak{X}}^{(0,1),i}}$
is a graded $D(W(k))$-algebra, and $\widehat{\mathcal{D}}_{\mathfrak{X}}^{(0,1)}/(v-1)\tilde{=}\widehat{\mathcal{D}}_{\mathfrak{X}}^{(0)}$,
while the sheaf $\widehat{\mathcal{D}}_{\mathfrak{X}}^{(0,1)}/(f-1)$
has $p$-adic completion equal to $\widehat{\mathcal{D}}_{\mathfrak{X}}^{(1)}$. 

2) Let $\widehat{\mathcal{D}}_{\mathfrak{X}}^{(0,1)}/p:=\mathcal{D}_{X}^{(0,1)}$,
a graded sheaf of $k$-algebras on $X$. The filtration $\text{im}(\mathcal{D}_{X}^{(0,1),i}\to\mathcal{D}_{X}^{(0)}\tilde{=}\mathcal{D}_{X}^{(0,1)}/(v-1))$
agrees with the conugate filtration on $\mathcal{D}_{X}^{(0)}$. 

3) We have $\mathcal{D}_{X}^{(1)}=\mathcal{D}_{X}^{(0,1)}/(f-1)$.
Consider the filtration $F^{i}(\mathcal{D}_{X}^{(1)})=\text{im}(\mathcal{D}_{X}^{(0,1),i}\to\mathcal{D}_{X}^{(0)}\tilde{=}\mathcal{D}_{X}^{(0,1)}/(f-1))$.
Then filtered modules over $(\mathcal{D}_{X}^{(1)},F^{\cdot})$ are
equivalent to filtered modules over $(\mathcal{D}_{X}^{(0)},F^{\cdot})$
(the symbol filtration on $\mathcal{D}_{X}^{(0)}$). 
\end{thm}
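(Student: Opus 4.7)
My plan is to construct $\widehat{\mathcal{D}}_{\mathfrak{X}}^{(0,1)}$ as a Rees-type graded sheaf interpolating between Berthelot's $p$-completed level-$0$ and level-$1$ arithmetic differential operators. Viewing $\widehat{\mathcal{D}}_{\mathfrak{X}}^{(0)}$ as a subring of $\widehat{\mathcal{D}}_{\mathfrak{X}}^{(1)}$ (both sitting inside the common $p$-inverted algebra $\widehat{\mathcal{D}}_{\mathfrak{X}}^{(0)}[p^{-1}]=\widehat{\mathcal{D}}_{\mathfrak{X}}^{(1)}[p^{-1}]$), I propose to set
\[
\widehat{\mathcal{D}}_{\mathfrak{X}}^{(0,1),i} \,:=\, p^{i}\,\widehat{\mathcal{D}}_{\mathfrak{X}}^{(1)} \,\cap\, \widehat{\mathcal{D}}_{\mathfrak{X}}^{(0)}
\]
for each $i\in\mathbb{Z}$, declare $v$ to be the inclusion $\widehat{\mathcal{D}}_{\mathfrak{X}}^{(0,1),i}\hookrightarrow\widehat{\mathcal{D}}_{\mathfrak{X}}^{(0,1),i-1}$ (since the family is decreasing in $i$), and declare $f$ to be multiplication by $p$, which lands in $\widehat{\mathcal{D}}_{\mathfrak{X}}^{(0,1),i+1}$. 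The relations $fv=vf=p$ hold tautologically, and the direct sum is closed under composition by the elementary inclusion $(p^iB\cap A)(p^jB\cap A)\subseteq p^{i+j}B\cap A$ valid for subrings $A\subseteq B$ of a common ring. The piece $\widehat{\mathcal{D}}_{\mathfrak{X}}^{(0,1),i}$ stabilizes to $\widehat{\mathcal{D}}_{\mathfrak{X}}^{(0)}$ for $i\leq 0$, so the $v$-colimit $\widehat{\mathcal{D}}_{\mathfrak{X}}^{(0,1)}/(v-1)$ equals $\widehat{\mathcal{D}}_{\mathfrak{X}}^{(0)}$; and $\widehat{\mathcal{D}}_{\mathfrak{X}}^{(0,1)}/(f-1)=\bigcup_{i}p^{-i}\widehat{\mathcal{D}}_{\mathfrak{X}}^{(0,1),i}=\bigcup_{i}(\widehat{\mathcal{D}}_{\mathfrak{X}}^{(1)}\cap p^{-i}\widehat{\mathcal{D}}_{\mathfrak{X}}^{(0)})$ has $p$-adic completion $\widehat{\mathcal{D}}_{\mathfrak{X}}^{(1)}$, which gives Part (1). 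Gluing is automatic because both $\widehat{\mathcal{D}}^{(m)}$ are intrinsic sheaves on $\mathfrak{X}$, and the intersection is functorial; local noetherianity is inherited from that of each $\widehat{\mathcal{D}}^{(m)}$ together with boundedness of the nontrivial part of the grading modulo any fixed power of $p$.

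For Part (2), reducing mod $p$ identifies the image of $\mathcal{D}_X^{(0,1),i}$ in $\mathcal{D}_X^{(0)}$ with the image of $(p^i\widehat{\mathcal{D}}^{(1)}\cap\widehat{\mathcal{D}}^{(0)})/p$ in $\widehat{\mathcal{D}}^{(0)}/p$. I would then use Berthelot's explicit formula $\partial^{\langle k\rangle_{(1)}}=q_k!\,\partial^{[k]}$ (with $k=pq_k+r_k$) to compute this image locally in coordinates and compare it with the definition of the conjugate filtration from \cite{key-11}, Section 3.4. For Part (3), the identity $\mathcal{D}_X^{(1)}=\mathcal{D}_X^{(0,1)}/(f-1)$ follows from Part (1) reduced mod $p$. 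The equivalence between filtered $(\mathcal{D}_X^{(1)},F^\cdot)$-modules and filtered $(\mathcal{D}_X^{(0)},F^\cdot_{\mathrm{sym}})$-modules I would deduce via a Rees-algebra argument: the graded $k$-algebra $\mathcal{D}_X^{(0,1)}$ is itself a Rees algebra for $(\mathcal{D}_X^{(1)},F^\cdot)$ (by construction) and, after accounting for the interlevel transition formulas mod $p$, also encodes $(\mathcal{D}_X^{(0)},F^\cdot_{\mathrm{sym}})$; so both filtered module categories are equivalent to suitable categories of graded $\mathcal{D}_X^{(0,1)}$-modules and hence to each other.

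The main obstacle I expect is Part (2): reconciling the purely $p$-adic description coming from the Rees intersection with the definition of the conjugate filtration in \cite{key-11}, which is framed indirectly (in terms of a Frobenius-type construction on the associated graded). This reduces to a delicate local computation involving the $p$-adic valuations of the factorials $q_\alpha!$ appearing in Berthelot's interlevel transition formulas, and matching the resulting descending filtration by two-sided ideals against the conjugate filtration term by term. Checking intrinsicality of the construction (i.e., independence from a coordinate system) and establishing local noetherianity also require care, but should be routine once the local description is understood. Part (3)'s categorical equivalence, by contrast, should follow formally once Parts (1) and (2) are in place.
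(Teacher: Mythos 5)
Your construction is the paper's own, up to a harmless reindexing: the paper sets $\widehat{\mathcal{D}}_{\mathfrak{X}}^{(0,1),i}=\{\Phi\in\widehat{\mathcal{D}}_{\mathfrak{X}}^{(1)}\,|\,p^{i}\Phi\in\widehat{\mathcal{D}}_{\mathfrak{X}}^{(0)}\}$ with $f$ the inclusion and $v$ multiplication by $p$, and the map $x\mapsto p^{-i}x$ identifies your $p^{i}\widehat{\mathcal{D}}_{\mathfrak{X}}^{(1)}\cap\widehat{\mathcal{D}}_{\mathfrak{X}}^{(0)}$ with that piece, intertwining your $(f,v)$ with the paper's. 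Parts (1) and (2) then proceed exactly as you outline: the identifications of $-\infty$ and $\infty$ are as you say, local noetherianity and the comparison with the conjugate filtration of Ogus--Vologodsky both reduce to the local-coordinate computation showing that the degree-$i$ piece is generated over $\widehat{\mathcal{D}}^{(0)}$ by the monomials $(\partial_{1}^{[p]})^{j_{1}}\cdots(\partial_{n}^{[p]})^{j_{n}}$ with $|J|\le i$, so that the image in $\mathcal{D}_{X}^{(0)}/p$ is the $i$th power of the ideal generated by $\partial_{1}^{p},\dots,\partial_{n}^{p}$. You have correctly located where the work lies for these two parts.

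Part (3), however, contains a genuine gap. You assert that the equivalence between filtered $(\mathcal{D}_{X}^{(1)},F^{\cdot})$-modules and filtered $(\mathcal{D}_{X}^{(0)},F^{\cdot}_{\mathrm{sym}})$-modules ``follows formally'' because $\mathcal{D}_{X}^{(0,1)}$ is a Rees algebra that ``also encodes'' the symbol filtration. It does not: $\mathcal{D}_{X}^{(0,1)}/v$ is the Rees algebra of the \emph{Hodge} filtration on $\mathcal{D}_{X}^{(1)}$, and $\mathcal{D}_{X}^{(0,1)}/f$ is the Rees algebra of the \emph{conjugate} filtration on $\mathcal{D}_{X}^{(0)}$; the symbol-filtration Rees algebra $\mathcal{R}(\mathcal{D}_{X}^{(0)})$ is not a sub or quotient of $\mathcal{D}_{X}^{(0,1)}$, and these filtrations live on different rings. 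The bridge is Berthelot's Frobenius descent upgraded to the filtered level, which is genuinely nontrivial: one must construct the $(\mathcal{D}_{X}^{(1)},\mathcal{D}_{X}^{(0)})$-bimodule $F^{*}\mathcal{D}_{X}^{(0)}\cong\mathcal{D}_{X}^{(1)}/\mathcal{D}_{X}^{(1)}\mathcal{J}$, verify the strict compatibility $F^{i}(\mathcal{D}_{X}^{(1)}/\mathcal{D}_{X}^{(1)}\mathcal{J})\cdot F^{j}(\mathcal{D}_{X}^{(0)})=F^{i+j}(\mathcal{D}_{X}^{(1)}/\mathcal{D}_{X}^{(1)}\mathcal{J})$ relating the Hodge filtration to the symbol filtration, and then prove that the Rees module $\mathcal{R}(F^{*}\mathcal{D}_{X}^{(0)})$ induces a Morita equivalence $\mathcal{R}(\mathcal{D}_{X}^{(1)})\simeq\mathcal{R}(\mathcal{D}_{X}^{(0)})$ --- the paper does this by passing to the associated graded, invoking Cartier descent to identify $\overline{\mathcal{D}}_{X}^{(0)}$ with $\mathcal{E}nd_{\mathcal{O}_{X}}(F^{*}\mathcal{O}_{X})$, and concluding by graded Nakayama. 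None of this is a formal consequence of Parts (1) and (2), and your proposal as written does not supply it.
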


This sheaf of algebras is constructed in \secref{The-Algebra} below;
part $2)$ of the theorem is proved in \remref{Description-of-conjugate},
and part $3)$ is \thmref{Filtered-Frobenius}. This theorem shows
that a graded module over $\mathcal{D}_{X}^{(0,1)}$ is a simultanious
generalization of a conugate-filtered and a Hodge-filtered $\mathcal{D}_{X}^{(0)}$-module. 

The algebra $\widehat{\mathcal{D}}_{\mathfrak{X}}^{(0,1)}$ admits
analogues of all of the usual $\mathcal{D}$-module operations; namely,
tensor product, duality, left-right interchange, and well as push-forward
and pull-back over arbitrary morphisms (between smooth formal schemes).
By construction the sheaf $D(\mathcal{O}_{\mathfrak{X}})=\mathcal{O}_{\mathfrak{X}}[f,v]/(fv-p)$
carrries an action of $\widehat{\mathcal{D}}_{\mathfrak{X}}^{(0,1)}$.
Let $D(\mathcal{G}(\widehat{\mathcal{D}}_{\mathfrak{X}}^{(0,1)}))$
denotes the derived category of graded $\widehat{\mathcal{D}}_{\mathfrak{X}}^{(0,1)}$-modules;
then we have
\begin{thm}
For any morphism $\varphi:\mathfrak{X}\to\mathfrak{Y}$ of smooth
formal schemes we denote the pushforward by ${\displaystyle \int_{\varphi}:D(\mathcal{G}(\widehat{\mathcal{D}}_{\mathfrak{X}}^{(0,1)}))\to D(\mathcal{G}(\widehat{\mathcal{D}}_{\mathfrak{Y}}^{(0,1)}))}$.
If $\varphi$ is proper, then the pushforward takes $D_{coh}^{b}(\mathcal{G}(\widehat{\mathcal{D}}_{\mathfrak{X}}^{(0,1)}))$
to $D_{coh}^{b}(\mathcal{G}(\widehat{\mathcal{D}}_{\mathfrak{Y}}^{(0,1)}))$.
We have ${\displaystyle (\int_{\varphi}\mathcal{M})^{-\infty}\tilde{=}(\int_{\varphi}\mathcal{M}^{-\infty})}$,
where the pushforward on the right is in the category of $\widehat{\mathcal{D}}_{\mathfrak{X}}^{(0)}$-modules.
In particular, if $\mathfrak{Y}$ is $\text{Specf}(W(k))$, then ${\displaystyle {\displaystyle \int_{\varphi}}D(\mathcal{O}_{\mathfrak{X}})}$
is a bounded complex of finite type gauges, and we have isomorphisms
\[
({\displaystyle \int_{\varphi}}D(\mathcal{O}_{\mathfrak{X}}))^{-\infty}[d_{X}]\tilde{=}\mathbb{H}_{dR}^{\cdot}(\mathfrak{X})
\]
and 
\[
({\displaystyle \int_{\varphi}}D(\mathcal{O}_{\mathfrak{X}}))^{\infty}[d_{X}]\tilde{=}F^{*}\mathbb{H}_{dR}^{\cdot}(\mathfrak{X})
\]
where $F$ is the Witt-vector Frobenius. After passing to $k$ we
obtain isomorphisms in the filtered derived category
\[
({\displaystyle \int_{\varphi}}D(\mathcal{O}_{\mathfrak{X}})\otimes_{W(k)}^{L}k)^{-\infty}[d_{X}]\tilde{=}(\mathbb{H}_{dR}^{\cdot}(X),C')
\]
(where $C'$ in the conjugate filtration, appropriately re-indexed
to make it a decreasing filtration), and 
\[
({\displaystyle \int_{\varphi}}D(\mathcal{O}_{\mathfrak{X}})\otimes_{W(k)}^{L}k)^{\infty}[d_{X}]\tilde{=}F^{*}(\mathbb{H}_{dR}^{\cdot}(X),F')
\]
where where $F'$ is the Hodge filtration, suitably re-indexed to
make it an increasing filtration
\end{thm}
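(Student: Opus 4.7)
The plan is to construct $\int_\varphi$ on the graded $(0,1)$-side by imitating the classical transfer-bimodule construction of $\mathcal{D}$-module pushforward, and then reduce every asserted property to the already-available filtered theory at levels $(0)$ and $(1)$ by specialising under $(-)/(v-1)$ and (after $p$-adic completion) $(-)/(f-1)$. Concretely, I would first introduce the graded transfer bimodule $\widehat{\mathcal{D}}^{(0,1)}_{\mathfrak{X}\to\mathfrak{Y}} := \mathcal{O}_\mathfrak{X}\otimes_{\varphi^{-1}\mathcal{O}_\mathfrak{Y}}\varphi^{-1}\widehat{\mathcal{D}}^{(0,1)}_\mathfrak{Y}$ and its right analogue $\widehat{\mathcal{D}}^{(0,1)}_{\mathfrak{Y}\leftarrow\mathfrak{X}}$ obtained by the usual left-right twist; the $D(W(k))$-grading is inherited from Theorem~\ref{thm:D01}(1). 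The pushforward is then defined by $\int_\varphi\mathcal{M} := R\varphi_*\bigl(\widehat{\mathcal{D}}^{(0,1)}_{\mathfrak{Y}\leftarrow\mathfrak{X}}\otimes^{L}_{\widehat{\mathcal{D}}^{(0,1)}_\mathfrak{X}}\mathcal{M}\bigr)$, resolved by a relative graded Spencer/de Rham complex whose terms are $\Omega^{\cdot}_{\mathfrak{X}/\mathfrak{Y}}\otimes_{\mathcal{O}_\mathfrak{X}}\widehat{\mathcal{D}}^{(0,1)}_{\mathfrak{X}\to\mathfrak{Y}}$.

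Second, compatibility with $(-)^{-\infty}=(-)/(v-1)=\varinjlim(-)^{i}$ follows from the identification $\widehat{\mathcal{D}}^{(0,1)}_{\mathfrak{X}\to\mathfrak{Y}}/(v-1)\cong\widehat{\mathcal{D}}^{(0)}_{\mathfrak{X}\to\mathfrak{Y}}$ supplied by Theorem~\ref{thm:D01}(1), together with the exactness of this colimit and its commutation with $R\varphi_*$. The analogous statement for $(-)^{\infty}=(-)/(f-1)$ holds after $p$-adic completion and identifies the specialisation with the usual level-$(1)$ pushforward. Preservation of coherence for proper $\varphi$ then reduces to Berthelot's finiteness theorems for $\widehat{\mathcal{D}}^{(0)}$ and $\widehat{\mathcal{D}}^{(1)}$: a graded Nakayama argument, exploiting local noetherianity of $\widehat{\mathcal{D}}^{(0,1)}_\mathfrak{Y}$ from Theorem~\ref{thm:D01} and the fact that both specialisations of $\int_\varphi\mathcal{M}$ are coherent, yields coherence of $\int_\varphi\mathcal{M}$ itself.

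Specialising to $\mathcal{M}=D(\mathcal{O}_\mathfrak{X})$ and $\mathfrak{Y}=\text{Specf}(W(k))$, note that $D(\mathcal{O}_\mathfrak{X})^{-\infty}=\mathcal{O}_\mathfrak{X}$ as a $\widehat{\mathcal{D}}^{(0)}_\mathfrak{X}$-module while $D(\mathcal{O}_\mathfrak{X})^{\infty}=F^{*}\mathcal{O}_\mathfrak{X}$ as a $\widehat{\mathcal{D}}^{(1)}_\mathfrak{X}$-module via Berthelot's Frobenius descent. The first two displayed identities then follow from the compatibilities of the previous paragraph applied to $\mathcal{O}_\mathfrak{X}$, together with the classical identification $\int_\varphi\mathcal{O}_\mathfrak{X}[d_X]\tilde{\to}\mathbb{H}^{\cdot}_{dR}(\mathfrak{X})$. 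For the last two displays, I would reduce mod $p$ and invoke Laumon's filtered pushforward: Theorem~\ref{thm:D01}(2) identifies the induced filtration on the $-\infty$ side with the conjugate filtration on $\mathcal{D}_X^{(0)}$, and Laumon's formalism then delivers the conjugate filtration $C'$ on $\mathbb{H}^{\cdot}_{dR}(X)$; symmetrically, Theorem~\ref{thm:D01}(3) identifies the $\infty$-side filtration with the symbol/Hodge filtration at level $(1)$, yielding the Hodge filtration $F'$ on $F^*\mathbb{H}^{\cdot}_{dR}(X)$, modulo the stated reindexing conventions.

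The main obstacle will be the filtration-matching in this last step: one must verify that the filtration on $H^{i}\int_\varphi D(\mathcal{O}_\mathfrak{X})\otimes^{L}_{W(k)}k$ coming from the grading, after passing to $\pm\infty$, coincides with Laumon's filtered pushforward of $(\mathcal{O}_X,\text{trivial filtration})$, and not with a twist or shift of it. This requires a careful comparison of the graded Spencer resolution of $D(\mathcal{O}_\mathfrak{X})$ over $\widehat{\mathcal{D}}^{(0,1)}_\mathfrak{X}$ with the usual Spencer resolutions at levels $(0)$ and $(1)$, tracking the grading shifts induced by the left-right interchange and by the shift $[d_X]$. A secondary difficulty lies in establishing enough graded flatness and resolution properties of $\widehat{\mathcal{D}}^{(0,1)}_{\mathfrak{Y}\leftarrow\mathfrak{X}}$ so that the Nakayama-style reduction to levels $(0)$ and $(1)$, used both for coherence and for the compatibility with $(-)^{\pm\infty}$, is valid in the derived category.
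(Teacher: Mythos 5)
Your overall architecture---define $\int_\varphi$ via a graded transfer bimodule and control it through the specialisations at $v=1$ and (after completion) $f=1$---is the same as the paper's, and your treatment of $(-)^{\pm\infty}$ and of the two integral displays for $D(\mathcal{O}_{\mathfrak{X}})$ is essentially right. However, two of your steps would fail as written.

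First, the coherence step. Coherence of the two specialisations $\mathcal{N}^{-\infty}$ and $\widehat{\mathcal{N}^{\infty}}$ of a graded module $\mathcal{N}$ does not imply coherence of $\mathcal{N}$ over $\widehat{\mathcal{D}}_{\mathfrak{Y}}^{(0,1)}$, graded Nakayama notwithstanding: both functors annihilate any gauge on which $f$ and $v$ act by zero, so an infinite direct sum of shifts of $\mathcal{D}_Y^{(0,1)}/(f,v)$ has vanishing specialisations but is not coherent. What is actually needed is coherence of the full mod-$p$ reduction as a complex of $\mathcal{D}_Y^{(0,1)}$-modules, which requires a separate argument (the paper factors $\varphi$ into a closed immersion and a smooth projection, treats induced modules $\mathcal{F}\otimes_{D(\mathcal{O}_X)}\mathcal{D}_X^{(0,1)}$ by the projection formula, and uses that the transfer bimodule is quasi-rigid, hence of finite local homological dimension); one then passes to $W(k)$ via the criterion that a cohomologically complete complex is coherent iff its reduction mod $p$ is (\propref{coh-to-coh}). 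Relatedly, your proposed resolution of the transfer bimodule by $\Omega^{\cdot}_{\mathfrak{X}/\mathfrak{Y}}\otimes_{\mathcal{O}_{\mathfrak{X}}}\widehat{\mathcal{D}}^{(0,1)}_{\mathfrak{X}\to\mathfrak{Y}}$ is too short: in local coordinates the relative left ideal is generated by the fibre-direction $\partial_i$ \emph{and} $\partial_i^{[p]}$, so no Koszul complex on $\mathcal{T}_{\mathfrak{X}/\mathfrak{Y}}$ alone resolves it. The paper circumvents this by splitting any gauge into an $\overline{\mathcal{R}}(\mathcal{D}^{(0)})$-part and an $\mathcal{R}(\mathcal{D}^{(1)})$-part (\propref{Sandwich!}) and using a different resolution on each piece.

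Second, the conjugate-filtration display. Laumon's filtered pushforward governs the symbol filtration, hence (after Frobenius descent) the Hodge filtration on the $\infty$-side; it does not apply to the $-\infty$-side, where the relevant filtration on $\mathcal{D}_X^{(0)}$ is the conjugate filtration by powers of the two-sided ideal generated by the positive part of the centre. Identifying $(\int_\varphi D(\mathcal{O}_{\mathfrak{X}})\otimes^L_{W(k)}k)^{-\infty}[d_X]$ with $(\mathbb{H}^{\cdot}_{dR}(X),C')$ requires the $\overline{\mathcal{R}}(\mathcal{D}_X^{(0)})$-machinery: the Koszul (Higgs) resolution built from the centre, the Azumaya splitting over $X\times_Y T^*Y$, and the comparison with the Ogus--Vologodsky conjugate-filtered pushforward, which is what finally matches the grading-induced filtration with the second spectral sequence of hypercohomology. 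Citing Laumon for this half of the statement leaves the comparison unproved.
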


This theorem is proved in \secref{Push-Forward} below.

In fact $\widehat{\mathcal{D}}_{\mathfrak{X}}^{(0,1)}$ has many more
favorable properties which are developed extensively in this paper;
including a well-behaved pull-back for arbitrary maps, an internal
tensor product which satisfies the projection formula, and a relative
duality theory; these are sections five through eight below. Simultaneously,
we develop the analogous theory $\mathcal{D}_{X}^{(0,1)}$-modules;
here $\widehat{\mathcal{D}}_{\mathfrak{X}}^{(0,1)}/p$; technically,
we do a little more than that, and develop the theory of $\mathcal{D}_{X}^{(0,1)}$-modules
over smooth varieties which do not have to lift to $W(k)$. The two
theories play off each other nicely- we often use reduction mod $p$
and various versions of Nakayama's lemma to reduce statements about
$\widehat{\mathcal{D}}_{\mathfrak{X}}^{(0,1)}$ to statements about
$\mathcal{D}_{X}^{(0,1)}$; on the other hand, there are always local
lifts of a smooth variety over $k$, so local questions about $\mathcal{D}_{X}^{(0,1)}$
often reduce to questions about $\widehat{\mathcal{D}}_{\mathfrak{X}}^{(0,1)}$.
There is also an interesting and rich theory over the truncated Witt
vectors $W_{n}(k)$, but, given the length of this paper, we will
undertake a detailed study of it in another work. 

We also have a comparison with the gauge constructed in \thmref{=00005BFJ=00005D};
however, we will defer the proof of this result to a later paper.
That is because it seems best to prove it as a consequence of a more
general comparison theorem between the category of gauges constructed
here and the one constructed in \cite{key-5}; and this general statement
is still a work in progress. It also seems that there is a close connection
with the recent works of Drinfeld \cite{key-39} and Bhatt-Lurie \cite{key-40}
via a kind of Koszul duality formalism; again, the details are a work
in progress\footnote{The author has been discussing these topics with Bhargav Bhatt }. 

Now we discuss Mazur's theorem in the relative context. We begin with
the 
\begin{defn}
A module $\mathcal{M}\in\mathcal{G}(\widehat{\mathcal{D}}_{\mathfrak{X}}^{(0,1)})$
is standard if if $\mathcal{M}^{-\infty}$ and $\mathcal{M}^{\infty}$
are $p$-torsion-free, each map $f_{\infty}:\mathcal{M}^{i}\to\mathcal{M}^{\infty}$
is injective; and, finally, there is a $j_{0}\in\mathbb{Z}$ so that
\[
f_{\infty}(\mathcal{M}^{i+j_{0}})=\{m\in\mathcal{M}^{\infty}|p^{i}m\in f_{\infty}(\mathcal{M}^{j_{0}})\}
\]
for all $i\in\mathbb{Z}$. 
\end{defn}

Note that, over $W(k)$, this is a generalization of the construction
of the gauge in \eqref{Basic-Gauge-defn}; with the roles of $f$
and $v$ reversed (this is related to the re-indexing of the Hodge
and conjugate filtrations; c.f. also \remref{basic-equiv} below).
Thus a general version of Mazur's theorem will give conditions on
a complex of gauges which ensure that each cohomology group is standard.
In order to state such a theorem, we need to note that there is a
notion of $F$-gauge in this context, or, to be more precise, a notion
of $F^{-1}$-gauge: 
\begin{defn}
(c.f. \defref{Gauge-Defn!}) Let $F^{*}:\widehat{\mathcal{D}}_{\mathfrak{X}}^{(0)}-\text{mod}\to\widehat{\mathcal{D}}_{\mathfrak{X}}^{(1)}-\text{mod}$
denote Berthelot's Frobenius pullback (c.f. \thmref{Berthelot-Frob}
below for details). Then an $F^{-1}$-gauge over $\mathfrak{X}$ is
an object of $\mathcal{G}(\mathcal{\widehat{D}}_{\mathfrak{X}}^{(0,1)})$
equipped with an isomorphism $F^{*}\mathcal{M}^{-\infty}\tilde{\to}\widehat{\mathcal{M}^{\infty}}$
(here $\widehat{?}$ denotes $p$-adic completion). There is also
a version for complexes in $D(\mathcal{G}(\mathcal{\widehat{D}}_{\mathfrak{X}}^{(0,1)}))$,
namely, an $F^{-1}$-gauge in $D(\mathcal{G}(\mathcal{\widehat{D}}_{\mathfrak{X}}^{(0,1)}))$
is a complex $\mathcal{M}^{\cdot}$ equipped with an isomorphism $F^{*}\mathcal{M}^{\cdot,-\infty}\tilde{\to}\widehat{\mathcal{M}^{\cdot,\infty}}$
(here $\widehat{?}$ denotes the cohomolocial or derived completion,
c.f. \defref{CC} and \propref{Basic-CC-facts} below). 
\end{defn}

We denote by $D_{F^{-1}}(\mathcal{G}(\widehat{\mathcal{D}}_{\mathfrak{X}}^{(0,1)}))$
the category of complexes for which there exists an isomorphism $F^{*}\mathcal{M}^{\cdot,-\infty}\tilde{\to}\widehat{\mathcal{M}^{\cdot,\infty}}$
as above. Then we have the following rather general version of Mazur's
theorem:
\begin{thm}
(c.f. \thmref{F-Mazur}) Let $\mathcal{M}^{\cdot}\in D_{\text{coh},F^{-1}}^{b}(\mathcal{G}(\widehat{\mathcal{D}}_{\mathfrak{X}}^{(0,1)}))$.
Suppose that $\mathcal{H}^{n}(\mathcal{M}^{\cdot})^{-\infty}$ is
$p$-torsion-free for all $n$, and suppose that $\mathcal{H}^{n}((\mathcal{M}^{\cdot}\otimes_{W(k)}^{L}k)\otimes_{D(k)}^{L}k[f])$
is $f$-torsion-free for all $n$. Then $\mathcal{H}^{n}(\mathcal{M}^{\cdot})$
is standard for all $n$. 
\end{thm}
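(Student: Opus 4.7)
The plan is to work one cohomology degree at a time, using the $F^{-1}$-gauge structure to transfer information between the $-\infty$ and $\infty$ ends of $\mathcal{N}:=\mathcal{H}^{n}(\mathcal{M}^{\cdot})$. As a preliminary I would verify that $(-)^{-\infty}$ commutes with $\mathcal{H}^{n}$ (an exact localization at $v=1$) and that $(-)^{\infty}$ commutes with $\mathcal{H}^{n}$ up to derived $p$-adic completion, both following from boundedness and coherence. The theorem then reduces to proving three facts about $\mathcal{N}$: $p$-torsion-freeness of $\mathcal{N}^{-\infty}$ and $\mathcal{N}^{\infty}$; injectivity of each $f_{\infty}:\mathcal{N}^{i}\to\mathcal{N}^{\infty}$; and the existence of a $j_{0}$ with the numerical relation describing $f_{\infty}(\mathcal{N}^{i+j_{0}})$.

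The first hypothesis gives $p$-torsion-freeness of $\mathcal{N}^{-\infty}$ directly. For $\mathcal{N}^{\infty}$, I would apply the $F^{-1}$-gauge isomorphism $F^{*}\mathcal{N}^{-\infty}\tilde{\to}\widehat{\mathcal{N}^{\infty}}$: since Berthelot's Frobenius pullback (\thmref{Berthelot-Frob}) is $p$-flat, the target inherits $p$-torsion-freeness, and coherence of $\mathcal{N}^{\infty}$ over the $p$-adically complete sheaf from \thmref{D01}(1) lets this descend to $\mathcal{N}^{\infty}$ itself.

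To get injectivity of $f_{\infty}$ I would interpret the second hypothesis: via a spectral-sequence argument using hypothesis (1) to kill the relevant Tor-terms, $\mathcal{H}^{n}((\mathcal{M}^{\cdot}\otimes_{W(k)}^{L}k)\otimes_{D(k)}^{L}k[f])$ is identified with $(\mathcal{N}/p)\otimes_{D(k)}k[f]$ as a graded $k[f]$-module, so that $f$-torsion-freeness translates into injectivity of each $f:(\mathcal{N}/p)^{i}\to(\mathcal{N}/p)^{i+1}$. A Nakayama-style d\'evissage on $0\to\mathcal{N}^{i}\xrightarrow{p}\mathcal{N}^{i}\to(\mathcal{N}/p)^{i}\to 0$ combined with the previous paragraph lifts this to integral injectivity of $f:\mathcal{N}^{i}\to\mathcal{N}^{i+1}$, and hence of $f_{\infty}$. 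For the structural identity, coherence of $\mathcal{N}$ over the noetherian sheaf $\widehat{\mathcal{D}}_{\mathfrak{X}}^{(0,1)}$ forces stabilization: there is a $j_{0}$ so that $f_{\infty}:\mathcal{N}^{j}\tilde{\to}\mathcal{N}^{\infty}$ is an isomorphism for $j\gg 0$ and $\mathcal{N}^{j_{0}}$ generates the lower pieces via the $v$-action. The relation $fv=p$, injectivity of $f_{\infty}$, and $p$-torsion-freeness of $\mathcal{N}^{\infty}$ together pin down $f_{\infty}(\mathcal{N}^{i+j_{0}})$ as the set $\{m\in\mathcal{N}^{\infty}:p^{i}m\in f_{\infty}(\mathcal{N}^{j_{0}})\}$.

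The hardest step is the interpretation of the second hypothesis. One must simultaneously manage the two successive derived tensor products $\otimes_{W(k)}^{L}k$ and $\otimes_{D(k)}^{L}k[f]$ through spectral sequences, leverage hypothesis (1) (which controls only the $-\infty$-colimit) to vanish the higher Tor-terms that actually matter for the graded $k[f]$-module structure, and then propagate mod-$p$ injectivity to an integral statement on the coherent gauge in a manner compatible with the $F^{-1}$-structure. This delicate Nakayama-lifting-across-derived-tensor-products is where the interaction between the graded structure, the relation $fv=p$, and the $p$-adic topology has to be controlled most carefully.
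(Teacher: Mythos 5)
Your treatment of the genuinely new content of this theorem relative to \thmref{Mazur!} --- namely, upgrading $p$-torsion-freeness from the $-\infty$ end to the $\infty$ end --- is essentially the paper's argument: the $F^{-1}$-structure identifies the cohomological completion of $\mathcal{M}^{\cdot,\infty}$ with $F^{*}\mathcal{M}^{\cdot,-\infty}$, whose cohomologies are $p$-torsion-free by hypothesis since $F^{*}$ is an exact equivalence, and \propref{Completion-for-noeth} plus coherence lets one descend from the completion to $\mathcal{H}^{n}(\mathcal{M}^{\cdot})^{\infty}$ itself. That part is fine, and your single-gauge argument (injectivity of $f_{\infty}$ from $f$-torsion-freeness of $\mathcal{N}_{0}/v$, then the structural identity from coherence and $fv=p$) is in substance \propref{Baby-Mazur}.

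The gap is in the step you yourself flag as hardest: the claim that, for \emph{each} $n$ separately, $\mathcal{H}^{n}((\mathcal{M}^{\cdot}\otimes_{W(k)}^{L}k)\otimes_{D(k)}^{L}k[f])$ can be identified with $(\mathcal{H}^{n}(\mathcal{M}^{\cdot})/p)/v$ by using hypothesis (1) to kill the relevant Tor-terms. Hypothesis (1), even after the $F^{-1}$-argument, controls only the two colimits $(-)^{\pm\infty}$ of $\mathcal{H}^{n+1}(\mathcal{M}^{\cdot})$; it does not rule out $p$-torsion in individual graded pieces (a coherent graded module $T$ with $T^{\infty}=T^{-\infty}=0$ --- every section killed by powers of both $f$ and $v$ --- need not vanish), so the term $\mathcal{T}or_{1}^{W(k)}(\mathcal{H}^{n+1}(\mathcal{M}^{\cdot}),k)$ in the first universal-coefficients sequence does not obviously die. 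Worse, since $k[f]$ has an infinite periodic resolution over $D(k)$, the second tensor product contributes higher terms $\mathcal{T}or_{i}^{D(k)}(\mathcal{H}^{n+i}(\mathcal{M}^{\cdot}\otimes_{W(k)}^{L}k),k[f])$ for all $i$, which vanish exactly when the higher cohomology sheaves mod $p$ are quasi-rigid --- part of the conclusion, not the hypothesis. The paper resolves this circularity by a descending induction on cohomological degree (proof of \thmref{Mazur!}): at the top degree $b$ the identification holds for free by right-exactness, \propref{Baby-Mazur} then makes $\mathcal{H}^{b}(\mathcal{M}^{\cdot})$ standard, and rigidity of $\mathcal{H}^{b}(\mathcal{M}^{\cdot})/p$ (\lemref{Standard-is-rigid}, \lemref{Basic-Facts-on-Rigid}) shows that both truncation triangles collapse so that $\tau_{\leq b-1}\mathcal{M}^{\cdot}$ inherits all the hypotheses. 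Without this induction your degree-by-degree reduction does not close.
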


Using the formalism of filtered $\mathcal{D}$-modules one verifies
that the condition that $\mathcal{H}^{n}((\mathcal{M}^{\cdot}\otimes_{W(k)}^{L}k)\otimes_{D(k)}^{L}k[f])$
is $f$-torsion-free for all $n$ is a generalization of the degeneration
of the Hodge-to-de Rham spectral sequence. Therefore this theorem,
along with the previous one, provide a robust generalization of Mazur's
theorem, which allows much more general kinds of coefficients. 

The conditions of the theorem are satisfied in several important cases.
Suppose $R$ is a finitely generated $\mathbb{Z}$-algebra, and suppose
that $X_{R}$ is a smooth $R$ scheme, and let $\varphi:X_{R}\to Y_{R}$
be a proper map. Suppose that $(\mathcal{M}_{R},F)$ is a filtered
coherent $\mathcal{D}_{X_{R}}^{(0)}$-module on $X_{R}$. If the associated
complex filtered $\mathcal{D}$-module, $(\mathcal{M}_{\mathbb{C}},F)$
undergirds a mixed Hodge module, then by Saito's theory the Hodge-to-de
Rham spectral sequence for ${\displaystyle \int_{\varphi}(\mathcal{M}_{\mathbb{C}},F)}$
degenerates at $E_{1}$. Thus the same is true over $R$, after possibly
localizing. Further localization ensures that each ${\displaystyle \mathcal{H}^{i}(\int_{\varphi}(\mathcal{M}_{R},F))}$
is flat over $R$. 

Now suppose we have a map $R\to W(k)$. Let $\varphi:\mathfrak{X}\to\mathfrak{Y}$
denote the formal completion of the base change to $W(k)$. Then the
theorem applies if there exist a $p$-torsion-free gauge $\mathcal{N}$
over $\mathfrak{X}$ such that $\mathcal{N}^{-\infty}\tilde{=}\widehat{\mathcal{M}\otimes_{R}W(k)}$
and $F^{*}(\mathcal{M}_{k},F)\tilde{\to}\mathcal{N}^{\infty}/p$.
By a direct construction, this happens for $\mathcal{M}=\mathcal{O}_{X}$
as well as $\mathcal{M}=j_{\star}\mathcal{O}_{U}$ and $\mathcal{M}=j_{!}\mathcal{O}_{U}$
(where $U\subset X$ is an open inclusion whose compliment is a normal
crossings divisor, and $j_{\star}$ and $j_{!}$ denote the pushforwards
in mixed Hodge module theory). Therefore, by the theorem itself, it
happens when $(\mathcal{M}_{\mathbb{C}},F)$ is itself a Hodge module
``of geometric origin'' (c.f. \corref{Mazur-for-Hodge-1}). In this
paper we give some brief applications of this to the case where $\mathcal{M}_{\mathbb{C}}$
is the local cohomology along some subcheme; but we expect that there
are many more. 

Finally, let's mention that Hodge modules of geometric origin control
both the intersection cohomology and singular cohomology of singular
varieties over $\mathbb{C}$. So we can obtain 
\begin{thm}
\label{thm:Mazur-for-IC-Intro}Let $X_{R}$ be a (possibly singular)
quasiprojective variety over $R$. Then, after possibly localizing
$R,$ there is a filtered complex of $R$-modules $I\mathbb{H}^{\cdot}(X_{R})$,
whose base change to $\mathbb{C}$ yields $I\mathbb{H}^{\cdot}(X_{\mathbb{C}})$,
with its Hodge filtration. Now suppose $R\to W(k)$ for some perfect
field $k$. Then for each $i$, there is a standard gauge $\tilde{I\mathbb{H}}^{i}(X)_{W(k)}$
so that 
\[
\tilde{I\mathbb{H}}^{i}(X)_{W(k)}^{-\infty}\tilde{=}I\mathbb{H}^{\cdot}(X_{R})\otimes_{R}W(k)
\]
 and so that 
\[
\tilde{I\mathbb{H}}^{i}(X)_{W(k)}^{\infty}\tilde{=}F^{*}(I\mathbb{H}^{\cdot}(X_{R})\otimes_{R}W(k))
\]
Under this isomorphism, the Hodge filtration on $\tilde{I\mathbb{H}}^{i}(X)_{W(k)}^{\infty}/p$
agrees with the Frobenius pullback of the image of the Hodge filtration
in $I\mathbb{H}^{\cdot}(X_{R})\otimes_{R}k$. 

The analogous statement holds for the ordinary cohomology of a quasiprojective
variety $X_{R}$, with its Hodge filtration; as well as the compactly
supported cohomology. 
\end{thm}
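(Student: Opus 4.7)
The proof proceeds by reducing to \corref{Mazur-for-Hodge-1} (Mazur for Hodge modules of geometric origin), via the observation that intersection cohomology, ordinary cohomology, and compactly supported cohomology of a quasiprojective variety all arise as filtered de Rham pushforwards of mixed Hodge modules of geometric origin on a smooth projective ambient space.

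\emph{Spreading out.} After localizing $R$, choose a resolution of singularities $\pi:\bar{X}_R\to X_R$ which is projective, with $\bar{X}_R$ smooth projective over $R$, together with an open immersion $j:U\hookrightarrow\bar{X}_R$, $U\subset X_R^{\mathrm{sm}}$, whose complement $D_R$ is a simple normal crossings divisor relative to $R$. By Saito's theory, $(H^\cdot(X_\mathbb{C}),F)$ is the filtered de Rham pushforward of $j_\star\mathcal{O}_U$; $(H_c^\cdot(X_\mathbb{C}),F)$ is that of $j_!\mathcal{O}_U$; and $(I\mathbb{H}^\cdot(X_\mathbb{C}),F)$ is a direct summand of $(H^\cdot(\bar{X}_\mathbb{C}),F)$ via the decomposition theorem applied to $\pi$. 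All three structures spread out to filtered complexes of $R$-modules by generic flatness applied to the filtered $\mathcal{D}^{(0)}$-modules and the idempotents involved; this defines the filtered complex $I\mathbb{H}^\cdot(X_R)$ of the theorem (and its analogues for $H^\cdot$, $H_c^\cdot$).

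\emph{Gauge models and Mazur.} Pass to the formal completion $\bar{\mathfrak{X}}$ of $\bar{X}_R\otimes_R W(k)$. By the direct constructions recalled in the paper just before the theorem statement, each of $\mathcal{O}_{\bar{\mathfrak{X}}}$, $j_\star\mathcal{O}_U$, and $j_!\mathcal{O}_U$ underlies an explicit $p$-torsion-free standard gauge $\mathcal{N}$ over $\bar{\mathfrak{X}}$ satisfying $\mathcal{N}^{-\infty}\tilde{=}\widehat{\mathcal{M}\otimes_R W(k)}$ and $F^{\ast}(\mathcal{M}_k,F)\tilde{\to}\mathcal{N}^\infty/p$. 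Applying the pushforward of \thmref{D01} to $\bar{\mathfrak{X}}\to\mathrm{Specf}(W(k))$ produces complexes in $D^b_{\mathrm{coh},F^{-1}}(\mathcal{G}(D(W(k))))$. The two hypotheses of \thmref{F-Mazur} are then satisfied: $p$-torsion-freeness of cohomology holds after a further localization of $R$, while $f$-torsion-freeness of $\mathcal{H}^n((\mathcal{M}^\cdot\otimes_{W(k)}^L k)\otimes_{D(k)}^L k[f])$ is, by the filtered $\mathcal{D}$-module dictionary, equivalent to $E_1$-degeneration of the Hodge-to-de Rham spectral sequence, which is Saito's fundamental theorem for mixed Hodge modules. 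Hence \thmref{F-Mazur} produces a standard gauge structure on each cohomology group; combined with the identification of the $(\pm\infty)$-fibers given in \thmref{D01}, this establishes the result for $H^\cdot$ and $H_c^\cdot$.

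\emph{Intersection cohomology and the main obstacle.} For $I\mathbb{H}^\cdot$, the decomposition theorem over $\mathbb{C}$ supplies an idempotent projector onto the $IC$-summand inside the filtered de Rham cohomology of $\bar{X}_\mathbb{C}$ in the derived category of mixed Hodge structures; after further localization it spreads out to $R$, and thereby descends to $W(k)$. The essential technical point is to lift this idempotent to an endomorphism of the full gauge complex produced in the previous step, compatibly with the $F^{-1}$-gauge structure. Once this is done, its image lies in $D^b_{\mathrm{coh},F^{-1}}$, and its cohomology gauges are direct summands of the standard ones already produced, hence are themselves standard (the defining conditions of a \emph{standard} gauge being visibly closed under direct summands). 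This lifting of the decomposition-theorem projector into the $\widehat{\mathcal{D}}_{\mathfrak{X}}^{(0,1)}$-category, compatibly with Frobenius, is the main obstacle; it is precisely the sort of statement for which the formalism developed in the body of the paper---in particular the tensor/duality/pushforward machinery and the $F^{-1}$-gauge structure on objects of geometric origin---is designed.
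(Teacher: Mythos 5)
Your high-level strategy (reduce everything to \corref{Mazur-for-Hodge-1} via Hodge modules of geometric origin) is the paper's, and your treatment of the $j_{\star}$/$j_{!}$ cases is essentially correct. But there are two genuine gaps. First, for intersection cohomology you correctly isolate the main obstacle --- lifting the decomposition-theorem projector to an idempotent endomorphism of the gauge complex in $D_{coh,F^{-1}}^{b}(\mathcal{G}(\widehat{\mathcal{D}}_{\mathfrak{X}}^{(0,1)}))$ --- but you do not carry it out, and asserting that the formalism ``is designed for'' such statements is not an argument. The paper (\corref{Mazur-for-IC}) avoids the problem entirely: it realizes $\text{IC}_{X}$ not as the image of an idempotent but as the image of the \emph{natural map} $\mathcal{H}^{0}(\int_{\varphi}\mathcal{O}_{\tilde{X}_{\mathbb{C}}})\to\mathcal{H}^{0}(j_{\star}\mathcal{O}_{U_{\mathbb{C}}})$, where $U$ is a smooth affine open over which the resolution $\varphi$ is an isomorphism; the decomposition theorem is used only over $\mathbb{C}$, to see that the complementary summand is supported off $U$ and hence dies in the target. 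Since source and target have canonical coherent $F^{-1}$-gauge models and the map between them is induced by functoriality of the gauge pushforward, one takes the image of the corresponding map of gauges and verifies standardness directly from \propref{Baby-Mazur}: $p$-torsion-freeness of $(\cdot)^{-\infty}$ follows from $R$-flatness of the image (the kernel over $R$ is a filtered direct summand after localization, because it is one over $\mathbb{C}$), and $f$-torsion-freeness of $(\cdot)^{\infty}/p/v$ follows from strictness of the map for the Hodge filtrations. No lifting of idempotents, and no appeal to \thmref{F-Mazur} for the IC summand itself, is needed; this replacement of ``idempotent'' by ``image of a natural map'' is the missing idea in your argument.

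Second, your reduction for ordinary and compactly supported cohomology is wrong when $X$ is singular: the filtered pushforward of $j_{\star}\mathcal{O}_{U}$ (resp.\ $j_{!}\mathcal{O}_{U}$) for $U\subset X^{\mathrm{sm}}$ computes the Hodge cohomology of the smooth open subvariety $U$, not of $X$, and for singular $X$ these differ already as vector spaces. The paper's \corref{Mazur-for-Ordinary} instead embeds the (possibly singular) variety as a closed subscheme $Y$ of a smooth quasiprojective $X$, represents the unit mixed Hodge module $\mathbb{I}_{Y}$ as the cone of $Rj_{!}(\mathcal{O}_{X\backslash Y})\to\mathcal{O}_{X}$, computes compactly supported cohomology by pushing this cone forward (both terms of the cone being of the form covered by \corref{Mazur-for-Hodge-1}), and then obtains ordinary cohomology by applying the filtered duality functor. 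As written, your argument establishes the ordinary-cohomology statement only for smooth $X_{R}$.
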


This is proved in \corref{Mazur-for-IC} and \corref{Mazur-for-Ordinary}below.
As in \cite{key-13} and \cite{key-38}, \cite{key-55} this result
implies that the ``Newton polygon'' lies on or above the ``Hodge
polygon'' for both the ordinary and the intersection cohomology of
quasiprojective varieties, in the circumstances of the above theorem.
We note here that the theorem gives an $F$-semilinear action on the
groups $I\mathbb{H}^{\cdot}(X_{R})\otimes_{R}W(k)[p^{-1}]$, as well
as the ordinary cohomology groups $\mathbb{H}^{\cdot}(X_{R})\otimes_{R}W(k)[p^{-1}]$,
and the compactly supported cohomology as well. This action has already
been constructed as a consequence of the formalism of rigid cohomology
(c.f. \cite{key-80},\cite{key-81}). However, to my knowledge this
``integral'' version of the action has not been considered before. 

\subsection{Plan of the Paper}

The first chapter has two sections. In the first, we quickly review
the theory of gauges over $W(k)$, and in particular give the equivalence
between $F$-guages and $F^{-1}$-guages in this context. In the second,
we give a quick recollection of some generalities on graded modules,
before reviewing and extending (to the case of graded modules) the
very important technical notion of cohomological completeness (also
known as derived completeness). The Nakayama lemma is key here, as
the reduction mod $p$ will be one of our main technical tools for
proving theorems. 

The next chapter introduces $\widehat{\mathcal{D}}_{\mathfrak{X}}^{(0,1)}$,
as well as its analogue $\mathcal{D}_{X}^{(0,1)}$ over a smooth $k$-variety
$X$ (which does not have to lift to a smooth formal scheme), and
performs some basic local calculations. In particular, we prove \corref{Local-coords-over-A=00005Bf,v=00005D},
which provides a local description of $\mathcal{D}_{X}^{(0,1)}$ which
is analogous to the basic descriptions of differential operators ``in
local coordinates'' that one finds in other contexts.

In chapter $4$, we study the categories of graded modules over $\widehat{\mathcal{D}}_{\mathfrak{X}}^{(0,1)}$
and $\mathcal{D}_{X}^{(0,1)}$, importing and generalizing some key
results of \cite{key-5}. We prove the ``abstract'' version of Mazur's
theorem (\thmref{Mazur!}) for a complex of gauges. Then we go on
to introduce the notion of an $F^{-1}$-gauge over $X$ (and $\mathfrak{X}$),
which makes fundamental use of Berthelot's Frobenius descent. We explain
in \thmref{Filtered-Frobenius} how this Frobenius descent interacts
with the natural filtrations coming from the grading on $\mathcal{D}_{X}^{(0,1)}$.
Along the way, we look at the relationship between modules over $\mathcal{D}_{X}^{(0,1)}$
and modules over two important Rees algebras:$\mathcal{R}(\mathcal{D}_{X}^{(0)})$
and $\overline{\mathcal{R}}(\mathcal{D}_{X}^{(0)})$, the Rees algebras
of $\mathcal{D}_{X}^{(0)}$ with respect to the symbol and conjugate
filtrations, respectively. 

Chapters $5$ through $8$ introduce and study the basic $\mathcal{D}$-module
operations in this context: pullback, tensor product, left-right interchange,
pushforward, and duality. Much of this is similar to the story for
algebraic $\mathcal{D}$-modules (as covered in \cite{key-49}, for
instance). For instance, even though $\mathcal{D}_{X}^{(0,1)}$ and
$\widehat{\mathcal{D}}_{\mathfrak{X}}^{(0,1)}$ do not have finite
homological dimension, we show that the pushforward, pullback, and
duality functors do have finite homological dimension. As usual, the
study of the pushforward (chapter $7$ below) is the most involved,
and we spend some time exploring the relationship with the pushforwards
for $\mathcal{R}(\mathcal{D}_{X}^{(0)})$ and $\overline{\mathcal{R}}(\mathcal{D}_{X}^{(0)})$,
respectively; these admit descriptions in terms of the more standard
filtered pushforwards of $\mathcal{D}$-modules. 

Finally, in the last chapter we put everything together and prove
Mazur's theorem for a Hodge module of geometric origin; this uses,
essentially, all of the theory built in the previous sections. In
addition to the applications explained in the introduction, we give
some applications to the theory of the Hodge filtration on the local
cohomology of a subcheme of a smooth complex variety. 

There is one appendix to the paper- in which we prove a technical
result useful for constructing the gauge $j_{\star}(D(\mathcal{O}))$,
the pushforward of the trivial gauge over a normal crossings divisor. 

\subsection{Notations and Conventions}

Let us introduce some basic notations which are used throughout the
paper. For any ring (or sheaf of rings) $\mathcal{R}$, we will denote
by $D(\mathcal{R})$ the graded ring in which $\mathcal{R}$ has degree
$0$, $f$ has degree $1$, $v$ has degree $-1$, and $fv=p$. The
symbol $k$ will always denote a perfect field of positive characteristic,
and $W(k)$ the $p$-typical Witt vectors. Letters $X$, $Y$,$Z$
will denote smooth varieties over $k$, while $\mathfrak{X}$,$\mathfrak{Y}$,$\mathfrak{Z}$
will denote smooth formal schemes over $W(k)$. When working with
formal schemes, we let $\Omega_{\mathfrak{X}}^{1}$ denote the sheaf
of continuous differentials (over $W(k)$), and $\mathcal{T}_{\mathcal{X}}$
denote the continuous $W(k)$-linear derivations; we set $\Omega_{\mathfrak{X}}^{i}=\bigwedge^{i}\Omega_{\mathfrak{X}}^{1}$
and $\mathcal{T}_{\mathfrak{X}}^{i}=\bigwedge^{i}\mathcal{T}_{\mathfrak{X}}$. 

We denote by $X^{(i)}$ the $i$th Frobenius twist of $X$; i.e.,
the scheme $X\times_{\text{Spec}(k)}\text{Spec}(k)$, where $k$ map
to $k$ via $F^{i}$. Since $k$ is perfect, the natural map $\sigma:X^{(i)}\to X$
is an isomorphism. On the other hand, the relative Frobenius $X\to X^{(i)}$
is a bijection on topological spaces, which allows us to identify
$\mathcal{O}_{X^{(i)}}\tilde{=}\mathcal{O}_{X}^{p^{i}}$; we shall
tacitly use this below. 

Now we introduce some conventions on differential operators. If $\mathfrak{X}$
is a smooth formal scheme over $W(k)$, then for each $i\geq0$ we
have Berthelot's ring of differential operators of level $i$, $\widehat{\mathcal{D}}_{\mathfrak{X}}^{(i)}$,
introduced in \cite{key-1} This is a $p$-adically complete, locally
noetherian sheaf of rings on $\mathfrak{X}$. In general, this sheaf
is somewhat complicated to define, but when $\mathfrak{X}=\text{Specf}(\mathcal{A})$
is affine and admits local coordinates\footnote{i.e., $\Gamma(\Omega_{\mathfrak{X}}^{1})$ is a free module over $\mathcal{A}$}
one has the following description of its global sections: let $D_{\mathcal{A}}^{(\infty)}$
denote the subring of $\text{End}_{W(k)}(\mathcal{A})$ consisting
of the the finite order, continuous differential operators on $\mathcal{A}$.
Define $D_{\mathcal{A}}^{(i)}\subset D_{\mathcal{A}}^{(\infty)}$
to be the subring generated by differential operators of level $\leq p^{i}$.
Then we have 
\[
\Gamma(\widehat{\mathcal{D}}_{\mathfrak{X}}^{(i)})=\widehat{D_{\mathcal{A}}^{(i)}}
\]
where $\widehat{?}$ stands for $p$-adic completion. For each $i\geq0$
there is a natural, injective map $\widehat{\mathcal{D}}_{\mathfrak{X}}^{(i)}\to\widehat{\mathcal{D}}_{\mathfrak{X}}^{(i+1)}$;
when $\mathfrak{X}=\text{Specf}(\mathcal{A})$ is as above it is given
by the $p$-adic completion of the tautological inclusion $D_{\mathcal{A}}^{(i)}\subset D_{\mathcal{A}}^{(i+1)}$. 

Similarly, we have the sheaves of algebras $\mathcal{D}_{X}^{(i)}$
when $X$ is smooth over $k$. In the case $i=0$, this is simply
the usual sheaf of pd-differential operators on $X$ (c.f. \cite{key-10}).
This sheaf can be rather rapidly defined (as in \cite{key-3} chapter
1, though there they are called crystalline differential operators)
as the enveloping algebroid of the tangent sheaf $\mathcal{T}_{X}$. 

Finally let us mention that we will be often working with derived
categories of graded modules in this work. In that context, the symbol
$[i]$ denotes a shift in homological degree, while $(i)$ denotes
a shift in the grading degree. 

\subsection{Acknowledgements}

I would like to thank Mircea Musta\c{t}\u{a} and Bhargav Bhatt for
looking at an early version of the manuscript, and for many helpful
discussions thereafter. 

\section{Preliminaries}

\subsection{Gauges over $W(k)$}

In this section we set some basic notation and terminology; all of
which is essentially taken from the paper \cite{key-5}. Let $k$
be a perfect field of characteristic $p>0$; and let $W(k)$ be the
$p$-typical Witt vectors. Let $S$ be a noetherian $W(k)$-algebra.
We recall from \cite{key-5} (also \cite{key-20}) that a gauge over
$S$ is a graded module ${\displaystyle M=\bigoplus_{i=\infty}^{\infty}M^{i}}$
over the graded ring $D(S)$ where, (as always) we suppose $\text{deg}(f)=1$,
$\text{deg}(v)=-1$, and $fv=p$. A morphism of gauges is a morphism
in the category of graded modules. 

If $M$ is a gauge, we denote the resulting multiplication maps by
$f:M^{i}\to M^{i+1}$ and $v:M^{i}\to M^{i-1}$ for all $i$. 

As explained in \cite{key-5}, lemma 1.1.1, such a module is finitely
generated over $R$ iff each $M^{i}$ is finite over $S$ and the
maps $f:M^{r}\to M^{r+1}$ and $v:M^{-r}\to M^{-r-1}$ are isomorphisms
for $r>>0$. It follows that in this case the map $v:M^{r}\to M^{r-1}$
is $p\cdot$ for $r>>0$, and $f:M^{-r}\to M^{-r+1}$ is $p\cdot$
for $r>>0$. In the terminology of \cite{key-5}, such a gauge is
\emph{concentrated in a finite interval}. 
\begin{defn}
\label{def:endpoints} Let $M$ be a gauge. 

1) Set ${\displaystyle M^{\infty}:=M/(f-1)M\tilde{\to}\lim_{r\to\infty}M^{r}}$
and ${\displaystyle M^{-\infty}:=M/(v-1)M\tilde{\to}\lim_{r\to-\infty}M^{r}}$. 

2) For each $i$, denote by $f_{\infty}:M^{i}\to M^{\infty}$ and
$v_{-\infty}:M^{i}\to M^{-\infty}$ the induced maps. 

3) Define $F^{i}(M^{\infty}):=\text{image}(M^{i}\to M^{\infty})$
and $C^{i}(M^{-\infty}):=\text{image}(M^{i}\to M^{-\infty})$. In
particular, $F^{i}$ is an increasing filtration on $M^{\infty}$
and $C^{i}$ is a decreasing filtration on $M^{-\infty}$. Clearly
any morphism of gauges $M\to N$ induces morphisms of filtered modules
$(M^{\infty},F^{\cdot})\to(N^{\infty},F^{\cdot})$ and $(M^{-\infty},C^{\cdot})\to(N^{-\infty},C^{\cdot})$. 
\end{defn}

If $M$ is finitely generated we see that $M^{r}\tilde{=}M^{\infty}$
and $M^{-r}\tilde{=}M^{-\infty}$ for all $r>>0$. 

Many gauges arising in examples posses an additional piece of structure-
a Frobenius semi-linear isomorphism from $M^{\infty}$ to $M^{-\infty}$.
So let us now suppose that $S$ is equipped with an endomorphism $F$
which extends the Frobenius on $W(k)$. 
\begin{defn}
\label{def:F-gauge} (\cite{key-5}, section 1.4) An $F$-gauge is
a gauge $M$ equipped with an isomorphism $\varphi:F^{*}M^{\infty}\tilde{\to}M^{-\infty}$.
A morphism of $F$-gauges is required to respect the isomorphism $\varphi$.
More precisely, given a morphism $G:M\to N$, it induces $G^{\infty}:M^{\infty}\to N^{\infty}$
and $G^{-\infty}:M^{-\infty}\to N^{-\infty}$, and we demand $\varphi\circ F^{*}G^{\infty}=G^{\infty}\circ\varphi$.
This makes the category of $F$-gauges into an additive category,
which is abelian if $F^{*}$ is an exact functor. 
\end{defn}

Now suppose in addition that $F:S\to S$ is an isomorphism. Then: 
\begin{rem}
\label{rem:basic-equiv}There is an equivalence of categories from
$F$-gauges to $F^{-1}$-gauges; namely, send $M$ to the gauge $N$
where $N^{i}=M^{-i}$, $f:N^{i}\to N^{i+1}$ is defined to be $v:M^{-i}\to M^{-i-1}$
, $v:N^{i}\to N^{i-1}$ is defined to be $f:M^{-i}\to M^{-i+1}$.
Then $M^{\infty}=N^{-\infty}$ , $M^{-\infty}=N^{-\infty}$, and the
isomorphism $\varphi:F^{*}M^{\infty}\tilde{\to}M^{-\infty}$ yields
an isomorphism $\psi^{-1}:F^{*}N^{-\infty}\tilde{\to}N^{\infty}$;
which is equivalent to giving an isomorphism $\psi:(F^{-1})^{*}N^{\infty}\tilde{\to}N^{-\infty}$. 
\end{rem}

Finally, we want to quickly review an important construction of gauges.
We suppose here that $S=W(k)$; equipped with its Frobenius automorphism
$F$. We use the same letter $F$ to denote the induced automorphism
of the field $B=W(k)[p^{-1}]$. We will explain how gauges arrive
from lattices of $B$-vector spaces: 
\begin{example}
\label{exa:BasicGaugeConstruction}Let $D$ be a finite dimensional
$B$-vector space, and let $M$ and $N$ be two lattices (i.e., finite
free $W(k)$-modules which span $D$) in $D$. To this situation we
may attach a gauge over $W(k)$ as follows: for all $i\in\mathbb{Z}$
define
\[
M^{i}=\{m\in M|p^{i}m\in N\}
\]
We let $f:M^{i}\to M^{i+1}$ be the inclusion, and $v:M^{i}\to M^{i-1}$
be the multiplication by $p$. For $i>>0$ we have $p^{i}M\subset N$
and so $M^{i}=M$ for all such $i$. For $i<<0$ we have $p^{-i}N\subset M$
and so $M^{i}=p^{-i}N\tilde{=}N$ for such $i$. In particular we
obtain $M^{-\infty}\tilde{=}N$ and $M^{\infty}\tilde{=}M$. This
is evidently a finite-type gauge over $W(k)$. Now suppose that there
is an $F$-semi-linear automorphism $\Phi:D\to D$ so that $M=\Phi(N)$.
Then the previous construction gives an $F^{-1}$ gauge via the isomorphism
$\Phi:N=M^{-\infty}\to M^{\infty}=M$. 
\end{example}

\begin{rem}
\label{rem:=00005BFJ=00005D-standard}In \cite{key-5}, section 2.2,
there is associated an $F$-gauge to a finite dimensional $B$ vector
space $D$, equipped with lattice $M\subset D$ and a semi-linear
automorphism $\Phi:D\to D$. We recall that their construction is 
\end{rem}

\[
M^{i}=\{m\in M|\Phi(m)\in p^{i}M\}=\{m\in M|m\in p^{i}\Phi^{-1}(M)\}
\]
for all $i\in\mathbb{Z}$. In this instance $f:M^{i}\to M^{i+1}$
is the multiplication by $p$, and $v:M^{i}\to M^{i-1}$ is the inclusion.
If we set $N=\Phi^{-1}(M)$ then this is exactly the $F$-gauge which
corresponds to the $F^{-1}$ gauge constructed in \exaref{BasicGaugeConstruction}
above, via the equivalence of categories of \remref{basic-equiv}. 

In \cite{key-5} this construction is referred to as the standard
construction of gauges. We will generalize this below in \subsecref{Standard}. 

\subsection{Cohomological Completion of Graded Modules}

In this section we give some generalities on sheaves of graded modules.
Throughout this section, we let $X$ be a noetherian topological space
and $\tilde{\mathcal{R}}=\bigoplus_{i\in\mathbb{Z}}\tilde{\mathcal{R}}^{i}$
a $\mathbb{Z}$-graded sheaf of rings on $X$. The noetherian hypothesis
ensures that, for each open subset $U\subset X$, the functor $\mathcal{F}\to\mathcal{F}(U)$
respects direct sums; although perhaps not strictly necessary, it
simplifies the discussion of graded sheaves (and it always applies
in this paper). Denote $\tilde{\mathcal{R}}^{0}=\mathcal{R}$, a sheaf
of rings on $X$. 

Let $\mathcal{G}(\tilde{\mathcal{R}})$ denote the category of graded
sheaves of modules over $\tilde{\mathcal{R}}$. This is a Grothendieck
abelian category; the direct sum is given by the usual direct sum
of sheaves. To construct the product of sheaves $\{\mathcal{M}_{i}\}_{i\in I}$,
one takes the sheafification of the pre-sheaf of local sections of
the form $(m_{i})_{i\in I}$ for which there is a bound on the degree;
i.e. $-N\leq\text{deg}(m_{i})\le N$ for a fixed $N\in\mathbb{N}$
and all $i\in I$. Since $X$ is a noetherian space, this pre-sheaf
is actually already a sheaf. 

It follows formally that $\mathcal{G}(\tilde{\mathcal{R}})$ has enough
injectives; this can also be proved in the traditional way by constructing
enough injective in the category of modules over a graded ring and
then noting that the sheaf ${\displaystyle \prod_{x\in X}\mathcal{I}_{x}}$
is injective if $\mathcal{I}_{x}$ is an injective object in the category
of graded $\tilde{\mathcal{R}}_{x}$-modules. We note that an injective
in $\mathcal{G}(\tilde{\mathcal{R}})$ might not be an injective $\mathcal{\tilde{R}}$-module.
However, from the previous remark it follows that any injective in
$\mathcal{G}(\tilde{\mathcal{R}})$ is a summand of a sheaf of the
form $\prod_{x\in X}\mathcal{I}_{x}$; as such sheaves are clearly
flasque it follows that any injective in $\mathcal{G}(\tilde{\mathcal{R}})$
is flasque.

For each $i\in\mathbb{Z}$ we have the exact functor $\mathcal{M}\to\mathcal{M}^{i}$
which takes $\mathcal{G}(\tilde{\mathcal{R}})\to\mathcal{R}-\text{mod}$;
the direct sum of all of these functors is isomorphic to the identity
(on the underlying sheaves of $\mathcal{R}$-modules). Note that the
functor $\mathcal{M}\to\mathcal{M}^{0}$ admits the left adjoint $\mathcal{N}\to\tilde{\mathcal{R}}\otimes_{\mathcal{R}}\mathcal{N}$. 

Let $D(\mathcal{G}(\tilde{\mathcal{R}}))$ denote the (unbounded)
derived category of $\mathcal{G}(\tilde{\mathcal{R}})$. Then the
exact functor $\mathcal{M}\to\mathcal{M}^{i}$ derives to a functor
$\mathcal{M}^{\cdot}\to\mathcal{M}^{\cdot,i}$, and we have $\mathcal{M}^{\cdot}={\displaystyle \bigoplus_{i}\mathcal{M}^{\cdot,i}}$
for any complex in $D(\mathcal{G}(\tilde{\mathcal{R}}))$. 
\begin{lem}
Let $\varphi:X\to Y$ be a continuous map, and let $\tilde{\mathcal{R}}_{X}$
and $\tilde{\mathcal{R}}_{Y}$ be graded sheaves of algebras on $X$
and $Y$, respectively. Suppose there is a morphism of graded rings
$\varphi^{-1}(\tilde{\mathcal{R}}_{Y})\to\tilde{\mathcal{R}}_{X}$.
Then we can form the derived functor $R\varphi_{*}:D(\mathcal{G}(\tilde{\mathcal{R}}_{X}))\to D(\mathcal{G}(\tilde{\mathcal{R}}_{Y}))$,
as well as $R\varphi_{*}:D(\tilde{\mathcal{R}}_{X}-\text{mod})\to D(\tilde{\mathcal{R}}_{Y}-\text{mod})$. 

1) Let $\mathcal{F}_{X}$ denote the forgetful functor from $\mathcal{G}(\tilde{\mathcal{R}}_{X})$
to $\tilde{\mathcal{R}}_{X}-\text{mod}$ (and similarly for $\mathcal{F}_{Y}$).
Then for any $\mathcal{M}^{\cdot}\in D^{+}(\mathcal{G}(\tilde{\mathcal{R}}_{X}))$,
we have $\mathcal{F}_{Y}R\varphi_{*}(\mathcal{M}^{\cdot})\tilde{\to}R\varphi_{*}(\mathcal{F}_{X}\mathcal{M}^{\cdot})$;
where on the right hand side $R\varphi_{*}$ denotes the pushforward
$D^{+}(\tilde{\mathcal{R}}_{X}-\text{mod})\to D^{+}(\tilde{\mathcal{R}}_{Y}-\text{mod})$.
If $X$ and $Y$ have finite dimension, then this isomorphism holds
for all $\mathcal{M}^{\cdot}\in D(\mathcal{G}(\tilde{\mathcal{R}}_{X}))$. 

2) Again assuming $X$ and $Y$ have finite dimension; for each $i\in\mathbb{Z}$
we have $R\varphi_{*}(\mathcal{M}^{\cdot,i})\tilde{=}R\varphi_{*}(\mathcal{M}^{\cdot})^{i}$
in $D(\mathcal{R}_{Y}-\text{mod})$. 

3) For every $\mathcal{M}^{\cdot}\in D(\mathcal{G}(\tilde{\mathcal{R}}_{X}))$
and $\mathcal{N}^{\cdot}\in D(\mathcal{G}(\tilde{\mathcal{R}}_{Y}))$
we have 
\[
R\varphi_{*}R\underline{\mathcal{H}om}_{\varphi^{-1}(\tilde{\mathcal{R}}_{Y})}(\varphi^{-1}\mathcal{N}^{\cdot},\mathcal{M}^{\cdot})\tilde{\to}R\underline{\mathcal{H}om}_{\tilde{\mathcal{R}}_{Y}}(\mathcal{N}^{\cdot},R\varphi_{*}\mathcal{M}^{\cdot})
\]
\end{lem}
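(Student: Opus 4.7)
The plan is to leverage the observation, made just before the lemma statement, that every injective object in $\mathcal{G}(\tilde{\mathcal{R}}_X)$ is flasque as a sheaf of abelian groups. Since $\mathcal{G}(\tilde{\mathcal{R}}_X)$ is a Grothendieck abelian category, K-injective resolutions exist; and because the forgetful functor $\mathcal{F}_X$ is the identity on underlying sheaves, pushforwards in the two derived categories can be computed by a common resolution. This single device handles all three parts.

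For Part $1)$, in the bounded-below case I would pick a termwise injective resolution $\mathcal{I}^{\cdot} \to \mathcal{M}^{\cdot}$ in $\mathcal{G}(\tilde{\mathcal{R}}_X)$. Then $R\varphi_{*}\mathcal{M}^{\cdot} \simeq \varphi_{*}\mathcal{I}^{\cdot}$, and since each $\mathcal{I}^{n}$ is flasque we also have $R\varphi_{*}(\mathcal{F}_X\mathcal{M}^{\cdot}) \simeq \varphi_{*}(\mathcal{F}_X\mathcal{I}^{\cdot})$; the two computations agree after applying $\mathcal{F}_Y$ tautologically. For an unbounded complex one passes instead to a K-injective resolution. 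Here the finite-dimensionality of $X$ and $Y$ is what is used: it bounds the cohomological dimension of $\varphi_{*}$ on sheaves of abelian groups, allowing a Spaltenstein-style argument in which the pushforward of $\mathcal{M}^{\cdot}$ is controlled by the pushforwards of its truncations $\tau_{\geq -n}\mathcal{M}^{\cdot}$, reducing to the bounded-below case above.

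For Part $2)$, with the same K-injective resolution $\mathcal{I}^{\cdot}$, the grading decomposition $\mathcal{I}^{\cdot} = \bigoplus_{i} \mathcal{I}^{\cdot, i}$ is preserved by the underived $\varphi_{*}$, since on a noetherian space $\varphi_{*}$ commutes with direct sums of sheaves of abelian groups (as recalled in the paragraphs immediately preceding the lemma). Each $\mathcal{I}^{\cdot, i}$ inherits flasqueness as a direct summand of a flasque complex, so it represents $R\varphi_{*}\mathcal{M}^{\cdot, i}$ in $D(\mathcal{R}_Y\text{-mod})$, and one reads off $R\varphi_{*}(\mathcal{M}^{\cdot, i}) \simeq \varphi_{*}(\mathcal{I}^{\cdot, i}) = (\varphi_{*}\mathcal{I}^{\cdot})^{i} \simeq R\varphi_{*}(\mathcal{M}^{\cdot})^{i}$.

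For Part $3)$, I would again use a K-injective resolution $\mathcal{I}^{\cdot}$ of $\mathcal{M}^{\cdot}$ in $\mathcal{G}(\tilde{\mathcal{R}}_X)$. Restriction of scalars along the graded ring map $\varphi^{-1}\tilde{\mathcal{R}}_Y \to \tilde{\mathcal{R}}_X$ admits the exact left adjoint $\tilde{\mathcal{R}}_X \otimes_{\varphi^{-1}\tilde{\mathcal{R}}_Y} (-)$, so it preserves K-injectivity; hence $\mathcal{I}^{\cdot}$ also computes the derived internal Hom out of $\varphi^{-1}\mathcal{N}^{\cdot}$. Since $\varphi^{-1}$ is exact, $\varphi_{*}\mathcal{I}^{\cdot}$ is in turn K-injective in $\mathcal{G}(\tilde{\mathcal{R}}_Y)$, so it represents the $R\underline{\mathcal{H}om}$ on the right-hand side. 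The underived sheaf-Hom adjunction $\varphi_{*}\underline{\mathcal{H}om}(\varphi^{-1}(-), -) \simeq \underline{\mathcal{H}om}(-, \varphi_{*}(-))$, combined with the flasqueness of the internal Hom (which makes $R\varphi_{*}$ coincide with $\varphi_{*}$ on it), delivers the isomorphism termwise. The main obstacle throughout is the unbounded-case bookkeeping in Parts $1)$ and $2)$: one must verify that K-injective resolutions constructed inside $\mathcal{G}(\tilde{\mathcal{R}}_X)$ remain adequate for derived functors computed in $D(\tilde{\mathcal{R}}_X\text{-mod})$, which is precisely where the flasqueness observation and the finite-dimensionality hypothesis combine to make the argument go through.
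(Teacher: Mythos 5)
Your overall strategy---injectives in $\mathcal{G}(\tilde{\mathcal{R}}_X)$ are flasque, so one resolution computes the pushforward in both the graded and ungraded categories, with finite-dimensionality supplying the finite cohomological dimension of $\varphi_*$ needed to pass from $D^{+}$ to unbounded complexes---is exactly the route the paper takes; the paper compresses the unbounded bookkeeping into an appeal to the composition-of-derived-functors theorem of Kashiwara--Schapira, where you spell out the truncation argument, and both are fine. Parts $1)$ and $2)$ are correct as written.

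In Part $3)$ one step does not hold as stated: you claim that restriction of scalars along $\varphi^{-1}(\tilde{\mathcal{R}}_Y)\to\tilde{\mathcal{R}}_X$ preserves K-injectives because its left adjoint $\tilde{\mathcal{R}}_X\otimes_{\varphi^{-1}(\tilde{\mathcal{R}}_Y)}(-)$ is exact. That left adjoint is only right exact; its exactness would require $\tilde{\mathcal{R}}_X$ to be flat over $\varphi^{-1}(\tilde{\mathcal{R}}_Y)$, which is not among the hypotheses of the lemma. Consequently a K-injective resolution of $\mathcal{M}^{\cdot}$ taken in $\mathcal{G}(\tilde{\mathcal{R}}_X)$ need not compute $R\underline{\mathcal{H}om}_{\varphi^{-1}(\tilde{\mathcal{R}}_Y)}(\varphi^{-1}\mathcal{N}^{\cdot},-)$. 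The repair is easy and standard: the inner Hom is taken over $\varphi^{-1}(\tilde{\mathcal{R}}_Y)$ and its output carries no $\tilde{\mathcal{R}}_X$-structure, so you may simply resolve $\mathcal{M}^{\cdot}$ by a (termwise injective) K-injective complex $\mathcal{I}^{\cdot}$ of graded $\varphi^{-1}(\tilde{\mathcal{R}}_Y)$-modules. Then $\underline{\mathcal{H}om}_{\varphi^{-1}(\tilde{\mathcal{R}}_Y)}(\varphi^{-1}\mathcal{N}^{\cdot},\mathcal{I}^{\cdot})$ is a complex of flasque sheaves (Hom into an injective is flasque, and on a noetherian space a direct sum of flasque sheaves is flasque, so the graded $\underline{\mathcal{H}om}$ is too); $\varphi_*\mathcal{I}^{\cdot}$ is K-injective over $\tilde{\mathcal{R}}_Y$ because $\varphi^{-1}$ is exact; and the underived adjunction $\varphi_*\underline{\mathcal{H}om}_{\varphi^{-1}(\tilde{\mathcal{R}}_Y)}(\varphi^{-1}\mathcal{N}^{\cdot},\mathcal{I}^{\cdot})\tilde{=}\underline{\mathcal{H}om}_{\tilde{\mathcal{R}}_Y}(\mathcal{N}^{\cdot},\varphi_*\mathcal{I}^{\cdot})$ then finishes the argument exactly as you intended.
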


\begin{proof}
1) The statement about $D^{+}(\mathcal{G}(\tilde{\mathcal{R}}_{X}))$
follows immediately from the fact that injectives are flasque. For
the unbounded derived category, the assumption implies $\varphi_{*}$
has finite homological dimension; and by what we have just proved
the forgetful functor takes acyclic objects to acyclic objects. Therefore
we can apply the composition of derived functors (as in \cite{key-9},
corollary 14.3.5), which implies that, since $\varphi_{*}$, $\mathcal{F}_{X}$,
and $\mathcal{F}_{Y}$ have finite homological dimension in this case,
there is an isomorphism $R\varphi_{*}\circ\mathcal{F}_{X}\tilde{=}R(\varphi_{*}\circ\mathcal{F}_{X})\tilde{\to}R(\mathcal{F}_{Y}\circ\varphi_{*})\tilde{=}\mathcal{F}_{Y}\circ R\varphi_{*}$. 

2) As above this follows from \cite{key-9}, corollary 14.3.5, using
$\varphi_{*}\circ\mathcal{M}^{i}\tilde{=}(\varphi_{*}\mathcal{M})^{i}$. 

3) This is essentially identical to the analogous fact in the ungraded
case.
\end{proof}
Now we briefly discuss the internal Hom and tensor on these categories.
If $\mathcal{M}$ and $\mathcal{N}$ are objects of $\mathcal{G}(\tilde{\mathcal{R}})$,
we have the sheaf of $\mathbb{Z}$-modules $\mathcal{H}om_{\mathcal{G}(\tilde{\mathcal{R}})}(\mathcal{M},\mathcal{N})$
as well as the sheaf of graded $\mathbb{Z}$-modules ${\displaystyle \underline{\mathcal{H}om}(\mathcal{M},\mathcal{N})=\bigoplus_{i\in\mathbb{Z}}\mathcal{H}om_{\mathcal{G}(\tilde{\mathcal{R}})}(\mathcal{M},\mathcal{N}(i))}$;
if $\mathcal{M}$ is locally finitely presented this agrees with $\mathcal{H}om$
on the underlying $\tilde{\mathcal{R}}$-modules. Also, if $\mathcal{M}\in\mathcal{G}(\tilde{\mathcal{R}})$
and $\mathcal{N}\in\mathcal{G}(\tilde{\mathcal{R}}^{opp})$, we have
the tensor product $\mathcal{N}\otimes_{\tilde{\mathcal{R}}}\mathcal{M}$
which is graded in the natural way. Suppose now that $\tilde{\mathcal{S}}$
is another sheaf of graded algebras on $X$, 
\begin{lem}
\label{lem:basic-hom-tensor}1) Let $\mathcal{N}$ be a graded $(\mathcal{\tilde{\mathcal{R}}},\mathcal{\tilde{\mathcal{S}}})$
bimodule, $\mathcal{M}\in\mathcal{G}(\tilde{\mathcal{S}})$, and $\mathcal{P}\in\mathcal{G}(\tilde{\mathcal{R}})$.
Then there is an isomorphism
\[
\underline{\mathcal{H}om}_{\mathcal{\tilde{R}}}(\mathcal{N}\otimes_{\mathcal{\tilde{S}}}\mathcal{M},\mathcal{P})\tilde{\to}\underline{\mathcal{H}om}_{\tilde{\mathcal{S}}}(\mathcal{M},\underline{\mathcal{H}om}_{\tilde{\mathcal{R}}}(\mathcal{N},\mathcal{P}))
\]
Now, if we consider $\mathcal{M}^{\cdot}\in D(\mathcal{G}(\tilde{\mathcal{S}}))$
and $\mathcal{P}^{\cdot}\in D(\mathcal{G}(\tilde{\mathcal{R}}))$,
we have a map
\[
R\underline{\mathcal{H}om}_{\mathcal{\tilde{R}}}(\mathcal{N}\otimes_{\mathcal{\tilde{S}}}^{L}\mathcal{M}^{\cdot},\mathcal{P}^{\cdot})\to R\underline{\mathcal{H}om}_{\tilde{\mathcal{S}}}(\mathcal{M}^{\cdot},R\underline{\mathcal{H}om}_{\tilde{\mathcal{R}}}(\mathcal{N},\mathcal{P}^{\cdot}))
\]
and if, further, $\mathcal{N}$ is flat over $\tilde{\mathcal{S}}^{opp}$,
then this map is an isomorphism. 

2) Now suppose $\tilde{\mathcal{S}}\subset\tilde{\mathcal{R}}$ is
a central inclusion of graded rings (in particular $\tilde{\mathcal{S}}$
is commutative). Then for any $\mathcal{M}\in\mathcal{G}(\tilde{\mathcal{R}})$,
$\mathcal{N}\in\mathcal{G}(\tilde{\mathcal{R}}^{opp})$, and $\mathcal{P}\in\mathcal{G}(\tilde{\mathcal{R}})$
there are isomorphisms 
\[
\underline{\mathcal{H}om}_{\mathcal{\tilde{S}}}(\mathcal{N}\otimes_{\mathcal{\tilde{R}}}\mathcal{M},\mathcal{P})\tilde{\to}\underline{\mathcal{H}om}_{\tilde{\mathcal{R}}}(\mathcal{M},\underline{\mathcal{H}om}_{\tilde{\mathcal{S}}}(\mathcal{N},\mathcal{P}))
\]
the analogous result holds at the level of complexes: if $\mathcal{M}^{\cdot}\in D(\mathcal{G}(\tilde{\mathcal{R}}))$,
$\mathcal{N}^{\cdot}\in D(\mathcal{G}(\tilde{\mathcal{R}}^{opp}))$,
and $\mathcal{P}^{\cdot}\in D(\mathcal{G}(\tilde{\mathcal{R}}))$
there are isomorphisms 
\[
R\underline{\mathcal{H}om}_{\mathcal{\tilde{S}}}(\mathcal{N}^{\cdot}\otimes_{\mathcal{\tilde{R}}}^{L}\mathcal{M}^{\cdot},\mathcal{P}^{\cdot})\tilde{\to}R\underline{\mathcal{H}om}_{\tilde{\mathcal{R}}}(\mathcal{M}^{\cdot},R\underline{\mathcal{H}om}_{\tilde{\mathcal{S}}}(\mathcal{N}^{\cdot},\mathcal{P}^{\cdot}))
\]
\end{lem}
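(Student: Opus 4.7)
This lemma is the graded sheaf-theoretic incarnation of the standard tensor--Hom adjunction, so my plan is to handle the underived assertions by direct verification and then bootstrap to the derived ones via suitable resolutions.

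For the underived statements of (1) and (2), I would argue sectionwise: for each open $U \subset X$ and each degree $i$, the classical tensor--Hom adjunction furnishes a natural bijection between graded $\tilde{\mathcal{R}}(U)$-linear maps $(\mathcal{N} \otimes_{\tilde{\mathcal{S}}} \mathcal{M})(U) \to \mathcal{P}(U)$ of degree $i$ and graded $\tilde{\mathcal{S}}(U)$-linear maps $\mathcal{M}(U) \to \underline{\mathcal{H}om}_{\tilde{\mathcal{R}}}(\mathcal{N}, \mathcal{P})(U)$ of degree $i$, implemented by the usual currying $\phi \mapsto (m \mapsto \phi(- \otimes m))$ and its inverse. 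Naturality in $U$ sheafifies this bijection, and summing over $i$ gives the claim. In (2), the centrality of $\tilde{\mathcal{S}} \subset \tilde{\mathcal{R}}$ is used only to transport the left $\tilde{\mathcal{R}}$-action from $\mathcal{P}$ to $\underline{\mathcal{H}om}_{\tilde{\mathcal{S}}}(\mathcal{N}, \mathcal{P})$ and to check that currying is compatible with it.

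For the derived version of (1), choose a K-injective resolution $\mathcal{I}^{\cdot} \to \mathcal{P}^{\cdot}$ in $\mathcal{G}(\tilde{\mathcal{R}})$, which exists since the latter is a Grothendieck abelian category. Flatness of $\mathcal{N}$ over $\tilde{\mathcal{S}}^{opp}$ makes $\mathcal{N} \otimes_{\tilde{\mathcal{S}}} -$ exact, so this functor both represents $\mathcal{N} \otimes^L_{\tilde{\mathcal{S}}} -$ and forces its right adjoint $\underline{\mathcal{H}om}_{\tilde{\mathcal{R}}}(\mathcal{N}, -)$ to preserve K-injectivity. Hence the left-hand side is computed by $\underline{\mathcal{H}om}_{\tilde{\mathcal{R}}}(\mathcal{N} \otimes_{\tilde{\mathcal{S}}} \mathcal{M}^{\cdot}, \mathcal{I}^{\cdot})$; the inner $R\underline{\mathcal{H}om}_{\tilde{\mathcal{R}}}(\mathcal{N}, \mathcal{P}^{\cdot})$ is represented by $\underline{\mathcal{H}om}_{\tilde{\mathcal{R}}}(\mathcal{N}, \mathcal{I}^{\cdot})$, which is K-injective over $\tilde{\mathcal{S}}$; and the outer $R\underline{\mathcal{H}om}_{\tilde{\mathcal{S}}}$ is therefore computed directly. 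Applying the underived adjunction termwise identifies the two sides.

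For the derived version of (2), the strategy is the same but with a K-flat resolution $\mathcal{K}^{\cdot} \to \mathcal{M}^{\cdot}$ in $\mathcal{G}(\tilde{\mathcal{R}})$ used to compute $\mathcal{N}^{\cdot} \otimes^L_{\tilde{\mathcal{R}}} \mathcal{M}^{\cdot}$, together with a K-injective resolution $\mathcal{I}^{\cdot}$ of $\mathcal{P}^{\cdot}$ in $\mathcal{G}(\tilde{\mathcal{R}})$. I expect the main technical obstacle to be confirming that $\underline{\mathcal{H}om}_{\tilde{\mathcal{S}}}(\mathcal{N}^{\cdot}, \mathcal{I}^{\cdot})$ simultaneously represents $R\underline{\mathcal{H}om}_{\tilde{\mathcal{S}}}(\mathcal{N}^{\cdot}, \mathcal{P}^{\cdot})$ as an object of $D(\mathcal{G}(\tilde{\mathcal{R}}))$ and is acyclic enough against $\mathcal{K}^{\cdot}$ to compute the outer $R\underline{\mathcal{H}om}_{\tilde{\mathcal{R}}}$; this is standard Spaltenstein-style resolution bookkeeping, and once it is handled the comparison isomorphism follows from the underived adjunction applied termwise.
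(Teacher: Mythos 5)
Your proof is correct and follows essentially the route the paper intends — the paper offers no argument of its own, deferring to the ungraded case in Kashiwara--Schapira (theorem 14.4.8), and your currying-plus-resolution bookkeeping is exactly that standard argument transported to graded sheaves. One small correction for part (2): the left $\tilde{\mathcal{R}}$-module structure on $\underline{\mathcal{H}om}_{\tilde{\mathcal{S}}}(\mathcal{N},\mathcal{P})$ for which the adjunction holds is the one induced by the \emph{right} $\tilde{\mathcal{R}}$-action on $\mathcal{N}$, namely $(r\cdot\phi)(n)=\phi(nr)$, rather than being ``transported from $\mathcal{P}$''; your currying formula produces this action automatically, so nothing else changes.
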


This is proved in a nearly identical way to the ungraded case (c.f.
\cite{key-9}, theorem 14.4.8).

Throughout this work, we will make extensive use of various sheaves
of rings over $W(k)$ and derived categories of sheaves of modules
over them. One of our main techniques will be to work with complexes
of sheaves which are\emph{ }complete in a suitable sense, and then
to apply Nakayama's lemma to deduce properties of those complexes
from their $\text{mod}$ $p$ analogues. The technical set-up for
this is the theory of cohomologically complete complexes (also called
\emph{derived complete complexes }in many places) which has been treated
in the literature in many places, e.g., \cite{key-41}, \cite{key-42},
\cite{key-43}, Tag 091N, and \cite{key-82}, section 3.4. We will
use the reference \cite{key-8}, chapter 1.5, which deals with non-commutative
sheaves of algebras in a very general setting (namely, they work with
sheaves of rings over $\mathbb{Z}[h]$, which are $h$-torsion-free). 

However, we actually have to extend the theory slightly to get exactly
what we need, because our interest is in complexes of \emph{graded}
modules, and the useful notion of completeness in this setting is
to demand, essentially, that each graded piece of a module (or complex)
is complete. We will set this up in a way that we can derive the results
in a similar way to \cite{key-8} (or even derive them from \cite{key-8}
sometimes).

From now on, we impose the assumption that $\tilde{\mathcal{R}}$
is a $W(k)$-algebra (where $W(k)$ sits in degree $0$) which is
$p$-torsion-free. Note that we have the sheaf of algebras $\tilde{\mathcal{R}}[p^{-1}]$,
which we regard as an object of $\mathcal{G}(\tilde{\mathcal{R}})$
via $\tilde{\mathcal{R}}[p^{-1}]=\bigoplus_{i\in\mathbb{Z}}\tilde{\mathcal{R}}^{i}[p^{-1}]$.
There is the category $\mathcal{G}(\tilde{\mathcal{R}}[p^{-1}])$
of graded sheaves of modules over $\tilde{\mathcal{R}}[p^{-1}]$,
and there is the functor $D(\mathcal{G}(\tilde{\mathcal{R}}[p^{-1}]))\to D(\mathcal{G}(\tilde{\mathcal{R}}))$;
which is easily seen to be fully faithful, with essential image consisting
of those complexes in $D(\mathcal{G}(\tilde{\mathcal{R}}))$ for which
$p$ acts invertibly on each cohomology sheaf (compare \cite{key-8},
lemma 1.5.2); we shall therefore simply regard $D(\mathcal{G}(\tilde{\mathcal{R}}[p^{-1}]))$
as being a full subcategory of $D(\mathcal{G}(\tilde{\mathcal{R}}))$.
Then, following \cite{key-8}, definition 1.5.5, we make the
\begin{defn}
\label{def:CC}1) An object $\mathcal{M}^{\cdot}\in D(\mathcal{R}-\text{mod})$
is said to be cohomologically complete if $R\mathcal{H}om_{\mathcal{R}}(\mathcal{R}[p^{-1}],\mathcal{M}^{\cdot})=R\mathcal{H}om_{W(k)}(W(k)[p^{-1}],\mathcal{M}^{\cdot})=0$. 

2) An object $\mathcal{M}^{\cdot}\in D(\mathcal{G}(\mathcal{\tilde{R}}))$
is said to be cohomologically complete if \linebreak{}
$R\underline{\mathcal{H}om}(\tilde{\mathcal{R}}[p^{-1}],\mathcal{M}^{\cdot})=0$. 
\end{defn}

We shall see below that two notions are not quite consistent with
one another, however, we shall only use definition $2)$ when working
with graded objects, so this will hopefully cause no confusion. 

Following \cite{key-8}, proposition 1.5.6), we have:
\begin{prop}
\label{prop:Basic-CC-facts}1) The cohomologically complete objects
in $D(\mathcal{G}(\tilde{\mathcal{R}}))$ form a thick triangulated
subcategory, denoted $D_{cc}(\mathcal{G}(\tilde{\mathcal{R}}))$.
An object $\mathcal{M}^{\cdot}\in D(\mathcal{G}(\mathcal{\tilde{R}}))$
is in $D_{cc}(\mathcal{G}(\tilde{\mathcal{R}}))$ iff $R\underline{\mathcal{H}om}(\mathcal{N}^{\cdot},\mathcal{M}^{\cdot})=0$
for all $\mathcal{N}^{\cdot}\in D(\mathcal{G}(\tilde{\mathcal{R}}[p^{-1}]))$. 

2) If $\tilde{\mathcal{S}}$ is any graded sheaf of $p$-torsion-free
$W(k)$-algebras equipped with a graded algebra map $\tilde{\mathcal{S}}\to\tilde{\mathcal{R}}$,
and $\mathcal{M}^{\cdot}\in D(\mathcal{G}(\tilde{\mathcal{R}}))$,
then $\mathcal{M}^{\cdot}\in D_{cc}(\mathcal{G}(\tilde{\mathcal{R}}))$
iff $\mathcal{M}^{\cdot}\in D_{cc}(\mathcal{G}(\tilde{\mathcal{S}}))$

3) For every $\mathcal{M}^{\cdot}\in D(\mathcal{G}(\tilde{\mathcal{R}}))$
there is a distinguished triangle
\[
R\underline{\mathcal{H}om}(\tilde{\mathcal{R}}[p^{-1}],\mathcal{M}^{\cdot})\to\mathcal{M}^{\cdot}\to R\underline{\mathcal{H}om}(\tilde{\mathcal{R}}[p^{-1}]/\tilde{\mathcal{R}}[-1],\mathcal{M}^{\cdot})
\]
and we have $R\underline{\mathcal{H}om}(\tilde{\mathcal{R}}[p^{-1}]/\tilde{\mathcal{R}}[-1],\mathcal{M}^{\cdot})\in D_{cc}(\mathcal{G}(\tilde{\mathcal{R}}))$
while $R\underline{\mathcal{H}om}(\tilde{\mathcal{R}}[p^{-1}],\mathcal{M}^{\cdot})\in D(\mathcal{G}(\tilde{\mathcal{R}}[p^{-1}]))$.
In particular, the category $D_{cc}(\mathcal{G}(\tilde{\mathcal{R}}))$
is naturally equivalent to the quotient of $D(\mathcal{G}(\tilde{\mathcal{R}}))$
by $D(\mathcal{G}(\tilde{\mathcal{R}}[p^{-1}]))$. 

4) Recall that for each object $\mathcal{M}^{\cdot}\in D(\mathcal{G}(\tilde{\mathcal{R}}))$
we have, for $i\in\mathbb{Z}$, the $i$th graded piece $\mathcal{M}^{\cdot,i}\in D(\mathcal{R}-\text{mod})$.
Then $\mathcal{M}^{\cdot}\in D(\mathcal{G}(\tilde{\mathcal{R}}))$
is in $D_{cc}(\mathcal{G}(\mathcal{R}))$ iff each $\mathcal{M}^{\cdot,i}\in D_{cc}(\mathcal{R}-\text{mod})$. 
\end{prop}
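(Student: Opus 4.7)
My plan is to reduce each of the four parts to the corresponding ungraded statement proved in \cite{key-8}, Proposition~1.5.6. The computational core is the identification $\tilde{\mathcal{R}}[p^{-1}]\tilde{=}\tilde{\mathcal{R}}\otimes_{W(k)}W(k)[p^{-1}]$ as graded $\tilde{\mathcal{R}}$-modules (with $W(k)[p^{-1}]$ concentrated in degree $0$), combined with the tensor--Hom adjunction of Lemma~\ref{lem:basic-hom-tensor}, which yields
\[
R\underline{\mathcal{H}om}_{\tilde{\mathcal{R}}}(\tilde{\mathcal{R}}[p^{-1}], \mathcal{M}^{\cdot})\tilde{=}\bigoplus_{i\in\mathbb{Z}}R\mathcal{H}om_{W(k)}(W(k)[p^{-1}], \mathcal{M}^{\cdot, i}).
\]
Part (4) is an immediate consequence: the left-hand side vanishes if and only if each $\mathcal{M}^{\cdot,i}$ is cohomologically complete in $D(W(k)-\text{mod})$, which in turn is equivalent to cohomological completeness in $D(\mathcal{R}-\text{mod})$ by applying the same adjunction to the graded algebra map $W(k)\to\mathcal{R}$. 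Part (2) follows in the same way, since both conditions reduce to piecewise completeness over $W(k)$.

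For part (1), stability of the defining condition under cones, extensions, and direct summands is formal, because $R\underline{\mathcal{H}om}(\tilde{\mathcal{R}}[p^{-1}], -)$ is a triangulated functor. The equivalence with vanishing of $R\underline{\mathcal{H}om}(\mathcal{N}^{\cdot}, \mathcal{M}^{\cdot})$ for every $\mathcal{N}^{\cdot} \in D(\mathcal{G}(\tilde{\mathcal{R}}[p^{-1}]))$ is proved by noting that any such $\mathcal{N}^{\cdot}$ satisfies $\mathcal{N}^{\cdot} \tilde{=} \mathcal{N}^{\cdot} \otimes^{L}_{\tilde{\mathcal{R}}[p^{-1}]} \tilde{\mathcal{R}}[p^{-1}]$, so Lemma~\ref{lem:basic-hom-tensor} gives
\[
R\underline{\mathcal{H}om}_{\tilde{\mathcal{R}}}(\mathcal{N}^{\cdot}, \mathcal{M}^{\cdot}) \tilde{=} R\underline{\mathcal{H}om}_{\tilde{\mathcal{R}}[p^{-1}]}\bigl(\mathcal{N}^{\cdot}, R\underline{\mathcal{H}om}_{\tilde{\mathcal{R}}}(\tilde{\mathcal{R}}[p^{-1}], \mathcal{M}^{\cdot})\bigr),
\]
which vanishes whenever $\mathcal{M}^{\cdot}$ is cohomologically complete; conversely, taking $\mathcal{N}^{\cdot}=\tilde{\mathcal{R}}[p^{-1}]$ recovers the defining condition.

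For part (3), I apply $R\underline{\mathcal{H}om}(-, \mathcal{M}^{\cdot})$ to the short exact sequence $0 \to \tilde{\mathcal{R}} \to \tilde{\mathcal{R}}[p^{-1}] \to \tilde{\mathcal{R}}[p^{-1}]/\tilde{\mathcal{R}} \to 0$ and rotate to obtain the desired distinguished triangle. That $R\underline{\mathcal{H}om}(\tilde{\mathcal{R}}[p^{-1}], \mathcal{M}^{\cdot})$ lies in $D(\mathcal{G}(\tilde{\mathcal{R}}[p^{-1}]))$ is immediate from the left $\tilde{\mathcal{R}}[p^{-1}]$-module structure on the first argument. To see that $R\underline{\mathcal{H}om}(\tilde{\mathcal{R}}[p^{-1}]/\tilde{\mathcal{R}}[-1], \mathcal{M}^{\cdot})$ is cohomologically complete, I use flatness of $\tilde{\mathcal{R}}[p^{-1}]$ over $\tilde{\mathcal{R}}^{\text{opp}}$ (a consequence of $p$-torsion-freeness: it is a filtered colimit of free modules along injective multiplication-by-$p$ maps) and Lemma~\ref{lem:basic-hom-tensor} to identify
\[
R\underline{\mathcal{H}om}\bigl(\tilde{\mathcal{R}}[p^{-1}], R\underline{\mathcal{H}om}(\tilde{\mathcal{R}}[p^{-1}]/\tilde{\mathcal{R}}[-1], \mathcal{M}^{\cdot})\bigr) \tilde{=} R\underline{\mathcal{H}om}\bigl((\tilde{\mathcal{R}}[p^{-1}]/\tilde{\mathcal{R}}[-1]) \otimes^{L}_{\tilde{\mathcal{R}}} \tilde{\mathcal{R}}[p^{-1}], \mathcal{M}^{\cdot}\bigr),
\]
and the inner tensor product vanishes because $\tilde{\mathcal{R}}[p^{-1}] \otimes_{\tilde{\mathcal{R}}} \tilde{\mathcal{R}}[p^{-1}] \tilde{=} \tilde{\mathcal{R}}[p^{-1}]$. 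The final quotient-category description follows from this Bousfield-localization picture, since the two terms of the triangle are, respectively, $p$-local and cohomologically complete, and any morphism between the two vanishes by the same calculation. The main technical care needed is in keeping the graded structure of $\tilde{\mathcal{R}}[p^{-1}]/\tilde{\mathcal{R}}$ (which is spread across all degrees) straight when invoking the adjunction of Lemma~\ref{lem:basic-hom-tensor}; once that is done the proof is entirely parallel to the ungraded version.
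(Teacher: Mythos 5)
Your proof is correct and follows essentially the same route as the paper's: the hom--tensor adjunction of Lemma \ref{lem:basic-hom-tensor} together with flatness of $\tilde{\mathcal{R}}[p^{-1}]$, the vanishing of $\tilde{\mathcal{R}}[p^{-1}]\otimes^{L}_{\tilde{\mathcal{R}}}(\tilde{\mathcal{R}}[p^{-1}]/\tilde{\mathcal{R}})$, the rotated triangle from the short exact sequence, and the degreewise decomposition of $R\underline{\mathcal{H}om}(\tilde{\mathcal{R}}[p^{-1}],-)$ for part (4). The only cosmetic difference is that the paper justifies the degreewise identification in (4) via a double-adjoint argument for the degree-zero functor rather than your base change along $W(k)\to\tilde{\mathcal{R}}$, but the computation is the same.
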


\begin{proof}
1) For any $\mathcal{N}^{\cdot}\in D(\mathcal{G}(\tilde{\mathcal{R}}[p^{-1}]))$
we have 
\[
R\underline{\mathcal{H}om}_{\tilde{\mathcal{R}}}(\mathcal{N}^{\cdot},\mathcal{M}^{\cdot})\tilde{=}R\underline{\mathcal{H}om}_{\tilde{\mathcal{R}}}(\tilde{\mathcal{R}}[p^{-1}]\otimes_{\tilde{\mathcal{R}}}^{L}\mathcal{N}^{\cdot},\mathcal{M}^{\cdot})\tilde{\to}R\underline{\mathcal{H}om}_{\tilde{\mathcal{R}}}(\mathcal{N}^{\cdot},R\underline{\mathcal{H}om}_{\tilde{\mathcal{R}}}(\tilde{\mathcal{R}}[p^{-1}],\mathcal{M}^{\cdot}))
\]
here, we have used the fact $\tilde{\mathcal{R}}[p^{-1}]$ is an $(\tilde{\mathcal{R}},\tilde{\mathcal{R}})$-bimodule,
along with \lemref{basic-hom-tensor}, 1). 

Thus if $R\underline{\mathcal{H}om}(\tilde{\mathcal{R}}[p^{-1}]\mathcal{M}^{\cdot})=0$
then $R\underline{\mathcal{H}om}(\mathcal{N}^{\cdot},\mathcal{M}^{\cdot})=0$
as claimed. Therefore $D_{cc}(\mathcal{G}(\tilde{\mathcal{R}}))$
is the (right) orthogonal subcategory to the thick subcategory $D(\mathcal{G}(\tilde{\mathcal{R}}[p^{-1}]))$;
it follows that is a thick triangulated subcategory. 

2) We have 
\[
R\underline{\mathcal{H}om}_{\tilde{\mathcal{S}}}(\tilde{\mathcal{S}}[p^{-1}],\mathcal{M}^{\cdot})\tilde{\to}R\underline{\mathcal{H}om}_{\tilde{\mathcal{R}}}(\tilde{\mathcal{S}}[p^{-1}]\otimes_{\tilde{\mathcal{S}}}^{L}\tilde{\mathcal{R}},\mathcal{M}^{\cdot})\tilde{\to}R\underline{\mathcal{H}om}_{\tilde{\mathcal{R}}}(\tilde{\mathcal{R}}[p^{-1}],\mathcal{M}^{\cdot})
\]
from which the result follows.

3) This triangle follows by applying $R\underline{\mathcal{H}om}$
to the short exact sequence 
\[
\tilde{\mathcal{R}}\to\tilde{\mathcal{R}}[p^{-1}]\to\tilde{\mathcal{R}}[p^{-1}]/\tilde{\mathcal{R}}
\]
and noting that $R\underline{\mathcal{H}om}(\tilde{\mathcal{R}},)$
is the identity functor. The complex $R\underline{\mathcal{H}om}(\tilde{\mathcal{R}}[p^{-1}],\mathcal{M}^{\cdot})$
is contained in $D(\mathcal{G}(\tilde{\mathcal{R}}[p^{-1}]))$ via
the action of $\tilde{\mathcal{R}}[p^{-1}]$ on itself. On the other
hand, as above there is a canonical isomorphism
\[
R\underline{\mathcal{H}om}_{\tilde{\mathcal{R}}}(\tilde{\mathcal{R}}[p^{-1}],R\underline{\mathcal{H}om}_{\tilde{\mathcal{R}}}(\tilde{\mathcal{R}}[p^{-1}]/\tilde{\mathcal{R}}[-1],\mathcal{M}^{\cdot}))\tilde{\leftarrow}R\underline{\mathcal{H}om}_{\tilde{\mathcal{R}}}(\tilde{\mathcal{R}}[p^{-1}]\otimes_{\tilde{\mathcal{R}}}^{L}(\tilde{\mathcal{R}}[p^{-1}]/\tilde{\mathcal{R}}),\mathcal{M}^{\cdot})[1]
\]
and the term on the right is zero since $\tilde{\mathcal{R}}[p^{-1}]\otimes_{\tilde{\mathcal{R}}}^{L}(\tilde{\mathcal{R}}[p^{-1}]/\tilde{\mathcal{R}})=0$;
therefore 
\[
R\underline{\mathcal{H}om}_{\tilde{\mathcal{R}}}(\tilde{\mathcal{R}}[p^{-1}]/\tilde{\mathcal{R}}[-1],\mathcal{M}^{\cdot})\in D_{cc}(\mathcal{G}(\mathcal{R}))
\]
 This shows that the inclusion $D_{cc}(\mathcal{G}(\mathcal{R}))\to D(\mathcal{G}(\mathcal{R}))$
admits a right adjoint, and the statement about the quotient category
follows immediately.

4) For each $\mathcal{M}\in\mathcal{G}(\tilde{\mathcal{R}})$ there
is an isomorphism of functors 
\[
\mathcal{H}om_{\mathcal{R}}(\mathcal{R}[p^{-1}],\mathcal{M}^{0})\tilde{=}\mathcal{H}om_{\mathcal{G}(\tilde{\mathcal{R}})}(\tilde{\mathcal{R}}[p^{-1}],\mathcal{M})
\]
given by restricting a morphism on the right hand side to degree $0$;
this follows from the fact that an local section of $\mathcal{H}om_{\mathcal{G}(\tilde{\mathcal{R}})}(\tilde{\mathcal{R}}[p^{-1}],\mathcal{M})$
is simply a system $(m_{i})$ of local sections of $\mathcal{M}^{0}$
satisfying $pm_{i}=m_{i-1}$; which is exactly a local section of
$\mathcal{H}om_{\mathcal{R}}(\mathcal{R}[p^{-1}],\mathcal{M}^{0})$. 

Now, $\mathcal{M}\to\mathcal{M}^{0}$ admits a left adjoint (namely
$\mathcal{N}\to\tilde{\mathcal{R}}\otimes_{\mathcal{R}}\mathcal{N}$),
and $\mathcal{N}\to\mathcal{H}om_{\mathcal{R}}(\mathcal{R}[p^{-1}],\mathcal{N})$
admits a left adjoint (namely $\mathcal{M}\to\mathcal{R}[p^{-1}]\otimes_{\mathcal{R}}\mathcal{M}$).
So by \cite{key-9}, proposition 14.4.7, the derived functor of $\mathcal{H}om_{\mathcal{R}}(\mathcal{R}[p^{-1}],\mathcal{M}^{0})$
is given by the functor $R\mathcal{H}om_{\mathcal{R}}(\mathcal{R}[p^{-1}],\mathcal{M}^{\cdot,0})$
for any $\mathcal{M}^{\cdot}\in D(\mathcal{G}(\tilde{R}))$. Therefore
there is an isomorphism of functors 
\[
R\mathcal{H}om_{\mathcal{R}}(\mathcal{R}[p^{-1}],\mathcal{M}^{\cdot,0})\tilde{\to}R\mathcal{H}om_{\mathcal{G}(\tilde{\mathcal{R}})}(\tilde{\mathcal{R}}[p^{-1}],\mathcal{M})
\]
Therefore 
\[
R\underline{\mathcal{H}om}_{\tilde{\mathcal{R}}}(\tilde{\mathcal{R}}[p^{-1}],\mathcal{M}^{\cdot})=\bigoplus_{i}R\mathcal{H}om_{\mathcal{G}(\tilde{\mathcal{R}})}(\tilde{\mathcal{R}}[p^{-1}],\mathcal{M}^{\cdot}(i))\tilde{=}\bigoplus_{i}R\mathcal{H}om_{\mathcal{R}}(\mathcal{R}[p^{-1}],\mathcal{M}^{\cdot,-i})
\]
and the result follows. 
\end{proof}
We will refer to the functor $\mathcal{M}^{\cdot}\to R\underline{\mathcal{H}om}(\tilde{\mathcal{R}}[p^{-1}]/\tilde{\mathcal{R}}[-1],\mathcal{M}^{\cdot})$
as the \emph{graded derived completion} of $\mathcal{M}^{\cdot}$,
or, usually, simply the completion of $\mathcal{M}^{\cdot}$ if no
confusion seems likely; we will denote it by $\widehat{\mathcal{M}}^{\cdot}$. 

A typical example of a cohomologically complete complex in $D(\mathcal{R}-\text{mod})$
is the following: suppose $\mathcal{M}^{\cdot}=\mathcal{M}$ is concentrated
in degree $0$. Then if $\mathcal{M}$ is $p$-torsion free, then
$\mathcal{M}$ is $p$-adically complete iff $\mathcal{M}^{\cdot}$
is cohomologically complete (c.f. \cite{key-8}, lemma 1.5.4). By
part $4)$ of the proposition, if $\mathcal{M}=\mathcal{M}^{\cdot}\in D(\mathcal{G}(\tilde{\mathcal{R}}))$
is concentrated in a single degree, then if each $\mathcal{M}^{i}$
is $p$-torsion free and $p$-adically complete, then $\mathcal{M}^{\cdot}$
is cohomologically complete (in the graded sense). Therefore the two
notions are not in general compatible; an infinite direct sum of $p$-adically
complete modules is generally not complete. 

Now we develop this notion a bit more:
\begin{lem}
\label{lem:reduction-of-completion}Let $\mathcal{M}^{\cdot}\in D(\mathcal{G}(\tilde{\mathcal{R}}))$.
Then the natural map $\mathcal{M}^{\cdot}\otimes_{W(k)}^{L}k\to\mathcal{\widehat{M}}^{\cdot}\otimes_{W(k)}^{L}k$
is an isomorphism.
\end{lem}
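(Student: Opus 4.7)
The plan is to reduce to the fact that any complex in $D(\mathcal{G}(\tilde{\mathcal{R}}[p^{-1}]))$ becomes zero after tensoring with $k$ over $W(k)$. Starting from the distinguished triangle provided by part $3)$ of \propref{Basic-CC-facts}:
\[
R\underline{\mathcal{H}om}(\tilde{\mathcal{R}}[p^{-1}],\mathcal{M}^{\cdot})\to\mathcal{M}^{\cdot}\to\widehat{\mathcal{M}}^{\cdot},
\]
I would apply the functor $-\otimes_{W(k)}^{L}k$. The canonical map in question is precisely the image of the middle arrow after this operation, so the statement reduces to showing the leftmost term is annihilated.

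To see this, write $\mathcal{N}^{\cdot}:=R\underline{\mathcal{H}om}(\tilde{\mathcal{R}}[p^{-1}],\mathcal{M}^{\cdot})$, which by the same part of \propref{Basic-CC-facts} lives in $D(\mathcal{G}(\tilde{\mathcal{R}}[p^{-1}]))$; in particular $p$ acts invertibly on each cohomology sheaf of $\mathcal{N}^{\cdot}$. Now use the short exact sequence $0\to W(k)\xrightarrow{p}W(k)\to k\to 0$, which when tensored with $\mathcal{N}^{\cdot}$ produces a distinguished triangle
\[
\mathcal{N}^{\cdot}\xrightarrow{p}\mathcal{N}^{\cdot}\to\mathcal{N}^{\cdot}\otimes_{W(k)}^{L}k.
\]
Since multiplication by $p$ is an isomorphism on $\mathcal{N}^{\cdot}$, the third term vanishes.

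Combining these two observations, the triangle obtained by tensoring with $k$ has a zero left term, so the map $\mathcal{M}^{\cdot}\otimes_{W(k)}^{L}k\to\widehat{\mathcal{M}}^{\cdot}\otimes_{W(k)}^{L}k$ is an isomorphism. There is no real obstacle here: the entire argument rests on the structural triangle already established in \propref{Basic-CC-facts} together with the elementary fact that $k\otimes_{W(k)}^{L}(-)$ kills complexes on which $p$ is invertible.
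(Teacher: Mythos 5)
Your argument is correct and is essentially the paper's own proof: both identify the cone of $\mathcal{M}^{\cdot}\to\widehat{\mathcal{M}}^{\cdot}$ with (a shift of) $R\underline{\mathcal{H}om}(\tilde{\mathcal{R}}[p^{-1}],\mathcal{M}^{\cdot})\in D(\mathcal{G}(\tilde{\mathcal{R}}[p^{-1}]))$ via the triangle of \propref{Basic-CC-facts}, and then observe that $\otimes_{W(k)}^{L}k$ annihilates any complex on which $p$ acts invertibly. You have merely spelled out the last step explicitly; no changes are needed.
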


\begin{proof}
The cone of the map $\mathcal{M}^{\cdot}\to\mathcal{\widehat{M}}^{\cdot}$
is contained in $D(\mathcal{G}(\tilde{\mathcal{R}}[p^{-1}]))$, and
therefore vanishes upon applying $\otimes_{W(k)}^{L}k$. 
\end{proof}
Now we can transfer the Nakayama lemma into the graded setting:
\begin{cor}
\label{cor:Nakayama}Let $\mathcal{M}^{\cdot}\in D_{cc}(\mathcal{G}(\mathcal{\tilde{R}}))$,
and let $a\in\mathbb{Z}$. If $\mathcal{H}^{i}(\mathcal{M}^{\cdot}\otimes_{W(k)}^{L}k)=0$
for all $i<a$, then $\mathcal{H}^{i}(\mathcal{M}^{\cdot})=0$ for
all $i<a$. In particular $\mathcal{M}^{\cdot}=0$ iff $\mathcal{M}^{\cdot}\otimes_{W(k)}^{L}k=0$. 

Therefore, if $\mathcal{M}^{\cdot},\mathcal{N}^{\cdot}\in D_{cc}(\mathcal{G}(\mathcal{\tilde{R}}))$
and $\eta:\mathcal{M}^{\cdot}\to\mathcal{N}^{\cdot}$ is a morphism
such that $\eta\otimes_{W(k)}^{L}k:\mathcal{M}^{\cdot}\otimes_{W(k)}^{L}k\to\mathcal{N}^{\cdot}\otimes_{W(k)}^{L}k$
is an isomorphism, then $\eta$ is an isomorphism.
\end{cor}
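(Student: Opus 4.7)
The plan is to reduce, via a truncation argument, to an application of the orthogonality in Proposition 2.2(1). The starting observation is that, since $W(k)$ is $p$-torsion-free, $\mathcal{M}^{\cdot} \otimes^L_{W(k)} k$ is computed as the mapping cone of multiplication by $p$, giving a distinguished triangle $\mathcal{M}^{\cdot} \xrightarrow{p} \mathcal{M}^{\cdot} \to \mathcal{M}^{\cdot} \otimes^L_{W(k)} k$. From the long exact sequence of cohomology sheaves one reads off that, under the hypothesis, $p$ acts bijectively on every $\mathcal{H}^i(\mathcal{M}^{\cdot})$ with $i < a$: surjectivity uses the vanishing at index $i$, and injectivity uses the vanishing at index $i-1$, both of which are granted.

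Consequently the truncation $\mathcal{N}^{\cdot} := \tau^{<a}\mathcal{M}^{\cdot}$ has $p$-invertible cohomology sheaves, so $\mathcal{N}^{\cdot} \in D(\mathcal{G}(\tilde{\mathcal{R}}[p^{-1}]))$. The orthogonality from Proposition 2.2(1) then yields $R\underline{\mathcal{H}om}(\mathcal{N}^{\cdot}, \mathcal{M}^{\cdot}) = 0$; in particular $\mathrm{Hom}_{D(\mathcal{G}(\tilde{\mathcal{R}}))}(\mathcal{N}^{\cdot}, \mathcal{M}^{\cdot}) = 0$, so the canonical morphism $\mathcal{N}^{\cdot} \to \mathcal{M}^{\cdot}$ appearing in the truncation triangle vanishes. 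A distinguished triangle whose first arrow is zero splits, giving $\mathcal{M}^{\cdot} \tilde{=} \tau^{\geq a}\mathcal{M}^{\cdot} \oplus \mathcal{N}^{\cdot}[1]$. Taking $\mathcal{H}^{a-1}$ of this decomposition gives $\mathcal{H}^{a-1}(\mathcal{M}^{\cdot}) \tilde{=} \mathcal{H}^a(\mathcal{N}^{\cdot}) = 0$, and for each $i < a-1$ the same decomposition yields $\mathcal{H}^i(\mathcal{M}^{\cdot}) \tilde{=} \mathcal{H}^{i+1}(\mathcal{M}^{\cdot})$; a finite descending induction starting from $\mathcal{H}^{a-1} = 0$ then concludes that $\mathcal{H}^i(\mathcal{M}^{\cdot}) = 0$ for all $i < a$.

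The remaining assertions follow formally. Letting $a$ tend to $\infty$ produces the criterion $\mathcal{M}^{\cdot} = 0 \iff \mathcal{M}^{\cdot} \otimes^L_{W(k)} k = 0$. For the statement about $\eta$, I would apply this criterion to $\mathrm{Cone}(\eta)$; this cone lies in $D_{cc}(\mathcal{G}(\tilde{\mathcal{R}}))$ by thickness (Proposition 2.2(1)) and satisfies $\mathrm{Cone}(\eta) \otimes^L_{W(k)} k \tilde{=} \mathrm{Cone}(\eta \otimes^L_{W(k)} k) = 0$ under the hypothesis, so $\mathrm{Cone}(\eta) = 0$ and $\eta$ is an isomorphism. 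The main obstacle is the pivotal vanishing of the canonical truncation morphism via Proposition 2.2(1); this is where cohomological completeness does its essential work, and everything else is bookkeeping with the truncation triangle and its cohomology.
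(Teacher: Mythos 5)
Your proof is correct, but it takes a genuinely different route from the paper's. The paper's own proof is a two-line reduction: by part 4) of \propref{Basic-CC-facts}, a graded complex is cohomologically complete iff each graded piece $\mathcal{M}^{\cdot,i}$ is cohomologically complete in $D(\mathcal{R}-\text{mod})$, and the ungraded statement is then quoted from \cite{key-8}, Proposition 1.5.8. You instead run the underlying argument directly in the graded category, using the orthogonality from part 1) of the same proposition: the hypothesis forces $p$ to act invertibly on $\mathcal{H}^{i}(\mathcal{M}^{\cdot})$ for $i<a$, so $\tau^{<a}\mathcal{M}^{\cdot}$ lies in $D(\mathcal{G}(\tilde{\mathcal{R}}[p^{-1}]))$ and the canonical truncation morphism into $\mathcal{M}^{\cdot}$ must vanish. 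In effect you have re-proved the Kashiwara--Schapira lemma in the graded setting rather than reducing to it; both are valid, yours being self-contained and the paper's shorter.

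Two small remarks on the execution. First, the splitting goes the other way: in a triangle $A\xrightarrow{0}B\to C\to A[1]$ it is $C$, not $B$, that decomposes, so the correct conclusion is $\tau^{\geq a}\mathcal{M}^{\cdot}\tilde{=}\mathcal{M}^{\cdot}\oplus\mathcal{N}^{\cdot}[1]$ rather than $\mathcal{M}^{\cdot}\tilde{=}\tau^{\geq a}\mathcal{M}^{\cdot}\oplus\mathcal{N}^{\cdot}[1]$. This does not damage the argument -- applying $\mathcal{H}^{i}$ for $i<a$ to the correct decomposition gives $0=\mathcal{H}^{i}(\mathcal{M}^{\cdot})\oplus\mathcal{H}^{i+1}(\mathcal{N}^{\cdot})$, which yields the vanishing at once and with no induction. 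Second, you can dispense with the splitting entirely: the truncation morphism $\mathcal{N}^{\cdot}\to\mathcal{M}^{\cdot}$ induces an isomorphism on $\mathcal{H}^{i}$ for all $i<a$ by the definition of $\tau^{<a}$, so once it is known to be the zero morphism, each $\mathcal{H}^{i}(\mathcal{M}^{\cdot})$ with $i<a$ admits the zero map as an automorphism and therefore vanishes. The remaining deductions (letting $a\to\infty$, and applying the vanishing criterion to $\text{cone}(\eta)$, which is cohomologically complete by thickness) are exactly as they should be.
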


\begin{proof}
By part 4) of the previous proposition this follows immediately from
the analogous fact for cohomologically complete sheaves over $\mathcal{R}$;
which is \cite{key-8}, proposition 1.5.8. 
\end{proof}
For later use, we record a few more useful properties of cohomologically
complete sheaves, following \cite{key-8}, propositions 1.5.10 and
1.5.12.
\begin{prop}
\label{prop:Push-and-complete}1) Suppose $\mathcal{M}^{\cdot},\mathcal{N}^{\cdot}\in D_{cc}(\mathcal{G}(\tilde{\mathcal{R}}))$,
and let $\tilde{\mathcal{S}}$ be any central graded sub-algebra of
$\tilde{\mathcal{R}}$ which contains $W(k)$. Then $R\underline{\mathcal{H}om}(\mathcal{M}^{\cdot},\mathcal{N}^{\cdot})\in D_{cc}(\mathcal{G}(\tilde{\mathcal{S}}))$. 

2) Suppose $\mathcal{M}^{\cdot}\in D(\mathcal{G}(\tilde{\mathcal{R}}))$
and $\mathcal{N}^{\cdot}\in D_{cc}(\mathcal{G}(\tilde{\mathcal{R}}))$.
Then the map $\mathcal{M}^{\cdot}\to\widehat{\mathcal{M}}^{\cdot}$
induces an isomorphism 
\[
R\underline{\mathcal{H}om}(\widehat{\mathcal{M}}^{\cdot},\mathcal{N}^{\cdot})\tilde{\to}R\underline{\mathcal{H}om}(\mathcal{M}^{\cdot},\mathcal{N}^{\cdot})
\]

3) Suppose $\varphi:X\to Y$ is a continuous map, and suppose $\tilde{\mathcal{R}}$
is a graded sheaf of algebras on $Y$ (satisfying the running assumptions
of the section). Let $\mathcal{M}^{\cdot}\in D_{cc}(\mathcal{G}(\varphi^{-1}(\tilde{\mathcal{R}})))$.
Then $R\varphi_{*}(\mathcal{M}^{\cdot})\in D_{cc}(\mathcal{G}(\tilde{\mathcal{R}}))$.
Therefore, if $\mathcal{M}^{\cdot}\in D(\mathcal{G}(\varphi^{-1}(\tilde{\mathcal{R}})))$
is any complex, then we have 
\[
\widehat{R\varphi_{*}(\mathcal{M}^{\cdot})}\tilde{\to}R\varphi_{*}(\widehat{\mathcal{M}^{\cdot}})
\]
\end{prop}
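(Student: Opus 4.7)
My plan is to deduce each of the three assertions from the characterization of cohomological completeness given in \propref{Basic-CC-facts}(1): namely, that $\mathcal{L}^{\cdot} \in D_{cc}(\mathcal{G}(\tilde{\mathcal{R}}))$ if and only if $R\underline{\mathcal{H}om}(\mathcal{K}^{\cdot}, \mathcal{L}^{\cdot})=0$ for every $\mathcal{K}^{\cdot} \in D(\mathcal{G}(\tilde{\mathcal{R}}[p^{-1}]))$. Combined with the tensor-Hom and pushforward-Hom adjunctions from \lemref{basic-hom-tensor} and the preceding lemma, the three parts should become short parallel arguments built around the ``coefficient'' $\tilde{\mathcal{R}}[p^{-1}]$.

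For part (1), I would apply the adjunction
\[
R\underline{\mathcal{H}om}_{\tilde{\mathcal{S}}}(\tilde{\mathcal{S}}[p^{-1}], R\underline{\mathcal{H}om}_{\tilde{\mathcal{R}}}(\mathcal{M}^{\cdot}, \mathcal{N}^{\cdot})) \tilde{=} R\underline{\mathcal{H}om}_{\tilde{\mathcal{R}}}(\mathcal{M}^{\cdot} \otimes^{L}_{\tilde{\mathcal{S}}} \tilde{\mathcal{S}}[p^{-1}], \mathcal{N}^{\cdot}),
\]
available because $\tilde{\mathcal{S}} \subset \tilde{\mathcal{R}}$ is central. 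The tensor product on the right collapses to $\mathcal{M}^{\cdot}[p^{-1}]$, which lies in $D(\mathcal{G}(\tilde{\mathcal{R}}[p^{-1}]))$, so the right side vanishes by cohomological completeness of $\mathcal{N}^{\cdot}$. For part (2), I would apply $R\underline{\mathcal{H}om}(-, \mathcal{N}^{\cdot})$ to the completion triangle
\[
R\underline{\mathcal{H}om}(\tilde{\mathcal{R}}[p^{-1}], \mathcal{M}^{\cdot}) \to \mathcal{M}^{\cdot} \to \widehat{\mathcal{M}}^{\cdot}
\]
of \propref{Basic-CC-facts}(3). The Hom out of the leftmost term vanishes by the same argument applied to $\mathcal{N}^{\cdot}$, and the resulting triangle forces the induced map $R\underline{\mathcal{H}om}(\widehat{\mathcal{M}}^{\cdot}, \mathcal{N}^{\cdot}) \to R\underline{\mathcal{H}om}(\mathcal{M}^{\cdot}, \mathcal{N}^{\cdot})$ to be an isomorphism.

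For part (3), I would combine the pushforward-Hom adjunction of the earlier lemma with the same characterization:
\[
R\underline{\mathcal{H}om}_{\tilde{\mathcal{R}}}(\tilde{\mathcal{R}}[p^{-1}], R\varphi_{*}\mathcal{M}^{\cdot}) \tilde{=} R\varphi_{*} R\underline{\mathcal{H}om}_{\varphi^{-1}\tilde{\mathcal{R}}}(\varphi^{-1}(\tilde{\mathcal{R}}[p^{-1}]), \mathcal{M}^{\cdot}),
\]
whose right side is zero because $\mathcal{M}^{\cdot}$ is cohomologically complete upstairs. For the final isomorphism, I would apply $R\varphi_{*}$ to the completion triangle of $\mathcal{M}^{\cdot}$: the leftmost term identifies via the adjunction with $R\underline{\mathcal{H}om}(\tilde{\mathcal{R}}[p^{-1}], R\varphi_{*}\mathcal{M}^{\cdot}) \in D(\mathcal{G}(\tilde{\mathcal{R}}[p^{-1}]))$, while $R\varphi_{*}\widehat{\mathcal{M}}^{\cdot}$ is cohomologically complete by what we have just proved. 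The description of $D_{cc}$ as the quotient of $D(\mathcal{G}(\tilde{\mathcal{R}}))$ by $D(\mathcal{G}(\tilde{\mathcal{R}}[p^{-1}]))$, from \propref{Basic-CC-facts}(3), then identifies $R\varphi_{*}\widehat{\mathcal{M}}^{\cdot}$ with $\widehat{R\varphi_{*}\mathcal{M}^{\cdot}}$.

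The main technical point I anticipate is justifying the derived adjunction used in part (1), since the flatness hypothesis on the bimodule in \lemref{basic-hom-tensor}(1) is not satisfied by a general $\mathcal{M}^{\cdot}$. I expect this to reduce either to replacing $\mathcal{M}^{\cdot}$ by a K-flat resolution over $\tilde{\mathcal{S}}$, or, more directly, to observing that $\tilde{\mathcal{S}}[p^{-1}]$ is already flat over $\tilde{\mathcal{S}}$ (localization), so that $\mathcal{M}^{\cdot} \otimes^{L}_{\tilde{\mathcal{S}}} \tilde{\mathcal{S}}[p^{-1}] = \mathcal{M}^{\cdot}[p^{-1}]$ without any resolution and the adjunction follows from the underived one levelwise after injective resolution of $\mathcal{N}^{\cdot}$. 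Everything else is a routine verification within the framework already in place.
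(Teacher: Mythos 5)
Your proposal is correct and follows essentially the same route as the paper: all three parts are deduced from the orthogonality characterization of $D_{cc}$ together with the tensor-Hom and pushforward-Hom adjunctions, with the derived adjunction in part (1) justified exactly as the paper does it (flatness of $\tilde{\mathcal{S}}[p^{-1}]$ over $\tilde{\mathcal{S}}$, plus the presentation of $\tilde{\mathcal{S}}[p^{-1}]$ as a homotopy colimit of the $p$-multiplication tower). The only cosmetic difference is that in part (1) you stop at $R\underline{\mathcal{H}om}_{\tilde{\mathcal{R}}}(\mathcal{M}^{\cdot}[p^{-1}],\mathcal{N}^{\cdot})=0$ via part 1) of \propref{Basic-CC-facts}, whereas the paper pushes the adjunction one step further to land $[p^{-1}]$ on $\mathcal{N}^{\cdot}$ and invokes part 2); both are equally valid.
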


\begin{proof}
1) As $\tilde{\mathcal{S}}$ is central we have 
\[
R\underline{\mathcal{H}om}_{\tilde{\mathcal{S}}}(\tilde{\mathcal{S}}[p^{-1}],R\underline{\mathcal{H}om}_{\tilde{\mathcal{R}}}(\mathcal{M}^{\cdot},\mathcal{N}^{\cdot}))\tilde{\leftarrow}R\underline{\mathcal{H}om}_{\tilde{\mathcal{S}}}(\mathcal{M}^{\cdot}\otimes_{\tilde{\mathcal{S}}}^{L}\tilde{\mathcal{S}}[p^{-1}],\mathcal{N}^{\cdot})
\]
\[
\tilde{\to}R\underline{\mathcal{H}om}_{\tilde{\mathcal{R}}}(\mathcal{M}^{\cdot},R\underline{\mathcal{H}om}_{\tilde{\mathcal{S}}}(\tilde{\mathcal{S}}[p^{-1}],\mathcal{N}^{\cdot}))
\]
where the second isomorphism follows from the flatness of $\tilde{\mathcal{S}}[p^{-1}]$
over $\tilde{\mathcal{S}}$, and first isomorphism follows directly
from 
\[
\tilde{\mathcal{S}}[p^{-1}]\tilde{=}\text{lim}(\tilde{\mathcal{S}}\xrightarrow{p}\tilde{\mathcal{S}}\xrightarrow{p}\tilde{\mathcal{S}}\cdots)\tilde{=}{\displaystyle \text{hocolim}(\tilde{\mathcal{S}}\xrightarrow{p}\tilde{\mathcal{S}}\xrightarrow{p}\tilde{\mathcal{S}}\cdots)}
\]
so the result follows from part $2)$ of \propref{Basic-CC-facts}. 

2) This follows since $\text{cone}(\mathcal{M}^{\cdot}\to\widehat{\mathcal{M}}^{\cdot})$
is contained in the orthogonal to $D_{cc}(\mathcal{G}(\tilde{\mathcal{R}}))$,
by definition.

3) For the first claim, we use the adjunction 
\[
R\varphi_{*}R\underline{\mathcal{H}om}_{\varphi^{-1}(\tilde{\mathcal{R}})}(\varphi^{-1}(\tilde{\mathcal{R}})[p^{-1}],\mathcal{M}^{\cdot})\tilde{=}R\underline{\mathcal{H}om}_{\tilde{\mathcal{R}}}(\tilde{\mathcal{R}}[p^{-1}],R\varphi_{*}(\mathcal{M}^{\cdot}))
\]
along with part $2)$ of \propref{Basic-CC-facts}. For the second,
we use the distinguished triangle 
\[
R\underline{\mathcal{H}om}(\tilde{\mathcal{R}}[p^{-1}],\mathcal{M}^{\cdot})\to\mathcal{M}^{\cdot}\to\widehat{\mathcal{M}^{\cdot}}
\]
Since $p$ acts invertibly on $R\underline{\mathcal{H}om}(\tilde{\mathcal{R}}[p^{-1}],\mathcal{M}^{\cdot})$,
it will also act invertibly on\linebreak{}
$R\varphi_{*}(R\underline{\mathcal{H}om}(\tilde{\mathcal{R}}[p^{-1}],\mathcal{M}^{\cdot}))$,
and the result follows from the fact that $R\varphi_{*}(\widehat{\mathcal{M}^{\cdot}})$
is already cohomologically complete.
\end{proof}
In using this theory, it is also useful to note the following straightforward
\begin{lem}
\label{lem:Hom-tensor-and-reduce}Let $\tilde{\mathcal{R}}$ be as
above. Then, for any $\mathcal{M}^{\cdot},\mathcal{N}^{\cdot}\in D(\mathcal{G}(\tilde{\mathcal{R}}))$
we have 
\[
R\underline{\mathcal{H}om}_{\tilde{\mathcal{R}}}(\mathcal{M}^{\cdot},\mathcal{N}^{\cdot})\otimes_{W(k)}^{L}k\tilde{\to}R\underline{\mathcal{H}om}_{\tilde{\mathcal{R}}/p}(\mathcal{M}^{\cdot}\otimes_{W(k)}^{L}k,\mathcal{N}^{\cdot}\otimes_{W(k)}^{L}k)
\]
If we have $\mathcal{N}^{\cdot}\in D(\mathcal{G}(\tilde{\mathcal{R}})^{\text{opp}})$,
then we have
\[
(\mathcal{N}^{\cdot}\otimes_{\tilde{\mathcal{R}}}^{L}\mathcal{M}^{\cdot})\otimes_{W(k)}^{L}k\tilde{\to}(\mathcal{N}^{\cdot}\otimes_{W(k)}^{L}k)\otimes_{\tilde{\mathcal{R}}/p}^{L}(\mathcal{M}^{\cdot}\otimes_{W(k)}^{L}k)
\]
\end{lem}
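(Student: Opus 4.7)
The proof should be a formal diagram chase with hom--tensor adjunctions, leveraging crucially the hypothesis that $\tilde{\mathcal{R}}$ is $p$-torsion-free; this gives a tautological identification
\[
\tilde{\mathcal{R}}/p \;\tilde{=}\; \tilde{\mathcal{R}}\otimes_{W(k)}^{L}k
\]
as graded sheaves of algebras. Both claimed isomorphisms then follow from associativity of $\otimes^{L}$ and the graded extension-of-scalars adjunction of Lemma~\ref{lem:basic-hom-tensor}, applied along the central map $W(k)\hookrightarrow\tilde{\mathcal{R}}$ and the quotient $\tilde{\mathcal{R}}\twoheadrightarrow\tilde{\mathcal{R}}/p$.

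For the tensor-product identity, I would simply chain two applications of associativity. First, since $W(k)$ is central in $\tilde{\mathcal{R}}$,
\[
(\mathcal{N}^{\cdot}\otimes_{\tilde{\mathcal{R}}}^{L}\mathcal{M}^{\cdot})\otimes_{W(k)}^{L}k \;\tilde{=}\; \mathcal{N}^{\cdot}\otimes_{\tilde{\mathcal{R}}}^{L}(\mathcal{M}^{\cdot}\otimes_{W(k)}^{L}k).
\]
Then, using the identification above to rewrite $\mathcal{M}^{\cdot}\otimes_{W(k)}^{L}k \tilde{=} (\tilde{\mathcal{R}}/p)\otimes_{\tilde{\mathcal{R}}}^{L}\mathcal{M}^{\cdot}$, another application of associativity yields
\[
\mathcal{N}^{\cdot}\otimes_{\tilde{\mathcal{R}}}^{L}\bigl((\tilde{\mathcal{R}}/p)\otimes_{\tilde{\mathcal{R}}}^{L}\mathcal{M}^{\cdot}\bigr) \;\tilde{=}\; (\mathcal{N}^{\cdot}\otimes_{W(k)}^{L}k)\otimes_{\tilde{\mathcal{R}}/p}^{L}(\mathcal{M}^{\cdot}\otimes_{W(k)}^{L}k),
\]
which is what we want.

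For the Hom identity, I would proceed in two steps. \emph{Step 1:} pull the reduction inside the target. Modelling $-\otimes_{W(k)}^{L}k$ by the two-term resolution $[W(k)\xrightarrow{p}W(k)]$ of $k$ (equivalently, as the cone of multiplication by $p$), and using that $p$ is central in $\tilde{\mathcal{R}}$, one gets
\[
R\underline{\mathcal{H}om}_{\tilde{\mathcal{R}}}(\mathcal{M}^{\cdot},\mathcal{N}^{\cdot})\otimes_{W(k)}^{L}k \;\tilde{=}\; R\underline{\mathcal{H}om}_{\tilde{\mathcal{R}}}(\mathcal{M}^{\cdot},\mathcal{N}^{\cdot}\otimes_{W(k)}^{L}k),
\]
since $R\underline{\mathcal{H}om}_{\tilde{\mathcal{R}}}(\mathcal{M}^{\cdot},-)$ commutes with cones and with multiplication by a central element. \emph{Step 2:} apply the hom--tensor adjunction from Lemma~\ref{lem:basic-hom-tensor}(1), with bimodule $\tilde{\mathcal{R}}/p$, to the target $\mathcal{N}^{\cdot}\otimes_{W(k)}^{L}k$ (on which $p$ already acts by zero, so the $\tilde{\mathcal{R}}$-action factors through $\tilde{\mathcal{R}}/p$):
\[
R\underline{\mathcal{H}om}_{\tilde{\mathcal{R}}}\bigl(\mathcal{M}^{\cdot},\mathcal{N}^{\cdot}\otimes_{W(k)}^{L}k\bigr) \;\tilde{=}\; R\underline{\mathcal{H}om}_{\tilde{\mathcal{R}}/p}\bigl(\mathcal{M}^{\cdot}\otimes_{\tilde{\mathcal{R}}}^{L}\tilde{\mathcal{R}}/p,\;\mathcal{N}^{\cdot}\otimes_{W(k)}^{L}k\bigr).
\]
Combining with $\mathcal{M}^{\cdot}\otimes_{\tilde{\mathcal{R}}}^{L}\tilde{\mathcal{R}}/p \tilde{=} \mathcal{M}^{\cdot}\otimes_{W(k)}^{L}k$ (by associativity and $p$-torsion-freeness of $\tilde{\mathcal{R}}$) gives the claim.

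The only point requiring attention is that all these manipulations take place in the unbounded graded derived category $D(\mathcal{G}(\tilde{\mathcal{R}}))$, so one must ensure the relevant derived functors and adjunctions are available without finiteness hypotheses on $\mathcal{M}^{\cdot}$ or $\mathcal{N}^{\cdot}$. This is precisely what Lemma~\ref{lem:basic-hom-tensor} provides in the graded setting, so no genuine obstacle arises; the argument is essentially the same as in the ungraded case.
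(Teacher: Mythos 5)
Your argument is correct in outline, and since the paper states this lemma without proof (calling it ``straightforward''), there is no competing argument to compare against: the tensor-product half is a clean chain of associativities using $\tilde{\mathcal{R}}\otimes_{W(k)}^{L}k\tilde{=}\tilde{\mathcal{R}}/p$, and Step 1 of the Hom half (commuting $\otimes_{W(k)}^{L}k$ past $R\underline{\mathcal{H}om}_{\tilde{\mathcal{R}}}(\mathcal{M}^{\cdot},-)$ because $k$ is the cone of the central element $p$) is exactly right.

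One citation in Step 2 does not literally apply and should be repaired. Lemma \ref{lem:basic-hom-tensor}(1) promises an isomorphism only when the bimodule is flat over $\tilde{\mathcal{S}}^{\text{opp}}=\tilde{\mathcal{R}}^{\text{opp}}$, and $\tilde{\mathcal{R}}/p$ is not flat over $\tilde{\mathcal{R}}$ (it has nonzero $\mathcal{T}or_{1}$ against itself). The adjunction you need,
\[
R\underline{\mathcal{H}om}_{\tilde{\mathcal{R}}/p}\bigl(\tilde{\mathcal{R}}/p\otimes_{\tilde{\mathcal{R}}}^{L}\mathcal{M}^{\cdot},\mathcal{P}^{\cdot}\bigr)\tilde{\to}R\underline{\mathcal{H}om}_{\tilde{\mathcal{R}}}(\mathcal{M}^{\cdot},\mathcal{P}^{\cdot})\quad\text{for }\mathcal{P}^{\cdot}\in D(\mathcal{G}(\tilde{\mathcal{R}}/p)),
\]
is nonetheless valid: the flatness hypothesis is only a convenient sufficient condition, and it may be replaced by perfectness of the bimodule over $\tilde{\mathcal{R}}^{\text{opp}}$, which holds here via the two-term resolution $[\tilde{\mathcal{R}}\xrightarrow{p}\tilde{\mathcal{R}}]$ by bimodules free on the right (by $p$-torsion-freeness of $\tilde{\mathcal{R}}$). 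Alternatively, both sides are triangulated functors of $\mathcal{M}^{\cdot}$ taking coproducts to products, and the canonical map between them is an isomorphism on the generators $j_{!}(\tilde{\mathcal{R}}(i)|_{U})$ of $D(\mathcal{G}(\tilde{\mathcal{R}}))$, where both sides compute $Rj_{*}(\mathcal{P}^{\cdot}(-i)|_{U})$. Relatedly, the phrase ``$p$ already acts by zero, so the action factors through $\tilde{\mathcal{R}}/p$'' is not quite the right justification in the derived category; the honest statement is that $\mathcal{N}^{\cdot}\otimes_{W(k)}^{L}k\tilde{=}\tilde{\mathcal{R}}/p\otimes_{\tilde{\mathcal{R}}}^{L}\mathcal{N}^{\cdot}$, which lies in the image of $D(\mathcal{G}(\tilde{\mathcal{R}}/p))$ by construction. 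With these two repairs the proof is complete.
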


To close out this section, we will give an explicit description of
the (ungraded) cohomological completion functor in a special case.
Let $\mathcal{R}$ be a $p$-torsion-free sheaf of rings on $X$ as
above; suppose $\mathcal{R}$ is left noetherian. Let us suppose that,
in addition, the $p$-adic completion $\widehat{\mathcal{R}}$ is
$p$-torsion-free, left noetherian, and that there exists a base of
open subsets $\mathcal{B}$ on $X$ such that, for any $U\in\mathcal{B}$
and any coherent sheaf $\mathcal{M}$ of $\mathcal{R}_{0}=\mathcal{R}/p=\widehat{\mathcal{R}}/p$
modules on $U$, we have $H^{i}(U,\mathcal{M})=0$ for all $i>0$
(these are assumptions $1.2.2$ and $1.2.3$ of \cite{key-8}; they
are always satisfied in this paper). Then we have
\begin{prop}
\label{prop:Completion-for-noeth}Let $\mathcal{M}^{\cdot}\in D_{coh}^{b}(\mathcal{R}-\text{mod})$.
Then there is an isomorphism 
\[
\widehat{\mathcal{M}^{\cdot}}\tilde{=}\widehat{\mathcal{R}}\otimes_{\mathcal{R}}^{L}\mathcal{M}^{\cdot}
\]
where $\widehat{\mathcal{M}^{\cdot}}$ denotes the derived completion
as usual. 
\end{prop}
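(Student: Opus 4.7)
The strategy is to construct a canonical map $\widehat{\mathcal{M}^{\cdot}} \to \widehat{\mathcal{R}} \otimes_{\mathcal{R}}^{L} \mathcal{M}^{\cdot}$ and verify it is an isomorphism via Nakayama's lemma. The map is produced as follows: there is a tautological map $\mathcal{M}^{\cdot} \to \widehat{\mathcal{R}} \otimes_{\mathcal{R}}^{L} \mathcal{M}^{\cdot}$, and once we show the target is cohomologically complete, the universal property of completion (the ungraded analog of \propref{Push-and-complete}(2), which identifies $\widehat{(\cdot)}$ with the left adjoint to the inclusion $D_{cc}(\mathcal{R}-\text{mod}) \hookrightarrow D(\mathcal{R}-\text{mod})$) factors this uniquely through $\widehat{\mathcal{M}^{\cdot}}$.

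The main obstacle is proving that $\widehat{\mathcal{R}} \otimes_{\mathcal{R}}^{L} \mathcal{M}^{\cdot}$ is cohomologically complete. Since $\widehat{\mathcal{R}}$ is $p$-torsion-free and left noetherian, and since $\widehat{\mathcal{R}}$ is flat over $\mathcal{R}$ (locally, $\widehat{\mathcal{R}}$ is the $p$-adic completion of a left noetherian ring, which is flat), the cohomology of $\widehat{\mathcal{R}} \otimes_{\mathcal{R}}^{L} \mathcal{M}^{\cdot}$ is bounded, and each cohomology sheaf $\mathcal{H}^{i}(\widehat{\mathcal{R}} \otimes_{\mathcal{R}}^{L} \mathcal{M}^{\cdot})$ is coherent over $\widehat{\mathcal{R}}$. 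The running hypothesis on the base of open sets $\mathcal{B}$ is exactly what is needed (as in \cite{key-8}) to guarantee that coherent $\widehat{\mathcal{R}}$-modules are $p$-adically complete, and hence cohomologically complete. Since $D_{cc}(\mathcal{R}-\text{mod})$ is a thick triangulated subcategory, an induction on truncation triangles shows that any bounded complex with cohomology in $D_{cc}$ lies in $D_{cc}$.

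To verify the map $\widehat{\mathcal{M}^{\cdot}} \to \widehat{\mathcal{R}} \otimes_{\mathcal{R}}^{L} \mathcal{M}^{\cdot}$ is an isomorphism, it suffices by the Nakayama lemma (the ungraded form of \corref{Nakayama}, from \cite{key-8}, Proposition 1.5.8) to check this after $-\otimes_{W(k)}^{L} k$. On the left, the ungraded analog of \lemref{reduction-of-completion} gives an isomorphism $\widehat{\mathcal{M}^{\cdot}} \otimes_{W(k)}^{L} k \simeq \mathcal{M}^{\cdot} \otimes_{W(k)}^{L} k$. On the right, using the identity $\widehat{\mathcal{R}}/p = \mathcal{R}/p$ and change of rings, one computes
\[
(\widehat{\mathcal{R}} \otimes_{\mathcal{R}}^{L} \mathcal{M}^{\cdot}) \otimes_{W(k)}^{L} k \simeq (\mathcal{R}/p) \otimes_{\mathcal{R}/p}^{L} (\mathcal{M}^{\cdot} \otimes_{W(k)}^{L} k) \simeq \mathcal{M}^{\cdot} \otimes_{W(k)}^{L} k,
\]
and it is immediate that the induced map is the identity. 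Nakayama then upgrades this to an isomorphism $\widehat{\mathcal{M}^{\cdot}} \simeq \widehat{\mathcal{R}} \otimes_{\mathcal{R}}^{L} \mathcal{M}^{\cdot}$, completing the proof.
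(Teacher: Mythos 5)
Your proposal is correct, and its core coincides with the paper's argument: both proofs hinge on showing that $\widehat{\mathcal{R}}$ is flat over $\mathcal{R}$ (via the hypothesis on $\mathcal{B}$ and the noetherian assumption, which identify $\widehat{\mathcal{R}}(U)\otimes_{\mathcal{R}(U)}\mathcal{M}(U)$ with the $p$-adic completion of $\mathcal{M}(U)$ for coherent $\mathcal{M}$), so that $\widehat{\mathcal{R}}\otimes_{\mathcal{R}}^{L}\mathcal{M}^{\cdot}$ has coherent cohomology over $\widehat{\mathcal{R}}$ and is therefore cohomologically complete. You diverge only in the concluding step. The paper observes directly that the cone of $\mathcal{M}^{\cdot}\to\widehat{\mathcal{R}}\otimes_{\mathcal{R}}^{L}\mathcal{M}^{\cdot}$ lies in $D(\mathcal{R}[p^{-1}]-\text{mod})$ (since the kernel and cokernel of $\mathcal{H}^{i}(\mathcal{M}^{\cdot})\to\widehat{\mathcal{H}^{i}(\mathcal{M}^{\cdot})}$ are $p$-divisible and $p$-torsion, respectively), and then invokes the description of $D_{cc}$ as the quotient of $D(\mathcal{R}-\text{mod})$ by $D(\mathcal{R}[p^{-1}]-\text{mod})$; you instead factor the tautological map through $\widehat{\mathcal{M}^{\cdot}}$ by the universal property and apply derived Nakayama after $\otimes_{W(k)}^{L}k$. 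The two finishes are equivalent in strength and both rest on the same completeness formalism; the paper's version is marginally more direct in that it never needs to verify that a reduction mod $p$ of a specific map is the identity, whereas yours reuses machinery (\lemref{reduction-of-completion}, \corref{Nakayama}) already set up elsewhere in the section. Your flatness claim is stated a bit tersely — it is a sheaf-level statement that requires the identification $\widehat{\mathcal{R}}(U)=\widehat{\mathcal{R}(U)}$ on $U\in\mathcal{B}$ before the ring-theoretic fact can be applied — but the hypotheses you cite are exactly what make this work.
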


\begin{proof}
Let $\mathcal{M}$ be a coherent $\mathcal{R}$-module, and let $\widehat{\mathcal{M}}$
denote its $p$-adic completion. By \cite{key-8}, lemma 1.1.6 and
the assumption on $\mathcal{B}$, we have ${\displaystyle \widehat{\mathcal{M}}(U)=\lim_{\leftarrow}\mathcal{M}(U)/p^{n}}$
for any $U\in\mathcal{B}$. So, by the noetherian hypothesis, we see
that the natural map $\widehat{\mathcal{R}}(U)\otimes_{\mathcal{R}(U)}\mathcal{M}(U)\to\widehat{\mathcal{M}}(U)$
is an isomorphism for all $U\in\mathcal{B}$ . It follows that the
map $\widehat{\mathcal{R}}\otimes_{\mathcal{R}}\mathcal{M}\to\widehat{\mathcal{M}}$
is an isomorphism of sheaves; and therefore (as $p$-adic completion
is exact on $\mathcal{R}(U)-\text{mod}$) that $\widehat{\mathcal{R}}$
is flat over $\mathcal{R}$. 

Now consider an arbitrary $\mathcal{M}^{\cdot}\in D_{coh}^{b}(\mathcal{R}-\text{mod})$.
The above implies 
\[
\mathcal{H}^{i}(\widehat{\mathcal{R}}\otimes_{\mathcal{R}}^{L}\mathcal{M}^{\cdot})\tilde{=}\widehat{\mathcal{R}}\otimes_{\mathcal{R}}\mathcal{H}^{i}(\mathcal{M}^{\cdot})\tilde{\to}\widehat{\mathcal{H}^{i}(\mathcal{M}^{\cdot})}
\]
Therefore $\widehat{\mathcal{R}}\otimes_{\mathcal{R}}^{L}\mathcal{M}^{\cdot}\in D_{coh}^{b}(\widehat{\mathcal{R}}-\text{mod})$,
which is contained in $D_{cc}(\mathcal{R}-\text{mod})$ by \cite{key-8},
theorem 1.6.1. 

Let $\mathcal{C}^{\cdot}$ be the cone of the map $\mathcal{M}^{\cdot}\to\widehat{\mathcal{R}}\otimes_{\mathcal{R}}^{L}\mathcal{M}^{\cdot}$
. Then we have a long exact sequence 
\[
\mathcal{H}^{i-1}(\mathcal{C}^{\cdot})\to\mathcal{H}^{i}(\mathcal{M}^{\cdot})\to\widehat{\mathcal{H}^{i}(\mathcal{M}^{\cdot})}\to\mathcal{H}^{i}(\mathcal{C}^{\cdot})
\]
and since both the kernel and cokernel of $\mathcal{H}^{i}(\mathcal{M}^{\cdot})\to\widehat{\mathcal{H}^{i}(\mathcal{M}^{\cdot})}$
are in $\mathcal{R}[p^{-1}]-\text{mod}$, we conclude that $\mathcal{C}^{\cdot}\in D(\mathcal{R}[p^{-1}]-\text{mod})$.
Since $\widehat{\mathcal{R}}\otimes_{\mathcal{R}}^{L}\mathcal{M}^{\cdot}\in D_{cc}(\mathcal{R}-\text{mod})$,
the result follows from the fact that $D_{cc}(\mathcal{R}-\text{mod})$
is the quotient of $D(\mathcal{R}-\text{mod})$ by $D(\mathcal{R}[p^{-1}]-\text{mod})$. 
\end{proof}

\section{\label{sec:The-Algebra}The Algebra $\widehat{\mathcal{D}}_{\mathfrak{X}}^{(0,1)}$}

To define the algebra $\widehat{\mathcal{D}}_{\mathfrak{X}}^{(0,1)}$
and prove \thmref{D01}, we are going to apply the basic gauge construction
(\exaref{BasicGaugeConstruction}) to Berthelot's differential operators.
Let $\mathfrak{X}$ be a smooth formal scheme over $W(k)$, and $X$
its special fibre. If $\mathfrak{X}$ is affine, then we denote $\mathfrak{X}=\text{Specf}(\mathcal{A})$,
and $X=\text{Spec}(A)$. 
\begin{defn}
\label{def:D^(0,1)-in-the-lifted-case} We set
\[
\mathcal{\widehat{D}}_{\mathfrak{X}}^{(0,1),i}:=\{\Phi\in\mathcal{\widehat{D}}_{\mathfrak{X}}^{(1)}|p^{i}\Phi\in\mathcal{\widehat{D}}_{\mathfrak{X}}^{(0)}\}
\]
We let $f:\mathcal{\widehat{D}}_{\mathfrak{X}}^{(0,1),i}\to\mathcal{\widehat{D}}_{\mathfrak{X}}^{(0,1),i+1}$
denote the inclusion, and $v:\mathcal{\widehat{D}}_{\mathfrak{X}}^{(0,1),i}\to\mathcal{\widehat{D}}_{\mathfrak{X}}^{(0,1),i-1}$
denote the multiplication by $p$. If $\Phi_{1}\in\mathcal{\widehat{D}}_{\mathfrak{X}}^{(0,1),i}$
and $\Phi_{2}\in\mathcal{\widehat{D}}_{\mathfrak{X}}^{(0,1),j}$,
then $\Phi_{1}\cdot\Phi_{2}\in\mathcal{\widehat{D}}_{\mathfrak{X}}^{(0,1),i+j}$,
and in this way we give 
\[
\mathcal{\widehat{D}}_{\mathfrak{X}}^{(0,1)}=\bigoplus_{i\in\mathbb{Z}}\mathcal{\widehat{D}}_{\mathfrak{X}}^{(0,1),i}
\]
the structure of a sheaf of graded algebras over $D(W(k))$. 

Now suppose $\mathfrak{X}=\text{Specf}(\mathcal{A})$. Then we have
ring theoretic analogue of the above: define $\widehat{D}_{\mathcal{A}}^{(0,1),i}:=\{\Phi\in\widehat{D}_{\mathcal{A}}^{(1)}|p^{i}\Phi\in\widehat{D}_{\mathcal{A}}^{(0)}\}$,
and as above we obtain the graded ring 
\[
\widehat{D}_{\mathcal{A}}^{(0,1)}=\bigoplus_{i}\widehat{D}_{\mathcal{A}}^{(0,1),i}
\]
 over $D(W(k))$. 

In this case, we also have the finite-order analogue: define $D_{\mathcal{A}}^{(0,1),i}:=\{\Phi\in D_{\mathcal{A}}^{(1)}|p^{i}\Phi\in D_{\mathcal{A}}^{(0)}\}$,
and as above we obtain the graded ring 
\[
D_{\mathcal{A}}^{(0,1)}=\bigoplus_{i}D_{\mathcal{A}}^{(0,1),i}
\]
 over $D(W(k))$. 
\end{defn}

It is easy to see that $\widehat{D}_{\mathcal{A}}^{(0,1),i}:=\Gamma(\mathcal{\widehat{D}}_{\mathfrak{X}}^{(0,1),i})$
when $\mathfrak{X}=\text{Specf}(\mathcal{A})$. 

With the help of local coordinates, this algebra is not too difficult
to study. We now suppose $\mathfrak{X}=\text{Specf}(\mathcal{A})$
where $\mathcal{A}$ possesses local coordinates; i.e., there is a
collection $\{x_{i}\}_{i=1}^{n}\in\mathcal{A}$ and derivations $\{\partial_{i}\}_{i=1}^{n}$
such that $\partial_{i}(x_{j})=\delta_{ij}$ and such that $\{\partial_{i}\}_{i=1}^{n}$
form a free basis for the $\mathcal{A}$-module of $W(k)$-linear
derivations. We let $\partial_{i}^{[p]}:=\partial_{i}^{p}/p!$, this
is a differential operator of order $p$ on $\mathcal{A}$. 
\begin{lem}
\label{lem:Basic-structure-of-D_A^(i)} For $i\geq0$ we have that
$\widehat{D}_{\mathcal{A}}^{(0,1),i}$ is the left $\widehat{D}_{\mathcal{A}}^{(0)}$-module\footnote{In fact, it is also the right $\widehat{D}_{\mathcal{A}}^{(0)}$-module
generated by the same elements, as an identical proof shows } generated by $\{(\partial_{1}^{[p]})^{j_{1}}\cdots(\partial_{n}^{[p]})^{j_{n}}\}$
where ${\displaystyle \sum_{t=1}^{n}j_{t}\leq i}$. For $i\leq0$
we have that $\widehat{D}_{\mathcal{A}}^{(0,1),i}=p^{-i}\cdot\widehat{D}_{\mathcal{A}}^{(0)}$. 
\end{lem}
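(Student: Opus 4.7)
The argument naturally splits by the sign of $i$. For $i\le 0$, the claim is essentially tautological: reading the condition $p^{i}\Phi\in\widehat{D}_{\mathcal{A}}^{(0)}$ inside $\widehat{D}_{\mathcal{A}}^{(1)}[p^{-1}]$ gives $\Phi\in p^{-i}\widehat{D}_{\mathcal{A}}^{(0)}$, and the containment $p^{-i}\widehat{D}_{\mathcal{A}}^{(0)}\subset\widehat{D}_{\mathcal{A}}^{(1)}$ is immediate from $\widehat{D}_{\mathcal{A}}^{(0)}\subset\widehat{D}_{\mathcal{A}}^{(1)}$.

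Suppose now $i\ge 0$, and let $M$ denote the left $\widehat{D}_{\mathcal{A}}^{(0)}$-module generated by the $(\partial^{[p]})^{\vec{j}}$ with $\sum j_{l}\le i$. For $M\subset\widehat{D}_{\mathcal{A}}^{(0,1),i}$: since $p$ is central, $\widehat{D}_{\mathcal{A}}^{(0,1),i}$ is visibly a left $\widehat{D}_{\mathcal{A}}^{(0)}$-submodule of $\widehat{D}_{\mathcal{A}}^{(1)}$, so it suffices to check each generator, which reduces to the identity
\[
p^{j}(\partial_{l}^{[p]})^{j}\;=\;\partial_{l}^{jp}/((p-1)!)^{j}\;\in\;\widehat{D}_{\mathcal{A}}^{(0)},
\]
valid because $(p-1)!$ is a $p$-adic unit by Wilson's theorem. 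Taking the product over $l$ (the $\partial_{l}^{[p]}$'s commute) and multiplying by the nonnegative power $p^{i-\sum j_{l}}$ finishes this direction.

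The reverse inclusion is the substantive content. I would introduce the family
\[
B\;:=\;\{\partial^{\vec{k}}(\partial^{[p]})^{\vec{j}}\;:\;0\le k_{l}<p,\;\vec{j}\in\mathbb{Z}_{\ge 0}^{n}\}
\]
and first prove that $B$ is an $\widehat{\mathcal{A}}$-basis of $\widehat{D}_{\mathcal{A}}^{(1)}$. Writing $\vec{m}=\vec{k}+p\vec{j}$ uniquely with $0\le k_{l}<p$, one has
\[
\partial^{\vec{k}}(\partial^{[p]})^{\vec{j}}\;=\;\Bigl(\prod_{l}\tfrac{(k_{l}+pj_{l})!}{j_{l}!\,(p!)^{j_{l}}}\Bigr)\,\partial^{\langle\vec{m}\rangle_{(1)}},
\]
where $\partial^{\langle\vec{m}\rangle_{(1)}}:=\bigl(\prod_{l}\lfloor m_{l}/p\rfloor!/m_{l}!\bigr)\partial^{\vec{m}}$ is Berthelot's standard $\widehat{\mathcal{A}}$-basis element of $\widehat{D}_{\mathcal{A}}^{(1)}$. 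A Legendre/Kummer computation (no base-$p$ carries arise when $k_{l}<p$ is added to $j_{l}p$) gives each scalar factor $p$-adic valuation zero; hence $B$ is also an $\widehat{\mathcal{A}}$-basis of $\widehat{D}_{\mathcal{A}}^{(1)}$.

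Now expand $\Phi=\sum a_{\vec{k},\vec{j}}\,\partial^{\vec{k}}(\partial^{[p]})^{\vec{j}}\in\widehat{D}_{\mathcal{A}}^{(0,1),i}$ in $B$. Rewriting $\partial^{\vec{k}}(\partial^{[p]})^{\vec{j}}=\partial^{\vec{k}+p\vec{j}}/(p!)^{\sum j_{l}}$ inside $\widehat{D}_{\mathcal{A}}^{(1)}[p^{-1}]$ and comparing coefficients in the $\widehat{\mathcal{A}}$-basis $\{\partial^{\vec{m}}\}$ of $\widehat{D}_{\mathcal{A}}^{(0)}$ (using uniqueness of the decomposition $\vec{m}=\vec{k}+p\vec{j}$), the hypothesis $p^{i}\Phi\in\widehat{D}_{\mathcal{A}}^{(0)}$ forces $v_{p}(a_{\vec{k},\vec{j}})\ge\max(0,\sum_{l}j_{l}-i)$. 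Dually, a general element of $M$ is a finite $\widehat{D}_{\mathcal{A}}^{(0)}$-combination of generators; expanding each $\widehat{D}_{\mathcal{A}}^{(0)}$-coefficient in $\{\partial^{\vec{m}}\}$ and using $\partial_{l}^{p}=p!\,\partial_{l}^{[p]}$ to re-express each summand in $B$ shows the coefficients of elements of $M$ are characterized by exactly the same valuation condition---and every such coefficient is realizable by a suitable choice of data. Thus $M=\widehat{D}_{\mathcal{A}}^{(0,1),i}$. The main obstacle is establishing $B$ as a genuine $\widehat{\mathcal{A}}$-basis of the completion (rather than of a strict submodule), which hinges on the $p$-adic unit computation above; once this is in hand, the rest is a coordinate-wise comparison and $p$-adic convergence is automatic.
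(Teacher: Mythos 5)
Your overall strategy is sound and genuinely different in organization from the paper's. The paper avoids topological bases of $\widehat{D}_{\mathcal{A}}^{(1)}$ entirely: it first proves the finite-order statement (that $\{\Phi\in D_{\mathcal{A}}^{(1)}\mid p^{i}\Phi\in D_{\mathcal{A}}^{(0)}\}$ is the $D_{\mathcal{A}}^{(0)}$-module generated by the $(\partial^{[p]})^{J}$ with $|J|\le i$) by essentially the same coefficient comparison you use, but with finite sums so that no convergence questions arise; it then passes to the completion by writing $p^{i}\Phi=\sum_{j\ge0}p^{j}\Phi_{j}$ with each $\Phi_{j}\in D_{\mathcal{A}}^{(0)}$ of finite order and splitting the sum at $j=i$. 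Your route works directly with the topological $\widehat{\mathcal{A}}$-basis $B$ of $\widehat{D}_{\mathcal{A}}^{(1)}$; the unit computation identifying $B$ with Berthelot's basis $\partial^{\langle\vec{m}\rangle_{(1)}}$ is correct (the Legendre count is right precisely because $k_{l}<p$), and this buys a cleaner coefficient-wise description of $\widehat{D}_{\mathcal{A}}^{(0,1),i}$, at the cost of having to track $p$-adic convergence by hand.

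That is where the one real problem lies: the condition $v_{p}(a_{\vec{k},\vec{j}})\ge\max(0,\sum_{l}j_{l}-i)$ is necessary but not sufficient for $p^{i}\Phi\in\widehat{D}_{\mathcal{A}}^{(0)}$. Membership in $\widehat{D}_{\mathcal{A}}^{(0)}$ also requires that the coefficients $p^{i}a_{\vec{k},\vec{j}}/(p!)^{|\vec{j}|}$ tend to zero, i.e. $v_{p}(a_{\vec{k},\vec{j}})-|\vec{j}|\to\infty$. For instance $\sum_{\vec{j}}p^{\max(0,|\vec{j}|-i)}(\partial^{[p]})^{\vec{j}}$ converges in $\widehat{D}_{\mathcal{A}}^{(1)}$ and satisfies your valuation bound with equality, yet $p^{i}$ times it is $\sum_{\vec{j}}((p-1)!)^{-|\vec{j}|}\partial^{p\vec{j}}$ plus finitely many integral terms, which does not converge in $\widehat{D}_{\mathcal{A}}^{(0)}$; this element lies neither in $\widehat{D}_{\mathcal{A}}^{(0,1),i}$ nor in $M$. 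So the assertion that every coefficient system satisfying the valuation bound is realizable in $M$ is false, and convergence is not ``automatic'' from the bound alone. The repair is straightforward: include the decay condition $v_{p}(a_{\vec{k},\vec{j}})-|\vec{j}|\to\infty$ in your characterization of $\widehat{D}_{\mathcal{A}}^{(0,1),i}$ (it is part of the hypothesis $p^{i}\Phi\in\widehat{D}_{\mathcal{A}}^{(0)}$, not an extra assumption), and observe that it is exactly what forces the coefficients $a_{\vec{k},\vec{j}}/(p!)^{|\vec{j}|-|\vec{j}_{0}|}$ of the $\widehat{D}_{\mathcal{A}}^{(0)}$-factors you construct to tend to zero, so that those factors genuinely lie in $\widehat{D}_{\mathcal{A}}^{(0)}$. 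With that adjustment the argument closes.
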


\begin{proof}
Let $i>0$. Clearly the module described is contained in $\widehat{D}_{\mathcal{A}}^{(0,1),i}$.
For the converse, we begin with the analogous finite-order version
of the statement. Namely, let $\Phi\in D_{\mathcal{A}}^{(1)}$ be
such that $p^{i}\Phi\in D_{\mathcal{A}}^{(0)}$. We can write 
\[
\Phi=\sum_{I,J}a_{I,J}\partial_{1}^{i_{1}}(\partial_{1}^{[p]})^{j_{1}}\cdots\partial_{n}^{i_{n}}(\partial_{n}^{[p]})^{j_{n}}=\sum_{I,J}a_{I,J}\frac{\partial_{1}^{i_{1}+pj_{1}}\cdots\partial_{n}^{i_{n}+pj_{n}}}{(p!)^{|J|}}
\]
where $|J|=j_{1}+\dots+j_{n}$, and the sum is finite. After collecting
like terms together, we may suppose that $0\leq i_{j}<p$. In that
case, the $a_{I,J}\in\mathcal{A}$ are uniquely determined by $\Phi$,
and $\Phi\in D_{\mathcal{A}}^{(0)}$ iff $p^{|J|}|a_{I,J}$ for all
$I,J$. So, if $p^{i}\Phi\in D_{\mathcal{A}}^{(0)}$, we have ${\displaystyle a_{I,J}\frac{p^{i}}{p^{|J|}}\in\mathcal{A}}$.
Thus whenever $|J|>i$ we have $a_{I,J}\in p^{|J|-i}\mathcal{A}$.
On the other hand
\[
p^{|J|-i}(\partial_{1}^{[p]})^{j_{1}}\cdots(\partial_{n}^{[p]})^{j_{n}}=u\cdot\partial_{1}^{pj'_{1}}\cdots\partial_{n}^{pj'_{n}}\cdot(\partial_{1}^{[p]})^{j''_{1}}\cdots\partial_{n}^{i_{n}}(\partial_{n}^{[p]})^{j''_{n}}
\]
where $u$ is a unit in $\mathbb{Z}_{p}$, ${\displaystyle \sum j'_{i}=|J|-i}$,
and ${\displaystyle \sum j''_{i}=i}$ (this follows from the relation
$p!\partial_{j}^{[p]}=\partial_{j}^{p}$). Therefore if $|J|>i$ we
have
\[
a_{I,J}\frac{\partial_{1}^{i_{1}+pj_{1}}\cdots\partial_{n}^{i_{n}+pj_{n}}}{(p!)^{|J|}}\in D_{\mathcal{A}}^{(0)}\cdot(\partial_{1}^{[p]})^{j''_{1}}\cdots\partial_{n}^{i_{n}}(\partial_{n}^{[p]})^{j''_{n}}
\]
It follows that $\Phi$ is contained in the $D_{\mathcal{A}}^{(0)}$-submodule
generated by $\{(\partial_{1}^{[p]})^{j_{1}}\cdots(\partial_{n}^{[p]})^{j_{n}}\}$
where $j_{1}+\dots+j_{n}\leq i$. So this submodule is exactly $\{\Phi\in D_{\mathcal{A}}^{(1)}|p^{i}\Phi\in D_{\mathcal{A}}^{(0)}\}$. 

Now let $\Phi\in\widehat{D}_{\mathcal{A}}^{(0,1),i}$. Then we can
write 
\[
p^{i}\Phi=\sum_{j=0}^{\infty}p^{j}\Phi_{j}
\]
where $\Phi_{j}\in D_{\mathcal{A}}^{(0)}$. Therefore, if $j\le i$
we have, by the previous paragraph, that $p^{-i}(p^{j}\Phi_{j})$
is contained in the $D_{\mathcal{A}}^{(0)}$ submodule generated by
$\{(\partial_{1}^{[p]})^{j_{1}}\cdots(\partial_{n}^{[p]})^{j_{n}}\}$
where $j_{1}+\dots+j_{n}\leq i$. So the result follows from ${\displaystyle \Phi=\sum_{j=0}^{i}p^{j-i}\Phi_{j}+\sum_{j=i+1}^{\infty}p^{j-i}\Phi_{j}}$
as the second term in this sum is contained in $\widehat{\mathcal{D}}_{\mathcal{A}}^{(0)}$.
This proves the lemma for $i\geq0$; while for $i\leq0$ it follows
immediately from the definition. 
\end{proof}
From this it follows that ${\displaystyle \widehat{D}_{\mathcal{A}}^{(0,1),\infty}:=\lim_{\rightarrow}\widehat{D}_{\mathcal{A}}^{(0,1),i}}$
is the sub-algebra of $\text{End}_{W(k)}(\mathcal{A})$ generated
by $\widehat{D}_{\mathcal{A}}^{(0)}$ and $\{\partial_{1}^{[p]},\dots,\partial_{n}^{[p]}\}$.
We have
\begin{lem}
\label{lem:Basic-Structure-of-D^(1)} The algebra $\widehat{D}_{\mathcal{A}}^{(0,1),\infty}$
is a (left and right) noetherian ring, whose $p$-adic completion
is isomorphic to $\widehat{D}_{\mathcal{A}}^{(1)}$. Further, we have
$\widehat{D}_{\mathcal{A}}^{(0,1),\infty}[p^{-1}]\tilde{=}\widehat{D}_{\mathcal{A}}^{(0)}[p^{-1}]$. 
\end{lem}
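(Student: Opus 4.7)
The plan is to handle the three claims in the order (3), (2), (1), since each builds on information gleaned from the preceding one. For the identity $\widehat{D}_{\mathcal{A}}^{(0,1),\infty}[p^{-1}]\tilde{=}\widehat{D}_{\mathcal{A}}^{(0)}[p^{-1}]$: both algebras sit inside $\widehat{D}_{\mathcal{A}}^{(1)}[p^{-1}]$. One inclusion is clear since $\widehat{D}_{\mathcal{A}}^{(0)}\subseteq\widehat{D}_{\mathcal{A}}^{(0,1),0}$. For the reverse, the additional generators of $\widehat{D}_{\mathcal{A}}^{(0,1),\infty}$ over $\widehat{D}_{\mathcal{A}}^{(0)}$ are the operators $\partial_{j}^{[p]}$, and since $\partial_{j}^{[p]}=\partial_{j}^{p}/p!$ holds inside $\widehat{D}_{\mathcal{A}}^{(1)}[p^{-1}]$, each $\partial_{j}^{[p]}$ already lies in $\widehat{D}_{\mathcal{A}}^{(0)}[p^{-1}]$ after inverting $p$.

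For the claim that the $p$-adic completion of $\widehat{D}_{\mathcal{A}}^{(0,1),\infty}$ is $\widehat{D}_{\mathcal{A}}^{(1)}$, I would use the sandwich
\[
D_{\mathcal{A}}^{(1)}\subseteq\widehat{D}_{\mathcal{A}}^{(0,1),\infty}\subseteq\widehat{D}_{\mathcal{A}}^{(1)}.
\]
The left inclusion holds because $D_{\mathcal{A}}^{(1)}$ is generated as a ring by $D_{\mathcal{A}}^{(0)}$ together with the $\partial_{j}^{[p]}$, all of which are visibly in $\widehat{D}_{\mathcal{A}}^{(0,1),\infty}$; the right inclusion is the very definition. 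Since $D_{\mathcal{A}}^{(1)}$ is $p$-adically dense in $\widehat{D}_{\mathcal{A}}^{(1)}$, so is $\widehat{D}_{\mathcal{A}}^{(0,1),\infty}$. To upgrade density to equality of completions, I would verify that $\widehat{D}_{\mathcal{A}}^{(0,1),\infty}$ is $p$-saturated in $\widehat{D}_{\mathcal{A}}^{(1)}$: if $p^{n}\Psi\in\widehat{D}_{\mathcal{A}}^{(0,1),i}$ for some $\Psi\in\widehat{D}_{\mathcal{A}}^{(1)}$, then $p^{n+i}\Psi=p^{i}(p^{n}\Psi)\in\widehat{D}_{\mathcal{A}}^{(0)}$, so $\Psi\in\widehat{D}_{\mathcal{A}}^{(0,1),n+i}\subset\widehat{D}_{\mathcal{A}}^{(0,1),\infty}$. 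Hence the subspace $p$-adic topology on $\widehat{D}_{\mathcal{A}}^{(0,1),\infty}$ from $\widehat{D}_{\mathcal{A}}^{(1)}$ coincides with its intrinsic one, so its completion is computed inside $\widehat{D}_{\mathcal{A}}^{(1)}$ and equals all of it.

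The main work is noetherianness. I would endow $\widehat{D}_{\mathcal{A}}^{(0,1),\infty}$ with the positive, exhaustive, multiplicative filtration $F^{i}:=\widehat{D}_{\mathcal{A}}^{(0,1),i}$ for $i\geq0$, with $F^{0}=\widehat{D}_{\mathcal{A}}^{(0)}$; by \lemref{Basic-structure-of-D_A^(i)}, each $F^{i}$ is generated over $F^{0}$ by monomials $(\partial^{[p]})^{J}$ with $|J|\leq i$. A direct Leibniz expansion of $\partial_{j}^{[p]}(af)$ shows that $[\partial_{j}^{[p]},a]\in\widehat{D}_{\mathcal{A}}^{(0)}$ for every $a\in\mathcal{A}$ (the intermediate terms carry coefficients $1/(m!(p-m)!)$ for $1\leq m\leq p-1$, which are $p$-adic units), while $[\partial_{j}^{[p]},\partial_{k}]=[\partial_{j}^{[p]},\partial_{k}^{[p]}]=0$ as operators on $\mathcal{A}$. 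Thus the symbols $\xi_{j}:=\overline{\partial_{j}^{[p]}}\in\mathrm{gr}^{1}$ commute both with one another and with $F^{0}=\mathrm{gr}^{0}$, giving a surjection of graded rings
\[
\widehat{D}_{\mathcal{A}}^{(0)}[\xi_{1},\ldots,\xi_{n}]\twoheadrightarrow\mathrm{gr}^{\bullet}\bigl(\widehat{D}_{\mathcal{A}}^{(0,1),\infty}\bigr).
\]
The noncommutative Hilbert basis theorem, applied to the left and right noetherian ring $\widehat{D}_{\mathcal{A}}^{(0)}$ (Berthelot), shows the source is two-sided noetherian, hence so is $\mathrm{gr}^{\bullet}$, and the standard lifting lemma for positive exhaustive filtrations transports this back to $\widehat{D}_{\mathcal{A}}^{(0,1),\infty}$ itself.

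The main obstacle is controlling the commutators cleanly enough to identify $\mathrm{gr}^{\bullet}$ as a quotient of a polynomial ring: one must track that the Leibniz corrections to $[\partial_{j}^{[p]},a]$ really land in $F^{0}$ (not merely in some larger piece), and that centrality of the $\xi_{j}$ propagates through the entire associated graded using the multiplicative structure of $F^{\bullet}$. Once that is in hand, the rest is formal.
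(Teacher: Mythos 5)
Your proposal is correct and follows essentially the same route as the paper: the same filtration $F^{i}=\widehat{D}_{\mathcal{A}}^{(0,1),i}$ with $\mathrm{gr}$ a quotient of $\widehat{D}_{\mathcal{A}}^{(0)}[T_{1},\dots,T_{n}]$ via the commutator identity $[\partial_{i}^{[p]},a]=\sum_{j=1}^{p}\partial_{i}^{[j]}(a)\partial_{i}^{[p-j]}\in\widehat{D}_{\mathcal{A}}^{(0)}$, the same sandwich $D_{\mathcal{A}}^{(1)}\subset\widehat{D}_{\mathcal{A}}^{(0,1),\infty}\subset\widehat{D}_{\mathcal{A}}^{(1)}$ for the completion, and the same observation $\partial_{i}^{[p]}=\partial_{i}^{p}/p!$ for the statement after inverting $p$. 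The only cosmetic difference is that you establish the completion claim by density plus $p$-saturation, whereas the paper directly checks that $D_{\mathcal{A}}^{(1)}/p^{n}\to\widehat{D}_{\mathcal{A}}^{(0,1),\infty}/p^{n}$ is an isomorphism; the two are equivalent.
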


\begin{proof}
First, put a filtration on $\widehat{D}_{\mathcal{A}}^{(0,1),\infty}$
by setting $F^{j}(\widehat{D}_{\mathcal{A}}^{(0,1),\infty})$ to be
the $\widehat{D}_{\mathcal{A}}^{(0)}$-submodule generated by $\{(\partial_{1}^{[p]})^{j_{1}}\cdots(\partial_{n}^{[p]})^{j_{n}}\}$
where $j_{1}+\dots+j_{n}\leq i$. Then $\text{gr}(\widehat{D}_{\mathcal{A}}^{(0,1),\infty})$
is a quotient of a polynomial ring $\widehat{D}_{\mathcal{A}}^{(0)}[T_{1},\dots T_{n}]$
where $T_{i}$ is sent to the class of $\partial_{i}^{[p]}$ in $\text{gr}_{1}(\widehat{D}_{\mathcal{A}}^{(0,1),\infty})$.
To see this, we need to show that the image of $\partial_{i}^{[p]}$
in $\text{gr}(\widehat{D}_{\mathcal{A}}^{(0,1),\infty})$ commutes
with $\widehat{D}_{\mathcal{A}}^{(0)}=\text{gr}^{0}(\widehat{D}_{\mathcal{A}}^{(0,1),\infty})$;
this follows from the relation 
\[
[\partial_{i}^{[p]},a]=\sum_{j=1}^{p}\partial_{i}^{[j]}(a)\partial_{i}^{[p-j]}\in\widehat{D}_{\mathcal{A}}^{(0)}
\]
for any $a\in\mathcal{A}$. So the fact that $\widehat{D}_{\mathcal{A}}^{(0,1),\infty}$
is a (left and right) noetherian ring follows from the Hilbert basis
theorem and the fact that $\widehat{D}_{\mathcal{A}}^{(0)}$ is left
and right noetherian.

Now we compute the $p$-adic completion of $\widehat{D}_{\mathcal{A}}^{(0,1),\infty}$.
Inside $\text{End}_{W(k)}(\mathcal{A})$, we have 
\[
D_{\mathcal{A}}^{(1)}\subset\widehat{D}_{\mathcal{A}}^{(0,1),\infty}\subset\widehat{D}_{\mathcal{A}}^{(1)}
\]
and so, for all $n>0$ we have 
\[
D_{\mathcal{A}}^{(1)}/p^{n}\to\widehat{D}_{\mathcal{A}}^{(0,1),\infty}/p^{n}\to\widehat{D}_{\mathcal{A}}^{(1)}/p^{n}
\]
and the composition is the identity. Thus $D_{\mathcal{A}}^{(1)}/p^{n}\to\widehat{D}_{\mathcal{A}}^{(0,1),\infty}/p^{n}$
is injective. On the other hand, suppose $\Phi\in\widehat{D}_{\mathcal{A}}^{(0,1),\infty}$.
By definition we can write 
\[
\Phi=\sum_{I}\varphi_{i}\cdot(\partial_{1}^{[p]})^{i_{1}}\cdots(\partial_{n}^{[p]})^{i_{n}}
\]
where $I=(i_{1},\dots,i_{n})$ is a multi-index, $\varphi_{i}\in\widehat{D}_{\mathcal{A}}^{(0)}$,
and the sum is finite. Choose elements $\psi_{i}\in D_{A}^{(0)}$
such that $\psi_{i}-\varphi_{i}\in p^{n}\cdot\widehat{D}_{\mathcal{A}}^{(0)}$
(this is possible since $\widehat{D}_{\mathcal{A}}^{(0)}$ is the
$p$-adic completion of $D_{A}^{(0)}$). Then if we set 
\[
\Phi'=\sum_{I}\psi_{i}\cdot(\partial_{1}^{[p]})^{i_{1}}\cdots(\partial_{n}^{[p]})^{i_{n}}\in D_{\mathcal{A}}^{(1)}
\]
we see that the class of $\Phi'$ in $D_{\mathcal{A}}^{(1)}/p^{n}$
maps to the class of $\Phi\in\widehat{D}_{\mathcal{A}}^{(0,1),\infty}/p^{n}$.
Thus $D_{\mathcal{A}}^{(1)}/p^{n}\to\widehat{D}_{\mathcal{A}}^{(0,1),\infty}/p^{n}$
is onto and therefore an isomorphism, and the completion result follows
by taking the inverse limit. 

Finally, since each $\partial_{i}^{[p]}=\partial_{i}^{p}/p!$ is contained
in $\widehat{D}_{\mathcal{A}}^{(0)}[p^{-1}]$, we must have $\widehat{D}_{\mathcal{A}}^{(0)}\subset D_{\mathcal{A}}^{(0,1),\infty}\subset\widehat{D}_{\mathcal{A}}^{(0)}[p^{-1}]$,
so that $\widehat{D}_{\mathcal{A}}^{(0,1),\infty}[p^{-1}]\tilde{=}\widehat{D}_{\mathcal{A}}^{(0)}[p^{-1}]$. 
\end{proof}
\begin{cor}
$\widehat{D}_{\mathcal{A}}^{(0,1)}$ is a (left and right) noetherian
ring, which is finitely generated as an algebra over $\widehat{D}_{\mathcal{A}}^{(0)}[f,v]$.
Therefore the sheaf $\widehat{\mathcal{D}}_{\mathfrak{X}}^{(0,1)}$
is a coherent, locally noetherian sheaf of rings which is stalk-wise
noetherian. 
\end{cor}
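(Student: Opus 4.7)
The plan is a standard filtered-noetherian argument, parallel to the proof of \lemref{Basic-Structure-of-D^(1)}: establish finite generation from the explicit local description, pass to an associated graded which is (essentially) a commutative polynomial ring over an extended Rees-type algebra, and finally sheafify.

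For finite generation, \lemref{Basic-structure-of-D_A^(i)} provides, for each $i \geq 0$, a generating set $\{(\partial_1^{[p]})^{j_1}\cdots(\partial_n^{[p]})^{j_n} : \sum_t j_t \leq i\}$ of $\widehat{D}_{\mathcal{A}}^{(0,1),i}$ as a left (or right) $\widehat{D}_{\mathcal{A}}^{(0)}$-module. Since multiplication by $f$ realizes the tautological inclusion $\widehat{D}_{\mathcal{A}}^{(0,1),s} \hookrightarrow \widehat{D}_{\mathcal{A}}^{(0,1),s+1}$, any such monomial of total degree $s \leq i$ equals $f^{i-s}$ times a homogeneous degree-$s$ element of the graded algebra. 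For $i \leq 0$ the same lemma gives $\widehat{D}_{\mathcal{A}}^{(0,1),i} = v^{-i}\widehat{D}_{\mathcal{A}}^{(0)}$. Hence, as a graded $D(W(k))$-algebra, $\widehat{D}_{\mathcal{A}}^{(0,1)}$ is generated over $\widehat{D}_{\mathcal{A}}^{(0)}[f,v]$ by the $n$ degree-$1$ elements $\partial_1^{[p]}, \dots, \partial_n^{[p]}$.

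For noetherianity, I would introduce the graded filtration $\{F^j\}_{j \geq 0}$ on $\widehat{D}_{\mathcal{A}}^{(0,1)}$, where $F^j$ is the graded $\widehat{D}_{\mathcal{A}}^{(0)}[f,v]$-submodule generated by products of at most $j$ of the $\partial_t^{[p]}$; this is exhaustive with $F^{-1} = 0$. The commutation relations $[\partial_i^{[p]}, a] = \sum_{j=1}^{p} \partial_i^{[j]}(a)\partial_i^{[p-j]} \in \widehat{D}_{\mathcal{A}}^{(0)}$ (as used in \lemref{Basic-Structure-of-D^(1)}) and $[\partial_i^{[p]}, \partial_j^{[p]}] = 0$ (since $\partial_i, \partial_j$ already commute in $\widehat{D}_{\mathcal{A}}^{(1)}$) imply that the symbols of the $\partial_t^{[p]}$ in $\mathrm{gr}^F$ are central. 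Thus $\mathrm{gr}^F \widehat{D}_{\mathcal{A}}^{(0,1)}$ is a quotient of the polynomial ring $(\widehat{D}_{\mathcal{A}}^{(0)}[f,v]/(fv-p))[T_1,\dots,T_n]$ in central variables $T_t$. The base ring $\widehat{D}_{\mathcal{A}}^{(0)}[f,v]/(fv-p)$ is the extended Rees-type algebra attached to the central principal ideal $(p) \subset \widehat{D}_{\mathcal{A}}^{(0)}$, sitting as $\widehat{D}_{\mathcal{A}}^{(0)}[f, pf^{-1}] \subset \widehat{D}_{\mathcal{A}}^{(0)}[f^{\pm 1}]$, and is left and right noetherian by standard arguments. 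Hilbert's basis theorem and the filtered Hilbert basis theorem then yield that $\widehat{D}_{\mathcal{A}}^{(0,1)}$ is left and right noetherian.

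For the sheaf statements, noetherianity of sections over any affine $\text{Specf}(\mathcal{A}) \subset \mathfrak{X}$ admitting local coordinates immediately gives that $\widehat{\mathcal{D}}_{\mathfrak{X}}^{(0,1)}$ is locally noetherian, and finite generation over the coherent sheaf $\widehat{\mathcal{D}}_{\mathfrak{X}}^{(0)}[f,v]/(fv-p)$ yields coherence as a sheaf of rings. The main obstacle is stalk-wise noetherianity, which does not follow formally for non-commutative sheaves. To handle this I would adapt Berthelot's approach for $\widehat{\mathcal{D}}_{\mathfrak{X}}^{(0)}$ and $\widehat{\mathcal{D}}_{\mathfrak{X}}^{(1)}$: the filtration $F^j$ is Zariski-local and descends to stalks, the associated graded stalks are polynomial rings over stalks of $\widehat{\mathcal{D}}_{\mathfrak{X}}^{(0)}[f,v]/(fv-p)$ (whose noetherianity is inherited from Berthelot's theorem for $\widehat{\mathcal{D}}_{\mathfrak{X}}^{(0)}$), and a stalk-wise filtered Hilbert basis argument then concludes.
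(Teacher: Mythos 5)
Your proposal is correct and follows essentially the same route as the paper, which deduces the corollary from the filtration-by-$\partial^{[p]}$-degree argument of \lemref{Basic-Structure-of-D^(1)} (associated graded a quotient of a polynomial ring in the central symbols $T_{i}$ of the $\partial_{i}^{[p]}$ over the noetherian base, then Hilbert basis and the filtered-to-graded transfer), together with the generators supplied by \lemref{Basic-structure-of-D_A^(i)}. The only difference is that you spell out the extension of that argument from $\widehat{D}_{\mathcal{A}}^{(0,1),\infty}$ to the full graded ring over $D(\widehat{D}_{\mathcal{A}}^{(0)})$ and the stalk-wise statement, which the paper leaves implicit.
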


This follows immediately from the above. Set $\widehat{\mathcal{D}}_{\mathfrak{X}}^{(0,1),-\infty}:=\widehat{\mathcal{D}}_{\mathfrak{X}}^{(0,1)}/(v-1)\tilde{=}\widehat{\mathcal{D}}_{\mathfrak{X}}^{(0)}$,
while $\widehat{\mathcal{D}}_{\mathfrak{X}}^{(0,1),\infty}:=\widehat{\mathcal{D}}_{\mathfrak{X}}^{(0,1)}/(f-1)$
has $p$-adic completion equal to $\widehat{\mathcal{D}}_{\mathfrak{X}}^{(1)}$. 

\subsection{Generators and Relations, Local Coordinates}

In addition to the description above as via endomorphisms of $\mathcal{O}_{\mathfrak{X}}$,
it is also useful to have a more concrete (local) description of $\widehat{\mathcal{D}}_{\mathfrak{X}}^{(0,1)}$
and, especially, $\mathcal{D}_{\mathfrak{X}}^{(0,1)}/p$ . Suppose
$\mathfrak{X}=\text{Specf}(\mathcal{A})$ possesses local coordinates
as above. We'll start by describing ${\displaystyle \widehat{D}_{\mathcal{A}}^{(0,1),+}:=\bigoplus_{i=0}^{\infty}\widehat{D}_{\mathcal{A}}^{(0,1),i}}$. 
\begin{defn}
Let $M_{\mathcal{A}}$ be the free graded $\mathcal{A}$-module on
generators $\{\xi_{i}\}_{i=1}^{n}$ (in degree $0$), and $f$ and
$\{\xi_{i}^{[p]}\}_{i=1}^{n}$ (in degree $1$). Let $\mathcal{B}^{(0,1),+}$
be the quotient of the tensor algebra $T_{\mathcal{A}}(M_{\mathcal{A}})$
by the relations $[f,m]$ (for all $m\in M_{\mathcal{A}}$), $[\xi_{i},a]-\partial_{i}(a)$
(for all $i$, and for any $a\in A$), $[\xi_{i},\xi_{j}]$, $[\xi_{i}^{[p]},\xi_{j}^{[p]}]$,
$[\xi_{i}^{[p]},\xi_{j}]$ (for all $i,j$), ${\displaystyle [\xi_{i}^{[p]},a]-f\cdot\sum_{r=0}^{p-1}\frac{\partial_{i}^{p-r}}{(p-r)!}(a)\cdot\frac{\xi_{i}^{r}}{r!}}$
(for all $i$, and for any $a\in A$), $f\xi_{i}^{p}-p!\xi_{i}^{[p]}$
for all $i$, and $\xi_{i}^{p}\xi_{j}^{[p]}-\xi_{j}^{p}\xi_{i}^{[p]}$
for all $i$ and $j$. 

The algebra $\mathcal{B}^{(0,1),+}$ inherits a grading from $T_{\mathcal{A}}(M_{\mathcal{A}})$.
Let $\mathcal{C}^{(0,1),+}$ be the graded ring obtained by $p$-adically
completing each component of $\mathcal{B}^{(0,1),+}$. 
\end{defn}

Then we have 
\begin{lem}
\label{lem:Reduction-is-correct}There is an isomorphism of graded
algebras $\mathcal{C}^{(0,1),+}\tilde{\to}\widehat{D}_{\mathcal{A}}^{(0,1),+}$. 
\end{lem}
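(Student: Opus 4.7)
The plan is to construct a natural graded algebra map $\phi: \mathcal{B}^{(0,1),+} \to D_{\mathcal{A}}^{(0,1),+}$, verify the defining relations by direct computation, prove $\phi$ is a degree-wise isomorphism via a symbol-filtration argument, and then pass to $p$-adic completions.

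First, define $\phi$ by $\xi_i \mapsto \partial_i \in D_{\mathcal{A}}^{(0,1),0}$, $f \mapsto f$, and $\xi_i^{[p]} \mapsto \partial_i^{[p]} \in D_{\mathcal{A}}^{(0,1),1}$ (which is legitimate since $p\,\partial_i^{[p]} = \partial_i^p/(p-1)!$ lies in $D_{\mathcal{A}}^{(0)}$, as $(p-1)!$ is a unit in $W(k)$). Each defining relation is then a direct verification in $\text{End}_{W(k)}(\mathcal{A})$: the commutator identities for $\xi_i$ come from the Leibniz rule, and the explicit formula for $[\partial_i^{[p]},a]$ arises by dividing $[\partial_i^p,a] = \sum_{r=0}^{p-1}\binom{p}{r}\partial_i^{p-r}(a)\partial_i^r$ by $p!$; centrality of $f$ holds because $f$ is simply the tautological degree-shifting inclusion $D_{\mathcal{A}}^{(0,1),k} \hookrightarrow D_{\mathcal{A}}^{(0,1),k+1}$; the identity $f\xi_i^p = p!\xi_i^{[p]}$ (and hence also $\xi_i^p\xi_j^{[p]} = \xi_j^p\xi_i^{[p]}$) encodes the endomorphism identity $p!\partial_i^{[p]} = \partial_i^p$ in degree $1$. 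Extending by $p$-adic completion of each graded component yields the desired $\phi: \mathcal{C}^{(0,1),+} \to \widehat{D}_{\mathcal{A}}^{(0,1),+}$. Surjectivity of $\phi$ in degree $i$ is immediate from the finite-order case of \lemref{Basic-structure-of-D_A^(i)}: the monomials $f^{i-|J|}\prod_t(\partial_t^{[p]})^{j_t}$ with $|J|\le i$ generate $D_{\mathcal{A}}^{(0,1),i}$ as a left $D_{\mathcal{A}}^{(0)}$-module, and all lie in the image.

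The main obstacle is injectivity, which I would prove by comparing associated graded rings under a symbol filtration. Assign weights $w(\xi_i)=1$, $w(\xi_i^{[p]})=p$, $w(a)=w(f)=0$ for $a\in\mathcal{A}$, and let $F^\bullet$ be the induced ascending filtration on $\mathcal{B}^{(0,1),+}$. Each commutator relation is either trivial on $\text{gr}_F$ or has right-hand side of strictly smaller weight than the product (for instance the right-hand side of $[\xi_i^{[p]},a]$ has top weight $p-1 < p$), so $\text{gr}_F\mathcal{B}^{(0,1),+}$ is the graded-commutative $\mathcal{A}$-algebra on $\xi_i, \xi_i^{[p]}, f$ modulo the weight-homogeneous Frobenius relations $f\xi_i^p = p!\xi_i^{[p]}$ and $\xi_i^p\xi_j^{[p]} = \xi_j^p\xi_i^{[p]}$. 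Equip $D_{\mathcal{A}}^{(0,1),+}$ with the parallel filtration by operator order inside $D_{\mathcal{A}}^{(1)}$; using the PBW basis $\{\partial^I(\partial^{[p]})^J : 0 \le i_t < p\}$ of $D_{\mathcal{A}}^{(1)}$, one sees that $\text{gr}$ of $D_{\mathcal{A}}^{(0,1),+}$ admits the same presentation, with $f\sigma(\partial_i)^p = p!\sigma(\partial_i^{[p]})$ as the sole relation per $i$. Since $F^0$ is $\mathcal{A}\cdot f^i$ on both sides in each degree $i$, and both filtrations are exhaustive and separated, induction on symbol weight via the five-lemma promotes the $\text{gr}_F$-isomorphism to an isomorphism $\mathcal{B}^{(0,1),i}\tilde{\to}D_{\mathcal{A}}^{(0,1),i}$ for every $i$. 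Finally, since each $D_{\mathcal{A}}^{(0,1),i}$ is finitely generated over the noetherian ring $D_{\mathcal{A}}^{(0)}$, $p$-adic completion is exact and delivers $\mathcal{C}^{(0,1),i}\tilde{\to}\widehat{D_{\mathcal{A}}^{(0,1),i}}$; the latter coincides with $\widehat{D}_{\mathcal{A}}^{(0,1),i}$ from \defref{D^(0,1)-in-the-lifted-case} via the matching generators supplied by \lemref{Basic-structure-of-D_A^(i)}.
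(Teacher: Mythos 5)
Your construction of the map, the verification of the relations, and the surjectivity argument all match the paper. The gap is in the injectivity step. Filtering $\mathcal{B}^{(0,1),+}$ by symbol weight only gives you, for free, a \emph{surjection} from the presented graded-commutative algebra $P:=\mathcal{A}[f]\bigl[\xi_i,\xi_i^{[p]}\bigr]/\bigl(f\xi_i^{p}-p!\,\xi_i^{[p]},\ \xi_i^{p}\xi_j^{[p]}-\xi_j^{p}\xi_i^{[p]}\bigr)$ onto $\text{gr}_F\mathcal{B}^{(0,1),+}$; whether there are hidden relations in the associated graded is exactly a PBW-type statement, and the standard ways to prove such statements are either a normal-form/rewriting analysis or a faithful representation. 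Your phrase ``one sees that $\text{gr}$ of $D_{\mathcal{A}}^{(0,1),+}$ admits the same presentation'' therefore conceals the entire difficulty: to conclude injectivity you must show that the composite $P\to\text{gr}_F\mathcal{B}^{(0,1),+}\to\text{gr}\,D_{\mathcal{A}}^{(0,1),+}$ is injective, and this requires an explicit basis for $P$ in each degree. Because $p!$ is \emph{not} a unit, the relation $f\xi_i^{p}=p!\,\xi_i^{[p]}$ does not simply eliminate monomials with $i_t\geq p$; it forces delicate $p$-divisibility constraints (e.g.\ $f\xi_1^{p}$ equals $p!\,\xi_1^{[p]}$, but $\xi_1^{p}$ itself survives in degree $0$, and in degree $i$ one must track exactly which multiples of which reduced monomials occur). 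Matching this against the order-filtration graded of $D_{\mathcal{A}}^{(0,1),i}$, whose $\mathcal{A}$-basis consists of $p^{\max(|J|-i,0)}\partial^{I}(\partial^{[p]})^{J}$, is precisely the $p$-adic valuation bookkeeping that constitutes the hard part of the proof. You have not carried it out, and there is also a circularity hazard: the explicit freeness statements you would most naturally invoke (\thmref{Local-Coords-for-D+}, \corref{Local-coords-over-A=00005Bf,v=00005D}, \corref{Each-D^(i)-is-free}) are all proved \emph{using} this lemma.

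The paper avoids this by exhibiting a faithful module: $\widehat{D}_{\mathcal{A}}^{(0,1),+}$ acts on $\mathcal{A}[f]$ by $\Phi\cdot(af^{i})=\Phi(a)f^{i+j}$, and one shows by induction on the grading degree that any $\Phi\in\mathcal{C}^{(0,1),i}$ acting as zero lies in $f\cdot\mathcal{C}^{(0,1),i-1}$. The key computation there is to expand $\Phi$ in monomials, compare its action on $\mathcal{A}$ after inverting $p$ (where $\partial_i^{[p]}=\partial_i^{p}/p!$), deduce that the top coefficients $a_{J,I}$ lie in $p\mathcal{A}$, and then use $p\,\xi_i^{[p]}\in f\cdot\mathcal{C}^{(0,1),+}$ to absorb those terms into $f\cdot\mathcal{C}^{(0,1),i-1}$. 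If you want to keep your associated-graded framing, you would still need to supply an argument of exactly this type (a faithful action or an explicit normal form for $P$) to justify the presentation of the two associated graded rings; as written the proposal assumes what it needs to prove.
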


\begin{proof}
There is an evident map $T_{\mathcal{A}}(M_{\mathcal{A}})\to\widehat{D}_{\mathcal{A}}^{(0,1),+}$
which is the identity on $\mathcal{A}$ and which sends $\xi_{i}\to\partial_{i}$,
$f\to f$ and $\xi_{i}^{[p]}\to\partial_{i}^{[p]}$. Clearly this
induces a graded map $\mathcal{B}^{(0,1),+}\to\tilde{\to}\widehat{D}_{\mathcal{A}}^{(0,1),+}$.
Since each graded component of $\widehat{D}_{\mathcal{A}}^{(0,1),+}$
is $p$-adically complete, we obtain a map $\mathcal{C}^{(0,1),+}\to\tilde{\to}\widehat{D}_{\mathcal{A}}^{(0,1),+}$.
Let us show that is is an isomorphism.

We begin with the surjectivity. In degree $0$, we have that $\mathcal{B}^{(0,1),0}$
is generated by $\mathcal{A}$ and $\{\xi_{i}\}_{i=1}^{n}$ and satisfies
$[\xi_{i},a]=\partial_{i}(a)$ for all $i$ and all $a\in\mathcal{A}$.
Thus the obvious map $\mathcal{B}^{(0,1),0}\to\mathcal{D}_{\mathcal{A}}^{(0)}$
is an isomorphism, and therefore so is the completion $\mathcal{C}^{(0,1),0}\to\widehat{\mathcal{D}}_{\mathcal{A}}^{0}=\widehat{\mathcal{D}}_{\mathcal{A}}^{(0,1),0}$.
Further, we saw above (in \lemref{Basic-structure-of-D_A^(i)}) that
each $\widehat{\mathcal{D}}_{\mathcal{A}}^{(0,1),i}$ (for $i\geq0$)
is generated, as a module over $\widehat{\mathcal{D}}_{\mathcal{A}}^{0}$,
by terms of the form $\{f^{i_{0}}\partial_{1}^{[p]i_{1}}\cdots\partial_{n}^{[p]i_{n}}\}$
where $i_{0}+i_{1}\dots i_{n}=i$. By definition, $\mathcal{C}^{(0,1),i}$
is exactly the $\mathcal{C}^{(0,1),0}$-module generated by terms
of the form $\{f^{i_{0}}\xi_{1}^{[p]i_{1}}\cdots\xi_{n}^{[p]i_{n}}\}$.
Thus we see that the map surjects onto the piece of degree $i$ for
all $i$; hence the map is surjective. 

To show the injectivity, consider the graded ring ${\displaystyle \mathcal{A}[f]=\bigoplus_{i=0}^{\infty}\mathcal{A}}$.
The algebra $\widehat{D}_{\mathcal{A}}^{(0,1),+}$ acts on $\mathcal{A}[f]$
as follows: if $\Phi\in\widehat{D}_{\mathcal{A}}^{(0,1),j}$ then
$\Phi\cdot(af^{i})=\Phi(a)f^{i+j}$, where $\Phi(a)$ is the usual
action of $\Phi$ on $\mathcal{A}$, coming from the fact that $\Phi\in\widehat{D}_{\mathcal{A}}^{(1)}$.
In addition, $\mathcal{C}^{(0,1),+}$ acts on $\mathcal{A}[f]$ via
$\xi_{i}(af^{j})=\xi_{i}(a)f^{j}$ and $\xi_{i}^{[p]}(af^{j})=\partial_{i}^{[p]}(a)f^{j+1}$.
This action agrees with the composed map 
\[
\mathcal{C}^{(0,1),+}\to\widehat{D}_{\mathcal{A}}^{(0,1),+}\to\text{End}_{W(k)}(\mathcal{A}[f])
\]
where the latter map comes from the action of $\widehat{D}_{\mathcal{A}}^{(0,1),+}$
on $\mathcal{A}[f]$. We will therefore be done if we can show that
this composition is injective. 

For this, we proceed by induction on the degree $i$. When $i=0$
it follows immediately from the fact that $\mathcal{C}^{(0,1),0}\tilde{=}\widehat{D}_{\mathcal{A}}^{(0)}$.
Let $\Phi\in\mathcal{C}^{(0,1),i}$. If $\Phi$ acts as zero on $\mathcal{A}[f]$,
we will show that $\Phi\in f\cdot\mathcal{C}^{(0,1),i-1}$; the induction
assumption (and the fact that $f$ acts injectively on $\mathcal{A}[f]$)
then implies that $\Phi=0$. 

Write 
\[
\Phi=\sum_{J}\Phi_{J}(\xi_{1}^{[p]})^{j_{1}}\cdots(\xi_{n}^{[p]})^{j_{n}}-f^{i}\Psi_{0}-\sum_{s=1}^{i-1}f^{i-s}\sum_{J}\Psi_{sJ}(\xi_{1}^{[p]})^{j_{1}}\cdots(\xi_{n}^{[p]})^{j_{n}}
\]
where, in the first sum, each $J$ satisfies ${\displaystyle i=|J|=\sum_{i=1}^{n}j_{i}}$,
and in the second sum we have $|J|=i-s$, and $\Phi_{J},\Psi_{0},\Psi_{sJ}\in\mathcal{C}^{(0,1),0}\tilde{=}\widehat{D}_{\mathcal{A}}^{(0)}$.
We shall show that every term in the first sum is contained in $f\cdot\mathcal{C}^{(0,1),i-1}$. 

Expanding in terms of monomials in the $\{\xi_{i}\}$, denoted $\{\xi^{I}\}$,
we obtain an equation
\[
\Phi=\sum_{I,J}a_{J,I}\xi^{I}(\xi_{1}^{[p]})^{j_{1}}\cdots(\xi_{n}^{[p]})^{j_{n}}-f^{i}\sum_{I}b_{0,I}\xi^{I}-\sum_{s=1}^{i-1}f^{i-s}\sum_{I,J}b_{s,I,J}\xi^{I}(\xi_{1}^{[p]})^{j_{1}}\cdots(\xi_{n}^{[p]})^{j_{n}}
\]
where $a_{J,I}\to0$, $b_{0,I}\to0$, and $b_{s,I,J}\to0$ (in the
$p$-adic topology on $\mathcal{A}$) as $|I|\to\infty$. For any
multi-index $J=(j_{1},\dots,j_{n})$ let $pJ=(pj_{1},\dots,pj_{n})$.
The relations $\xi_{i}^{p}\xi_{j}^{[p]}=\xi_{j}^{p}\xi_{i}^{[p]}$
(for all $i,j$ ) in $\mathcal{C}^{(0,1),+}$ ensure that $\xi^{I}(\xi_{1}^{[p]})^{j_{1}}\cdots(\xi_{n}^{[p]})^{j_{n}}=\xi^{I'}(\xi_{1}^{[p]})^{j'_{1}}\cdots(\xi_{n}^{[p]})^{j'_{n}}$
whenever $I+pJ=I'+pJ'$ and $|J|=|J'|$. Since, in the sum ${\displaystyle \sum_{I,J}a_{J,I}\xi^{I}(\xi_{1}^{[p]})^{j_{1}}\cdots(\xi_{n}^{[p]})^{j_{n}}}$,
we have $|J|=i$ for all $J$, we may collect terms together and assume
that each multi-index $I+pJ$ is represented only once. 

Now, the fact that $\Phi$ acts as zero on $\mathcal{A}[f]$ implies
that the differential operators ${\displaystyle \sum_{I,J}a_{J,I}\partial^{I}(\partial_{1}^{[p]})^{j_{1}}\cdots(\partial_{n}^{[p]})^{j_{n}}}$
and ${\displaystyle \sum_{I}b_{0,I}\partial^{I}+\sum_{s=1}^{i-1}\sum_{I,J}b_{s,I,J}\partial^{I}(\partial_{1}^{[p]})^{j_{1}}\cdots(\partial_{n}^{[p]})^{j_{n}}}$
act as the same endomorphism on $\mathcal{A}$. Therefore, for each
$a_{J,I}$ which is nonzero, we have 
\[
a_{J,I}\partial^{I}(\partial_{1}^{[p]})^{j_{1}}\cdots(\partial_{n}^{[p]})^{j_{n}}=\sum_{I'=I+pJ}b_{0,I'}\partial^{I'}+\sum_{s=1}^{i-1}\sum_{I'+pJ'=I+pJ}b_{s,I',J'}\partial^{I'}(\partial_{1}^{[p]})^{j'_{1}}\cdots(\partial_{n}^{[p]})^{j'_{n}}
\]
Now, after inverting $p$, and using $\partial_{i}^{[p]}=\partial_{i}^{p}/p!$
inside $\widehat{\mathcal{D}}_{\mathcal{A}}^{(1)}[p^{-1}]$, we obtain
the equation 
\[
\frac{a_{J,I}}{(p!)^{i}}=b_{0,I'}+\sum_{s=1}^{i-1}\sum_{I',J'}\frac{b_{s,I',J'}}{(p!)^{s}}
\]
which implies $a_{J,I}\in p\cdot\mathcal{A}$. But we have the relation
$f\xi_{i}^{p}-p!\xi_{i}^{[p]}$ in $\mathcal{C}^{(0,1),+}$; i.e.,
$p\xi_{i}^{[p]}\in f\cdot\mathcal{C}^{(0,1),+}$. Therefore $a_{J,I}\in p\cdot\mathcal{A}$
implies $a_{J,I}\xi^{I}(\xi_{1}^{[p]})^{j_{1}}\cdots(\xi_{n}^{[p]})^{j_{n}}\in f\cdot\mathcal{C}^{(0,1),i-1}$.
Since this holds for all $I,J$, we see that in fact $\Phi\in f\cdot\mathcal{C}^{(0,1),i-1}$
as desired. 
\end{proof}
\begin{rem}
Given the isomorphism of the theorem, from now on, we shall denote
$\xi_{i}$ by $\partial_{i}$ and $\xi_{i}^{[p]}$ by $\partial_{i}^{[p]}$
inside $\mathcal{C}^{(0,1),+}$. 
\end{rem}

Next, we have
\begin{lem}
\label{lem:linear-independance-over-D_0-bar} Suppose that $\{\Phi_{sJ}\}$
are elements of $\widehat{D}_{\mathcal{A}}^{(0)}$, and suppose that,
for some $i\geq1$, we have
\[
\sum_{s=0}^{i-1}\sum_{|J|=s}f^{i-s}\Phi_{sJ}(\partial_{1}^{[p]})^{j_{1}}\cdots(\partial_{n}^{[p]})^{j_{n}}\in p\cdot\widehat{D}_{\mathcal{A}}^{(0,1),i}
\]
in $\widehat{D}_{\mathcal{A}}^{(0,1),+}$. Then each $\Phi_{sJ}$
is contained in the right ideal generated by $\{\partial_{1}^{p},\dots,\partial_{n}^{p}\}$
and $p$. 
\end{lem}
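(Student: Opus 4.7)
The plan is to reduce the hypothesis modulo $p$ and then exploit the identity $\partial_k^p=p!\,\partial_k^{[p]}$, which forces each $\partial_k^p$ to vanish in $\mathcal{D}_A^{(1)}$.

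First I would translate the hypothesis into a statement inside $\widehat{D}_{\mathcal{A}}^{(1)}$. In the graded algebra $\widehat{D}_\mathcal{A}^{(0,1)}$, the generator $f$ acts as the inclusion $\widehat{D}_{\mathcal{A}}^{(0,1),s}\hookrightarrow\widehat{D}_{\mathcal{A}}^{(0,1),s+1}$ of subsets of $\widehat{D}_{\mathcal{A}}^{(1)}$, while multiplication by $p$ in degree $i$ is just ordinary multiplication by $p$ inside $\widehat{D}_\mathcal{A}^{(1)}$. Setting
\[
Z:=\sum_{s=0}^{i-1}\sum_{|J|=s}\Phi_{sJ}(\partial_{1}^{[p]})^{j_{1}}\cdots(\partial_{n}^{[p]})^{j_{n}}\in\widehat{D}_{\mathcal{A}}^{(1)},
\]
the hypothesis becomes $Z\in p\widehat{D}_{\mathcal{A}}^{(1)}$, equivalently $\bar Z=0$ in $\mathcal{D}_A^{(1)}:=\widehat{D}_{\mathcal{A}}^{(1)}/p$.

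Next I would analyze this vanishing via PBW expansions. Writing $\bar\Phi_{sJ}=\sum_{L\geq0}c_{sJ,L}\partial^{L}$ in the basis $\{\partial^{L}\}$ of $\mathcal{D}_{A}^{(0)}$ over $A$, and using the identity $\partial^{L}=(p!)^{|Q|}\partial^{R}(\partial^{[p]})^{Q}$ valid in $\widehat{D}_{\mathcal{A}}^{(1)}$ (for $L=pQ+R$ with $0\leq r_{k}<p$), the factor $(p!)^{|Q|}$ kills every summand with $|Q|\geq 1$ modulo $p$. Hence the image of $\bar\Phi_{sJ}$ in $\mathcal{D}_A^{(1)}$ is supported on basis elements $\partial^{R}$ with $0\leq r_{k}<p$. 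Substituting into $\bar Z=0$, one obtains
\[
\sum_{s,J,R}c_{sJ,R}\,\partial^{R}(\partial_{1}^{[p]})^{j_{1}}\cdots(\partial_{n}^{[p]})^{j_{n}}=0\quad\text{in }\mathcal{D}_A^{(1)},
\]
and since $\{\partial^{R}(\partial^{[p]})^{J}:0\leq r_{k}<p,\,J\geq 0\}$ is an $A$-basis of $\mathcal{D}_A^{(1)}$ (by the same PBW reasoning as in \lemref{Basic-structure-of-D_A^(i)}), each $c_{sJ,R}$ with $0\leq r_{k}<p$ must vanish.

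Finally, this forces each $\bar\Phi_{sJ}$ to be an $A$-combination of $\{\partial^{L}:\exists k,\,l_{k}\geq p\}$. For any such $L$, $\partial^{L}=\partial_{k}^{p}\cdot\partial^{L-pe_{k}}$, and since $\partial_k^p$ is central in $\mathcal{D}_A^{(0)}$ in characteristic $p$ (because it acts as zero on the smooth algebra $A$, making $[\partial_k^p,a]=\partial_k^p(a)$ vanish for all $a\in A$), the element $\bar\Phi_{sJ}$ lies in the right ideal of $\mathcal{D}_{A}^{(0)}$ generated by $\{\bar\partial_{1}^{p},\dots,\bar\partial_{n}^{p}\}$. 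Lifting the expression $\bar\Phi_{sJ}=\sum_k \bar\partial_k^p\bar\Omega_{sJ,k}$ through the surjection $\widehat{D}_\mathcal{A}^{(0)}\twoheadrightarrow\mathcal{D}_A^{(0)}$ yields $\Phi_{sJ}=\sum_k \partial_k^p\Omega_{sJ,k}+p\Omega'_{sJ}$, placing $\Phi_{sJ}$ in the right ideal generated by $\{p,\partial_1^p,\dots,\partial_n^p\}$. The main technical care is needed in the middle step: one must verify that the $(p!)^{|Q|}$ factors really do annihilate all the ``high-$L$'' part of $\bar\Phi_{sJ}$ modulo $p$, so that the linear-independence argument applies cleanly to the surviving low-order coefficients.
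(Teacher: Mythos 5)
Your proof is correct, and it takes a genuinely different (and cleaner) route than the paper's. Both arguments ultimately rest on the same fact, namely the unique expansion $\Phi=\sum_{I,J}a_{I,J}\,\partial^{I}(\partial^{[p]})^{J}$ with $0\leq i_{k}<p$ established in the proof of \lemref{Basic-structure-of-D_A^(i)}, but you deploy it differently. The paper stays in mixed characteristic: it writes $\Phi$ a second time using the generating set $\{f^{i-s}(\partial^{[p]})^{J}\}$ of $\widehat{D}_{\mathcal{A}}^{(0,1),i}$ with coefficients in $p\cdot\widehat{D}_{\mathcal{A}}^{(0)}$, normalizes the multi-indices (the $\tilde{K}=\tilde{I}+p\tilde{J}$ device) so that each total index $K$ occurs only once after discarding terms already visibly in the right ideal, and then compares the two expansions through the faithful action on $\mathcal{A}[f]$. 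You instead observe that the hypothesis implies $\bar{Z}=0$ in $D_{A}^{(1)}=\widehat{D}_{\mathcal{A}}^{(1)}/p$ (your reduction is legitimate: $f$ is the inclusion and $p\cdot$ in degree $i$ is honest multiplication by $p$ inside $\widehat{D}_{\mathcal{A}}^{(1)}$, and in fact, since $|J|\leq i-1$ forces $p^{i-1}Z\in\widehat{D}_{\mathcal{A}}^{(0)}$ automatically, the two conditions are even equivalent), that the composite $D_{A}^{(0)}\to D_{A}^{(1)}$ kills exactly the ideal generated by the $\partial_{k}^{p}$ because of the $(p!)^{|Q|}$ factors, and then you invoke linear independence of $\{\partial^{R}(\partial^{[p]})^{J}\}$ over $A$ in $D_{A}^{(1)}$. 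This sidesteps both the second expansion and the index normalization; what the paper's version buys in exchange is essentially only that it runs parallel to (and reuses the conventions of) the surjectivity argument in the preceding lemma. Your identification of the right ideal is also fine, since $\partial_{k}^{p}$ is central in $D_{A}^{(0)}$, so left and right ideals coincide and the lift back to $\widehat{D}_{\mathcal{A}}^{(0)}$ goes through as you describe.
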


\begin{proof}
As in the previous proof we may expand the $\Phi_{0}$ and $\Phi_{s,J}$
in terms of the $\{\partial^{I}\}$ to obtain 
\begin{equation}
\Phi=\sum_{s=0}^{i-1}f^{i-s}\sum_{I,J,|J|=s}b_{s,I,J}\partial^{I}(\partial_{1}^{[p]})^{j_{1}}\cdots(\partial_{n}^{[p]})^{j_{n}}\label{eq:first-form-for-phi}
\end{equation}
where $b_{s,I,J}\to0$ as $|I|\to\infty$. Om the other hand, since
$\Phi\in p\cdot\widehat{D}_{\mathcal{A}}^{(0,1),i}$, and $\widehat{D}_{\mathcal{A}}^{(0,1),i}$
is generated over $\widehat{D}_{\mathcal{A}}^{(0)}$ by $\{f^{i-s}(\partial_{1}^{[p]})^{j_{1}}\cdots(\partial_{n}^{[p]})^{j_{n}}\}_{0\leq s\leq i,|J|=s}$,
we also obtain 
\begin{equation}
\Phi=\sum_{s=0}^{i}f^{i-s}\sum_{I,J,|J|=s}a_{s,I,J}\partial^{I}(\partial_{1}^{[p]})^{j_{1}}\cdots(\partial_{n}^{[p]})^{j_{n}}\label{eq:second-form-for-phi}
\end{equation}
where each $a_{0I}$ and $a_{s,I,J}$ are contained in $p\cdot\widehat{D}_{\mathcal{A}}^{(0)}$,
and $a_{0I}\to0$, $a_{s,I,J}\to0$ as $|I|\to\infty$. 

For a multi-index $K$, let $\tilde{K}$ denote the multi-index $(\tilde{k}_{1},\dots,\tilde{k}_{n})$
such that $\tilde{k}_{i}\leq k_{i}$ for all $i$ and such that $\tilde{k}{}_{i}$
is the greatest multiple of $p$ less than or equal $k_{i}$. Write
$\tilde{K}=p\tilde{J}$, and $K=\tilde{I}+p\tilde{J}$. Then if $K=I'+pJ'$
for some $I'\neq\tilde{I}$ and $J'\neq\tilde{J}$, we must have $j_{m}<\tilde{j}_{m}$
for some $m$; which implies that $\partial^{I'}$ is contained in
the right ideal generated by $\partial_{m}^{p}$. Since $f\cdot\partial_{i}^{p}=p!\partial_{i}^{[p]}$,
we obtain 
\[
f^{i-s}b_{s,I',J'}\partial^{I'}(\partial_{1}^{[p]})^{j'_{1}}\cdots(\partial_{n}^{[p]})^{j'_{n}}=f^{i-s-1}b_{s-1,I'',J''}\partial^{I''}(\partial_{1}^{[p]})^{j''_{1}}\cdots(\partial_{n}^{[p]})^{j''_{n}}
\]
where $I''+pJ''=I'+pJ'=K$, with $j''_{i}=j_{i}'+1$, and $b_{s-1,I'',J''}\in p\cdot\mathcal{A}$.
Therefore each such term is in the right ideal generated by $\{\partial_{i}^{p}\}$
and is contained in $p\cdot\widehat{D}_{\mathcal{A}}^{(0,1),i}$,
and so we may subtract each of these terms from $\Phi$ without affecting
the statement. 

Thus we may assume that each nonzero $b_{s,I,J}$ in \eqref{first-form-for-phi}
is of the form $\tilde{I}+p\tilde{J}$ as above, and so there is only
one nonzero $b_{s,I,J}$ for each multi-index $K$. 

Now, comparing the actions of each of the expressions \eqref{first-form-for-phi}
and \eqref{second-form-for-phi} on $\mathcal{A}[f]$, we obtain,
for each multi-index $K$, the equality 
\[
b_{s\tilde{,I},\tilde{J}}=\sum_{I+pJ=K}\sum_{s}a_{s,I,J}
\]
and since each $a_{s,I,J}\in p\cdot\mathcal{A}$, we see $b_{s\tilde{,I},\tilde{J}}\in p\cdot\mathcal{A}$.
Since this is true for all $b_{s\tilde{,I},\tilde{J}}$ the result
follows. 
\end{proof}
Using these results, we can give a description of $\widehat{D}_{\mathcal{A}}^{(0,1),+}/p:=D_{A}^{(0,1),+}.$
Let $I$ be the two-sided ideal of $D_{A}^{(0)}:=\widehat{D}_{\mathcal{A}}^{(0)}/p$
generated by $\mathcal{Z}(D_{A}^{(0)})^{+}$, the positive degree
elements of the center\footnote{The center of $D_{A}^{(0)}$ is a graded algebra via the isomorphism
$\mathcal{Z}(D_{A}^{(0)})\tilde{=}A^{(1)}[\partial_{1}^{p},\dots,\partial_{n}^{p}]$,
the degree of each $\partial_{i}^{p}$ is $1$}, and let $\overline{D_{A}^{(0)}}=D_{A}^{(0)}/I$. 
\begin{thm}
\label{thm:Local-Coords-for-D+} Let ${\displaystyle D_{A}^{(0,1),+}=\bigoplus_{i=0}^{\infty}D_{A}^{(0,1),i}}$
be the decomposition according to grading. Then each $D_{A}^{(0,1),i}$
is a module over $D_{A}^{(0)}=D_{A}^{(0,1),0}$, and
\[
D_{A}^{(0,1),i}=f\cdot D_{A}^{(0,1),i-1}\oplus\sum_{|J|=i}D_{A}^{(0)}\cdot(\partial_{1}^{[p]})^{j_{1}}\cdots(\partial_{n}^{[p]})^{j_{n}}
\]
as $D_{A}^{(0)}$-modules. Further, $f\cdot D_{A}^{(0,1),i-1}$ is
free over $\overline{D_{A}^{(0)}}$, and the module\linebreak{}
 ${\displaystyle \sum_{|J|=i}D_{A}^{(0)}\cdot(\partial_{1}^{[p]})^{j_{1}}\cdots(\partial_{n}^{[p]})^{j_{n}}}$
is isomorphic, as a $D_{A}^{(0)}$-module, to $I^{i}$, via the map
which sends $(\partial_{1}^{[p]})^{j_{1}}\cdots(\partial_{n}^{[p]})^{j_{n}}$
to $\partial_{1}^{p}{}^{j_{1}}\cdots\partial_{n}^{p}{}^{j_{n}}$.
In particular, on each $D_{A}^{(0,1),i}$ we have $\text{ker}(f)=\text{im}(v)$
and $\text{im}(f)=\text{ker}(v)$. 
\end{thm}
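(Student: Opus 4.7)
My plan is to proceed by induction on $i$, with the case $i=0$ being the tautology $D_A^{(0,1),0}=D_A^{(0)}$. For the spanning assertion, $\lemref{Basic-structure-of-D_A^(i)}$ shows that $\widehat{D}_A^{(0,1),i}$ is generated over $\widehat{D}_A^{(0)}$ by $\{(\partial^{[p]})^J\}_{|J|\leq i}$, and for $|J|<i$ the generator $(\partial^{[p]})^J$ regarded in $\widehat{D}_A^{(0,1),i}$ equals $f^{i-|J|}$ times its representative in $\widehat{D}_A^{(0,1),|J|}$, hence lies in $f\cdot\widehat{D}_A^{(0,1),i-1}$; reducing mod $p$ produces the desired spanning.

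For the directness of the sum and the isomorphism with $I^i$, my main tool will be the multiplication-by-$p^i$ map $v^i:\widehat{D}_A^{(0,1),i}\to\widehat{D}_A^{(0)}$, well-defined by the very definition of $\widehat{D}_A^{(0,1),i}$. Using $p!\,\partial_k^{[p]}=\partial_k^p$, a direct calculation yields $v^i(f^{i-|J|}(\partial^{[p]})^J)=u_J\cdot p^{i-|J|}\partial^{pJ}$ for a $p$-adic unit $u_J$. Reducing mod $p$, the induced map $\bar v^i:D_A^{(0,1),i}\to D_A^{(0)}$ vanishes on $f\cdot D_A^{(0,1),i-1}$ and sends $(\partial^{[p]})^J$ (for $|J|=i$) to $\bar u_J\partial^{pJ}\in I^i$. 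Given a hypothetical relation mod $p$ between the two summands, I lift it to an element of $p\widehat{D}_A^{(0,1),i}$; $\lemref{linear-independance-over-D_0-bar}$ then forces the coefficients of the $f$-tail (the $s<i$ terms) to vanish modulo $I$, while $\bar v^i$ converts the $|J|=i$ part into a relation in $I^i\subseteq D_A^{(0)}$. Since $D_A^{(0)}$ is free over its central polynomial subring $A^{(1)}[\partial_1^p,\ldots,\partial_n^p]$, the syzygies of the generators $\{\partial^{pJ}\}_{|J|=i}$ of $I^i$ reduce to the tautological ones $\partial^{pJ}\partial^{pK}=\partial^{pK}\partial^{pJ}$, which correspond identically to the relations $(\partial^{[p]})^J(\partial^{[p]})^K=(\partial^{[p]})^K(\partial^{[p]})^J$ in our submodule (both being $(\partial^{[p]})^{J+K}$). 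This simultaneously establishes the direct sum decomposition and the $D_A^{(0)}$-module isomorphism $\sum_{|J|=i}D_A^{(0)}(\partial^{[p]})^J\cong I^i$.

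The freeness of $f\cdot D_A^{(0,1),i-1}$ over $\overline{D_A^{(0)}}$ follows because the identity $f\partial_k^p=p!\,\partial_k^{[p]}\equiv 0\pmod p$, combined with the centrality of $f$ in the gauge algebra, shows that $I$ annihilates $f\cdot D_A^{(0,1),i-1}$; $\lemref{linear-independance-over-D_0-bar}$ then identifies $\ker(f\cdot)$ as exactly $I\cdot D_A^{(0,1),i-1}$, giving $f\cdot D_A^{(0,1),i-1}\cong D_A^{(0,1),i-1}/I\cdot D_A^{(0,1),i-1}$, free over $\overline{D_A^{(0)}}$ by the inductive hypothesis. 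The exactness statements $\ker(f)=\mathrm{im}(v)$ and $\mathrm{im}(f)=\ker(v)$ then follow from the gauge structure together with the $p$-torsion-freeness of $\widehat{D}_A^{(0,1),\bullet}$: given $f\Phi=p\Phi'$ with $\Phi\in\widehat{D}_A^{(0,1),i}$ and $\Phi'\in\widehat{D}_A^{(0,1),i+1}$, the injectivity of $f$ as inclusion in $\widehat{D}_A^{(1)}$ combined with $fv=p$ forces $\Phi=v\Phi'$, and dually. The main obstacle will be executing the combination of the two perspectives — the $f$-tail analysis via $\lemref{linear-independance-over-D_0-bar}$ and the top-degree analysis via $\bar v^i$ and the central structure of $D_A^{(0)}$ — so that together they capture all relations in $D_A^{(0,1),i}$ without leaving any residual ambiguity.
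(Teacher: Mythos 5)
Your proposal is correct in outline and shares most of its skeleton with the paper's proof: the spanning statement is extracted from \lemref{Basic-structure-of-D_A^(i)} exactly as in the paper, the identification of $\sum_{|J|=i}D_{A}^{(0)}(\partial^{[p]})^{J}$ with $I^{i}$ is in both cases the reduction mod $p$ of the multiplication map $p^{i}\cdot$ (your $\bar v^{i}$), the freeness of $f\cdot D_{A}^{(0,1),i-1}$ rests on \lemref{linear-independance-over-D_0-bar} in both treatments, and your direct gauge-theoretic argument for $\ker(f)=\mathrm{im}(v)$, $\mathrm{im}(f)=\ker(v)$ is a clean substitute for the paper's ``follows from the description.'' The one genuinely different step is how injectivity/directness is obtained: the paper lifts a relation into $p\cdot\widehat{D}_{\mathcal{A}}^{(0,1),i}\subset f\cdot\widehat{D}_{\mathcal{A}}^{(0,1),i-1}$ and reruns the coefficient comparison from the proof of \lemref{Reduction-is-correct} (i.e.\ the faithfulness of the action on $\mathcal{A}[f]$), whereas you deduce both the directness and the isomorphism with $I^{i}$ from the injectivity of $\bar v^{i}$ on the top-degree summand, established by matching syzygies of the monomial generators of $I^{i}$ over the center $A^{(1)}[\partial_{1}^{p},\dots,\partial_{n}^{p}]$. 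This buys a more structural argument at the cost of having to control a syzygy module.

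Three points need repair, all local. First, the generating syzygies of $\{\partial^{pJ}\}_{|J|=i}$ in $I^{i}$ are \emph{not} the quadratic commutation relations you name, but the linear Koszul-type relations $\partial_{m}^{p}\cdot[\partial^{pJ}]-\partial_{m'}^{p}\cdot[\partial^{pJ'}]$ for $J+e_{m}=J'+e_{m'}$ (via the Taylor complex of the monomial ideal $\mathfrak{m}^{i}$, extended to $D_{A}^{(0)}$ by freeness over the center); your quadratic relations are consequences of these and do not generate, so as written the verification that every syzygy of the $\partial^{pJ}$ is a syzygy of the $(\partial^{[p]})^{J}$ is incomplete. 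The fix is immediate, since both sides of the linear relation equal $p!\,(\partial^{[p]})^{J+e_{m}}$ in $\widehat{D}_{\mathcal{A}}^{(0,1),i}$. Second, in the directness step your invocation of \lemref{linear-independance-over-D_0-bar} does not match that lemma's hypotheses, which exclude the $f^{0}$, $|J|=i$ terms; but this invocation is redundant, since once $\bar v^{i}$ is known to be injective on $M=\sum_{|J|=i}D_{A}^{(0)}(\partial^{[p]})^{J}$ and to kill $f\cdot D_{A}^{(0,1),i-1}$, the intersection of the two summands vanishes with no further input. Third, ``free over $\overline{D_{A}^{(0)}}$ by the inductive hypothesis'' for $D_{A}^{(0,1),i-1}/I\cdot D_{A}^{(0,1),i-1}$ silently uses that $I^{i-1}/I^{i}$ is free over $\overline{D_{A}^{(0)}}$; this is true (again by freeness of $D_{A}^{(0)}$ over its center and the freeness of $\mathfrak{m}^{i-1}/\mathfrak{m}^{i}$ over $A^{(1)}$) but should be said.
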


\begin{proof}
Let $i\geq1$. By definition $D_{A}^{(0,1),i}$ is generated, over
$D_{A}^{(0)}$ terms of the form \linebreak{}
$\{f^{i-s}\{(\partial_{1}^{[p]})^{j_{1}}\cdots(\partial_{n}^{[p]})^{j_{n}}\}_{|J|=s}$;
and so it is also generated by $f\cdot D_{A}^{(0,1),i-1}$ and $\{(\partial_{1}^{[p]})^{j_{1}}\cdots(\partial_{n}^{[p]})^{j_{n}}\}_{|J|=i}$
. Suppose we have an equality of the form 
\[
\sum_{J=i}\bar{\Phi}_{J}(\partial_{1}^{[p]})^{j_{1}}\cdots(\partial_{n}^{[p]})^{j_{n}}=\sum_{s=0}^{i-1}f^{i-s}\sum_{J}\bar{\Psi}_{sJ}(\partial_{1}^{[p]})^{j_{1}}\cdots(\partial_{n}^{[p]})^{j_{n}}
\]
in $D_{A}^{(0,1),i}$ (here, $\bar{\Phi}_{J},\bar{\Psi}_{sJ}$ are
in $D_{A}^{(0)}$). Choosing lifts to $\Phi_{J},\Psi_{J}\in\widehat{D}_{\mathcal{A}}^{(0)}$
yields 
\[
\sum_{J=i}\Phi_{J}(\partial_{1}^{[p]})^{j_{1}}\cdots(\partial_{n}^{[p]})^{j_{n}}-\sum_{s=0}^{i-1}f^{i-s}\sum_{J}\Psi_{sJ}(\partial_{1}^{[p]})^{j_{1}}\cdots(\partial_{n}^{[p]})^{j_{n}}\in p\cdot D_{\mathcal{A}}^{(0,1),i}\subset f\cdot D_{\mathcal{A}}^{(0,1),i-1}
\]
(the last inclusion follows from $(p!)\partial_{i}^{[p]}=f\partial_{i}^{p}$);
and so (the proof of) \lemref{Reduction-is-correct} now forces $\Phi_{J}\in p\cdot\widehat{D}_{\mathcal{A}}^{(0,1),i}$
for all $J$ so $\bar{\Phi}_{J}=0$ as desired. The isomorphism of
${\displaystyle \sum_{|J|=i}D_{A}^{(0)}\cdot(\partial_{1}^{[p]})^{j_{1}}\cdots(\partial_{n}^{[p]})^{j_{n}}}$
with $I^{i}$ is given by the reduction of the morphism $p^{i}\cdot$
on $D_{\mathcal{A}}^{(0,1),i}$, and \lemref{linear-independance-over-D_0-bar}
yields that $f\cdot D_{A}^{(0,1),i-1}$ is free over $\overline{D}_{A}^{(0)}$;
a basis is given by $\{f^{i-|J|}(\partial_{1}^{[p]})^{j_{1}}\cdots(\partial_{n}^{[p]})^{j_{n}}\}_{0\leq|J|\leq i}$.
The last statement follows directly from this description.
\end{proof}
We now use this to describe the entire graded algebra $D_{A}^{(0,1)}:=D_{\mathcal{A}}^{(0,1)}/p$. 
\begin{cor}
\label{cor:Local-coords-over-A=00005Bf,v=00005D} The algebra $D_{A}^{(0,1)}$
is a free graded module over $D(A)$, with a basis given by the set
$\{\partial^{I}(\partial^{[p]})^{J}\}$, where $I=(i_{1},\dots,i_{n})$
is a multi-index with $0\leq i_{j}\leq p-1$ for all $j$ and $J$
is any multi-index with entries $\geq0$. 
\end{cor}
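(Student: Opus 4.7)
The plan is to consider the natural $D(A)$-module map $\phi : F \to D_A^{(0,1)}$ from the free graded $D(A)$-module $F$ on generators $e_{I,J} := \partial^I(\partial^{[p]})^J$ (placed in degree $|J|$, indexed by $I,J$ as in the statement), defined by $e_{I,J}\mapsto \partial^I(\partial^{[p]})^J$, and show it is an isomorphism of graded $D(A)$-modules. Since $p=fv$ in $D(A)$ and $p=0$ in $D_A^{(0,1)}$, the map factors through $D(A)/p \tilde{=} A[f,v]/(fv)$; the grading of the latter has $f$ and $v$ living in disjoint degree ranges, so at each degree $m$, $F^m$ has $A$-basis
\[
\{f^{m-|J|}e_{I,J}\,:\,|J|\le m\}\,\cup\,\{v^{|J|-m}e_{I,J}\,:\,|J|\ge m\}
\]
(with a single overlap at $|J|=m$, where both sets give $e_{I,J}$). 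It therefore suffices to check that $\phi$ induces an $A$-isomorphism in each degree.

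For $m\ge 0$, I invoke \thmref{Local-Coords-for-D+} to get $D_A^{(0,1),m} = f\cdot D_A^{(0,1),m-1}\oplus M_m$ together with the isomorphism $M_m\tilde{\to} I^m$ sending $(\partial^{[p]})^J\mapsto \partial^{pJ}$ for $|J|=m$. Proceeding by induction on $m$ starting from $m=0$ (where $D_A^{(0,1),0}=D_A^{(0)}$ has the obvious PBW basis), the freeness of $f\cdot D_A^{(0,1),m-1}$ over $\overline{D_A^{(0)}}$ from the theorem---combined with the relation $fv=0$ in $D_A^{(0,1)}$, which kills the $v$-part of the inductively known degree $m-1$ basis---gives $\{f^{m-|J|}e_{I,J}\,:\,|J|\le m-1\}$ as an $A$-basis of the first summand. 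For $M_m$, I compute directly via $(\partial^{[p]})^J=\partial^{pJ}/(p!)^{|J|}$ and Wilson's theorem $(p-1)!\equiv -1\pmod p$: for $|J|\ge m$, choosing any splitting $J=K+L$ with $|K|=m$, one finds
\[
v^{|J|-m}\partial^I(\partial^{[p]})^J \;=\; \partial^I\cdot\frac{\partial^{pL}}{((p-1)!)^{|L|}}\cdot(\partial^{[p]})^K \;\in\; M_m,
\]
which under $M_m\tilde{=}I^m$ corresponds to $(-1)^{|J|-m}\partial^{I+pJ}$. Since $\{\partial^{I+pJ}\,:\,|J|\ge m,\,0\le i_j\le p-1\}$ is an $A$-basis of $I^m$, the system $\{v^{|J|-m}e_{I,J}\,:\,|J|\ge m\}$ is an $A$-basis of $M_m$, completing the inductive step.

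For $m\le 0$, the definition $\widehat{D}_{\mathcal{A}}^{(0,1),m}=p^{-m}\widehat{D}_{\mathcal{A}}^{(0)}$ yields an isomorphism $D_A^{(0,1),m}\tilde{=}D_A^{(0)}$ given by dividing by $p^{-m}$ and reducing mod $p$. The same Wilson-style computation shows that $v^{|J|-m}\partial^I(\partial^{[p]})^J = p^{|J|-m}\partial^I(\partial^{[p]})^J$ corresponds to $(-1)^{|J|}\partial^{I+pJ}$ under this identification, and as $(I,J)$ ranges over admissible pairs these are $\pm 1$ times the standard $A$-basis of $D_A^{(0)}$. Hence $\phi$ is an isomorphism in this range too.

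The only real obstacle is the clerical bookkeeping with the factors $p^k/(p!)^k=1/((p-1)!)^k$; since each such factor is a unit mod $p$ (equal to $(-1)^k$), they do not disrupt the basis structure under the various identifications. Beyond this, the argument is a direct assembly of the decomposition provided by \thmref{Local-Coords-for-D+} and the behavior of $f,v$ modulo $p$.
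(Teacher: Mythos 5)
Your proposal is correct and follows essentially the same route as the paper: both arguments reduce to a degree-by-degree check resting on the decomposition $D_{A}^{(0,1),i}\tilde{=}f\cdot D_{A}^{(0,1),i-1}\oplus\sum_{|J|=i}D_{A}^{(0)}(\partial^{[p]})^{J}$ of \thmref{Local-Coords-for-D+} (together with $D_{A}^{(0,1),i}\tilde{=}D_{A}^{(0)}$ in negative degrees), and on identifying $v^{|J|-m}\partial^{I}(\partial^{[p]})^{J}$ with a unit multiple of $\partial^{I+pJ}$ under the isomorphism with $I^{m}$. The only difference is organizational: you package spanning and independence as ``the map from the free module sends a basis to a basis in each degree,'' whereas the paper expands a general relation and matches coefficients, but the underlying computations coincide.
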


\begin{proof}
By the previous corollary, any element of $D_{A}^{(0,1),+}$ can be
written as a finite sum
\[
\sum_{I,J}a_{I,J}\partial^{I}(\partial^{[p]})^{J}
\]
where $a_{I,J}\in A[f]$ and $I$ and $J$ are arbitrary multi-indices.
As any element in $D_{A}^{(0,1),-}$ is a sum of the form 
\[
\sum_{i=1}^{m}v^{i}\sum_{J}b_{i,J}(\partial)^{J}
\]
we see that in fact any element of $D_{A}^{(0,1)}$ can be written
as a finite sum
\[
\sum_{I,J}a_{I,J}\partial^{I}(\partial^{[p]})^{J}
\]
where $a_{I,J}\in A[f,v]$ and $I$ and $J$ are arbitrary multi-indices.
Iteratively using the relations $(p-1)!\partial_{i}^{p}=v\partial_{i}^{[p]}$
we see that we may suppose that each entry of $I$ is contained in
$\{0,\dots,p-1\}$; this shows that these elements span.

To see the linear independence, suppose we have 
\begin{equation}
\sum_{I,J}a_{I,J}\partial^{I}(\partial^{[p]})^{J}=0\label{eq:lin-dep}
\end{equation}
where now each entry of $I$ is contained in $\{0,\dots,p-1\}$. Write
\[
a_{I,J}=\sum_{s\geq0}f^{s}a_{I,J,s}+\sum_{t<0}v^{t}a_{I,J,t}
\]
We have 
\[
a_{I,J}\partial^{I}(\partial^{[p]})^{J}=\sum_{s\geq0}f^{s}a_{I,J,s}\partial^{I}(\partial^{[p]})^{J}+\sum_{J',t}a_{I,J,t}\partial^{I+pJ'}(\partial^{[p]})^{J''}+\sum_{t<0}v^{-t-|J|}a_{I,J,t}\partial^{I+pJ}
\]
where, in the middle sum, $t$ satisfies $0<-t\leq|J|$; for each
such $t$ we pick $J'$ such that $J'+J''=J$ and $|J'|=-t$. Now,
the previous corollary gives an isomorphism 
\[
D_{A}^{(0,1),i}\tilde{=}D_{A}^{(0)}/I^{i}\oplus I^{i}
\]
where $I=C^{1}(D_{A}^{(0)})$, for all $i\geq0$; this in fact holds
for all $i\in\mathbb{Z}$ if we interpret $I^{i}=D_{A}^{(0)}$ for
$i<0$. This implies that the elements $\{f^{s}\partial^{I}(\partial^{[p]})^{J},\partial^{I+pJ'}(\partial^{[p]})^{J''},v^{-t-|J|}\partial^{I+pJ}\}$
where $I,J$ are multi-indices with each entry of $I$ is contained
in $\{0,\dots,p-1\}$, are linearly independent over $A$ (one may
look at each degree separately and use the above description). Thus
\eqref{lin-dep} implies $a_{I,J,s}=0=a_{I,J,t}$ for all $I,J,s,t$;
hence each $a_{I,J}=0$ as desired. 
\end{proof}
Finally, let us apply this result to describe the finite order operators
$D_{\mathcal{A}}^{(0,1)}$. Namely, we have 
\begin{cor}
\label{cor:Each-D^(i)-is-free}The algebra $D_{\mathcal{A}}^{(0,1)}$
is free over $D(\mathcal{A})$, with a basis given by the set $\{\partial^{I}(\partial^{[p]})^{J}\}$,
where $I=(i_{1},\dots,i_{n})$ is a multi-index with $0\leq i_{j}\leq p-1$
for all $j$ and $J$ is any multi-index with entries $\geq0$. 
\end{cor}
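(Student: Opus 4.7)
The strategy is to imitate the template of Corollary \corref{Local-coords-over-A=00005Bf,v=00005D} and lift its conclusion from $D_{A}^{(0,1)}$ to $D_{\mathcal{A}}^{(0,1)}$ by combining a direct spanning argument with a reduction-mod-$p$ argument.

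For the spanning claim, I would invoke the finite-order analogue of Lemma \lemref{Basic-structure-of-D_A^(i)}, which is precisely what is established in the first paragraph of that lemma's proof before passing to the $p$-adic completion: for $i\geq 0$, $D_{\mathcal{A}}^{(0,1),i}$ is the left $D_{\mathcal{A}}^{(0)}$-module generated by $\{f^{i-|J|}(\partial^{[p]})^{J}\}_{|J|\leq i}$, and for $i\leq 0$ it equals $v^{-i}\cdot D_{\mathcal{A}}^{(0)}$. Expanding each $D_{\mathcal{A}}^{(0)}$-coefficient in the standard $\mathcal{A}$-basis $\{\partial^{I}\}$ of $D_{\mathcal{A}}^{(0)}$, and then iteratively applying the relation $(p-1)!\,v\,\partial_{i}^{[p]}=\partial_{i}^{p}$ in $D_{\mathcal{A}}^{(0,1),0}$ (which follows from the defining relations of $\mathcal{B}^{(0,1),+}$ together with the fact that $(p-1)!$ is a unit in $W(k)$) to reduce each exponent $i_{j}$ modulo $p$, I obtain that every homogeneous element of $D_{\mathcal{A}}^{(0,1)}$ is a finite $D(\mathcal{A})$-linear combination of $\{\partial^{I}(\partial^{[p]})^{J}\}$ with $0\leq i_{j}\leq p-1$.

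For linear independence, suppose $\sum_{I,J}a_{I,J}\,\partial^{I}(\partial^{[p]})^{J}=0$ in $D_{\mathcal{A}}^{(0,1)}$ with $a_{I,J}\in D(\mathcal{A})$ and each entry of $I$ in $\{0,\dots,p-1\}$. Reducing modulo $p$, note that $D_{\mathcal{A}}^{(0,1)}/p=D_{A}^{(0,1)}$ and $D(\mathcal{A})/p=D(A)$, so the relation descends to a $D(A)$-linear relation among the basis elements of Corollary \corref{Local-coords-over-A=00005Bf,v=00005D}; that corollary then forces every $\bar{a}_{I,J}=0$, i.e. $a_{I,J}\in p\cdot D(\mathcal{A})$. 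Now $D_{\mathcal{A}}^{(0,1)}\subset D_{\mathcal{A}}^{(1)}\subset\mathrm{End}_{W(k)}(\mathcal{A})$ is $p$-torsion-free, and $D(\mathcal{A})=\mathcal{A}[f,v]/(fv-p)$ is $p$-torsion-free because each of its graded pieces is isomorphic to $\mathcal{A}$. Hence one may write $a_{I,J}=p\,b_{I,J}$ uniquely with $b_{I,J}\in D(\mathcal{A})$ and cancel $p$, producing a relation of the same form in the $b_{I,J}$. Iterating yields $a_{I,J}\in\bigcap_{n}p^{n}D(\mathcal{A})$, and this intersection vanishes because $\mathcal{A}$ is $p$-adically complete (hence $p$-adically separated) applied degree by degree.

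The main obstacle is essentially bookkeeping: all the substantive input is already in place (the mod-$p$ result, the $p$-torsion-freeness of the algebras and the structure of $\mathcal{B}^{(0,1),+}$). The only point requiring a little care is that the coefficients live in the graded ring $D(\mathcal{A})$ rather than in $\mathcal{A}$, so one must verify $D(\mathcal{A})/p\cong D(A)$, the $p$-torsion-freeness of $D(\mathcal{A})$, and $\bigcap_{n}p^{n}D(\mathcal{A})=0$; each of these reduces immediately to the corresponding property of $\mathcal{A}$ via the graded decomposition $D(\mathcal{A})^{d}\cong\mathcal{A}$.
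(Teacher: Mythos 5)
Your proposal is correct and follows essentially the same route as the paper: spanning via the finite-order part of \lemref{Basic-structure-of-D_A^(i)} together with the relation $\partial_{i}^{p}=(p-1)!\,v\,\partial_{i}^{[p]}$, and linear independence by reducing mod $p$ to \corref{Local-coords-over-A=00005Bf,v=00005D} and then using $p$-torsion-freeness of $D_{\mathcal{A}}^{(0,1)}$ and $p$-adic separatedness of $D(\mathcal{A})$. The extra bookkeeping you supply (the dévissage $a_{I,J}=pb_{I,J}$ and the degreewise identification $D(\mathcal{A})^{d}\cong\mathcal{A}$) just makes explicit what the paper leaves to the reader.
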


\begin{proof}
By the previous result, the images of these elements in $D_{A}^{(0,1)}=D_{\mathcal{A}}^{(0,1)}/p$
form a basis over $D(\mathcal{A})$. Since $D_{\mathcal{A}}^{(0,1)}$
is $p$-torsion-free, and $D(\mathcal{A})$ is $p$-adically separated,
it follows directly that these elements are linearly independent over
$D(\mathcal{A})$. The fact that they span follows (as in the previous
proof) from \lemref{Basic-structure-of-D_A^(i)}. 
\end{proof}

\subsection{$\mathcal{D}^{(0,1)}$-modules over $k$}

Now let $X$ be an arbitrary smooth variety over $k$; in this subsection
we make no assumption that there is a lift of $X$; however, if $U\subset X$
is an affine, there is a always a lift of $U$ to as smooth formal
scheme $\mathfrak{U}$. In this section we will construct a sheaf
of algebras $\mathcal{D}_{X}^{(0,1)}$ such that, on each open affine
$U$ which possesses local coordinates, we have $\mathcal{D}_{X}^{(0,1)}(U)=\widehat{\mathcal{D}}_{\mathfrak{U}}^{(0,1)}(\mathfrak{U})/p$. 

There is a natural action of $\mathcal{D}_{X}^{(0)}$ on $\mathcal{O}_{X}$;
inducing a map $\mathcal{D}_{X}^{(0)}\to\mathcal{E}nd_{k}(\mathcal{O}_{X})$,
and we let $\overline{\mathcal{D}_{X}^{(0)}}\subset\mathcal{E}nd_{k}(\mathcal{O}_{X})$
denote the image of $\mathcal{D}_{X}^{(0)}$ under this map. It is
a quotient algebra of $\mathcal{D}_{X}^{(0)}$, and a quick local
calculation gives 
\begin{lem}
\label{lem:Basic-description-of-D-bar} Let $U\subset X$ be an open
subset, which possesses local coordinates $\{x_{1},\dots,x_{n}\}$,
and let $\{\partial_{1},\dots,\partial_{n}\}$ denote derivations
satisfying $\partial_{i}(x_{j})=\delta_{ij}$. Then the kernel of
the map $\mathcal{D}_{X}^{(0)}(U)\to\mathcal{E}nd_{k}(\mathcal{O}_{X}(U))$
is the two sided ideal $\mathcal{I}$ generated by $\{\partial_{1}^{p},\dots,\partial_{n}^{p}\}$.
The image consists of differential operators of the form ${\displaystyle \sum a_{I}\partial^{I}}$
where the sum ranges over multi-indices $I=(i_{1},\dots,i_{n})$ for
which $0\leq i_{j}<p$ (for all $j$), the $a_{I}\in\mathcal{O}_{X}(U)$,
and $\partial^{I}=\partial_{1}^{i_{1}}\cdots\partial_{n}^{i_{n}}$. 
\end{lem}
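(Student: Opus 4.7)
The plan is to establish three statements which together yield the lemma: (i) every element of $\mathcal{I}$ acts by zero; (ii) modulo $\mathcal{I}$, every element of $\mathcal{D}_X^{(0)}(U)$ is represented by some $\sum a_I \partial^I$ with $0 \leq i_j < p$; and (iii) the $\mathcal{O}_X(U)$-module spanned by these ``reduced'' monomials injects into $\mathcal{E}nd_k(\mathcal{O}_X(U))$.

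For (i), in characteristic $p$ any iterate $\partial^p$ of a derivation is again a derivation, since the cross terms in the iterated Leibniz rule carry coefficients $\binom{p}{k} \equiv 0 \pmod p$ for $0 < k < p$. Since $\partial_j^p(x_k) = \partial_j^{p-1}(\delta_{jk}) = 0$ and $\Omega^1_{U/k}$ is freely generated by $dx_1,\dots,dx_n$, the derivation $\partial_j^p$ vanishes identically on $\mathcal{O}_X(U)$, so $\mathcal{I} \subseteq \ker$. For (ii), by the PBW description of $\mathcal{D}_X^{(0)}$ as the enveloping algebroid of $\mathcal{T}_X$ (recalled in the conventions), every element of $\mathcal{D}_X^{(0)}(U)$ is uniquely of the form $\sum_I a_I \partial^I$. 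Whenever some $i_j \geq p$, the fact that the $\partial_j$'s mutually commute lets us factor $\partial^I = \partial^{I - p e_j} \cdot \partial_j^p$, placing $a_I \partial^I$ in the right ideal generated by $\partial_j^p$ and hence in $\mathcal{I}$; so modulo $\mathcal{I}$ we may assume all $i_j < p$.

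Step (iii) is the heart of the argument. Suppose $P = \sum_I a_I \partial^I$ (indexed by multi-indices with all $i_j < p$) acts as zero and, for contradiction, let $I_0$ be minimal in lexicographic order among those $I$ with $a_I \neq 0$. Apply $P$ to $x^{I_0} \in \mathcal{O}_X(U)$: a direct computation gives $\partial^{I'}(x^{I_0}) = \frac{I_0!}{(I_0 - I')!}\, x^{I_0 - I'}$ when $I' \leq I_0$ componentwise, and $0$ otherwise. Because componentwise $I' \leq I_0$ implies lex $I' \leq I_0$, lex-minimality of $I_0$ kills all surviving terms with $I' \neq I_0$, leaving $P(x^{I_0}) = a_{I_0} \cdot I_0!$. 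Since each $i_{0,j} < p$, the factor $I_0!$ is a unit in $k$, forcing $a_{I_0} = 0$ and giving the contradiction. Combining (i)--(iii), the induced map $\mathcal{D}_X^{(0)}(U)/\mathcal{I} \to \mathcal{E}nd_k(\mathcal{O}_X(U))$ is an isomorphism onto the described subspace, so $\ker = \mathcal{I}$ and the image has the claimed form. The main technical point is this injectivity step; it hinges on the fact that factorials of integers below $p$ remain invertible in $k$, which is precisely why one truncates at exponent $p-1$.
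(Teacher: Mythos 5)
Your proof is correct and is exactly the ``quick local calculation'' the paper leaves to the reader: kill $\partial_j^p$ by noting it is a derivation (vanishing binomial coefficients) that annihilates all coordinates, hence is zero since $\Omega^1_{U/k}$ is free on the $dx_i$; reduce exponents mod $p$ using PBW and commutativity of the $\partial_j$; and prove injectivity on reduced monomials by evaluating on $x^{I_0}$ for a lex-minimal index, where $I_0!$ is a unit precisely because all entries are below $p$. No gaps.
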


In particular, if $U=\text{Spec}(A)$ then we have $\overline{\mathcal{D}_{X}^{(0)}}(U)=\overline{D_{A}^{(0)}}$
as defined in the previous section. 

Now let $\mathcal{D}iff_{X}^{\leq n}$ denote the sheaf of differential
operators of order $\leq n$ on $X$. This is a sub-sheaf of $\mathcal{E}nd_{k}(\mathcal{O}_{X})$. 
\begin{defn}
\label{def:L}1) Let $\tilde{\mathcal{D}iff}_{X}^{\leq p}$ denote
the sub-sheaf of $\mathcal{D}iff_{X}^{\leq p}$ defined by the following
condition: a local section $\delta$ of $\mathcal{D}iff_{X}^{\leq p}$
is contained in $\tilde{\mathcal{D}iff}_{X}^{\leq p}$ if, for any
local section $\Phi\in\overline{\mathcal{D}_{X}^{(0)}}$, we have
$[\delta,\Phi]\in\overline{\mathcal{D}_{X}^{(0)}}$ (Here, the bracket
is the natural Lie bracket on $\mathcal{E}nd_{k}(\mathcal{O}_{X})$
coming from the algebra structure). 

2) We define the sub-sheaf $\mathfrak{l}_{X}\subset\mathcal{D}iff_{X}$
to be $\tilde{\mathcal{D}iff}_{X}^{\leq p}+\overline{\mathcal{D}_{X}^{(0)}}$. 
\end{defn}

The sections in $\mathfrak{l}_{X}$ can easily be identified in local
coordinates. Suppose $U=\text{Spec}(A)$ possess local coordinates
$\{x_{1},\dots,x_{n}\}$, and coordinate derivations $\{\partial_{1},\dots,\partial_{n}\}$. 
\begin{prop}
\label{lem:O^p-action} Let $U\subset X$ be an open subset as above.
Then we have 
\[
\mathfrak{l}_{X}(U)=\bigoplus_{i=1}^{n}\mathcal{O}_{U}^{p}\cdot\partial_{i}^{[p]}\oplus\overline{\mathcal{D}_{X}^{(0)}}(U)
\]
In particular, $\mathfrak{l}_{X}$ is a sheaf of $\mathcal{O}_{X}^{p}$-modules
(via the left action of $\mathcal{O}_{X}^{p}$ on $\mathcal{E}nd_{k}(\mathcal{O}_{X})$). 
\end{prop}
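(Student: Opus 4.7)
The problem is local on $U$, so I will fix local coordinates $x_1,\dots,x_n$ with dual derivations $\partial_1,\dots,\partial_n$; the operators $\partial_i^{[p]}\in\mathcal{D}iff_X^{\leq p}(U)$ are then well-defined (either intrinsically as Berthelot level-one operators, or by reduction mod $p$ from a local formal lift $\mathfrak{U}\to W(k)$, which always exists for affine $U$). The backbone of the proof will be the $\mathcal{O}_U$-module decomposition
\[
\mathcal{D}iff_X^{\leq p}(U) \;=\; \bigl(\overline{\mathcal{D}_X^{(0)}}(U)\cap\mathcal{D}iff_X^{\leq p}(U)\bigr)\,\oplus\,\bigoplus_{i=1}^n \mathcal{O}_U\,\partial_i^{[p]},
\]
which I will derive from the principal-parts description of $\mathcal{D}iff^{\leq p}$: it is freely generated over $\mathcal{O}_U$ by the divided-power monomials $\partial^{[\alpha]}=\prod_j\partial_j^{[\alpha_j]}$ with $|\alpha|\le p$. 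By \lemref{Basic-description-of-D-bar}, those with every $\alpha_j<p$ span $\overline{\mathcal{D}_X^{(0)}}\cap\mathcal{D}iff^{\leq p}$, while the remaining basis elements are exactly $\partial^{[pe_i]}=\partial_i^{[p]}$.

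To prove the $\supseteq$ inclusion I will verify two things. First, $\partial_i^{[p]}\in\tilde{\mathcal{D}iff}_X^{\leq p}$: the PD-Leibniz rule (checked on the lift and then reduced) gives
\[
[\partial_i^{[p]},a] \;=\; \sum_{j=1}^{p} \partial_i^{[j]}(a)\,\partial_i^{[p-j]}\ \in\ \overline{\mathcal{D}_X^{(0)}}(U)
\]
for $a\in\mathcal{O}_U$ (all factors on the right lie in $\overline{\mathcal{D}_X^{(0)}}$ since $\partial_i^{[p-j]}$ with $p-j<p$ equals $\partial_i^{p-j}/(p-j)!$), and $[\partial_i^{[p]},\partial_j]=0$ because the coordinate derivations commute in the lift. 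Since $[\partial_i^{[p]},-]$ is a derivation of $\mathcal{E}nd_k(\mathcal{O}_X)$ and $\overline{\mathcal{D}_X^{(0)}}$ is the subalgebra generated by $\mathcal{O}_U$ and $\{\partial_j\}$, these two facts propagate to give $[\partial_i^{[p]},\Phi]\in\overline{\mathcal{D}_X^{(0)}}$ for every $\Phi\in\overline{\mathcal{D}_X^{(0)}}$. Second, $\mathcal{O}_X^p$ is central in $\overline{\mathcal{D}_X^{(0)}}$ (since $\partial_j(a^p)=pa^{p-1}\partial_j(a)=0$), so for $a\in\mathcal{O}_U^p$ and $\Phi\in\overline{\mathcal{D}_X^{(0)}}$,
\[
[a\,\partial_i^{[p]},\Phi]\;=\;a\,[\partial_i^{[p]},\Phi]\;\in\;\overline{\mathcal{D}_X^{(0)}},
\]
proving $a\,\partial_i^{[p]}\in\tilde{\mathcal{D}iff}_X^{\leq p}$; this also supplies the $\mathcal{O}_X^p$-module statement at the end of the proposition.

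For the $\subseteq$ inclusion I take $\delta\in\tilde{\mathcal{D}iff}_X^{\leq p}$ and use the decomposition in paragraph~1 to write uniquely $\delta=\delta'+\sum_j a_j\,\partial_j^{[p]}$ with $\delta'\in\overline{\mathcal{D}_X^{(0)}}\cap\mathcal{D}iff_X^{\leq p}$ and $a_j\in\mathcal{O}_U$. Using $[\partial_j^{[p]},\partial_l]=0$ and the ordinary Leibniz rule,
\[
[\delta,\partial_l]\;=\;[\delta',\partial_l]\,-\,\sum_j \partial_l(a_j)\,\partial_j^{[p]}.
\]
The left-hand side lies in $\overline{\mathcal{D}_X^{(0)}}$ by hypothesis, and $[\delta',\partial_l]\in\overline{\mathcal{D}_X^{(0)}}$ because $\overline{\mathcal{D}_X^{(0)}}$ is a subalgebra; so the remaining sum lies in $\overline{\mathcal{D}_X^{(0)}}\cap\bigoplus_j\mathcal{O}_U\partial_j^{[p]}=0$, which by the linear independence of the $\partial_j^{[p]}$ forces $\partial_l(a_j)=0$ for all $j,l$, hence $a_j\in\mathcal{O}_U^p$. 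The directness of the sum in the statement follows at once from the decomposition of paragraph~1, since $\bigoplus_i\mathcal{O}_U^p\partial_i^{[p]}\subset\bigoplus_i\mathcal{O}_U\partial_i^{[p]}$ meets $\overline{\mathcal{D}_X^{(0)}}\cap\mathcal{D}iff^{\leq p}$ trivially. The most delicate step is promoting the bracket identity from the generating set $\mathcal{O}_U\cup\{\partial_j\}$ to all of $\overline{\mathcal{D}_X^{(0)}}$; this rests on $[\partial_i^{[p]},-]$ being a genuine derivation of $\mathcal{E}nd_k(\mathcal{O}_X)$, which is what lets us reduce everything to the two local computations above.
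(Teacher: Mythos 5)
Your proof is correct and follows essentially the same route as the paper: verify the forward inclusion via the PD-Leibniz formula $[\partial_i^{[p]},a]=\sum_j\partial_i^{[j]}(a)\partial_i^{[p-j]}$ together with $[\partial_i^{[p]},\partial^I]=0$, and for the reverse inclusion expand a general operator of order $\leq p$ in the divided-power basis, bracket against the coordinate derivations, and conclude $\partial_l(a_j)=0$, hence $a_j\in\mathcal{O}_U^p$. The only (harmless) difference is that you check the bracket condition on the generators of $\overline{\mathcal{D}_X^{(0)}}$ and propagate via the derivation property of $[\delta,-]$, whereas the paper computes $[g^p\partial_i^{[p]},\sum_I a_I\partial^I]$ directly.
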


\begin{proof}
First, let's show that the displayed sum is contained in $\mathfrak{l}_{X}(U)$.
By definition $\overline{\mathcal{D}_{X}^{(0)}}(U)\subset\mathfrak{l}_{X}(U)$.
Let $\Phi\in\overline{\mathcal{D}_{X}^{(0)}}(U)$, and write ${\displaystyle \Phi=\sum_{I}a_{I}\partial^{I}}$
as in \lemref{Basic-description-of-D-bar}. Then, for any $g\in O_{X}(U)$,
we have 
\[
[g^{p}\partial_{i}^{[p]},\sum_{I}a_{I}\partial^{I}]=\sum_{I}[g^{p}\partial_{i}^{[p]},a_{I}\partial^{I}]=\sum_{I}[g^{p}\partial_{i}^{[p]},a_{I}]\partial^{I}+\sum_{I}a_{I}[g^{p}\partial_{i}^{[p]},\partial^{I}]
\]
Now, ${\displaystyle [g^{p}\partial_{i}^{[p]},a_{I}]\partial^{I}=g^{p}[\partial_{i}^{[p]},a_{I}]\partial^{I}=g^{p}\sum_{r=0}^{p-1}\partial_{i}^{[p-r]}(a_{I})\partial_{i}^{[r]}\cdot\partial^{I}\in\overline{\mathcal{D}_{X}^{(0)}}(U)}$.
Further, $a_{I}[g^{p}\partial_{i}^{[p]},\partial^{I}]=a_{I}g^{p}[\partial_{i}^{[p]},\partial^{I}]+a_{I}[g^{p},\partial^{I}]\partial_{i}^{[p]}=0$.
Thus we see that each $g^{p}\partial_{i}^{[p]}\in\mathfrak{l}_{X}(U)$,
and the right hand side is contained in the left. 

For the converse, let $\Phi\in\mathcal{D}iff_{X}^{\leq p}(U)$. It
may be uniquely written as 
\[
\Phi=\sum_{i=1}^{n}a_{i}\partial_{i}^{[p]}+\sum_{I}a_{I}\partial^{I}
\]
where $a_{i}$ and $a_{I}$ are in $\mathcal{O}_{X}(U)$, and the
second sum ranges over multi-indices $I=(i_{1},\dots,i_{n})$ with
each $i_{j}<p$ and so that $i_{1}+\dots+i_{n}\leq p$. For any coordinate
derivation $\partial_{j}$, we have 
\[
[\Phi,\partial_{j}]=-(\sum_{i=1}^{n}\partial_{j}(a_{i})\partial_{i}^{[p]}+\sum_{I}\partial_{j}(a_{I})\partial^{I})
\]
For this to be contained in $\overline{\mathcal{D}_{X}^{(0)}}(U)$,
we must have $\partial_{j}(a_{i})=0$ for all $i$. Therefore, if
$[\Phi,\partial_{j}]\in\overline{\mathcal{D}_{X}^{(0)}}(U)$ for all
$j$, we must have $\partial_{j}(a_{i})=0$ for all $j$ (and all
$i$), which means that each $a_{i}\in\mathcal{O}_{X}(U)^{p}$. Therefore,
if $\Phi\in\tilde{\mathcal{D}iff}_{X}^{\leq p}(U)$, then $\Phi$
must be contained in ${\displaystyle \bigoplus_{i=1}^{n}\mathcal{O}_{U}^{p}\cdot\partial_{i}^{[p]}\oplus\overline{\mathcal{D}_{X}^{(0)}}(U)}$,
and the result follows. 
\end{proof}
\begin{cor}
$\mathfrak{l}_{X}$ is a sheaf of Lie subalgebras of $\mathcal{E}nd_{k}(\mathcal{O}_{X})$. 
\end{cor}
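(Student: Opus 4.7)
The statement is local on $X$, so I would work on an open affine $U \subset X$ equipped with local coordinates $\{x_1, \dots, x_n\}$ and dual derivations $\{\partial_1, \dots, \partial_n\}$. By \propref{O^p-action} the sections decompose as
\[
\mathfrak{l}_X(U) = \overline{\mathcal{D}_X^{(0)}}(U) \oplus \bigoplus_{i=1}^n \mathcal{O}_U^p \cdot \partial_i^{[p]},
\]
so it suffices to check closure of the Lie bracket on pairs of generators. Three cases arise. First, $[\overline{\mathcal{D}_X^{(0)}}, \overline{\mathcal{D}_X^{(0)}}] \subset \overline{\mathcal{D}_X^{(0)}}$ is automatic, as the latter is a subalgebra of $\mathcal{E}nd_k(\mathcal{O}_X)$. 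Second, for any local section $g$ of $\mathcal{O}_X$, the inclusion $[g^p \partial_i^{[p]}, \overline{\mathcal{D}_X^{(0)}}] \subset \overline{\mathcal{D}_X^{(0)}}$ is essentially the defining property of $\tilde{\mathcal{D}iff}_X^{\leq p}$ and was already verified explicitly in the proof of \propref{O^p-action}.

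The nontrivial case is the bracket of $g^p \partial_i^{[p]}$ with $h^p \partial_j^{[p]}$. Expanding via $[AB,CD] = [A,C]BD + A[B,C]D + C[A,D]B + CA[B,D]$, and using that $\mathcal{O}_X$ is commutative and that $[\partial_i^{[p]}, \partial_j^{[p]}] = 0$ in $\mathcal{E}nd_k(\mathcal{O}_X)$ (immediate on coordinate monomials), one obtains
\[
[g^p \partial_i^{[p]},\, h^p \partial_j^{[p]}] = g^p \partial_i^{[p]}(h^p)\, \partial_j^{[p]} - h^p \partial_j^{[p]}(g^p)\, \partial_i^{[p]}.
\]
Here the ``mixed'' terms $\partial_i^{[r]}(h^p)$ arising from the divided-power Leibniz rule for $1 \leq r \leq p-1$ all vanish because $\partial_i(h^p) = p h^{p-1}\partial_i(h) = 0$ in characteristic $p$, so iteratively $\partial_i^{r}(h^p) = 0$, hence $\partial_i^{[r]}(h^p) = (r!)^{-1}\partial_i^r(h^p) = 0$ for $1\le r \le p-1$.

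The key remaining input is the identity $\partial_k^{[p]}(h^p) = (\partial_k h)^p$, which I would prove by observing that both sides are additive in $h$ (the left side uses Frobenius additivity $(h_1+h_2)^p = h_1^p + h_2^p$, the right side is obviously additive via Frobenius as well), and multiplicative on the subring $\mathcal{O}_X^p$ in a matching way via $\partial_k^{[p]}(f^p g^p) = \partial_k^{[p]}(f^p) g^p + f^p \partial_k^{[p]}(g^p)$ (again using the vanishing of the intermediate divided-power terms); hence it suffices to check the identity on the coordinate generators $h = x_\ell$, where both sides evaluate to $\delta_{k\ell}$. Granting this, $g^p \partial_i^{[p]}(h^p) = (g\,\partial_i h)^p \in \mathcal{O}_X^p$ and similarly for the other term, so
\[
[g^p \partial_i^{[p]},\, h^p \partial_j^{[p]}] = (g\,\partial_i h)^p\, \partial_j^{[p]} - (h\,\partial_j g)^p\, \partial_i^{[p]} \in \bigoplus_k \mathcal{O}_U^p \cdot \partial_k^{[p]} \subset \mathfrak{l}_X(U),
\]
completing the verification. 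The main technical hinge is the classical characteristic-$p$ identity $\partial_k^{[p]}(h^p) = (\partial_k h)^p$; once this is in hand, closure of $\mathfrak{l}_X$ under the bracket reduces to a mechanical commutator calculation.
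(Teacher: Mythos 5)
Your proof is correct and follows the same overall route as the paper: localize, invoke the decomposition of \propref{O^p-action}, dispose of the brackets involving $\overline{\mathcal{D}_X^{(0)}}$ by citing the computation already done there, and reduce the remaining bracket $[g^p\partial_i^{[p]},h^p\partial_j^{[p]}]$ to the identity $\partial_k^{[p]}(h^p)=(\partial_k h)^p$. The one genuine divergence is how that identity is established. The paper proves it combinatorially, from the Hasse--Schmidt formula $\partial^{[j]}(h^m)=\sum_{i_1+\cdots+i_m=j}\partial^{[i_1]}(h)\cdots\partial^{[i_m]}(h)$ together with an $S_p$-orbit count showing all cross terms vanish mod $p$. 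You instead observe that both $h\mapsto\partial_k^{[p]}(h^p)$ and $h\mapsto(\partial_k h)^p$ are additive and satisfy the twisted Leibniz rule $D(fg)=D(f)g^p+f^pD(g)$, i.e.\ both are $k$-derivations of $\mathcal{O}_X$ into $\mathcal{O}_X$ viewed as a module via Frobenius, and then you match them on the coordinates. That is a cleaner and more structural argument; the only thing to make explicit is why checking on $h=x_\ell$ suffices, since in the \'etale-coordinate setting $\mathcal{O}_X(U)$ is \emph{not} generated as a $k$-algebra by $x_1,\dots,x_n$. The correct justification is that a derivation into any module factors through $\Omega^1_{X/k}$, which is free on $dx_1,\dots,dx_n$; so two such derivations agreeing on the $x_\ell$ agree everywhere. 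With that sentence added (and a word on why $[\partial_i^{[p]},\partial_j^{[p]}]=0$, e.g.\ by $\mathcal{O}_X^{p^2}$-linearity plus evaluation on monomials), your argument is complete.
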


\begin{proof}
As the question is local, it suffices to prove that $\mathfrak{l}_{X}(U)$
is closed under the bracket for a neighborhood $U$ which possesses
local coordinates. We use the description of the previous lemma. So
we must show that all brackets of the form 
\[
[g^{p}\partial_{i}^{[p]},h^{p}\partial_{j}^{[p]}]
\]
and 
\[
[g^{p}\partial_{i}^{[p]},\sum_{I}a_{I}\partial^{I}]
\]
are contained in $\mathfrak{l}_{X}(U)$. Here the notation is as above;
so $g,h\in\mathcal{O}_{X}(U)$, and $I=(i_{1},\dots,i_{n})$ is a
multi-index with each $i_{j}<p$. In fact, we already showed that
${\displaystyle [g^{p}\partial_{i}^{[p]},\sum_{I}a_{I}\partial^{I}]\in\overline{\mathcal{D}_{X}^{(0)}}(U)}$
in the course of the proof of the previous lemma. So we are left to
analyze the first bracket. Now, 
\[
[g^{p}\partial_{i}^{[p]},h^{p}\partial_{j}^{[p]}]=h^{p}[g^{p}\partial_{i}^{[p]},\partial_{j}^{[p]}]+[g^{p}\partial_{i}^{[p]},h^{p}]\partial_{j}^{[p]}
\]
\[
=h^{p}[g^{p},\partial_{j}^{[p]}]\partial_{i}^{[p]}+g^{p}[\partial_{i}^{[p]},h^{p}]\partial_{j}^{[p]}
\]
and we have 
\[
[\partial_{i}^{[p]},h^{p}]=\sum_{r=0}^{p-1}\partial_{i}^{[p-r]}(h^{p})\partial_{i}^{[r]}=\partial_{i}^{[p]}(h^{p})
\]
and similarly, $[g^{p},\partial_{j}^{[p]}]=-\partial_{j}^{[p]}(g^{p})$.
It is a well-known fact that $\partial_{i}^{[p]}(h^{p})=(\partial_{i}(h))^{p}$
(for the sake of completeness, we include a proof directly below).
It follows immediately that $[g^{p}\partial_{i}^{[p]},h^{p}\partial_{j}^{[p]}]\in\mathfrak{l}_{X}(U)$,
and the corollary follows.

To prove that $\partial_{i}^{[p]}(h^{p})=(\partial_{i}(h))^{p}$,
recall the following formula for Hasse-Schmidt derivations acting
on powers: 
\[
\partial_{i}^{[j]}(h^{m})=\sum_{i_{1}+\dots+i_{m}=j}\partial_{i}^{[i_{1}]}(h)\cdots\partial_{i}^{[i_{m}]}(h)
\]
which is easily checked by induction. Put $m=j=p$ in the formula.
The set 
\[
\{(i_{1},\dots,i_{p})\in\mathbb{Z}_{\geq0}^{p}|i_{1}+\dots+i_{p}=p\}
\]
is acted upon by the symmetric group $S_{p}$, and, after grouping
like terms together, we see that each term $\partial_{i}^{[i_{1}]}(h)\cdots\partial_{i}^{[i_{m}]}(h)$
in the sum is repeated $N$ times, where $N$ is the size of the $S_{p}$
orbit of $(i_{1},\dots,i_{p})$. There is a unique orbit of size $1$,
namely $i_{1}=i_{2}=\cdots=i_{p}=1$; and for every other orbit, the
size is a number of the form ${\displaystyle \frac{p!}{c_{1}!\cdots c_{r}!}}$
for some numbers $c_{i}<p$ such that $\sum c_{i}=p$ . Any such is
divisible by $p$, and so all these terms are zero in the sum since
we are in characteristic $p$. Thus we obtain 
\[
\partial_{i}^{[p]}(h^{p})=\partial_{i}^{[1]}(h)\cdots\partial_{i}^{[1]}(h)=(\partial_{i}(h))^{p}
\]
as claimed. 
\end{proof}
Now we will build the ring $\mathcal{D}_{X}^{(0,1)}$ out of $\mathcal{D}_{X}^{(0)}$
and $\mathfrak{l}_{X}$, in a manner quite analogous to the way in
which $\mathcal{D}_{X}^{(0)}$ is built out of $\mathcal{O}_{X}$
and $\mathcal{T}_{X}$ as an enveloping algebra of a Lie algebroid;
in the classical case, this construction is given in \cite{key-44}
(for schemes) and \cite{key-46} (for rings). Our construction is
similar in spirit to these works (c.f. also \cite{key-45}). 
\begin{defn}
\label{def:D-=00005Cplus-L}Let $f:\mathcal{D}_{X}^{(0)}\to\mathfrak{l}_{X}$
denote the map $\mathcal{D}_{X}^{(0)}\to\overline{\mathcal{D}_{X}^{(0)}}\subset\mathfrak{l}_{X}$.
Define the sheaf 
\[
\mathfrak{L}_{X}:=\mathcal{D}_{X}^{(0)}\oplus\bigoplus_{i=1}^{\infty}\mathfrak{l}_{X}=\bigoplus_{i=0}^{\infty}\mathfrak{L}_{X}^{i}
\]
and make it into a graded $k[f]$-module by letting $f:\mathfrak{l}_{X}\to\mathfrak{l}_{X}$
be the identity in degrees $\geq1$; thus any homogenous element in
degree $i\geq1$ can be uniquely written $f^{i-1}\Psi$ for some $\Psi\in\mathfrak{l}_{X}$. 

For local sections $\Phi\in\mathcal{D}_{X}^{(0)}$ and $f^{i-1}\Psi\in\mathfrak{L}_{X}^{i}$,
define $[\Phi,f^{i-1}\Psi]:=f^{i-1}[f\circ\Phi,\Psi]$ where on the
right we have the bracket in $\mathfrak{l}_{X}$. We then make $\mathfrak{L}_{X}$
into a sheaf of graded Lie algebras by setting $[f^{i-1}\Psi_{1},f^{j-1}\Psi_{2}]=f^{i+j-1}[\Psi_{1},\Psi_{2}]$
where $\{\Psi_{1},\Psi_{2}\}$ are local sections of $\mathfrak{l}_{X}$.
The Jacobi identity can be verified by a direct computation. 
\end{defn}

Next we introduce the action of $v$:
\begin{lem}
\label{lem:Construction-of-v-1}There is a unique endomorphism $v$
of $\mathfrak{L}_{X}$ satisfying $v(\mathcal{D}_{X}^{(0)})=0$ and,
upon restriction to an open affine $U$ which possesses local coordinates,
$v(\partial_{i}^{[p]})=(p-1)!\partial_{i}^{p}$ for coordinate derivations
$\{\partial_{i}\}_{i=1}^{n}$. This endomorphism vanishes on $f(\mathcal{D}_{X}^{(0)})$,
and on ${\displaystyle \bigoplus_{i=2}^{\infty}\mathfrak{L}_{X}^{i}}$. 
\end{lem}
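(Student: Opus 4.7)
The stipulated vanishing conditions force $v$ to vanish on $\mathfrak{L}_X^0 = \mathcal{D}_X^{(0)}$, on $f(\mathcal{D}_X^{(0)}) = \overline{\mathcal{D}_X^{(0)}} \subset \mathfrak{L}_X^1$, and on $\mathfrak{L}_X^i$ for every $i \geq 2$, so the only nontrivial piece of $v$ is the induced map $\overline{v}\colon \mathfrak{l}_X/\overline{\mathcal{D}_X^{(0)}} \to \mathcal{D}_X^{(0)}$. By \lemref{O^p-action} this quotient is a locally free $\mathcal{O}_X^p$-module with local basis $\{\partial_i^{[p]}\}$, and the values $\overline{v}(\partial_i^{[p]}) = (p-1)!\,\partial_i^p$ pin $\overline{v}$ down completely.

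\textbf{Local existence.} On each open affine $U \subset X$ that admits a system of local coordinates, use the decomposition of \lemref{O^p-action} to define $v_U$ as the $\mathcal{O}_X^p$-linear map sending $\sum g_i^p \partial_i^{[p]} + \Phi$ (with $\Phi \in \overline{\mathcal{D}_X^{(0)}}(U)$) to $(p-1)!\sum g_i^p \partial_i^p$, and extend by zero in degree $0$ and degrees $\geq 2$. All three of the stipulated vanishing properties are immediate from this definition.

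\textbf{Coordinate independence and gluing.} The main content is to check that $v_U$ is independent of the coordinate system, so that the $v_U$'s patch together. My approach is to appeal to a local lift: since any affine open $U$ admits a smooth formal lift $\mathfrak{U} = \operatorname{Specf}(\mathcal{A})$ over $W(k)$, the graded algebra $\widehat{D}_\mathcal{A}^{(0,1)}$ of \defref{D^(0,1)-in-the-lifted-case} carries the tautological multiplication-by-$p$ map $V\colon \widehat{D}_\mathcal{A}^{(0,1),1} \to \widehat{D}_\mathcal{A}^{(0,1),0}$, which is intrinsic to $\mathcal{A}$ (no coordinate choice is involved). Inside $\widehat{D}_\mathcal{A}^{(1)}$ one has the relation $f\partial_i^p = p!\,\partial_i^{[p]}$, whence $V(\partial_i^{[p]}) = \partial_i^p/(p-1)!$; reducing modulo $p$ and using $((p-1)!)^{-1} \equiv (p-1)! \pmod p$ (Wilson), this gives $\overline{V}(\partial_i^{[p]}) = (p-1)!\,\partial_i^p$. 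Identifying $\mathfrak{l}_X(U)$ with the evident subsheaf of $D_A^{(0,1),1}$, the map $\overline{V}$ restricts to our $v_U$. Independence of the chosen lift follows because any two lifts of $U$ are abstractly $W(k)$-isomorphic and $V$ is preserved by any $W(k)$-algebra isomorphism.

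\textbf{Main obstacle.} The crux is this coordinate-independence step; the reduction via lifts above still requires verifying that different lifts produce compatible $v_U$. The cleanest conceptual alternative would identify both $\mathfrak{l}_X/\overline{\mathcal{D}_X^{(0)}}$ and the degree-one subspace $\bigoplus_i \mathcal{O}_X^p \partial_i^p \subset \mathcal{Z}(\mathcal{D}_X^{(0)})$ coordinate-freely with the Frobenius pullback $F^*\mathcal{T}_X$ (via the order-$p$ principal symbol on the source and the $p$-curvature embedding on the target), so that $v$ becomes multiplication by the unit $(p-1)! \in k^\times$. A direct verification via Jacobson's formula for $(\sum c_i\partial_i)^p$ in characteristic $p$ is also possible but combinatorially heavier.
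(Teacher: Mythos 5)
Your uniqueness argument and the local coordinate definition of $v_U$ are fine, and your displayed computation $V(\partial_i^{[p]})=p\cdot\partial_i^p/p!=\partial_i^p/(p-1)!\equiv(p-1)!\,\partial_i^p\pmod p$ is correct. But the step you yourself flag as the crux — coordinate independence — is not actually closed by the lifting argument. The intrinsic nature of the multiplication-by-$p$ map $V$ on $\widehat{D}_{\mathcal{A}}^{(0,1)}$ does not help, because the input to $V$ is not intrinsic: to feed a section of $\mathfrak{l}_X(U)\subset\mathcal{E}nd_k(\mathcal{O}_X(U))$ into $D_A^{(0,1),1}=\widehat{D}_{\mathcal{A}}^{(0,1),1}/p$ you must choose a preimage, and the decomposition $\mathfrak{l}_X(U)=\bigoplus_i\mathcal{O}_U^p\partial_i^{[p]}\oplus\overline{\mathcal{D}_X^{(0)}}(U)$ used to make that choice is exactly the coordinate-dependent datum in question. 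Two coordinate systems on the \emph{same} lift $\mathcal{A}$ give two such sections of the action map $D_A^{(0,1),1}\to\mathcal{E}nd_k(A)$, and you have not shown that $\overline{V}$ takes the same value on both; comparing two abstract lifts by a $W(k)$-isomorphism is a different (and easier) issue and does not address this. So as written the gluing step is asserted rather than proved.

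The fix is precisely the "cleanest conceptual alternative" you sketch at the end, and it is what the paper does; you should carry it out rather than leave it as an aside. Concretely: for $\Phi\in\mathfrak{l}_X$, the restriction $\Phi|_{\mathcal{O}_X^p}$ is a derivation of $\mathcal{O}_X^p\tilde{=}\mathcal{O}_{X^{(1)}}$ (this uses $\partial_i^{[p]}(g^p)=(\partial_i g)^p$ from \lemref{O^p-action}), and this restriction vanishes exactly on $f(\mathcal{D}_X^{(0)})$ — which simultaneously gives the required vanishing. Since $k$ is perfect there is a canonical isomorphism $\tau:\text{Der}(\mathcal{O}_{X^{(1)}})\to\text{Der}(\mathcal{O}_X)$, $\partial'\mapsto(g\mapsto(\partial'(g^p))^{1/p})$, and one composes with the $p$-curvature map $\sigma:\text{Der}(\mathcal{O}_X)\to\mathcal{Z}(\mathcal{D}_X^{(0)})$, $\partial\mapsto\partial^p-\partial^{[p]}$. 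Setting $v(\Phi)=(p-1)!\cdot\sigma\circ\tau(\Phi|_{\mathcal{O}_X^p})$ is manifestly coordinate-free; the identity $\sigma(a\partial)=a^p\sigma(\partial)$ matches the $\mathcal{O}_X^p$-linearity of your local formula, and $\sigma(\partial_i)=\partial_i^p$ for coordinate derivations recovers $v(\partial_i^{[p]})=(p-1)!\,\partial_i^p$. This replaces both your lifting argument and the "combinatorially heavier" Jacobson-formula computation.
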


\begin{proof}
Since $v(\mathcal{D}_{X}^{(0)})=0$ it suffices to define $v$ on
$\mathfrak{l}_{X}$. For any $\Phi\in\mathfrak{l}_{X}$, the action
of $\Phi$ preserves $\mathcal{O}_{X}^{p}$, and the restriction of
$\Phi$ to $\mathcal{O}_{X}^{p}\tilde{=}\mathcal{O}_{X^{(1)}}$ is
a derivation on $\mathcal{O}_{X}^{p}$ (this follows immediately from
\lemref{O^p-action} and the fact that $\partial_{i}^{[p]}(g^{p})=(\partial_{i}(g))^{p}$).
Further this derivation is trivial iff $\Phi\in f(\mathcal{D}_{X}^{(0)})\subset\mathfrak{l}_{X}$. 

On the other hand, since $k$ is perfect there is a natural isomorphism
between the sheaf of derivations on $\mathcal{O}_{X^{(1)}}$ and the
sheaf of derivations on $\mathcal{O}_{X}$, given as follows: if $\partial'$
is a (local) derivation on $\mathcal{O}_{X^{(1)}}$, then we can define
a derivation of $\mathcal{O}_{X}$ by $\partial(g)=(\partial'(g^{p}))^{1/p}$;
this is possible precisely by the identification $\mathcal{O}_{X}^{p}\tilde{=}\mathcal{O}_{X^{(1)}}$.
This association is easily checked to be an isomorphism using local
coordinates; let's name it $\tau:\text{Der}(\mathcal{O}_{X^{(1)}})\to\text{Der}(\mathcal{O}_{X})$. 

Further, there is a map $\sigma:\text{Der}(\mathcal{O}_{X})\to\mathcal{Z}(\mathcal{D}_{X}^{(0)})$
defined by $\partial\to\partial^{p}-\partial^{[p]}$, where $\partial^{[p]}$
is the $p$th iterate of the derivation (c.f. \cite{key-3}, chapter
1). In particular this map takes $\partial_{i}\to\partial_{i}^{p}$
if $\partial_{i}$ is a coordinate derivation as above. 

Now we define $v(\Phi)=(p-1)!\cdot\sigma\circ\tau(\Phi|_{\mathcal{O}_{X^{(1)}}})$;
by the above discussion this satisfies all the properties of the lemma. 
\end{proof}
Now we proceed to the definition of $\mathcal{D}_{X}^{(0,1),+}$.
By the functoriality of the enveloping algebra construction, we can
now form the pre-sheaf of enveloping algebras $\mathcal{U}(\mathfrak{L}_{X})$;
this is a pre-sheaf of graded algebras with the grading inherited
from $\mathfrak{L}_{X}$. Inside this pre-sheaf is the pre-sheaf $\mathcal{U}^{+}(\mathfrak{L}_{X})$,
which is the pre-sheaf of non-unital algebras generated by $\mathfrak{L}_{X}\subset\mathcal{U}(\mathfrak{L}_{X})$. 

For any local section $\Phi\in\mathcal{D}_{X}^{(0)}$, let $\Phi'$
denote its image in $\mathcal{U}^{+}(\mathfrak{L}_{X})$, by regarding
$\Phi\in\mathcal{D}_{X}^{(0)}\subset\mathfrak{L}_{X}$; similarly,
for a local sections $\Psi\in\mathfrak{L}_{X}$, let $\Psi'\in\mathfrak{L}_{X}\subset\mathcal{U}^{+}(\mathfrak{L}_{X})$
denote its image. 
\begin{defn}
Let $\mathcal{J}$ be the pre-sheaf of homogenous two-sided ideals
in $\mathcal{U}^{+}(\mathfrak{L}_{X})$ generated by the following
sections: for any local sections $\Phi_{1},\Phi_{2}\in\mathcal{D}_{X}^{(0)}$:
$(\Phi_{1}\cdot\Phi_{2})'-\Phi_{1}'\cdot\Phi_{2}'$ , $f\cdot\Phi_{1}'-f(\Phi_{1})'$,
$\Phi'_{1}\cdot f(\Phi'_{2})-f(\Phi_{1}')\cdot\Phi'_{2}$, $\Phi'_{1}\cdot f(\Phi'_{2})-f\cdot(\Phi_{1}'\cdot\Phi'_{2})$.
Further, if $\Psi_{1},\Psi_{2}\in\mathfrak{L}_{X}$ are any local
sections, we add the elements $\Psi_{1}'\Psi_{2}'-\Psi_{2}'\Psi_{1}'-[\Psi_{1},\Psi_{2}]'$,
as well as $g'\cdot\Psi_{1}=(g\cdot\Psi_{1})'$ for any local section
$g\in\mathcal{O}_{X}^{p}$ (the action of $\mathcal{O}_{X}^{p}$ on
$\mathfrak{l}_{X}$ is that of \lemref{O^p-action}). Finally, we
add $\Phi_{1}'\cdot\Psi'_{1}-\Phi_{2}'\cdot\Psi'_{2}$ where $\Phi_{i}$
are local sections of $\mathcal{Z}(D_{X}^{(0)})$ such that $\Phi_{1}\cdot v(\Psi_{1})=\Phi_{2}\cdot v(\Psi_{2})$. 

Define $\mathcal{D}_{X}^{(0,1),+}$ to be the sheafification of the
presheaf $\mathcal{U}^{+}(\mathfrak{L}_{X})/\mathcal{J}$. It is a
graded sheaf of algebras on $X$. 
\end{defn}

Of course, such a definition is only really useful if we can write
the algebra out explicitly in the presence of coordinates. Fortunately,
this is the case; in fact, if $U=\text{Spec}(A)$ we can compare it
with the presentation of $D_{A}^{(0,1),+}=D_{\mathcal{A}}^{(0,1),+}/p$
discussed in the previous section:
\begin{thm}
\label{thm:D-is-quasi-coherent} Let $\tilde{D}_{A}^{(0,1),+}$ be
the quasi-coherent sheaf on $U$ obtained by localizing $D_{A}^{(0,1),+}$.
This a sheaf of algebras on $U$. There is an isomorphism (of graded
sheaves of algebras) $\mathcal{D}_{X}^{(0,1),+}|_{U}\tilde{=}\tilde{D}_{A}^{(0,1),+}$.
In particular, $\mathcal{D}_{X}^{(0,1),+}$ is a quasi-coherent sheaf
of algebras on $X$, and we have $\mathcal{D}_{X}^{(0,1),0}\tilde{=}\mathcal{D}_{X}^{(0)}$. 
\end{thm}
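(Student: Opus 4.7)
The plan is to construct, on each open affine $U = \text{Spec}(A) \subset X$ which admits local coordinates, a comparison map
\[
\eta_U : \mathcal{D}_X^{(0,1),+}|_U \to \tilde{D}_A^{(0,1),+}
\]
and show it is an isomorphism; quasi-coherence and the degree-zero identification then follow at once by gluing over such a cover. To build $\eta_U$, I would invoke the universal property of the enveloping algebra: it suffices to give a morphism of graded Lie algebras $\mathfrak{L}_X|_U \to \tilde{D}_A^{(0,1),+}$ and then verify that the two-sided ideal $\mathcal{J}$ lies in the kernel of the induced map. In degree $0$ the map is the tautological identification $\mathcal{D}_X^{(0)}|_U \leftrightarrow D_A^{(0)} = D_A^{(0,1),0}$. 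In degree $1$, I would use the decomposition of Proposition \ref{lem:O^p-action}: send $g^p \partial_i^{[p]} \in \mathcal{O}_U^p \cdot \partial_i^{[p]}$ to its namesake in $D_A^{(0,1),1}$, and send a class $\bar\Phi \in \overline{\mathcal{D}_X^{(0)}}(U)$ to $f \cdot \Phi$ for any lift $\Phi \in D_A^{(0)}$. Well-definedness of the latter rests on the identity $f \cdot \partial_i^p = p! \cdot \partial_i^{[p]} \equiv 0 \pmod{p}$, which is recorded by the presentation of Lemma \ref{lem:Reduction-is-correct} after reduction modulo $p$. In higher degrees, extend by the $f$-action already built into $\mathfrak{L}_X$.

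Next I would verify the generators of $\mathcal{J}$ are killed. The relations internal to $\mathcal{D}_X^{(0)}$ and the various $f$-compatibility relations are tautological from the definition of the degree-$0$ part and the $f$-structure. The bracket relations for $\Psi_1, \Psi_2 \in \mathfrak{l}_X$ hold because the Lie bracket on $\mathfrak{l}_X$ was defined as the commutator inside $\mathcal{E}nd_k(\mathcal{O}_X)$, and the image inside $\tilde{D}_A^{(0,1),+}$ is obtained through the same commutator (shifted to degree $2$ by an $f$-factor). The $\mathcal{O}_X^p$-linearity relations reduce to the same commutator computation. The more delicate central-multiplier relation $\Phi_1' \Psi_1' - \Phi_2' \Psi_2'$ for $\Phi_1 v(\Psi_1) = \Phi_2 v(\Psi_2)$ reduces, via Lemma \ref{lem:Construction-of-v-1}, to the identity $v \cdot \partial_i^{[p]} = (p-1)! \cdot \partial_i^p$ and its central $\mathcal{D}_X^{(0)}$-multiples in $D_A^{(0,1),+}$; these hold in $\widehat{D}_\mathcal{A}^{(0,1)}$ (where $v$ acts as $p$, so that $p \partial_i^{[p]} = \partial_i^p/(p-1)! \equiv (p-1)!\,\partial_i^p$ modulo $p$ by Wilson) and descend to the reduction. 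This yields $\eta_U$ on the presheaf level; sheafifying gives a map of quasi-coherent sheaves.

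Finally I would verify $\eta_U$ is an isomorphism by matching both sides against the explicit structure of Theorem \ref{thm:Local-Coords-for-D+}. That theorem gives
\[
D_A^{(0,1),i} = f \cdot D_A^{(0,1),i-1} \;\oplus\; \sum_{|J|=i} D_A^{(0)} (\partial_1^{[p]})^{j_1}\cdots(\partial_n^{[p]})^{j_n},
\]
with the first summand free over $\overline{D_A^{(0)}}$ and the second isomorphic to the $i$-th power of the ideal $I \subset D_A^{(0)}$. Surjectivity of $\eta_U$ is then immediate since each generator on the right visibly lies in the image. For injectivity, the defining relations of $\mathcal{U}^+(\mathfrak{L}_X)/\mathcal{J}$ produce an analogous decomposition of the left-hand side: the $f$-relations cut the degree-$i$ piece down to $f$ times the degree $(i-1)$ piece plus the $\mathcal{O}_X^p$-span of the $\partial^{[p]}$-monomials of weight $i$, and the central-multiplier relation enforces precisely the $\overline{D_A^{(0)}}$-freeness (rather than $D_A^{(0)}$-freeness) of the $f$-summand. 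I expect the main obstacle to be exactly this last step: showing that $\mathcal{J}$ imposes no hidden relations beyond those encoded by Theorem \ref{thm:Local-Coords-for-D+}, so that the natural surjection matches summand by summand and is therefore bijective. Once this is established, the final two assertions of the theorem --- quasi-coherence of $\mathcal{D}_X^{(0,1),+}$ and $\mathcal{D}_X^{(0,1),0} \tilde{=} \mathcal{D}_X^{(0)}$ --- are immediate from the local isomorphism together with the degree-zero definition.
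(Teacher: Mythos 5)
Your construction of the forward map $\eta_U$ and the verification that the generators of $\mathcal{J}$ die are essentially the paper's first step, and the details you do supply (the well-definedness of $\bar\Phi\mapsto f\Phi$ via $f\partial_i^p=p!\,\partial_i^{[p]}\equiv 0$, and the Wilson-theorem check for the central-multiplier relation) are correct. The gap is exactly where you flag it, and it is not closed by what you write. The assertion that ``the defining relations of $\mathcal{U}^{+}(\mathfrak{L}_{X})/\mathcal{J}$ produce an analogous decomposition of the left-hand side'' conflates two different things. The relations do give you the \emph{spanning} direction: every class in degree $i$ can be rewritten as an element of $f\cdot(\text{degree }i-1)$ plus a $D_A^{(0)}$-combination of the monomials $(\partial_1^{[p]})^{j_1}\cdots(\partial_n^{[p]})^{j_n}$, which is why $\eta_U$ is surjective. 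But to conclude injectivity you need the \emph{linear-independence} direction --- that the quotient by $\mathcal{J}$ is no larger than $D_A^{(0,1),i}$, i.e.\ that $\mathcal{J}$ already contains every relation that holds in $D_A^{(0,1),+}$ among these spanning elements. That is a PBW-type statement, and it never follows from merely inspecting a list of relations; a priori the quotient $\mathcal{U}^{+}(\mathfrak{L}_{X})(U)/\mathcal{J}(U)$ could have a nonzero kernel under $\eta_U$ even though its image decomposes as in Theorem \ref{thm:Local-Coords-for-D+}. You would need either a faithful module for the quotient (the role played by $\mathcal{A}[f]$ in the proof of Lemma \ref{lem:Reduction-is-correct}) or an inverse homomorphism, and you supply neither.

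The paper closes this by the second route: it defines a map in the opposite direction, $D_{A}^{(0,1),+}\to\mathcal{U}^{+}(\mathfrak{L}_{X})(U)/\mathcal{J}(U)$, sending $A\to A\subset\mathcal{D}_{X}^{(0)}(U)$, $\partial_{i}\to\partial_{i}$, $\partial_{i}^{[p]}\to\partial_{i}^{[p]}\in\mathfrak{l}_{X}(U)$, and $f\to f$. This is legitimate because $D_A^{(0,1),+}$ comes with an explicit presentation by generators and relations --- the reduction mod $p$ of the presentation $\mathcal{C}^{(0,1),+}\tilde{=}\widehat{D}_{\mathcal{A}}^{(0,1),+}$ established in Lemma \ref{lem:Reduction-is-correct} --- so one only has to check that each listed relation maps into $\mathcal{J}$. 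Both composites are then the identity on generators, hence identities, and $\eta_U$ is an isomorphism with no further structural analysis of the quotient required. If you want to keep your one-directional argument instead, you must replace the ``matches summand by summand'' claim with an actual faithfulness argument, e.g.\ by letting $\mathcal{U}^{+}(\mathfrak{L}_{X})(U)/\mathcal{J}(U)$ act on $A[f]$ and running the injectivity argument of Lemma \ref{lem:Reduction-is-correct} in characteristic $p$; as written, the key step is asserted rather than proved.
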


\begin{proof}
We have the algebra $\mathcal{U}^{+}(\mathfrak{L}_{X})(U)/\mathcal{J}(U)$.
It admits a map to $D_{A}^{(0,1),+}$ as follows: by \lemref{O^p-action},
the lie algebra $\mathfrak{L}_{X}(U)$ is equal to 
\[
\mathcal{D}_{X}^{(0)}(U)\oplus\bigoplus_{i=1}^{\infty}(f^{i}(\overline{\mathcal{D}_{X}^{(0)}}(U))\oplus\bigoplus_{i=1}^{n}f^{i-1}\mathcal{O}_{X}^{p}(U)\cdot\partial_{i}^{[p]})
\]
\[
=D_{A}^{(0)}\oplus\bigoplus_{i=1}^{\infty}(f^{i}(\overline{D_{A}^{(0)}})\oplus\bigoplus_{i=1}^{n}f^{i-1}A^{p}\cdot\partial_{i}^{[p]})
\]
We map this to $D_{A}^{(0,1),+}$ via the identification of $D_{A}^{(0)}$
with $D_{A}^{(0,1),0}$, and by sending $f^{i}(\overline{D_{A}^{(0)}})$
to $f^{i}\cdot D_{A}^{(0)}\tilde{=}\overline{D_{A}^{(0)}}$ and $f^{i-1}g^{p}\partial_{i}^{[p]}$
to $f^{i-1}g^{p}\partial_{i}^{[p]}\in D_{A}^{(0,1),i}$. By sending
$f$ to $f$ we get a map of algebras $\mathcal{U}^{+}(\mathfrak{L}_{X})(U)/\mathcal{J}(U)\to D_{A}^{(0,1),+}$
(one checks the relations directly). 

Conversely, we get a map $D_{A}^{(0,1),+}\to\mathcal{U}^{+}(\mathfrak{L}_{X})(U)/\mathcal{J}(U)$
by sending $A\to A\subset\mathcal{D}_{X}^{(0)}(U)$, $\partial_{i}\to\partial_{i}\in\mathcal{D}_{X}^{(0)}(U)$,
$\partial_{i}^{[p]}\to\partial_{i}^{[p]}\in\mathfrak{l}_{X}(U)$ and
$f\to f$. Again checking the relations, this is a morphism of algebras,
and the compositions in both directions are the identity on generators.
Therefore the presheaf $U\to\mathcal{U}^{+}(\mathfrak{L}_{X})(U)/\mathcal{J}(U)$,
when restricted to open affines which admit local coordinates, agrees
with the assignment $U\to D_{A}^{(0,1),+}$. But the latter, by the
description of \thmref{Local-Coords-for-D+}, clearly agrees with
the quasi-coherent sheaf $\tilde{D}_{A}^{(0,1),+}$ on $\text{Spec}(A)$,
and the result follows. 
\end{proof}
Finally, we need to define the entire algebra $\mathcal{D}_{X}^{(0,1)}$.
This entails extending the operator $v$ to an endomorphism of all
of $\mathcal{D}_{X}^{(0,1),+}$. 
\begin{lem}
\label{lem:Construction-of-v} There is a unique $\mathcal{D}_{X}^{(0)}$-linear
endomorphism $v$ of $\mathcal{D}_{X}^{(0,1),+}$ satisfying $v(\mathcal{D}_{X}^{(0,1),i})\subset D_{X}^{(0.1),i-1}$
for all $i\geq1$ (and $v(\mathcal{D}_{X}^{(0,1),0})=0$), $v(\Phi_{1}\cdot\Phi_{2})=\Phi_{1}v(\Phi_{2})$
for all $\Phi_{1},\Phi_{2}\in\bigoplus_{i=1}^{\infty}\mathcal{D}_{X}^{(0,1),i}$,
$v(f\cdot\Phi)=0$ for all $\Phi$, and such that the restriction
of $v$ to $\mathfrak{L}_{X}$ agrees with the map $v$ constructed
in \lemref{Construction-of-v-1}. 
\end{lem}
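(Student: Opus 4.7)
The plan is to reduce to the affine case via quasi-coherence, prove uniqueness first so the construction is canonical, and then produce existence by reducing an integral lift. By \thmref{D-is-quasi-coherent}, $\mathcal{D}_X^{(0,1),+}$ is quasi-coherent and agrees with $\tilde{D}_A^{(0,1),+}$ on each affine open $U = \text{Spec}(A)$ admitting local coordinates, so it suffices to construct, on every such $U$, a $k$-linear endomorphism of $D_A^{(0,1),+}$ of degree $-1$ with the listed properties and to argue that these local constructions glue.

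For uniqueness, I invoke the decomposition
\[
D_A^{(0,1),i} = f \cdot D_A^{(0,1),i-1} \oplus \sum_{|J|=i} D_A^{(0)} \cdot (\partial_1^{[p]})^{j_1}\cdots(\partial_n^{[p]})^{j_n}
\]
supplied by \thmref{Local-Coords-for-D+}. The rule $v(f\Phi)=0$ kills the first summand. For a generator $(\partial^{[p]})^J$ of the second summand, choose any $m$ with $j_m\geq 1$ and iterate $v(\Phi_1\Phi_2) = \Phi_1 v(\Phi_2)$ with $\Phi_1=(\partial^{[p]})^{J-e_m}$ and $\Phi_2=\partial_m^{[p]}$; combined with $v(\partial_m^{[p]}) = (p-1)!\partial_m^p$ this forces $v\bigl((\partial^{[p]})^J\bigr) = (p-1)!\,(\partial^{[p]})^{J-e_m}\partial_m^p$. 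Left $\mathcal{D}_X^{(0)}$-linearity then determines $v$ on all of $D_A^{(0,1),i}$.

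For existence, I pick a formal lift $\mathfrak{U} = \text{Specf}(\mathcal{A})$ of $U$, so that $D_A^{(0,1),+} = D_\mathcal{A}^{(0,1),+}/p$. The $W(k)$-linear map $v\colon D_\mathcal{A}^{(0,1),i}\to D_\mathcal{A}^{(0,1),i-1}$ given by multiplication by $p$ in the sense of \defref{D^(0,1)-in-the-lifted-case} descends to a $k$-linear endomorphism of $D_A^{(0,1),+}$ of degree $-1$. Each required property is immediate on the integral side: $v|_{D_\mathcal{A}^{(0)}}$ lands in $p\cdot D_\mathcal{A}^{(0)}$ and so vanishes modulo $p$; both $\mathcal{D}_X^{(0)}$-linearity and the identity $v(\Phi_1\Phi_2) = \Phi_1 v(\Phi_2)$ follow from $p(\Phi_1\Phi_2) = \Phi_1(p\Phi_2)$; the identity $v(f\Phi)=0$ holds because $v(f\Phi)=p\Phi$ lies in $p\cdot D_\mathcal{A}^{(0,1),i}$; and on $\partial_i^{[p]}$ one computes $p\partial_i^{[p]} = \partial_i^p/(p-1)!$, which reduces to $(p-1)!\,\partial_i^p$ modulo $p$ by Wilson's theorem, matching \lemref{Construction-of-v-1}. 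Because uniqueness characterizes $v$ intrinsically, the construction is independent of the choice of lift and the local definitions agree on overlaps, yielding a global endomorphism of $\mathcal{D}_X^{(0,1),+}$.

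The main delicate point is well-definedness of the formula for $v$ on the summand $\sum_{|J|=i} D_A^{(0)}\cdot(\partial^{[p]})^J$, since the generators $(\partial^{[p]})^J$ are not a free basis but only generate a submodule isomorphic to $I^i$ via \thmref{Local-Coords-for-D+}. The integral-lift strategy sidesteps this obstruction by presenting $v$ as the reduction of an unambiguously defined $W(k)$-linear map; alternatively, one can verify that $(p-1)!\,(\partial^{[p]})^{J-e_m}\partial_m^p$ is independent of the choice of $m$ directly from the relations $\partial_i^p\partial_j^{[p]} = \partial_j^p\partial_i^{[p]}$ and $[\partial_i^{[p]},\partial_j]=0$ in the presentation of \lemref{Reduction-is-correct}, together with the centrality of $\partial_j^p$ in $D_A^{(0)}$.
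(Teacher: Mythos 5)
Your proof is correct, and the uniqueness half coincides with the paper's: both arguments use the decomposition of $D_{A}^{(0,1),i}$ from \thmref{Local-Coords-for-D+} to see that $v$ is determined by its vanishing on $f\cdot D_{A}^{(0,1),i-1}$, its values on the products $(\partial^{[p]})^{J}$, and $\mathcal{D}_{X}^{(0)}$-linearity. Where you genuinely diverge is in existence. The paper writes down the explicit formula $v((\partial_{j}^{[p]})^{i_{j}}\cdots(\partial_{n}^{[p]})^{i_{n}})=\partial_{j}^{p}(\partial_{j}^{[p]})^{i_{j}-1}\cdots(\partial_{n}^{[p]})^{i_{n}}$ (choosing the first index with $i_{j}\geq1$) and asserts well-definedness; as you correctly observe, this is the delicate point, since the $(\partial^{[p]})^{J}$ generate a copy of $I^{i}$ rather than a free module, so one must check that the formula respects the relations. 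Your route --- realizing $v$ as the reduction mod $p$ of the multiplication-by-$p$ map $\widehat{D}_{\mathcal{A}}^{(0,1),i}\to\widehat{D}_{\mathcal{A}}^{(0,1),i-1}$ of \defref{D^(0,1)-in-the-lifted-case} on a local lift --- makes well-definedness automatic and renders all the axioms one-line computations ($v(f\Phi)=p\Phi\in p\cdot\widehat{D}_{\mathcal{A}}^{(0,1),i}$, $p\partial_{i}^{[p]}=\partial_{i}^{p}/(p-1)!\equiv(p-1)!\partial_{i}^{p}$ by Wilson), and the independence of the lift then falls out of the uniqueness you proved first. This is very much in the spirit of the paper's other local-lift arguments (e.g.\ the proofs of \propref{pull-back-in-pos-char} and \propref{Left-Right-Swap}) and is arguably cleaner here. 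Two small points of hygiene: in degree $0$ the reduction of multiplication by $p$ does \emph{not} vanish (it is an isomorphism onto $D_{A}^{(0,1),-1}$), so the correct statement is simply that $\mathcal{D}_{X}^{(0,1),+}$ has no degree $-1$ component and one decrees $v=0$ there, which is consistent because none of the other axioms constrain degree $0$ from above; and your normalization $(p-1)!\,(\partial^{[p]})^{J-e_{m}}\partial_{m}^{p}$ is the one compatible with \lemref{Construction-of-v-1} (the paper's displayed formula in its proof silently drops this unit).
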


\begin{proof}
Define $v$ on $\mathcal{D}_{X}^{(0)}\oplus\mathfrak{l}_{X}$ to be
the map constructed in \lemref{Construction-of-v-1}. The claim is
that there is a unique extension of this map to all of $\mathcal{D}_{X}^{(0,1),+}$
satisfying the conditions of the lemma. 

By the uniqueness, it is enough to check this locally. Let $U=\text{Spec}(A)$
posses local coordinates. By \thmref{Local-Coords-for-D+}, if we
set $v$ to be zero on $D_{A}^{(0,1),0}$ and $f\cdot(D_{A}^{(0,1),+})$,
and we define 
\[
v((\partial_{j}^{[p]})^{i_{j}}\cdots(\partial_{n}^{[p]})^{i_{n}})=\partial_{j}^{p}\cdot(\partial_{j}^{[p]})^{i_{j}-1}\cdots(\partial_{n}^{[p]})^{i_{n}}
\]
where $j$ is the first index such that $i_{j}\geq1$, then we have
a well-defined $D_{A}^{(0)}$-linear map satisfying all the properties
of the lemma, and which agrees with the $v$ defined above on $\mathfrak{L}_{X}(U)$.
On the other hand, $D_{A}^{(0,1),+}$ is generated as a $D_{A}^{(0)}$-module
by $D_{A}^{(0)}$, $f\cdot(D_{A}^{(0,1),+})$, and elements which
are products of $\mathfrak{L}_{X}(U)$ (again by \thmref{Local-Coords-for-D+}).
So any map which satisfies the above list of properties and equals
$v$ on $\mathfrak{L}_{X}(U)$ is equal to the one we have written
down; so the uniqueness follows as well. 
\end{proof}
Now we arrive at 
\begin{defn}
\label{def:D(0,1)}The sheaf of algebras $\mathcal{D}_{X}^{(0,1)}$
is defined as the $\mathbb{Z}$-graded sheaf of $k[v,f]$-algebras,
which as a graded sheaf is given by
\[
\bigoplus_{i=-\infty}^{-1}\mathcal{D}_{X}^{(0)}\oplus\mathcal{D}_{X}^{(0,1),+}
\]
and where we extend the action of $f$ (to an operator of degree $1$)
from $\mathcal{D}_{X}^{(0,1),+}$ to $\mathcal{D}_{X}^{(0,1)}$ by
setting $f=0$ on ${\displaystyle \bigoplus_{i=-\infty}^{-1}\mathcal{D}_{X}^{(0)}}$,
and we extend the action of $v$ (to an operator of degree $-1$)
on $\bigoplus_{i=1}^{\infty}\mathcal{D}_{X}^{(0,1),i}$ by letting
$v:\mathcal{D}_{X}^{(0,1),i}\to\mathcal{D}_{X}^{(0,1),i-1}$ be the
identity whenever $i\leq0$. The product on this algebra extends the
product on $\mathcal{D}_{X}^{(0,1),+}$ as follows: on the negative
half ${\displaystyle \bigoplus_{i=-\infty}^{0}\mathcal{D}_{X}^{(0)}}=\mathcal{D}_{X}^{(0)}\otimes_{k}k[v]$,
we use the obvious graded product. For $i\leq0$, if $\Phi\in\mathcal{D}_{X}^{(0,1),i}\tilde{=}D_{X}^{(0)}$
and $\Psi\in\mathcal{D}_{X}^{(0,1),+}$, we set 
\[
\Phi\cdot\Psi=\Phi_{0}v^{i}(\Psi)
\]
where $\Phi_{0}$ is the element $\Phi\in D_{X}^{(0)}$, now regarded
as an element of degree $0$. 
\end{defn}

From this definition and \thmref{D-is-quasi-coherent}, we see that
this is a quasicoherent sheaf of algebras, and we have an isomorphism
\[
\widehat{D}_{\mathcal{A}}^{(0,1)}/p\tilde{=}\mathcal{D}_{X}^{(0,1)}(U)
\]
for any $U=\text{Spec}(A)$ which possesses local coordinates. It
follows that $\mathcal{D}_{X}^{(0,1)}$ is a coherent, locally noetherian
sheaf of rings which is stalk-wise noetherian. One sees directly the
isomorphism $\mathcal{D}_{X}^{(0)}\tilde{=}\mathcal{D}^{(0,1)}/(v-1)$,
and we may now define $\mathcal{D}_{X}^{(1)}:=\mathcal{D}^{(0,1)}/(f-1)$.
We will see below that this agrees with Berthelot's definition; this
is clear if $X$ is liftable but not quite obvious in general.

\section{Gauges Over $\widehat{\mathcal{D}}_{\mathfrak{X}}^{(0,1)}$}

We now have several locally noetherian graded rings and so we can
consider categories of modules over them; in particular we have the
category of graded $\widehat{\mathcal{D}}_{\mathfrak{X}}^{(0,1)}$-modules,
(which we refer to a gauges over $\mathfrak{X}$) $\mathcal{G}(\widehat{\mathcal{D}}_{\mathfrak{X}}^{(0,1)})$
and the category of coherent graded modules $\mathcal{G}_{coh}(\widehat{\mathcal{D}}_{\mathfrak{X}}^{(0,1)})$.
We have the analogous categories in positive characteristic as well
as $\mathcal{G}_{qcoh}(\mathcal{D}_{X}^{(0,1)})$, the graded quasicoherent
$\mathcal{D}_{X}^{(0,1)}$-modules; as $\mathcal{D}_{X}^{(0,1)}$
is itself a quasi-coherent sheaf of algebras, this is simply the category
of sheaves in $\mathcal{G}(\widehat{\mathcal{D}}_{\mathfrak{X}}^{(0,1)})$
which are quasi-coherent over $\mathcal{O}_{X}[f,v]$. 

In this chapter we develop the basic properties of these categories
of gauges; we begin by collecting a few of their most basic properties. 

For any object in $\mathcal{G}(\widehat{\mathcal{D}}_{\mathfrak{X}}^{(0,1)})$
(or $\mathcal{G}(\mathcal{D}_{X}^{(0,1)})$) set $\mathcal{M}^{\infty}:=\mathcal{M}/(f-1)$
and $\mathcal{M}^{-\infty}:=\mathcal{M}/(v-1)$; these are exact functors
to the categories of $\widehat{\mathcal{D}}_{\mathfrak{X}}^{(0,1),\infty}$
and $\widehat{\mathcal{D}}_{\mathfrak{X}}^{(0)}(=\widehat{\mathcal{D}}_{\mathfrak{X}}^{(0,1),-\infty})$-modules,
respectively; there are obvious maps $f_{\infty}:\mathcal{M}^{i}\to\mathcal{M}^{\infty}$
and $v_{-\infty}:\mathcal{M}^{i}\to\mathcal{M}^{-\infty}$ for each
$i$. We use the same notation to denote the analogous constructions
for $\mathcal{G}(\mathcal{D}_{X}^{(0,1)})$. 

We have: 
\begin{lem}
\label{lem:Basic-v}Let $\mathcal{M}\in\mathcal{G}_{coh}(\mathcal{\widehat{D}}_{\mathfrak{X}}^{(0,1)})$.
Then each $\mathcal{M}^{i}$ is coherent as a $\mathcal{\widehat{D}}_{X}^{(0)}$-module.
Further, for all $i<<0$, the map $v:\mathcal{M}^{i}\to\mathcal{M}^{i-1}$
is an isomorphism. The same holds for $\mathcal{M}\in\mathcal{G}_{coh}(\mathcal{D}_{X}^{(0,1)})$. 
\end{lem}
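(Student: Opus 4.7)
The plan is to reduce both assertions to local calculations on an affine chart admitting coordinates, using Lemma 3.2 (and, in the mod $p$ case, Corollary 3.7) together with the fact that both $\widehat{\mathcal{D}}_{\mathfrak{X}}^{(0,1)}$ and $\widehat{\mathcal{D}}_{\mathfrak{X}}^{(0)}$ are locally noetherian. I would restrict to an affine $U = \text{Specf}(\mathcal{A})$ possessing local coordinates, fix finitely many graded generators $m_1,\dots,m_n$ of $\mathcal{M}|_U$ in degrees $d_1,\dots,d_n$, and use the presentation $\bigoplus_j \widehat{\mathcal{D}}_{\mathfrak{X}}^{(0,1)}(-d_j)|_U \twoheadrightarrow \mathcal{M}|_U$.

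For the coherence of $\mathcal{M}^i$, I take the degree $i$ component of the presentation to obtain a surjection $\bigoplus_j \widehat{\mathcal{D}}_{\mathfrak{X}}^{(0,1),i-d_j}|_U \twoheadrightarrow \mathcal{M}^i|_U$. Lemma 3.2 tells me that each $\widehat{\mathcal{D}}_{\mathfrak{X}}^{(0,1),r}$ is finitely generated as a left $\widehat{\mathcal{D}}_{\mathfrak{X}}^{(0)}$-module (free of rank one on $v^{-r}$ when $r \leq 0$, and spanned by the finitely many monomials $(\partial_1^{[p]})^{j_1}\cdots(\partial_n^{[p]})^{j_n}$ with $\sum_k j_k \leq r$ when $r \geq 0$). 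So $\mathcal{M}^i|_U$ is finitely generated over $\widehat{\mathcal{D}}_{\mathfrak{X}}^{(0)}|_U$, and coherence follows from Berthelot's local noetherianity of $\widehat{\mathcal{D}}_{\mathfrak{X}}^{(0)}$.

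For the stabilization of $v$, the key point from Lemma 3.2 is the identification $\widehat{\mathcal{D}}_{\mathfrak{X}}^{(0,1),r} = v^{-r}\widehat{\mathcal{D}}_{\mathfrak{X}}^{(0)}$ for $r \leq 0$. Setting $d = \min_j d_j$, this shows that for $i \leq d$ the module $\mathcal{M}^i|_U$ is generated over $\widehat{\mathcal{D}}_{\mathfrak{X}}^{(0)}$ by $\{v^{d_j - i}m_j\}_j$, and $v$ carries these to the analogous generators of $\mathcal{M}^{i-1}|_U$; hence $v:\mathcal{M}^i \to \mathcal{M}^{i-1}$ is surjective in that range. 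For injectivity I introduce the graded submodule $\mathcal{K} := \ker(v : \mathcal{M} \to \mathcal{M}(-1))$, which is coherent because $\widehat{\mathcal{D}}_{\mathfrak{X}}^{(0,1)}$ is locally noetherian. Locally $\mathcal{K}|_U$ has finite graded generators $k_1,\dots,k_r$ in degrees $e_1,\dots,e_r$, and for $i < e := \min_s e_s$ every local section of $\mathcal{K}^i$ takes the form $\sum_s v^{e_s - i}\alpha_s k_s$ with $\alpha_s \in \widehat{\mathcal{D}}_{\mathfrak{X}}^{(0)}$. Since $e_s - i \geq 1$, this rewrites as $v \cdot \bigl(\sum_s v^{e_s - i - 1}\alpha_s k_s\bigr)$; the inner sum lies in $\mathcal{K}^{i+1}$, and $v$ annihilates $\mathcal{K}$ by definition, so the section is zero. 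Thus $\mathcal{K}^i = 0$ for $i < e$ and $v$ is injective on $\mathcal{M}^i$ in that range.

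Combining, for any $i_0 < \min(d,e)$ the map $v:\mathcal{M}^i \to \mathcal{M}^{i-1}$ is an isomorphism; quasicompactness of $\mathfrak{X}$ lets me cover it by finitely many coordinate charts and take the maximum of the local bounds. The proof for $\mathcal{M} \in \mathcal{G}_{coh}(\mathcal{D}_X^{(0,1)})$ is formally the same, using Corollary 3.7 in place of Lemma 3.2 and the local noetherianity of $\mathcal{D}_X^{(0)}$. The only substantive step is the coherence of the kernel $\mathcal{K}$---and I expect that to be the main technical point one must carefully justify---but it is immediate from the locally (and stalk-wise) noetherian property of $\widehat{\mathcal{D}}_{\mathfrak{X}}^{(0,1)}$ and $\mathcal{D}_X^{(0,1)}$ recorded in the preceding section.
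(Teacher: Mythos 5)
Your proof is correct and follows essentially the same route as the paper's: reduce to a local finite presentation by shifted copies of $\widehat{\mathcal{D}}_{\mathfrak{X}}^{(0,1)}$ and invoke the explicit description of its graded pieces as finite (free, in negative degrees) modules over $\widehat{\mathcal{D}}_{\mathfrak{X}}^{(0)}$. The only harmless variation is that the paper keeps the full two-term presentation, so that coherence of each $\mathcal{M}^i$ and bijectivity of $v$ for $i\ll 0$ both follow at once from the corresponding facts for the free terms, whereas you use only a generating surjection and supply injectivity separately via the coherence and eventual vanishing of $\ker(v)$ --- which works equally well.
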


\begin{proof}
By definition we have, at least locally, an exact sequence
\[
\bigoplus_{i=1}^{s}\mathcal{\widehat{D}}_{\mathfrak{X}}^{(0,1)}(r_{i})\to\bigoplus_{i=1}^{m}\mathcal{\widehat{D}}_{\mathfrak{X}}^{(0,1)}(l_{i})\to\mathcal{M}\to0
\]
Now the result follows as the lemma is true for $\mathcal{\widehat{D}}_{\mathfrak{X}}^{(0,1)}$
by construction. As the same holds for $\mathcal{\widehat{D}}_{\mathfrak{X}}^{(0,1)}$,
we may prove $2)$ in an identical manner.
\end{proof}
This allows us to give: 
\begin{defn}
\label{def:Index!}Let $\mathcal{M}\in\mathcal{G}_{coh}(\mathcal{\widehat{D}}_{\mathfrak{X}}^{(0,1)})$.
Then the index of $\mathcal{M}$ in $\mathbb{Z}\cup\{\infty\}$ is
the largest integer $i$ for which $v:\mathcal{M}^{j}\to\mathcal{M}^{j-1}$
is an isomorphism for all $j\leq i$. The index is $\infty$ if $v$
is an isomorphism for all $i$ (this can indeed happen; c.f. \exaref{Exponential!}
below). We can make the same definition for $\mathcal{M}\in\mathcal{G}_{coh}(\mathcal{D}_{X}^{(0,1)})$. 
\end{defn}

We will now use show how cohomological completeness gives a convenient
criterion for a complex to be in $D_{coh}^{b}(\mathcal{G}(\mathcal{\widehat{D}}_{\mathfrak{X}}^{(0,1)}))$. 
\begin{prop}
\label{prop:coh-to-coh}We have $D_{coh}^{b}(\mathcal{G}(\mathcal{\widehat{D}}_{\mathfrak{X}}^{(0,1)}))\subset D_{cc}(\mathcal{G}(\mathcal{\widehat{D}}_{\mathfrak{X}}^{(0,1)}))$.
Further, for $\mathcal{M}^{\cdot}\in D_{cc}(\mathcal{G}(\mathcal{\widehat{D}}_{\mathfrak{X}}^{(0,1)}))$,
we have $\mathcal{M}^{\cdot}\in D_{coh}^{?}(\mathcal{\widehat{D}}_{\mathfrak{X}}^{(0,1)})$
iff $\mathcal{M}^{\cdot}\otimes_{W(k)}^{L}k\in D_{coh}^{?}(\mathcal{G}(\mathcal{\widehat{D}}_{X}^{(0,1)}))$,
where $?=+$ or $?=b$. 
\end{prop}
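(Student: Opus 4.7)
The plan is to handle the two assertions separately: Part 1 by reducing to the ungraded analogue, and the iff in Part 2 by combining Nakayama's lemma with a local lifting construction. For Part 1, let $\mathcal{M}^{\cdot}\in D_{coh}^{b}(\mathcal{G}(\widehat{\mathcal{D}}_{\mathfrak{X}}^{(0,1)}))$. By \lemref{Basic-v} each cohomology sheaf is coherent graded-piece-by-graded-piece as a $\widehat{\mathcal{D}}_{\mathfrak{X}}^{(0)}$-module, so each $\mathcal{M}^{\cdot,i}$ lies in $D_{coh}^{b}(\widehat{\mathcal{D}}_{\mathfrak{X}}^{(0)}\text{-mod})$. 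The ungraded result \cite{key-8}, theorem 1.6.1 then guarantees each graded piece is cohomologically complete, and \propref{Basic-CC-facts}(4) assembles these into cohomological completeness of $\mathcal{M}^{\cdot}$ itself.

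For Part 2, the forward implication follows from the long exact sequence of the triangle $\mathcal{M}^{\cdot}\xrightarrow{p}\mathcal{M}^{\cdot}\to\mathcal{M}^{\cdot}\otimes_{W(k)}^{L}k$: each $\mathcal{H}^{n}(\mathcal{M}^{\cdot}\otimes_{W(k)}^{L}k)$ fits in a short exact sequence between $\mathcal{H}^{n}(\mathcal{M}^{\cdot})/p$ and $\mathcal{H}^{n+1}(\mathcal{M}^{\cdot})[p]$, both coherent since $\mathcal{D}_{X}^{(0,1)}$ is locally noetherian, and the tensor shifts cohomological degree by at most one. For the reverse implication, assume $\mathcal{M}^{\cdot}\in D_{cc}$ and $\mathcal{M}^{\cdot}\otimes_{W(k)}^{L}k\in D_{coh}^{?}(\mathcal{G})$; \corref{Nakayama} immediately gives the lower bound on cohomology. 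For the coherence of each $\mathcal{H}^{n}(\mathcal{M}^{\cdot})$, I will work locally on an affine $U$ with lift $\mathfrak{U}$ and, for each fixed $n$, construct a finite-length complex $\mathcal{P}^{\cdot}$ of finite free graded $\widehat{\mathcal{D}}_{\mathfrak{X}}^{(0,1)}|_{U}$-modules together with a map $\mathcal{P}^{\cdot}\to\mathcal{M}^{\cdot}$ that becomes a quasi-isomorphism mod $p$ in degrees $\leq n$. The approach is to first choose a truncated finite free resolution $\bar{\mathcal{P}}^{\cdot}\to\mathcal{M}^{\cdot}\otimes_{W(k)}^{L}k$ over the noetherian $\mathcal{D}_{X}^{(0,1)}|_{U}$, then lift it: each $\bar{\mathcal{P}}^{i}$ lifts freely, and the differentials lift by a $p$-adic obstruction iteration, convergent because each graded piece of $\widehat{\mathcal{D}}_{\mathfrak{X}}^{(0,1)}$ is $p$-adically complete over $\widehat{\mathcal{D}}_{\mathfrak{X}}^{(0)}$. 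The resulting map is between cohomologically complete complexes (the source by Part 1), so \corref{Nakayama} applied to its truncated cone forces it to be a quasi-isomorphism in degrees $\leq n$; hence $\mathcal{H}^{n}(\mathcal{M}^{\cdot}|_{U})$ is a subquotient of finite free graded modules, and therefore coherent.

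Finally, for $?=b$, the previous step yields $\mathcal{M}^{\cdot}\in D_{coh}^{+}$; to upgrade to $D_{coh}^{b}$, the long exact sequence forces $\mathcal{H}^{n}(\mathcal{M}^{\cdot})$ to be $p$-divisible for $n>b$. By \lemref{Basic-v} each graded piece $\mathcal{H}^{n}(\mathcal{M}^{\cdot})^{i}$ is then a finitely generated module over the $p$-adically complete noetherian ring $\widehat{\mathcal{D}}_{\mathfrak{X}}^{(0)}$ on which $p$ acts surjectively, so classical Nakayama forces each such piece, and hence $\mathcal{H}^{n}(\mathcal{M}^{\cdot})$, to vanish. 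The main obstacle will be the local lifting construction: showing that the $p$-adic iteration for the differentials of $\mathcal{P}^{\cdot}$ converges in the graded category, by verifying $p$-adic completeness of the relevant graded endomorphism sheaves and controlling the obstructions to $d^{2}=0$ without disturbing the differentials already constructed.
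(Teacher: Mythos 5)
Your Part 1, the forward implication of Part 2, and the final boundedness upgrade are all sound and essentially coincide with what the paper does. The problem is the converse coherence step, where you replace the paper's reduction to the ungraded theory by a direct lifting construction, and that construction has a real gap.

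The gap is the claimed convergence of the ``$p$-adic obstruction iteration'' for the differentials of $\mathcal{P}^{\cdot}$. Having lifted the free modules and chosen arbitrary lifts $d_{0}$ of the differentials, one has $d_{0}^{2}=pe$ for some degree-two map $e$; correcting $d_{0}$ by $pc$ changes $\bar{e}$ by $\bar{d}\bar{c}+\bar{c}\bar{d}$, and $\bar{e}$ commutes with $\bar{d}$ because $d_{0}^{2}$ does. So the obstruction to improving $d^{2}=0$ from mod $p$ to mod $p^{2}$ is the class of the chain map $\bar{e}:\bar{\mathcal{P}}^{\cdot}\to\bar{\mathcal{P}}^{\cdot}[2]$ modulo null-homotopies, i.e. a class in $\mathrm{Hom}_{D}(\bar{\mathcal{M}}^{\cdot},\bar{\mathcal{M}}^{\cdot}[2])$. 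This is an $\mathrm{Ext}^{2}$-type class with no formal reason to vanish: deforming a complex of free modules across $p$ is a genuinely obstructed problem, and no amount of $p$-adic completeness of endomorphism sheaves makes the obstruction go away. What saves the situation here is that the derived-category object $\mathcal{M}^{\cdot}\otimes_{W(k)}^{L}k$ manifestly does lift (namely to $\mathcal{M}^{\cdot}$ itself), but exploiting this forces you to restructure the argument: either build the free resolution of $\mathcal{M}^{\cdot}$ directly from the top degree down, using the complete Nakayama lemma at each stage to check that finitely many sections generating mod $p$ actually generate (this only ever requires lifting a surjection onto a single module, never a whole complex), or lift compatibly along the tower $\mathcal{M}^{\cdot}\otimes^{L}\mathbb{Z}/p^{n}$ using these complexes as guides. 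The paper sidesteps the issue entirely: it applies the ungraded coherence criterion (\cite{key-8}, theorem 1.6.4) to each graded piece $(\mathcal{M}^{\cdot})^{i}$ to get each $\mathcal{H}^{j}((\mathcal{M}^{\cdot})^{i})$ coherent and $p$-adically complete over $\widehat{\mathcal{D}}_{\mathfrak{X}}^{(0)}$, then upgrades to finite generation over the graded ring by applying complete Nakayama to the transition maps $v:\mathcal{H}^{j}((\mathcal{M}^{\cdot})^{i})\to\mathcal{H}^{j}((\mathcal{M}^{\cdot})^{i-1})$ and $\widehat{\mathcal{D}}_{\mathfrak{X}}^{(0,1),1}\otimes_{\widehat{\mathcal{D}}_{\mathfrak{X}}^{(0)}}\mathcal{H}^{j}((\mathcal{M}^{\cdot})^{i})\to\mathcal{H}^{j}((\mathcal{M}^{\cdot})^{i+1})$, and finally checks coherence by a localization argument. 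I would either adopt that reduction or rewrite your local step as a direct construction of the resolution of $\mathcal{M}^{\cdot}$ rather than a lift of a resolution of its reduction.
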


\begin{proof}
Recall that if $\mathcal{F}$ is a sheaf of $W(k)$-modules which
is $p$-torsion free and $p$-adically complete; or if it is annihilated
by $p^{N}$ for some fixed $N\in\mathbb{N}$, then $\mathcal{F}$
(considered as a complex concentrated in one degree) is contained
in $D_{cc}(W(k))$. It follows that a coherent $\mathcal{\widehat{D}}_{X}^{(0)}$-module
is cohomologically complete; therefore so is an element of $\mathcal{G}_{coh}(\mathcal{\widehat{D}}_{\mathfrak{X}}^{(0,1)})$
by \propref{Basic-CC-facts}, part $4)$. Since $D_{cc}(\mathcal{G}(\mathcal{D}_{\mathfrak{X}}^{(0,1)}))$
is closed under extensions, the first statement follows directly (c.f.
\cite{key-8}, theorem 1.6.1). 

For the second statement, the forward direction is obvious. For the
converse, we note that by \cite{key-8} theorem 1.6.4, since (for
either $?=+$ or $?=b$) each \linebreak{}
$(\mathcal{M}^{\cdot})^{i}\otimes_{W(k)}^{L}k\in D_{coh}^{+}(\mathcal{D}_{X}^{(0)}-\text{mod})$,
we must have $(\mathcal{M}^{\cdot})^{i}\in D_{coh}^{+}(\mathcal{D}_{\mathfrak{X}}^{(0)}-\text{mod})$.
In particular $\mathcal{H}^{j}((\mathcal{M}^{\cdot})^{i})$ is $p$-adically
complete for each $i$ and $j$. Further, we have the short exact
sequences for the functor $\otimes_{W(k)}^{L}k$ 
\[
0\to\mathcal{H}^{j}(\mathcal{M}^{\cdot})/p\to\mathcal{H}^{j}(\mathcal{M}^{\cdot}\otimes_{W(k)}^{L}k)\to\mathcal{T}or_{1}^{W(k)}(\mathcal{H}^{j+1}(\mathcal{M}^{\cdot}),k))\to0
\]
which implies also that $\mathcal{H}^{j}(\mathcal{M}^{\cdot})/p$
is coherent over $\mathcal{D}_{X}^{(0,1)}$ for all $j$ (this follows
from the fact that $\mathcal{H}^{j}((\mathcal{M}^{\cdot})^{i})/p$
is coherent, and hence quasi-coherent, for all $i$; which implies
$\mathcal{H}^{j}(\mathcal{M}^{\cdot})/p$ is a quasicoherent sub-sheaf
of the coherent $\mathcal{D}_{X}^{(0,1)}$-module $\mathcal{H}^{j}(\mathcal{M}^{\cdot}\otimes_{W(k)}^{L}k)$,
and hence coherent). 

Now, for a fixed $j$, we can consider, for any $i$ 
\[
v:\mathcal{H}^{j}((\mathcal{M}^{\cdot})^{i})\to\mathcal{H}^{j}((\mathcal{M}^{\cdot})^{i-1})
\]
and 
\[
\mathcal{D}_{\mathfrak{X}}^{(0,1),1}\otimes_{\mathcal{D}_{\mathfrak{X}}^{(0)}}\mathcal{H}^{j}((\mathcal{M}^{\cdot})^{i})\to\mathcal{H}^{j}((\mathcal{M}^{\cdot})^{i+1})
\]
Since $\mathcal{H}^{j}(\mathcal{M}^{\cdot})/p$ is coherent over $\mathcal{D}_{X}^{(0,1)}$,
we have that the reduction mod $p$ of $v$ is surjective for $i<<0$
and the the reduction mod $p$ of the second map is surjective for
$i>>0$. By the usual complete Nakayama lemma, we see that $v$ is
surjective for $i<<0$ and the second map is surjective for $i>>0$;
therefore $\mathcal{H}^{j}(\mathcal{M}^{\cdot})$ is locally finitely
generated over $\mathcal{D}_{\mathfrak{X}}^{(0,1)}$ (since each $\mathcal{H}^{j}((\mathcal{M}^{\cdot})^{i})$
is coherent over $\mathcal{D}_{\mathfrak{X}}^{(0)}$). 

Now, let $U\subset X$ be an open affine and let $D(g)\subset U$
be a principle open inside $U$; let $\tilde{g}$ be a lift of the
function $g$ to $\Gamma(\mathcal{O}_{U})$. As each $\mathcal{H}^{j}((\mathcal{M}^{\cdot})^{i})$
is coherent, we have that $\mathcal{H}^{j}((\mathcal{M}^{\cdot})^{i})(D(g))$
is isomorphic to the completion of the localization of $\mathcal{H}^{j}((\mathcal{M}^{\cdot})^{i})(U)$
at $\tilde{g}$. It follows that $\mathcal{H}^{j}(\mathcal{M}^{\cdot})(D(g))$
is given by localizing $\mathcal{H}^{j}(\mathcal{M}^{\cdot})(U)$
at $\tilde{g}$ and then completing each component. If $\mathcal{F}$
is a graded free module over $\widehat{\mathcal{D}}_{\mathfrak{X}}^{(0,1)}$,
it has the same description; and so the kernel of any map $\mathcal{F}\to\mathcal{H}^{j}(\mathcal{M}^{\cdot})|_{U}$
also has this description (as the functor of localizing and completing
is exact on coherent $\widehat{\mathcal{D}}_{\mathfrak{X}}^{(0)}$-modules);
hence it is locally finitely generated and so $\mathcal{H}^{j}(\mathcal{M}^{\cdot})$
is itself coherent. 

Finally, we note that $\mathcal{H}^{j}(\mathcal{M}^{\cdot}\otimes_{W(k)}^{L}k)=0$
implies $\mathcal{H}^{j}(\mathcal{M}^{\cdot})/p=0$ by the above short
exact sequence. So, if $\mathcal{M}^{\cdot}\otimes_{W(k)}^{L}k\in D_{coh}^{+}(\mathcal{G}(\mathcal{D}_{X}^{(0,1)}))$,
we see that $\mathcal{H}^{j}(\mathcal{M}^{\cdot})/p=0$ for all $j<<0$;
which implies $\mathcal{H}^{j}(\mathcal{M}^{\cdot})=0$ for $j<<0$
since each $\mathcal{H}^{j}(\mathcal{M}^{\cdot})^{i}$ is $p$-adically
complete; i.e., we have $\mathcal{M}^{\cdot}\in D_{\text{Coh}}^{+}(\mathcal{D}_{\mathfrak{X}}^{(0,1)})$;
the same argument applies for bounded complexes. 
\end{proof}
This proposition will be our main tool for showing that elements of
$D_{cc}(\mathcal{G}(\mathcal{D}_{\mathfrak{X}}^{(0,1)}))$ are actually
in $D_{coh}^{b}(\mathcal{\widehat{D}}_{\mathfrak{X}}^{(0,1)})$.

\subsection{\label{subsec:Standard}Standard Gauges, Mazur's Theorem}

In this subsection we discuss the analogue of (the abstract version
of) Mazur's theorem in the context of $\widehat{\mathcal{D}}_{\mathfrak{X}}^{(0,1)}$-gauges.
Since the notion of gauge was invented in order to isolate the structures
used in the proof of Mazur's theorem, it comes as no surprise that
there is a very general version of the theorem available in this context.
Before proving it, we discuss some generalities, starting with 
\begin{defn}
\label{def:Standard!}Let $\mathcal{M}\in\mathcal{G}(\widehat{\mathcal{D}}_{\mathfrak{X}}^{(0,1)})$.
We say $\mathcal{M}$ is standard if $\mathcal{M}^{-\infty}$ and
$\mathcal{M}^{\infty}$ are $p$-torsion-free, each $f_{\infty}:\mathcal{M}^{i}\to\mathcal{M}^{\infty}$
is injective; and, finally, there is a $j_{0}\in\mathbb{Z}$ so that
\[
f_{\infty}(\mathcal{M}^{i+j_{0}})=\{m\in\mathcal{M}^{\infty}|p^{i}m\in f_{\infty}(\mathcal{M}^{j_{0}})\}
\]
for all $i\in\mathbb{Z}$. 
\end{defn}

The $j_{0}$ appearing in this definition is not unique; indeed, from
the definition if $i<0$ we have $f_{\infty}(\mathcal{M}^{i+j_{0}})=p^{-i}\cdot f_{\infty}(\mathcal{M}^{j_{0}})$
which implies that we can replace $j_{0}$ with any $j<j_{0}$. In
particular the\emph{ }index\emph{ }of a standard gauge (as in \defref{Index!})
is the maximal $j_{0}$ for which the description in the definition
is true (and it takes the value $\infty$ if this description is true
for all integers). Note that if $\mathcal{M}$ is standard, so is
the shift $\mathcal{M}(j)$, and the index of $\mathcal{M}(j)$ is
equal to $\text{index}(\mathcal{M})+j$. 

As in the case where $\mathfrak{X}$ is a point (which is discussed
above in \exaref{BasicGaugeConstruction}), standard gauges are (up
to a shift of index) exactly the ones that can be constructed from
lattices: 
\begin{example}
\label{exa:Basic-Construction-over-X} Let $\mathcal{N}'$ be a $\widehat{\mathcal{D}}_{\mathfrak{X}}^{(0)}[p^{-1}]$-module,
and let $\mathcal{N}$ be a lattice; i.e., a $\widehat{\mathcal{D}}_{\mathfrak{X}}^{(0)}$-submodule
such that $\mathcal{N}[p^{-1}]=\mathcal{N}'$. Recalling the isomorphism
$\widehat{\mathcal{D}}_{\mathfrak{X}}^{(0)}[p^{-1}]\tilde{=}\widehat{\mathcal{D}}_{\mathfrak{X}}^{(0,1),\infty}[p^{-1}]$
(c.f. \lemref{Basic-Structure-of-D^(1)}), we also suppose given a
$\widehat{\mathcal{D}}_{\mathfrak{X}}^{(0,1),\infty}$-lattice of
$\mathcal{N}'$ called $\mathcal{M}^{\infty}$. Then we may produce
a standard gauge $\mathcal{M}$ via 
\[
\mathcal{M}^{i}=\{m\in\mathcal{M}^{\infty}|p^{i}m\in\mathcal{N}\}
\]
If $\mathcal{M}^{\infty}$ is coherent over $\widehat{\mathcal{D}}_{\mathfrak{X}}^{(0,1),\infty}$
and $\mathcal{N}$ is coherent over $\widehat{\mathcal{D}}_{\mathfrak{X}}^{(0)}$,
then $\mathcal{M}$ is a coherent gauge. 
\end{example}

Let us give some general properties of standard gauges:
\begin{lem}
\label{lem:Standard-is-rigid}Suppose $\mathcal{M}\in\mathcal{G}(\widehat{\mathcal{D}}_{\mathfrak{X}}^{(0,1)})$
is standard; and let $\mathcal{M}_{0}=\mathcal{M}/p$ be its reduction
mod $p$. Then $\mathcal{M}_{0}$ has $\text{ker}(f)=\text{im}(v)$
and $\text{ker}(v)=\text{im}(f)$; further, if $\overline{m}_{i}\in\mathcal{M}_{0}^{i}$,
then $f\overline{m}_{i}=0=v\overline{m}_{i}$ iff $\overline{m}_{i}=0$. 
\end{lem}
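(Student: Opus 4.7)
The plan is to use the injectivity of $f_\infty$ to regard each $\mathcal{M}^i$ as a submodule of $\mathcal{M}^\infty$, reducing every claim to a computation inside the $p$-torsion-free module $\mathcal{M}^\infty$. Under this identification, $f:\mathcal{M}^i\to\mathcal{M}^{i+1}$ becomes the inclusion $\mathcal{M}^i\subset\mathcal{M}^{i+1}$, and the relation $fv=p$ (combined with the injectivity of $f$) forces $v:\mathcal{M}^i\to\mathcal{M}^{i-1}$ to be multiplication by $p$. The standardness condition then reads $\mathcal{M}^{i+j_0}=\{x\in\mathcal{M}^\infty \,:\, p^i x\in\mathcal{M}^{j_0}\}$ for $i\geq 0$ and $\mathcal{M}^{j_0-i}=p^i\mathcal{M}^{j_0}$ for $i\geq 0$.

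The pivotal step, which I would establish first, is the equivalence: for every $x\in\mathcal{M}^\infty$ and every $i\in\mathbb{Z}$, one has $x\in\mathcal{M}^{i+1}$ if and only if $px\in\mathcal{M}^i$. The forward direction is just $v=p\cdot$; for the reverse, split on whether $i\geq j_0$ or $i<j_0$. If $i\geq j_0$, then $p^{i-j_0}(px)=p^{i+1-j_0}x\in\mathcal{M}^{j_0}$, so $x\in\mathcal{M}^{i+1}$ by definition. If $i<j_0$, then $px=p^{j_0-i}n$ for some $n\in\mathcal{M}^{j_0}$, and the $p$-torsion-freeness of $\mathcal{M}^\infty$ yields $x=p^{j_0-i-1}n\in\mathcal{M}^{i+1}$.

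With this in hand, the identities $\ker f=\mathrm{im}\,v$ and $\ker v=\mathrm{im}\,f$ on $\mathcal{M}_0^i$ are formal. In each case, one inclusion follows from $fv=vf=p$. For the remaining inclusion in the first identity: $f\bar m=0$ means $m\in p\mathcal{M}^{i+1}$, so $m=pn$ with $n\in\mathcal{M}^{i+1}$ (unique by $p$-torsion-freeness of $\mathcal{M}^\infty$), hence $\bar m=v(\bar n)$. For the second identity: $v\bar m=0$ means $pm\in p\mathcal{M}^{i-1}$, and $p$-torsion-freeness forces $m\in\mathcal{M}^{i-1}$, so $\bar m$ lies in the image of $f$.

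Finally, if $f\bar m_i=0=v\bar m_i$, the two analyses together give $m=pn$ for some $n\in\mathcal{M}^{i+1}$ and $m\in\mathcal{M}^{i-1}$ simultaneously; hence $pn\in\mathcal{M}^{i-1}$. Applying the pivotal equivalence to $n$ at index $i-1$ yields $n\in\mathcal{M}^i$, so $m=pn\in p\mathcal{M}^i$, i.e.\ $\bar m_i=0$. I expect the only (mild) obstacle to be the case split at $i=j_0$ in the pivotal equivalence; it is bookkeeping rather than a conceptual difficulty, and no input is required beyond the defining property of a standard gauge and the $p$-torsion-freeness of $\mathcal{M}^\infty$.
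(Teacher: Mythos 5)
Your proof is correct and follows essentially the same route as the paper's: identify each $\mathcal{M}^{i}$ with its image in $\mathcal{M}^{\infty}$ via the injectivity of $f_{\infty}$, and use the $p$-torsion-freeness of $\mathcal{M}^{\infty}$ together with the defining property of standard gauges. Your ``pivotal equivalence'' ($x\in\mathcal{M}^{i+1}$ iff $px\in\mathcal{M}^{i}$) is just a clean one-step repackaging of the standardness condition that the paper applies directly in its $p^{i}$ form for the last assertion.
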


\begin{proof}
Since $fv=0$ on $\mathcal{M}_{0}$, we always have $\text{im}(f)\subset\text{ker}(v)$
and $\text{im}(v)\subset\text{ker}(f)$; so we consider the other
inclusions.

Let $m_{i}\in\mathcal{M}^{i}$, and denote its image in $\mathcal{M}_{0}^{i}$
by $\overline{m}_{i}$. Suppose $v\overline{m}_{i}=0$. Then $vm_{i}=pm_{i-1}$
for some $m_{i-1}\in\mathcal{M}^{i-1}$, so that $f_{\infty}(vm_{i})=pf_{\infty}(m_{i})=pf_{\infty}(m_{i-1})$.
Since $\mathcal{M}^{\infty}$ is $p$-torsion-free this yields $f_{\infty}(m_{i})=f_{\infty}(m_{i-1})$
so that $fm_{i-1}=m_{i}$ by the injectivity of $f_{\infty}$. Thus
$\overline{m}_{i}\in\text{im}(f)$ and we see $\text{ker}(v)\subset\text{im}(f)$
as required. 

Now suppose $f\overline{m}_{i}=0$. Then $fm_{i}=pm_{i+1}$ for some
$m_{i+1}\in\mathcal{M}^{i+1}$ so that $f_{\infty}(m_{i})=pf_{\infty}(m_{i+1})=f_{\infty}(vm_{i+1})$,
and the injectivity of $f_{\infty}$ implies $m_{i}=vm_{i+1}$ so
that $\overline{m}_{i}\in\text{im}(v)$ as required.

To obtain the last property; since $\mathcal{M}$ is standard, after
shifting the grading if necessary, we may identify $f_{\infty}(\mathcal{M}^{i})$
with $\{m\in\mathcal{M}^{\infty}|p^{i}m\in f_{\infty}(\mathcal{M}^{0})\}$.
If $m_{i}\in\mathcal{M}^{i}$ and $f\overline{m}_{i}=0=v\overline{m}_{i}$
then $fm_{i}=pm_{i+1}$ and $vm_{i}=pm_{i-1}$; therefore $f_{\infty}(m_{i})=pf_{\infty}(m_{i+1})$
and $pf_{\infty}(m_{i})=pf_{\infty}(m_{i-1})$ so that $p^{2}f_{\infty}(m_{i+1})=pf_{\infty}(m_{i-1})$
which implies $pf_{\infty}(m_{i+1})=f_{\infty}(m_{i-1})$. But $p^{i-1}f_{\infty}(m_{i-1})\in f_{\infty}(\mathcal{M}^{0})$,
so that $p^{i}f_{\infty}(m_{i+1})\in f_{\infty}(\mathcal{M}^{0})$
which forces $f_{\infty}(m_{i+1})\in f_{\infty}(\mathcal{M}^{i})$
so that $m_{i+1}=fm'_{i}$ for some $m'_{i}\in\mathcal{M}^{i}$. So
$fm_{i}=pm_{i+1}=f(pm_{i}')$ which implies $m_{i}=pm'_{i}$ and so
$\overline{m}_{i}=0$. 
\end{proof}
This motivates the 
\begin{defn}
(\cite{key-5}, definition 2.2.2) A gauge $\mathcal{M}_{0}$ over
$\mathcal{D}_{X}^{(0,1)}$ is called quasi-rigid if it satisfies $\text{ker}(f)=\text{im}(v)$
and $\text{ker}(v)=\text{im}(f)$, it is called rigid if it is quasi-rigid
and, in addition, $\text{ker}(f)\cap$$\text{ker}(v)=0$. 
\end{defn}

By the above lemma, a gauge is rigid if it is of the form $\mathcal{M}/p$
for some standard gauge $\mathcal{M}$. 

As explained in \cite{key-5}, rigidity is a very nice condition;
and in particular we have the following generalization of \cite{key-5},
lemma 2.2.5:
\begin{lem}
\label{lem:Basic-Facts-on-Rigid}Let $\mathcal{M}_{0}\in\mathcal{G}(\mathcal{D}_{X}^{(0,1)})$.
Then $\mathcal{M}_{0}$ is rigid iff $\mathcal{M}_{0}/f$ is $v$-torsion
free and $\mathcal{M}_{0}/v$ is $f$-torsion-free. 

Further, $\mathcal{M}_{0}$ is quasi-rigid iff $\mathcal{M}_{0}\otimes_{k[f,v]}^{L}k[f]\tilde{=}\mathcal{M}_{0}/v$
and $\mathcal{M}_{0}\otimes_{k[f,v]}^{L}k[v]\tilde{=}\mathcal{M}_{0}/f$. 
\end{lem}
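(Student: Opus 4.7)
The plan is to prove both equivalences by elementwise arguments, using only the relation $fv = 0 = vf$ (which holds on any $\mathcal{D}_X^{(0,1)}/p$-module) together with a single small computation in the ring $R = k[f,v]/(fv)$, the mod-$p$ reduction of $D(k)$.

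For the first equivalence, I would first handle the forward direction: assume $\mathcal{M}_0$ is rigid, let $m$ be a local section whose class in $\mathcal{M}_0/f$ is $v$-torsion, write $vm = fm'$, and observe that $vm$ lies in both $\text{im}(f)\subset\ker(v)$ (by quasi-rigidity) and in $\ker(f)$ (since $fv=0$); rigidity then forces $vm \in \ker(f)\cap\ker(v) = 0$, so $m \in \ker(v) = \text{im}(f)$ and $\bar m = 0$. The assertion about $\mathcal{M}_0/v$ is symmetric. For the converse, $v$-torsion-freeness of $\mathcal{M}_0/f$ immediately gives $\ker(v)\subset\text{im}(f)$ (apply to $m \in \ker(v)$, noting $vm = 0 \in \text{im}(f)$), and combined with the automatic reverse inclusion and the symmetric statement for $f$, quasi-rigidity follows. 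To upgrade to rigidity, I would take $m \in \ker(f)\cap\ker(v)$, write $m = vm'$ using $\ker(f)=\text{im}(v)$, note that $vm' = m \in \ker(v) = \text{im}(f)$, so the class $\bar{m'} \in \mathcal{M}_0/f$ is $v$-torsion and hence zero; then $m' \in \text{im}(f)$ and $m = vm' = 0$ since $vf = 0$.

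For the second equivalence, the key input is an explicit free $R$-resolution of $k[f] = R/(v)$. Using the $k$-basis $\{f^i,\, v^j : i \geq 0,\ j \geq 1\}$ of $R$, one checks directly that the annihilator of $v$ in $R$ is $fR$ and the annihilator of $f$ in $R$ is $vR$, yielding the two-periodic resolution
\[
\cdots \to R \xrightarrow{f} R \xrightarrow{v} R \xrightarrow{f} R \xrightarrow{v} R \to k[f] \to 0.
\]
Tensoring with $\mathcal{M}_0$ produces a complex whose zeroth cohomology is $\mathcal{M}_0/v$ and whose higher cohomology groups alternate between $\ker(v)/\text{im}(f)$ and $\ker(f)/\text{im}(v)$. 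Hence $\mathcal{M}_0 \otimes^L_R k[f] \tilde{\to} \mathcal{M}_0/v$ precisely when both of these groups vanish, i.e.\ precisely when $\mathcal{M}_0$ is quasi-rigid; the condition for $k[v] = R/(f)$ is literally the same by the evident $f \leftrightarrow v$ symmetry of the setup.

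The main obstacle is not deep: both parts reduce to careful diagram-chasing with kernels and images, and the only mild technical point is the identification of the annihilators of $f$ and $v$ in $R$, which is a direct calculation on the explicit $k$-basis. I expect no serious difficulty beyond making the equivalences line up cleanly.
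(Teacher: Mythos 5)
Your proof is correct and follows essentially the same route as the paper: the same element-wise chase with $\ker$ and $\operatorname{im}$ for the rigidity equivalence (your converse is just the $f\leftrightarrow v$ mirror of the paper's), and the same two-periodic free resolution $\cdots\to R\xrightarrow{f}R\xrightarrow{v}R\to k[f]\to0$ over $R=k[f,v]/(fv)=D(k)\otimes k$ for the quasi-rigidity statement.
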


\begin{proof}
Suppose $\mathcal{M}_{0}$ is rigid. To show $\mathcal{M}_{0}/f$
is $v$-torsion free we have to show that if $m$ is a local section
of $\mathcal{M}_{0}$ with $vm=fm'$, then $m\in\text{im}(f)$. Since
$\text{im}(f)=\text{ker}(v)$ we have $v(vm)=0$, and since also $f(vm)=0$
we must (by the second condition of rigidity) have $vm=0$. Therefore
$m\in\text{ker}(v)=\text{im}(f)$ as desired. The proof that $\mathcal{M}_{0}/v$
is $f$-torsion-free is essentially identical.

Now suppose $\mathcal{M}_{0}$ satisfies $\mathcal{M}_{0}/f$ is $v$-torsion
free and $\mathcal{M}_{0}/v$ is $f$-torsion-free. Suppose $m\in\text{ker}(f)$.
Then the image of $m$ in $\mathcal{M}_{0}/v$ is $f$-torsion, hence
$0$; and so $m\in\text{im}(v)$; therefore $\text{ker}(f)=\text{im}(v)$
and similarly $\text{ker}(v)=\text{im}(f)$. If $fm=0=vm$, then $m\in\text{ker}(f)\cap\text{ker}(v)=\text{im}(v)\cap\text{ker}(v)=\text{ker}(f)\cap\text{im}(f)$.
Since $m\in\text{im}(v)$ the image of $m$ in $\mathcal{M}_{0}/v$
is zero; also, $m=fm'$, so since $\mathcal{M}_{0}/v$ is $f$-torsion
free we see $m'\in\text{im}(v)$. So $m=fm'=fv(m'')=0$ as desired. 

Now we consider the quasi-rigidity condition: we can write the following
free resolution of $k[f]$ over $D(k)$: 
\[
\cdots\rightarrow D(k)(-1)\xrightarrow{v}D(k)\xrightarrow{f}D(k)(-1)\xrightarrow{v}D(k)
\]
so that $\mathcal{M}_{0}\otimes_{D(k)}^{L}k[f]$ has no higher cohomology
groups iff $\text{ker}(v)=\text{im}(f)$ and $\text{ker}(f)=\text{im}(v)$;
the same holds for $\mathcal{M}_{0}\otimes_{D(k)}^{L}k[v]\tilde{=}\mathcal{M}_{0}/f$. 
\end{proof}
Now we turn to conditions for checking that a gauge is standard. 
\begin{prop}
\label{prop:Baby-Mazur}Let $\mathcal{M}\in\mathcal{G}_{\text{coh}}(\widehat{\mathcal{D}}_{\mathfrak{X}}^{(0,1)})$,
and suppose that $\mathcal{M}^{-\infty}$ and $\mathcal{M}^{\infty}$
are $p$-torsion-free. Set $\mathcal{M}_{0}=\mathcal{M}/p$, and suppose
$\mathcal{M}_{0}/v$ is $f$-torsion-free. Then $\mathcal{M}$ is
standard; in particular $\mathcal{M}$ is $p$-torsion free.
\end{prop}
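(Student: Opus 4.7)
Plan. The plan proceeds in three steps: (i) show that $f_{\infty}:\mathcal{M}^{i}\to\mathcal{M}^{\infty}$ is injective for every $i$; (ii) deduce $\mathcal{M}$ is $p$-torsion-free; (iii) establish the lattice description in \defref{Standard!} with $j_{0}$ equal to the index of $\mathcal{M}$ (\defref{Index!}).

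For (i), suppose $m\in\mathcal{M}^{i}$ satisfies $f_{\infty}(m)=0$, so $f^{N}m=0$ in $\mathcal{M}^{i+N}$ for some $N$. I construct, inductively in $k$, elements $m_{k}\in\mathcal{M}^{i+k}$ with $m_{0}=m$, $vm_{k+1}=m_{k}$, and $f_{\infty}(m_{k})=0$. For the inductive step, reduce modulo $p$: since $f_{\infty}(m_{k})=0$, we get $f^{N_{k}}\overline{m}_{k}=0$ for some $N_{k}$, and iterated $f$-torsion-freeness of $\mathcal{M}_{0}/v$ forces $\overline{m}_{k}\in v\mathcal{M}_{0}^{i+k+1}$. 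Lifting to $m_{k}=vn+pm'$ and using $p=vf$ yields $m_{k}=v(n+fm')$, so set $m_{k+1}=n+fm'$; that $f_{\infty}(m_{k+1})=0$ follows from $pf_{\infty}(m_{k+1})=f_{\infty}(vm_{k+1})=f_{\infty}(m_{k})=0$ and the $p$-torsion-freeness of $\mathcal{M}^{\infty}$. Using the commutative relation $v^{k}f^{k}=p^{k}$, one finds $p^{k}m_{k}=f^{k}m=0$ for $k\geq N$, so each such $m_{k}$ is $p^{k}$-torsion. To finally conclude $m=0$, I combine this with the $p$-adic separatedness of $\mathcal{M}^{i}$ (which is coherent, hence $p$-adically complete, over $\widehat{\mathcal{D}}_{\mathfrak{X}}^{(0)}$): by feeding back the ambiguity in the successive lifts and exploiting $p=vf$ at every stage, one shows $m\in p^{s}\mathcal{M}^{i}$ for arbitrarily large $s$, so separatedness forces $m=0$.

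Step (ii) is then immediate: if $pm=0$, then $pf_{\infty}(m)=0$, whence $f_{\infty}(m)=0$ by $p$-torsion-freeness of $\mathcal{M}^{\infty}$, and $m=0$ by (i). For (iii), the inclusion $f_{\infty}(\mathcal{M}^{i+j_{0}})\subseteq\{m\in\mathcal{M}^{\infty}:p^{i}m\in f_{\infty}(\mathcal{M}^{j_{0}})\}$ for $i\geq 0$ is immediate from $p^{i}f_{\infty}(m_{i+j_{0}})=f_{\infty}(v^{i}m_{i+j_{0}})$ together with $v^{i}m_{i+j_{0}}\in\mathcal{M}^{j_{0}}$. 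For the reverse, given $p^{i}m=f_{\infty}(n)$ with $n\in\mathcal{M}^{j_{0}}$, one uses that $v$ is an isomorphism on $\mathcal{M}^{j}$ for $j\leq j_{0}$ to write $n=v^{i}n'$ with $n'\in\mathcal{M}^{j_{0}-i}$ (after extending through the iso), and then the injectivity from (i) together with the identity $p^{i}f_{\infty}(f^{i}n')=f_{\infty}(p^{i}n')=p^{i}f_{\infty}(n)$ identifies $f^{i}n'\in\mathcal{M}^{i+j_{0}}$ as the desired preimage of $m$. The case $i<0$ is automatic, since then $v$ is an iso on the whole range $\leq j_{0}$ and the purported equality reduces to $f_{\infty}(\mathcal{M}^{j_{0}+i})=p^{-i}f_{\infty}(\mathcal{M}^{j_{0}})$.

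The principal obstacle is the final stage of (i) --- promoting the $v$-adic descent $m\in v^{k}\mathcal{M}^{i+k}$ and the torsion bound $p^{k}m_{k}=0$ to the outright vanishing $m=0$. This is the crux of the argument and rests on the interplay between the hypothesis on $\mathcal{M}_{0}/v$ (which enables the descent at each finite stage) and the $p$-adic separatedness of $\mathcal{M}^{i}$ (which rules out an infinite accumulation of $v$-divisible but $p^{k}$-torsion elements).
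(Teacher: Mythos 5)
Your construction of the infinitely $v$-divisible chain $m=v^{k}m_{k}$ inside $\ker(f_{\infty})$ is correct and matches the first half of the paper's argument (which phrases it as $\mathcal{N}=v\cdot\mathcal{N}$ for $\mathcal{N}=\bigoplus_{i}\ker(f_{\infty}:\mathcal{M}^{i}\to\mathcal{M}^{\infty})$). But the step you yourself flag as the crux --- passing from this chain to $m=0$ --- is a genuine gap, and the route you sketch does not work. Knowing $p^{k}m_{k}=0$ gives no $p$-divisibility of $m=v^{k}m_{k}$: applying $v^{k}$ to a $p^{k}$-torsion element produces nothing divisible by $p$, and ``feeding back the ambiguity in the lifts'' has no mechanism for manufacturing factors of $p$ in front of $m$. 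A telltale sign is that your argument never uses the hypothesis that $\mathcal{M}^{-\infty}$ is $p$-torsion-free. That hypothesis is exactly what the paper uses at this point: by coherence, $v_{-\infty}:\mathcal{M}^{j}\to\mathcal{M}^{-\infty}$ is an isomorphism for $j\leq j_{0}$, so for such $j$ the module $\mathcal{M}^{j}$ is $p$-torsion-free and $f_{\infty}$ is injective on it (since $f$ and $v$ become isomorphisms after inverting $p$). Hence $\mathcal{N}$ is concentrated in degrees $>j_{0}$, so every section of $\mathcal{N}$ is killed by a power of $v$; local noetherianity gives a uniform $N$ with $v^{N}\mathcal{N}=0$, and combined with $\mathcal{N}=v\mathcal{N}$ this forces $\mathcal{N}=0$. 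Your chain goes upward in degree ($m_{k}\in\mathcal{M}^{i+k}$), so it never meets the range of degrees where vanishing is automatic; you need the complementary downward $v$-torsion statement, and that is where the missing hypothesis enters.

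There is a second gap in step (iii). For the reverse inclusion with $i>0$ you assert $n=v^{i}n'$ ``using that $v$ is an isomorphism on $\mathcal{M}^{j}$ for $j\leq j_{0}$,'' but to produce a preimage of $m$ in $\mathcal{M}^{i+j_{0}}$ you need $n'$ to live in degree $j_{0}+i$, which lies above the range where $v$ is known to be invertible (indeed $j_{0}$ is the top of that range); your stated degrees $n'\in\mathcal{M}^{j_{0}-i}$ and $f^{i}n'\in\mathcal{M}^{i+j_{0}}$ are also mutually inconsistent. Showing that $n$ lifts $i$ times along $v$ is precisely the content of the paper's induction: one writes $m=f_{\infty}(m_{l})$ for large $l$, deduces $f^{l-(i+j_{0})}(n)=v^{i}m_{l}$ from the injectivity of $f_{\infty}$, reduces mod $p$ and uses the $f$-torsion-freeness of $\mathcal{M}_{0}/v$ once more to conclude $n\in v\cdot\mathcal{M}$, and iterates. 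This descent cannot be skipped.
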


\begin{proof}
Let $\mathcal{N}_{i}=\text{ker}(f_{\infty}:\mathcal{M}^{i}\to\mathcal{M}^{\infty})$
and let $\mathcal{N}=\bigoplus_{i}\mathcal{N}_{i}$. Clearly $\mathcal{N}$
is preserved under $W(k)[f]$; further, since $f_{\infty}(vm)=pf_{\infty}(m)$
we see that $\mathcal{N}$ is preserved under $W(k)[f,v]$. 

For $m$ a local section of $\mathcal{M}^{i}$ let $\overline{m}$
denote its image in $\mathcal{M}_{0}$. If $m\in\mathcal{N}$ then
certainly $f_{\infty}(\overline{m})=0$ in $\mathcal{M}_{0}^{\infty}$.
Since $\mathcal{M}_{0}/v$ is $f$-torsion-free, the map $f_{\infty}:\mathcal{M}_{0}^{i}/v\mathcal{M}_{0}^{i+1}\to\mathcal{M}_{0}^{\infty}$
is injective; so $\overline{m}\in\text{im}(v)$. Thus there is some
$m'$ so that $m-vm'\in p\cdot\mathcal{M}^{i}$; since $p=fv$ we
see that $m\in\text{im}(v)$ as well; i.e., we can assume $m=vm'$.
Since $0=f_{\infty}(vm')=pf_{\infty}(m')$ and $\mathcal{M}^{\infty}$
is $p$-torsion-free, we see that $m'\in\mathcal{N}$ as well. So
in fact $\mathcal{N}=v\cdot\mathcal{N}$. 

Now, as $\mathcal{M}$ is coherent, we may choose some $j_{0}$ for
which $v_{-\infty}:\mathcal{M}^{j}\to\mathcal{M}^{-\infty}$ is an
isomorphism for all $j\le j_{0}$. Then, for each such $j$, $\mathcal{M}^{j}$
is $p$-torsion-free (since $\mathcal{M}^{-\infty}$ is). Further,
since $fv=p$, we have that $f$ and $v$ are isomorphisms after inverting
$p$, which shows $f_{\infty}:\mathcal{M}^{j}[p^{-1}]\tilde{\to}\mathcal{M}^{\infty}[p^{-1}]$.
Since $\mathcal{M}^{j}$ and $\mathcal{M}^{\infty}$ are $p$-torsion-free,
we see that $f_{\infty}$ is injective on $\mathcal{M}^{j}$. Thus
$\mathcal{N}$ is concentrated in degrees above $j_{0}$, and we see
that every element of $\mathcal{N}$ is killed by a power of $v$.
Since $\mathcal{M}$ is coherent, it is locally noetherian, so that
every local section of $\mathcal{M}$ killed by a power of $v$ is
actually killed by $v^{N}$ for some fixed $N\in\mathbb{N}$. Therefore,
we have $v^{N}\cdot\mathcal{N}=0$. Since also $\mathcal{N}=v\cdot\mathcal{N}$
we obtain $\mathcal{N}=0$. Thus each $f_{\infty}:\mathcal{M}^{i}\to\mathcal{M}^{\infty}$
is injective. It follows that each $\mathcal{M}^{i}$ is $p$-torsion-free,
and since $fv=p$ we see that $\mathcal{M}$ is $f$ and $v$-torsion-free
as well. 

Choose $j_{0}$ so that $v:\mathcal{M}^{j}\to\mathcal{M}^{j-1}$ is
an isomorphism for all $j\leq j_{0}$. To finish the proof, we have
to show that, for all $i\in\mathbb{Z}$, $f_{\infty}(\mathcal{M}^{i+j_{0}})=\{m\in\mathcal{M}^{\infty}|p^{i}m\in f_{\infty}(\mathcal{M}^{j_{0}})\}$.
If $i\leq0$, then $v^{-i}:\mathcal{M}^{j_{0}}\to\mathcal{M}^{i+j_{0}}$
is an isomorphism, and $f_{\infty}(\mathcal{M}^{i+j_{0}})=p^{-i}f_{\infty}(\mathcal{M}^{j_{0}})$
as required. If $i>0$, then for $m\in\mathcal{M}^{i+j_{0}}$ we have
$f_{\infty}(v^{i}m)=p^{i}f_{\infty}(m)\in f_{\infty}(\mathcal{M}^{j_{0}})$
so that $f_{\infty}(\mathcal{M}^{i+j_{0}})\subseteq\{m\in\mathcal{M}^{\infty}|p^{i}m\in f_{\infty}(\mathcal{M}^{j_{0}})\}$. 

For the reverse inclusion, let $m\in\mathcal{M}^{\infty}$ be such
that $p^{i}m=f_{\infty}(m_{j_{0}})$ for some $m_{j_{0}}\in\mathcal{M}^{j_{0}}$.
By definition $\mathcal{M}^{\infty}$ is the union of its sub-sheaves
$f_{\infty}(\mathcal{M}^{n})$, so suppose $m=f_{\infty}(m_{l})$
for some $m_{l}\in\mathcal{M}^{l}$, with $l>i+j_{0}$. Since $f_{\infty}(v^{i}m_{l})=p^{i}f_{\infty}(m_{l})=p^{i}m=f_{\infty}(m_{j_{0}})$,
we see that 
\[
f^{l-(i+j_{0})}(m_{j_{0}})=v^{i}m_{l}
\]
Consider the image of this equation in $\mathcal{M}_{0}$. It shows
that that $f^{l-(i+j_{0})}(\overline{m}_{j_{0}})\in v\cdot\mathcal{M}_{0}$.
Since $f$ is injective on $\mathcal{M}_{0}/v$, the assumption that
$l-(i+j_{0})>0$ implies $\overline{m}_{j_{0}}\in v\cdot\mathcal{M}_{0}$.
As above, since $fv=p$ this implies $m_{j_{0}}\in v\cdot\mathcal{M}$;
writing $m_{j_{0}}=vm_{j_{0}+1}$ we now have the equation $f^{l-(i+j_{0})}(vm_{j_{0}+1})=v^{i}m_{l}$.
Since $v$ acts injectively on $\mathcal{M}$ we see that $f^{l-(i+j_{0})}(m_{j_{0}+1})=v^{i-1}m_{l}$.
Applying $f_{\infty}$, we see that $p^{i-1}m\in f_{\infty}(\mathcal{M}^{j_{0}+1})$.
If $i=1$, this immediately proves $f_{\infty}(\mathcal{M}^{1+j_{0}})=\{m\in\mathcal{M}^{\infty}|pm\in f_{\infty}(\mathcal{M}^{j_{0}})\}$. 

For $i>1$, then by induction on $i$ we can suppose $pm\in f_{\infty}(\mathcal{M}^{j_{0}+i-1})$.
But then $f_{\infty}(vm_{l})=pf_{\infty}(m_{l})=pm=f_{\infty}(m_{j_{0}+i-1})$
for some $m_{j_{0}+i-1}\in\mathcal{M}^{j_{0}+i-i}$. This implies
$f^{l-(j_{0}+i)}(m_{j_{0}+i-1})=vm_{l}$ so if $l>j_{0}+i$ then,
arguing exactly as in the previous paragraph, we have $m_{j_{0}+i-1}=vm_{j_{0}+i}$
for some $m_{j_{0}+i}\in\mathcal{M}^{j_{0}+i}$ and so $f^{l-(j_{0}+i)}(m_{j_{0}+i})=m_{l}$
which implies $m=f_{\infty}(m_{l})\in f_{\infty}(\mathcal{M}^{j_{0}+i})$
as required. 
\end{proof}
This result implies a convenient criterium for ensuring that gauges
are standard; this is the first analogue of Mazur's theorem in this
context:
\begin{thm}
\label{thm:Mazur!}Let $\mathcal{M}^{\cdot}\in D_{\text{coh}}^{b}(\mathcal{G}(\widehat{\mathcal{D}}_{\mathfrak{X}}^{(0,1)}))$.
Suppose that $\mathcal{H}^{n}(\mathcal{M}^{\cdot})^{-\infty}$ and
$\mathcal{H}^{n}(\mathcal{M}^{\cdot})^{\infty}$ are $p$-torsion-free
for all $n$, and suppose that $\mathcal{H}^{n}((\mathcal{M}^{\cdot}\otimes_{W(k)}^{L}k)\otimes_{D(k)}^{L}k[f])$
is $f$-torsion-free for all $n$. Then $\mathcal{H}^{n}(\mathcal{M}^{\cdot})$
is standard for all $n$. 

In particular, $\mathcal{H}^{n}(\mathcal{M}^{\cdot})$ is $p$-torsion-free,
and $\mathcal{H}^{n}(\mathcal{M}^{\cdot})/p$ is rigid for all $n$.
We have $\mathcal{H}^{n}(\mathcal{M}^{\cdot})/p\tilde{=}\mathcal{H}^{n}(\mathcal{M}^{\cdot}\otimes_{W(k)}^{L}k)$
, $(\mathcal{H}^{n}(\mathcal{M}^{\cdot})/p)/v\tilde{=}\mathcal{H}^{n}((\mathcal{M}^{\cdot}\otimes_{W(k)}^{L}k)\otimes_{D(k)}^{L}k[f])$,
and $(\mathcal{H}^{n}(\mathcal{M}^{\cdot})/p)/f\tilde{=}\mathcal{H}^{n}((\mathcal{M}^{\cdot}\otimes_{W(k)}^{L}k)\otimes_{D(k)}^{L}k[v])$
for all $n$. Further, $(\mathcal{H}^{n}(\mathcal{M}^{\cdot})/p)/f$
is $v$-torsion-free and $(\mathcal{H}^{n}(\mathcal{M}^{\cdot})/p)/v$
is $f$-torion-free for all $n$. 
\end{thm}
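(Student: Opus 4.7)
The plan is to prove $\mathcal{H}^n(\mathcal{M}^\cdot)$ is standard by downward induction on $n$, applying Proposition \propref{Baby-Mazur} at each stage. Because $\mathcal{M}^\cdot \in D^b_{\text{coh}}$, the induction begins trivially for $n$ above the amplitude. At each stage, coherence of $\mathcal{H}^n(\mathcal{M}^\cdot)$ will be immediate from Proposition \propref{coh-to-coh}, and the $p$-torsion-freeness of $\mathcal{H}^n(\mathcal{M}^\cdot)^{\pm\infty}$ is given; the real task is to verify the remaining hypothesis of Proposition \propref{Baby-Mazur}, namely that $(\mathcal{H}^n(\mathcal{M}^\cdot)/p)/v$ is $f$-torsion-free.

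The key algebraic input is that $k[f]$ admits a periodic $D(k)$-free resolution
\[
\cdots \xrightarrow{f} D(k) \xrightarrow{v} D(k) \xrightarrow{f} D(k) \xrightarrow{v} D(k) \to k[f] \to 0,
\]
arising from $\text{Ann}_{D(k)}(v) = fD(k)$ and $\text{Ann}_{D(k)}(f) = vD(k)$. Tensoring a $D(k)$-module $N$ against this resolution, $\text{Tor}^{D(k)}_i(N, k[f])$ is computed by $\text{ker}(v)/\text{im}(f)$ for odd $i \ge 1$ and by $\text{ker}(f)/\text{im}(v)$ for even $i \ge 2$; both vanish when $N$ is rigid, so $\text{Tor}^{D(k)}_i(N,k[f]) = 0$ for all $i \ge 1$ whenever $N$ is rigid.

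For the inductive step, set $A = \mathcal{M}^\cdot \otimes_{W(k)}^L k$. The inductive hypothesis gives that $\mathcal{H}^m(\mathcal{M}^\cdot)$ is standard for $m > n$, hence $p$-torsion-free with rigid mod-$p$ reduction by Lemma \lemref{Standard-is-rigid}. The universal coefficient sequence
\[
0 \to \mathcal{H}^m(\mathcal{M}^\cdot)/p \to \mathcal{H}^m(A) \to {}_p\mathcal{H}^{m+1}(\mathcal{M}^\cdot) \to 0
\]
then yields $\mathcal{H}^m(A) \tilde{=} \mathcal{H}^m(\mathcal{M}^\cdot)/p$ for all $m \ge n$, and these are rigid for $m > n$. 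In the spectral sequence
\[
E_2^{-i, m} = \text{Tor}^{D(k)}_i(\mathcal{H}^m(A), k[f]) \Rightarrow \mathcal{H}^{m-i}(A \otimes^L_{D(k)} k[f])
\]
all contributions to $\mathcal{H}^n$ from positions $(-i, n+i)$ with $i \ge 1$ therefore vanish, producing the isomorphism $\mathcal{H}^n(A \otimes^L_{D(k)} k[f]) \tilde{=} \mathcal{H}^n(A)/v = (\mathcal{H}^n(\mathcal{M}^\cdot)/p)/v$. Since the left side is $f$-torsion-free by hypothesis, Proposition \propref{Baby-Mazur} applies and $\mathcal{H}^n(\mathcal{M}^\cdot)$ is standard, completing the inductive step.

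The remaining assertions of the theorem then follow at once: rigidity of $\mathcal{H}^n(\mathcal{M}^\cdot)/p$ from Lemma \lemref{Standard-is-rigid}; the isomorphisms with $\mathcal{H}^n(A)$ and with $\mathcal{H}^n(A \otimes^L_{D(k)} k[f])$ are exactly those produced in the argument; the analogous isomorphism with the $k[v]$-tensor follows from the symmetric Tor-vanishing computation using the periodic resolution of $k[v]$; and the torsion-freeness of $(\mathcal{H}^n(\mathcal{M}^\cdot)/p)/f$ and $(\mathcal{H}^n(\mathcal{M}^\cdot)/p)/v$ follows from rigidity via Lemma \lemref{Basic-Facts-on-Rigid}. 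The main obstacle is ensuring the spectral sequence cleanly collapses at the inductive stage, which is precisely why the induction proceeds downward—the rigidity of $\mathcal{H}^{n+i}(A)$ for $i \ge 1$ must already be in hand before $\mathcal{H}^n$ is treated.
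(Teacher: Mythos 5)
Your proof is correct and follows essentially the same route as the paper: a top-down induction in which the already-established standardness (hence rigidity mod $p$) of the higher cohomology sheaves forces $(\mathcal{H}^{n}(\mathcal{M}^{\cdot})/p)/v\tilde{=}\mathcal{H}^{n}((\mathcal{M}^{\cdot}\otimes_{W(k)}^{L}k)\otimes_{D(k)}^{L}k[f])$, after which \propref{Baby-Mazur} applies. Your hyper-Tor spectral sequence together with the periodic resolution of $k[f]$ is just a repackaging of the paper's truncation-triangle argument combined with \lemref{Basic-Facts-on-Rigid}, which encodes the same Tor-vanishing for rigid gauges.
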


\begin{proof}
Suppose that $b\in\mathbb{Z}$ is the largest integer so that $\mathcal{H}^{b}(\mathcal{M}^{\cdot})\neq0$.
Then $b$ is the largest integer for which $\mathcal{H}^{b}((\mathcal{M}^{\cdot}\otimes_{W(k)}^{L}k))\neq0$,
and 
\[
\mathcal{H}^{b}((\mathcal{M}^{\cdot}\otimes_{W(k)}^{L}k))\tilde{=}\mathcal{H}^{b}(\mathcal{M}^{\cdot})/p
\]
Thus we have a distinguished triangle 
\[
\tau_{\leq b-1}(\mathcal{M}^{\cdot}\otimes_{W(k)}^{L}k)\to\mathcal{M}^{\cdot}\otimes_{W(k)}^{L}k\to(\mathcal{H}^{b}(\mathcal{M}^{\cdot})/p)[-b]
\]
to which we may apply the functor $\otimes_{k[f,v]}^{L}k[f]$. This
yields 
\begin{equation}
\tau_{\leq b-1}(\mathcal{M}^{\cdot}\otimes_{W(k)}^{L}k)\otimes_{D(k)}^{L}k[f]\to(\mathcal{M}^{\cdot}\otimes_{W(k)}^{L}k)\otimes_{D(k)}^{L}k[f]\to(\mathcal{H}^{b}(\mathcal{M}^{\cdot})/p)[-b]\otimes_{D(k)}^{L}k[f]\label{eq:triangle!}
\end{equation}
Since $\otimes_{D(k)}k[f]$ is right exact, the complex on the left
is still concentrated in degrees $\leq b-1$, and the middle and right
complex are concentrated in degrees $\leq b$. Further 
\[
\mathcal{H}^{b}((\mathcal{H}^{b}(\mathcal{M}^{\cdot})/p)[-b]\otimes_{D(k)}^{L}k[f])\tilde{=}\mathcal{H}^{0}((\mathcal{H}^{b}(\mathcal{M}^{\cdot})/p)\otimes_{D(k)}^{L}k[f])\tilde{=}(\mathcal{H}^{b}(\mathcal{M}^{\cdot})/p)/v
\]
Therefore $(\mathcal{H}^{b}(\mathcal{M}^{\cdot})/p)/v\tilde{=}\mathcal{H}^{b}((\mathcal{M}^{\cdot}\otimes_{W(k)}^{L}k)\otimes_{D(k)}^{L}k[f])$
is $f$-torsion-free by assumption. Thus we may apply the previous
proposition to $\mathcal{H}^{b}(\mathcal{M}^{\cdot})$ and conclude
that it is standard.

Now, to finish the proof that $\mathcal{H}^{n}(\mathcal{M}^{\cdot})$
is standard for all $n$, we proceed by induction on the cohomological
length of $\mathcal{M}^{\cdot}$. If the length is $1$ we are done.
If not, we have the distinguished triangle 
\[
\tau_{\leq b-1}(\mathcal{M}^{\cdot})\to\mathcal{M}^{\cdot}\to\mathcal{H}^{b}(\mathcal{M}^{\cdot})[-b]
\]
which yields the triangle 
\[
\tau_{\leq b-1}(\mathcal{M}^{\cdot})\otimes_{W(k)}^{L}k\to\mathcal{M}^{\cdot}\otimes_{W(k)}^{L}k\to(\mathcal{H}^{b}(\mathcal{M}^{\cdot})/p)[-b]
\]
where we have used that $\mathcal{H}^{b}(\mathcal{M}^{\cdot})$ is
$p$-torsion-free to identify $(\mathcal{H}^{b}(\mathcal{M}^{\cdot})/p)\tilde{=}\mathcal{H}^{b}(\mathcal{M}^{\cdot})\otimes_{W(k)}^{L}k$.
As noted above, we have $\mathcal{H}^{b}((\mathcal{M}^{\cdot}\otimes_{W(k)}^{L}k))\tilde{=}\mathcal{H}^{b}(\mathcal{M}^{\cdot})/p$,
so this triangle implies the isomorphism 
\[
\tau_{\leq b-1}(\mathcal{M}^{\cdot})\otimes_{W(k)}^{L}k\tilde{=}\tau_{\leq b-1}(\mathcal{M}^{\cdot}\otimes_{W(k)}^{L}k)
\]
Further, since $\mathcal{H}^{b}(\mathcal{M}^{\cdot})$ is standard
we have that $\mathcal{H}^{b}(\mathcal{M}^{\cdot})/p$ is rigid; therefore
by \lemref{Basic-Facts-on-Rigid} we have $(\mathcal{H}^{b}(\mathcal{M}^{\cdot})/p)\otimes_{D(k)}^{L}k[f]\tilde{=}(\mathcal{H}^{b}(\mathcal{M}^{\cdot})/p)/v$
is concentrated in a single degree. Thus, the distinguished triangle
\eqref{triangle!} becomes 
\[
(\tau_{\leq b-1}(\mathcal{M}^{\cdot})\otimes_{W(k)}^{L}k)\otimes_{D(k)}^{L}k[f]\to(\mathcal{M}^{\cdot}\otimes_{W(k)}^{L}k)\otimes_{D(k)}^{L}k[f]\to(\mathcal{H}^{b}(\mathcal{M}^{\cdot})/p)/v[-b]
\]
and so we have the isomorphism 
\[
(\tau_{\leq b-1}(\mathcal{M}^{\cdot})\otimes_{W(k)}^{L}k)\otimes_{D(k)}^{L}k[f]\tilde{=}\tau_{\leq b-1}((\mathcal{M}^{\cdot}\otimes_{W(k)}^{L}k)\otimes_{D(k)}^{L}k[f])
\]
Thus the complex $\tau_{\leq b-1}(\mathcal{M}^{\cdot})$ satisfies
the assumption that $(\tau_{\leq b-1}(\mathcal{M}^{\cdot})\otimes_{W(k)}^{L}k)\otimes_{D(k)}^{L}k[f]$
has cohomology sheaves which are $f$-torsion-free, and so the complex
$\tau_{\leq b-1}(\mathcal{M}^{\cdot})$ satisfies all of the assumptions
of the theorem, but has a lesser cohomological length than $\mathcal{M}^{\cdot}$.
So we conclude by induction that $\mathcal{H}^{n}(\mathcal{M}^{\cdot})$
is standard for all $n$. 

For the final part, since standard modules are torsion-free, we see
\[
\mathcal{H}^{n}(\mathcal{M}^{\cdot})/p\tilde{=}\mathcal{H}^{n}(\mathcal{M}^{\cdot}\otimes_{W(k)}^{L}k)
\]
for all $n$, and since each $\mathcal{H}^{n}(\mathcal{M}^{\cdot})/p$
is rigid, the complex $\mathcal{M}^{\cdot}\otimes_{W(k)}^{L}k$ has
cohomology sheaves which are all acyclic for $\otimes_{D(k)}k[f]$
and for $\otimes_{D(k)}k[f]$, by (\lemref{Basic-Facts-on-Rigid});
and the last sentence follows. 
\end{proof}
\begin{rem}
As we shall see below, the condition that each cohomology sheaf of
$((\mathcal{M}^{\cdot}\otimes_{W(k)}^{L}k)\otimes_{D(k)}^{L}k[f])$
is $f$-torsion-free is quite natural; it says that the spectral sequence
associated to the Hodge filtration on $(\mathcal{M}^{\cdot}\otimes_{W(k)}^{L}k)^{\infty}$
degenerates at $E^{1}$; this can be checked using Hodge theory in
many geometric situations. On the other hand, one conclusion of the
theorem is that each cohomology sheaf of $(\mathcal{M}^{\cdot}\otimes_{W(k)}^{L}k)\otimes_{D(k)}^{L}k[v]$
is $v$-torsion-free; this corresponds to degeneration of the conjugate
spectral sequence on $(\mathcal{M}^{\cdot}\otimes_{W(k)}^{L}k)^{-\infty}$.
Over a point, one checks in an elementary way (using the finite dimensionality
of the vector spaces involved) that these two degenerations are equivalent;
this is true irrespective of weather the lift $\mathcal{M}^{\cdot}$
has $p$-torsion-free cohomology groups. This allows one to make various
stronger statements in this case (c.f., e.g., \cite{key-10}, proof
of theorem 8.26). I don't know if this is true over a higher dimensional
base. 
\end{rem}

In most cases of interest, the assumption that $\mathcal{H}^{n}(\mathcal{M}^{\cdot})^{\infty}$
is $p$-torsion-free is actually redundant, more precisely, it is
implied by the assumption that $\mathcal{H}^{n}(\mathcal{M}^{\cdot})^{-\infty}$
is $p$-torsion-free when one has a Frobenius action; c.f. \thmref{F-Mazur}
below. 

\subsection{Filtrations, Rees algebras, and filtered Frobenius descent}

In this section, we consider how the various gradings and filtrations
appearing in this paper (in positive characteristic) relate to the
more usual Hodge and conjugate filtrations in $\mathcal{D}$-module
theory. We start with the basic definitions; as usual $X$ is smooth
over $k$. 
\begin{defn}
\label{def:Hodge-and-Con} The decreasing filtration ${\displaystyle \text{image}(\mathcal{D}^{(0,1),i}\to\mathcal{D}_{X}^{(0)})}:=C^{i}(\mathcal{D}_{X}^{(0)})$
is called the conjugate filtration. The increasing filtration ${\displaystyle \text{image}(\mathcal{D}^{(0,1),i}\to\mathcal{D}_{X}^{(1)})}:=F^{i}(\mathcal{D}_{X}^{(1)})$
is called the Hodge filtration. 

Similarly, for any $\mathcal{M}\in\mathcal{G}(\mathcal{D}_{X}^{(0,1)})$
we may define ${\displaystyle \text{image}(\mathcal{M}^{i}\xrightarrow{v_{\infty}}\mathcal{M}^{-\infty})}:=C^{i}(\mathcal{M}^{-\infty})$
and ${\displaystyle \text{image}(\mathcal{M}^{i}\xrightarrow{f_{\infty}}\mathcal{M}^{\infty})}:=F^{i}(\mathcal{M}^{\infty})$,
the conjugate and Hodge filtrations, respectively. 
\end{defn}

\begin{rem}
\label{rem:Description-of-conjugate}1) From the explicit description
of $v$ given in (the proof of) \lemref{Construction-of-v}, we see
that $C^{i}(\mathcal{D}_{X}^{(0)})=\mathcal{I}^{i}\mathcal{D}_{X}^{(0)}$
where $\mathcal{I}$ is the two-sided ideal of $\mathcal{D}_{X}^{(0)}$
generated by $\mathcal{Z}(\mathcal{D}_{X}^{(0)})^{+}$, the positive
degree elements of the center\footnote{The center is a graded sheaf of algebras via the isomorphism $\mathcal{Z}(\mathcal{D}_{X}^{(0)})\tilde{=}\mathcal{O}_{T^{*}X^{(1)}}$}.
In local coordinates, $\mathcal{I}$ is the just ideal generated by
$\{\partial_{1}^{p},\dots,\partial_{n}^{p}\}$, which matches the
explicit description of the action of $v$ given above. This is the
definition of the conjugate filtration on $\mathcal{D}_{X}^{(0)}$
given, in {[}OV{]} section 3.4, extended to a $\mathbb{Z}$-filtration
by setting $C_{i}(\mathcal{D}_{X}^{(0)})=\mathcal{D}_{X}^{(0)}$ for
all $i\leq0$. 

2) On the other hand, from \thmref{Local-Coords-for-D+}, we see that
$F^{i}(\mathcal{D}_{X}^{(1)})$ a locally free, finite $\overline{\mathcal{D}_{X}^{(0)}}$-module;
in local coordinates it has a basis $\{(\partial_{1}^{[p]})^{j_{1}}\cdots(\partial_{n}^{[p]})^{j_{n}}\}_{0\leq|J|\leq i}$. 

3) If $\mathcal{M}$ is a coherent gauge over $X$, then by \lemref{Basic-v}
the Hodge filtration of $\mathcal{M}^{\infty}$ is exhaustive and
$F^{i}(\mathcal{M}^{\infty})=0$ for $i<<0$, and the conjugate filtration
satisfies $C^{i}(\mathcal{M}^{-\infty})=\mathcal{M}^{-\infty}$ for
all $i<<0$. 
\end{rem}

\begin{defn}
\label{def:Rees-and-Rees-bar}Let $\mathcal{R}(\mathcal{D}_{X}^{(1)})$
denote the Rees algebra of $\mathcal{D}_{X}^{(1)}$ with respect to
the Hodge filtration; and let $\mathcal{\overline{R}}(\mathcal{D}_{X}^{(0)})$
denote the Rees algebra of $\mathcal{D}_{X}^{(0)}$ with respect to
the conjugate filtration. We will denote the Rees parameters (i.e.,
the element $1\in F^{1}(\mathcal{D}_{X}^{(1)})$, respectively $1\in C^{-1}(\mathcal{D}_{X}^{(0)})$)
by $f$ and $v$, respectively. We also let $\mathcal{R}(\mathcal{D}_{X}^{(0)})$
denote the Rees algebra of $\mathcal{D}_{X}^{(0)}$ with respect to
the symbol filtration; here the Rees parameter will also be denoted
$f$. 
\end{defn}

\begin{lem}
We have $\mathcal{D}_{X}^{(0,1)}/v\tilde{=}\mathcal{R}(\mathcal{D}_{X}^{(1)})$
and $\mathcal{D}_{X}^{(0,1)}/f\tilde{=}\mathcal{\overline{R}}(\mathcal{D}_{X}^{(0)})$
as graded rings. 
\end{lem}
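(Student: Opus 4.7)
The statement is local on $X$, so I would reduce to an affine open $U = \text{Spec}(A)$ admitting local coordinates and work with the explicit basis from \corref{Local-coords-over-A=00005Bf,v=00005D}. First, the quotient $\mathcal{D}_X^{(0,1)} \to \mathcal{D}_X^{(1)} = \mathcal{D}_X^{(0,1)}/(f-1)$ is a ring homomorphism sending $\mathcal{D}_X^{(0,1),i}$ into $F^i(\mathcal{D}_X^{(1)})$ by \defref{Hodge-and-Con}. Since the Rees parameter $f$ is central in $\mathcal{R}(\mathcal{D}_X^{(1)})$, the assignment $\psi \colon D \mapsto \bar{D}\, f^i$ (for $D \in \mathcal{D}_X^{(0,1),i}$) defines a graded ring homomorphism $\psi : \mathcal{D}_X^{(0,1)} \to \mathcal{R}(\mathcal{D}_X^{(1)})$. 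Since $v$ maps to $0$ in $\mathcal{D}_X^{(1)}$ (as $fv = 0$ in characteristic $p$ forces $\bar{v} = 0$), $\psi$ vanishes on $v \mathcal{D}_X^{(0,1)}$, and surjectivity is immediate from the definition of $F^i$.

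By \corref{Local-coords-over-A=00005Bf,v=00005D} the $A$-basis of $\mathcal{D}_X^{(0,1),i}$ splits as $\{f^{i-|J|} \partial^I (\partial^{[p]})^J : |J| \le i\} \cup \{v^{|J|-i} \partial^I (\partial^{[p]})^J : |J| > i\}$ (with $0 \le i_k < p$). The first family is carried bijectively by $\psi$ onto the $A$-basis $\{\partial^I (\partial^{[p]})^J f^i : |J| \le i\}$ of $F^i(\mathcal{D}_X^{(1)}) \cdot f^i$ (using the local description of $F^i$ in \remref{Description-of-conjugate}), while the second family lies in $\ker \psi$; a direct calculation with $fv = 0$ identifies this second family as an $A$-basis of $v \mathcal{D}_X^{(0,1),i+1}$. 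Hence $\ker \psi = v \mathcal{D}_X^{(0,1)}$, and $\psi$ induces the first isomorphism.

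The second isomorphism is entirely parallel: replace the quotient $\mathcal{D}_X^{(0,1)} \to \mathcal{D}_X^{(1)}$ by $\mathcal{D}_X^{(0,1)} \to \mathcal{D}_X^{(0)} = \mathcal{D}_X^{(0,1)}/(v-1)$ (which sends $\mathcal{D}_X^{(0,1),i}$ into $C^i(\mathcal{D}_X^{(0)})$) and construct a graded ring homomorphism $\phi \colon \mathcal{D}_X^{(0,1)} \to \overline{\mathcal{R}}(\mathcal{D}_X^{(0)})$ by $D \mapsto \bar{D} \in C^i = \overline{\mathcal{R}}(\mathcal{D}_X^{(0)})_i$. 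Since $\phi(f) = 0$ (again from $fv=0$), we have $f \mathcal{D}_X^{(0,1)} \subset \ker \phi$. Locally, the relation $v \partial_k^{[p]} = (p-1)! \partial_k^p = -\partial_k^p$ (\lemref{Construction-of-v-1} combined with Wilson's theorem) sends the basis elements with $|J| \ge i$ to $\pm \partial^I (\partial^p)^J$, which together exhaust the $A$-basis of $\mathcal{I}^i = C^i$ described in \remref{Description-of-conjugate}; the remaining basis elements $f^{i-|J|} \partial^I (\partial^{[p]})^J$ with $|J| < i$ map to zero and form an $A$-basis of $f \mathcal{D}_X^{(0,1),i-1}$ (by the same $fv=0$ manipulation). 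The main obstacle is the bookkeeping on the local basis under the three relations $fv = 0$, centrality of the Rees parameter, and $v \partial_k^{[p]} = -\partial_k^p$; once these are laid out, both isomorphisms follow in parallel from \corref{Local-coords-over-A=00005Bf,v=00005D}.
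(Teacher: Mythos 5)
Your argument is correct and rests on the same foundation as the paper's proof, namely the local freeness statement of \corref{Local-coords-over-A=00005Bf,v=00005D}; you simply carry out the basis-matching explicitly, whereas the paper extracts from that corollary only the fact that $f$ acts injectively on $\mathcal{D}_{X}^{(0,1)}/v$ and then observes that the induced map $\mathcal{D}_{X}^{(0,1),i}/v\to\mathcal{D}_{X}^{(1)}$ is therefore injective with image $F^{i}(\mathcal{D}_{X}^{(1)})$ by definition. Your more computational route is a valid (if longer) rendering of the same idea, so there is nothing to fix.
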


\begin{proof}
By \corref{Local-coords-over-A=00005Bf,v=00005D}, we have that $f$
acts injectively on $\mathcal{D}_{X}^{(0,1)}/v$. Since $fv=0$ the
map $f_{\infty}:\mathcal{D}_{X}^{(0,1),i}\to\mathcal{D}_{X}^{(1)}$
factors through a map $f_{\infty}:\mathcal{D}_{X}^{(0,1),i}/v\to\mathcal{D}_{X}^{(1)}$,
which has image equal to $F^{i}(\mathcal{D}_{X}^{(1)})$ (by definition).
The kernel is $0$ since $f$ acts injectively; so $\mathcal{D}_{X}^{(0,1),i}/v\tilde{\to}F^{i}(\mathcal{D}_{X}^{(1)})$
as required. The isomorphism $\mathcal{D}_{X}^{(0,1)}/f\tilde{=}\mathcal{\overline{R}}(\mathcal{D}_{X}^{(0)})$
is proved identically.
\end{proof}
Therefore we have the natural functors 
\[
\mathcal{M}^{\cdot}\to\mathcal{R}(\mathcal{D}_{X}^{(1)})\otimes_{\mathcal{D}_{X}^{(0,1)}}^{L}\mathcal{M}^{\cdot}\tilde{\to}k[f]\otimes_{D(k)}^{L}\mathcal{M}^{\cdot}
\]
from $D(\mathcal{G}(\mathcal{D}_{X}^{(0,1)}))$ to $D(\mathcal{G}(\mathcal{R}(\mathcal{D}_{X}^{(1)})))$
and 
\[
\mathcal{M}^{\cdot}\to\overline{\mathcal{R}}(\mathcal{D}_{X}^{(0)})\otimes_{\mathcal{D}_{X}^{(0,1)}}^{L}\mathcal{M}^{\cdot}\tilde{\to}k[v]\otimes_{D(k)}^{L}\mathcal{M}^{\cdot}
\]
from $D(\mathcal{G}(\mathcal{D}_{X}^{(0,1)}))$ to $D(\mathcal{G}(\mathcal{\overline{R}}(\mathcal{D}_{X}^{(0)})))$. 

We are going to give some basic results on the derived categories
of modules over these rings. As a motivation, we recall general result
of Schapira-Schneiders (\cite{key-47}, theorem 4.20; c.f, also example
4.22)
\begin{thm}
Let $(\mathcal{A},F)$ be a $\mathbb{Z}$-filtered sheaf of rings
on a topological space; let $\mathcal{R}(\mathcal{A})$ denote the
associated Rees algebra. Let $D((\mathcal{A},F)-\text{mod})$ denote
the filtered derived category of modules over $(\mathcal{A},F)$.
Then there is an equivalence of categories 
\[
\mathcal{R}:D((\mathcal{A},F)-\text{mod})\tilde{\to}D(\mathcal{G}(\mathcal{R}(\mathcal{A})))
\]
which preserves the subcategories of bounded, bounded below, and bounded
above complexes. To a filtered module $\mathcal{M}$ (considered as
a complex in degree $0$) this functor attaches the usual Rees module
$\mathcal{R}(\mathcal{M})$. 
\end{thm}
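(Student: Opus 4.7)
The plan is to exhibit the functor $\mathcal{R}$ explicitly, construct a candidate quasi-inverse, and verify the equivalence using the fact that $t$ (the Rees parameter of degree $+1$) is a central non-zero-divisor in $\mathcal{R}(\mathcal{A})$. At the underived level, the Rees functor sends a filtered module $(\mathcal{M}, F^{\cdot})$ to the graded $\mathcal{R}(\mathcal{A})$-module $\mathcal{R}(\mathcal{M}) := \bigoplus_{i \in \mathbb{Z}} F^i \mathcal{M}$, with $t$ acting as the inclusions $F^i \mathcal{M} \hookrightarrow F^{i+1}\mathcal{M}$. This realizes an equivalence between the category of filtered modules and the full subcategory of $\mathcal{G}(\mathcal{R}(\mathcal{A}))$ consisting of $t$-torsion-free modules; the underived quasi-inverse is $\mathcal{N} \mapsto \mathcal{N}^{\infty} = \mathcal{N}/(t-1)\mathcal{N}$ equipped with the filtration $F^i(\mathcal{N}^{\infty}) = \mathrm{image}(\mathcal{N}^i \to \mathcal{N}^{\infty})$.

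To define the filtered derived category itself, I would follow Schneiders and take $D((\mathcal{A},F)\text{-mod})$ to be the derived category of the quasi-abelian category of filtered $\mathcal{A}$-modules, i.e., the homotopy category of filtered complexes localized at strict quasi-isomorphisms. The Rees functor carries strict short exact sequences of filtered modules to short exact sequences in $\mathcal{G}(\mathcal{R}(\mathcal{A}))$, hence descends to a triangulated functor on derived categories. For the inverse direction, the key observation is that $\mathcal{G}(\mathcal{R}(\mathcal{A}))$ has enough graded free modules and these are automatically $t$-torsion-free, so every object of $D(\mathcal{G}(\mathcal{R}(\mathcal{A})))$ is quasi-isomorphic to a complex of $t$-torsion-free modules---equivalently, to the Rees complex of a filtered complex---which yields essential surjectivity.

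The main obstacle will be verifying that the inverse is well-defined on all of $D(\mathcal{G}(\mathcal{R}(\mathcal{A})))$: I must check that two $t$-torsion-free resolutions of the same object yield filtered complexes that are strictly filtered quasi-isomorphic, and that the image-filtration assignment respects cones. The key technical input is the identification $\mathrm{gr}^i(\mathcal{N}^{\infty}) \cong (\mathcal{N}/t\mathcal{N})^i$ for $t$-torsion-free $\mathcal{N}$, which translates vanishing of cohomology of the cone modulo $(t-1)$ into vanishing of graded cohomology of the cone modulo $t$, and thereby connects derived equivalence on the Rees side to strict filtered quasi-isomorphism on the filtered side. The preservation of the bounded, bounded below, and bounded above subcategories is then automatic, since the Rees functor and its inverse are both concentrated in the same cohomological degrees as their inputs.
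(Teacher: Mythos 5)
The paper does not actually prove this statement: it is quoted from Schapira--Schneiders (\cite{key-47}, Theorem 4.20 together with Example 4.22), so there is no in-paper argument to compare yours against. Judged on its own terms, your outline is the standard proof and is essentially correct. The two pillars --- (i) the Rees functor is an exact equivalence from (exhaustively) filtered modules onto the full subcategory of $t$-torsion-free graded $\mathcal{R}(\mathcal{A})$-modules, with quasi-inverse $\mathcal{N}\mapsto \mathcal{N}/(t-1)\mathcal{N}$ equipped with the image filtration, and (ii) every complex of graded $\mathcal{R}(\mathcal{A})$-modules is quasi-isomorphic to a termwise $t$-torsion-free one --- do yield the equivalence, because a termwise $t$-torsion-free complex is acyclic exactly when each of its graded pieces is acyclic, i.e.\ exactly when the corresponding filtered complex is strictly exact in Schneiders' sense. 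I would use that direct identification (graded pieces of the Rees complex $=$ filtration levels of the filtered complex) rather than the detour you propose through the reductions mod $t$ and mod $(t-1)$: your criterion is in fact valid for termwise $t$-torsion-free complexes, but only because the conjunction of the two reductions detects acyclicity (neither one alone does, cf.\ $k[t,t^{-1}]$), and the verification uses exactness of filtered colimits; the direct statement is a tautology and needs none of this.

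Two points must be made explicit for the argument to close. First, the underived equivalence in (i) requires the filtrations to be exhaustive, as they are in the Schapira--Schneiders definition of the category of filtered objects; otherwise full faithfulness fails --- the zero filtration on a nonzero module has vanishing Rees module, while the filtered object itself is nonzero in the quasi-abelian derived category. Second, since the theorem concerns the unbounded derived categories, step (ii) cannot be carried out by naive termwise free resolutions (there are no projectives for sheaves, and unbounded complexes require more than termwise resolutions anyway); one needs K-flat resolutions built from the generators $j_!(\mathcal{R}(\mathcal{A})(n)|_U)$, which exist in this Grothendieck category and are termwise $t$-torsion-free. With these two repairs, the round-trip identities at the level of $t$-torsion-free complexes are identities on the nose, the well-definedness worry you raise disappears, and the preservation of the bounded, bounded-above and bounded-below subcategories follows as you indicate.
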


Recall that a filtered complex $\mathcal{M}^{\cdot}$ over $(\mathcal{A},F)$
is said to be strict if each morphism $d:(\mathcal{M}^{i},F)\to(\mathcal{M}^{i+1},F)$
satisfies $d(m)\in F_{j}(\mathcal{M}^{i+1})$ iff $m\in F_{j}(\mathcal{M}^{i})$
(for all local sections $m$). Then $\mathcal{M}^{\cdot}$ is quasi-isomorphic
to a strict complex iff each cohomology sheaf $\mathcal{H}^{i}(\mathcal{R}(\mathcal{M}^{\cdot}))$
is torsion-free with respect to the Rees parameter. If $\mathcal{M}^{\cdot}$
is a bounded complex, for which the filtration is bounded below (i.e.
there is some $j\in\mathbb{Z}$ so that $F_{j}(\mathcal{M}^{i})=0$
for all $i$), then this condition is equivalent to the degeneration
at $E_{1}$ of the spectral sequence associated to the filtration. 

Now we return the discussion to $\mathcal{R}(\mathcal{D}_{X}^{(1)})$
and $\mathcal{\overline{R}}(\mathcal{D}_{X}^{(0)})$. We begin with
the latter; recall that Ogus and Vologodsky in \cite{key-11} have
considered the filtered derived category associated to the conjugate
filtration on $\mathcal{D}_{X}^{(0)}$; by the above theorem\footnote{The careful reader will note that in their work they require filtrations
to be separated; however, this leads to a canonically isomorphic filtered
derived category, as explained in \cite{key-59}, proposition 3.1.22 } this category is equivalent to $\mathcal{G}(\overline{\mathcal{R}}(\mathcal{D}_{X}^{(0)}))$.
After we construct our pushforward on $\overline{\mathcal{R}}(\mathcal{D}_{X}^{(0)})$,
we will show that it is compatible with the one constructed on \cite{key-11},
for now, we will just prove the following basic structure theorem
for $\overline{\mathcal{R}}(\mathcal{D}_{X}^{(0)})$; following \cite{key-3},
theorem 2.2.3: 
\begin{prop}
We have $\mathcal{Z}(\overline{\mathcal{R}}(\mathcal{D}_{X}^{(0)}))\tilde{=}\mathcal{O}_{T^{*}X^{(1)}}[v]$;
this is a graded ring where $\mathcal{O}_{T^{*}X^{(1)}}$ is graded
as usual and $v$ is placed in degree $-1$. The algebra $\overline{\mathcal{R}}(\mathcal{D}_{X}^{(0)})$
is Azumaya over $\mathcal{Z}(\overline{\mathcal{R}}(\mathcal{D}_{X}^{(0)}))$,
of index $p^{\text{dim}(X)}$. In particular, $\overline{\mathcal{R}}(\mathcal{D}_{X}^{(0)})(U)$
has finite homological dimension for each open affine $U$. 
\end{prop}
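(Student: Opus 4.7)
The plan is to compute the centre of $\overline{\mathcal{R}}(\mathcal{D}_X^{(0)})$ by reducing to the Bezrukavnikov--Mirkovic--Rumynin description of $\mathcal{Z}(\mathcal{D}_X^{(0)})$, then to verify the Azumaya property fibrewise over $\text{Spec}(\mathcal{O}_{T^{*}X^{(1)}}[v])$, and finally to deduce finite global dimension from the regularity of the centre.

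For the centre, I would view $\overline{\mathcal{R}}(\mathcal{D}_X^{(0)})$ as the graded algebra $\bigoplus_{i\in\mathbb{Z}} C^{i}\mathcal{D}_X^{(0)}$ with multiplication from the filtration property and $v$ of degree $-1$ acting by the inclusions $C^{i+1}\subset C^{i}$. An element in degree $i$ is central iff its underlying element of $C^{i}\mathcal{D}_X^{(0)}$ is central in $\mathcal{D}_X^{(0)}$. By \cite{key-3}, $\mathcal{Z}(\mathcal{D}_X^{(0)})=\mathcal{O}_{T^{*}X^{(1)}}$ and $\mathcal{D}_X^{(0)}$ is Azumaya of rank $p^{2\dim X}$ over this centre, hence locally free. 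Combining with Remark \ref{rem:Description-of-conjugate}, which writes $C^{i}\mathcal{D}_X^{(0)}=\mathcal{I}^{i}\cdot\mathcal{D}_X^{(0)}$ for $\mathcal{I}$ the ideal in $\mathcal{O}_{T^{*}X^{(1)}}$ generated by the positive-degree elements, local freeness forces $C^{i}\mathcal{D}_X^{(0)}\cap\mathcal{O}_{T^{*}X^{(1)}}=\mathcal{I}^{i}$, i.e.\ the degree-$\geq i$ homogeneous part. Summing, $\mathcal{Z}(\overline{\mathcal{R}}(\mathcal{D}_X^{(0)}))$ becomes the Rees algebra of the graded ring $\mathcal{O}_{T^{*}X^{(1)}}$ with respect to the filtration by its own grading; the standard isomorphism sending a homogeneous $a\in\mathcal{O}_{T^{*}X^{(1)}}$ in Rees-degree $i$ to $a\,v^{\deg a - i}$ identifies this with $\mathcal{O}_{T^{*}X^{(1)}}[v]$ carrying the natural grading and $\deg v=-1$.

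For the Azumaya claim, I would first use local coordinates and the basis $\{x^{J}\partial^{I}\}_{0\le I_{j},J_{j}<p}$ of $\mathcal{D}_X^{(0)}$ over its centre to write $\overline{\mathcal{R}}(\mathcal{D}_X^{(0)})\cong\bigoplus_{I,J}\mathcal{O}_{T^{*}X^{(1)}}[v]\cdot x^{J}\partial^{I}$ locally, establishing local freeness of rank $p^{2\dim X}$ over the centre. I would then check the Azumaya property fibrewise on $\text{Spec}(\mathcal{O}_{T^{*}X^{(1)}}[v])$: at a point where $v$ is invertible, $\overline{\mathcal{R}}(\mathcal{D}_X^{(0)})[v^{-1}]\cong\mathcal{D}_X^{(0)}[v,v^{-1}]$ is Azumaya over $\mathcal{O}_{T^{*}X^{(1)}}[v,v^{-1}]$ by \cite{key-3}; at a point with $v=0$, one identifies $\overline{\mathcal{R}}(\mathcal{D}_X^{(0)})/v\cong\text{gr}_{C}\mathcal{D}_X^{(0)}\cong\mathcal{O}_{T^{*}X^{(1)}}\otimes_{\mathcal{O}_{X^{(1)}}}\overline{\mathcal{D}_X^{(0)}}$ as graded rings (using that $\mathcal{O}_{T^{*}X^{(1)}}$ is central), and observes that $\overline{\mathcal{D}_X^{(0)}}$ is the restriction of the Azumaya algebra $\mathcal{D}_X^{(0)}$ along the zero section $X^{(1)}\hookrightarrow T^{*}X^{(1)}$, hence itself Azumaya over $\mathcal{O}_{X^{(1)}}$; flat base change along $\mathcal{O}_{X^{(1)}}\hookrightarrow\mathcal{O}_{T^{*}X^{(1)}}$ preserves Azumaya. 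Since a locally free algebra over a Noetherian base whose geometric fibres are all central simple is Azumaya, this completes the proof.

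For the final sentence, on an affine open $U\subset X$ the centre $\Gamma(U,\mathcal{O}_{T^{*}X^{(1)}}[v])$ is a polynomial ring in one variable over the smooth coordinate algebra of $T^{*}U^{(1)}$, hence a regular Noetherian commutative ring of finite Krull dimension $2\dim X+1$; an Azumaya algebra over such a ring has finite global dimension equal to that of the centre, since Azumaya algebras are \'etale-locally matrix rings and global dimension is preserved by Morita equivalence and \'etale descent. The trickiest step will be the identification of $\overline{\mathcal{R}}(\mathcal{D}_X^{(0)})/v$ with the flat base change of the Azumaya algebra $\overline{\mathcal{D}_X^{(0)}}$ as a graded ring, and in particular the verification that the graded multiplication on $\text{gr}_C\mathcal{D}_X^{(0)}$ matches the tensor product multiplication; once this is in hand, the fibrewise Azumaya check is direct and the rest is routine.
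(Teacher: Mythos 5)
Your proposal is correct, and the two key reductions (invert $v$; set $v=0$ and identify $\overline{\mathcal{R}}(\mathcal{D}_{X}^{(0)})/v$ with $\text{gr}_{C}(\mathcal{D}_{X}^{(0)})\tilde{=}\overline{\mathcal{D}_{X}^{(0)}}\otimes_{\mathcal{O}_{X^{(1)}}}\mathcal{O}_{T^{*}X^{(1)}}$) are exactly the ones the paper uses. The differences are in packaging. For the centre, the paper works entirely in local coordinates: it takes the basis $\{\partial^{I}(\partial^{[p]})^{J}\}$ of \corref{Local-coords-over-A=00005Bf,v=00005D} and uses the bracket formula for $\partial_{i}^{[p]}$ to see that the $(\partial^{[p]})^{J}$ become central, reading off $\mathcal{Z}=\mathcal{O}_{X^{(1)}}[v,\partial_{1}^{[p]},\dots,\partial_{n}^{[p]}]$ directly; your argument instead identifies the degree-$i$ central elements abstractly as $C^{i}\cap\mathcal{Z}(\mathcal{D}_{X}^{(0)})$ and uses that $\mathcal{O}_{T^{*}X^{(1)}}\cdot 1$ is a direct summand of the locally free module $\mathcal{D}_{X}^{(0)}$ to get $\mathcal{I}^{i}\mathcal{D}_{X}^{(0)}\cap\mathcal{O}_{T^{*}X^{(1)}}=\mathcal{I}^{i}$ -- a cleaner, coordinate-free route to the same answer. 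For the Azumaya property, you invoke the criterion ``locally free over a noetherian centre with central simple geometric fibres,'' whereas the paper follows \cite{key-3} more closely: it introduces the explicit commutative subalgebra $\mathcal{A}_{X,v}=\mathcal{O}_{X}\otimes_{\mathcal{O}_{X^{(1)}}}\mathcal{O}_{T^{*}X^{(1)}}[v]$, over which $\overline{\mathcal{R}}(\mathcal{D}_{X}^{(0)})$ is locally free of rank $p^{\dim X}$, and proves the action map $A$ to $\mathcal{E}nd_{\mathcal{A}_{X,v}}(\overline{\mathcal{R}}(\mathcal{D}_{X}^{(0)}))$ is an isomorphism by the same two-case check. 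What the paper's version buys is an explicit splitting module on the flat cover $X\times_{X^{(1)}}T^{*}X^{(1)}\times\mathbb{A}^{1}$, which also justifies the claim that the index is $p^{\dim X}$ (a part of the statement your write-up does not explicitly address, though it follows from your local freeness computation over $\mathcal{A}_{X,v}$ with the same basis); your version is more economical if one only wants the Azumaya property and the finiteness of homological dimension. The step you flag as delicate -- that the graded multiplication on $\text{gr}_{C}\mathcal{D}_{X}^{(0)}$ matches the tensor-product multiplication -- is handled in the paper by citation to \cite{key-11} and is indeed immediate from the centrality of $\mathcal{I}$ together with flatness of $\mathcal{D}_{X}^{(0)}$ over $\mathcal{O}_{T^{*}X^{(1)}}$.
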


\begin{proof}
The filtered embedding $\mathcal{O}_{T^{*}X^{(1)}}\to\mathcal{D}_{X}^{(0)}$
induces the map $\mathcal{O}_{T^{*}X^{(1)}}[v]\to\overline{\mathcal{R}}(\mathcal{D}_{X}^{(0)})$,
by the very definition of the conjugate filtration it is a map of
graded rings. To show that this map is an isomorphism onto the center,
note that by \corref{Local-coords-over-A=00005Bf,v=00005D}, after
choosing etale local coordinates we have that a basis for $\overline{\mathcal{R}}(\mathcal{D}_{X}^{(0)})$
over $\mathcal{O}_{X}[v]$ is given by $\{\partial^{I}(\partial^{[p]})^{J}\}$
where each entry of $I$is contained in $\{0,\dots,p-1\}$; and the
formula for the bracket by $\partial_{i}^{[p]}$ (c.f. \thmref{Local-Coords-for-D+})
shows that $(\partial^{[p]})^{J}$ is now central. Thus the center
is given by $\mathcal{O}_{X^{(1)}}[v,\partial_{1}^{[p]},\dots,\partial_{n}^{[p]}]$
which is clearly the (isomorphic) image of the map.

The above local coordinates also show that $\overline{\mathcal{R}}(\mathcal{D}_{X}^{(0)})$
is locally free over $\mathcal{Z}(\overline{\mathcal{R}}(\mathcal{D}_{X}^{(0)}))$,
of rank $p^{2\text{dim}(X)}$. Now we can follow the strategy of \cite{key-3},
to show that $\overline{\mathcal{R}}(\mathcal{D}_{X}^{(0)})$ is Azumaya:
we consider the commutative subalgebra $\mathcal{A}_{X,v}:=\mathcal{O}_{X}\otimes_{\mathcal{O}_{X^{(1)}}}\mathcal{O}_{T^{*}X^{(1)}}[v]$
inside $\overline{\mathcal{R}}(\mathcal{D}_{X}^{(0)})$; it acts by
right multiplication on $\overline{\mathcal{R}}(\mathcal{D}_{X}^{(0)})$,
and $\overline{\mathcal{R}}(\mathcal{D}_{X}^{(0)})$ is a locally
free module over it of rank $p^{\text{dim}(X)}$. We have the action
map 
\[
A:\overline{\mathcal{R}}(\mathcal{D}_{X}^{(0)})\otimes_{\mathcal{O}_{T^{*}X^{(1)}}[v]}\mathcal{A}_{X,v}\to\mathcal{E}nd_{\mathcal{A}_{X,v}}(\overline{\mathcal{R}}(\mathcal{D}_{X}^{(0)}))
\]
which is a morphism of algebras, both of which are locally free modules
of rank $p^{2\text{dim}(X)}$ over $\mathcal{A}_{X,v}$. Since the
left hand side is the pullback of $\overline{\mathcal{R}}(\mathcal{D}_{X}^{(0)})$,
considered as a sheaf of algebras over $T^{*}X^{(1)}\times\mathbb{A}^{1}$,
to the flat cover $X\times_{X^{(1)}}T^{*}X^{(1)}\times\mathbb{A}^{1}$,
we see that $\overline{\mathcal{R}}(\mathcal{D}_{X}^{(0)})$ is Azumaya
if $A$ is an isomorphism. 

To prove that $A$ an isomorphism it suffices to prove it after inverting
$v$ and after setting $v=0$. Upon inverting $v$, we have $\overline{\mathcal{R}}(\mathcal{D}_{X}^{(0)})=\mathcal{D}_{X}^{(0)}[v,v^{-1}]$,
so the map $A$ simply becomes the analogous map for $\mathcal{D}_{X}^{(0)}$
tensored with $k[v,v^{-1}]$; this is shown to be an isomorphism by
\cite{key-3}, proposition 2.2.2. Upon setting $v=0$, we obtain 
\[
A_{0}:\text{gr}(\mathcal{D}_{X}^{(0)})\otimes_{\mathcal{O}_{T^{*}X^{(1)}}}\mathcal{A}_{X}\to\mathcal{E}nd_{\mathcal{A}_{X}}(\text{gr}(\mathcal{D}_{X}^{(0)}))
\]
where $\text{gr}(\mathcal{D}_{X}^{(0)})$ is the associated graded
of $\mathcal{D}_{X}^{(0)}$ with respect to the conjugate filtration;
this is a (split) Azumaya algebra which is easily seen to be isomorphic
to $\overline{\mathcal{D}}_{X}^{(0)}\otimes_{\mathcal{O}_{X^{(1)}}}\mathcal{O}_{T^{*}X^{(1)}}$
(c.f. \cite{key-11}; the discussion below lemma 3.18). Thus the map
$A_{0}$ is again an isomorphism; indeed, we have 
\[
\text{gr}(\mathcal{D}_{X}^{(0)})\otimes_{\mathcal{O}_{T^{*}X^{(1)}}}\mathcal{A}_{X}\tilde{\to}\overline{\mathcal{D}}_{X}^{(0)}\otimes_{\mathcal{O}_{X^{(1)}}}\mathcal{O}_{T^{*}X^{(1)}}\otimes_{\mathcal{O}_{T^{*}X^{(1)}}}\mathcal{A}_{X}
\]
\[
\tilde{\to}\mathcal{E}nd_{\mathcal{O}_{X^{(1)}}}(\mathcal{O}_{X})\otimes_{\mathcal{O}_{X^{(1)}}}\mathcal{A}_{X}
\]
so that each reduction of $\text{gr}(\mathcal{D}_{X}^{(0)})\otimes_{\mathcal{O}_{T^{*}X^{(1)}}}\mathcal{A}_{X}$
to closed point in $X\times_{X^{(1)}}T^{*}X^{(1)}$ is a matrix algebra
of rank $p^{\text{dim}(X)}$, and hence a central simple ring, and
the result follows immediately. 
\end{proof}
Next we turn to the category of modules over $\mathcal{R}(\mathcal{D}_{X}^{(1)})$,
in this case, we can describe them in terms of the familiar filtered
$\mathcal{D}_{X}^{(0)}$-modules (in terms of the symbol filtration).
The key to doing so is a version of Berthelot's Frobenius descent
for filtered $\mathcal{D}_{X}^{(1)}$-modules; while we will consider
the more general Frobenius descent over $\mathfrak{X}$ in the next
subsection, we will give the basic construction on $X$ for now.

To proceed, recall that we have the embedding $\overline{\mathcal{D}_{X}^{(0)}}\subset\mathcal{D}_{X}^{(1)}$
which is simply the image of map $f_{\infty}:\mathcal{D}_{X}^{(0)}\to\mathcal{D}_{X}^{(1)}$.
Let $\mathcal{J}\subset\overline{\mathcal{D}_{X}^{(0)}}$ denote the
annihilator of $1\in\mathcal{O}_{X}$ under the action of $\overline{\mathcal{D}_{X}^{(0)}}$
on $\mathcal{O}_{X}$; we have the left ideal $\mathcal{D}_{X}^{(1)}\cdot\mathcal{J}$. 
\begin{prop}
\label{prop:Basic-F^*-over-k}There is an isomorphism of $\mathcal{O}_{X}$-modules
$\mathcal{D}_{X}^{(1)}/\mathcal{D}_{X}^{(1)}\cdot\mathcal{J}\tilde{\to}F^{*}\mathcal{D}_{X}^{(0)}$,
thus endowing $F^{*}\mathcal{D}_{X}^{(0)}$ with the structure of
a left $\mathcal{D}_{X}^{(1)}$-module; and hence the structure of
a $(\mathcal{D}_{X}^{(1)},\mathcal{D}_{X}^{(0)})$-bimodule. Let $F^{i}(\mathcal{D}_{X}^{(1)}/\mathcal{D}_{X}^{(1)}\cdot\mathcal{J})$
be the filtration induced from the Hodge filtration on $\mathcal{D}_{X}^{(1)}$,
and let $F^{i}(\mathcal{D}_{X}^{(0)})$ be the symbol filtration.
Then we have 
\[
F^{i}(\mathcal{D}_{X}^{(1)}/\mathcal{D}_{X}^{(1)}\cdot\mathcal{J})\cdot F^{j}(\mathcal{D}_{X}^{(0)})=F^{i+j}(\mathcal{D}_{X}^{(1)}/\mathcal{D}_{X}^{(1)}\cdot\mathcal{J})
\]
for all $i,j\geq0$. The induced morphism $\mathcal{D}_{X}^{(1)}\to\mathcal{E}nd_{\mathcal{D}_{X}^{(0),\text{opp}}}(F^{*}\mathcal{D}_{X}^{(0)})$
is an isomorphism of filtered algebras.
\end{prop}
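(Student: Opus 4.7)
The proposition is entirely local in nature, so my plan is to choose an affine $U=\mathrm{Spec}(A)\subset X$ admitting local coordinates $\{x_i,\partial_i\}_{i=1}^n$, do the computation there, and observe that the construction is coordinate-free. The starting point is \corref{Local-coords-over-A=00005Bf,v=00005D}: setting $f=1$ in $\mathcal{D}_X^{(0,1)}$ (which forces $v=0$ since $fv=0$ mod $p$, and yields $\partial_k^p=0$ via $f\xi_k^p=p!\xi_k^{[p]}=0$), the algebra $\mathcal{D}_X^{(1)}$ becomes $\mathcal{O}_X$-free on the basis $\{\partial^I(\partial^{[p]})^J: 0\le i_\ell<p,\ J\in\mathbb{N}^n\}$. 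Since $\mathcal{J}$ is the annihilator of $1\in\mathcal{O}_X$ in $\overline{\mathcal{D}_X^{(0)}}$, it is the left ideal generated by $\{\partial_k\}_{k=1}^n$, so $\mathcal{D}_X^{(1)}\cdot\mathcal{J}=\sum_k\mathcal{D}_X^{(1)}\partial_k$. Using the commutator relation $[\partial_i^{[p]},\partial_j]=0$ (which is built into the presentation of $\mathcal{C}^{(0,1),+}$ in \lemref{Reduction-is-correct} and is independent of $f$), together with $\partial_k^p=0$, I show that computing $\partial^I(\partial^{[p]})^J\cdot\partial_k$ yields either $\partial^{I+e_k}(\partial^{[p]})^J$ or $0$, so $\mathcal{D}_X^{(1)}\cdot\mathcal{J}$ is precisely the $\mathcal{O}_X$-span of the basis elements with $I\ne 0$, and the quotient $\mathcal{D}_X^{(1)}/\mathcal{D}_X^{(1)}\cdot\mathcal{J}$ is $\mathcal{O}_X$-free on $\{\overline{(\partial^{[p]})^J}\}_J$.

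Next I identify this with $F^*\mathcal{D}_X^{(0)}$, which under the identification $\sigma:X^{(1)}\tilde\to X$ from perfectness of $k$ is locally $\mathcal{O}_X$-free on $\{1\otimes\partial^J\}_J$. The map $\overline{(\partial^{[p]})^J}\mapsto 1\otimes\partial^J$, extended $\mathcal{O}_X$-linearly, is then a bijection on local bases. To see this is coordinate-independent, I interpret the left hand side as the cyclic $\mathcal{D}_X^{(1)}$-module coinduced from the trivial $\overline{\mathcal{D}_X^{(0)}}$-module $\mathcal{O}_X=\overline{\mathcal{D}_X^{(0)}}/\mathcal{J}$; this description is manifestly intrinsic. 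I then transfer the left $\mathcal{D}_X^{(1)}$-action on the quotient to $F^*\mathcal{D}_X^{(0)}$, and verify commutation with the natural right $\mathcal{D}_X^{(0)}$-action (multiplication on the second tensor factor) by checking on the generators $\{a,\partial_i,\partial_i^{[p]}\}$ of $\mathcal{D}_X^{(1)}$ against $\partial_k$: the commutator formula $[\partial_i^{[p]},a]=\sum_{r=0}^{p-1}\tfrac{\partial_i^{p-r}(a)}{(p-r)!(r!)}\partial_i^r$ from \lemref{Reduction-is-correct}, taken modulo $\mathcal{D}_X^{(1)}\mathcal{J}$, gives exactly the Leibniz rule for $\partial_i^{[p]}$ acting on the first factor of $F^*\mathcal{D}_X^{(0)}$, and the two actions commute tautologically on generators.

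For the filtration identity, $F^i(\mathcal{D}_X^{(1)}/\mathcal{D}_X^{(1)}\cdot\mathcal{J})$ is locally $\mathcal{O}_X$-spanned by $\{\overline{(\partial^{[p]})^J}\}_{|J|\le i}$, which under the isomorphism matches $\{1\otimes\partial^J\}_{|J|\le i}$; right multiplying by $\partial^K\in F^j(\mathcal{D}_X^{(0)})$ with $|K|\le j$ gives $1\otimes\partial^{J+K}$ with $|J+K|\le i+j$, and every multi-index $L$ with $|L|\le i+j$ splits as $L=J+K$ with $|J|\le i$, $|K|\le j$, giving equality $F^i\cdot F^j=F^{i+j}$. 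For the final claim, the bimodule structure produces the morphism $\mathcal{D}_X^{(1)}\to\mathcal{E}nd_{\mathcal{D}_X^{(0),\mathrm{opp}}}(F^*\mathcal{D}_X^{(0)})$; this is the essential content of Berthelot's Frobenius descent. I would prove injectivity by showing faithfulness: if $\Phi\cdot(a\otimes 1)=0$ for all $a\in\mathcal{O}_X$, then $\Phi a\in\mathcal{D}_X^{(1)}\cdot\mathcal{J}$ for all $a$, and expanding $\Phi$ in the explicit $\mathcal{O}_X$-basis and using that $\mathcal{O}_X=\bigoplus_{0\le m_i<p}\mathcal{O}_X^p x^M$ forces $\Phi=0$. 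Surjectivity and filtration compatibility follow by comparing ranks locally—$F^*\mathcal{D}_X^{(0)}$ is right $\mathcal{D}_X^{(0)}$-free of rank $p^n$ on $\{x^M\otimes 1\}_{0\le m_i<p}$—and constructing explicit preimages for matrix units via appropriate monomials $x^M(\partial^{[p]})^J$ in $\mathcal{D}_X^{(1)}$. The main obstacle is this last step: carefully matching the Hodge filtration on $\mathcal{D}_X^{(1)}$ with the order filtration on $M_{p^n}(\mathcal{D}_X^{(0)})$ induced by the filtered structure of $F^*\mathcal{D}_X^{(0)}$, which will require bootstrapping from the identity $F^i\cdot F^j=F^{i+j}$ already established.
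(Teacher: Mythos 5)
Your treatment of the first two assertions is sound and is in substance the same computation the paper makes: working in local coordinates, $\mathcal{D}_{X}^{(1)}$ is $\mathcal{O}_{X}$-free on $\{\partial^{I}(\partial^{[p]})^{J}\}_{0\leq i_{\ell}<p}$, the ideal $\mathcal{D}_{X}^{(1)}\cdot\mathcal{J}$ is the span of the basis monomials with $I\neq0$, and the quotient is free on the classes of $(\partial^{[p]})^{J}$, which matches $F^{*}F^{j}(\mathcal{D}_{X}^{(0)})$ degree by degree; the paper packages the right $\mathcal{D}_{X}^{(0)}$-action intrinsically via $\Phi\star\partial:=\Phi\cdot\delta$ for any $\delta$ of order $p$ with $\delta(g^{p})=(\partial g)^{p}$ (well-defined because two such $\delta$ differ by an element of $\mathcal{J}$), whereas you work with explicit bases, but the content is identical and your verification of the bimodule compatibility and of $F^{i}\cdot F^{j}=F^{i+j}$ is correct.

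The genuine gap is in the last assertion, and you have flagged it yourself: you establish (plausibly) that $\mathcal{D}_{X}^{(1)}\to\mathcal{E}nd_{\mathcal{D}_{X}^{(0),\text{opp}}}(F^{*}\mathcal{D}_{X}^{(0)})$ is injective, but the surjectivity-plus-strictness is left open, and the route you sketch does not obviously close it. "Comparing ranks locally" is not available as stated, since both sides are $\mathcal{O}_{X}$-free of countably infinite rank; one would have to compare the ranks of the filtered pieces, and the rank of $F^{i}(\mathcal{E}nd)=\{\varphi\mid\varphi(F^{j})\subset F^{i+j}\text{ for all }j\}$ is exactly what needs to be computed. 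Moreover, even a bijective filtered map need not be an isomorphism of \emph{filtered} algebras (one needs $F^{i}$ to surject onto $F^{i}$). The missing idea is to pass to associated graded objects: because $F^{*}\mathcal{D}_{X}^{(0)}$ is a locally free filtered module over $\mathcal{D}_{X}^{(0)}$ (a consequence of the identity $F^{i}\cdot F^{j}=F^{i+j}$ you already have), one gets $\text{gr}\,\mathcal{E}nd_{\mathcal{D}_{X}^{(0),\text{opp}}}(F^{*}\mathcal{D}_{X}^{(0)})\tilde{=}\mathcal{E}nd_{\text{gr}(\mathcal{D}_{X}^{(0)})}(\text{gr}(F^{*}\mathcal{D}_{X}^{(0)}))$, and in local coordinates the induced graded map becomes
\[
\mathcal{\overline{D}}_{X}^{(0)}[\partial_{1}^{[p]},\dots,\partial_{n}^{[p]}]\to\mathcal{E}nd_{\text{Sym}_{\mathcal{O}_{X}}(\mathcal{T}_{X})}(F^{*}(\text{Sym}_{\mathcal{O}_{X}}(\mathcal{T}_{X}))),
\]
which is an isomorphism by Cartier descent ($\mathcal{\overline{D}}_{X}^{(0)}\tilde{=}\mathcal{E}nd_{\mathcal{O}_{X}}(F^{*}\mathcal{O}_{X})$). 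An isomorphism on associated graded pieces of exhaustive, bounded-below filtrations then yields the desired isomorphism of filtered algebras in one stroke, handling injectivity, surjectivity, and strictness simultaneously. Your matrix-unit construction could probably be made to work, but it would amount to reproving Cartier descent by hand and would still leave the strictness of the filtration to be checked separately.
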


\begin{proof}
We put a right $\mathcal{D}_{X}^{(0)}$-module structure on $\mathcal{D}_{X}^{(1)}/\mathcal{D}_{X}^{(1)}\cdot\mathcal{J}$
as follows: let $\Phi\in\mathcal{D}_{X}^{(1)}/\mathcal{D}_{X}^{(1)}\cdot\mathcal{J}$
be a section, over some open affine subset $U$ which possesses local
coordinates. Let $\partial$ be a derivation over $U$. We may choose
a differential operator $\delta$ of order $p$ on $U$ such that
$\delta(f^{p})=(\partial f)^{p}$ for all $f\in\mathcal{O}_{X}(U)$;
for instance, if $\partial=\sum a_{i}\partial_{i}$ then we may choose
$\delta=\sum a_{i}^{p}\partial_{i}^{[p]}$. If $\delta'$ is another
such differential operator, then $\delta-\delta'$ is a section of
$\overline{\mathcal{D}_{X}^{(0)}}(U)$ which annihilates $\mathcal{O}_{X}(U)^{p}$.
In particular, $\delta-\delta'\in\mathcal{J}$, and so $\Phi\delta=\Phi\delta'$
inside $\mathcal{D}_{X}^{(1)}/\mathcal{D}_{X}^{(1)}\cdot\mathcal{J}$. 

So ,if we set $\Phi\star f=\Phi\cdot f^{p}$ and $\Phi\star\partial=\Phi\cdot\delta$
we obtain a (semilinear) right action of $\mathcal{D}_{X}^{(0)}$
on $\mathcal{D}_{X}^{(1)}/\mathcal{D}_{X}^{(1)}\cdot\mathcal{J}$.
Since $\mathcal{O}_{X}$ acts on $\mathcal{D}_{X}^{(1)}/\mathcal{D}_{X}^{(1)}\cdot\mathcal{J}$
on the left, the map 
\[
(f,\Psi)\to f\star\Psi
\]
induces a morphism $F^{*}\mathcal{D}_{X}^{(0)}\to\mathcal{D}_{X}^{(1)}/\mathcal{D}_{X}^{(1)}\cdot\mathcal{J}$.
To show it is an isomorphism, let us consider filtrations: by \thmref{Local-Coords-for-D+}
we have that $F^{i}(\mathcal{D}_{X}^{(1)})(U)$ is the free $\overline{\mathcal{D}_{X}^{(0)}}(U)$
module on $\{(\partial_{1}^{[p]})^{j_{1}}\cdots(\partial_{n}^{[p]})^{j_{n}}\}_{|J|\leq i}$.
Since $\overline{\mathcal{D}_{X}^{(0)}}/\mathcal{J}\tilde{\to}\mathcal{O}_{X}$,
we see that $F^{i}(\mathcal{D}_{X}^{(1)}/\mathcal{D}_{X}^{(1)}\cdot\mathcal{J})$
is the free $\mathcal{O}_{X}(U)$-module on $\{(\partial_{1}^{[p]})^{j_{1}}\cdots(\partial_{n}^{[p]})^{j_{n}}\}_{|J|\leq i}$.
On the other hand, $F^{i}(\mathcal{D}_{X}^{(0)})(U)$ is the free
$\mathcal{O}_{X}(U)$-module on $\{\partial_{1}^{j_{1}}\cdots\partial_{n}^{j_{n}}\}_{|J|\leq i}$.
Since $1\star\partial_{i}=\partial_{i}^{[p]}$ we deduce $F^{*}F^{i}(\mathcal{D}_{X}^{(0)})=F^{i}(\mathcal{D}_{X}^{(1)}/\mathcal{D}_{X}^{(1)}\cdot\mathcal{J})$
which implies that the map $F^{*}\mathcal{D}_{X}^{(0)}\to\mathcal{D}_{X}^{(1)}/\mathcal{D}_{X}^{(1)}\cdot\mathcal{J}$
is an isomorphism. The same calculation gives 
\[
F^{i}(\mathcal{D}_{X}^{(1)}/\mathcal{D}_{X}^{(1)}\cdot\mathcal{J})\cdot F_{j}(\mathcal{D}_{X}^{(0)})=F^{i+j}(\mathcal{D}_{X}^{(1)}/\mathcal{D}_{X}^{(1)}\cdot\mathcal{J})
\]
Therefore, the map 
\[
\mathcal{D}_{X}^{(1)}\to\mathcal{E}nd_{\mathcal{D}_{X}^{(0),\text{op}}}(F^{*}\mathcal{D}_{X}^{(0)})
\]
is a morphism of filtered algebras, where the filtration on the right
hand side is defined by 
\[
F^{i}(\mathcal{E}nd_{\mathcal{D}_{X}^{(0)}}(F^{*}\mathcal{D}_{X}^{(0)}))=\{\varphi\in\mathcal{E}nd_{\mathcal{D}_{X}^{(0)}}(F^{*}\mathcal{D}_{X}^{(0)})|\varphi(F^{j}(F^{*}\mathcal{D}_{X}^{(0)})\subset F^{i+j}(F^{*}\mathcal{D}_{X}^{(0)})\phantom{i}\text{for all}\phantom{i}j\}
\]
Upon passing to the associated graded, we obtain the morphism 
\[
\text{gr}(\mathcal{D}_{X}^{(1)})\to\text{gr}\mathcal{E}nd_{\mathcal{D}_{X}^{(0),\text{op}}}(F^{*}\mathcal{D}_{X}^{(0)})\tilde{=}\mathcal{E}nd_{\text{gr}(\mathcal{D}_{X}^{(0)})}(\text{gr}(F^{*}\mathcal{D}_{X}^{(0)}))
\]
(the last isomorphism follows from the fact that $F^{*}\mathcal{D}_{X}^{(0)}$
is a locally free filtered module over $\mathcal{D}_{X}^{(0)}$).
Working in local coordinates, we obtain the morphism 
\[
\mathcal{\overline{D}}_{X}^{(0)}[\partial_{1}^{[p]},\dots,\partial_{n}^{[p]}]\to\mathcal{E}nd_{\text{Sym}_{\mathcal{O}_{X}}(\mathcal{T}_{X})}(F^{*}(\text{Sym}_{\mathcal{O}_{X}}(\mathcal{T}_{X})))
\]
where $\partial_{i}^{[p]}$ is sent to $\partial_{i}\in\mathcal{T}_{X}$.
By Cartier descent, there is an isomorphism $\mathcal{\overline{D}}_{X}^{(0)}\tilde{=}\mathcal{E}nd_{\mathcal{O}_{X}}(F^{*}\mathcal{O}_{X})$
(here, the action of $\mathcal{O}_{X}$ on $F^{*}\mathcal{O}_{X}$
is on the right-hand factor in the tensor product; in other words,
it is the action of $\mathcal{O}_{X}$ on itself through the Frobenius);
and so we see that this map is an isomorphism. Thus the map $\mathcal{D}_{X}^{(1)}\to\mathcal{E}nd_{\mathcal{D}_{X}^{(0),\text{op}}}(F^{*}\mathcal{D}_{X}^{(0)})$
is an isomorphism as claimed. 
\end{proof}
This yields a functor $\mathcal{M}\to F^{*}\mathcal{M}:=F^{*}\mathcal{D}_{X}^{(0)}\otimes_{\mathcal{D}_{X}^{(0)}}\mathcal{M}$
(the Frobenius pullback) from $\mathcal{D}_{X}^{(0)}-\text{mod}$
to $\mathcal{D}_{X}^{(1)}-\text{mod}$; from the last part of the
above proposition and standard Morita theory one sees that it is an
equivalence of categories. Further: 
\begin{thm}
\label{thm:Filtered-Frobenius} The Frobenius pullback $F^{*}$ can
be upgraded to an equivalence from $\mathcal{G}(\mathcal{R}(\mathcal{D}_{X}^{(0)}))$
to $\mathcal{G}(\mathcal{R}(\mathcal{D}_{X}^{(1)}))$. Therefore,
the functor $F^{*}$ can also be upgraded to an equivalence of categories
from filtered $\mathcal{D}_{X}^{(0)}$-modules to filtered $\mathcal{D}_{X}^{(1)}$-modules.
In particular, $\mathcal{R}(\mathcal{D}_{X}^{(1)})(U)$ has finite
homological dimension for each open affine $U$. 
\end{thm}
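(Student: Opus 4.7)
The strategy is to upgrade the Morita-theoretic content of Proposition~\propref{Basic-F^*-over-k} to the Rees/graded level, and then read off the filtered and homological-dimension statements as formal consequences. First, Proposition~\propref{Basic-F^*-over-k} equips $F^{*}\mathcal{D}_{X}^{(0)}$ with the structure of a filtered $(\mathcal{D}_{X}^{(1)},\mathcal{D}_{X}^{(0)})$-bimodule: the Hodge filtration on the left is multiplicative in the usual sense, while on the right one has the strict equality $F^{i}(F^{*}\mathcal{D}_{X}^{(0)})\cdot F^{j}(\mathcal{D}_{X}^{(0)})=F^{i+j}(F^{*}\mathcal{D}_{X}^{(0)})$. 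Applying the Rees construction on both sides yields a graded $(\mathcal{R}(\mathcal{D}_{X}^{(1)}),\mathcal{R}(\mathcal{D}_{X}^{(0)}))$-bimodule $\mathcal{R}(F^{*}\mathcal{D}_{X}^{(0)})$, and I define
\[
F^{*}\colon\mathcal{G}(\mathcal{R}(\mathcal{D}_{X}^{(0)}))\longrightarrow\mathcal{G}(\mathcal{R}(\mathcal{D}_{X}^{(1)})),\qquad \mathcal{M}\longmapsto\mathcal{R}(F^{*}\mathcal{D}_{X}^{(0)})\otimes_{\mathcal{R}(\mathcal{D}_{X}^{(0)})}\mathcal{M}.
\]

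Second, I would take the Rees algebra of the filtered isomorphism $\mathcal{D}_{X}^{(1)}\tilde{\to}\mathcal{E}nd_{\mathcal{D}_{X}^{(0),\mathrm{opp}}}(F^{*}\mathcal{D}_{X}^{(0)})$ from Proposition~\propref{Basic-F^*-over-k} to obtain a graded algebra isomorphism
\[
\mathcal{R}(\mathcal{D}_{X}^{(1)})\ \tilde{\longrightarrow}\ \underline{\mathcal{E}nd}_{\mathcal{R}(\mathcal{D}_{X}^{(0)})^{\mathrm{opp}}}\bigl(\mathcal{R}(F^{*}\mathcal{D}_{X}^{(0)})\bigr).
\]
To convert this into an equivalence of graded module categories via the usual (graded) Morita argument, I need $\mathcal{R}(F^{*}\mathcal{D}_{X}^{(0)})$ to be a locally filtered-free progenerator over $\mathcal{R}(\mathcal{D}_{X}^{(0)})$. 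This is the \emph{main obstacle}: the right action is twisted by Frobenius, so filtered-freeness is not automatic. To handle it I would work on an affine $U$ with \'etale local coordinates $\{x_{i}\}$, pick the standard basis $\{x^{I}\}_{|I|<p}$ of $\mathcal{O}_{X}(U)$ over $\mathcal{O}_{X}(U)^{p}$, and combine Corollary~\corref{Local-coords-over-A=00005Bf,v=00005D} with the explicit formulas for the twisted right action $\star$ from the proof of Proposition~\propref{Basic-F^*-over-k} to show that $\{x^{I}\otimes 1\}$ is a filtered-free basis of $F^{*}\mathcal{D}_{X}^{(0)}$ as a filtered right $\mathcal{D}_{X}^{(0)}$-module, equivalently that $\mathrm{gr}\,(F^{*}\mathcal{D}_{X}^{(0)})$ is free of rank $p^{\dim(X)}$ over $\mathrm{gr}\,\mathcal{D}_{X}^{(0)}$. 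Once this is established, $\mathcal{R}(F^{*}\mathcal{D}_{X}^{(0)})$ is a locally free graded progenerator, and the Morita equivalence follows.

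Third, the equivalence of filtered $\mathcal{D}_{X}^{(0)}$-modules and filtered $\mathcal{D}_{X}^{(1)}$-modules is then immediate from the Schapira--Schneiders dictionary recalled just after Definition~\ref{def:Rees-and-Rees-bar}: the graded equivalence preserves the Rees essential image on both sides and is intertwined with the bimodule $F^{*}\mathcal{D}_{X}^{(0)}$ tautologically. For the final assertion, I note that for every affine $U$ the ring $\mathcal{R}(\mathcal{D}_{X}^{(0)})(U)$ has finite homological dimension, since $f$ is a central non-zero-divisor in degree $1$ with $\mathcal{R}(\mathcal{D}_{X}^{(0)})(U)[f^{-1}]\tilde{=}\mathcal{D}_{X}^{(0)}(U)[f,f^{-1}]$ and $\mathcal{R}(\mathcal{D}_{X}^{(0)})(U)/f\tilde{=}\mathrm{gr}\,\mathcal{D}_{X}^{(0)}(U)\tilde{=}\mathcal{O}_{T^{*}X}(U)$, both of finite homological dimension on an affine $U$. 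Morita equivalence via the graded progenerator $\mathcal{R}(F^{*}\mathcal{D}_{X}^{(0)})(U)$ then transports this to $\mathcal{R}(\mathcal{D}_{X}^{(1)})(U)$.
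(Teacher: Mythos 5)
Your proposal is correct and follows essentially the same route as the paper: form the Rees module of the filtered bimodule $F^{*}\mathcal{D}_{X}^{(0)}$, check it is locally a graded-free progenerator of rank $p^{\dim(X)}$ over $\mathcal{R}(\mathcal{D}_{X}^{(0)})$ via the basis $\{x^{I}\}_{|I|<p}$, identify $\mathcal{R}(\mathcal{D}_{X}^{(1)})$ with its graded endomorphism algebra, and conclude by graded Morita theory together with the Rees dictionary for filtered modules. The only cosmetic difference is that the paper proves the endomorphism-algebra isomorphism by reducing mod $f$ and invoking graded Nakayama plus $f$-torsion-freeness, rather than by applying the Rees functor directly to the filtered isomorphism of \propref{Basic-F^*-over-k}; the underlying input (the isomorphism on associated gradeds) is the same.
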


\begin{proof}
In \propref{Basic-F^*-over-k}, we showed that $F^{*}\mathcal{D}_{X}^{(0)}$
is filtered, in a way that strictly respects the filtered action of
both $\mathcal{D}_{X}^{(1)}$ and $\mathcal{D}_{X}^{(0)}$. So, consider
the Rees module $\mathcal{R}(F^{*}\mathcal{D}_{X}^{(0)})$. This is
a graded $(\mathcal{R}(\mathcal{D}_{X}^{(1)}),\mathcal{R}(\mathcal{D}_{X}^{(0)}))$-bimodule;
and the isomorphism $F^{*}F_{i}(\mathcal{D}_{X}^{(0)}\tilde{=}F_{i}(F^{*}\mathcal{D}_{X}^{(0)})$
proved in loc.cit. shows that $\mathcal{R}(F^{*}\mathcal{D}_{X}^{(0)})\tilde{=}F^{*}\mathcal{R}_{X}^{(0)}$
as a right $\mathcal{R}(\mathcal{D}_{X}^{(0)})$-module. Thus the
result will follow if we can show that the action map 
\begin{equation}
\mathcal{R}(\mathcal{D}_{X}^{(1)})\to\mathcal{E}nd_{\mathcal{R}(\mathcal{D}_{X}^{(0)})}(\mathcal{R}(F^{*}\mathcal{D}_{X}^{(0)}))\tilde{=}\underline{\mathcal{E}nd}{}_{\mathcal{R}(\mathcal{D}_{X}^{(0)})}(\mathcal{R}(F^{*}\mathcal{D}_{X}^{(0)}))\label{eq:first-map}
\end{equation}
is an isomorphism (the latter isomorphism follows from the fact that
$\mathcal{R}(F^{*}\mathcal{D}_{X}^{(0)})$ is coherent over $\mathcal{R}(\mathcal{D}_{X}^{(0)})$).
Both sides are therefore positively graded algebras over the ring
$k[f]$; taking reduction mod $f$ we obtain the map $\text{gr}(\mathcal{D}_{X}^{(1)})\to\mathcal{E}nd_{\text{gr}(\mathcal{D}_{X}^{(0)})}(\text{gr}(F^{*}\mathcal{D}_{X}^{(0)}))$
which we already showed to be an isomorphism. Thus by the graded Nakayama
lemma \eqref{first-map} is surjective. As both sides are $f$-torsion
free it follows that it is an isomorphism. Thus the first result of
$1)$ is proved, the second follows by identifying filtered modules
with graded modules over the Rees ring which are torsion-free with
respect to $f$. 
\end{proof}
\begin{rem}
\label{rem:The-inverse-to-F^*}The inverse to the functor $F^{*}$
can be described as follows: via the embedding $\overline{\mathcal{D}_{X}^{(0)}}\subset\mathcal{R}(\mathcal{D}_{X}^{(1)})$,
any module $\mathcal{M}$ over $\mathcal{R}(\mathcal{D}_{X}^{(1)})$
possesses a connection which has $p$-curvature $0$. Apply this to
$\mathcal{R}(\mathcal{D}_{X}^{(1)})/\mathcal{J}\cdot\mathcal{R}(\mathcal{D}_{X}^{(1)})$,
where as above $\mathcal{J}\subset\overline{\mathcal{D}_{X}^{(0)}}$
denotes the annihilator of $1\in\mathcal{O}_{X}$ under the action
of $\overline{\mathcal{D}_{X}^{(0)}}$ on $\mathcal{O}_{X}$. We obtain
from the above argument the isomorphism 
\[
(\mathcal{R}(\mathcal{D}_{X}^{(1)})/\mathcal{J}\cdot\mathcal{R}(\mathcal{D}_{X}^{(1)}))^{\nabla}\tilde{=}\mathcal{R}(\mathcal{D}_{X^{(1)}}^{(0)})
\]
Thus for any the sheaf $\mathcal{M}^{\nabla}:=\text{ker}(\nabla:\mathcal{M}\to\mathcal{M})$
inherits the structure of a module over $\mathcal{R}(\mathcal{D}_{X^{(1)}}^{(0)})$.
As $k$ is perfect we have an isomorphism of schemes $\sigma:X^{(1)}\to X$,
and so composing with this we can obtain from $\mathcal{M}^{\nabla}$
an $\mathcal{R}(\mathcal{D}_{X}^{(0)})$-module; this is the inverse
functor to $F^{*}$.
\end{rem}

To close out this subsection, we'd like to discuss an important tool
for studying $\mathcal{G}(\mathcal{D}_{X}^{(0,1)})$; namely, reducing
statements to their analogues in $\mathcal{R}(\mathcal{D}_{X}^{(1)})$
and $\overline{\mathcal{R}}(\mathcal{D}_{X}^{(0)})$. For any $\mathcal{M}\in\mathcal{G}(\mathcal{D}_{X}^{(0,1)})$,
we have a short exact sequence 
\[
0\to\text{ker}(f)\to\mathcal{M}\to\mathcal{M}/\text{ker}(f)\to0
\]
the module on the left is annihilated by $f$; i.e., it is a module
over $\mathcal{\overline{R}}(\mathcal{D}_{X}^{(0)})$ while the module
on the right is annihilated by $v$; i.e., it is a module over $\mathcal{R}(\mathcal{D}_{X}^{(1)})$.
This allows us to deduce many basic structural properties of $\mathcal{G}(\mathcal{D}_{X}^{(0,1)})$
from properties of $\mathcal{G}(\mathcal{R}(\mathcal{D}_{X}^{(1)}))$
and $\mathcal{G}(\mathcal{\overline{R}}(\mathcal{D}_{X}^{(0)}))$.
We now give the key technical input; to state it, we will abuse notation
slightly, so that if $\mathcal{M}^{\cdot}\in D(\mathcal{G}(\mathcal{R}(\mathcal{D}_{X}^{(1)})))$
(or in $D(\mathcal{G}(\mathcal{\overline{R}}(\mathcal{D}_{X}^{(0)})))$)
we will use the same symbol $\mathcal{M}^{\cdot}$ to denote its image
in $D(\mathcal{G}(\mathcal{D}_{X}^{(0,1)}))$
\begin{prop}
\label{prop:Sandwich!}1) Let $\mathcal{M}^{\cdot}\in D(\mathcal{G}(\mathcal{R}(\mathcal{D}_{X}^{(1)})))$.
Suppose $\mathcal{N}\in\mathcal{G}(\mathcal{D}_{X}^{(0,1),\text{opp}})$
is quasi-rigid. Then 
\[
\mathcal{N}\otimes_{\mathcal{D}_{X}^{(0,1)}}^{L}\mathcal{M}^{\cdot}\tilde{=}\mathcal{N}/v\otimes_{\mathcal{R}(\mathcal{D}_{X}^{(1)})}^{L}\mathcal{M}^{\cdot}
\]
Similarly, if $\mathcal{M}^{\cdot}\in D(\mathcal{G}(\mathcal{\overline{R}}(\mathcal{D}_{X}^{(0)})))$
we have 
\[
\mathcal{N}\otimes_{\mathcal{D}_{X}^{(0,1)}}^{L}\mathcal{M}^{\cdot}\tilde{=}\mathcal{N}/f\otimes_{\overline{\mathcal{R}}(\mathcal{D}_{X}^{(0)})}^{L}\mathcal{M}^{\cdot}
\]
The analogous statement holds for $\mathcal{N}\in\mathcal{G}(\mathcal{D}_{X}^{(0,1)})$
quasi-rigid and $\mathcal{M}^{\cdot}\in D(\mathcal{G}(\mathcal{R}(\mathcal{D}_{X}^{(1)})^{\text{opp}}))$,
resp. $\mathcal{M}^{\cdot}\in D(\mathcal{G}(\mathcal{\overline{R}}(\mathcal{D}_{X}^{(0)})^{\text{opp}}))$. 

2) As above, let $\mathcal{M}^{\cdot}\in D(\mathcal{G}(\mathcal{R}(\mathcal{D}_{X}^{(1)})))$
and suppose $\mathcal{N}$ is quasi-rigid. Then 
\[
R\underline{\mathcal{H}om}_{\mathcal{D}_{X}^{(0,1)}}(\mathcal{M}^{\cdot},\mathcal{N})\tilde{=}R\underline{\mathcal{H}om}_{\mathcal{R}(\mathcal{D}_{X}^{(1)})}(\mathcal{M}^{\cdot},\text{ker}(v:\mathcal{N}\to\mathcal{N}))
\]
Similarly, if $\mathcal{M}^{\cdot}\in D(\mathcal{G}(\mathcal{\overline{R}}(\mathcal{D}_{X}^{(0)})))$
then 
\[
R\underline{\mathcal{H}om}_{\mathcal{D}_{X}^{(0,1)}}(\mathcal{M},\mathcal{N})\tilde{=}R\underline{\mathcal{H}om}_{\overline{\mathcal{R}}(\mathcal{D}_{X}^{(0)})}(\mathcal{M},\text{ker}(f:\mathcal{N}\to\mathcal{N}))
\]
\end{prop}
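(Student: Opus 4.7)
My plan is to produce an explicit two-periodic free resolution of $\mathcal{R}(\mathcal{D}_X^{(1)})$ (and symmetrically of $\overline{\mathcal{R}}(\mathcal{D}_X^{(0)})$) as a left $\mathcal{D}_X^{(0,1)}$-module, and then to exploit the quasi-rigidity of $\mathcal{N}$ to collapse the resulting complex when computing derived tensor or Hom. Everything takes place in characteristic $p$, so $fv=0=vf$ in $\mathcal{D}_X^{(0,1)}$. By \corref{Local-coords-over-A=00005Bf,v=00005D}, $\mathcal{D}_X^{(0,1)}$ is locally free over $D(A)=A[f,v]/(fv)$; in particular, $\mathcal{D}_X^{(0,1)}$ is itself quasi-rigid as a module over itself, with $\ker(v\cdot)=\mathrm{im}(f\cdot)$ and $\ker(f\cdot)=\mathrm{im}(v\cdot)$. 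Since $f,v$ are central (they lie in $D(k)$), iterating these two identifications yields the two-periodic free graded left $\mathcal{D}_X^{(0,1)}$-module resolution
\[
\cdots\xrightarrow{f}\mathcal{D}_X^{(0,1)}(1)\xrightarrow{v}\mathcal{D}_X^{(0,1)}(0)\xrightarrow{f}\mathcal{D}_X^{(0,1)}(1)\xrightarrow{v}\mathcal{D}_X^{(0,1)}(0)\to\mathcal{R}(\mathcal{D}_X^{(1)})\to 0,
\]
and a completely symmetric resolution of $\overline{\mathcal{R}}(\mathcal{D}_X^{(0)})$ with the roles of $f$ and $v$ interchanged.

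For part (1), applying $\mathcal{N}\otimes_{\mathcal{D}_X^{(0,1)}}(-)$ to this resolution produces the periodic complex $\cdots\xrightarrow{f}\mathcal{N}(1)\xrightarrow{v}\mathcal{N}(0)\xrightarrow{f}\mathcal{N}(1)\xrightarrow{v}\mathcal{N}(0)$, which represents $\mathcal{N}\otimes^L_{\mathcal{D}_X^{(0,1)}}\mathcal{R}(\mathcal{D}_X^{(1)})$. Its cohomology in the rightmost degree is $\mathcal{N}/v$, while every higher cohomology is either $\ker(v\cdot)/\mathrm{im}(f\cdot)$ or $\ker(f\cdot)/\mathrm{im}(v\cdot)$ on $\mathcal{N}$, both of which vanish by quasi-rigidity. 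Hence $\mathcal{N}\otimes^L_{\mathcal{D}_X^{(0,1)}}\mathcal{R}(\mathcal{D}_X^{(1)})\simeq\mathcal{N}/v$, and associativity of derived tensor product gives
\[
\mathcal{N}\otimes^L_{\mathcal{D}_X^{(0,1)}}\mathcal{M}^{\cdot}\simeq\bigl(\mathcal{N}\otimes^L_{\mathcal{D}_X^{(0,1)}}\mathcal{R}(\mathcal{D}_X^{(1)})\bigr)\otimes^L_{\mathcal{R}(\mathcal{D}_X^{(1)})}\mathcal{M}^{\cdot}\simeq\mathcal{N}/v\otimes^L_{\mathcal{R}(\mathcal{D}_X^{(1)})}\mathcal{M}^{\cdot}.
\]
The $\overline{\mathcal{R}}$-variant and the opposite-sided variants go through identically.

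For part (2), I apply $R\underline{\mathcal{H}om}_{\mathcal{D}_X^{(0,1)}}(-,\mathcal{N})$ to the same resolution. Using $\underline{\mathcal{H}om}_{\mathcal{D}_X^{(0,1)}}(\mathcal{D}_X^{(0,1)}(a),\mathcal{N})=\mathcal{N}(-a)$ and the fact that the induced differentials are again left multiplication by $v$ and $f$ alternately, the resulting non-negatively graded complex has cohomology in degree zero equal to $\ker(v\colon\mathcal{N}\to\mathcal{N})$ and vanishing higher cohomology (once more by quasi-rigidity). Centrality of $v$ makes $\ker(v)$ a sub-$\mathcal{D}_X^{(0,1)}$-module of $\mathcal{N}$ on which $v$ acts trivially, so it is naturally an $\mathcal{R}(\mathcal{D}_X^{(1)})$-module. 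The Hom-tensor adjunction of \lemref{basic-hom-tensor}(1), applied to the $(\mathcal{D}_X^{(0,1)},\mathcal{R}(\mathcal{D}_X^{(1)}))$-bimodule $\mathcal{R}(\mathcal{D}_X^{(1)})$ (which is free, hence flat, as a right module over itself), combined with $\mathcal{R}(\mathcal{D}_X^{(1)})\otimes^L_{\mathcal{R}(\mathcal{D}_X^{(1)})}\mathcal{M}^{\cdot}\simeq\mathcal{M}^{\cdot}$, then yields
\[
R\underline{\mathcal{H}om}_{\mathcal{D}_X^{(0,1)}}(\mathcal{M}^{\cdot},\mathcal{N})\simeq R\underline{\mathcal{H}om}_{\mathcal{R}(\mathcal{D}_X^{(1)})}(\mathcal{M}^{\cdot},\ker(v)),
\]
and the $\overline{\mathcal{R}}$-version is identical.

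The entire argument is formal once the periodic Koszul-type resolution is set up; everything else is associativity of derived tensor and Hom-tensor adjunction. The only real input beyond \lemref{basic-hom-tensor} is the quasi-rigidity of $\mathcal{D}_X^{(0,1)}$ itself, which is immediate from the local coordinate description. The main bookkeeping hazard — not a substantive obstacle — is keeping the grading shifts consistent so that each differential is a genuine degree-zero map of graded modules.
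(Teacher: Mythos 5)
Your proof is correct and follows essentially the same route as the paper: both arguments reduce to the base change $\mathcal{N}\otimes_{\mathcal{D}_{X}^{(0,1)}}^{L}\mathcal{R}(\mathcal{D}_{X}^{(1)})\tilde{=}\mathcal{N}/v$ (resp. $\mathrm{ker}(v)$ for the Hom version), which is exactly the content of \lemref{Basic-Facts-on-Rigid}, and the paper even uses your two-periodic resolution $\cdots\to\mathcal{D}_{X}^{(0,1)}(-1)\xrightarrow{v}\mathcal{D}_{X}^{(0,1)}\xrightarrow{f}\mathcal{D}_{X}^{(0,1)}(-1)\xrightarrow{v}\mathcal{D}_{X}^{(0,1)}\to\mathcal{R}(\mathcal{D}_{X}^{(1)})$ verbatim in part (2). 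The only cosmetic difference is in part (1), where the paper resolves $\mathcal{N}$ by flats and observes that $(\mathcal{F}/v)^{\cdot}$ is a flat resolution of $\mathcal{N}/v$ over $\mathcal{R}(\mathcal{D}_{X}^{(1)})$, whereas you resolve $\mathcal{R}(\mathcal{D}_{X}^{(1)})$ and invoke derived associativity; these are mirror formulations of the same computation.
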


\begin{proof}
1) Choose a flat resolution $\mathcal{F}^{\cdot}\to\mathcal{N}$ (in
the category of right $\mathcal{D}_{X}^{(0,1)}$-gauges); concretely,
the terms of $\mathcal{F}^{\cdot}$ are direct sums of sheaves of
the form $j_{!}(\mathcal{D}_{X}^{(0,1)}(i)|_{U})$ (where $U\subset X$
is open and $j_{!}$ denotes extension by zero). Then $\mathcal{N}\otimes_{\mathcal{D}_{X}^{(0,1)}}^{L}\mathcal{M}^{\cdot}$
is represented by the complex 
\[
\mathcal{F}^{\cdot}\otimes_{\mathcal{D}_{X}^{(0,1)}}\mathcal{M}^{\cdot}\tilde{=}(\mathcal{F}/v)^{\cdot}\otimes_{\mathcal{R}(\mathcal{D}_{X}^{(1)})}\mathcal{M}^{\cdot}
\]
where the isomorphism follows from the fact that each term of $\mathcal{M}^{\cdot}$
is annihilated by $v$. On the other hand, $(\mathcal{F}/v)^{\cdot}$
is a complex, whose terms are direct sums of sheaves of the form $j_{!}(\mathcal{R}(\mathcal{D}_{X}^{(1)}(i)|_{U})$,
which computes $\mathcal{N}\otimes_{\mathcal{D}_{X}^{(0,1)}}^{L}\mathcal{R}(\mathcal{D}_{X}^{(1)})$.
However, we have $\mathcal{N}\otimes_{\mathcal{D}_{X}^{(0,1)}}^{L}\mathcal{R}(\mathcal{D}_{X}^{(1)})\tilde{=}\mathcal{N}/v$
by the assumption on $\mathcal{N}$ (c.f. \lemref{Basic-Facts-on-Rigid})
Therefore $(\mathcal{F}/v)^{\cdot}$ is a flat resolution (in the
category of graded right $\mathcal{R}(\mathcal{D}_{X}^{(1)})$-modules)
of $\mathcal{N}$, and so 
\[
(\mathcal{F}/v)^{\cdot}\otimes_{\mathcal{R}(\mathcal{D}_{X}^{(1)})}\mathcal{M}^{\cdot}\tilde{=}\mathcal{N}\otimes_{\mathcal{R}(\mathcal{D}_{X}^{(1)})}^{L}\mathcal{M}^{\cdot}
\]
as claimed. The case $\mathcal{M}^{\cdot}\in D(\mathcal{G}(\mathcal{\overline{R}}(\mathcal{D}_{X}^{(0)})))$
is essentially identical. 

2) Choose an injective resolution $\mathcal{N}\to\mathcal{I}^{\cdot}$.
Then we have that $R\underline{\mathcal{H}om}_{\mathcal{D}_{X}^{(0,1)}}(\mathcal{M}^{\cdot},\mathcal{N})$
is represented by
\[
\underline{\mathcal{H}om}_{\mathcal{D}_{X}^{(0,1)}}(\mathcal{M}^{\cdot},\mathcal{I}^{\cdot})=\underline{\mathcal{H}om}_{\mathcal{D}_{X}^{(0,1)}}(\mathcal{M}^{\cdot},\mathcal{I}^{\cdot,v=0})=\underline{\mathcal{H}om}_{\mathcal{R}(\mathcal{D}_{X}^{(1)})}(\mathcal{M}^{\cdot},\mathcal{I}^{\cdot,v=0})
\]
where $\mathcal{I}^{j,v=0}=\{m\in\mathcal{I}^{j}|vm=0\}$. From the
isomorphism $\underline{\mathcal{H}om}_{\mathcal{D}_{X}^{(0,1)}}(\mathcal{M},\mathcal{I}^{\cdot})=\underline{\mathcal{H}om}_{\mathcal{R}(\mathcal{D}_{X}^{(1)})}(\mathcal{M},\mathcal{I}^{\cdot,v=0})$
we see that the the functor $\mathcal{I}\to\mathcal{I}^{v=0}$ takes
injectives in $\mathcal{G}(\mathcal{D}_{X}^{(0,1)})$ to injectives
in $\mathcal{G}(\mathcal{R}(\mathcal{D}_{X}^{(1)}))$. On the other
hand, we have $\mathcal{I}^{j,v=0}=\underline{\mathcal{H}om}_{\mathcal{D}_{X}^{(0,1)}}(\mathcal{R}(\mathcal{D}_{X}^{(1)}),\mathcal{I}^{j,v=0})$.
Thus the functor $\underline{\mathcal{H}om}_{\mathcal{D}_{X}^{(0,1)}}(\mathcal{R}(\mathcal{D}_{X}^{(1)}),)$
takes injectives in $\mathcal{G}(\mathcal{D}_{X}^{(0,1)})$ to injectives
in $\mathcal{R}(\mathcal{D}_{X}^{(1)})$ and so 
\[
R\underline{\mathcal{H}om}_{\mathcal{D}_{X}^{(0,1)}}(\mathcal{M}^{\cdot},\mathcal{N})\tilde{=}R\underline{\mathcal{H}om}_{\mathcal{R}(\mathcal{D}_{X}^{(1)})}(\mathcal{M}^{\cdot},R\underline{\mathcal{H}om}_{\mathcal{D}_{X}^{(0,1)}}(\mathcal{R}(\mathcal{D}_{X}^{(1)}),\mathcal{N}))
\]
On the other hand, using the resolution 
\[
\cdots\to\mathcal{D}_{X}^{(0,1)}(-1)\xrightarrow{v}\mathcal{D}_{X}^{(0,1)}\xrightarrow{f}\mathcal{D}_{X}^{(0,1)}(-1)\xrightarrow{v}\mathcal{D}_{X}^{(0,1)}\to\mathcal{R}(\mathcal{D}_{X}^{(1)})
\]
one deduces 
\[
R\underline{\mathcal{H}om}_{\mathcal{D}_{X}^{(0,1)}}(\mathcal{R}(\mathcal{D}_{X}^{(1)}),\mathcal{N})\tilde{=}\text{ker}(v:\mathcal{N}\to\mathcal{N})
\]
and the first statement in $2)$ follows; the second statement is
proved in an identical fashion. 
\end{proof}
Here is a typical application: 
\begin{prop}
\label{prop:Quasi-rigid=00003Dfinite-homological}A quasicoherent
gauge $\mathcal{N}\in\mathcal{G}_{qcoh}(\mathcal{D}_{X}^{(0,1)})$
is quasi-rigid iff, for each open affine $U\subset X$, $\mathcal{N}(U)$
has finite projective dimension over $\mathcal{D}_{X}^{(0,1)}(U)$. 
\end{prop}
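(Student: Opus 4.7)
The pivotal algebraic fact is that $R := \mathcal{D}_X^{(0,1)}(U)$ is free as a graded module over $D(k)=k[f,v]/(fv)$: by \corref{Local-coords-over-A=00005Bf,v=00005D}, $\mathcal{D}_X^{(0,1)}$ is free over $D(\mathcal{A})=\mathcal{A}\otimes_k D(k)$, and $\mathcal{A}$ is free over $k$, so the claim follows. This flatness gives, for any $R$-module $M$, natural isomorphisms
\[
M\otimes_R^L(R/v)\;\tilde{\to}\; M\otimes_{D(k)}^L k[f],\qquad M\otimes_R^L(R/f)\;\tilde{\to}\; M\otimes_{D(k)}^L k[v].
\]

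For the direction ``finite projective dimension implies quasi-rigid,'' I would apply the $2$-periodic minimal resolution of $k[f]$ (respectively $k[v]$) over $D(k)$ that was recalled in the proof of \lemref{Basic-Facts-on-Rigid}: it shows that for $i\geq 1$, the groups $\mathrm{Tor}^{D(k)}_i(M,k[f])$ alternate between $\ker(v|_M)/\mathrm{im}(f|_M)$ and $\ker(f|_M)/\mathrm{im}(v|_M)$. If $M=\mathcal{N}(U)$ has finite projective dimension over $R$, the identification above forces these groups to vanish for $i\gg 0$, hence for every $i\geq 1$ by $2$-periodicity; the symmetric argument with $k[v]$ supplies the remaining identities, giving exactly quasi-rigidity.

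For the converse I would take a graded free resolution $\mathcal{P}^\cdot\twoheadrightarrow\mathcal{N}$ over $\mathcal{D}_X^{(0,1)}$ and show it may be chosen of finite length. A dimension-shifting argument in Tor shows that each syzygy $\mathcal{K}^n$ remains quasi-rigid, and the quasi-rigidity of $\mathcal{N}$ implies that $\mathcal{P}^\cdot/v\twoheadrightarrow\mathcal{N}/v$ is an honest graded free resolution over $R/v=\mathcal{R}(\mathcal{D}_X^{(1)})$ (the higher Tors being the only obstruction). Since $\mathcal{R}(\mathcal{D}_X^{(1)})$ has finite global dimension by \thmref{Filtered-Frobenius}, the syzygy $\mathcal{K}^n/v$ becomes projective over $R/v$ for $n\gg 0$; the symmetric argument, using the finite homological dimension of $\overline{\mathcal{R}}(\mathcal{D}_X^{(0)})$ established in the preceding proposition, gives $\mathcal{K}^n/f$ projective over $R/f$ for $n\gg 0$.

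The remaining step, which I expect to be the main obstacle, is the claim that a quasi-rigid $R$-module $\mathcal{K}$ whose reductions $\mathcal{K}/v$ and $\mathcal{K}/f$ are projective over $R/v$ and $R/f$ respectively must itself be free over $R$. For this I would exploit the short exact sequence of $R$-bimodules
\[
0\to R\to R/v\oplus R/f\to R/(f,v)\to 0,
\]
whose exactness reduces via the $D(k)$-freeness of $R$ to the identity $vD(k)\cap fD(k)=0$. Tensoring with $\mathcal{K}$ and using quasi-rigidity to kill all higher Tors against $R/v$ and $R/f$, one obtains a short exact sequence realising $\mathcal{K}$ as the fibre product $\mathcal{K}/v\times_{\mathcal{K}/(f,v)}\mathcal{K}/f$; a graded Nakayama / minimal lifting argument at the graded maximal ideal $(f,v)$ --- using also that $R/(f,v)\cong F^{*}\mathrm{Sym}_{\mathcal{O}_X}(\mathcal{T}_X)(U)$ has finite global dimension --- then upgrades a free presentation of $\mathcal{K}/(f,v)\mathcal{K}$ over $R/(f,v)$ to a free cover of $\mathcal{K}$ whose kernel is annihilated modulo $(f,v)$ and hence (by the fibre product description and quasi-rigidity) vanishes.
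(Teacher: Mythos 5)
Your direction ``finite projective dimension $\Rightarrow$ quasi-rigid'' is exactly the paper's argument: $R=\mathcal{D}_X^{(0,1)}(U)$ is free over $D(k)$, so $\mathrm{Tor}_i^{R}(R/v,M)\cong\mathrm{Tor}_i^{D(k)}(k[f],M)$, and the $2$-periodic resolution from \lemref{Basic-Facts-on-Rigid} forces quasi-rigidity once these groups vanish for $i\gg0$. The genuine gap is in the converse, precisely at the step you yourself flag as the main obstacle. Two concrete problems. First, the fibre-product description $\mathcal{K}\cong\mathcal{K}/v\times_{\mathcal{K}/(f,v)}\mathcal{K}/f$ requires $\mathcal{K}\to\mathcal{K}/v\oplus\mathcal{K}/f$ to be injective, i.e.\ $v\mathcal{K}\cap f\mathcal{K}=0$; but for a quasi-rigid module one has $v\mathcal{K}\cap f\mathcal{K}=\ker(f)\cap\ker(v)$, which vanishes only for \emph{rigid} modules. (This is repairable --- your syzygies embed in free modules, and $\mathcal{D}_X^{(0,1)}$ is rigid by \lemref{Standard-is-rigid}, so they are rigid --- but it must be said.) Second, and more seriously, ``graded Nakayama at $(f,v)$'' is not available: a quasicoherent gauge is $\mathbb{Z}$-graded with $\deg v=-1$ and is in general unbounded in both directions, so $(f,v)\mathcal{K}=\mathcal{K}$ can happen for $\mathcal{K}\neq0$ (already $k[f,f^{-1}]$ with $v=0$ is a rigid $D(k)$-module with $(f,v)M=M$). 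Your minimal-lifting argument therefore does not get started, and it also aims at \emph{freeness} of $\mathcal{K}$, which cannot follow when $\mathcal{K}/v$ and $\mathcal{K}/f$ are merely projective. What would actually close your argument is Milnor patching over the Cartesian square of rings $R\tilde{\to}R/v\times_{R/(f,v)}R/f$ (both maps to the corner are surjective), which manufactures a projective $R$-module from projective reductions plus a gluing isomorphism; even then one must check the patching lemma for non-finitely-generated graded projectives, since $\mathcal{N}$ is only quasicoherent.

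For contrast, the paper's forward direction constructs no resolution of $\mathcal{N}$ at all. It bounds $\mathcal{N}\otimes^L_{\mathcal{D}_X^{(0,1)}}\mathcal{M}$ for an arbitrary test module $\mathcal{M}$ by splitting $\mathcal{M}$ (not $\mathcal{N}$) along $0\to\ker(f)\to\mathcal{M}\to\mathcal{M}/\ker(f)\to0$ and applying \propref{Sandwich!}, which converts the two outer tensor products into tensor products over $\mathcal{R}(\mathcal{D}_X^{(1)})$ and $\overline{\mathcal{R}}(\mathcal{D}_X^{(0)})$, both of finite homological dimension. This gives a uniform bound on Tor-dimension in two lines and sidesteps the patching problem entirely; I would recommend rebuilding your forward direction on that template.
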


\begin{proof}
Let $\mathcal{N}$ be quasi-rigid. Then for any quasicoherent $\mathcal{M}\in\mathcal{G}(\mathcal{D}_{X}^{(0,1),\text{opp}})$,
we have the short exact sequence 
\[
0\to\text{ker}(f)\to\mathcal{M}\to\mathcal{M}/\text{ker}(f)\to0
\]
which yields the distinguished triangle 
\[
0\to\mathcal{N}\otimes_{\mathcal{D}_{X}^{(0,1)}}^{L}\text{ker}(f)\to\mathcal{N}\otimes_{\mathcal{D}_{X}^{(0,1)}}^{L}\mathcal{M}\to\mathcal{N}\otimes_{\mathcal{D}_{X}^{(0,1)}}^{L}\mathcal{M}/\text{ker}(f)\to0
\]
Applying the previous result; we see that the outer two tensor products
are isomorphic to tensor products over $\mathcal{R}(\mathcal{D}_{X}^{(1)})$
and $\mathcal{\overline{R}}(\mathcal{D}_{X}^{(0)})$, respectively.
As these algebras have finite homological dimension (the dimension
is $2\text{dim}(X)+1$, in fact) over any open affine, we see that
$\mathcal{N}\otimes_{\mathcal{D}_{X}^{(0,1)}}^{L}\mathcal{M}$ is
a bounded complex; since this is true for all $\mathcal{M}$ we obtain
the forward implication. For the reverse, note that by \lemref{Basic-Facts-on-Rigid},
the functor $\mathcal{M}\to\mathcal{R}(\mathcal{D}_{X}^{(1)})\otimes_{\mathcal{D}_{X}^{(0,1)}}\mathcal{M}\tilde{\to}k[f]\otimes_{D(k)}\mathcal{M}$
has infinite homological dimension when $\mathcal{M}$ is not quasi-rigid. 
\end{proof}

\subsection{\label{subsec:Frobenius-Descent,--Gauges}Frobenius Descent, $F^{-1}$-Gauges}

In this section we recall Berthelot's theory of Frobenius descent
for $\mathcal{D}$-modules and give the definition of an $F^{-1}$-gauge
over a higher dimensional base. 

We begin by briefly recalling Berthelot's theory of the Frobenius
action in mixed characteristic. This is developed using the theory
of (mixed) divided powers in \cite{key-2}; for the reader's convenience
we will recall a simple description in the case of interest to us
(this point of view is emphasized in \cite{key-48}). 

First suppose that $\mathfrak{X}$ admits an endomorphism $F$ which
lifts the Frobenius on $X$, and whose restriction to $W(k)$ agrees
with the Witt-vector Frobenius on $W(k)$. This is equivalent to giving
a morphism $\mathfrak{X}\to\mathfrak{X}^{(1)}$ whose composition
with the natural map $\mathfrak{X}^{(1)}\to\mathfrak{X}$ agrees with
$F$ (here, $\mathfrak{X}^{(1)}$ denotes the first Frobenius twist
of $\mathfrak{X}$ over $W(k)$); we will also denote the induced
morphism $\mathfrak{X}\to\mathfrak{X}^{(1)}$ by $F$. On the underlying
topological spaces (namely $X$ and $X^{(1)}$), this map is a bijection,
and we shall consistently consider $\mathcal{O}_{\mathfrak{X}^{(1)}}$
as a sheaf of rings on $X$, equipped with an injective map of sheaves
of algebras $F^{\#}:\mathcal{O}_{\mathfrak{X}^{(1)}}\to\mathcal{O}_{\mathfrak{X}}$
which makes $\mathcal{O}_{\mathfrak{X}}$ into a finite $\mathcal{O}_{\mathfrak{X}^{(1)}}$-module.

Now consider the sheaf $\mathcal{H}om_{W(k)}(\mathcal{O}_{\mathfrak{X}^{(1)}},\mathcal{O}_{\mathfrak{X}})$.
For any $i\geq0$, this is a $(\mathcal{D}_{\mathfrak{X}}^{(i+1)},\mathcal{D}_{\mathfrak{X}^{(1)}}^{(i)})$
bi-module (via the actions of these rings of differential operators
on $\mathcal{O}_{\mathfrak{X}}$ and $\mathcal{O}_{\mathfrak{X}^{(1)}}$,
respectively). Then we have the 
\begin{thm}
\label{thm:Berthelot-Frob}(Berthelot) The $(\mathcal{\widehat{D}}_{\mathfrak{X}}^{(i+1)},\mathcal{\widehat{D}}_{\mathfrak{X}^{(1)}}^{(i)})$
bi-sub-module of $\mathcal{H}om_{W(k)}(\mathcal{O}_{\mathfrak{X}^{(1)}},\mathcal{O}_{\mathfrak{X}})$
locally generated by $F^{\#}$ is isomorphic to $\mathcal{O}_{\mathfrak{X}}\otimes_{\mathcal{O}_{\mathfrak{X}^{(1)}}}\mathcal{\widehat{D}}_{\mathfrak{X}^{(1)}}^{(i)}$,
via the map 
\[
(f,\Phi)\to f\circ F^{\#}\circ\Phi\in\mathcal{H}om_{W(k)}(\mathcal{O}_{\mathfrak{X}^{(1)}},\mathcal{O}_{\mathfrak{X}})
\]
for local sections $f\in\mathcal{O}_{\mathfrak{X}}$ and $\Phi\in\mathcal{\widehat{D}}_{\mathfrak{X}^{(1)}}^{(i)}$.
This gives the sheaf $\mathcal{O}_{\mathfrak{X}}\otimes_{\mathcal{O}_{\mathfrak{X}^{(1)}}}\mathcal{\widehat{D}}_{\mathfrak{X}^{(1)}}^{(i)}=F^{*}\mathcal{\widehat{D}}_{\mathfrak{X}^{(1)}}^{(i)}$
the structure of a $(\mathcal{\widehat{D}}_{\mathfrak{X}}^{(i+1)},\mathcal{\widehat{D}}_{\mathfrak{X}^{(1)}}^{(i)})$
bimodule. The associated functor, denoted $F^{*}$, 
\[
\mathcal{M}\to F^{*}\mathcal{\widehat{D}}_{\mathfrak{X}^{(1)}}^{(i)}\otimes_{\mathcal{D}_{\mathfrak{X}^{(1)}}^{(i)}}\mathcal{M}\tilde{=}F^{*}\mathcal{M}
\]
is an equivalence of categories from $\mathcal{\widehat{D}}_{\mathfrak{X}^{(1)}}^{(i)}-\text{mod}\to\mathcal{\widehat{D}}_{\mathfrak{X}}^{(i+1)}-\text{mod}$
; which induces an equivalence $\text{Coh}(\mathcal{\widehat{D}}_{\mathfrak{X}^{(1)}}^{(i)})\to\text{Coh}(\mathcal{\widehat{D}}_{\mathfrak{X}}^{(i+1)})$.
In particular, the map $\widehat{\mathcal{D}}_{\mathfrak{X}}^{(i+1)}\to\mathcal{E}nd_{\mathcal{\widehat{D}}_{\mathfrak{X}^{(1)}}^{(i),\text{op}}}(F^{*}\mathcal{\widehat{D}}_{\mathfrak{X}^{(1)}}^{(i)})$
is an isomorphism of sheaves of algebras. 

As $W(k)$ is perfect, we have an isomorphism $\mathfrak{X}^{(1)}\tilde{\to}\mathfrak{X}$;
and we may therefore regard $F^{*}$ as being an equivalence of categories
from $\mathcal{\widehat{D}}_{\mathfrak{X}}^{(i)}-\text{mod}$ to $\mathcal{\widehat{D}}_{\mathfrak{X}}^{(i+1)}-\text{mod}$
\end{thm}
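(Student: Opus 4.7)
The plan is to reduce the statement to a local computation in coordinates and then bootstrap from the positive-characteristic case we have already established, using cohomological completeness and Nakayama's lemma. Since all assertions are local, I would pass to an affine open $\mathfrak{X} = \text{Specf}(\mathcal{A})$ admitting étale coordinates $\{x_1,\ldots,x_n\}$ with derivations $\{\partial_1,\ldots,\partial_n\}$; after a further affine localization one may arrange the Frobenius lift so that $F^{\#}(x_j^{(1)}) = x_j^p$ for each $j$.

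The first step is to check that the candidate map $(f,\Phi) \mapsto f\circ F^{\#}\circ\Phi$ lands in a sub-bimodule. The key local computation is the Leibniz-type identity
\[
\partial_j\circ F^{\#} \;=\; p\,x_j^{p-1}\cdot F^{\#}\circ\partial_j^{(1)}
\]
together with its analogues for the divided-power operators $\partial_j^{\langle p^m\rangle}$, which one verifies inductively on generators and then extends by the Leibniz rule. These identities show both that the image is stable under the left action of $\widehat{\mathcal{D}}_{\mathfrak{X}}^{(i+1)}$ (so the image is exactly the bi-submodule locally generated by $F^{\#}$) and that the natural map $F^{*}\widehat{\mathcal{D}}_{\mathfrak{X}^{(1)}}^{(i)} \to \mathcal{H}om_{W(k)}(\mathcal{O}_{\mathfrak{X}^{(1)}}, \mathcal{O}_{\mathfrak{X}})$ is a morphism of $(\widehat{\mathcal{D}}_{\mathfrak{X}}^{(i+1)}, \widehat{\mathcal{D}}_{\mathfrak{X}^{(1)}}^{(i)})$-bimodules.

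Next I would establish that the structural maps are isomorphisms. Both the candidate isomorphism $F^{*}\widehat{\mathcal{D}}_{\mathfrak{X}^{(1)}}^{(i)} \to \mathcal{H}om_{W(k)}(\mathcal{O}_{\mathfrak{X}^{(1)}}, \mathcal{O}_{\mathfrak{X}})_{F^{\#}}$ and the algebra map $\widehat{\mathcal{D}}_{\mathfrak{X}}^{(i+1)} \to \mathcal{E}nd_{\widehat{\mathcal{D}}_{\mathfrak{X}^{(1)}}^{(i),\text{op}}}(F^{*}\widehat{\mathcal{D}}_{\mathfrak{X}^{(1)}}^{(i)})$ are morphisms between $p$-torsion-free, $p$-adically complete sheaves, so by \corref{Nakayama} it suffices to verify that their reductions mod $p$ are isomorphisms. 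For the level-$0$ case, this is exactly the content of \propref{Basic-F^*-over-k} combined with \thmref{Filtered-Frobenius}; for higher levels one runs the identical local-coordinate argument, using \corref{Each-D^(i)-is-free} to exhibit the natural basis of $\widehat{\mathcal{D}}_{\mathfrak{X}}^{(i+1)}$ in terms of the divided-power operators up to order $p^{i+1}$ and matching them against the basis of $F^{*}\widehat{\mathcal{D}}_{\mathfrak{X}^{(1)}}^{(i)}$ obtained from the basis of $\widehat{\mathcal{D}}_{\mathfrak{X}^{(1)}}^{(i)}$ via the Frobenius.

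Once $F^{*}\widehat{\mathcal{D}}_{\mathfrak{X}^{(1)}}^{(i)}$ is exhibited as a bimodule which is locally free of finite rank on the right and whose right-endomorphism algebra is $\widehat{\mathcal{D}}_{\mathfrak{X}}^{(i+1)}$, the equivalence of module categories is a formal consequence of standard Morita theory; the preservation of coherence follows from the right-coherence of $F^{*}\widehat{\mathcal{D}}_{\mathfrak{X}^{(1)}}^{(i)}$ over $\widehat{\mathcal{D}}_{\mathfrak{X}^{(1)}}^{(i)}$. The main obstacle is the combinatorial bookkeeping for the divided-power operators at level $i+1$; this is essentially Berthelot's original calculation from \cite{key-2}, which one can either reproduce in the coordinate patch or invoke directly as a black box.
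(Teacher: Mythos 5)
First, note that the paper does not actually prove this theorem: it is quoted from Berthelot (\cite{key-2}, section 2.3), and the paper only remarks afterwards that the first assertion "can be proved quite directly" in coordinates with $F(t_i)=t_i^p$, and that the algebra isomorphism $\widehat{\mathcal{D}}_{\mathfrak{X}}^{(1)}\to\mathcal{E}nd_{\widehat{\mathcal{D}}_{\mathfrak{X}}^{(0),\text{op}}}(F^{*}\widehat{\mathcal{D}}_{\mathfrak{X}}^{(0)})$ can be checked by reduction mod $p$ since both sides are $p$-adically complete and $p$-torsion-free (\remref{Compare-With-Berthelot}). Your treatment of that second map is exactly this argument, and your use of \propref{Basic-F^*-over-k} plus Morita theory for the equivalence of categories is sound (for general $i$ the char-$p$ input at higher level is not in the paper, but deferring that to Berthelot is reasonable).

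There is, however, a genuine gap in your treatment of the \emph{first} structural map. You propose to prove that $(f,\Phi)\mapsto f\circ F^{\#}\circ\Phi$ is an isomorphism onto the sub-bimodule generated by $F^{\#}$ by reducing mod $p$ and applying \corref{Nakayama}. This cannot work, because the reduction mod $p$ of the map $F^{*}\widehat{\mathcal{D}}_{\mathfrak{X}^{(1)}}^{(i)}\to\mathcal{H}om_{W(k)}(\mathcal{O}_{\mathfrak{X}^{(1)}},\mathcal{O}_{\mathfrak{X}})$ is \emph{not} injective: already for $i=0$, the element $1\otimes(\partial_j^{(1)})^{p}$ is nonzero in $F^{*}\widehat{\mathcal{D}}_{\mathfrak{X}^{(1)}}^{(0)}/p$ (since $(\partial^{(1)}_j)^p/p\notin\widehat{\mathcal{D}}^{(0)}$), whereas $F^{\#}\circ(\partial_j^{(1)})^{p}$ takes values in $p\cdot\mathcal{O}_{\mathfrak{X}}$ (the coefficient $m(m-1)\cdots(m-p+1)=p!\binom{m}{p}$ is divisible by $p$) and hence vanishes in $\mathcal{H}om_{W(k)}(\mathcal{O}_{\mathfrak{X}^{(1)}},\mathcal{O}_{\mathfrak{X}})/p$. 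This is precisely why, in characteristic $p$, the bimodule $F^{*}\mathcal{D}_{X}^{(0)}$ is realized in \propref{Basic-F^*-over-k} as the \emph{quotient} $\mathcal{D}_{X}^{(1)}/\mathcal{D}_{X}^{(1)}\mathcal{J}$ rather than as a submodule of $\mathcal{H}om_{k}(\mathcal{O}_{X^{(1)}},\mathcal{O}_{X})$. Replacing the target by the submodule $N$ generated by $F^{\#}$ does not rescue the argument, since identifying $N/pN$ presupposes the structure of $N$ that you are trying to establish. The injectivity must therefore be proved directly over $W(k)$: in coordinates one evaluates $\sum_{I}f_{I}\,F^{\#}\circ(\partial^{(1)})^{I}$ on monomials $x^{(1),M}$ and uses that the resulting coefficients are nonzero in the $p$-torsion-free ring $\mathcal{O}_{\mathfrak{X}}$, together with a triangularity argument in $M$. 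This is the "direct" computation the paper alludes to, and it is the one step your reduction-mod-$p$ scheme does not cover.
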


This is proved in \cite{key-2}, section 2.3. In fact, in the case
where $\mathfrak{X}=\text{Specf}(\mathcal{A})$ is affine and admits
etale local coordinates, and the map $F$ acts on coordinates $\{t_{i}\}_{i=1}^{n}$
via $F(t_{i})=t_{i}^{p}$, then the first assertion can be proved
quite directly. The second is the theory of \cite{key-2}. Note that
this description implies that the reduction mod $p$ of the bimodule
$F^{*}\mathcal{\widehat{D}}_{\mathfrak{X}}^{(0)}$ agrees with the
bimodule $F^{*}\mathcal{D}_{X}^{(0)}$ constructed in \propref{Basic-F^*-over-k}. 
\begin{rem}
\label{rem:Compare-With-Berthelot}1) Let $\mathcal{D}_{X,\mathbf{Ber}}^{(1)}$
denote Berthelot's ring of divided power differential operators of
level $1$ on $X$. Then the Frobenius descent theory of the previous
theorem gives an isomorphism 
\[
\mathcal{D}_{X,\text{Ber}}^{(1)}\to\mathcal{E}nd_{\mathcal{D}_{X}^{(0),\text{op}}}(F^{*}\mathcal{D}_{X}^{(0)})
\]
It follows that $\mathcal{D}_{X,\mathbf{Ber}}^{(1)}\tilde{=}\mathcal{D}_{X}^{(1)}$
even if $X$ is not liftable to $W(k)$. 

2) The Frobenius descent over $X$ implies the Frobenius descent over
$\mathfrak{X}$, once one constructs the $(\mathcal{\widehat{D}}_{\mathfrak{X}}^{(1)},\mathcal{\widehat{D}}_{\mathfrak{X}}^{(0)})$
bimodule structure on $F^{*}\widehat{\mathcal{D}}_{\mathfrak{X}}^{(0)}$.
Indeed, this structure yields a morphism 
\[
\widehat{\mathcal{D}}_{\mathfrak{X}}^{(1)}\to\mathcal{E}nd_{\mathcal{\widehat{D}}_{\mathfrak{X}}^{(0),\text{op}}}(F^{*}\mathcal{\widehat{D}}_{\mathfrak{X}}^{(0)})
\]
as both sides are $p$-adically complete and $p$-torsion-free, to
check that this map is an isomorphism one simply has to reduce mod
$p$. 
\end{rem}

Now let us return to a general $\mathfrak{X}$. It is an fundamental
fact that Frobenius descent doesn't really depend on the existence
of the lift $F$:
\begin{thm}
(Berthelot) Suppose $F_{1},F_{2}$ are two lifts of Frobenius on $\mathfrak{X}$.
Then there is an isomorphism of bimodules $\sigma_{1,2}:F_{1}^{*}\mathcal{D}_{\mathfrak{X}}^{(i)}\tilde{\to}F_{2}^{*}\mathcal{D}_{\mathfrak{X}}^{(i)}$.
If $F_{3}$ is a third lift, we have $\sigma_{2,3}\circ\sigma_{12}=\sigma_{1,3}$. 
\end{thm}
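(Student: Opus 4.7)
The plan is to construct $\sigma_{1,2}$ locally via a Taylor-type formula and then verify that the construction globalizes and satisfies the cocycle identity. The key observation is that by \thmref{Berthelot-Frob}, both $F_1^*\widehat{\mathcal{D}}_{\mathfrak{X}^{(1)}}^{(i)}$ and $F_2^*\widehat{\mathcal{D}}_{\mathfrak{X}^{(1)}}^{(i)}$ are identified with sub-bimodules of $\mathcal{H}om_{W(k)}(\mathcal{O}_{\mathfrak{X}^{(1)}}, \mathcal{O}_{\mathfrak{X}})$, locally generated (as $\mathcal{O}_{\mathfrak{X}}$-left, $\widehat{\mathcal{D}}_{\mathfrak{X}^{(1)}}^{(i)}$-right modules) by the ring maps $F_1^\#$ and $F_2^\#$ respectively. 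So my plan is to show that these two sub-bimodules in fact coincide, and take $\sigma_{1,2}$ to be the restriction to them of the identity map on $\mathcal{H}om_{W(k)}(\mathcal{O}_{\mathfrak{X}^{(1)}}, \mathcal{O}_{\mathfrak{X}})$.

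First I would work locally on $\mathfrak{X} = \text{Specf}(\mathcal{A})$ with etale coordinates $\{t_j\}_{j=1}^n$ and derivations $\{\partial_j\}$. Since $F_1$ and $F_2$ both lift the absolute Frobenius on $X$, we have $\xi_j := F_2^\#(t_j) - F_1^\#(t_j) \in p\mathcal{A}$. I would then write down the Taylor expansion
\[
F_2^\#(a) = \sum_{I \geq 0} \xi^I \cdot F_1^\#\bigl(\partial^{\langle I \rangle}(a)\bigr)
\]
where $\partial^{\langle I \rangle}$ denotes Berthelot's level-$i$ divided-power operators on $\mathfrak{X}^{(1)}$. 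The content is to verify that this series converges $p$-adically in $\mathcal{H}om_{W(k)}(\mathcal{O}_{\mathfrak{X}^{(1)}}, \mathcal{O}_{\mathfrak{X}})$ with integral terms, and therefore exhibits $F_2^\#$ as an element of the sub-bimodule generated by $F_1^\#$; by symmetry the two sub-bimodules coincide, yielding the local isomorphism.

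Next I would globalize: although the individual terms of the Taylor expansion depend on the choice of coordinates, the resulting sub-bimodule of $\mathcal{H}om_{W(k)}(\mathcal{O}_{\mathfrak{X}^{(1)}}, \mathcal{O}_{\mathfrak{X}})$ does not, so the local identifications patch canonically to give a global $\sigma_{1,2}$. The cocycle identity $\sigma_{2,3} \circ \sigma_{1,2} = \sigma_{1,3}$ will then follow by a direct algebraic manipulation of Taylor series: substituting the expansion of $F_3^\#$ in terms of $F_2^\#$ into the expansion of $F_2^\#$ in terms of $F_1^\#$ and reindexing should reproduce the direct expansion of $F_3^\#$ in terms of $F_1^\#$. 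Alternatively, since $\sigma_{1,2}$ is literally the identity on the ambient Hom-bimodule restricted to the common sub-bimodule, the cocycle is automatic.

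The hard part will be the $p$-adic convergence and integrality of the Taylor expansion. Each $\xi_j \in p\mathcal{A}$ contributes a factor of $p^{|I|}$ to $\xi^I$, while the operator $\partial^{\langle I \rangle}$ has a factorial denominator whose $p$-adic valuation grows like $|I|/(p-1)$; for the level-$i$ PD-structure to absorb these denominators once $F_1^\#$ is pre-composed, one must exploit precisely the shift to level $i+1$ on the target. This bookkeeping between source-level $i$, target-level $i+1$, and the Frobenius twist absorbing one factor of $p$ per coordinate is exactly what makes the accounting work, and it is no accident: it is the same asymmetry that forces the level shift in \thmref{Berthelot-Frob}. I expect this valuation estimate, carried out uniformly in $I$, to be where most of the technical effort resides.
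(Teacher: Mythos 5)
The paper does not prove this statement at all --- it is quoted from Berthelot (\cite{key-2}, th\'eor\`eme 2.2.5; see also \cite{key-21}, corollary 13.3.8) --- and your sketch is essentially the argument of those references: realize $F_{1}^{*}\widehat{\mathcal{D}}_{\mathfrak{X}^{(1)}}^{(i)}$ and $F_{2}^{*}\widehat{\mathcal{D}}_{\mathfrak{X}^{(1)}}^{(i)}$ as the sub-bimodules of $\mathcal{H}om_{W(k)}(\mathcal{O}_{\mathfrak{X}^{(1)}},\mathcal{O}_{\mathfrak{X}})$ generated by $F_{1}^{\#}$ and $F_{2}^{\#}$, show by Taylor expansion that they coincide, and observe that the cocycle identity is then automatic. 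Two points to fix when you carry this out. First, your displayed formula has the wrong coefficients: writing $I=p^{i}q_{I}+r_{I}$ with $0\leq r_{I}<p^{i}$ (componentwise), the level-$i$ generators are $\partial^{\langle I\rangle}=q_{I}!\,\partial^{[I]}$, so the Taylor expansion reads $F_{2}^{\#}(a)=\sum_{I}(\xi^{I}/q_{I}!)\,F_{1}^{\#}(\partial^{\langle I\rangle}a)$; the estimate you must prove is that $v_{p}(\xi^{I}/q_{I}!)\geq|I|-v_{p}(q_{I}!)$ tends to infinity, which is immediate for $i\geq1$ or $p\geq3$ but is genuinely delicate for $p=2$, $i=0$ (there the bound degenerates to the digit sum of $I$, and one must argue more carefully). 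Second, the level shift $i\mapsto i+1$ on the target is not what absorbs the denominators here: the comparison of two lifts uses only the \emph{right} $\widehat{\mathcal{D}}_{\mathfrak{X}^{(1)}}^{(i)}$-module structure together with $\xi_{j}\in p\mathcal{A}$; the left $\widehat{\mathcal{D}}_{\mathfrak{X}}^{(i+1)}$-action is a separate verification (already part of \thmref{Berthelot-Frob}) and plays no role in the convergence of the Taylor series.
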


This is \cite{key-2}, theorem 2.2.5; c.f. also \cite{key-21}, corollary
13.3.8. As lifts of Frobenius always exist locally, this implies that
there is a globally defined bimodule $F^{*}\mathcal{\widehat{D}}_{\mathfrak{X}}^{(i)}$,
which induces an equivalence $F^{*}:\mathcal{\widehat{D}}_{\mathfrak{X}}^{(i)}-\text{mod}\to\mathcal{\widehat{D}}_{\mathfrak{X}}^{(i+1)}-\text{mod}$;
we use the same letter to denote the derived equivalence $F^{*}:D(\mathcal{\widehat{D}}_{\mathfrak{X}}^{(i)}-\text{mod})\to D(\mathcal{\widehat{D}}_{\mathfrak{X}}^{(i+1)}-\text{mod})$. 

The equivalence of categories $F^{*}$ has many remarkable properties,
in particular its compatibility with the push-forward, pullback, and
duality functors for $\mathcal{D}$-modules; we will recall these
properties in the relevant sections below. 

It will also be useful to recall some basic facts about the right-handed
version of the equivalence. Recall that we have equivalences of categories
$\mathcal{M}\to\omega_{\mathfrak{X}}\otimes_{\mathcal{O}_{\mathfrak{X}}}\mathcal{M}$
from $\mathcal{\widehat{D}}_{\mathfrak{X}}^{(i)}-\text{mod}$ to $\mathcal{\widehat{D}}_{\mathfrak{X}}^{(i),\text{op}}-\text{mod}$
for any $i$ (c.f, \cite{key-1}, or \propref{Left-Right-Swap} below).
This implies that there is a functor $\mathcal{M}\to F^{!}\mathcal{M}:=\omega_{\mathfrak{X}}\otimes_{\mathcal{O}_{\mathfrak{X}}}F^{*}(\omega_{\mathfrak{X}}^{-1}\otimes_{\mathcal{O}_{\mathfrak{X}}}\mathcal{M})$
which is an equivalence from $\mathcal{\widehat{D}}_{\mathfrak{X}}^{(i),\text{op}}-\text{mod}$
to $\mathcal{\widehat{D}}_{\mathfrak{X}}^{(i+1),\text{op}}-\text{mod}$.
By basic Grothendieck duality theory (c.f. \cite{key-2}, 2.4.1),
there is an isomorphism 
\[
F^{!}\mathcal{M}\tilde{=}F^{-1}\mathcal{H}om_{\mathcal{O}_{\mathfrak{X}}}(F_{*}\mathcal{O}_{\mathfrak{X}},\mathcal{M})
\]
of sheaves of $\mathcal{O}_{\mathfrak{X}}$-modules (this justifies
the notation). If we put $\mathcal{M}=\mathcal{\widehat{D}}_{\mathfrak{X}}^{(i)}$
this isomorphism exhibits the left $\mathcal{\widehat{D}}_{\mathfrak{X}}^{(i)}$-module
structure on $F^{!}\mathcal{\widehat{D}}_{\mathfrak{X}}^{(i)}$. 
\begin{prop}
\label{prop:F^*F^!}1) The equivalence of categories $F^{!}:\mathcal{\widehat{D}}_{\mathfrak{X}}^{(i),\text{op}}-\text{mod}\to\mathcal{\widehat{D}}_{\mathfrak{X}}^{(i+1),\text{op}}-\text{mod}$
is given by $\mathcal{M}\to\mathcal{M}\otimes_{\mathcal{\widehat{D}}_{\mathfrak{X}}^{(i)}}F^{!}\mathcal{\widehat{D}}_{\mathfrak{X}}^{(i)}$. 

2) There are isomorphisms of $(\mathcal{\widehat{D}}_{\mathfrak{X}}^{(i+!)},\mathcal{\widehat{D}}_{\mathfrak{X}}^{(i+1)})$
bimodules $F^{*}\mathcal{\widehat{D}}_{\mathfrak{X}}^{(i)}\otimes_{\mathcal{\widehat{D}}_{\mathfrak{X}}^{(i)}}F^{!}\mathcal{\widehat{D}}_{\mathfrak{X}}^{(i)}=F^{*}F^{!}\mathcal{\widehat{D}}_{\mathfrak{X}}^{(i)}\tilde{\to}\mathcal{\widehat{D}}_{\mathfrak{X}}^{(i+1)}$
and $\mathcal{\widehat{D}}_{\mathfrak{X}}^{(i+1)}\tilde{\leftarrow}F^{!}F^{*}\mathcal{\widehat{D}}_{\mathfrak{X}}^{(i)}=F^{*}\mathcal{\widehat{D}}_{\mathfrak{X}}^{(i)}\otimes_{\mathcal{\widehat{D}}_{\mathfrak{X}}^{(i)}}F^{!}\mathcal{\widehat{D}}_{\mathfrak{X}}^{(i)}$.
In particular, for a $\mathcal{\widehat{D}}_{\mathfrak{X}}^{(i+1)}$-module
$\mathcal{M}$, we have $\mathcal{M}=F^{*}\mathcal{N}$ iff $F^{!}\mathcal{\widehat{D}}_{\mathfrak{X}}^{(i)}\otimes_{\mathcal{\widehat{D}}_{\mathfrak{X}}^{(i+1)}}\mathcal{M}\tilde{=}\mathcal{N}$. 
\end{prop}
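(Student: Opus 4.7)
The proof breaks naturally into two parts, and the guiding principle is that $F^{!}$ is an exact cocontinuous functor between module categories, so by an Eilenberg–Watts style argument it must be given by tensoring with a bimodule, namely its value on the regular module.

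For part $1)$, the plan is to exhibit a natural map
\[
\alpha_{\mathcal{M}}:\mathcal{M}\otimes_{\widehat{\mathcal{D}}_{\mathfrak{X}}^{(i)}}F^{!}\widehat{\mathcal{D}}_{\mathfrak{X}}^{(i)}\longrightarrow F^{!}\mathcal{M},
\]
constructed by sending a local section $m\otimes x$ to $F^{!}(\phi_{m})(x)$, where $\phi_{m}:\widehat{\mathcal{D}}_{\mathfrak{X}}^{(i)}\to\mathcal{M}$ is the right $\widehat{\mathcal{D}}_{\mathfrak{X}}^{(i)}$-linear map determined by $1\mapsto m$. When $\mathcal{M}=\widehat{\mathcal{D}}_{\mathfrak{X}}^{(i)}$, the map $\alpha_\mathcal{M}$ is tautologically the identity. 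Since $F^{!}$ is an equivalence of categories (and so exact and commuting with arbitrary direct sums), both source and target are cocontinuous functors of $\mathcal{M}$; choosing a presentation of $\mathcal{M}$ by direct sums of copies of $\widehat{\mathcal{D}}_{\mathfrak{X}}^{(i)}$ then propagates the isomorphism to all $\mathcal{M}$ by the five-lemma.

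For part $2)$, I would first invoke part $1)$ to identify both $F^{*}F^{!}\widehat{\mathcal{D}}_{\mathfrak{X}}^{(i)}$ and $F^{!}F^{*}\widehat{\mathcal{D}}_{\mathfrak{X}}^{(i)}$ with the single bimodule $F^{*}\widehat{\mathcal{D}}_{\mathfrak{X}}^{(i)}\otimes_{\widehat{\mathcal{D}}_{\mathfrak{X}}^{(i)}}F^{!}\widehat{\mathcal{D}}_{\mathfrak{X}}^{(i)}$, so only one isomorphism with $\widehat{\mathcal{D}}_{\mathfrak{X}}^{(i+1)}$ needs to be constructed. The identification with $\widehat{\mathcal{D}}_{\mathfrak{X}}^{(i+1)}$ I would extract from Berthelot's theorem (\thmref{Berthelot-Frob}): the isomorphism $\widehat{\mathcal{D}}_{\mathfrak{X}}^{(i+1)}\tilde{\to}\mathcal{E}nd_{\widehat{\mathcal{D}}_{\mathfrak{X}}^{(i),\text{op}}}(F^{*}\widehat{\mathcal{D}}_{\mathfrak{X}}^{(i)})$ says that $F^{*}\widehat{\mathcal{D}}_{\mathfrak{X}}^{(i)}$ is a Morita progenerator with endomorphism ring $\widehat{\mathcal{D}}_{\mathfrak{X}}^{(i+1)}$, so its Morita inverse (a $(\widehat{\mathcal{D}}_{\mathfrak{X}}^{(i)},\widehat{\mathcal{D}}_{\mathfrak{X}}^{(i+1)})$-bimodule) is uniquely characterized. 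The claim is that $F^{!}\widehat{\mathcal{D}}_{\mathfrak{X}}^{(i)}$ is this inverse; to verify it, I would reduce modulo $p$ using cohomological completeness (\corref{Nakayama} in the ungraded case, together with \propref{Basic-CC-facts}) and compare with the explicit computation in \propref{Basic-F^*-over-k}, or compute directly using the definition $F^{!}\widehat{\mathcal{D}}_{\mathfrak{X}}^{(i)}=\omega_{\mathfrak{X}}\otimes_{\mathcal{O}_{\mathfrak{X}}}F^{*}(\omega_{\mathfrak{X}}^{-1}\otimes_{\mathcal{O}_{\mathfrak{X}}}\widehat{\mathcal{D}}_{\mathfrak{X}}^{(i)})$ and the description $F^{*}\widehat{\mathcal{D}}_{\mathfrak{X}}^{(i)}\tilde{=}\mathcal{O}_{\mathfrak{X}}\otimes_{\mathcal{O}_{\mathfrak{X}^{(1)}}}\widehat{\mathcal{D}}_{\mathfrak{X}^{(1)}}^{(i)}$. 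Once $F^{*}\widehat{\mathcal{D}}_{\mathfrak{X}}^{(i)}\otimes_{\widehat{\mathcal{D}}_{\mathfrak{X}}^{(i)}}F^{!}\widehat{\mathcal{D}}_{\mathfrak{X}}^{(i)}\tilde{\to}\widehat{\mathcal{D}}_{\mathfrak{X}}^{(i+1)}$ is established, the converse Morita identity $F^{!}\widehat{\mathcal{D}}_{\mathfrak{X}}^{(i)}\otimes_{\widehat{\mathcal{D}}_{\mathfrak{X}}^{(i+1)}}F^{*}\widehat{\mathcal{D}}_{\mathfrak{X}}^{(i)}\tilde{\to}\widehat{\mathcal{D}}_{\mathfrak{X}}^{(i)}$ follows automatically (any one-sided inverse of an invertible bimodule is a two-sided inverse), and the final ``iff'' clause is immediate by tensoring this identity against $\mathcal{M}$.

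The main obstacle will be the bookkeeping of left and right module structures through the left–right swap that defines $F^{!}$, particularly in checking that the bimodule $F^{!}\widehat{\mathcal{D}}_{\mathfrak{X}}^{(i)}$ really is the Morita inverse of $F^{*}\widehat{\mathcal{D}}_{\mathfrak{X}}^{(i)}$ (as opposed to some twist by $\omega$ that accidentally creeps in). Since the question is local on $\mathfrak{X}$ and all sheaves in sight are $p$-torsion-free and $p$-adically complete, I would settle this subtlety in local coordinates on a Frobenius-lifted affine, where the bimodule structures on both $F^{*}\widehat{\mathcal{D}}_{\mathfrak{X}}^{(i)}$ and $F^{!}\widehat{\mathcal{D}}_{\mathfrak{X}}^{(i)}$ admit explicit descriptions compatible with Grothendieck duality for the finite flat morphism $F:\mathfrak{X}\to\mathfrak{X}^{(1)}$; independence of the chosen lift then propagates the isomorphism globally by the usual descent arguments for Frobenius descent.
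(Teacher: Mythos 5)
The paper does not actually prove this proposition: it is quoted from Berthelot (\cite{key-2}, 2.5.1; see also \cite{key-21}, lemma 13.5.1), so there is no in-text argument to compare against. Your reconstruction is sound and is, in substance, the standard proof found in those references. Part $1)$ is the Eilenberg--Watts pattern: the natural transformation $m\otimes x\mapsto F^{!}(\phi_{m})(x)$ is well defined over $\otimes_{\widehat{\mathcal{D}}_{\mathfrak{X}}^{(i)}}$ precisely because $\phi_{m\cdot a}=\phi_{m}\circ\lambda_{a}$ and the left module structure on $F^{!}\widehat{\mathcal{D}}_{\mathfrak{X}}^{(i)}$ is $F^{!}(\lambda_{a})$; since $F$ is a homeomorphism on underlying spaces and $F_{*}\mathcal{O}_{\mathfrak{X}}$ is locally free of finite rank, $F^{!}$ is exact and commutes with direct sums and extension by zero, so the isomorphism propagates from the regular module — just be slightly careful that for sheaves of modules the presentation is by sums of $j_{!}(\widehat{\mathcal{D}}_{\mathfrak{X}}^{(i)}|_{U})$ rather than literal free modules. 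You also correctly locate where the real content of part $2)$ lives: everything reduces to showing that $F^{!}\widehat{\mathcal{D}}_{\mathfrak{X}}^{(i)}$ is the Morita inverse of the progenerator $F^{*}\widehat{\mathcal{D}}_{\mathfrak{X}}^{(i)}$ supplied by \thmref{Berthelot-Frob}, and your proposed verification — local coordinates on a Frobenius-lifted affine combined with the Grothendieck-duality description $F^{!}\mathcal{M}\tilde{=}F^{-1}\mathcal{H}om_{\mathcal{O}_{\mathfrak{X}}}(F_{*}\mathcal{O}_{\mathfrak{X}},\mathcal{M})$ recorded just before the proposition, with reduction mod $p$ handling completeness — is exactly how this is settled; the one-sided-inverse-implies-two-sided-inverse remark and the final ``iff'' clause then follow by pure Morita formalism as you say. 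The only caveat worth recording is that the proposition is stated as a black box from Berthelot, so in the paper's economy your argument replaces a citation rather than an existing proof.
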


This is proved in \cite{key-2}, 2.5.1 (c.f. also \cite{key-21},
lemma 13.5.1). Further, by applying the Rees functor it directly implies
the analogue for the filtered Frobenius descent of \thmref{Filtered-Frobenius}: 
\begin{cor}
\label{cor:Filtered-right-Frob}There is an equivalence of categories
$F^{!}:\mathcal{G}(\mathcal{R}(\mathcal{D}_{X}^{(0)})^{\text{op}})\to\mathcal{G}(\mathcal{R}(\mathcal{D}_{X}^{(1)})^{\text{op}})$;
which yields a $(\mathcal{R}(\mathcal{D}_{X}^{(0)}),\mathcal{R}(\mathcal{D}_{X}^{(1)}))$
bimodule $F^{!}\mathcal{R}(\mathcal{D}_{X}^{(0)})$. We have isomorphisms
of $(\mathcal{R}(\mathcal{D}_{X}^{(1)}),\mathcal{R}(\mathcal{D}_{X}^{(1)}))$
bimodules 
\[
F^{!}F^{*}\mathcal{R}(\mathcal{D}_{X}^{(0)})\tilde{\to}\mathcal{R}(\mathcal{D}_{X}^{(1)})\leftarrow F^{*}F^{!}\mathcal{R}(\mathcal{D}_{X}^{(0)})
\]
\end{cor}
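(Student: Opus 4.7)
The plan is to extract the corollary from the filtered left-handed descent (\thmref{Filtered-Frobenius}) via the left-right swap functor, and to use \propref{F^*F^!} (reduced mod $p$, or argued directly) to produce the two bimodule identifications. The key conceptual point is that the line bundle $\omega_X$, being a filtered $\mathcal{D}_X^{(0)}$-module concentrated in filtration degree $0$ (and likewise for $\mathcal{D}_X^{(1)}$ via the filtered Frobenius descent), gives a filtered equivalence between left and right modules over each of $\mathcal{D}_X^{(i)}$, and hence an equivalence between graded modules over $\mathcal{R}(\mathcal{D}_X^{(i)})$ and over $\mathcal{R}(\mathcal{D}_X^{(i)})^{\mathrm{op}}$. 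Conjugating the filtered Frobenius pullback of \thmref{Filtered-Frobenius} by these swaps, one obtains the desired equivalence $F^!\colon \mathcal{G}(\mathcal{R}(\mathcal{D}_X^{(0)})^{\mathrm{op}})\to\mathcal{G}(\mathcal{R}(\mathcal{D}_X^{(1)})^{\mathrm{op}})$.

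Concretely, I would define the desired bimodule as the Rees module of the filtered $(\mathcal{D}_X^{(0)},\mathcal{D}_X^{(1)})$-bimodule
\[
F^!\mathcal{D}_X^{(0)} := \omega_X\otimes_{\mathcal{O}_X} F^*\bigl(\omega_X^{-1}\otimes_{\mathcal{O}_X}\mathcal{D}_X^{(0)}\bigr),
\]
the filtration being induced from the Hodge/symbol filtration on the right-hand factor; that this bimodule is well defined and yields an equivalence is automatic from the analogous assertion for $F^*\mathcal{D}_X^{(0)}$ established in \thmref{Filtered-Frobenius}, since tensoring by the (trivially filtered) line bundle $\omega_X$ is a filtered equivalence that commutes with the Rees functor. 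The corresponding graded bimodule is then $\mathcal{R}(F^!\mathcal{D}_X^{(0)})=:F^!\mathcal{R}(\mathcal{D}_X^{(0)})$, and $F^!(-) \cong (-)\otimes_{\mathcal{R}(\mathcal{D}_X^{(0)})}F^!\mathcal{R}(\mathcal{D}_X^{(0)})$ is the sought-after equivalence.

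For the two bimodule isomorphisms, I would invoke \propref{F^*F^!} to get the underlying (unfiltered) isomorphisms of $(\mathcal{D}_X^{(1)},\mathcal{D}_X^{(1)})$-bimodules $F^!F^*\mathcal{D}_X^{(0)}\xrightarrow{\sim}\mathcal{D}_X^{(1)}\xleftarrow{} F^*F^!\mathcal{D}_X^{(0)}$ (passing from the formal scheme to $X$ by reduction mod $p$, or from a local lift since the question is local). The crux of the argument is then to verify that these isomorphisms are \emph{strictly filtered} with respect to the natural filtrations on both sides: on the target this is the Hodge filtration on $\mathcal{D}_X^{(1)}$, while on the source the filtration is the one induced on the composite bimodule from the filtrations on $F^*\mathcal{D}_X^{(0)}$ and $F^!\mathcal{D}_X^{(0)}$. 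Once strictness is established, applying the Rees functor converts both isomorphisms into the required graded bimodule isomorphisms.

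The main obstacle is precisely this strict compatibility with filtrations. I would dispose of it locally: in coordinates, Proposition~\ref{prop:Basic-F^*-over-k} gives $F^i(F^*\mathcal{D}_X^{(0)})$ as the free $\mathcal{O}_X$-module on the $(\partial^{[p]})^J$ with $|J|\le i$, and symmetrically for $F^!$ after swapping left and right via $\omega_X$. A direct computation of the tensor product $F^*\mathcal{D}_X^{(0)}\otimes_{\mathcal{D}_X^{(0)}}F^!\mathcal{D}_X^{(0)}$ in these coordinates shows that the isomorphism with $\mathcal{D}_X^{(1)}$ constructed in \propref{F^*F^!} matches the Hodge filtration degree by degree, and dually for $F^!F^*$; alternatively, one can argue by graded Nakayama against the associated graded (as in the proof of \thmref{Filtered-Frobenius}), where the Azumaya structure reduces the claim to an easy matrix-algebra computation.
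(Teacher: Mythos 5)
Your proposal is correct and follows essentially the same route as the paper, which simply asserts that the corollary follows from \propref{F^*F^!} "by applying the Rees functor" in combination with \thmref{Filtered-Frobenius}. You supply the detail the paper leaves implicit — namely that the isomorphisms of \propref{F^*F^!} must be checked to be strictly filtered before the Rees functor can be applied, which you verify locally using the explicit filtration on $F^*\mathcal{D}_X^{(0)}$ from \propref{Basic-F^*-over-k} — and this is a sound way to close that gap.
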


Now we proceed to the
\begin{defn}
\label{def:Gauge-Defn!}An $F^{-1}$-gauge over $\mathfrak{X}$ is
an object of $\mathcal{G}(\mathcal{\widehat{D}}_{\mathfrak{X}}^{(0,1)})$
equipped with an isomorphism $F^{*}\mathcal{M}^{-\infty}\tilde{\to}\widehat{\mathcal{M}^{\infty}}$
(here $\widehat{?}$ denotes $p$-adic completion). A coherent $F^{-1}$-gauge
is an $F^{-1}$-gauge whose underlying $\mathcal{\widehat{D}}_{\mathfrak{X}}^{(0,1)}$-module
is coherent. We define the category of $F^{-1}$-gauges, $\mathcal{G}_{F^{-1}}(\mathcal{D}_{\mathfrak{X}}^{(0,1)})$
by demanding that morphisms between $F^{-1}$-gauges respect the $F^{-1}$-structure
(as in \defref{F-gauge}), and similarly for the category of coherent
$F^{-1}$-gauges, $\mathcal{G}_{F^{-1},coh}(\mathcal{D}_{\mathfrak{X}}^{(0,1)})$. 

Similarly, An $F^{-1}$-Gauge over $X$ is an object of $\mathcal{G}(\mathcal{D}_{X}^{(0,1)})$
equipped with an isomorphism $F^{*}\mathcal{M}^{-\infty}\tilde{\to}\widehat{\mathcal{M}^{\infty}}$,
and for the category $\mathcal{G}_{F^{-1}}(\mathcal{D}_{X}^{(0,1)})$
we demand that morphisms between $F^{-1}$-gauges respect the $F^{-1}$-structure.
We have the obvious subcategories of quasi-coherent and coherent gauges. 
\end{defn}

In the world of coherent gauges, we have seen in \propref{Completion-for-noeth}
that completion is an exact functor. Therefore, the category of coherent
$F^{-1}$-gauges over $\mathfrak{X}$ is abelian; the same does not
seem to be true for the category of all gauges over $\mathfrak{X}$.
On the other hand, the category of all $F^{-1}$-gauges over $X$
is abelian, as are the categories of coherent and quasicoherent $F^{-1}$-gauges. 

Now let us turn to the derived world:
\begin{defn}
\label{def:F-gauge-for-complexes}A complex $\mathcal{M}^{\cdot}$
in $D(\mathcal{G}(\mathcal{\widehat{D}}_{\mathfrak{X}}^{(0,1)})$
is said to admit the structure of an $F^{-1}$-gauge if there is an
isomorphism $F^{*}(\mathcal{M}^{\cdot})^{-\infty}\tilde{\to}\widehat{(\mathcal{M}^{\cdot})^{\infty}}$
where $\widehat{}$ denotes the cohomological completion. Similarly,
we say that a complex $\mathcal{M}^{\cdot}$ in $D(\mathcal{G}(\mathcal{D}_{X}^{(0,1)}))$
admits the structure of an $F^{-1}$-gauge if there is an isomorphism
$F^{*}(\mathcal{M}^{\cdot})^{-\infty}\tilde{\to}(\mathcal{M}^{\cdot})^{\infty}$.
We will use a subscript $F^{-1}$ to denote the relevant categories;
e.g. $D_{F^{-1}}(\mathcal{G}(\mathcal{\widehat{D}}_{\mathfrak{X}}^{(0,1)})$. 
\end{defn}

These are not triangulated categories in general, though there is
an obvious functor $D^{b}(\mathcal{G}_{F^{-1},coh}(\mathcal{D}_{\mathfrak{X}}^{(0,1)}))\to D_{coh,F^{-1}}^{b}(\mathcal{G}(\mathcal{\widehat{D}}_{\mathfrak{X}}^{(0,1)})$
(and similarly for $X$). To give the correct triangulated analogue
of \defref{Gauge-Defn!} one must use higher homotopy theory; namely,
the glueing of $\infty$-categories along a pair of functors. I intend
to peruse this in a later project. For the purposes of this paper,
\defref{F-gauge-for-complexes} will suffice.
\begin{rem}
\label{rem:Cut-off-for-F-gauges}Suppose $\mathcal{M}^{\cdot}\in D_{coh,F^{-1}}^{b}(\mathcal{G}(\mathcal{\widehat{D}}_{\mathfrak{X}}^{(0,1)})$,
then by \propref{Completion-for-noeth} $\widehat{\mathcal{H}^{i}(\mathcal{M}^{\cdot})^{\infty}}\tilde{=}\mathcal{H}^{i}(\widehat{\mathcal{M}^{\cdot,\infty}})$.
Therefore $\mathcal{H}^{i}(\mathcal{M}^{\cdot})$ admits the structure
of an $F^{-1}$-gauge for each $i$. Further, as both $F^{*}$ and
the completion functor are exact, we have that $\tau_{\leq i}(\mathcal{M}^{\cdot})$
and $\tau_{\geq i}(\mathcal{M}^{\cdot})$ are contained in $\mathcal{M}^{\cdot}\in D_{coh,F^{-1}}^{b}(\mathcal{G}(\mathcal{\widehat{D}}_{\mathfrak{X}}^{(0,1)})$,
where $\tau_{\leq i},\tau_{\geq i}$ are the cut-off functors. 
\end{rem}

Given this, we can give the more refined version of Mazur's theorem
for $F^{-1}$-gauges: 
\begin{thm}
\label{thm:F-Mazur}Let $\mathcal{M}^{\cdot}\in D_{\text{coh},F^{-1}}^{b}(\mathcal{G}(\widehat{\mathcal{D}}_{\mathfrak{X}}^{(0,1)}))$.
Suppose that $\mathcal{H}^{n}(\mathcal{M}^{\cdot})^{-\infty}$ is
$p$-torsion-free for all $n$, and suppose that $\mathcal{H}^{n}((\mathcal{M}^{\cdot}\otimes_{W(k)}^{L}k)\otimes_{D(k)}^{L}k[f])$
is $f$-torsion-free for all $n$. Then $\mathcal{H}^{n}(\mathcal{M}^{\cdot})$
is standard for all $n$. 

In particular, $\mathcal{H}^{n}(\mathcal{M}^{\cdot})$ is $p$-torsion-free,
and $\mathcal{H}^{n}(\mathcal{M}^{\cdot})/p$ is rigid for all $n$.
We have $\mathcal{H}^{n}(\mathcal{M}^{\cdot})/p\tilde{=}\mathcal{H}^{n}(\mathcal{M}^{\cdot}\otimes_{W(k)}^{L}k)$
, $(\mathcal{H}^{n}(\mathcal{M}^{\cdot})/p)/v\tilde{=}\mathcal{H}^{n}((\mathcal{M}^{\cdot}\otimes_{W(k)}^{L}k)\otimes_{D(k)}^{L}k[f])$,
and $(\mathcal{H}^{n}(\mathcal{M}^{\cdot})/p)/f\tilde{=}\mathcal{H}^{n}((\mathcal{M}^{\cdot}\otimes_{W(k)}^{L}k)\otimes_{D(k)}^{L}k[v])$
for all $n$. Further, $(\mathcal{H}^{n}(\mathcal{M}^{\cdot})/p)/f$
is $v$-torsion-free and $(\mathcal{H}^{n}(\mathcal{M}^{\cdot})/p)/v$
is $f$-torion-free for all $n$. 
\end{thm}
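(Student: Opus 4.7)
The plan is to reduce the statement to the abstract Mazur-type result \thmref{Mazur!}, whose hypotheses agree with those given here except that it additionally requires each $\mathcal{H}^n(\mathcal{M}^\cdot)^{\infty}$ to be $p$-torsion-free. The role of the $F^{-1}$-gauge structure on $\mathcal{M}^\cdot$ is precisely to supply this missing piece of data.

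First I would invoke \remref{Cut-off-for-F-gauges}, which says that for $\mathcal{M}^\cdot \in D^b_{\text{coh},F^{-1}}(\mathcal{G}(\widehat{\mathcal{D}}_{\mathfrak{X}}^{(0,1)}))$ each cohomology sheaf $\mathcal{H}^n(\mathcal{M}^\cdot)$ inherits the structure of an $F^{-1}$-gauge. Explicitly, this gives an isomorphism of $\widehat{\mathcal{D}}_{\mathfrak{X}}^{(1)}$-modules
\[
F^* \mathcal{H}^n(\mathcal{M}^\cdot)^{-\infty} \tilde{\to} \widehat{\mathcal{H}^n(\mathcal{M}^\cdot)^{\infty}} = \mathcal{H}^n(\mathcal{M}^\cdot)^{\infty},
\]
where the last equality uses that each graded piece of a coherent gauge is coherent over $\widehat{\mathcal{D}}_{\mathfrak{X}}^{(0)}$ (\lemref{Basic-v}) and hence already $p$-adically complete, so the derived completion acts as the identity by \propref{Completion-for-noeth}.

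Second, I would observe that Berthelot's functor $F^*$ preserves $p$-torsion-freeness. This is immediate from \thmref{Berthelot-Frob}: the bimodule $F^* \widehat{\mathcal{D}}_{\mathfrak{X}}^{(0)}$ is, locally, of the form $\mathcal{O}_{\mathfrak{X}} \otimes_{\mathcal{O}_{\mathfrak{X}^{(1)}}} \widehat{\mathcal{D}}_{\mathfrak{X}^{(1)}}^{(0)}$ with $\mathcal{O}_{\mathfrak{X}}$ locally free over $\mathcal{O}_{\mathfrak{X}^{(1)}}$, so $F^*$ is (locally) a flat functor. Combined with the previous step, the standing hypothesis that $\mathcal{H}^n(\mathcal{M}^\cdot)^{-\infty}$ is $p$-torsion-free translates into the desired $p$-torsion-freeness of $\mathcal{H}^n(\mathcal{M}^\cdot)^{\infty}$.

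Third, with this missing hypothesis now established, all the assumptions of \thmref{Mazur!} are met, and the conclusion (each $\mathcal{H}^n(\mathcal{M}^\cdot)$ is standard, together with the enumerated rigidity and torsion-freeness statements for the reductions modulo $p$ and modulo $f$, $v$) follows verbatim from that theorem. The only real obstacle is in the first step, namely the careful verification that the $F^{-1}$-structure descends cleanly to cohomology and that the cohomological completion appearing in \defref{Gauge-Defn!} may be replaced by the honest module; but this is already packaged by \propref{Completion-for-noeth} and \remref{Cut-off-for-F-gauges}, so once those are invoked the argument is essentially mechanical.
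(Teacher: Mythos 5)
Your overall strategy is exactly the paper's: the only thing missing from the hypotheses of \thmref{Mazur!} is the $p$-torsion-freeness of $\mathcal{H}^{n}(\mathcal{M}^{\cdot})^{\infty}$, and the $F^{-1}$-structure supplies it via the flatness of Berthelot's $F^{*}$ (your second step is correct and is used implicitly in the paper as well). However, your first step contains a false assertion: you claim that $\widehat{\mathcal{H}^{n}(\mathcal{M}^{\cdot})^{\infty}}=\mathcal{H}^{n}(\mathcal{M}^{\cdot})^{\infty}$ because ``each graded piece of a coherent gauge is coherent over $\widehat{\mathcal{D}}_{\mathfrak{X}}^{(0)}$ and hence already $p$-adically complete.'' The module $\mathcal{M}^{\infty}=\mathcal{M}/(f-1)\tilde{=}\lim_{\to}\mathcal{M}^{i}$ is not a graded piece but a colimit of them, and \lemref{Basic-v} gives no stabilization of the maps $f:\mathcal{M}^{i}\to\mathcal{M}^{i+1}$ for $i\gg0$. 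Indeed $\mathcal{M}^{\infty}$ is coherent over $\widehat{\mathcal{D}}_{\mathfrak{X}}^{(0,1),\infty}$, which is \emph{not} $p$-adically complete: its completion is $\widehat{\mathcal{D}}_{\mathfrak{X}}^{(1)}$ (\lemref{Basic-Structure-of-D^(1)}). So in general $\mathcal{H}^{n}(\mathcal{M}^{\cdot})^{\infty}$ is a proper, dense submodule of its completion, and the equality you invoke fails already for $\mathcal{M}=\widehat{\mathcal{D}}_{\mathfrak{X}}^{(0,1)}$ itself.

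The gap is reparable, and the repair is what the paper actually does: by \propref{Completion-for-noeth} one has $\mathcal{H}^{n}(\widehat{\mathcal{M}^{\cdot,\infty}})\tilde{=}\widehat{\mathcal{H}^{n}(\mathcal{M}^{\cdot,\infty})}$ (honest $p$-adic completion of a coherent module over the noetherian sheaf $\widehat{\mathcal{D}}_{\mathfrak{X}}^{(0,1),\infty}$), and the $F^{-1}$-structure identifies this completion with $F^{*}\mathcal{H}^{n}(\mathcal{M}^{\cdot,-\infty})$, which is $p$-torsion-free. One then concludes not by identifying the module with its completion, but by the weaker (and correct) statement that a coherent module is $p$-torsion-free if and only if its $p$-adic completion is. With that substitution your argument becomes the paper's proof.
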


\begin{proof}
This follows from \thmref{Mazur!} if we can show that $\mathcal{H}^{n}(\mathcal{M}^{\cdot})^{\infty}\tilde{=}\mathcal{H}^{n}(\mathcal{M}^{\cdot,\infty})$
is also $p$-torsion-free for all $n$. Since $\mathcal{M}^{\cdot}\in D_{coh,F^{-1}}^{b}(\mathcal{G}(\mathcal{\widehat{D}}_{\mathfrak{X}}^{(0,1)})$
, we have that the cohomological completion of the complex $\mathcal{\widehat{M}}^{\cdot,\infty}$
is isomorphic to $F^{*}(\mathcal{M}^{\cdot,-\infty})$; and this complex
has $p$-torsion-free cohomologies by the assumption. Since $\mathcal{M}^{\cdot,\infty}$
is a bounded complex with coherent cohomology sheaves, by \propref{Completion-for-noeth}
we have that $\mathcal{H}^{n}(\mathcal{\widehat{M}}^{\cdot,\infty})\tilde{=}\widehat{\mathcal{H}^{n}(\mathcal{M}^{\cdot,\infty})}$,
where the completion on the right denotes the usual $p$-adic completion.
But the module $\mathcal{H}^{n}(\mathcal{M}^{\cdot,\infty})$, being
coherent, is $p$-torsion-free iff its $p$-adic completion is. Thus
each $\mathcal{H}^{n}(\mathcal{M}^{\cdot,\infty})$ is $p$-torsion-free
as desired. 
\end{proof}
In the case where $\mathfrak{X}=\text{Specf}(W(k))$ is a point, and
$\mathcal{M}^{\cdot}$ is the gauge coming from cohomology of some
smooth proper $\mathfrak{X}$ (this exists by \thmref{=00005BFJ=00005D},
and we'll construct it, in the language of this paper, in \secref{Push-Forward}
below), this is exactly the content of \thmref{(Mazur)}; indeed,
the first assumption is that $\mathbb{H}_{dR}^{i}(\mathfrak{X})$
is $p$-torsion-free for all $i$, and the second assumption is the
degeneration of the Hodge to de Rham spectral sequence. 

\subsection{Examples of Gauges}

We close out this chapter by giving a few important examples of gauges,
beyond $\widehat{\mathcal{D}}_{\mathfrak{X}}^{(0,1)}$ itself. 
\begin{example}
Let $\mathfrak{X}$ be a smooth formal scheme. Then $D(\mathcal{O}_{\mathfrak{X}})\in\mathcal{G}(\widehat{\mathcal{D}}_{\mathfrak{X}}^{(0,1)})$
by the very definition of $\widehat{\mathcal{D}}_{\mathfrak{X}}^{(0,1)}$
; indeed, we have $D(\mathcal{O}_{\mathfrak{X}}){}^{i}=\{g\in\mathcal{O}_{\mathfrak{X}}|p^{i}g\in\mathcal{O}_{\mathfrak{X}}\}$
so that the natural action of $\widehat{\mathcal{D}}_{\mathfrak{X}}^{(0)}[p^{-1}]$
on $\mathcal{O}_{\mathfrak{X}}[p^{-1}]$ induces the action of $\widehat{\mathcal{D}}_{\mathfrak{X}}^{(0,1)}$
on $\mathcal{O}_{\mathfrak{X}}[f,v]$. This is an $F^{-1}$-gauge
via the isomorphism $F^{*}\mathcal{O}_{\mathfrak{X}}\tilde{\to}\mathcal{O}_{\mathfrak{X}}$. 
\end{example}

To generalize this, suppose $\mathfrak{D}\subset\mathfrak{X}$ is
a locally normal crossings divisor. Let $\mathfrak{U}$ be the compliment
of $\mathfrak{D}$. Denote the inclusion map by $j$. We are going
to define a coherent $F^{-1}$-gauge ${\displaystyle j_{\star}D(\mathcal{O}_{\mathfrak{U}})}$,
whose cohomology is the gauge version of the log de Rham cohomology
of $\mathfrak{X}$ with respect to $\mathfrak{D}$. 

To proceed, let $\mathfrak{V}\subset\mathfrak{X}$ be an affine open,
on which there are local coordinates $\{x_{1},\dots,x_{n}\}$ in which
the divisor $\mathfrak{D}$ is given by $\{x_{1}\cdots x_{j}=0\}$.
Then (starting with the action of finite-order differential operators),
we may consider the $D_{\mathfrak{V}}^{(0)}$-submodule of $\mathcal{O}_{\mathfrak{V}}[x_{1}^{-1}\cdots x_{j}^{-1}]$
generated by $x_{1}^{-1}\cdots x_{j}^{-1}$; it is easily seen to
be independent of the choice of coordinates; hence we obtain a well-defined
$D_{\mathfrak{V}}^{(0)}$-module denoted ${\displaystyle (j_{\star}\mathcal{O}_{\mathfrak{U}})}^{\text{fin}}$;
and we define the $\mathcal{\widehat{D}}_{\mathfrak{V}}^{(0)}$ module,
denoted $(j_{\star}\mathcal{O}_{\mathfrak{U}})^{-\infty}|_{\mathfrak{V}}$
to be the $p$-adic completion of ${\displaystyle (j_{\star}\mathcal{O}_{\mathfrak{U}})}^{\text{fin}}$.
By glueing we obtain a coherent $\mathcal{\widehat{D}}_{\mathfrak{X}}^{(0)}$-module
$(j_{\star}\mathcal{O}_{\mathfrak{U}})^{-\infty}$. We have
\begin{lem}
\label{lem:Injectivity-of-completion}For any $\mathfrak{V}$ as above,
the natural map $\text{(\ensuremath{{\displaystyle j_{\star}\mathcal{O}_{\mathfrak{U}}}}})^{-\infty}|_{\mathfrak{V}}\to\widehat{(\mathcal{O}_{\mathfrak{V}}[x_{1}^{-1}\cdots x_{j}^{-1}])}$
(where $\widehat{}$ denotes $p$-adic completion) is injective. 
\end{lem}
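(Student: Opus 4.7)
The plan is to work in local coordinates and give an explicit description of $M := (j_\star\mathcal{O}_\mathfrak{U})^{\mathrm{fin}}|_\mathfrak{V}$ inside $N := \mathcal{O}_\mathfrak{V}[(x_1\cdots x_j)^{-1}]$, then verify injectivity of $\widehat{M} \to \widehat{N}$ by a direct valuation argument. After localizing so that $\mathfrak{V} = \mathrm{Specf}(\mathcal{A})$ has coordinates with $\mathfrak{D} = \{x_1\cdots x_j = 0\}$, I would use that $\partial_l$ annihilates $x_1^{-1}\cdots x_j^{-1}$ for $l > j$ and $\partial_l^s(x_l^{-1}) = (-1)^s s!\, x_l^{-s-1}$ for $l \le j$ to identify $M$ as the $\mathcal{A}$-submodule of $N$ generated by $\{(r_1-1)!\cdots(r_j-1)!\, x_1^{-r_1}\cdots x_j^{-r_j}\}_{r_l\ge 1}$.

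Each element of $N$ admits a unique monomial expansion $\sum_{I\in\mathbb{Z}^j\times\mathbb{N}^{n-j}} c_I\, x^I$ with $c_I\in W(k)$ and bounded polar part. Setting $\rho_l(I) := \max(1,-I_l)$ and $(\rho(I)-1)! := \prod_{l\le j}(\rho_l(I)-1)!$, the first step is to establish the characterization
\[
M = \bigl\{\,m\in N : c_I(m)\in (\rho(I)-1)!\, W(k)\ \text{for all }I\,\bigr\}.
\]
The forward inclusion is a direct calculation from the generators; the reverse, which is the main obstacle of the proof, requires exhibiting any $m = \sum_I c_I x^I$ with the stated divisibility as an actual finite $\mathcal{A}$-combination of the generators. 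This uses that the polar part of $m \in N$ is bounded (so only finitely many $r$'s, namely those with $r \le (\ell,\ldots,\ell)$ for some $\ell$, are needed) together with the tail convergence of the $c_I$ inherited from the Tate algebra structure of $\mathcal{A}$, which guarantees that the resulting $\mathcal{A}$-coefficients of the realization actually lie in $\mathcal{A}$ rather than merely in $\mathcal{A}[p^{-1}]$.

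With this characterization in hand, injectivity follows from a valuation argument. Let $\alpha\in\widehat{M}$ be represented by a compatible sequence $(m_k)_{k\ge 1}$ with $m_k\in M$ and $m_{k+1}-m_k\in p^k M$. If $\alpha$ maps to $0$ in $\widehat{N}$, then $m_k\in p^k N$ for all $k$. Expanding $m_k=\sum_I c_{k,I}\, x^I$, one has
\[
c_{k,I}\in(\rho(I)-1)!\,W(k),\quad c_{k,I}\in p^k W(k),\quad c_{k+1,I}-c_{k,I}\in p^k(\rho(I)-1)!\,W(k).
\]
For each fixed $I$, the sequence $(c_{k,I})_k$ is Cauchy in $W(k)$ with $p$-adic limit $0$ (since $\bigcap_k p^k W(k)=0$), and the Cauchy rate from compatibility then upgrades this to $v_p(c_{k,I})\ge k+v_p((\rho(I)-1)!)$, i.e.\ $c_{k,I}\in p^k(\rho(I)-1)!\, W(k)$. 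By the characterization of $M$ above, this means $m_k\in p^k M$ for all $k$, so $\alpha=0$ in $\widehat{M}$, establishing injectivity.
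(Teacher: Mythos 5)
Your strategy is genuinely different from the paper's (which uses strictness of the Hodge filtration together with an $R^{1}\varprojlim$ argument on Rees modules), and the second half of your argument — upgrading $c_{k,I}\in p^{k}W(k)$ and $c_{k+1,I}-c_{k,I}\in p^{k}(\rho(I)-1)!\,W(k)$ to $c_{k,I}\in p^{k}(\rho(I)-1)!\,W(k)$ by summing the telescoping differences — is correct. The problem is the first half. Everything rests on the claim that each element of $N=\mathcal{O}_{\mathfrak{V}}[(x_{1}\cdots x_{j})^{-1}]$ has a unique expansion $\sum_{I}c_{I}x^{I}$ with $c_{I}\in W(k)$, i.e.\ that $\mathcal{O}_{\mathfrak{V}}$ is the $p$-adic completion of $W(k)[x_{1},\dots,x_{n}]$; you invoke ``the Tate algebra structure of $\mathcal{A}$'' explicitly. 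But the hypothesis on $\mathfrak{V}$ is only that it admits local coordinates in the paper's sense ($\partial_{i}(x_{l})=\delta_{il}$ with the $\partial_{i}$ freely generating the derivations), which makes $\mathfrak{V}$ formally \'etale over $\widehat{\mathbb{A}^{n}}$ rather than an open of it: $\mathcal{A}$ may contain $(x_{j+1}-1)^{-1}$, or a root of $1+x_{j+1}$, and then no such monomial expansion exists. Localizing further does not help, since arbitrarily small affine opens of a smooth formal scheme are still only formally \'etale over restricted power series rings.

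Nor is the reduction to the Tate case a formality. Writing $\mathcal{A}_{0}=W(k)\langle x_{1},\dots,x_{n}\rangle\to\mathcal{A}$ for the (formally \'etale, hence flat) comparison map and $C$, $C_{0}$ for the cokernels of $M\subset N$ and of its model, one has $C=\mathcal{A}\otimes_{\mathcal{A}_{0}}C_{0}$ and $C[p^{k}]=\mathcal{A}\otimes_{\mathcal{A}_{0}}C_{0}[p^{k}]$, while the kernel of $\widehat{M}\to\widehat{N}$ is $\varprojlim_{k}C[p^{k}]$ with transition maps given by multiplication by $p$. Knowing $\varprojlim_{k}C_{0}[p^{k}]=0$ does not yield $\varprojlim_{k}\bigl(\mathcal{A}\otimes_{\mathcal{A}_{0}}C_{0}[p^{k}]\bigr)=0$, because tensoring with a flat but non-finitely-presented module need not commute with inverse limits. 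So you must either prove your divisibility characterization of $M$ intrinsically for general $\mathcal{A}$ (replacing monomial coefficients by the successive quotients of the pole-order filtration — essentially what the paper's strictness argument accomplishes), or supply a genuine descent argument for the vanishing of this $\varprojlim$. As written, the proof establishes the lemma only when $\mathcal{O}_{\mathfrak{V}}$ is a restricted power series ring.
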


We'll give a proof of this rather technical result in \secref{Appendix:-an-Inectivity}.
From this we deduce
\begin{lem}
\label{lem:Hodge-filt-on-log}Let $F$ be a lift of Frobenius satisfying
$F(x_{i})=x_{i}^{p}$ for all $1\leq i\leq n$. Then the natural map
$F^{*}\text{(\ensuremath{{\displaystyle j_{\star}\mathcal{O}_{\mathfrak{U}}}}})^{-\infty}|_{\mathfrak{V}}\to\widehat{(\mathcal{O}_{\mathfrak{V}}[x_{1}^{-1}\cdots x_{j}^{-1}])}$
is injective, and its image is the $\widehat{\mathcal{D}}_{\mathfrak{V}}^{(1)}$-submodule
generated by $x_{1}^{-1}\cdots x_{j}^{-1}$. 
\end{lem}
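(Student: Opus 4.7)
The plan is to realize the natural map as a composite
\[
F^{*}M \xrightarrow{F^{*}\iota} F^{*}N \xrightarrow{\phi} N,
\]
where $M := (j_{\star}\mathcal{O}_{\mathfrak{U}})^{-\infty}|_{\mathfrak{V}}$, $N := \widehat{\mathcal{O}_{\mathfrak{V}}[x_{1}^{-1}\cdots x_{j}^{-1}]}$, $\iota \colon M \hookrightarrow N$ is the inclusion from \lemref{Injectivity-of-completion}, and $\phi$ is the $\mathcal{O}_{\mathfrak{V}}$-linear map sending $a \otimes n$ to $a \cdot F^{\#}(n)$. The first arrow is $F^{*}$ applied to $\iota$, hence injective because $F^{*}$ is an equivalence of categories (\thmref{Berthelot-Frob}) and in particular exact. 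The fact that $\phi$ is $\widehat{\mathcal{D}}_{\mathfrak{V}}^{(1)}$-linear uses Berthelot's explicit description of the bimodule $F^{*}\widehat{\mathcal{D}}_{\mathfrak{V}}^{(0)}$ as the sub-bimodule of $\mathcal{H}om_{W(k)}(\mathcal{O}_{\mathfrak{V}^{(1)}},\mathcal{O}_{\mathfrak{V}})$ generated by $F^{\#}$, in the presence of the particular Frobenius lift $F(x_{i})=x_{i}^{p}$.

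For injectivity of $\phi$, I would invoke the fact that $\mathcal{O}_{\mathfrak{V}}$ is free of rank $p^{n}$ over $F^{\#}(\mathcal{O}_{\mathfrak{V}})=\mathcal{O}_{\mathfrak{V}}^{p}$ with basis $\{x^{I}:=x_{1}^{i_{1}}\cdots x_{n}^{i_{n}}\}_{0\leq i_{l}<p}$; this freeness descends to $N$ over $F^{\#}(N)$ with the same basis. Consequently, every local section of $F^{*}N = \mathcal{O}_{\mathfrak{V}}\otimes_{F^{\#},\mathcal{O}_{\mathfrak{V}}} N$ has a unique presentation $\sum_{I} x^{I}\otimes n_{I}$ with $n_{I}\in N$, and $\phi(\sum_{I}x^{I}\otimes n_{I})=\sum_{I}x^{I}F^{\#}(n_{I})\in N$, which vanishes only when every $n_{I}=0$, using that $F^{\#}$ is injective on $N$. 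Composed with the injection $F^{*}\iota$, this yields injectivity of the natural map.

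For the image, observe that $M$ is generated as a $\widehat{\mathcal{D}}_{\mathfrak{V}}^{(0)}$-module by $x_{1}^{-1}\cdots x_{j}^{-1}$, so $F^{*}M$ is generated as a $\widehat{\mathcal{D}}_{\mathfrak{V}}^{(1)}$-module by $1\otimes(x_{1}^{-1}\cdots x_{j}^{-1})$, which maps under the natural map to $F^{\#}(x_{1}^{-1}\cdots x_{j}^{-1})=x_{1}^{-p}\cdots x_{j}^{-p}$. Thus the image is the $\widehat{\mathcal{D}}_{\mathfrak{V}}^{(1)}$-submodule generated by $x_{1}^{-p}\cdots x_{j}^{-p}$, which coincides with the one generated by $x_{1}^{-1}\cdots x_{j}^{-1}$: indeed, since $(p-1)!$ is a unit in $W(k)$, the operators $\partial_{i}^{[p-1]}=\partial_{i}^{p-1}/(p-1)!$ all lie in $\widehat{\mathcal{D}}_{\mathfrak{V}}^{(0)}\subset\widehat{\mathcal{D}}_{\mathfrak{V}}^{(1)}$, and a direct computation gives $\partial_{1}^{[p-1]}\cdots\partial_{j}^{[p-1]}(x_{1}^{-1}\cdots x_{j}^{-1})=\pm x_{1}^{-p}\cdots x_{j}^{-p}$, while conversely $x_{1}^{-1}\cdots x_{j}^{-1}=x_{1}^{p-1}\cdots x_{j}^{p-1}\cdot x_{1}^{-p}\cdots x_{j}^{-p}$ with the prefactor acting through $\mathcal{O}_{\mathfrak{V}}\subset\widehat{\mathcal{D}}_{\mathfrak{V}}^{(1)}$.

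The principal technical obstacle I anticipate is verifying the $\widehat{\mathcal{D}}_{\mathfrak{V}}^{(1)}$-linearity of $\phi$: $\mathcal{O}_{\mathfrak{V}}$-linearity is immediate from the formula, but compatibility with the level-one divided-power operators $\partial_{i}^{[p]}$ requires a direct computation using Berthelot's bimodule structure on $F^{*}\widehat{\mathcal{D}}_{\mathfrak{V}}^{(0)}$ together with the explicit form $F(x_{i})=x_{i}^{p}$; once this is in place, the rest of the argument reduces to the elementary coordinate calculations above.
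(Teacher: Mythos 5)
Your proposal is correct and follows essentially the same route as the paper: factor the map through $F^{*}$ of the inclusion from \lemref{Injectivity-of-completion}, use exactness/flatness of Frobenius pullback for injectivity, and identify the image as $\widehat{\mathcal{D}}_{\mathfrak{V}}^{(1)}\cdot x_{1}^{-p}\cdots x_{j}^{-p}=\widehat{\mathcal{D}}_{\mathfrak{V}}^{(1)}\cdot x_{1}^{-1}\cdots x_{j}^{-1}$ by an explicit divided-power computation. The only cosmetic differences are that the paper establishes $F^{*}\widehat{(\mathcal{O}_{\mathfrak{V}}[x_{1}^{-1}\cdots x_{j}^{-1}])}\tilde{\to}\widehat{(\mathcal{O}_{\mathfrak{V}}[x_{1}^{-1}\cdots x_{j}^{-1}])}$ by reduction mod $p^{r}$ and inverse limits rather than by your explicit monomial basis, and uses $\partial_{i}^{[p]}$ in place of your $\partial_{i}^{[p-1]}$ in the final identification of submodules.
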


\begin{proof}
For each $r>0$ we have an isomorphism $F^{*}(\mathcal{O}_{\mathfrak{V}}[x_{1}^{-1}\cdots x_{j}^{-1}]/p^{r})\tilde{\to}\mathcal{O}_{\mathfrak{V}}[x_{1}^{-1}\cdots x_{j}^{-1}]/p^{r}$;
upon taking the inverse limit we obtain $F^{*}\widehat{(\mathcal{O}_{\mathfrak{V}}[x_{1}^{-1}\cdots x_{j}^{-1}])}\tilde{\to}\widehat{(\mathcal{O}_{\mathfrak{V}}[x_{1}^{-1}\cdots x_{j}^{-1}])}$.
Since $F^{*}$ is an exact, conservative functor on $\mathcal{O}_{\mathfrak{V}}-\text{mod}$,
the previous lemma implies that $F^{*}\text{(\ensuremath{{\displaystyle j_{\star}\mathcal{O}_{\mathfrak{U}}}}})^{-\infty}|_{\mathfrak{V}}\to\widehat{(\mathcal{O}_{\mathfrak{V}}[x_{1}^{-1}\cdots x_{j}^{-1}])}$
is injective. Since the image of $({\displaystyle j_{\star}\mathcal{O}_{\mathfrak{U}}})^{-\infty}|_{\mathfrak{V}}\to\widehat{(\mathcal{O}_{\mathfrak{V}}[x_{1}^{-1}\cdots x_{j}^{-1}])}$
is the $\widehat{\mathcal{D}}_{\mathfrak{V}}^{(0)}$-submodule generated
by $x_{1}^{-1}\cdots x_{j}^{-1}$, the image of $F^{*}(j_{\star}{\displaystyle \mathcal{O}_{\mathfrak{U}}})^{-\infty}|_{\mathfrak{V}}\to\widehat{(\mathcal{O}_{\mathfrak{V}}[x_{1}^{-1}\cdots x_{j}^{-1}])}$
is the $\widehat{\mathcal{D}}_{\mathfrak{V}}^{(1)}$-submodule generated
by $F(x_{1}^{-1}\cdots x_{j}^{-1})=x_{1}^{-p}\cdots x_{j}^{-p}$.
But since $\partial_{i}^{[p]}x_{i}^{-1}=-x_{i}^{-p-1}$ we see that
$\widehat{\mathcal{D}}_{\mathfrak{V}}^{(1)}\cdot x_{1}^{-1}\cdots x_{j}^{-1}=\widehat{\mathcal{D}}_{\mathfrak{V}}^{(1)}\cdot x_{1}^{-p}\cdots x_{j}^{-p}$
as claimed.
\end{proof}
We can now construct the full gauge ${\displaystyle j_{\star}\mathcal{O}_{\mathfrak{U}}[f,v]}$
as follows: denote by $\widehat{\text{(\ensuremath{{\displaystyle j_{\star}\mathcal{O}_{\mathfrak{U}}}}})^{\infty}}$
the $\widehat{\mathcal{D}}_{\mathfrak{X}}^{(1)}$-submodule of $\widehat{(\mathcal{O}_{\mathfrak{V}}[x_{1}^{-1}\cdots x_{j}^{-1}])}$
locally generated by $x_{1}^{-1}\cdots x_{j}^{-1}$; as above this
is independent of the choice of coordinates for the divisor $\mathfrak{D}$.
Then we have
\begin{example}
\label{exa:Integral-j} Define $({\displaystyle j_{\star}D(\mathcal{O}_{\mathfrak{U}})})^{i}:=\{m\in\widehat{\text{(\ensuremath{{\displaystyle j_{\star}\mathcal{O}_{\mathfrak{U}}}}})^{\infty}}|p^{i}m\in\text{(\ensuremath{{\displaystyle j_{\star}\mathcal{O}_{\mathfrak{U}}}}})^{-\infty}\}$.
By the above discussion this is an object in $\text{Coh}_{F^{-1}}(\widehat{\mathcal{D}}_{\mathfrak{X}}^{(0,1)})$
via the isomorphism \linebreak{}
$F^{*}\text{(\ensuremath{{\displaystyle j_{\star}\mathcal{O}_{\mathfrak{U}}}}})^{-\infty}\tilde{\to}\widehat{\text{(\ensuremath{{\displaystyle j_{\star}\mathcal{O}_{\mathfrak{U}}}}})^{\infty}}$.
Let ${\displaystyle j_{\star}D(\mathcal{O}_{U}})$ denote the reduction
mod $p$. We claim that the $l$th term of the Hodge filtration on
$({\displaystyle j_{\star}D(\mathcal{O}_{U})})^{\infty}$ is given
by $F^{*}(F^{l}(\mathcal{D}_{X}^{(0)})\cdot(x_{1}^{-1}\cdots x_{j}^{-1}))$,
where $F^{l}\mathcal{D}_{X}^{(0)}$ is the $l$th term of the symbol
filtration. 

To see this, we work again in local coordinates over $\mathfrak{V}$.
One computes that $(\partial_{i}^{[p]})^{l}(x_{i}^{-p})=u\cdot l!x_{i}^{-p(l+1)}$
where $u$ is a unit in $\mathbb{Z}_{p}$. Therefore the module \linebreak{}
$D_{\mathfrak{V}}^{(1)}\cdot x_{1}^{-1}\cdots x_{j}^{-1}=D_{\mathfrak{V}}^{(1)}\cdot x_{1}^{-p}\cdots x_{j}^{-p}$
is spanned over $\mathcal{O}_{\mathfrak{V}}$ by terms of the form
$I!\cdot x_{1}^{-p(i_{1}+1)}\cdots x_{j}^{-p(i_{j}+1)}$; the $p$-adic
completion of this module is ${\displaystyle \widehat{\text{(\ensuremath{{\displaystyle j_{\star}\mathcal{O}_{\mathfrak{U}}}}})^{\infty}}}$. 

For a multi-index $I$, set $\tilde{I}=(pi_{1}+p-1,\dots,pi_{j}+p-1)$.
Then \linebreak{}
$I!\cdot x_{1}^{-p(i_{1}+1)}\cdots x_{j}^{-p(i_{j}+1)}\in({\displaystyle j_{\star}D(\mathcal{O}_{\mathfrak{U}})})^{r}$
iff $p^{r}\cdot I!\cdot x_{1}^{-p(i_{1}+1)}\cdots x_{j}^{-p(i_{j}+1)}\in\mathcal{\widehat{D}}_{\mathfrak{V}}^{(0)}\cdot x_{1}^{-1}\cdots x_{j}^{-1}$
. Furthermore, it is not difficult to see that $p^{r}\cdot I!\cdot x_{1}^{-p(i_{1}+1)}\cdots x_{j}^{-p(i_{j}+1)}\in\mathcal{\widehat{D}}_{\mathfrak{V}}^{(0)}\cdot x_{1}^{-1}\cdots x_{j}^{-1}$
iff $p^{r}\cdot I!\cdot x_{1}^{-p(i_{1}+1)}\cdots x_{j}^{-p(i_{j}+1)}\in\mathcal{D}_{\mathfrak{V}}^{(0)}\cdot x_{1}^{-1}\cdots x_{j}^{-1}$;
in turn, this holds iff $r\geq\text{val}(\tilde{I}!)-\text{val}(I!)$
(since $\mathcal{D}_{\mathfrak{V}}^{(0)}\cdot x_{1}^{-1}\cdots x_{j}^{-1}$
is spanned by terms of the form $I!x_{1}^{-(i_{1}+1)}\cdots x_{j}^{-(i_{j}+1)}$);
here $\text{val}$ denotes the usual $p$-adic valuation; so that
$\text{val}(p)=1$. 

On the other hand one has 
\[
\text{val}((pi+p-1)!)-\text{val}(i!)=i
\]
for all $i\geq0$. So ${\displaystyle \text{val}(\tilde{I}!)-\text{val}(I!)=\sum_{t=1}^{j}i_{t}}$
which implies $I!\cdot x_{1}^{-p(i_{1}+1)}\cdots x_{j}^{-p(i_{j}+1)}\in({\displaystyle j_{\star}D(\mathcal{O}_{\mathfrak{U}})})^{r}$
iff ${\displaystyle r\geq\sum_{t=1}^{j}i_{t}}$. 

On the other hand, $(F^{l}(\mathcal{D}_{\mathfrak{X}}^{(0)})\cdot(x_{1}^{-1}\cdots x_{j}^{-1})$
is spanned over $\mathcal{O}_{\mathfrak{V}}$ by terms of the form
$I!\cdot x_{1}^{-(i_{1}+1)}\cdots x_{j}^{-(i_{j}+1)}$ where ${\displaystyle \sum_{t=1}^{j}i_{t}\leq l}$.
Thus the module $F^{*}(F^{l}(\mathcal{D}_{X}^{(0)})\cdot(x_{1}^{-1}\cdots x_{j}^{-1}))$
is exactly the image in $({\displaystyle j_{\star}\mathcal{O}_{U}[f,v]})^{\infty}$
of $({\displaystyle j_{\star}\mathcal{O}_{\mathfrak{U}}[f,v]})^{r}$,
which is the claim. 
\end{example}

Finally, we end with an example of a standard, coherent gauge which
definitely does not admit an $F^{-1}$-action:
\begin{example}
\label{exa:Exponential!} Let $\mathfrak{X}=\widehat{\mathbb{A}_{W(k)}^{1}}$.
Consider the $\widehat{\mathcal{D}}_{\mathfrak{X}}^{(0)}$-module
$e^{x}$; i.e., the sheaf $\mathcal{O}_{\mathfrak{X}}$ equipped with
the action determined by
\[
\sum_{i=0}^{\infty}a_{i}\partial^{i}\cdot1=\sum_{i=0}^{\infty}a_{i}
\]
(here $a_{i}\to0$ as $i\to\infty$). Then $\mathcal{O}_{\mathfrak{X}}[p^{-1}]$
is a coherent $\widehat{\mathcal{D}}_{\mathfrak{X}}^{(0,1),\infty}$-module
since $\partial^{[p]}\cdot1=(p!)^{-1}$; it has a $\widehat{\mathcal{D}}_{\mathfrak{X}}^{(0)}$-lattice
given by $\mathcal{O}_{\mathfrak{X}}$. Thus by \exaref{Basic-Construction-over-X}
we may define 
\[
(e^{x})^{i}:=\{m\in\mathcal{O}_{\mathfrak{X}}[p^{-1}]|p^{i}m\in\mathcal{O}_{\mathfrak{X}}\}
\]
and we obtain a gauge, also denoted $e^{x}$, such that $(e^{x})^{i}\tilde{=}\mathcal{O}_{\mathfrak{X}}$
for all $i$, and such that $v$ is an isomorphism for all $i$, while
$f$ is given by multiplication by $p$. We have $(e^{x})^{-\infty}\tilde{=}\mathcal{O}_{\mathfrak{X}}$
while $(e^{x})^{\infty}=\mathcal{O}_{\mathfrak{X}}[p^{-1}]$. 
\end{example}

This example indicates that the ``exponential Hodge theory'' appearing,
e.g., in Sabbah's work \cite{key-22}, could also be a part of this
story; this should be interesting to pursue in future work. 

\section{\label{sec:Operations:PullBack}Operations on Gauges: Pull-back}

Let $\varphi:\mathfrak{X}\to\mathfrak{Y}$ be a morphism of smooth
formal schemes over $W(k)$. Let us begin by setting our conventions
on the pullback of $\mathcal{O}$-modules: 
\begin{defn}
\label{def:Correct-Pullback}1) If $\mathcal{M}\in\mathcal{O}_{\mathfrak{Y}}-\text{mod}$,
we set $\varphi^{*}\mathcal{M}:=\mathcal{O}_{\mathfrak{X}}\widehat{\otimes}_{\varphi^{-1}(\mathcal{O}_{\mathfrak{Y}})}\varphi^{-1}(\mathcal{M}^{\cdot})$,
the $p$-adic completion of the naive tensor product. If $\mathcal{M}^{\cdot}\in D(\mathcal{O}_{\mathfrak{Y}})$,
then we define $L\varphi^{*}\mathcal{M}^{\cdot}:=\mathcal{O}_{\mathfrak{X}}\widehat{\otimes}_{\varphi^{-1}(\mathcal{O}_{\mathfrak{Y}})}^{L}\varphi^{-1}(\mathcal{M}^{\cdot})$;
the cohomological completion of the usual derived tensor product. 

2) Consider $D(\mathcal{O}_{\mathfrak{X}})$ and $D(\mathcal{O}_{\mathfrak{Y}})$
as graded sheaves of rings as usual. If $\mathcal{M}^{\cdot}\in D(\mathcal{G}(D(\mathcal{O}_{\mathfrak{Y}})))$,
then we define $L\varphi^{*}\mathcal{M}^{\cdot}:=D(\mathcal{O}_{\mathfrak{X}})\widehat{\otimes}_{\varphi^{-1}(D(\mathcal{O}_{\mathfrak{Y}}))}^{L}\varphi^{-1}(\mathcal{M}^{\cdot})$,
the graded cohomological completion of the usual derived tensor product.
\end{defn}

\begin{rem}
The reader will note several inconsistencies in these notations. First
of all, we do not, in general, have $\mathcal{H}^{0}(L\varphi^{*}\mathcal{M})=\varphi^{*}\mathcal{M}$.
Furthermore, the functor $L\varphi^{*}$ does not commute with the
forgetful functor from graded $\mathcal{O}[f,v]$-modules to $\mathcal{O}$-modules.
However, we will only use the underived $\varphi^{*}$ in a few very
special cases (c.f. the lemma directly below), when in fact the equality
$\mathcal{H}^{0}(L\varphi^{*}\mathcal{M})=\varphi^{*}\mathcal{M}$
does hold. Further, we will only apply the graded functor when working
with a graded module; and this will almost always be the case. Hopefully
this notational scheme does not cause any undue confusion.
\end{rem}

Now we should check that this operation behaves well on the basic
objects of interest in our paper: 
\begin{lem}
\label{lem:phi-pullback-of-D^i}For each $i\in\mathbb{Z}$ we have
\[
L\varphi^{*}(\widehat{\mathcal{D}}_{\mathfrak{Y}}^{(0,1),i})\tilde{=}\varphi^{*}(\widehat{\mathcal{D}}_{\mathfrak{Y}}^{(0,1),i})\tilde{=}\lim_{n}(\mathcal{O}_{\mathfrak{X}_{n}}\otimes_{\varphi^{-1}(\mathcal{O}_{\mathfrak{Y}_{n}})}\varphi^{-1}(\widehat{\mathcal{D}}_{\mathfrak{Y}}^{(0,1),i}/p^{n}))
\]
In particular, we have 
\[
\mathcal{H}^{0}(L\varphi^{*}(\widehat{\mathcal{D}}_{\mathfrak{Y}}^{(0,1),i}))\tilde{=}\varphi^{*}(\widehat{\mathcal{D}}_{\mathfrak{Y}}^{(0,1),i})
\]
 under the conventions of \defref{Correct-Pullback}. The same holds
if we replace $\widehat{\mathcal{D}}_{\mathfrak{Y}}^{(0,1),i}$ by
$\mathcal{\widehat{D}}_{\mathfrak{Y}}^{(j)}$ for any $j\geq0$. 
\end{lem}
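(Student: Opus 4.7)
The plan is to identify the derived pullback with the underived one by combining local freeness of each reduction $\widehat{\mathcal{D}}_{\mathfrak{Y}}^{(0,1),i}/p^n$ with the Nakayama lemma for cohomologically complete complexes. First, working over an affine open $\text{Specf}(\mathcal{A})\subset\mathfrak{Y}$ with etale local coordinates, I would combine \lemref{Basic-structure-of-D_A^(i)} (which handles $i\le0$ by giving $\widehat{D}_{\mathcal{A}}^{(0,1),i}=p^{-i}\widehat{D}_{\mathcal{A}}^{(0)}$) and \corref{Each-D^(i)-is-free} together with the degree decomposition of $D(\mathcal{A})$ (to handle $i\ge0$) to exhibit $\widehat{D}_{\mathcal{A}}^{(0,1),i}$ as the $p$-adic completion of a free $\mathcal{A}$-module on an explicit countable basis. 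The mod-$p^n$ reduction is thus a free $\mathcal{A}/p^n$-module, so $\widehat{\mathcal{D}}_{\mathfrak{Y}}^{(0,1),i}/p^n$ is locally free over $\mathcal{O}_{\mathfrak{Y}_n}$ for every $n\ge1$; the analogous statement for $\widehat{\mathcal{D}}_{\mathfrak{Y}}^{(j)}/p^n$ follows from Berthelot's classical description of level-$j$ operators in local coordinates.

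Next, the second equality
\[
\varphi^{*}(\widehat{\mathcal{D}}_{\mathfrak{Y}}^{(0,1),i})\;=\;\lim_{n}\mathcal{O}_{\mathfrak{X}_{n}}\otimes_{\varphi^{-1}\mathcal{O}_{\mathfrak{Y}_{n}}}\varphi^{-1}(\widehat{\mathcal{D}}_{\mathfrak{Y}}^{(0,1),i}/p^{n})
\]
follows by unwinding the definition of $p$-adic completion, using exactness of $\varphi^{-1}$ together with the right-exactness identity $(\mathcal{O}_{\mathfrak{X}}\otimes_{\varphi^{-1}\mathcal{O}_{\mathfrak{Y}}}\varphi^{-1}\mathcal{F})/p^n=\mathcal{O}_{\mathfrak{X}_n}\otimes_{\varphi^{-1}\mathcal{O}_{\mathfrak{Y}_n}}\varphi^{-1}(\mathcal{F}/p^n)$. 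Local freeness of the terms on $\mathfrak{X}_n$, surjectivity of transition maps, and passage to the limit show that this sheaf is $p$-torsion-free and $p$-adically complete, hence cohomologically complete by the discussion following \propref{Basic-CC-facts}.

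For the first equality, the natural truncation
\[
\mathcal{O}_{\mathfrak{X}}\otimes^{L}_{\varphi^{-1}\mathcal{O}_{\mathfrak{Y}}}\varphi^{-1}\widehat{\mathcal{D}}_{\mathfrak{Y}}^{(0,1),i}\to\mathcal{H}^{0}(\mathcal{O}_{\mathfrak{X}}\otimes^{L}_{\varphi^{-1}\mathcal{O}_{\mathfrak{Y}}}\varphi^{-1}\widehat{\mathcal{D}}_{\mathfrak{Y}}^{(0,1),i})\to\varphi^{*}\widehat{\mathcal{D}}_{\mathfrak{Y}}^{(0,1),i}
\]
factors through the cohomological completion of the source to yield a morphism $\eta\colon L\varphi^{*}\widehat{\mathcal{D}}_{\mathfrak{Y}}^{(0,1),i}\to\varphi^{*}\widehat{\mathcal{D}}_{\mathfrak{Y}}^{(0,1),i}$. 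Both sides are cohomologically complete, so by \corref{Nakayama} it suffices to show $\eta\otimes^{L}_{W(k)}k$ is an isomorphism. By \lemref{reduction-of-completion}, the source reduces to $\mathcal{O}_{X}\otimes^{L}_{\varphi^{-1}\mathcal{O}_{Y}}\varphi^{-1}(\widehat{\mathcal{D}}_{\mathfrak{Y}}^{(0,1),i}/p)$, which by the flatness from the first step is concentrated in degree $0$ and equal to $\varphi^{*}(\widehat{\mathcal{D}}_{\mathfrak{Y}}^{(0,1),i}/p)$. The target, being $p$-torsion-free, reduces to $\varphi^{*}\widehat{\mathcal{D}}_{\mathfrak{Y}}^{(0,1),i}/p$, which via the limit formula and Mittag-Leffler identifies with $\varphi^{*}(\widehat{\mathcal{D}}_{\mathfrak{Y}}^{(0,1),i}/p)$; under these identifications $\eta$ is the identity.

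The main obstacle is that $\widehat{\mathcal{D}}_{\mathfrak{Y}}^{(0,1),i}$ has countably infinite $\mathcal{O}_{\mathfrak{Y}}$-rank even locally, so the sheaf itself is not obviously flat over $\mathcal{O}_{\mathfrak{Y}}$ and the usual coherent-sheaf arguments do not apply; one must route everything through the mod-$p^n$ reductions and the cohomological completion formalism. A secondary subtlety is verifying that the reduction of $\varphi^{*}\widehat{\mathcal{D}}_{\mathfrak{Y}}^{(0,1),i}$ mod $p$ really does identify with $\varphi^{*}(\widehat{\mathcal{D}}_{\mathfrak{Y}}^{(0,1),i}/p)$; this uses the Mittag-Leffler condition on the defining inverse system, which is automatic given surjectivity of the transition maps.
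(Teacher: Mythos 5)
Your argument is correct, and it rests on the same essential fact as the paper's proof --- the local freeness coming from \corref{Local-coords-over-A=00005Bf,v=00005D} / \corref{Each-D^(i)-is-free} --- but the mechanics differ. The paper works with the dense finite-order submodule $D_{\mathcal{A}}^{(0,1),i}\subset\widehat{D}_{\mathcal{A}}^{(0,1),i}$: since that submodule is free over $\mathcal{A}$ and the cone of $D_{\mathcal{A}}^{(0,1),i}\to\widehat{D}_{\mathcal{A}}^{(0,1),i}$ is a $\mathbb{Z}[p^{-1}]$-module (killed by completion), one may compute $\mathcal{B}\widehat{\otimes}_{\mathcal{A}}^{L}\widehat{D}_{\mathcal{A}}^{(0,1),i}$ as the completion of $\mathcal{B}\otimes_{\mathcal{A}}^{L}D_{\mathcal{A}}^{(0,1),i}$, which is concentrated in degree $0$ and free, so its completion is the $p$-adic one; the flatness of $\widehat{D}_{\mathcal{A}}^{(0,1),i}$ over $\mathcal{A}$ is quoted from \cite{key-8}, theorem 1.6.6. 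You instead stay entirely on the completed side, establish freeness of the mod-$p^{n}$ reductions, and invoke the derived Nakayama lemma (\corref{Nakayama}) together with \lemref{reduction-of-completion} to check the comparison map after reduction mod $p$. Your route avoids quoting the flatness theorem from \cite{key-8} and uses only the graded completion formalism already set up in the paper, at the cost of a slightly longer verification (torsion-freeness and completeness of the target, plus the Mittag--Leffler identification of its reduction); the paper's route is more direct once the external flatness result is accepted. Both are valid.
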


\begin{proof}
As this question is local, we can assume $\mathfrak{X}=\text{Specf}(\mathcal{B})$
and $\mathfrak{Y}=\text{Specf}(\mathcal{A})$ where $\mathcal{A}$
possess local coordinates $\{t_{1},\dots,t_{n}\}$. By definition
we have that $\widehat{D}_{\mathcal{A}}^{(0,1),i}$ is the $p$-adic
completion of $D_{\mathcal{A}}^{(0,1),i}$. By \corref{Each-D^(i)-is-free}
we have that $D_{\mathcal{A}}^{(0,1),i}$ is free over $\mathcal{A}$.
In particular, it is $p$-torsion free and $p$-adically separated;
and so by \cite{key-8}, lemma 1.5.4 its cohomological completion
is equal to $\widehat{D}_{\mathcal{A}}^{(0,1),i}$. Therefore we have
the short exact sequence 
\[
D_{\mathcal{A}}^{(0,1),i}\to\widehat{D}_{\mathcal{A}}^{(0,1),i}\to K
\]
where $p$ acts invertibly on $K$. Now we apply the functor $\mathcal{B}\otimes_{\mathcal{A}}^{L}$.
By \cite{key-8}, theorem 1.6.6, we have that $\widehat{D}_{\mathcal{A}}^{(0,1),i}$
is flat over $\mathcal{A}$. Thus we see that $\mathcal{B}\widehat{\otimes}_{\mathcal{A}}^{L}\mathcal{\widehat{D}}_{\mathcal{A}}^{(0,1),i}$,
the cohomological completion of $\mathcal{B}\otimes_{\mathcal{A}}^{L}\widehat{D}_{\mathcal{A}}^{(0,1),i}$,
is isomorphic to the cohomological completion of $\mathcal{B}\otimes_{\mathcal{A}}^{L}\mathcal{D}_{\mathcal{A}}^{(0,1),i}$,
which is just the usual $p$-adic completion since this is a free
$\mathcal{B}$-module, and the statement follows. An identical argument
works for $\mathcal{\widehat{D}}_{\mathfrak{Y}}^{(j)}$. 
\end{proof}
Now let $j\geq0$. We recall that, for each $j\geq0$, Berthelot has
constructed a pullback functor $\varphi^{!,(j)}$ from $\widehat{\mathcal{D}}_{\mathfrak{Y}}^{(j)}-\text{mod}$
to $\widehat{\mathcal{D}}_{\mathfrak{X}}^{(j)}-\text{mod}$. In fact,
in \cite{key-2}, section 3.2, he has shown that $\mathcal{\widehat{D}}_{\mathfrak{X}\to\mathfrak{Y}}^{(j)}:=\varphi^{*}(\mathcal{\widehat{D}}_{\mathfrak{Y}}^{(j)})$
carries the structure of a left $\mathcal{\widehat{D}}_{\mathfrak{X}}^{(j)}$-module.
By definition $\varphi^{*}\mathcal{\widehat{D}}_{\mathfrak{Y}}^{(j)}$
carries the structure of a right $\varphi^{-1}(\mathcal{\widehat{D}}_{\mathfrak{Y}}^{(j)})$-module.
This, in turn allows one to define the functor $\varphi^{*,(j)}$
via 
\[
L\varphi^{*,(j)}(\mathcal{M})=\varphi^{*}\mathcal{\widehat{D}}_{\mathfrak{Y}}^{(j)}\widehat{\otimes}_{\varphi^{-1}(\mathcal{\widehat{D}}_{\mathfrak{Y}})}^{L}\varphi^{-1}(\mathcal{M})\tilde{=}L\varphi^{*}(\mathcal{M})
\]
(where the last isomorphism is as sheaves of $\mathcal{O}_{\mathfrak{X}}$-modules).
One sets $\varphi^{!,(j)}:=L\varphi^{*,(j)}[d_{X/Y}]$ (where $d_{X/Y}=\text{dim}(X)-\text{dim}(Y)$). 

In fact, this is not quite Berthelot's definition, as he does not
use the cohomological completion; rather, he first defines the functor
in the case of a morphism $\varphi:\mathfrak{X}_{n}\to\mathfrak{Y}_{n}$
(the reductions mod $p^{n}$ of $\mathfrak{X}$ and $\mathfrak{Y}$,
respectively), and then applies the $\text{R}\lim$ functor. However,
the two notions agree on bounded complexes of coherent $\widehat{\mathcal{D}}_{\mathfrak{Y}}$-modules;
the version introduced here seems better suited to very general complexes.

In order to upgrade this to gauges, we must upgrade the bimodule $\mathcal{\widehat{D}}_{\mathfrak{X}\to\mathfrak{Y}}^{(0)}$
to a bimodule $\mathcal{\widehat{D}}_{\mathfrak{X}\to\mathfrak{Y}}^{(0,1)}$:
\begin{defn}
\label{def:Transfer-Bimod} We set 
\[
\mathcal{\widehat{D}}_{\mathfrak{X}\to\mathfrak{Y}}^{(0,1)}:=\bigoplus_{i\in\mathbb{Z}}\varphi^{*}\widehat{\mathcal{D}}_{\mathfrak{Y}}^{(0,1),i}
\]
The sheaf ${\displaystyle \bigoplus_{i\in\mathbb{Z}}\mathcal{\widehat{D}}_{\mathfrak{X}\to\mathfrak{Y}}^{(0,1),i}}$
is a graded sheaf of $D(W(k))$-modules; induced from the $D(W(k))$
action on $\mathcal{\widehat{D}}_{\mathfrak{Y}}^{(0,1)}$. Note that
$\mathcal{\widehat{D}}_{\mathfrak{X}\to\mathfrak{Y}}^{(0,1),-\infty}=\varphi^{*}\widehat{\mathcal{D}}_{\mathfrak{Y}}^{(0)}$. 
\end{defn}

Let us analyze this sheaf:
\begin{prop}
\label{prop:Basic-properties-of-the-transfer-module}1) For each $i\in\mathbb{Z}$
, the natural map $\iota:\varphi^{*}\mathcal{\widehat{D}}_{\mathfrak{Y}}^{(0,1),i}\to\varphi^{*}\widehat{\mathcal{D}}_{\mathfrak{Y}}^{(1)}$
(induced from the inclusion $\eta:\mathcal{\widehat{D}}_{\mathfrak{Y}}^{(0,1),i}\to\widehat{\mathcal{D}}_{\mathfrak{Y}}^{(1)}$)
is injective. 

2) The image $\iota(\varphi^{*}\mathcal{\widehat{D}}_{\mathfrak{Y}}^{(0,1),i})$
is equal to the sheaf whose local sections are given by $\{\Psi\in\varphi^{*}\mathcal{\widehat{D}}_{\mathfrak{Y}}^{(1)}|p^{i}\Psi\in\iota(\varphi^{*}\mathcal{\widehat{D}}_{\mathfrak{Y}}^{(0)})\}$.
In particular, $\mathcal{\widehat{D}}_{\mathfrak{X}\to\mathfrak{Y}}^{(0,1)}$
is a standard gauge.

3) The sheaf ${\displaystyle \mathcal{\widehat{D}}_{\mathfrak{X}\to\mathfrak{Y}}^{(0,1)}}$
carries the structure of a graded $(\mathcal{\widehat{D}}_{\mathfrak{X}}^{(0,1)},\varphi^{-1}(\mathcal{\widehat{D}}_{\mathfrak{Y}}^{(0,1)}))$-bimodule
as follows: we have the inclusions $\mathcal{\widehat{D}}_{\mathfrak{X}}^{(0)}\subset\mathcal{\widehat{D}}_{\mathfrak{X}}^{(1)}$,
so if $\Phi\in\mathcal{\widehat{D}}_{\mathfrak{X}}^{(0,1),i}$ and
$\Psi\in\mathcal{\widehat{D}}_{\mathfrak{X}\to\mathfrak{Y}}^{(0,1),j}$
are local sections, then $p^{i+j}(\Phi\cdot\Psi)=(p^{i}\Phi)\cdot(p^{j}\Psi)\in\mathcal{\widehat{D}}_{\mathfrak{X}\to\mathfrak{Y}}^{(0)}$.
Similarly, $\mathcal{\widehat{D}}_{\mathfrak{X}\to\mathfrak{Y}}^{(0,1)}$
becomes a right $\varphi^{-1}(\mathcal{\widehat{D}}_{\mathfrak{Y}}^{(0,1)})$-module
via $\varphi^{-1}\mathcal{\widehat{D}}_{\mathfrak{Y}}^{(0)}\subset\varphi^{-1}\mathcal{\widehat{D}}_{\mathfrak{Y}}^{(1)}$. 
\end{prop}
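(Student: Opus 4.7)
The plan is to work Zariski-locally. Since all three assertions are local on $\mathfrak{X}$ and $\mathfrak{Y}$, I reduce to the case $\mathfrak{Y}=\text{Specf}(\mathcal{A})$ with etale local coordinates $\{t_{1},\dots,t_{n}\}$ (and coordinate derivations $\partial_{1},\dots,\partial_{n}$) and $\mathfrak{X}=\text{Specf}(\mathcal{B})$. In this setting, \lemref{phi-pullback-of-D^i} identifies each $\varphi^{*}\widehat{\mathcal{D}}_{\mathfrak{Y}}^{(0,1),i}$ and $\varphi^{*}\widehat{\mathcal{D}}_{\mathfrak{Y}}^{(1)}$ with the $p$-adic completion of the ordinary base change $\mathcal{B}\otimes_{\mathcal{A}}(-)$, with no higher derived correction, so everything can be analyzed in terms of the explicit bases constructed in Section 3.

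For part (1), I use the PBW-style basis from \corref{Each-D^(i)-is-free} together with \lemref{Basic-Structure-of-D^(1)}. Unpacking the grading, $\widehat{D}_{\mathcal{A}}^{(0,1),i}$ has a topologically free $\mathcal{A}$-basis given by $\{f^{i-|J|}\partial^{I}(\partial^{[p]})^{J}\}_{|J|\leq i}\cup\{v^{|J|-i}\partial^{I}(\partial^{[p]})^{J}\}_{|J|>i}$ (with $0\leq i_{j}\leq p-1$), while $\widehat{D}_{\mathcal{A}}^{(1)}$ has a topologically free $\mathcal{A}$-basis $\{\partial^{I}(\partial^{[p]})^{J}\}$. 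Under $\eta$, using the identifications $f\mapsto 1$ and $v\mapsto p$ in the quotient by $(f-1)$, each basis element of $\widehat{D}_{\mathcal{A}}^{(0,1),i}$ maps to a (possibly trivial) power of $p$ times a basis element of $\widehat{D}_{\mathcal{A}}^{(1)}$. Since $p$ is a nonzerodivisor in both $\mathcal{A}$ and $\mathcal{B}$, this ``diagonal in powers of $p$'' form of $\eta$ survives the completed base change by $\mathcal{B}$, and hence $\iota$ is injective.

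For part (2), the forward containment $\iota(\varphi^{*}\widehat{\mathcal{D}}_{\mathfrak{Y}}^{(0,1),i})\subseteq\{\Psi\mid p^{i}\Psi\in\iota(\varphi^{*}\widehat{\mathcal{D}}_{\mathfrak{Y}}^{(0)})\}$ is immediate from the defining inclusion on $\mathfrak{Y}$ and functoriality of $\varphi^{*}$. For the reverse inclusion, I expand a local section $\Psi$ of $\varphi^{*}\widehat{\mathcal{D}}_{\mathfrak{Y}}^{(1)}$ in the pulled-back basis $\{\partial^{I}(\partial^{[p]})^{J}\}$ and check, exactly as in the proofs of \lemref{Basic-structure-of-D_A^(i)} and \lemref{Reduction-is-correct}, that the hypothesis $p^{i}\Psi\in\iota(\varphi^{*}\widehat{\mathcal{D}}_{\mathfrak{Y}}^{(0)})$ forces coefficient-wise $p$-divisibilities on $\Psi$ that are precisely those defining $\varphi^{*}\widehat{\mathcal{D}}_{\mathfrak{Y}}^{(0,1),i}$. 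This simultaneously identifies $\mathcal{\widehat{D}}_{\mathfrak{X}\to\mathfrak{Y}}^{(0,1)}$ with the standard gauge of \exaref{Basic-Construction-over-X} attached to the lattice pair $(\varphi^{*}\widehat{\mathcal{D}}_{\mathfrak{Y}}^{(0)},\varphi^{*}\widehat{\mathcal{D}}_{\mathfrak{Y}}^{(1)})$ inside the common localization $\varphi^{*}\widehat{\mathcal{D}}_{\mathfrak{Y}}^{(0)}[p^{-1}]\tilde{=}\varphi^{*}\widehat{\mathcal{D}}_{\mathfrak{Y}}^{(1)}[p^{-1}]$, and hence shows that $\mathcal{\widehat{D}}_{\mathfrak{X}\to\mathfrak{Y}}^{(0,1)}$ is standard.

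Finally, for part (3), the right $\varphi^{-1}\widehat{\mathcal{D}}_{\mathfrak{Y}}^{(0,1)}$-action is induced graded-piece-by-graded-piece from the multiplication $\widehat{\mathcal{D}}_{\mathfrak{Y}}^{(0,1),i}\cdot\widehat{\mathcal{D}}_{\mathfrak{Y}}^{(0,1),j}\subseteq\widehat{\mathcal{D}}_{\mathfrak{Y}}^{(0,1),i+j}$ by functoriality of $\varphi^{*}$. For the left action, I glue Berthelot's left $\widehat{\mathcal{D}}_{\mathfrak{X}}^{(j)}$-module structures on $\varphi^{*}\widehat{\mathcal{D}}_{\mathfrak{Y}}^{(j)}$ for $j=0,1$ provided by \thmref{Berthelot-Frob}: after inverting $p$ these two actions agree, as both restrict from the common action of $\widehat{\mathcal{D}}_{\mathfrak{X}}^{(1)}[p^{-1}]=\widehat{\mathcal{D}}_{\mathfrak{X}}^{(0)}[p^{-1}]$ on $\varphi^{*}\widehat{\mathcal{D}}_{\mathfrak{Y}}^{(1)}[p^{-1}]$, and the standard-gauge description of part (2) then forces this common action to preserve the integrality filtration $\{\varphi^{*}\widehat{\mathcal{D}}_{\mathfrak{Y}}^{(0,1),i}\}$ with exactly the grading-additivity claimed. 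The main obstacle is making the coefficient-wise argument of part (2) compatible with $p$-adic completion; the key input is that the completed base change of a free $\mathcal{A}$-module retains a canonical topological $\mathcal{B}$-basis, so the $p$-divisibility condition is checkable coefficient-by-coefficient, exactly as in the finite-order arguments of Section 3.
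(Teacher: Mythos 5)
Your proposal is correct, and for parts (2) and (3) it follows essentially the paper's own route: localize to the coordinatized affine case, reduce to the explicit coefficient computation for finite-order operators carried out in \lemref{Basic-structure-of-D_A^(i)}, and then pass to the completion. (The paper does spell out the one step your sketch leaves implicit: for the reverse inclusion in (2) one writes $p^{i}\Psi=\sum_{j}p^{j}\Psi_{j}$ with $\Psi_{j}\in\mathcal{B}\otimes_{\mathcal{A}}D_{\mathcal{A}}^{(0)}$ and splits the sum at $j=i$, so the finite-order divisibility argument applies to the first block and the tail already lies in $\mathcal{B}\widehat{\otimes}_{\mathcal{A}}\widehat{D}_{\mathcal{A}}^{(0)}$.) The genuine divergence is part (1). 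The paper argues by reduction mod $p$: freeness of $D_{A}^{(0,1)}$ over $D(A)$ gives $\ker(f_{\infty})=\mathrm{im}(v)$ on $\varphi^{*}D_{A}^{(0,1)}$, and then an iteration ($m\in\ker(\iota)$ forces $m\in v^{N}\varphi^{*}\widehat{D}_{\mathcal{A}}^{(0,1),i+N}$ for all $N$, hence $\overline{m}=0$ by $v$-adic separatedness mod $p$, hence $m\in\bigcap_{n}p^{n}(\cdots)=0$). You instead exhibit $\iota$ as a bijection of topological $\mathcal{B}$-bases composed with multiplication by powers of $p$, which is injective because coordinates in the $p$-adic completion of a free module are well defined and $\mathcal{B}$ is $p$-torsion-free. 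Your version is more explicit and avoids the bootstrap, at the cost of leaning entirely on the identification of $\varphi^{*}\widehat{\mathcal{D}}_{\mathfrak{Y}}^{(0,1),i}$ with the completion of the free module $\mathcal{B}\otimes_{\mathcal{A}}D_{\mathcal{A}}^{(0,1),i}$ (which \lemref{phi-pullback-of-D^i} and \corref{Each-D^(i)-is-free} do supply); the paper's argument is the one that survives when no explicit basis is available. In part (3), your gluing of the level-$0$ and level-$1$ left actions via their agreement after inverting $p$ is exactly the compatibility of Berthelot's transfer structures across levels that the paper also invokes without comment, so you are at the same level of rigor there.
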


\begin{proof}
1) As the statement is local, we can suppose $\mathfrak{Y}=\text{Specf}(\mathcal{A})$
and $\mathfrak{X}=\text{Specf}(\mathcal{B})$ where $\mathcal{A}$
and $\mathcal{B}$ admit local coordinates; let the reductions mod
$p$ be $Y=\text{Spec}(A)$ and $X=\text{Spec}(B)$. By \corref{Local-coords-over-A=00005Bf,v=00005D}
we know that $D_{A}^{(0,1)}$ is a free graded $A[f,v]$-module, therefore
$\varphi^{*}D_{A}^{(0,1)}=B\otimes_{A}D_{A}^{(0,1)}$ is a free graded
$B[f,v]$-module; and we have that the kernel of $f_{\infty}:\varphi^{*}D_{A}^{(0,1),i}\to\varphi^{*}D_{A}^{(0,1),\infty}=\varphi^{*}D_{A}^{(1)}$
is exactly the image of $v:\varphi^{*}D_{A}^{(0,1),i+1}\to\varphi^{*}D_{A}^{(0,1),i}$. 

Now consider $m\in\text{ker}(\iota:\varphi^{*}\widehat{D}_{\mathcal{A}}^{(0,1),i}\to\varphi^{*}\widehat{D}_{\mathcal{A}}^{(1)})$.
The reduction mod $p$ of $\iota$ agrees with $f_{\infty}:\varphi^{*}D_{A}^{(0,1),i}\to\varphi^{*}D_{A}^{(1)}$.
Let $\overline{m}$ denote the image of $m$ in $\varphi^{*}D_{A}^{(0,1),i}$.
Then $\overline{m}\in\text{ker}(\varphi^{*}D_{A}^{(0,1),i}\to\varphi^{*}D_{A}^{(1)})=v\cdot\varphi^{*}D_{A}^{(0,1),i+1}$.
So, since $fv=p,$ we have $m\in v\cdot\varphi^{*}\widehat{D}_{\mathcal{A}}^{(0,1),i+1}$;
write $m=vm'$. By definition, the composition $\widehat{D}_{\mathcal{A}}^{(0,1),i+1}\xrightarrow{v}\widehat{D}_{\mathcal{A}}^{(0,1),i}\xrightarrow{\eta}\widehat{D}_{\mathcal{A}}^{(1)}$
is equal to $p\cdot\eta:\widehat{D}_{\mathcal{A}}^{(0,1),i+1}\to\widehat{D}_{\mathcal{A}}^{(1)}$;
thus also $\iota\circ v=p\cdot\iota$ and so $\iota(m)=\iota(vm')=p\iota(m')=0$;
therefore $m'\in\text{ker}(\iota)$ as $\varphi^{*}\widehat{D}_{\mathcal{A}}^{(1)}$
is $p$-torsion-free\footnote{Indeed, it is the inverse limit of the $W_{n}(k)$-flat modules $(\varphi^{*}\widehat{D}_{\mathcal{A}}^{(1)})/p^{n}=(\mathcal{B}/p^{n})\otimes_{\mathcal{A}/p^{n}}(\widehat{D}_{\mathcal{A}}^{(1)}/p^{n})$}.
Iterating the argument, we see that $m\in v^{N}\varphi^{*}\widehat{D}_{\mathcal{A}}^{(0,1),i+N}$
for all $N>0$; reducing mod $p$, this forces $\overline{m}=0$ since
(again by \corref{Local-coords-over-A=00005Bf,v=00005D}) $\varphi^{*}D_{A}^{(0,1)}$
is $v$-adically seperated. Thus $m=pm_{1}$; and then $\iota(m_{1})=0$
since $\varphi^{*}\widehat{D}_{\mathcal{A}}^{(1)}$ is $p$-torsion-free;
continuing in this way we obtain $m\in\bigcap_{n}p^{n}\cdot\varphi^{*}\widehat{D}_{\mathcal{A}}^{(0,1),i}=0$. 

2) For each $i\geq0$ we have a short exact sequence 
\[
0\to\mathcal{\widehat{D}}_{\mathfrak{Y}}^{(0,1),i}\to\mathcal{\widehat{D}}_{\mathfrak{Y}}^{(0,1),i+1}\to\mathcal{F}_{i}\to0
\]
 where $\mathcal{F}_{i}$ is a sheaf which is annihilated by $p$.
By the injectivity just proved (and the equality $L\varphi^{*}\widehat{\mathcal{D}}_{\mathfrak{Y}}^{(0,1),i}=\varphi^{*}\widehat{\mathcal{D}}_{\mathfrak{Y}}^{(0,1),i}$)
we obtain the short exact sequence 
\[
0\to\varphi^{*}\mathcal{\widehat{D}}_{\mathfrak{Y}}^{(0,1),i}\to\varphi^{*}\mathcal{\widehat{D}}_{\mathfrak{Y}}^{(0,1),i+1}\to\mathcal{H}^{0}(L\varphi^{*}\mathcal{F}_{i})\to0
\]
and, since $\mathcal{F}_{i}$ is annihilated by $p$, we have $\mathcal{H}^{0}(L\varphi^{*}\mathcal{F}_{i})=\mathcal{O}_{X}\otimes_{\varphi^{-1}(\mathcal{O}_{Y})}\varphi^{-1}(\mathcal{F}_{i})$.
So we obtain $p\cdot\varphi^{*}\mathcal{\widehat{D}}_{\mathfrak{Y}}^{(0,1),i+1}\subset\varphi^{*}\mathcal{\widehat{D}}_{\mathfrak{Y}}^{(0,1),i}$,
and since $\varphi^{*}\mathcal{\widehat{D}}_{\mathfrak{Y}}^{(0,1),0}=\varphi^{*}\mathcal{\widehat{D}}_{\mathfrak{Y}}^{(0)}$,
we see inductively that $\varphi^{*}\mathcal{\widehat{D}}_{\mathfrak{Y}}^{(0,1),i}\subset\{\Psi\in\varphi^{*}\mathcal{\widehat{D}}_{\mathfrak{Y}}^{(1)}|p^{i}\Psi\in\iota(\varphi^{*}\mathcal{\widehat{D}}_{\mathfrak{Y}}^{(0)})\}$
for all $i$. 

For the converse direction, we work locally and assume $\mathfrak{X}=\text{Specf}(\mathcal{B})$
and $\mathfrak{Y}=\text{Specf}(\mathcal{A})$ where $\mathcal{A}$
possess etale local coordinates $\{t_{1},\dots,t_{n}\}$. Then we
have that $\Gamma(\varphi^{*}\mathcal{\widehat{D}}_{\mathfrak{Y}}^{(1)})=\mathcal{B}\widehat{\otimes}_{\mathcal{A}}\widehat{D}_{\mathcal{A}}^{(1)}\tilde{=}\mathcal{B}\widehat{\otimes}_{\mathcal{A}}D_{\mathcal{A}}^{(1)}$.
As in the proof of \lemref{Basic-structure-of-D_A^(i)}, we will consider
the finite-order analogue first. From (the proof of) that lemma, it
follows that, any element of $\mathcal{B}\otimes_{\mathcal{A}}D_{\mathcal{A}}^{(1)}$
admits a unique expression of the form 
\[
\Psi=\sum_{I,J}b_{I,J}\frac{\partial_{1}^{i_{1}+pj_{1}}\cdots\partial_{n}^{i_{n}+pj_{n}}}{(p!)^{|J|}}
\]
for which $0\leq i_{j}<p$, all $b_{I,J}\in\mathcal{B}$, and the
sum is finite. We have that $p^{i}\Psi\in\mathcal{B}\otimes_{\mathcal{A}}D_{\mathcal{A}}^{(0)}$
iff ${\displaystyle \frac{p^{i}}{p^{|J|}}b_{I,J}}\in\mathcal{B}$.
So, if $|J|>i$ we can conclude (again, as in the proof of \lemref{Basic-structure-of-D_A^(i)})
that
\[
b_{I,J}\frac{\partial_{1}^{i_{1}+pj_{1}}\cdots\partial_{n}^{i_{n}+pj_{n}}}{(p!)^{|J|}}=\tilde{b}_{I,J}\cdot\partial_{1}^{i_{1}+pj'_{1}}\cdots\partial_{n}^{i_{n}+pj'_{n}}\cdot(\partial_{1}^{[p]})^{j''_{1}}\cdots\partial_{n}^{i_{n}}(\partial_{n}^{[p]})^{j''_{n}}
\]
where $\tilde{b}_{I,J}\in\mathcal{B}$, and $j''_{1}+\dots+j_{n}''=i$.
In particular $\Psi$ is contained in the $\mathcal{B}$-submodule
spanned by $\{\partial_{1}^{i_{1}}\cdots\partial_{n}^{i_{n}}\cdot(\partial_{1}^{[p]})^{j_{1}}\cdots(\partial_{n}^{[p]})^{j_{n}}\}$
where $j_{1}+\dots+j_{n}\le i$, which is exactly the image of $\mathcal{B}\otimes_{\mathcal{A}}D_{\mathcal{A}}^{(0,1),i}$
in $\mathcal{B}\otimes_{\mathcal{A}}D_{\mathcal{A}}^{(1)}$. 

Now, if $\Psi\in\mathcal{B}\widehat{\otimes}_{\mathcal{A}}\widehat{D}_{\mathcal{A}}^{(1)}$
is such that $p^{i}\Psi\in\mathcal{B}\widehat{\otimes}_{\mathcal{A}}\widehat{D}_{\mathcal{A}}^{(0)}$,
then we can write ${\displaystyle p^{i}\Psi=\sum_{j=0}^{\infty}p^{j}\Psi_{j}}$
where $\Psi_{j}\in\mathcal{B}\otimes_{\mathcal{A}}D_{\mathcal{A}}^{(0)}$.
Therefore 
\[
\Psi=\sum_{j=0}^{i}p^{j-i}\Psi_{j}+\sum_{j=i+1}^{\infty}p^{j-i}\Psi_{j}
\]
where, by the previous paragraph, the first sum is contained in the
$\mathcal{B}$-submodule spanned by $\{\partial_{1}^{i_{1}}\cdots\partial_{n}^{i_{n}}\cdot(\partial_{1}^{[p]})^{j_{1}}\cdots(\partial_{n}^{[p]})^{j_{n}}\}$
where $j_{1}+\dots+j_{n}\le i$, and the second sum is contained in
$\mathcal{B}\widehat{\otimes}_{\mathcal{A}}\widehat{D}_{\mathcal{A}}^{(0)}$.
Thus $\Psi$ is in the image of $\mathcal{B}\widehat{\otimes}_{\mathcal{A}}\widehat{D}_{\mathcal{A}}^{(0,1),i}$
as required. It follows directly from the definition that $\mathcal{\widehat{D}}_{\mathfrak{X}\to\mathfrak{Y}}^{(0,1)}$
is standard. Part $3)$ of the proposition follows immediately.
\end{proof}
\begin{rem}
\label{rem:Direct-defn-of-transfer-bimodule}Combining the previous
proposition with \lemref{phi-pullback-of-D^i}, we also obtain the
description 
\[
\mathcal{\widehat{D}}_{\mathfrak{X}\to\mathfrak{Y}}^{(0,1)}\tilde{=}L\varphi^{*}(\mathcal{\widehat{D}}_{\mathfrak{Y}}^{(0,1)})=D(\mathcal{O}_{\mathfrak{X}})\widehat{\otimes}_{\varphi^{-1}(D(\mathcal{O}_{\mathfrak{Y}}))}^{L}\varphi^{-1}(\mathcal{\widehat{D}}_{\mathfrak{Y}}^{(0,1)})
\]
in the category $D_{cc}(\mathcal{G}(D(\mathcal{O}_{\mathfrak{X}}))$. 
\end{rem}

This leads to the 
\begin{defn}
\label{def:Pullback!}Let $\mathcal{M}^{\cdot}\in D_{cc}(\mathcal{G}(\mathcal{\widehat{D}}_{\mathfrak{Y}}^{(0,1)}))$.
Then we define 
\[
L\varphi^{*}(\mathcal{M}^{\cdot}):=\mathcal{\widehat{D}}_{\mathfrak{X}\to\mathfrak{Y}}^{(0,1)}\widehat{\otimes}_{\varphi^{-1}(\mathcal{D}_{\mathfrak{Y}}^{(0,1)})}^{L}\varphi^{-1}(\mathcal{M}^{\cdot})\in\mathcal{M}^{\cdot}\in D_{cc}(\mathcal{G}(\mathcal{\widehat{D}}_{\mathfrak{X}}^{(0,1)}))
\]
 where, as usual $\widehat{?}$ denotes graded derived completion.
The induced left action of $\mathcal{D}_{\mathfrak{X}}^{(0,1)}$ given
by the above definition; set $\varphi^{!}:=L\varphi^{*}[d_{X/Y}]$. 
\end{defn}

In order to study this definition, we shall use the corresponding
mod $p$ theory; as usual this can be defined by reduction mod $p$
when the schemes $X$ and $Y$ are liftable, but it actually exists
for all $\varphi:X\to Y$. This is contained in the the following
\begin{prop}
\label{prop:pull-back-in-pos-char}Let $\varphi:X\to Y$ be a morphism
of smooth varieties over $k$. 

1) There is a map of sheaves $\alpha:\mathfrak{l}_{X}\to\varphi^{*}\mathcal{D}_{Y}^{(0,1),1}$
(where $\mathfrak{l}_{X}$ is defined in \defref{L}). 

2) Let $\beta:\mathcal{T}_{X}\to\varphi^{*}\mathcal{D}_{Y}^{(0,1),0}=\varphi^{*}\mathcal{D}_{Y}^{(0)}$
denote the natural map. There is a left action of $\mathcal{D}_{X}^{(0,1)}$
on $\varphi^{*}\mathcal{D}_{Y}^{(0,1)}$ satisfying $\partial\cdot(1\otimes1)=\beta(\partial)$
for all $\partial\in\mathcal{T}_{X}$ and $\delta\cdot(1\otimes1)=\alpha(\delta)$
for all $\delta\in\mathfrak{l}_{X}$. This action commutes with the
right action of $\varphi^{-1}(\mathcal{D}_{Y}^{(0,1)})$ on $\varphi^{*}\mathcal{D}_{Y}^{(0,1)}$. 
\end{prop}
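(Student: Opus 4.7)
The plan is to work locally where $\varphi$ can be lifted to a map of formal schemes, deduce the desired action from \propref{Basic-properties-of-the-transfer-module}, and then descend to a global construction by proving lift-independence. Specifically, I would choose an open affine $U = \text{Spec}(A) \subset X$ with local coordinates $\{x_j\}$, mapping into an open affine $V = \text{Spec}(B) \subset Y$ with coordinates $\{t_i\}$; lift $U, V$ to smooth formal affine schemes $\mathfrak{U}, \mathfrak{V}$ over $W(k)$, and choose any lifts $\tilde{\varphi}_i \in \mathcal{B}$ of $\varphi^*(t_i)$, determining a lift $\tilde{\varphi}\colon \mathfrak{U} \to \mathfrak{V}$ of $\varphi|_U$. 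Then \propref{Basic-properties-of-the-transfer-module} furnishes a $(\widehat{\mathcal{D}}_{\mathfrak{U}}^{(0,1)}, \tilde{\varphi}^{-1}\widehat{\mathcal{D}}_{\mathfrak{V}}^{(0,1)})$-bimodule structure on $\widehat{\mathcal{D}}_{\mathfrak{U}\to\mathfrak{V}}^{(0,1)}$ with commuting actions.

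To descend this to positive characteristic, I would use \lemref{phi-pullback-of-D^i}: each $\widehat{\mathcal{D}}_{\mathfrak{U}\to\mathfrak{V}}^{(0,1),i} = \tilde{\varphi}^{*}\widehat{\mathcal{D}}_{\mathfrak{V}}^{(0,1),i}$ is an underived pullback of a flat sheaf, and the standardness established in \propref{Basic-properties-of-the-transfer-module}(2) ensures $p$-torsion-freeness, giving $\widehat{\mathcal{D}}_{\mathfrak{U}\to\mathfrak{V}}^{(0,1),i}/p \cong \varphi^{*}\mathcal{D}_Y^{(0,1),i}|_U$. Reducing the bimodule structure mod $p$ produces a left $\mathcal{D}_X^{(0,1)}|_U$-action on $\varphi^{*}\mathcal{D}_Y^{(0,1)}|_U$ commuting with the right $\varphi^{-1}\mathcal{D}_Y^{(0,1)}$-action, and I define $\alpha(\delta) := \delta \cdot (1 \otimes 1)$ for $\delta \in \mathfrak{l}_X|_U \subset \mathcal{D}_X^{(0,1),1}|_U$. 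Using Berthelot's explicit description of the $\widehat{\mathcal{D}}_{\mathfrak{U}}^{(0)}$-action on the transfer bimodule, a derivation $\partial \in \mathcal{T}_X|_U$ satisfies $\partial \cdot (1 \otimes 1) = \sum_i \partial(\tilde{\varphi}_i) \otimes \partial_{t_i}$, which reduces mod $p$ to $\beta(\partial)$, as required.

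The main obstacle is independence of the lift $\tilde{\varphi}$, which is needed to glue the local data into a global sheaf map $\alpha$ and global action. For a second lift $\tilde{\varphi}'_i = \tilde{\varphi}_i + p h_i$, the comparison for a derivation $\partial \in \mathcal{T}_X$ is immediate: the difference in the actions on $1 \otimes 1$ is $\sum_i p\partial(h_i) \otimes \partial_{t_i} \equiv 0 \pmod p$. The level-$p$ case is the delicate step. For $g^p \partial_{x_j}^{[p]} \in \mathfrak{l}_X$, the action on $1 \otimes 1$ is determined by the Hasse--Schmidt product rule $\partial_{x_j}^{[p]}(\prod_l \tilde{\varphi}_{i_l}) = \sum_{\sum r_l = p} \prod_l \partial_{x_j}^{[r_l]}(\tilde{\varphi}_{i_l})$: the coefficient of $\partial_{t_i}^{[p]}$ works out to $g^p (\partial_{x_j}\tilde{\varphi}_i)^p$, whose reduction mod $p$ is independent of the lift by the identity $\partial^{[p]}(h^p) = (\partial h)^p$ proven in \corref{Local-coords-over-A=00005Bf,v=00005D}; the remaining lower-order contributions lie in $f \cdot \varphi^{*}\widehat{\mathcal{D}}_{\mathfrak{V}}^{(0)}$ and pick up $p$-multiples under the substitution $\tilde{\varphi}_i \leadsto \tilde{\varphi}_i + p h_i$, hence vanish mod $p$. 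Once this is established, the local actions glue over overlaps to yield the required global $\mathcal{D}_X^{(0,1)}$-action on $\varphi^{*}\mathcal{D}_Y^{(0,1)}$ and global map $\alpha\colon\mathfrak{l}_X \to \varphi^{*}\mathcal{D}_Y^{(0,1),1}$, with commutation of the left action with the right $\varphi^{-1}\mathcal{D}_Y^{(0,1)}$-action inherited directly from the lifted bimodule.
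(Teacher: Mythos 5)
Your proposal is correct and its overall architecture coincides with the paper's: restrict to affines, lift $\varphi$ to a map of smooth affine formal schemes, invoke \propref{Basic-properties-of-the-transfer-module} to get the lifted bimodule structure on $\widehat{\mathcal{D}}_{\mathfrak{U}\to\mathfrak{V}}^{(0,1)}$, and reduce mod $p$ using \lemref{phi-pullback-of-D^i} and $p$-torsion-freeness. The genuine difference is in how global well-definedness is handled. The paper first constructs $\alpha$ \emph{intrinsically} over $k$ (part 1): for $\Phi\in\mathfrak{l}_X$ it forms the composite differential operator $\Phi\circ\varphi^{\#}:\varphi^{-1}(\mathcal{O}_Y)\to\mathcal{O}_X$ and checks in coordinates that it lies in $\mathcal{O}_X\otimes_{\varphi^{-1}(\mathcal{O}_Y)}\varphi^{-1}(\mathfrak{l}_Y)$; since the values of the would-be action on the generators $\mathcal{T}_X$ and $\mathfrak{l}_X$ are then manifestly global and determine the action uniquely (the target is generated by $1\otimes1$ over the right action), the local lifted actions glue for free. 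You instead define $\alpha$ through the lift and prove lift-independence by an explicit Fa\`a di Bruno computation; this buys a concrete formula but puts the burden on the comparison of two lifts, which the paper's route avoids entirely. Your computation is essentially sound: the top coefficient $(\partial_{x_j}\tilde{\varphi}_i)^p$ is lift-independent mod $p$ already by the binomial theorem (the identity $\partial^{[p]}(h^p)=(\partial h)^p$, which you cite from the wrong place --- it is proved in the corollary following \lemref{O^p-action}, not in \corref{Local-coords-over-A=00005Bf,v=00005D} --- is what makes the reduced coefficient expressible in terms of $\varphi^{\#}$ over $k$, i.e.\ what identifies your $\alpha$ with the intrinsic one), and the changes in the lower-order terms are divisible by $p$ and land in $p\cdot\varphi^{*}\widehat{\mathcal{D}}_{\mathfrak{V}}^{(0,1),1}$, hence die mod $p$. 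One point you should make explicit for the gluing step: on an overlap the two charts may use different coordinates and different formal lifts of $U$, $V$, and of $\varphi$; to reduce this to your two-lift comparison you need to invoke the uniqueness of an action with prescribed values on $\mathcal{O}_X$, $\mathcal{T}_X$, $\mathfrak{l}_X$, $f$, $v$ (which is exactly the observation the paper leans on), so that only the values $\partial\cdot(1\otimes1)$ and $\delta\cdot(1\otimes1)$ --- not the whole action --- need to be compared.
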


\begin{proof}
1) Let $\Phi$ be a local section of $\mathfrak{l}_{X}$. Composing
the map $\varphi^{\#}:\varphi^{-1}(\mathcal{O}_{Y})\to\mathcal{O}_{X}$
with $\Phi$ gives a differential operator from $\varphi^{-1}(\mathcal{O}_{Y})$
to $\mathcal{O}_{X}$; call this operator $\Phi'$. We claim $\Phi'\in\mathcal{O}_{X}\otimes_{\varphi^{-1}(\mathcal{O}_{Y})}\varphi^{-1}\mathfrak{l}_{Y}$
(here, we are using the fact that the sheaf $\mathfrak{l}_{Y}$ is
a subsheaf of $\mathcal{D}iff_{Y}$ and that $\mathcal{O}_{X}\otimes_{\varphi^{-1}(\mathcal{O}_{Y})}\varphi^{-1}(\mathcal{D}iff_{Y})\tilde{=}\mathcal{D}iff(\varphi^{-1}(\mathcal{O}_{Y}),\mathcal{O}_{X})$). 

Let $U\subset X$ and $V\subset Y$ be open subsets which possess
local coordinates, such that $\varphi(U)\subset V$. As in \lemref{O^p-action}
write 
\[
\Phi=\sum_{i=1}^{n}a_{i}^{p}\partial_{i}^{[p]}+\sum_{I}a_{I}\partial^{I}
\]
where $a_{i},a_{I}\in\mathcal{O}_{X}(U)$. The map $(\sum_{I}a_{I}\partial^{I})\circ\varphi^{\#}:\varphi^{-1}(\mathcal{O}_{V})\to\mathcal{O}_{U}$
is a differential operator which satisfies $((\sum_{I}a_{I}\partial^{I})\circ\varphi^{\#})(g^{p}\cdot h)=\varphi^{\#}(g^{p})\cdot((\sum_{I}a_{I}\partial^{I})\circ\varphi^{\#})(h)$
for all $g,h\in\mathcal{O}_{V}$. From this we conclude 
\[
(\sum_{I}a_{I}\partial^{I})\circ\varphi^{\#}=\sum b_{J}\partial^{J}
\]
where $b_{J}\in\mathcal{O}_{X}(U)$ and now $\partial^{J}=\partial_{1}^{j_{1}}\cdots\partial_{r}^{j_{r}}$
are coordinate derivations on $V$ (to prove this, write the differential
operator $(\sum_{I}a_{I}\partial^{I})\circ\varphi^{\#}$ in terms
of $\partial_{1}^{[j_{1}]}\cdots\partial_{r}^{[j_{r}]}$ and then
use the linearity over $\varphi^{\#}(g^{p})$ to deduce that there
are no terms with any $j_{i}\geq p$). 

Similarly, the map ${\displaystyle \sum_{i=1}^{n}a_{i}^{p}\partial_{i}^{[p]}}\circ\varphi^{\#}:\varphi^{-1}(\mathcal{O}_{V})\to\mathcal{O}_{U}$
is a differential operator of order $\leq p$, whose action on any
$p$th power in $\varphi^{-1}(\mathcal{O}_{V})$ is a $p$th power
in $\mathcal{O}_{U}$. From this one easily sees 
\[
(\sum_{i=1}^{n}a_{i}^{p}\partial_{i}^{[p]})\circ\varphi^{\#}=\sum_{j=1}^{r}b_{j}^{p}\partial_{j}^{[p]}+\sum_{J}b_{J}\partial^{J}
\]
for some $b_{j},b_{J}\in\mathcal{O}_{U}$. So we conclude $\Phi'\in\mathcal{O}_{X}\otimes_{\varphi^{-1}(\mathcal{O}_{Y})}\varphi^{-1}\mathfrak{l}_{Y}$
as desired. Further, since $\mathfrak{l}_{Y}\subset\mathcal{D}_{Y}^{(0,1),1}$
we obtain $\varphi^{-1}(\mathfrak{l}_{Y})\subset\varphi^{-1}(\mathcal{D}_{Y}^{(0,1),1})$
and therefore a map $\mathcal{O}_{X}\otimes_{\varphi^{-1}(\mathcal{O}_{Y})}\varphi^{-1}(\mathfrak{l}_{Y})\to\mathcal{O}_{X}\otimes_{\varphi^{-1}(\mathcal{O}_{Y})}\varphi^{-1}(\mathcal{D}_{Y}^{(0,1),1})$;
we can now define $\alpha$ as the composition. 

2) It suffices to check this locally. Restrict to an open affine $U\subset X$
which posses etale local coordinates, and we may suppose $\varphi(U)\subset V$,
where $V$ also possesses etale local coordinates. Writing $U=\text{Spec}(A)$
and $V=\text{Spec}(B)$, we let $\mathcal{A}$ and $\mathcal{B}$
be flat lifts of $A$ and $B$ to $W(k)$, as in the proof of \lemref{linear-independance-over-D_0-bar}
above. Let $\varphi^{\#}:\mathcal{B}\to\mathcal{A}$ be a lift of
$\varphi^{\#}:B\to A$ (these always exist for affine neighborhoods
which posses local coordinates, by the infinitesimal lifting property).
Then the construction of \defref{Transfer-Bimod} provides an action
of $D_{\mathcal{B}}^{(0,1)}$ on $\varphi^{*}(D_{\mathcal{A}}^{(0,1)})$
which commutes with the obvious right action of $D_{\mathcal{A}}^{(0,1)}$.
The reduction mod $p$ of this action, when restricted to $\mathcal{T}_{X}\subset\mathcal{D}_{X}^{(0)}$
and $\mathfrak{l}_{X}\subset\mathcal{D}_{X}^{(0,1),1}$ clearly agrees
with the map described above. Thus the map extends (uniquely) to an
action, as claimed. 
\end{proof}
Thus we have
\begin{defn}
Let $\mathcal{D}_{X\to Y}^{(0,1)}:=\varphi^{*}\mathcal{D}_{Y}^{(0,1)}$,
equipped with the structure of a graded $(\mathcal{D}_{X}^{(0,1)},\varphi^{-1}(\mathcal{D}_{Y}^{(0,1)}))$-bimoddule
as above. Let $\mathcal{M}^{\cdot}\in D(\mathcal{G}(\mathcal{D}_{Y}^{(0,1)}))$.
Then we define $L\varphi^{*}(\mathcal{M}^{\cdot}):=\mathcal{D}_{X\to Y}^{(0,1)}\otimes_{\varphi^{-1}(\mathcal{D}_{Y}^{(0,1)})}^{L}\varphi^{-1}(\mathcal{M}^{\cdot})$
with the induced left action of $\mathcal{D}_{X}^{(0,1)}$ given by
the above. Set $\varphi^{!}=L\varphi^{*}[d_{X/Y}]$. The functor $L\varphi^{*}$
takes $D_{\text{qcoh}}(\mathcal{G}(\mathcal{D}_{Y}^{(0,1)}))$ to
$D_{\text{qcoh}}(\mathcal{G}(\mathcal{D}_{X}^{(0,1)}))$. 
\end{defn}

\begin{rem}
In fact, as an object in $D(\mathcal{G}(D(\mathcal{O}_{X})))$, we
have that $L\varphi^{*}(\mathcal{M}^{\cdot})$ agrees with the usual
pullback of $\mathcal{O}$-modules. This follows directly from the
isomorphism $\varphi^{*}\mathcal{D}_{Y}^{(0,1)}\tilde{=}\mathcal{O}_{X}\otimes_{\varphi^{-1}(\mathcal{O}_{Y})}\varphi^{-1}(\mathcal{D}_{Y}^{(0,1)})$,
and the fact that $\mathcal{D}_{Y}^{(0,1)}$ is flat over $\mathcal{O}_{Y}$.
The analogous fact is also true for $\varphi:\mathfrak{X}\to\mathfrak{Y}$;
making use of \remref{Direct-defn-of-transfer-bimodule}. It follows
that $L\varphi^{*}$ has finite homological dimension.
\end{rem}

Now we record some basic properties of these functors: 
\begin{lem}
\label{lem:composition-of-pullbacks}If $\psi:\mathfrak{Y}\to\mathfrak{Z}$,
there is an isomorphism of functors \linebreak{}
$L\varphi^{*}\circ L\psi^{*}\tilde{=}L(\psi\circ\varphi)^{*}$. The
same result holds for $\varphi:X\to Y$ and $\psi:Y\to Z$. 
\end{lem}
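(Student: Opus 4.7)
The plan is to reduce the composition of pullbacks to a natural identification of transfer bimodules, after which the lemma follows by associativity of the completed derived tensor product. Concretely, in the $\mathfrak{X}\to\mathfrak{Y}\to\mathfrak{Z}$ setting I would first establish a canonical isomorphism of graded $(\widehat{\mathcal{D}}_{\mathfrak{X}}^{(0,1)}, (\psi\circ\varphi)^{-1}\widehat{\mathcal{D}}_{\mathfrak{Z}}^{(0,1)})$-bimodules
\[
\widehat{\mathcal{D}}_{\mathfrak{X}\to\mathfrak{Y}}^{(0,1)}\widehat{\otimes}^L_{\varphi^{-1}(\widehat{\mathcal{D}}_{\mathfrak{Y}}^{(0,1)})}\varphi^{-1}(\widehat{\mathcal{D}}_{\mathfrak{Y}\to\mathfrak{Z}}^{(0,1)})\tilde{\to}\widehat{\mathcal{D}}_{\mathfrak{X}\to\mathfrak{Z}}^{(0,1)}.
\]
Using Remark~\ref{rem:Direct-defn-of-transfer-bimodule}, the underlying identification in $D_{cc}(\mathcal{G}(D(\mathcal{O}_{\mathfrak{X}})))$ reduces to the standard transitivity of pullback of $\mathcal{O}$-modules, which follows from associativity of the (graded, cohomologically completed) derived tensor product combined with the compatibility of $\varphi^{-1}$ with graded cohomological completion afforded by Proposition~\ref{prop:Push-and-complete}.

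Next, I would check that the two left $\widehat{\mathcal{D}}_{\mathfrak{X}}^{(0,1)}$-actions on the two sides agree (the right action by $(\psi\circ\varphi)^{-1}\widehat{\mathcal{D}}_{\mathfrak{Z}}^{(0,1)}$ is automatic from the construction). This is a local verification: choose affine opens admitting coordinates and compatible ring-theoretic lifts $\varphi^{\#}$, $\psi^{\#}$, and observe that both actions are characterized by their restriction to $\widehat{\mathcal{D}}_{\mathfrak{X}}^{(0)}$, where they coincide with the classical Berthelot transfer action, together with the scaling identity $p^{i+j}(\Phi\cdot\Psi)=(p^{i}\Phi)\cdot(p^{j}\Psi)$ from Proposition~\ref{prop:Basic-properties-of-the-transfer-module}. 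Both properties are visibly preserved by composition, so the actions match.

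Once the bimodule identification is in place, applying associativity of $\widehat{\otimes}^L$ to an arbitrary $\mathcal{M}^{\cdot}$ yields
\[
L\varphi^{*}L\psi^{*}\mathcal{M}^{\cdot}\tilde{=}\bigl(\widehat{\mathcal{D}}_{\mathfrak{X}\to\mathfrak{Y}}^{(0,1)}\widehat{\otimes}^L_{\varphi^{-1}\widehat{\mathcal{D}}_{\mathfrak{Y}}^{(0,1)}}\varphi^{-1}\widehat{\mathcal{D}}_{\mathfrak{Y}\to\mathfrak{Z}}^{(0,1)}\bigr)\widehat{\otimes}^L_{(\psi\circ\varphi)^{-1}\widehat{\mathcal{D}}_{\mathfrak{Z}}^{(0,1)}}(\psi\circ\varphi)^{-1}\mathcal{M}^{\cdot},
\]
which by the preceding identification is $L(\psi\circ\varphi)^{*}\mathcal{M}^{\cdot}$. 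The corresponding statement for $\varphi:X\to Y$ and $\psi:Y\to Z$ is strictly easier, as no completions intervene; one runs the same argument using Proposition~\ref{prop:pull-back-in-pos-char} and matches actions on the generators $\partial_{i}\in\mathcal{T}_{X}$ and $\delta\in\mathfrak{l}_{X}$. The main obstacle should be the bookkeeping in step two: the left action on the transfer bimodule was defined somewhat indirectly, through the inclusion into the $p$-inverted algebra together with the scaling by powers of $p$, and one must check carefully that this implicit definition is genuinely functorial in the pair $(\varphi,\psi)$. Everything else is essentially formal.
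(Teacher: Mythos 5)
Your proposal is correct and follows essentially the same route as the paper: both reduce the statement to the transitivity isomorphism $\widehat{\mathcal{D}}_{\mathfrak{X}\to\mathfrak{Y}}^{(0,1)}\widehat{\otimes}^{L}_{\varphi^{-1}(\widehat{\mathcal{D}}_{\mathfrak{Y}}^{(0,1)})}\varphi^{-1}(\widehat{\mathcal{D}}_{\mathfrak{Y}\to\mathfrak{Z}}^{(0,1)})\tilde{=}\widehat{\mathcal{D}}_{\mathfrak{X}\to\mathfrak{Z}}^{(0,1)}$, established via the description of the transfer bimodule as $D(\mathcal{O}_{\mathfrak{X}})\widehat{\otimes}^{L}\varphi^{-1}(\widehat{\mathcal{D}}_{\mathfrak{Y}}^{(0,1)})$ from Remark \ref{rem:Direct-defn-of-transfer-bimodule}, and then conclude by associativity of the completed derived tensor product. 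Your extra local verification that the left $\widehat{\mathcal{D}}_{\mathfrak{X}}^{(0,1)}$-actions match is a reasonable elaboration of what the paper treats as part of the canonicity of its chain of bimodule isomorphisms.
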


\begin{proof}
(compare \cite{key-49}, proposition 1.5.11) We have, by \remref{Direct-defn-of-transfer-bimodule},
\[
\mathcal{\widehat{D}}_{\mathfrak{X}\to\mathfrak{Y}}^{(0,1)}\widehat{\otimes}_{\varphi^{-1}(\mathcal{D}_{\mathfrak{Y}}^{(0,1)})}^{L}\varphi^{-1}(\mathcal{D}_{\mathfrak{Y\to\mathfrak{Z}}}^{(0,1)})
\]
\[
=(D(\mathcal{O}_{\mathfrak{X}})\widehat{\otimes}_{\varphi^{-1}(D(\mathcal{O}_{\mathfrak{Y}}))}^{L}\varphi^{-1}(\mathcal{\widehat{D}}_{\mathfrak{Y}}^{(0,1)}))\widehat{\otimes}_{\varphi^{-1}(\mathcal{D}_{\mathfrak{Y}}^{(0,1)})}^{L}\varphi^{-1}(\mathcal{O}_{\mathfrak{Y}}[f,v]\widehat{\otimes}_{\psi^{-1}(D(\mathcal{O}_{\mathfrak{Z}}))}^{L}\psi^{-1}(\mathcal{\widehat{D}}_{\mathfrak{Z}}^{(0,1)}))
\]
\[
\tilde{=}D(\mathcal{O}_{\mathfrak{X}})\widehat{\otimes}_{\varphi^{-1}(D(\mathcal{O}_{\mathfrak{Y}}))}^{L}(\varphi^{-1}D(\mathcal{O}_{\mathfrak{Y}})\widehat{\otimes}_{(\psi\circ\varphi)^{-1}(D(\mathcal{O}_{\mathfrak{Z}}))}^{L}(\psi\circ\varphi)^{-1}(\mathcal{\widehat{D}}_{\mathfrak{Z}}^{(0,1)}))
\]
\[
\tilde{=}D(\mathcal{O}_{\mathfrak{X}})\widehat{\otimes}_{(\psi\circ\varphi)^{-1}(D(\mathcal{O}_{\mathfrak{Z}}))}^{L}(\psi\circ\varphi)^{-1}(\mathcal{\widehat{D}}_{\mathfrak{Z}}^{(0,1)})=\mathcal{\widehat{D}}_{\mathfrak{X}\to\mathfrak{Z}}^{(0,1)}
\]
as $(\mathcal{\widehat{D}}_{\mathfrak{X}}^{(0,1)},(\psi\circ\varphi)^{-1}\mathcal{\widehat{D}}_{\mathfrak{Z}}^{(0,1)})$-bimodules.
This yields 
\[
L(\psi\circ\varphi)^{*}(\mathcal{M}^{\cdot})=\mathcal{\widehat{D}}_{\mathfrak{X}\to\mathfrak{Z}}^{(0,1)}\widehat{\otimes}_{(\psi\circ\varphi)^{-1}(\mathcal{D}_{\mathfrak{Z}}^{(0,1)})}^{L}(\psi\circ\varphi)^{-1}(\mathcal{M}^{\cdot})
\]
\[
\tilde{=}(\mathcal{\widehat{D}}_{\mathfrak{X}\to\mathfrak{Y}}^{(0,1)}\widehat{\otimes}_{\varphi^{-1}(\mathcal{D}_{\mathfrak{Y}}^{(0,1)})}^{L}\varphi^{-1}(\mathcal{D}_{\mathfrak{Y\to\mathfrak{Z}}}^{(0,1)}))\widehat{\otimes}_{(\psi\circ\varphi)^{-1}(\mathcal{D}_{\mathfrak{Z}}^{(0,1)})}^{L}(\psi\circ\varphi)^{-1}(\mathcal{M}^{\cdot})
\]
\[
\tilde{=}\mathcal{\widehat{D}}_{\mathfrak{X}\to\mathfrak{Y}}^{(0,1)}\widehat{\otimes}_{\varphi^{-1}(\mathcal{D}_{\mathfrak{Y}}^{(0,1)})}^{L}\varphi^{-1}((\mathcal{D}_{\mathfrak{Y\to\mathfrak{Z}}}^{(0,1)})\widehat{\otimes}_{\psi^{-1}(\mathcal{D}_{\mathfrak{Z}}^{(0,1)})}^{L}\psi^{-1}(\mathcal{M}^{\cdot}))=L\varphi^{*}(L\psi^{*}\mathcal{M}^{\cdot})
\]
An identical argument works for $\varphi:X\to Y$ and $\psi:Y\to Z$. 
\end{proof}
Next, we have
\begin{prop}
\label{prop:Basic-base-change-for-pullback}1) Let $\mathcal{M}^{\cdot}\in D_{cc}(\mathcal{G}(\mathcal{D}_{\mathfrak{Y}}^{(0,1)}))$.
Then $L\varphi^{*}(\mathcal{M}^{\cdot})^{-\infty}\tilde{\to}L\varphi^{*,(0)}(\mathcal{M}^{\cdot,-\infty})$
and $\widehat{L\varphi^{*}(\mathcal{M}^{\cdot})^{\infty}}\tilde{\to}L\varphi^{*,(1)}(\widehat{\mathcal{M}^{\cdot,\infty}})$.
The analogous result holds for $\varphi:X\to Y$. 

2) Let $\mathcal{M}^{\cdot}\in D_{cc}(\mathcal{G}(\mathcal{D}_{\mathfrak{Y}}^{(0,1)}))$.
Then $L\varphi^{*}(\mathcal{M}^{\cdot})\otimes_{W(k)}^{L}k\tilde{\to}L\varphi^{*,(0)}(\mathcal{M}^{\cdot}\otimes_{W(k)}^{L}k)$.
\end{prop}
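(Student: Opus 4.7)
My plan is to reduce all three claims to manipulations at the level of graded $D(\mathcal{O})$-modules. By \remref{Direct-defn-of-transfer-bimodule}, we have
\[
\mathcal{\widehat{D}}_{\mathfrak{X}\to\mathfrak{Y}}^{(0,1)}\tilde{=}D(\mathcal{O}_{\mathfrak{X}})\widehat{\otimes}_{\varphi^{-1}(D(\mathcal{O}_{\mathfrak{Y}}))}^{L}\varphi^{-1}(\mathcal{\widehat{D}}_{\mathfrak{Y}}^{(0,1)})
\]
Substituting this into \defref{Pullback!} and using associativity of the completed derived tensor product yields the key identification
\[
L\varphi^{*}(\mathcal{M}^{\cdot})\tilde{=}D(\mathcal{O}_{\mathfrak{X}})\widehat{\otimes}_{\varphi^{-1}(D(\mathcal{O}_{\mathfrak{Y}}))}^{L}\varphi^{-1}(\mathcal{M}^{\cdot})
\]
in $D(\mathcal{G}(D(\mathcal{O}_{\mathfrak{X}})))$, from which all three assertions will be extracted.

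For the first isomorphism in $1)$, I would use that $v$ is central in $D(W(k))$ and that $v-1$ acts injectively on every graded module, so the functor $(-)^{-\infty}=(-)\otimes_{D(W(k))}^{L}W(k)$ is exact and commutes past completed tensor products. Combined with $D(\mathcal{O}_{\mathfrak{X}})/(v-1)\tilde{=}\mathcal{O}_{\mathfrak{X}}$ and a change-of-rings argument through $D(\mathcal{O}_{\mathfrak{Y}})\to\mathcal{O}_{\mathfrak{Y}}$ (through which the $\varphi^{-1}(D(\mathcal{O}_{\mathfrak{Y}}))$-action on $\varphi^{-1}(\mathcal{M}^{\cdot,-\infty})$ factors), this gives
\[
L\varphi^{*}(\mathcal{M}^{\cdot})^{-\infty}\tilde{=}\mathcal{O}_{\mathfrak{X}}\widehat{\otimes}_{\varphi^{-1}(\mathcal{O}_{\mathfrak{Y}})}^{L}\varphi^{-1}(\mathcal{M}^{\cdot,-\infty})\tilde{=}L\varphi^{*,(0)}(\mathcal{M}^{\cdot,-\infty})
\]
with the $\widehat{\mathcal{D}}_{\mathfrak{X}}^{(0)}$-module structures agreeing, since \propref{Basic-properties-of-the-transfer-module} gives $\widehat{\mathcal{D}}_{\mathfrak{X}\to\mathfrak{Y}}^{(0,1),-\infty}\tilde{=}\widehat{\mathcal{D}}_{\mathfrak{X}\to\mathfrak{Y}}^{(0)}$. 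For the second isomorphism, the same manipulation with $(-)^{\infty}=(-)\otimes_{D(W(k))}^{L}D(W(k))/(f-1)$ and $D(\mathcal{O}_{\mathfrak{X}})/(f-1)\tilde{=}\mathcal{O}_{\mathfrak{X}}$ gives
\[
L\varphi^{*}(\mathcal{M}^{\cdot})^{\infty}\tilde{=}\mathcal{O}_{\mathfrak{X}}\widehat{\otimes}_{\varphi^{-1}(\mathcal{O}_{\mathfrak{Y}})}^{L}\varphi^{-1}(\mathcal{M}^{\cdot,\infty})
\]
The right-hand side is already cohomologically complete by virtue of the $\widehat{\otimes}^{L}$, so it equals its own graded cohomological completion. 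Since the cone of $\mathcal{M}^{\cdot,\infty}\to\widehat{\mathcal{M}^{\cdot,\infty}}$ lies in $D(\mathcal{G}(\mathcal{O}_{\mathfrak{Y}}[p^{-1}]))$, its pullback under $\mathcal{O}_{\mathfrak{X}}\widehat{\otimes}^{L}\varphi^{-1}(-)$ vanishes, yielding $\widehat{L\varphi^{*}(\mathcal{M}^{\cdot})^{\infty}}\tilde{=}L\varphi^{*,(1)}(\widehat{\mathcal{M}^{\cdot,\infty}})$.

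For $2)$, I would combine \lemref{reduction-of-completion}, which collapses graded cohomological completion after $\otimes_{W(k)}^{L}k$, with \lemref{Hom-tensor-and-reduce}, which commutes $\otimes_{W(k)}^{L}k$ with derived tensor products. Applied to the key identification above, the first removes the hat from $\widehat{\otimes}^{L}$ after reducing mod $p$, and the second passes $\otimes_{W(k)}^{L}k$ through to both factors, producing
\[
L\varphi^{*}(\mathcal{M}^{\cdot})\otimes_{W(k)}^{L}k\tilde{=}D(\mathcal{O}_{X})\otimes_{\varphi^{-1}(D(\mathcal{O}_{Y}))}^{L}\varphi^{-1}(\mathcal{M}^{\cdot}\otimes_{W(k)}^{L}k)
\]
using $D(\mathcal{O}_{\mathfrak{X}})\otimes_{W(k)}^{L}k\tilde{=}D(\mathcal{O}_{X})$ (which follows from the $p$-torsion-freeness of $\mathcal{O}_{\mathfrak{X}}$). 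The right-hand side is precisely the positive-characteristic pullback.

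The hard part will be carefully tracking the interaction between the graded cohomological completion, $p$-adic completion, and reduction mod $p$ in the various derived commutations. In particular, the second assertion of $1)$ requires verifying that the cone of the completion map $\mathcal{M}^{\cdot,\infty}\to\widehat{\mathcal{M}^{\cdot,\infty}}$ interacts correctly with $\varphi^{-1}$ and $\widehat{\otimes}^{L}$, while the change-of-rings arguments in the first assertion of $1)$ rely on the injectivity of $v-1$ (and $f-1$) on graded modules so that the relevant derived tensor products collapse to their underived counterparts.
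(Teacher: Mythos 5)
Your proof is correct and follows essentially the same route as the paper: both reduce everything to the specialization behavior of the transfer bimodule $\widehat{\mathcal{D}}_{\mathfrak{X}\to\mathfrak{Y}}^{(0,1)}$ under $/(v-1)$, $\widehat{/(f-1)}$, and $\otimes_{W(k)}^{L}k$, the paper stating these identities directly while you route through the equivalent $D(\mathcal{O})$-module description of \remref{Direct-defn-of-transfer-bimodule}. Your write-up actually supplies more of the functor-commutation detail that the paper dispatches with ``from which the result follows directly.''
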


\begin{proof}
1) By construction we have 
\[
(\mathcal{\widehat{D}}_{\mathfrak{X}\to\mathfrak{Y}}^{(0,1)})\widehat{\otimes}_{D(W(k))}^{L}W(k)[f,v]/(f-1)\tilde{=}\mathcal{\widehat{D}}_{\mathfrak{X}\to\mathfrak{Y}}^{(1)}
\]
and 
\[
(\mathcal{\widehat{D}}_{\mathfrak{X}\to\mathfrak{Y}}^{(0)})\widehat{\otimes}_{D(W(k))}^{L}W(k)[f,v]/(v-1)\tilde{=}\mathcal{\widehat{D}}_{\mathfrak{X}\to\mathfrak{Y}}^{(0)}
\]
from which the result follows directly. Similarly, for part $2)$
one uses 
\[
(\mathcal{\widehat{D}}_{\mathfrak{X}\to\mathfrak{Y}}^{(0,1)})\otimes_{W(k)}^{L}k\tilde{=}\mathcal{\widehat{D}}_{X\to Y}^{(0,1)}
\]
\end{proof}
Specializing to the case of positive characteristic, it is also useful
to have comparisons with the pullbacks of $\mathcal{R}(\mathcal{D}_{X}^{(1)})$
and $\overline{\mathcal{R}}(\mathcal{D}_{X}^{(0)})$-modules. First,
we need to give the relevant definitions: 
\begin{defn}
Suppose $\varphi:X\to Y$. We let $\mathcal{R}_{X\to Y}^{(1)}:=\varphi^{*}\mathcal{D}_{Y}^{(0,1)}/(v)$
and $\mathcal{\overline{R}}_{X\to Y}^{(0)}:=\varphi^{*}\mathcal{D}_{Y}^{(0,1)}/(f)$;
considered as a graded $(\mathcal{R}(\mathcal{D}_{X}^{(1)}),\varphi^{-1}(\mathcal{R}(\mathcal{D}_{Y}^{(1)}))$
bimodule (respectively a graded $(\mathcal{\overline{R}}(\mathcal{D}_{X}^{(0)}),\varphi^{-1}(\overline{\mathcal{R}}(\mathcal{D}_{Y}^{(0)}))$
bimodule). Let $\mathcal{M}^{\cdot}\in D(\mathcal{G}(\mathcal{R}(\mathcal{D}_{Y}^{(1)})))$.
Then we define $L\varphi^{*,(1)}(\mathcal{M}^{\cdot}):=\mathcal{R}_{X\to Y}^{(1)}\otimes_{\varphi^{-1}(\mathcal{R}(\mathcal{D}_{Y}^{(1)})}^{L}\varphi^{-1}(\mathcal{M}^{\cdot})$
with the induced left action of $\mathcal{R}(\mathcal{D}_{X}^{(1)})$
given by the bimodule structure.. Set $\varphi^{\dagger,(1)}=L\varphi^{*,(1)}[d_{X/Y}]$. 

We make the analogous definition for $\overline{\mathcal{R}}(\mathcal{D}_{Y}^{(0)})$-modules;
and denote the corresponding functors $L\varphi^{*,(0)}$ and $\varphi^{\dagger,(1)}$. 
\end{defn}

We note that the functor $L\varphi^{*,(1)}$ takes $D_{qc}(\mathcal{G}(\mathcal{R}(\mathcal{D}_{Y}^{(1)}))$
to $D_{qc}(\mathcal{G}(\mathcal{R}(\mathcal{D}_{X}^{(1)}))$; and
similarly for $\overline{\mathcal{R}}(\mathcal{D}_{Y}^{(0)})$. Then
we have the 
\begin{prop}
\label{prop:pullback-and-R}Let $\mathcal{M}^{\cdot}\in D(\mathcal{G}(\mathcal{D}_{Y}^{(0,1)}))$.
There is an isomorphism of functors 
\[
\mathcal{R}(\mathcal{D}_{X}^{(1)})\otimes_{\mathcal{D}_{X}^{(0,1)}}^{L}\varphi^{\dagger}\mathcal{M}^{\cdot}\tilde{=}\varphi^{\dagger,(1)}(\mathcal{R}(\mathcal{D}_{Y}^{(1)})\otimes_{\mathcal{D}_{Y}^{(0,1)}}^{L}\mathcal{M}^{\cdot})
\]
and similarly for $\varphi_{0}^{\dagger}$. 
\end{prop}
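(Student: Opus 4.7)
The plan is to reduce the stated isomorphism of functors to a single bimodule identity in the derived category, and then to deduce this identity from the ``sandwich'' result (\propref{Sandwich!}) after verifying a quasi-rigidity property of the transfer bimodule.

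First, by unwinding definitions and using the associativity of derived tensor product together with $\mathcal{R}(\mathcal{D}_{Y}^{(1)}) \otimes^L_{\mathcal{D}_{Y}^{(0,1)}} \mathcal{M}^{\cdot} \tilde{=} \mathcal{R}(\mathcal{D}_{Y}^{(1)}) \otimes^L_{\mathcal{R}(\mathcal{D}_{Y}^{(1)})} (\mathcal{R}(\mathcal{D}_{Y}^{(1)}) \otimes^L_{\mathcal{D}_{Y}^{(0,1)}} \mathcal{M}^{\cdot})$, I observe that it is enough to establish an isomorphism of $(\mathcal{R}(\mathcal{D}_{X}^{(1)}), \varphi^{-1}\mathcal{D}_{Y}^{(0,1)})$-bimodules
\[
\mathcal{R}(\mathcal{D}_{X}^{(1)}) \otimes_{\mathcal{D}_{X}^{(0,1)}}^{L} \mathcal{D}_{X\to Y}^{(0,1)} \; \tilde{=} \; \mathcal{R}_{X\to Y}^{(1)}
\]
in $D(\mathcal{G}(\mathcal{R}(\mathcal{D}_{X}^{(1)}) \otimes_{k} \varphi^{-1}\mathcal{D}_{Y}^{(0,1),\mathrm{opp}}))$; tensoring both sides on the right by $\varphi^{-1}\mathcal{M}^{\cdot}$ over $\varphi^{-1}\mathcal{D}_{Y}^{(0,1)}$ and shifting by $[d_{X/Y}]$ then yields the desired identity, since $\mathcal{R}_{X\to Y}^{(1)}$ is annihilated by $v$ and hence factors through $\varphi^{-1}\mathcal{R}(\mathcal{D}_{Y}^{(1)})$ on the right.

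Next, I verify that $\mathcal{D}_{X\to Y}^{(0,1)} = \varphi^{*}\mathcal{D}_{Y}^{(0,1)}$ is quasi-rigid as a $\mathcal{D}_{X}^{(0,1)}$-module. By \corref{Local-coords-over-A=00005Bf,v=00005D}, $\mathcal{D}_{Y}^{(0,1)}$ is locally free as a graded module over $D(\mathcal{O}_{Y})$, so $\varphi^{*}\mathcal{D}_{Y}^{(0,1)}$ is locally free as a graded module over $D(\mathcal{O}_{X}) = \mathcal{O}_{X}\otimes_{k}D(k)$. Since $f$ and $v$ act through $D(k)$ and are central, quasi-rigidity is a local property that reduces to the quasi-rigidity of $D(k)$ itself. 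The latter is a direct computation: in characteristic $p$ we have $fv = 0$ in $D(k)$, the graded piece $D(k)^{i}$ equals $k\cdot f^{i}$ for $i\geq 0$ and $k\cdot v^{-i}$ for $i\leq 0$, and one checks immediately that $\ker(f) = \mathrm{im}(v)$ and $\ker(v) = \mathrm{im}(f)$ at every degree.

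With quasi-rigidity of $\mathcal{D}_{X\to Y}^{(0,1)}$ in hand, the analogous statement in \propref{Sandwich!} (applied to $\mathcal{N} = \mathcal{D}_{X\to Y}^{(0,1)}$, considered on the left, and $\mathcal{M}^{\cdot} = \mathcal{R}(\mathcal{D}_{X}^{(1)})$, considered on the right) gives
\[
\mathcal{R}(\mathcal{D}_{X}^{(1)}) \otimes_{\mathcal{D}_{X}^{(0,1)}}^{L} \mathcal{D}_{X\to Y}^{(0,1)} \; \tilde{=} \; \mathcal{R}(\mathcal{D}_{X}^{(1)}) \otimes_{\mathcal{R}(\mathcal{D}_{X}^{(1)})}^{L} \bigl(\mathcal{D}_{X\to Y}^{(0,1)}/v\bigr) \; = \; \mathcal{D}_{X\to Y}^{(0,1)}/v \; = \; \mathcal{R}_{X\to Y}^{(1)},
\]
and the bimodule structures on both sides visibly agree. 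The $\varphi_{0}^{\dagger}$ variant is obtained by the symmetric argument: one replaces the sandwich identity with its $f$-analogue and uses the fact that the same local-coordinate description of $\mathcal{D}_{X\to Y}^{(0,1)}$ as a free $D(\mathcal{O}_{X})$-module gives quasi-rigidity in the dual direction as well.

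The only step requiring genuine care is the second one, where one must be precise about which freeness result is being invoked and that the resulting quasi-rigidity transfers from $D(k)$ to the full transfer bimodule; everything else is formal manipulation of derived tensor products, so this is the step I expect to write out in most detail in the final proof.
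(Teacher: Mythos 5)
Your proposal is correct and follows essentially the same route as the paper: both reduce the statement to the bimodule identity $\mathcal{R}(\mathcal{D}_{X}^{(1)})\otimes_{\mathcal{D}_{X}^{(0,1)}}^{L}\mathcal{D}_{X\to Y}^{(0,1)}\tilde{=}\mathcal{R}_{X\to Y}^{(1)}$, establish quasi-rigidity of the transfer bimodule, and conclude via the two-periodic resolution of $\mathcal{R}(\mathcal{D}_{X}^{(1)})$ over $\mathcal{D}_{X}^{(0,1)}$ (which is what \propref{Sandwich!} encodes). The only cosmetic difference is that the paper deduces quasi-rigidity from the fact that $\mathcal{D}_{X\to Y}^{(0,1)}$ locally lifts to the standard gauge $\widehat{\mathcal{D}}_{\mathfrak{X}\to\mathfrak{Y}}^{(0,1)}$, whereas you argue directly from local freeness over $D(\mathcal{O}_{X})$; both are valid.
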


\begin{proof}
We have
\[
\varphi^{\dagger,(1)}(\mathcal{R}(\mathcal{D}_{Y}^{(1)})\otimes_{\mathcal{D}_{Y}^{(0,1)}}^{L}\mathcal{M}^{\cdot})=\mathcal{R}_{X\to Y}^{(1)}\otimes_{\varphi^{-1}(\mathcal{R}(\mathcal{D}_{Y}^{(1)}))}^{L}\varphi^{-1}(\mathcal{R}(\mathcal{D}_{Y}^{(1)})\otimes_{\mathcal{D}_{Y}^{(0,1)}}^{L}\mathcal{M}^{\cdot})[d_{X/Y}]
\]
\[
\tilde{=}\mathcal{R}_{X\to Y}^{(1)}\otimes_{\varphi^{-1}(\mathcal{R}(\mathcal{D}_{Y}^{(1)}))}^{L}\varphi^{-1}(\mathcal{R}(\mathcal{D}_{Y}^{(1)}))\otimes_{\varphi^{-1}(\mathcal{D}_{Y}^{(0,1)})}^{L}\varphi^{-1}(\mathcal{M}^{\cdot})[d_{X/Y}]
\]
\[
\tilde{=}\mathcal{R}_{X\to Y}^{(1)}\otimes_{\varphi^{-1}(\mathcal{D}_{Y}^{(0,1)})}^{L}\varphi^{-1}(\mathcal{M}^{\cdot})[d_{X/Y}]
\]
Now, by definition, the module $\mathcal{D}_{X\to Y}^{(0,1)}$, admits,
locally on $X$ and $Y$, a lift $\widehat{\mathcal{D}}_{\mathfrak{X}\to\mathfrak{Y}}^{(0,1)}$
which we have constructed above in \defref{Transfer-Bimod}. This
lift is a standard gauge, and so $\mathcal{D}_{X\to Y}^{(0,1)}$ is
quasi-rigid. So, using the resolution (c.f. \lemref{Basic-Facts-on-Rigid})
\[
\cdots\to\mathcal{D}_{X}^{(0,1)}(-1)\xrightarrow{v}\mathcal{D}_{X}^{(0,1)}\xrightarrow{f}\mathcal{D}_{X}^{(0,1)}(-1)\xrightarrow{v}\mathcal{D}_{X}^{(0,1)}\to\mathcal{R}(\mathcal{D}_{X}^{(1)})
\]
for $\mathcal{R}(\mathcal{D}_{X}^{(1)})$ over $\mathcal{D}_{X}^{(0,1)}$,
this tell us that 
\begin{equation}
\mathcal{R}(\mathcal{D}_{X}^{(1)})\otimes_{\mathcal{D}_{X}^{(0,1)}}^{L}\mathcal{D}_{X\to Y}^{(0,1)}\tilde{=}\mathcal{D}_{X\to Y}^{(0,1)}/v=\mathcal{R}_{X\to Y}^{(1)}\label{eq:transfer-iso-1}
\end{equation}
i.e., this complex is concentrated in degree $0$ and is equal to
$\mathcal{R}_{X\to Y}^{(1)}$ there. Thus 
\[
\mathcal{R}_{X\to Y}^{(1)}\otimes_{\varphi^{-1}(\mathcal{D}_{Y}^{(0,1)})}^{L}\varphi^{-1}(\mathcal{M}^{\cdot})[d_{X/Y}]
\]
\[
\tilde{=}\mathcal{R}(\mathcal{D}_{X}^{(1)})\otimes_{\mathcal{D}_{X}^{(0,1)}}^{L}\mathcal{D}_{X\to Y}^{(0,1)}\otimes_{\varphi^{-1}(\mathcal{D}_{Y}^{(0,1)})}^{L}\varphi^{-1}(\mathcal{M}^{\cdot})[d_{X/Y}]=\mathcal{R}(\mathcal{D}_{X}^{(1)})\otimes_{\mathcal{D}_{X}^{(0,1)}}^{L}\varphi^{\dagger}\mathcal{M}^{\cdot}
\]
as desired. The case of $\overline{\mathcal{R}}(\mathcal{D}_{Y}^{(0)})$-modules
is essentially identical.
\end{proof}
Finally, we also have 
\begin{prop}
\label{prop:Smooth-pullback-preserves-coh}If $\varphi$ is smooth,
then $L\varphi^{*}$ takes $D_{coh}^{b}(\mathcal{G}(\mathcal{D}_{\mathfrak{Y}}^{(0,1)}))$
to $D_{coh}^{b}(\mathcal{G}(\mathcal{D}_{\mathfrak{X}}^{(0,1)}))$.
The same holds for a smooth morphism $\varphi:X\to Y$. 
\end{prop}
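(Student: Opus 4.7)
First I would reduce the mixed-characteristic statement to the positive-characteristic one. By construction $L\varphi^*(\mathcal{M}^\cdot)$ lies in $D_{cc}(\mathcal{G}(\widehat{\mathcal{D}}_{\mathfrak{X}}^{(0,1)}))$ (the definition already involves graded derived completion), so by \propref{coh-to-coh} it suffices to show its reduction modulo $p$ is in $D_{coh}^b(\mathcal{G}(\mathcal{D}_X^{(0,1)}))$. By \propref{Basic-base-change-for-pullback}(2) this reduction is the mod-$p$ pullback of $\mathcal{M}^\cdot\otimes_{W(k)}^L k$, which lies in $D_{coh}^b(\mathcal{G}(\mathcal{D}_Y^{(0,1)}))$ by applying \propref{coh-to-coh} in the other direction. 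Hence the problem reduces to the positive-characteristic case $\varphi:X\to Y$.

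For the positive-characteristic case, I would work locally, shrinking $Y$ to $V=\text{Spec}(B)$ with \'etale coordinates $\{y_1,\dots,y_m\}$ and $X$ to $U=\text{Spec}(A)\subset\varphi^{-1}(V)$ with extended coordinates $\{y_1,\dots,y_m,z_1,\dots,z_r\}$, which is possible since $\varphi$ is smooth. Since $A$ is flat over $B$, the functor $A\otimes_B -$ is exact, and so $\mathcal{D}_{X\to Y}^{(0,1)}=A\otimes_B\mathcal{D}_Y^{(0,1)}$ is flat as a right $\mathcal{D}_Y^{(0,1)}$-module (tensoring with it over $\mathcal{D}_Y^{(0,1)}$ equals $A\otimes_B-$ on underlying $B$-modules, which is exact). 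Thus $L\varphi^*\mathcal{M}^\cdot$ coincides with the underived $\varphi^*\mathcal{M}^\cdot$ at the level of cohomology, and coherence reduces via d\'evissage (using that coherent $\mathcal{D}_X^{(0,1)}$-modules form an abelian category, since $\mathcal{D}_X^{(0,1)}$ is stalkwise Noetherian) and local finite presentations to the single claim that $\mathcal{D}_{X\to Y}^{(0,1)}$ is coherent as a left $\mathcal{D}_X^{(0,1)}$-module.

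The claim is that $\mathcal{D}_{X\to Y}^{(0,1)}$ is in fact \emph{cyclic} over $\mathcal{D}_X^{(0,1)}$, generated by $1\otimes 1$. By \propref{pull-back-in-pos-char} the $z$-direction generators $\partial_{z_j}$ and $\partial_{z_j}^{[p]}$ annihilate $1\otimes 1$, since they restrict to zero as operators $\varphi^{-1}(\mathcal{O}_Y)\to\mathcal{O}_X$. Combining this with the $A[f,v]$-basis $\{\partial^I(\partial^{[p]})^J\}$ of $\mathcal{D}_X^{(0,1)}$ from \corref{Local-coords-over-A=00005Bf,v=00005D}, only the ``$y$-only'' basis elements survive acting on $1\otimes 1$, and these act as $\partial_y^I(\partial_y^{[p]})^J\cdot(1\otimes 1)=1\otimes\partial_y^I(\partial_y^{[p]})^J$, which form an $A[f,v]$-basis of $A\otimes_B\mathcal{D}_Y^{(0,1)}$ by \corref{Local-coords-over-A=00005Bf,v=00005D} applied to $Y$. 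Hence $\mathcal{D}_X^{(0,1)}\cdot(1\otimes 1)=\mathcal{D}_{X\to Y}^{(0,1)}$. \textbf{The main technical point} is this last commutation: moving $\partial_{z_j}^{[p]}$ past an element $a\in A$ or past a $y$-direction generator produces $f$-multiples of lower-order terms via the brackets of \defref{D-=00005Cplus-L} (and via the relation $f\partial_{z_j}^p=p!\partial_{z_j}^{[p]}$), and one must track carefully how the gauge grading in $f,v$ interacts with these divided-power relations to confirm that every basis element with nonzero $z$-component indeed lies in the left annihilator of $1\otimes 1$.
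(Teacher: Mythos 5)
Your proposal is correct and follows essentially the same route as the paper: reduce to positive characteristic via \propref{coh-to-coh} and \propref{Basic-base-change-for-pullback}, then in local coordinates exhibit $\mathcal{D}_{X\to Y}^{(0,1)}$ as the cyclic module $\mathcal{D}_{X}^{(0,1)}/J$ with $J$ the left ideal generated by the relative $\partial_{z_j}$ and $\partial_{z_j}^{[p]}$, giving coherence on the left and flatness on the right, and conclude by finite presentation. The "main technical point" you flag is harmless, since the derivation generators all commute with one another by the defining relations, so the $z$-part of each basis monomial can be moved to the right where it kills $1\otimes 1$.
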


\begin{proof}
By part $2)$ of \propref{Basic-base-change-for-pullback}, as well
as \propref{coh-to-coh}, the first statement reduces to the second.
We may assume that $X=\text{Spec}(B)$ and $Y=\text{Spec}(A)$ both
possess local coordinates. After further localizing if necessary we
can suppose that there are local coordinates $\{\partial_{1},\dots,\partial_{n}\}$
on $B$ such that the $A$-linear derivations of $B$ are $\{\partial_{1},\dots,\partial_{d}\}$.
In this case, if we let $J\subset\mathcal{D}_{B}^{(0,1)}$ be the
ideal generated by $\{\partial_{1},\dots,\partial_{d},\partial_{1}^{[p]},\dots,\partial_{d}^{[p]}\}$,
then we have 
\[
\mathcal{D}_{B}^{(0,1)}/J\tilde{=}B\otimes_{A}\mathcal{D}_{A}^{(0,1)}
\]
which shows that $B\otimes_{A}\mathcal{D}_{A}^{(0,1)}=\varphi^{*}\mathcal{D}_{A}^{(0,1)}$
is a coherent $\mathcal{D}_{B}^{(0,1)}$-module, which is flat as
a module over $\mathcal{D}_{A}^{(0,1),\text{opp}}$. This shows that
$\varphi^{*}$ is exact; and the coherence of the pullback for an
arbitrary coherent $\mathcal{D}_{A}^{(0,1)}$-module $\mathcal{M}$
follows by taking a finite presentation for $\mathcal{M}$. 
\end{proof}

\section{\label{sec:Operations:Swap-Tensor}Operations on Gauges: Left-Right
Interchange and tensor Product}

The first goal of this subsection is to prove
\begin{prop}
\label{prop:Left-Right-Swap} Let $\mathcal{M}\in\mathcal{G}(\mathcal{\widehat{D}}_{\mathfrak{X}}^{(0,1)})$.
Then $\omega_{\mathfrak{X}}\otimes_{\mathcal{O}_{\mathfrak{X}}}\mathcal{M}$
carries the structure of a right graded $\mathcal{\widehat{D}}_{\mathfrak{X}}^{(0,1)}$-module.
This functor defines an equivalence of categories, which preserves
coherent modules. The derived functor preserves the subcategories
of derived complete complexes. 

The analogous result holds for $X$ (i.e., in positive characteristic);
there, the functor preserves the category of quasi-coherent sheaves
as well.
\end{prop}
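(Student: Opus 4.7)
The plan is to reduce the construction to Berthelot's classical left-right swap for each level separately, and then assemble via the standard (lattice) construction of \exaref{Basic-Construction-over-X}. First I would construct a $(\mathcal{O}_{\mathfrak{X}},\widehat{\mathcal{D}}_{\mathfrak{X}}^{(0,1)})$-bimodule gauge $\omega_{\mathfrak{X}}^{(0,1)}$ as follows. The sheaf $\omega_{\mathfrak{X}}$ carries canonical right $\widehat{\mathcal{D}}_{\mathfrak{X}}^{(0)}$- and right $\widehat{\mathcal{D}}_{\mathfrak{X}}^{(1)}$-module structures (\cite{key-1}), and both extend the same right action of $\widehat{\mathcal{D}}_{\mathfrak{X}}^{(0)}[p^{-1}]\tilde{=}\widehat{\mathcal{D}}_{\mathfrak{X}}^{(1)}[p^{-1}]$ on $\omega_{\mathfrak{X}}[p^{-1}]$. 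Applying \exaref{Basic-Construction-over-X} with $\mathcal{N}=\mathcal{M}^{\infty}=\omega_{\mathfrak{X}}$ produces a standard gauge $\omega_{\mathfrak{X}}^{(0,1)}$; the compatibility $\omega_{\mathfrak{X}}^{(0,1),i}\cdot\widehat{\mathcal{D}}_{\mathfrak{X}}^{(0,1),j}\subset\omega_{\mathfrak{X}}^{(0,1),i+j}$ follows directly by multiplying the defining inequalities $p^{i}m\in\omega_{\mathfrak{X}}$ and $p^{j}\Phi\in\widehat{\mathcal{D}}_{\mathfrak{X}}^{(0)}$ to get $p^{i+j}(m\cdot\Phi)\in\omega_{\mathfrak{X}}$.

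Next, I would define the functor by $\mathcal{M}\mapsto\omega_{\mathfrak{X}}^{(0,1)}\otimes_{D(\mathcal{O}_{\mathfrak{X}})}\mathcal{M}$, with its induced right $\widehat{\mathcal{D}}_{\mathfrak{X}}^{(0,1)}$-module structure. A direct calculation using that every $\omega_{\mathfrak{X}}^{(0,1),i}$ is isomorphic to $\omega_{\mathfrak{X}}$ as an $\mathcal{O}_{\mathfrak{X}}$-module (via an appropriate power of $p$), combined with the relations $f\otimes1=1\otimes f$ and $v\otimes1=1\otimes v$ in the tensor product over $D(\mathcal{O}_{\mathfrak{X}})$, identifies the underlying graded $\mathcal{O}_{\mathfrak{X}}$-module with $\omega_{\mathfrak{X}}\otimes_{\mathcal{O}_{\mathfrak{X}}}\mathcal{M}$ graded by $(\omega_{\mathfrak{X}}\otimes\mathcal{M})^{k}=\omega_{\mathfrak{X}}\otimes\mathcal{M}^{k}$, which is the structure claimed in the statement.

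To show the functor is an equivalence, I would construct its quasi-inverse by running the same lattice recipe with the $(\widehat{\mathcal{D}}_{\mathfrak{X}}^{(0,1)},\mathcal{O}_{\mathfrak{X}})$-bimodule $(\omega_{\mathfrak{X}}^{-1})^{(0,1)}$, built from the classical right-to-left swap $\mathcal{N}\mapsto\omega_{\mathfrak{X}}^{-1}\otimes_{\mathcal{O}_{\mathfrak{X}}}\mathcal{N}$ at levels $0$ and $1$. Applying $\mathcal{M}\to\mathcal{M}^{-\infty}$ and $\mathcal{M}\to\widehat{\mathcal{M}^{\infty}}$ reduces the check that the two composites are isomorphic to the identity to the known left-right swaps for $\widehat{\mathcal{D}}_{\mathfrak{X}}^{(0)}$- and $\widehat{\mathcal{D}}_{\mathfrak{X}}^{(1)}$-modules; together with cohomological completeness (automatic since $\omega_{\mathfrak{X}}^{(0,1)}$ is, graded piece by graded piece, a coherent $p$-adically complete sheaf), \corref{Nakayama} then promotes the degree-wise isomorphisms to global ones. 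Preservation of coherence is immediate from the local freeness of $\omega_{\mathfrak{X}}$ of rank one.

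For the preservation of derived completeness, I would use \propref{Basic-CC-facts}(4) to check completeness one graded piece at a time; since tensoring with the invertible sheaf $\omega_{\mathfrak{X}}$ is exact and locally trivial, it commutes with the test functor $R\mathcal{H}om_{W(k)}(W(k)[p^{-1}],-)$ up to natural isomorphism, and so sends $D_{cc}$ to $D_{cc}$. The positive-characteristic statement over $X$ proceeds identically, replacing $\widehat{\mathcal{D}}_{\mathfrak{X}}^{(i)}$ by $\mathcal{D}_{X}^{(i)}$ and dropping the completion steps entirely; quasi-coherence is then preserved by the same local-triviality argument. I expect the main obstacle to be verifying carefully that the right $\widehat{\mathcal{D}}_{\mathfrak{X}}^{(0,1)}$-module structure obtained from the graded bimodule construction restricts, on the graded pieces $\omega_{\mathfrak{X}}\otimes\mathcal{M}^{i}$, to the classical Berthelot right action, and in particular that the degree-shifting by $f$ and $v$ is compatible with the identifications $\omega_{\mathfrak{X}}^{(0,1),i}\tilde{=}\omega_{\mathfrak{X}}$; once this bookkeeping is done, the remaining structural claims follow formally.
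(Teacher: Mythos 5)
Your overall architecture — build the right-module gauge $D(\omega_{\mathfrak{X}})$ out of Berthelot's right actions at levels $0$ and $1$ via the lattice construction of \exaref{Basic-Construction-over-X}, then tensor with it — is the same as the paper's, but two steps do not work as written. First, the phrase ``with its induced right $\widehat{\mathcal{D}}_{\mathfrak{X}}^{(0,1)}$-module structure'' on $\omega_{\mathfrak{X}}^{(0,1)}\otimes_{D(\mathcal{O}_{\mathfrak{X}})}\mathcal{M}$ hides the entire content of the proposition: the right action of $\widehat{\mathcal{D}}_{\mathfrak{X}}^{(0,1)}$ on the left factor is not $D(\mathcal{O}_{\mathfrak{X}})$-balanced and so does not descend to the tensor product; the correct action is a twisted ``diagonal'' one (for a derivation one needs $(w\otimes m)\partial=w\partial\otimes m-w\otimes\partial m$, with a more involved formula for the $\partial^{[p]}$ and the $f$, $v$ bookkeeping). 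You flag this as ``the main obstacle'' but never resolve it. The paper resolves it via the twisting formalism: $\omega_{\mathfrak{X}}\otimes_{\mathcal{O}_{\mathfrak{X}}}\widehat{\mathcal{D}}_{\mathfrak{X}}^{(0,1)}\otimes_{\mathcal{O}_{\mathfrak{X}}}\omega_{\mathfrak{X}}^{-1}$ is a priori an algebra acting on $\omega_{\mathfrak{X}}\otimes_{\mathcal{O}_{\mathfrak{X}}}\mathcal{M}$, and the substantive check, done in local coordinates by comparing images in $\mathcal{E}nd_{W(k)}(D(\omega_{\mathfrak{X}}))$, is that this algebra is $\widehat{\mathcal{D}}_{\mathfrak{X}}^{(0,1),\text{op}}$. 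Some such explicit local computation is unavoidable and is missing from your argument.

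Second, your verification that the two composites are the identity by applying $\mathcal{M}\to\mathcal{M}^{-\infty}$ and $\mathcal{M}\to\widehat{\mathcal{M}^{\infty}}$ is not a valid reduction: these functors are not jointly conservative on $\mathcal{G}(\widehat{\mathcal{D}}_{\mathfrak{X}}^{(0,1)})$ (a gauge concentrated in one degree on which $f$ and $v$ act by zero dies under both), so an isomorphism at $\pm\infty$ says nothing about the map itself; moreover \corref{Nakayama} concerns reduction mod $p$, not the specializations at $\pm\infty$, so it cannot promote such information. The repair is easy once the module structure is in place: each graded piece of $D(\omega_{\mathfrak{X}})$ is an invertible $\mathcal{O}_{\mathfrak{X}}$-module, so the composite is degreewise tensoring with $\omega_{\mathfrak{X}}^{-1}\otimes_{\mathcal{O}_{\mathfrak{X}}}\omega_{\mathfrak{X}}\tilde{=}\mathcal{O}_{\mathfrak{X}}$, and one only has to confirm $\widehat{\mathcal{D}}_{\mathfrak{X}}^{(0,1)}$-linearity of the canonical map — equivalently, that twisting by $\omega_{\mathfrak{X}}^{-1}$ inverts twisting by $\omega_{\mathfrak{X}}$ at the level of twisted algebras, which is how the paper phrases it. Your remaining claims (preservation of coherence and quasi-coherence by local triviality of $\omega_{\mathfrak{X}}$, and preservation of $D_{cc}$ via part 4 of \propref{Basic-CC-facts}) are fine as stated.
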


In order to prove this, we first recall that $\omega_{\mathfrak{X}}$
naturally carries the structure of a right $\mathcal{D}_{\mathfrak{X}}^{(i)}$-module
for all $i\geq0$; indeed, $\omega_{\mathfrak{X}}[p^{-1}]$ carries
a right $\mathcal{D}_{\mathfrak{X}}^{(i)}[p^{-1}]=\mathcal{D}_{\mathfrak{X}}^{(0)}[p^{-1}]$
structure via the Lie derivative (c.f., e.g. \cite{key-4}, page 8).
In local coordinates, this action is simply given by
\[
(gdx_{1}\wedge\cdots\wedge dx_{n})\partial=-\partial(g)dx_{1}\wedge\cdots\wedge dx_{n}
\]
for any derivation $\partial$. It follows that $\mathcal{D}_{\mathfrak{X}}^{(i)}$
preserves $\omega_{\mathfrak{X}}$ (for all $i$). As $\omega_{\mathfrak{X}}$
is $p$-adically complete, we see that it also inherits a right $\mathcal{\widehat{D}}_{\mathfrak{X}}^{(i)}$-module
structure. 
\begin{lem}
Let $D(\omega_{\mathfrak{X}})=\omega_{\mathfrak{X}}\otimes_{\mathcal{O}_{\mathfrak{X}}}D(\mathcal{O}_{\mathfrak{X}})$.
Then $D(\omega_{\mathfrak{X}})$ has a natural right graded $\mathcal{\widehat{D}}_{\mathfrak{X}}^{(0,1)}$-module
structure. Similarly, $D(\omega_{X})$ admits a right graded $\mathcal{D}_{X}^{(0,1)}$-module
structure, for any smooth $X$ over $k$. 
\end{lem}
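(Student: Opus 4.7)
My plan is to mirror, on the module $D(\omega_{\mathfrak{X}})$, the very construction of $\widehat{\mathcal{D}}_{\mathfrak{X}}^{(0,1)}$ from \defref{D^(0,1)-in-the-lifted-case}. Since $\omega_{\mathfrak{X}}$ is an invertible $\mathcal{O}_{\mathfrak{X}}$-module, the tensor product description gives canonical isomorphisms
\[
(D(\omega_{\mathfrak{X}}))^{i} \;=\; \omega_{\mathfrak{X}} \otimes_{\mathcal{O}_{\mathfrak{X}}} (D(\mathcal{O}_{\mathfrak{X}}))^{i} \;\cong\; \{\omega \in \omega_{\mathfrak{X}}[p^{-1}] : p^{i}\omega \in \omega_{\mathfrak{X}}\},
\]
under which $f$ becomes the natural inclusion $p^{-i}\omega_{\mathfrak{X}} \hookrightarrow p^{-(i+1)}\omega_{\mathfrak{X}}$ and $v$ becomes multiplication by $p$. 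Thus $D(\omega_{\mathfrak{X}})$ is exactly the standard gauge attached to the lattice $\omega_{\mathfrak{X}} \subset \omega_{\mathfrak{X}}[p^{-1}]$, in parallel with \exaref{BasicGaugeConstruction}.

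With this identification in hand, the key step is to restrict an ambient right action. The usual right $\widehat{\mathcal{D}}_{\mathfrak{X}}^{(0)}$-module structure on $\omega_{\mathfrak{X}}$ (via the Lie derivative, as recalled just above the lemma) extends to a right $\widehat{\mathcal{D}}_{\mathfrak{X}}^{(0)}[p^{-1}] = \widehat{\mathcal{D}}_{\mathfrak{X}}^{(1)}[p^{-1}]$-module structure on $\omega_{\mathfrak{X}}[p^{-1}]$ (see \lemref{Basic-Structure-of-D^(1)}). For local sections $\omega \in (D(\omega_{\mathfrak{X}}))^{i}$ and $\Phi \in \widehat{\mathcal{D}}_{\mathfrak{X}}^{(0,1),j} \subset \widehat{\mathcal{D}}_{\mathfrak{X}}^{(1)}$, I define $\omega \cdot \Phi$ by this ambient right action; it lies in $(D(\omega_{\mathfrak{X}}))^{i+j}$ because
\[
p^{i+j}(\omega \cdot \Phi) \;=\; (p^{i}\omega)\cdot(p^{j}\Phi) \;\in\; \omega_{\mathfrak{X}}\cdot\widehat{\mathcal{D}}_{\mathfrak{X}}^{(0)} \;\subset\; \omega_{\mathfrak{X}}.
\]
Associativity, compatibility with $f$ and $v$, and graded-ness are all inherited from the ambient right $\widehat{\mathcal{D}}_{\mathfrak{X}}^{(1)}[p^{-1}]$-action, so no further checking is required.

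For the statement over $X$ in positive characteristic, the simplest route is to work locally: any affine open $U \subset X$ admits a smooth lift $\mathfrak{U}$ over $W(k)$, and the construction above produces a right $\widehat{\mathcal{D}}_{\mathfrak{U}}^{(0,1)}$-action on $D(\omega_{\mathfrak{U}})$ whose reduction mod $p$ is a right $\mathcal{D}_{U}^{(0,1)}$-action on $D(\omega_{U})$. The main obstacle, and what I expect to take the most care, is to show that this action does not depend on the choice of lift, so that the local actions glue. By the presentation of $\mathcal{D}_{X}^{(0,1)}$ supplied by \thmref{D-is-quasi-coherent}, it suffices to check this on generators: on sections of $\mathcal{D}_{X}^{(0)}$ the action reduces to the standard right action on $\omega_{X}$, which is intrinsic; on a generator $\partial_{i}^{[p]} \in \mathfrak{l}_{X}$ lying in a chart, the identity $p\,\partial_{i}^{[p]} = ((p-1)!)^{-1}\,\partial_{i}^{p}$ inside $\widehat{\mathcal{D}}_{\mathfrak{U}}^{(0,1),1}$ exhibits the resulting degree-$0$-to-degree-$1$ map as $\omega \mapsto u\cdot\omega\cdot\partial_{i}^{p}$ for a unit $u \in \mathbb{F}_{p}^{\times}$, a formula visibly intrinsic to $U$. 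The remaining relations of $\mathcal{D}_{X}^{(0,1)}$ are then automatic from the corresponding relations verified in the lift.
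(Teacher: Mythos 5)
Your treatment of the formal-scheme case is essentially the paper's: identify $D(\omega_{\mathfrak{X}})^{i}$ with the lattices $\{m\in\omega_{\mathfrak{X}}[p^{-1}]\mid p^{i}m\in\omega_{\mathfrak{X}}\}$ and restrict the ambient right action coming from $\widehat{\mathcal{D}}_{\mathfrak{X}}^{(0)}[p^{-1}]\tilde{=}\widehat{\mathcal{D}}_{\mathfrak{X}}^{(1)}[p^{-1}]$, with the degree bookkeeping $p^{i+j}(\omega\cdot\Phi)=(p^{i}\omega)\cdot(p^{j}\Phi)\in\omega_{\mathfrak{X}}$. That half is fine.

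The positive-characteristic half has a genuine error at the one step that carries the content. The relation $p\,\partial_{i}^{[p]}=((p-1)!)^{-1}\partial_{i}^{p}$ only determines $p$ times the action of $\partial_{i}^{[p]}$, and $p$ times the action is exactly the information that is lost upon reduction mod $p$. Concretely, over the lift one has $\omega\cdot\partial_{i}^{p}=(-1)^{p}\,\partial_{i}^{\circ p}(g)\,dx_{1}\wedge\cdots\wedge dx_{n}$ for $\omega=g\,dx_{1}\wedge\cdots\wedge dx_{n}$, where $\partial_{i}^{\circ p}=p!\,\partial_{i}^{[p]}$ is the $p$-fold iterate; this is divisible by $p!$, which is why the division by $p$ defining $\omega\cdot\partial_{i}^{[p]}$ is possible, but the quotient is not recoverable from the mod-$p$ action of $\partial_{i}^{p}\in\mathcal{D}_{U}^{(0)}$ on $\omega_{U}$ — that action is identically zero in characteristic $p$ (the iterate $\partial_{i}^{\circ p}$ annihilates $\mathcal{O}_{U}$). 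So the formula $\omega\mapsto u\cdot\omega\cdot\partial_{i}^{p}$, read intrinsically on $U$ as you intend, makes $\partial_{i}^{[p]}$ act by zero, which is false: for instance $(x_{i}^{p}\,dx_{1}\wedge\cdots\wedge dx_{n})\cdot\partial_{i}^{[p]}=-f\,dx_{1}\wedge\cdots\wedge dx_{n}\neq0$. The correct intrinsic formula is $(g\,dx_{1}\wedge\cdots\wedge dx_{n})\cdot\partial_{i}^{[p]}=-f\cdot\partial_{i}^{[p]}(g)\,dx_{1}\wedge\cdots\wedge dx_{n}$, where $\partial_{i}^{[p]}(g)$ is the divided-power differential operator (a section of $\mathfrak{l}_{U}$, c.f. \lemref{O^p-action}) applied to $g$; this is what is genuinely intrinsic to $U$, and the paper's argument consists of writing this formula down and checking that it is the reduction of the lifted action, whence coordinate-independence and gluing. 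Your overall strategy (reduce a local lift, verify lift-independence on generators) is salvageable, but the verification for the generators $\partial_{i}^{[p]}$ must use the divided-power operator acting on functions, not the $p$-th power of the derivation.
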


\begin{proof}
We note that $(\omega_{\mathfrak{X}}[f,v])^{i}=\{m\in\omega_{\mathfrak{X}}[p^{-1}]|p^{i}m\in\omega_{\mathfrak{X}}\}$.
Thus the first result follows by using the right $\mathcal{\widehat{D}}_{\mathfrak{X}}^{(1)}$-module
structure on $\omega_{\mathfrak{X}}$. To prove the second result,
we choose on open affine $\text{Spec}(A)\subset X$ which possesses
etale local coordinates. In coordinates, the required action is given
by 
\[
(gdx_{1}\wedge\cdots\wedge dx_{n})\partial=-\partial(g)dx_{1}\wedge\cdots\wedge dx_{n}
\]
and 
\[
(gdx_{1}\wedge\cdots\wedge dx_{n})\partial^{[p]}=-f\cdot\partial^{[p]}(g)dx_{1}\wedge\cdots\wedge dx_{n}
\]
for any $g\in D(\mathcal{O}_{\mathfrak{X}})$. If we choose a lift
$\mathcal{A}$ of $A$, then, after lifting the coordinates, we see
that this action is the reduction mod $p$ of the action just defined;
in particular it is actually independent of the choice of coordinates
and therefore glues to define an action on all of $X$. 
\end{proof}
Now we recall a very general construction from \cite{key-4}, section
1.4b 
\begin{lem}
Let $\mathcal{L}$ be any line bundle on $\mathfrak{X}$. Placing
$\mathcal{L}$ and $\mathcal{L}^{-1}$ in degree $0$, the sheaf $\mathcal{\widehat{D}}_{\mathfrak{X},\mathcal{L}}^{(0,1)}:=\mathcal{L}\otimes_{\mathcal{O}_{\mathfrak{X}}}\mathcal{\widehat{D}}_{\mathfrak{X}}^{(0,1)}\otimes_{\mathcal{O}_{\mathfrak{X}}}\mathcal{L}^{-1}$
carries the structure of a graded algebra on $\mathfrak{X}$, via
the multiplication 
\[
(s_{1}\otimes\Phi_{1}\otimes t_{1})\cdot(s_{2}\otimes\Phi_{2}\otimes t_{2})=s_{1}\otimes\Phi_{1}<t_{1},s_{1}>\Phi_{2}\otimes t_{2}
\]
There is a functor $\mathcal{G}(\mathcal{\widehat{D}}_{\mathfrak{X}}^{(0,1)})\to\mathcal{G}(\mathcal{\widehat{D}}_{\mathfrak{X},\mathcal{L}}^{(0,1)})$
given by $\mathcal{M}\to\mathcal{L}\otimes_{\mathcal{O}_{\mathfrak{X}}}\mathcal{M}$;
the action of $\mathcal{\widehat{D}}_{\mathfrak{X},\mathcal{L}}^{(0,1)}$
on $\mathcal{L}\otimes_{\mathcal{O}_{\mathfrak{X}}}\mathcal{M}$ is
defined by 
\[
(s\otimes\Phi\otimes t)\cdot(s_{1}\otimes m)=s\otimes\Phi_{1}<t,s_{1}>m
\]
This functor is an equivalence of categories, whose inverse is given
by $\mathcal{N}\to\mathcal{L}^{-1}\otimes_{\mathcal{O}_{\mathfrak{X}}}\mathcal{N}$. 
\end{lem}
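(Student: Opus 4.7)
The plan is to reduce the assertion to the familiar line-bundle twisting construction for $\widehat{\mathcal{D}}_{\mathfrak{X}}^{(0)}$ and $\widehat{\mathcal{D}}_{\mathfrak{X}}^{(1)}$, since graded pieces of $\widehat{\mathcal{D}}_{\mathfrak{X}}^{(0,1)}$ sit inside these two rings and inherit an $(\mathcal{O}_{\mathfrak{X}},\mathcal{O}_{\mathfrak{X}})$-bimodule structure. Concretely, the first step is to note that $\mathcal{L}\otimes_{\mathcal{O}_{\mathfrak{X}}}\widehat{\mathcal{D}}_{\mathfrak{X}}^{(0,1),i}\otimes_{\mathcal{O}_{\mathfrak{X}}}\mathcal{L}^{-1}$ is well defined as a sheaf because each $\widehat{\mathcal{D}}_{\mathfrak{X}}^{(0,1),i}$ is an $(\mathcal{O}_{\mathfrak{X}},\mathcal{O}_{\mathfrak{X}})$-bimodule (via $\mathcal{O}_{\mathfrak{X}}\subset\widehat{\mathcal{D}}_{\mathfrak{X}}^{(0)}\subset\widehat{\mathcal{D}}_{\mathfrak{X}}^{(0,1),i}$ for $i\geq 0$ and via the inclusion of $\widehat{\mathcal{D}}_{\mathfrak{X}}^{(0,1),i}$ into $\widehat{\mathcal{D}}_{\mathfrak{X}}^{(0)}\subset\widehat{\mathcal{D}}_{\mathfrak{X}}^{(1)}$ followed by the two-sided $\mathcal{O}_{\mathfrak{X}}$-module structure for $i<0$). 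Placing each such piece in degree $i$ gives the desired grading on $\widehat{\mathcal{D}}_{\mathfrak{X},\mathcal{L}}^{(0,1)}$.

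Next, I would define the multiplication by the formula in the statement and check that it is well posed and associative. Well-posedness amounts to noting that $\langle t_1,s_2\rangle\in\mathcal{O}_{\mathfrak{X}}$ and that the bimodule structure on each $\widehat{\mathcal{D}}_{\mathfrak{X}}^{(0,1),i}$ makes the assignment $\mathbb{Z}$-bilinear over the various $\mathcal{O}_{\mathfrak{X}}$-actions; in particular the product of a degree-$i$ and a degree-$j$ element lies in degree $i+j$, so the grading is respected. Associativity is a direct computation using only the associativity of the product in $\widehat{\mathcal{D}}_{\mathfrak{X}}^{(0,1)}$ and the commutativity of $\mathcal{O}_{\mathfrak{X}}$; the unit is $s\otimes 1\otimes t$ with $\langle t,s\rangle=1$ locally. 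The sheafy version follows because the construction is local and each verification is local.

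Then I would define the module functor $\mathcal{M}\mapsto \mathcal{L}\otimes_{\mathcal{O}_{\mathfrak{X}}}\mathcal{M}$ and check that the displayed formula gives a well-defined action of $\widehat{\mathcal{D}}_{\mathfrak{X},\mathcal{L}}^{(0,1)}$ on $\mathcal{L}\otimes_{\mathcal{O}_{\mathfrak{X}}}\mathcal{M}$ — again this is bookkeeping with the pairing $\langle-,-\rangle$ and the left $\widehat{\mathcal{D}}_{\mathfrak{X}}^{(0,1)}$-action on $\mathcal{M}$. The functor is visibly $\mathcal{O}_{\mathfrak{X}}$-linear and sends the degree-$j$ part $\mathcal{M}^j$ to $\mathcal{L}\otimes\mathcal{M}^j$ in degree $j$, so it preserves the grading.

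Finally, for the equivalence I would exhibit $\mathcal{N}\mapsto \mathcal{L}^{-1}\otimes_{\mathcal{O}_{\mathfrak{X}}}\mathcal{N}$ as an inverse, using the canonical isomorphisms $\mathcal{L}\otimes_{\mathcal{O}_{\mathfrak{X}}}\mathcal{L}^{-1}\tilde{\to}\mathcal{O}_{\mathfrak{X}}\tilde{\leftarrow}\mathcal{L}^{-1}\otimes_{\mathcal{O}_{\mathfrak{X}}}\mathcal{L}$ induced by the pairing. Since tensoring with an invertible $\mathcal{O}_{\mathfrak{X}}$-module is exact and preserves local finite generation, coherence and quasi-coherence (in positive characteristic) are preserved, and the derived completeness assertion follows from \propref{Basic-CC-facts}(4) together with the observation that, on each graded piece, tensoring with $\mathcal{L}$ commutes with $R\mathcal{H}om_{W(k)}(W(k)[p^{-1}],-)$ because $\mathcal{L}$ is locally free of finite rank. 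There is no serious obstacle here; the only point that requires care is keeping the left and right $\mathcal{O}_{\mathfrak{X}}$-actions on $\widehat{\mathcal{D}}_{\mathfrak{X}}^{(0,1)}$ straight when verifying associativity of the twisted multiplication.
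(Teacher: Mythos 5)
Your proof is correct and is exactly the standard direct verification that the paper itself outsources to \cite{key-4}, section 1.4b: each graded piece $\widehat{\mathcal{D}}_{\mathfrak{X}}^{(0,1),i}$ is an $(\mathcal{O}_{\mathfrak{X}},\mathcal{O}_{\mathfrak{X}})$-sub-bimodule of $\widehat{\mathcal{D}}_{\mathfrak{X}}^{(1)}$, so the twist is defined degreewise, the multiplication (with the pairing correctly read as $\langle t_{1},s_{2}\rangle$, fixing the paper's typo) is associative by a local computation, and tensoring with the invertible sheaf $\mathcal{L}^{-1}$ gives the inverse functor. Your extra remarks on exactness, coherence, and derived completeness are not part of the lemma but correctly anticipate what is needed for \propref{Left-Right-Swap}.
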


So, \propref{Left-Right-Swap} follows directly from 
\begin{lem}
There is an isomorphism of algebras $\mathcal{\widehat{D}}_{\mathfrak{X},\omega_{\mathfrak{X}}}^{(0,1)}\tilde{=}\mathcal{\widehat{D}}_{\mathfrak{X}}^{(0,1),\text{op}}$.
The same is true over $X$. 
\end{lem}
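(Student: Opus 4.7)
The plan is to deduce the statement from the known ungraded isomorphisms $\omega_{\mathfrak{X}} \otimes_{\mathcal{O}_{\mathfrak{X}}} \widehat{\mathcal{D}}_{\mathfrak{X}}^{(i)} \otimes_{\mathcal{O}_{\mathfrak{X}}} \omega_{\mathfrak{X}}^{-1} \tilde{\to} \widehat{\mathcal{D}}_{\mathfrak{X}}^{(i),\mathrm{op}}$ for $i = 0, 1$ (c.f. \cite{key-4}, \S1.4b), which arise from the right $\widehat{\mathcal{D}}_{\mathfrak{X}}^{(i)}$-module structure on $\omega_{\mathfrak{X}}$, combined with the defining description $\widehat{\mathcal{D}}_{\mathfrak{X}}^{(0,1),j} = \{\Phi \in \widehat{\mathcal{D}}_{\mathfrak{X}}^{(1)} : p^{j}\Phi \in \widehat{\mathcal{D}}_{\mathfrak{X}}^{(0)}\}$ of each graded piece.

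First I would observe that the Berthelot isomorphisms for $i=0$ and $i=1$ are compatible with the inclusions $\widehat{\mathcal{D}}_{\mathfrak{X}}^{(0)} \hookrightarrow \widehat{\mathcal{D}}_{\mathfrak{X}}^{(1)}$ and $\widehat{\mathcal{D}}_{\mathfrak{X}}^{(0),\mathrm{op}} \hookrightarrow \widehat{\mathcal{D}}_{\mathfrak{X}}^{(1),\mathrm{op}}$, since both are induced from the same right action of $\widehat{\mathcal{D}}_{\mathfrak{X}}^{(1)}$ on $\omega_{\mathfrak{X}}$, which restricts to that of $\widehat{\mathcal{D}}_{\mathfrak{X}}^{(0)}$. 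Since $\omega_{\mathfrak{X}}$ and $\omega_{\mathfrak{X}}^{-1}$ are line bundles, tensoring with them commutes with taking kernels and subsheaves, so
\[
\omega_{\mathfrak{X}} \otimes_{\mathcal{O}_{\mathfrak{X}}} \widehat{\mathcal{D}}_{\mathfrak{X}}^{(0,1),j} \otimes_{\mathcal{O}_{\mathfrak{X}}} \omega_{\mathfrak{X}}^{-1} = \{y \in \omega_{\mathfrak{X}} \otimes \widehat{\mathcal{D}}_{\mathfrak{X}}^{(1)} \otimes \omega_{\mathfrak{X}}^{-1} : p^{j}y \in \omega_{\mathfrak{X}} \otimes \widehat{\mathcal{D}}_{\mathfrak{X}}^{(0)} \otimes \omega_{\mathfrak{X}}^{-1}\}.
\]
Being $W(k)$-linear and matching the level-$0$ sub-isomorphism, the Berthelot isomorphism at level $1$ sends this subsheaf bijectively onto $\{\Psi \in \widehat{\mathcal{D}}_{\mathfrak{X}}^{(1),\mathrm{op}} : p^{j}\Psi \in \widehat{\mathcal{D}}_{\mathfrak{X}}^{(0),\mathrm{op}}\} = \widehat{\mathcal{D}}_{\mathfrak{X}}^{(0,1),j,\mathrm{op}}$.

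Next I would sum over $j$ to obtain an isomorphism $\mathcal{\widehat{D}}_{\mathfrak{X},\omega_{\mathfrak{X}}}^{(0,1)} \tilde{\to} \mathcal{\widehat{D}}_{\mathfrak{X}}^{(0,1),\mathrm{op}}$ of graded $D(W(k))$-modules. Compatibility with the algebra structures is then automatic: by construction (see the preceding lemma defining $\widehat{\mathcal{D}}_{\mathfrak{X},\omega_{\mathfrak{X}}}^{(0,1)}$) the products on both sides are restrictions of the products on $\omega_{\mathfrak{X}} \otimes \widehat{\mathcal{D}}_{\mathfrak{X}}^{(1)} \otimes \omega_{\mathfrak{X}}^{-1}$ and on $\widehat{\mathcal{D}}_{\mathfrak{X}}^{(1),\mathrm{op}}$, between which Berthelot's isomorphism is an algebra map.

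For the positive-characteristic version, the statement is local on $X$, and every affine $U \subset X$ admits a lift to a smooth formal $W(k)$-scheme $\mathfrak{U}$. By \thmref{D-is-quasi-coherent}, $\mathcal{D}_{U}^{(0,1)} \tilde{=} \widehat{\mathcal{D}}_{\mathfrak{U}}^{(0,1)}/p$, and since each graded piece $\widehat{\mathcal{D}}_{\mathfrak{U}}^{(0,1),j}$ is $p$-torsion-free and $\omega_{\mathfrak{U}}$ is a line bundle, reducing the just-constructed isomorphism modulo $p$ yields the desired algebra isomorphism over $U$. The construction is independent of the chosen lift because the underlying map is determined by the intrinsic right $\mathcal{D}_{U}^{(0,1)}$-action on $D(\omega_{U})$ established in the previous lemma; thus the local isomorphisms automatically glue to a global one. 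The main potential obstacle is bookkeeping the left/right and $\omega$-twist conventions through the tensor-product identifications, but this is entirely formal once Berthelot's classical result is in hand.
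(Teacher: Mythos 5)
Your argument is correct, but it reaches the isomorphism by a genuinely different route than the paper does. The paper lets both $\widehat{\mathcal{D}}_{\mathfrak{X},\omega_{\mathfrak{X}}}^{(0,1)}$ and $\widehat{\mathcal{D}}_{\mathfrak{X}}^{(0,1),\text{op}}$ act on $D(\omega_{\mathfrak{X}})$ and verifies, by a computation in local coordinates, that the two images in $\mathcal{E}nd_{W(k)}(D(\omega_{\mathfrak{X}}))$ coincide; since these actions are faithful over $W(k)$, this produces the isomorphism. You instead deduce the graded isomorphism formally from Berthelot's ungraded left--right interchange at levels $0$ and $1$, combined with the saturation description $\widehat{\mathcal{D}}_{\mathfrak{X}}^{(0,1),j}=\{\Phi\in\widehat{\mathcal{D}}_{\mathfrak{X}}^{(1)}\mid p^{j}\Phi\in\widehat{\mathcal{D}}_{\mathfrak{X}}^{(0)}\}$ and the fact that twisting by a line bundle commutes with forming such subsheaves. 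This buys you two things: no coordinate computation is needed over $\mathfrak{X}$, and the compatibility with the algebra structures becomes transparent, since both products are restrictions of the corresponding products at level $1$. Both arguments then treat the positive-characteristic case by local lifting and reduction mod $p$, using that the graded pieces are $p$-torsion-free.

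One small caveat about your gluing step over $X$: you assert that the map is ``determined by the intrinsic right $\mathcal{D}_{U}^{(0,1)}$-action on $D(\omega_{U})$,'' but mod $p$ that action is not faithful (already $\mathcal{D}_{X}^{(0)}\to\mathcal{E}nd_{k}(\mathcal{O}_{X})$ kills the two-sided ideal generated by the $\partial_{i}^{p}$, cf.\ \lemref{Basic-description-of-D-bar}), so the action alone cannot pin down the isomorphism in characteristic $p$. The repair is the one the paper also gestures at: the local isomorphisms are reductions mod $p$ of canonical isomorphisms over lifts, any two lifts of an affine with coordinates are isomorphic by an isomorphism reducing to the identity, and the induced mod-$p$ isomorphisms therefore agree; equivalently, one checks directly that the coordinate formulas transform correctly under change of coordinates. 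This is a cosmetic fix to the justification, not a gap in the strategy.
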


\begin{proof}
We have the isomorphism $\mathcal{\widehat{D}}_{\mathfrak{X},\omega_{\mathfrak{X}}}^{(0,1)}\tilde{=}D(\omega_{\mathfrak{X}})\otimes_{D(\mathcal{O}_{\mathfrak{X}})}\mathcal{\widehat{D}}_{\mathfrak{X}}^{(0,1)}\otimes_{D(\mathcal{O}_{\mathfrak{X}})}D(\mathcal{\omega}_{\mathfrak{X}}^{-1})$.
This yields a left action of $\mathcal{\widehat{D}}_{\mathfrak{X},\omega_{\mathfrak{X}}}^{(0,1)}$
on $\omega_{\mathfrak{X}}[f,v]$, given by 
\[
(s\otimes\Phi\otimes t)\cdot s_{1}=s\otimes\Phi\cdot<t,s_{1}>
\]
where $<,>$ refers to the pairing $D(\mathcal{\omega}_{\mathfrak{X}})\otimes_{D(\mathcal{O}_{\mathfrak{X}})}D(\mathcal{\omega}_{\mathfrak{X}}^{-1})\to D(\mathcal{O}_{\mathfrak{X}})$.
Computing in local coordinates, one sees that the image of $\mathcal{\widehat{D}}_{\mathfrak{X},\omega_{\mathfrak{X}}}^{(0,1)}$
in $\mathcal{E}nd_{W(k)}(D(\omega_{\mathfrak{X}}))$ is the same as
the image of $\mathcal{\widehat{D}}_{\mathfrak{X}}^{(0,1),\text{op}}$
in $\mathcal{E}nd_{W(k)}(D(\omega_{\mathfrak{X}}))$ via the right
action defined above. This yields the isomorphism over $\mathfrak{X}$.
To deal with $X$, one first obtains the isomorphism locally (via
a local lifting of the variety), and then shows that the resulting
isomorphism is independent of the choice of coordinates (as in the
proof of the previous lemma). 
\end{proof}
Next, we define tensor products of (left) $\widehat{\mathcal{D}}_{\mathfrak{X}}^{(0,1)}$-modules.
The first step is to define the external product of sheaves:
\begin{defn}
1) Let $\mathfrak{X}$ and $\mathfrak{Y}$ be smooth formal schemes,
and let $\mathcal{M}^{\cdot}\in D(\mathcal{G}(D(\mathcal{O}_{\mathfrak{X}})))$,
$\mathcal{N}^{\cdot}\in D(\mathcal{G}(D(\mathcal{O}_{\mathfrak{Y}})))$.
Then we define 
\[
\mathcal{M}^{\cdot}\boxtimes\mathcal{N}^{\cdot}:=Lp_{1}^{*}(\mathcal{M}^{\cdot})\widehat{\otimes}_{D(\mathcal{O}_{\mathfrak{X}\times\mathfrak{Y}})}^{L}Lp_{2}^{*}(\mathcal{N}^{\cdot})\in D_{cc}(\mathcal{G}(D(\mathcal{O}_{\mathfrak{X}\times\mathfrak{Y}})))
\]
where $p_{i}$ ($i\in\{1,2\}$) are the projections and $Lp_{1}^{*},Lp_{2}^{*}$
are defined as in \defref{Correct-Pullback}. 

2) Let $X$ and $Y$ be smooth schemes over $k$. Then for $\mathcal{M}^{\cdot}\in D(\mathcal{G}(D(\mathcal{O}_{X})))$,
$\mathcal{N}^{\cdot}\in D(\mathcal{G}(D(\mathcal{O}_{Y})))$. Then
we define 
\[
\mathcal{M}^{\cdot}\boxtimes\mathcal{N}^{\cdot}:=Lp_{1}^{*}(\mathcal{M}^{\cdot})\otimes_{D(\mathcal{O}_{X\times Y})}^{L}Lp_{2}^{*}(\mathcal{N}^{\cdot})\in D(\mathcal{G}(D(\mathcal{O}_{X\times Y})))
\]
where for $\mathcal{M}^{\cdot}\in D(\mathcal{G}(D(\mathcal{O}_{X})))$
we have $Lp_{1}^{*}\mathcal{M}^{\cdot}=D(\mathcal{O}_{X\times Y})\otimes_{p_{1}^{-1}(D(\mathcal{O}_{X}))}^{L}\mathcal{M}^{\cdot}\in D(\mathcal{G}(D(\mathcal{O}_{X\times Y})))$
( and similarly for $p_{2}$). 
\end{defn}

The relationship with $\mathcal{D}$-modules is the following: 
\begin{lem}
1) There is an isomorphism 
\[
\widehat{\mathcal{D}}_{\mathfrak{X}}^{(0,1)}\boxtimes\widehat{\mathcal{D}}_{\mathfrak{Y}}^{(0,1)}\tilde{=}\widehat{\mathcal{D}}_{\mathfrak{X}\times\mathfrak{Y}}^{(0,1)}
\]
of sheaves of algebras on $\mathfrak{X}\times\mathfrak{Y}$. 

2) There is an isomorphism 
\[
\mathcal{D}_{X}^{(0,1)}\boxtimes\mathcal{D}_{Y}^{(0,1)}\tilde{=}\mathcal{D}_{X\times Y}^{(0,1)}
\]
of sheaves of algebras on $X\times Y$.
\end{lem}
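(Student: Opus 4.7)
The plan is to first establish part 2) by explicit computation in local coordinates, and then bootstrap to part 1) via reduction mod $p$ and Nakayama's lemma.

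For part 2), the statement is local, so I reduce to the case of affine opens $U = \text{Spec}(A) \subset X$, $V = \text{Spec}(B) \subset Y$ equipped with étale local coordinates $\{\partial_{X,i}\}$ and $\{\partial_{Y,j}\}$. First I would construct the natural map $\mathcal{D}_X^{(0,1)}\boxtimes\mathcal{D}_Y^{(0,1)} \to \mathcal{D}_{X\times Y}^{(0,1)}$ as follows: the smoothness of $p_1,p_2$ gives a splitting $\mathcal{T}_{X\times Y} \tilde{=} p_1^*\mathcal{T}_X \oplus p_2^*\mathcal{T}_Y$, which extends (using local coordinates and the local presentation of \lemref{Reduction-is-correct}) to sheaf maps $p_i^{-1}\mathfrak{l}_{X_i} \to \mathfrak{l}_{X\times Y}$, hence to algebra maps $p_i^{-1}\mathcal{D}_{X_i}^{(0,1)} \to \mathcal{D}_{X\times Y}^{(0,1)}$ whose images commute (by a direct check on generators: $\partial_{X,i}$ and $\partial_{X,i}^{[p]}$ act trivially on functions pulled back from $Y$, and vice versa). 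Extending $D(\mathcal{O}_{X\times Y})$-linearly and using this commutativity yields the desired algebra map from the external tensor product.

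To verify this map is an isomorphism, and that the derived tensor collapses to the ordinary one, I would appeal to \corref{Local-coords-over-A=00005Bf,v=00005D}: the sheaf $\mathcal{D}_X^{(0,1)}$ is free as a graded $D(A)$-module on the basis $\{\partial_X^I(\partial_X^{[p]})^J\}$ (with $0 \leq i_k < p$ and $J$ arbitrary), and similarly for $\mathcal{D}_Y^{(0,1)}$. Freeness makes $Lp_i^*\mathcal{D}_{X_i}^{(0,1)}$ concentrated in degree zero and free over $D(\mathcal{O}_{X\times Y})$, so the derived tensor over $D(\mathcal{O}_{X\times Y})$ is the ordinary tensor and is itself free on products of the two bases. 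Since $X\times Y$ has local coordinate system $\{\partial_{X,i}\} \sqcup \{\partial_{Y,j}\}$, another application of \corref{Local-coords-over-A=00005Bf,v=00005D} identifies the target with the free graded $D(\mathcal{O}_{X\times Y})$-module on the \emph{same} basis, and the map sends basis elements to basis elements by construction.

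For part 1), I would run an entirely parallel construction producing a natural map $\widehat{\mathcal{D}}_{\mathfrak{X}}^{(0,1)}\boxtimes\widehat{\mathcal{D}}_{\mathfrak{Y}}^{(0,1)} \to \widehat{\mathcal{D}}_{\mathfrak{X}\times\mathfrak{Y}}^{(0,1)}$ by glueing local lifts and passing to the graded cohomological completion on the source. Both sides lie in $D_{cc}(\mathcal{G}(D(\mathcal{O}_{\mathfrak{X}\times\mathfrak{Y}})))$: the source by construction, and the target because each graded component is a coherent $p$-adically complete module, so \propref{Basic-CC-facts}(4) applies. By \corref{Nakayama}, it then suffices to show the map is an isomorphism after applying $\otimes_{W(k)}^{L}k$. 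Combining \lemref{reduction-of-completion}, \lemref{Hom-tensor-and-reduce}, and \propref{Basic-base-change-for-pullback}(2), the source reduces to $\mathcal{D}_X^{(0,1)}\boxtimes\mathcal{D}_Y^{(0,1)}$ and the target to $\mathcal{D}_{X\times Y}^{(0,1)}$, so part 2) concludes the argument. The main obstacle is the careful construction of the natural algebra map—in particular, the verification that the images of $p_1^{-1}(\widehat{\mathcal{D}}_\mathfrak{X}^{(0,1)})$ and $p_2^{-1}(\widehat{\mathcal{D}}_\mathfrak{Y}^{(0,1)})$ commute inside $\widehat{\mathcal{D}}_{\mathfrak{X}\times\mathfrak{Y}}^{(0,1)}$, and that the Hasse-Schmidt generators $\partial^{[p]}$ transfer correctly under the projections—after which the isomorphism reduces, via \lemref{Reduction-is-correct} and the free basis descriptions above, to a combinatorial matching.
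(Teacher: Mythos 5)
Your proposal is correct and follows essentially the same route as the paper: part 2) is proved by constructing the map on the generators $\mathcal{T}$ and $\mathfrak{l}$ and then matching the free $D(\mathcal{O})$-bases of \corref{Local-coords-over-A=00005Bf,v=00005D} in local coordinates, and part 1) is deduced from part 2) by reduction mod $p$. The only cosmetic difference is that the paper builds the global map in the formal-scheme case via the action $(\Phi_{1}\otimes\Phi_{2})(a\otimes b)=\Phi_{1}(a)\otimes\Phi_{2}(b)$ on $\mathcal{A}\widehat{\otimes}\mathcal{B}$ and tracks the gauge pieces $\{\Phi\,|\,p^{i}\Phi\in\iota(\cdots)\}$ explicitly before reducing mod $p$, whereas you invoke the cohomological-completeness Nakayama lemma wholesale; both are valid.
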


\begin{proof}
First suppose $\mathfrak{X}=\text{Specf}(\mathcal{A})$ and $\mathfrak{Y}=\text{Specf}(\mathcal{B})$.
Then there is a morphism $\mathcal{D}_{\mathcal{A}}^{(\infty)}\otimes_{W(k)}\mathcal{D}_{\mathcal{B}}^{(\infty)}\to\mathcal{D}_{\mathcal{A}\widehat{\otimes}_{W(k)}\mathcal{B}}^{(\infty)}$
defined as follows: for sections $a\in\mathcal{A}$ and $b\in\mathcal{B}$,
we set
\[
(\Phi_{1}\otimes\Phi_{2})(a\otimes b)=\Phi_{1}(a)\otimes\Phi_{2}(b)
\]
and we extend to $\mathcal{A}\widehat{\otimes}_{W(k)}\mathcal{B}$
by linearity and continuity. For a fixed integer $j\geq0$, this yields
a map $\mathcal{D}_{\mathcal{A}}^{(j)}\otimes_{W(k)}\mathcal{D}_{\mathcal{B}}^{(j)}\to\mathcal{D}_{\mathcal{A}\widehat{\otimes}_{W(k)}\mathcal{B}}^{(j)}$;
these maps are compatible with localization at any element of $\mathcal{A}$
or $\mathcal{B}$. After $p$-adically completing we get a map $\widehat{\mathcal{D}}_{\mathcal{A}}^{(j)}\widehat{\otimes}_{W(k)}\widehat{\mathcal{D}}_{\mathcal{B}}^{(j)}\to\widehat{\mathcal{D}}_{\mathcal{A}\widehat{\otimes}_{W(k)}\mathcal{B}}^{(j)}$,
and these maps sheafifiy to a map \linebreak{}
$p_{1}^{-1}(\widehat{\mathcal{D}}_{\mathfrak{X}}^{(j)})\widehat{\otimes}_{W(k)}p_{2}^{-1}(\widehat{\mathcal{D}}_{\mathfrak{Y}}^{(j)})\to\widehat{\mathcal{D}}_{\mathfrak{X}\times\mathfrak{Y}}^{(j)}$.
Note that since $\mathcal{D}_{\mathcal{A}}^{(j)}\otimes_{W(k)}\mathcal{D}_{\mathcal{B}}^{(j)}$
is $p$-torsion-free (as is $\mathcal{D}_{\mathcal{A}\widehat{\otimes}_{W(k)}\mathcal{B}}^{(j)}$),
the usual $p$-adic completion of these sheaves agrees with the cohomological
completion. It follows that $p_{1}^{-1}(\widehat{\mathcal{D}}_{\mathfrak{X}}^{(j)})\widehat{\otimes}_{W(k)}p_{2}^{-1}(\widehat{\mathcal{D}}_{\mathfrak{Y}}^{(j)})\tilde{=}p_{1}^{*}(\widehat{\mathcal{D}}_{\mathfrak{X}}^{(j)})\widehat{\otimes}_{\mathcal{O}_{\mathfrak{X}\times\mathfrak{Y}}}p_{2}^{*}(\widehat{\mathcal{D}}_{\mathfrak{Y}}^{(j)}))$. 

1) We claim that the map 
\[
p_{1}^{*}(\widehat{\mathcal{D}}_{\mathfrak{X}}^{(j)})\widehat{\otimes}_{\mathcal{O}_{\mathfrak{X}\times\mathfrak{Y}}}p_{2}^{*}(\widehat{\mathcal{D}}_{\mathfrak{Y}}^{(j)}))\to\widehat{\mathcal{D}}_{\mathfrak{X}\times\mathfrak{Y}}^{(j)}
\]
is an isomorphism; indeed, both sides are $p$-adically complete and
$p$-torsion-free, so it suffices to check this after reduction mod
$p$, where it becomes an easy computation in local coordinates. Thus
we obtain isomorphisms 
\[
\{\Phi\in p_{1}^{*}(\widehat{\mathcal{D}}_{\mathfrak{X}}^{(1)})\widehat{\otimes}_{\mathcal{O}_{\mathfrak{X}\times\mathfrak{Y}}}p_{2}^{*}(\widehat{\mathcal{D}}_{\mathfrak{Y}}^{(1)})|p^{i}\Phi\in p_{1}^{*}(\widehat{\mathcal{D}}_{\mathfrak{X}}^{(0)})\widehat{\otimes}_{\mathcal{O}_{\mathfrak{X}\times\mathfrak{Y}}}p_{2}^{*}(\widehat{\mathcal{D}}_{\mathfrak{Y}}^{(0)})\}
\]
\[
\tilde{\to}\{\Phi\in\widehat{\mathcal{D}}_{\mathfrak{X}\times\mathfrak{Y}}^{(1)}|p^{i}\Phi\in\widehat{\mathcal{D}}_{\mathfrak{X}\times\mathfrak{Y}}^{(0)}\}=\widehat{\mathcal{D}}_{\mathfrak{X}\times\mathfrak{Y}}^{(0,1),i}
\]
for each $i\in\mathbb{Z}$. 

On the other hand, we claim that there is an isomorphism 
\[
(\widehat{\mathcal{D}}_{\mathfrak{X}}^{(0,1)}\boxtimes\widehat{\mathcal{D}}_{\mathfrak{Y}}^{(0,1)})^{i}\tilde{\to}\{\Phi\in p_{1}^{*}(\widehat{\mathcal{D}}_{\mathfrak{X}}^{(1)})\widehat{\otimes}_{\mathcal{O}_{\mathfrak{X}\times\mathfrak{Y}}}p_{2}^{*}(\widehat{\mathcal{D}}_{\mathfrak{Y}}^{(1)})|p^{i}\Phi\in p_{1}^{*}(\widehat{\mathcal{D}}_{\mathfrak{X}}^{(0)})\widehat{\otimes}_{\mathcal{O}_{\mathfrak{X}\times\mathfrak{Y}}}p_{2}^{*}(\widehat{\mathcal{D}}_{\mathfrak{Y}}^{(0)})\}
\]
Combined with the above, this proves $(\widehat{\mathcal{D}}_{\mathfrak{X}}^{(0,1)}\boxtimes\widehat{\mathcal{D}}_{\mathfrak{Y}}^{(0,1)})^{i}\tilde{\to}\widehat{\mathcal{D}}_{\mathfrak{X}\times\mathfrak{Y}}^{(0,1),i}$
as required. To see it, note that we have the map 
\[
f_{\infty}:(\widehat{\mathcal{D}}_{\mathfrak{X}}^{(0,1)}\boxtimes\widehat{\mathcal{D}}_{\mathfrak{Y}}^{(0,1)})^{i}\to(\widehat{\mathcal{D}}_{\mathfrak{X}}^{(0,1)}\boxtimes\widehat{\mathcal{D}}_{\mathfrak{Y}}^{(0,1)})^{\infty}
\]
The completion of the right hand side is $p_{1}^{*}(\widehat{\mathcal{D}}_{\mathfrak{X}}^{(1)})\widehat{\otimes}_{\mathcal{O}_{\mathfrak{X}\times\mathfrak{Y}}}p_{2}^{*}(\widehat{\mathcal{D}}_{\mathfrak{Y}}^{(1)})$;
so we obtain a map
\[
(\widehat{\mathcal{D}}_{\mathfrak{X}}^{(0,1)}\boxtimes\widehat{\mathcal{D}}_{\mathfrak{Y}}^{(0,1)})^{i}\to\{\Phi\in p_{1}^{*}(\widehat{\mathcal{D}}_{\mathfrak{X}}^{(1)})\widehat{\otimes}_{\mathcal{O}_{\mathfrak{X}\times\mathfrak{Y}}}p_{2}^{*}(\widehat{\mathcal{D}}_{\mathfrak{Y}}^{(1)})|p^{i}\Phi\in p_{1}^{*}(\widehat{\mathcal{D}}_{\mathfrak{X}}^{(0)})\widehat{\otimes}_{\mathcal{O}_{\mathfrak{X}\times\mathfrak{Y}}}p_{2}^{*}(\widehat{\mathcal{D}}_{\mathfrak{Y}}^{(0)})\}
\]
and to see that it is an isomorphism, one may check it after reduction
mod $p$; then it follows from the result of part $2)$ proved directly
below.

2) As above we have the map $p_{1}^{-1}\mathcal{D}_{X}^{(\infty)}\otimes_{k}p_{2}^{-1}\mathcal{D}_{Y}^{(\infty)}\to\mathcal{D}_{X\times Y}^{(\infty)}$.
Restricting to $\mathcal{T}_{X}$ and $\mathfrak{l}_{X}$ (a defined
in \defref{L} above)we get maps $p_{1}^{\#}:p_{1}^{-1}(\mathcal{T}_{X})\to\mathcal{T}_{X\times Y}$
and $p_{1}^{\#}:p_{1}^{-1}(\mathfrak{l}_{X})\to\mathfrak{l}_{X\times Y}$;
and similarly for $p_{2}$. Thus we get a map 
\[
A:(\mathcal{T}_{X}\boxtimes1)\oplus(1\boxtimes\mathcal{T}_{Y})\oplus(\mathfrak{l}_{X}\boxtimes1)\oplus(1\boxtimes\mathfrak{l}_{Y})\to\mathcal{D}_{X\times Y}^{(0,1)}
\]
defined by 
\[
A(\partial_{1}\boxtimes1+1\boxtimes\partial_{2}+\delta_{1}\boxtimes1+1\boxtimes\delta_{2})=p_{1}^{\#}(\partial_{1})+p_{2}^{\#}(\partial_{2})+p_{1}^{\#}(\delta_{1})+p_{2}^{\#}(\delta_{2})
\]
On the other hand, the sheaf $(\mathcal{T}_{X}\boxtimes1)\oplus(1\boxtimes\mathcal{T}_{Y})\oplus(\mathfrak{l}_{X}\boxtimes1)\oplus(1\boxtimes\mathfrak{l}_{Y})$
generates $\mathcal{D}_{X}^{(0,1)}\boxtimes\mathcal{D}_{Y}^{(0,1)}$
as a sheaf of algebras over $\mathcal{O}_{X\times Y}[f,v]$. Thus
to show that $A$ extends (necessarily uniquely) to an isomorphism
of algebras, we can so do locally.

So, let $\{x_{1},\dots,x_{n}\}$ and $\{y_{1},\dots,y_{m}\}$ be local
coordinates on $X$ and $Y$, respectively, with associated derivations
$\{\partial_{x_{1}},\dots,\partial_{x_{n}}\}$ and $\{\partial_{y_{1}},\dots,\partial_{y_{m}}\}$.
Then by \corref{Local-coords-over-A=00005Bf,v=00005D} an $D(\mathcal{O}_{X})$-basis
for $\mathcal{D}_{X}^{(0,1)}$ is given by the set $\{\partial_{x}^{I}(\partial_{x}^{[p]})^{J}\}$
for multi-indices $I,J$ such that each entry of $I$ is contained
in $\{0,1,\dots,p-1\}$; the analogous statement holds over $Y$.
Therefore the set $\{\partial_{x}^{I_{1}}(\partial_{x}^{[p]})^{J_{1}}\otimes\partial_{y}^{I_{2}}(\partial_{y}^{[p]})^{J_{2}}\}$
is an $\mathcal{O}_{X\times Y}[f,v]$-basis for $\mathcal{D}_{X}^{(0,1)}\boxtimes\mathcal{D}_{Y}^{(0,1)}$;
but also $\{\partial_{x}^{I_{1}}\partial_{y}^{I_{2}}(\partial_{x}^{[p]})^{J_{1}}(\partial_{y}^{[p]})^{J_{2}}\}$
is certainly an $D(\mathcal{O}_{X\times Y})$-basis for $\mathcal{D}_{X\times Y}^{(0,1)}$
and so the result follows immediately. 
\end{proof}
Now we can define the tensor product: 
\begin{defn}
Let $\Delta:\mathfrak{X}\to\mathfrak{X}\times\mathfrak{X}$ denote
the diagonal morphism. 

1) Then for $\mathcal{M}^{\cdot},\mathcal{N}^{\cdot}\in D(\mathcal{G}(\widehat{\mathcal{D}}_{\mathfrak{X}}^{(0,1)}))$
we define $\mathcal{M}^{\cdot}\widehat{\otimes}_{D(\mathcal{O}_{\mathfrak{X}})}^{L}\mathcal{N}^{\cdot}:=L\Delta^{*}(\mathcal{M}^{\cdot}\boxtimes\mathcal{N}^{\cdot})\in D_{cc}(\mathcal{G}(\widehat{\mathcal{D}}_{\mathfrak{X}}^{(0,1)}))$,
where $\mathcal{M}^{\cdot}\boxtimes\mathcal{N}^{\cdot}$ is regarded
as an element of $D_{cc}(\mathcal{G}(\widehat{\mathcal{D}}_{\mathfrak{X}\times\mathfrak{X}}^{(0,1)}))$
via the isomorphism $\widehat{\mathcal{D}}_{\mathfrak{X}}^{(0,1)}\boxtimes\widehat{\mathcal{D}}_{\mathfrak{X}}^{(0,1)}\tilde{=}\widehat{\mathcal{D}}_{\mathfrak{X}\times\mathfrak{X}}^{(0,1)}$. 

2) For $\mathcal{M}^{\cdot}\in D(\mathcal{G}(\widehat{\mathcal{D}}_{\mathfrak{X}}^{(0,1),\text{op}}))$
and $\mathcal{N}^{\cdot}\in D(\mathcal{G}(\widehat{\mathcal{D}}_{\mathfrak{X}}^{(0,1)}))$,
we define $\mathcal{M}^{\cdot}\widehat{\otimes}_{D(\mathcal{O}_{\mathfrak{X}})}^{L}\mathcal{N}^{\cdot}:=\omega_{\mathfrak{X}}\otimes_{\mathcal{O}_{\mathfrak{X}}}((\omega_{\mathfrak{X}}^{-1}\otimes_{\mathcal{O}_{\mathfrak{X}}}\mathcal{M}^{\cdot})\widehat{\otimes}_{D(\mathcal{O}_{\mathfrak{X}})}^{L}\mathcal{N}^{\cdot})\in D(\mathcal{G}(\widehat{\mathcal{D}}_{\mathfrak{X}}^{(0,1),\text{op}}))$

One has the analogous constructions for a smooth $X$ over $k$. 
\end{defn}

From the construction, one sees directly that, as an $D(\mathcal{O}_{\mathfrak{X}})$-module,
the module $\mathcal{M}^{\cdot}\widehat{\otimes}_{D(\mathcal{O}_{\mathfrak{X}})}^{L}\mathcal{N}^{\cdot}$
agrees with the $D(\mathcal{O}_{\mathfrak{X}})$-module denoted in
the same way. The issue that this construction resolves is how to
put a $\widehat{\mathcal{D}}_{\mathfrak{X}}^{(0,1)}$-module structure
on this object. 

To proceed further, it is useful to note some explicit formulas in
coordinates: 
\begin{rem}
\label{rem:Two-actions-agree}Suppose we have local coordinates $\{x_{i}\}_{i=1}^{n}$
and $\{\partial_{i}\}_{i=1}^{n}$ on $\mathfrak{X}$. Then for modules
$\mathcal{M},\mathcal{N}\in\mathcal{G}(\widehat{\mathcal{D}}_{\mathfrak{X}}^{(0,1)})$
we can put an action of $\widehat{\mathcal{D}}_{\mathfrak{X}}^{(0,1)}$
on $\mathcal{M}\otimes_{D(\mathcal{O}_{\mathfrak{X}})}\mathcal{N}$
via the following formulas: 
\[
\partial_{i}(m\otimes n)=\partial_{i}m\otimes n+m\otimes\partial_{i}n
\]
and 
\[
\partial_{i}^{([p]}(m\otimes n)=f\sum_{j=1}^{p-1}\partial^{[j]}(m)\otimes\partial^{[p-j]}(m)+\partial^{[p]}(m)\otimes n+m\otimes\partial^{[p]}(n)
\]
Taking a flat resolution of $\mathcal{N}$, this gives $\mathcal{M}\otimes_{D(\mathcal{O}_{\mathfrak{X}})}^{L}\mathcal{N}$
the structure of an element of $D(\mathcal{G}(\widehat{\mathcal{D}}_{\mathfrak{X}}^{(0,1)}))$,
which means that $\mathcal{M}\widehat{\otimes}_{D(\mathcal{O}_{\mathfrak{X}})}^{L}\mathcal{N}$
belongs to $D_{cc}(\mathcal{G}(\widehat{\mathcal{D}}_{\mathfrak{X}}^{(0,1)}))$.
This object is isomorphic to the tensor product defined above. Indeed,
in local coordinates the action of $\widehat{\mathcal{D}}_{\mathfrak{X}}^{(0,1)}$
on $\Delta^{*}(\widehat{\mathcal{D}}_{\mathfrak{X}\times\mathfrak{X}}^{(0,1)})$
is given as follows: let $\{\partial_{i},\partial'_{i}\}_{i=1}^{n}$
be local coordinate derivations on $\mathfrak{X}\times\mathfrak{X}$.
Then the action is given by $\partial_{i}\cdot1=\partial_{i}+\partial_{i}'$
and $\partial_{i}^{[p]}\cdot1=f\sum_{j=1}^{p-1}\partial_{i}^{[j]}\cdot(\partial'_{i})^{[p-j]}+\partial_{i}^{[p]}+(\partial'_{i})^{[p]}$,
which agrees with the above formula. 
\end{rem}

This allows us to prove the following useful 
\begin{lem}
\label{lem:Juggle}(Compare \cite{key-50}, lemma 2.2.5) Let $\mathcal{M}^{\cdot},\mathcal{P}^{\cdot}$
be elements of $D(\mathcal{G}(\widehat{\mathcal{D}}_{\mathfrak{X}}^{(0,1)}))$
and $\mathcal{N}^{\cdot}\in D(\mathcal{G}(\widehat{\mathcal{D}}_{\mathfrak{X}}^{(0,1),\text{opp}}))$.
Then there is an isomorphism 
\[
\mathcal{N}^{\cdot}\widehat{\otimes}_{\widehat{\mathcal{D}}_{\mathfrak{X}}^{(0,1)}}^{L}(\mathcal{M}^{\cdot}\widehat{\otimes}_{D(\mathcal{O}_{\mathfrak{X}})}^{L}\mathcal{P}^{\cdot})\tilde{\to}(\mathcal{N}^{\cdot}\widehat{\otimes}_{D(\mathcal{O}_{\mathfrak{X}})}^{L}\mathcal{M}^{\cdot})\widehat{\otimes}_{\widehat{\mathcal{D}}_{\mathfrak{X}}^{(0,1)}}^{L}\mathcal{P}^{\cdot}
\]
\end{lem}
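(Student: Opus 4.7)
The plan is to reduce the statement to an analogous identity for uncompleted derived tensor products in $D(\mathcal{G}(\widehat{\mathcal{D}}_{\mathfrak{X}}^{(0,1)}))$, and then transfer it to the cohomologically completed setting. This is the $F^{-1}$-gauge analogue of the classical ``juggling'' identity for $\mathcal{D}$-modules, which is precisely the pattern of \cite{key-50}, lemma 2.2.5 (cited in the statement itself), and the proof strategy is the same.

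First I would introduce the auxiliary $(\widehat{\mathcal{D}}_{\mathfrak{X}}^{(0,1)},\widehat{\mathcal{D}}_{\mathfrak{X}}^{(0,1)})$-bimodule
\[
\mathcal{B}^{\cdot} := \mathcal{M}^{\cdot} \otimes_{D(\mathcal{O}_{\mathfrak{X}})}^{L} \widehat{\mathcal{D}}_{\mathfrak{X}}^{(0,1)},
\]
where the left $\widehat{\mathcal{D}}_{\mathfrak{X}}^{(0,1)}$-action is the diagonal action on the tensor product (described explicitly in local coordinates in \remref{Two-actions-agree}) and the right action is the standard action on the second tensor factor. In local coordinates the formulas of \remref{Two-actions-agree} show that the two actions commute. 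As a right $\widehat{\mathcal{D}}_{\mathfrak{X}}^{(0,1)}$-module, $\mathcal{B}^{\cdot}$ is obtained by tensoring $\mathcal{M}^\cdot$ with the free right module $\widehat{\mathcal{D}}_{\mathfrak{X}}^{(0,1)}$, hence is flat on the right.

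The main computational step is then to establish, by a flat resolution argument, the two ``cancellation'' identities
\[
\mathcal{N}^{\cdot} \otimes^L_{\widehat{\mathcal{D}}_{\mathfrak{X}}^{(0,1)}} \mathcal{B}^{\cdot} \;\simeq\; \mathcal{N}^{\cdot} \otimes^L_{D(\mathcal{O}_{\mathfrak{X}})} \mathcal{M}^{\cdot},
\qquad
\mathcal{B}^{\cdot} \otimes^L_{\widehat{\mathcal{D}}_{\mathfrak{X}}^{(0,1)}} \mathcal{P}^{\cdot} \;\simeq\; \mathcal{M}^{\cdot} \otimes^L_{D(\mathcal{O}_{\mathfrak{X}})} \mathcal{P}^{\cdot},
\]
each with the appropriate residual (right or left) $\widehat{\mathcal{D}}_{\mathfrak{X}}^{(0,1)}$-action coming from the diagonal structure. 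Each of these reduces, via a resolution of $\mathcal{P}^\cdot$ (respectively $\mathcal{N}^\cdot$) by direct sums of shifts of $\widehat{\mathcal{D}}_{\mathfrak{X}}^{(0,1)}$, to the unit isomorphism $\mathcal{B}^{\cdot} \otimes_{\widehat{\mathcal{D}}_{\mathfrak{X}}^{(0,1)}} \widehat{\mathcal{D}}_{\mathfrak{X}}^{(0,1)} \simeq \mathcal{B}^{\cdot}$. Combining these two cancellation identities with the associativity of the (uncompleted) derived tensor product provides a natural isomorphism in $D(\mathcal{G}(\widehat{\mathcal{D}}_{\mathfrak{X}}^{(0,1)}))$ before completion.

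The main obstacle I anticipate is the compatibility of the above isomorphism with the cohomological completion $\widehat{\otimes}^{L}$. By definition, both $\widehat{\otimes}^{L}_{D(\mathcal{O}_{\mathfrak{X}})}$ and $\widehat{\otimes}^{L}_{\widehat{\mathcal{D}}_{\mathfrak{X}}^{(0,1)}}$ are graded derived completions of their uncompleted analogues; completion is a functor and preserves isomorphisms, so applying completion termwise to the uncompleted juggling identity will produce the desired statement, provided one knows that the iterated manipulations respect completion. This is where \propref{Basic-CC-facts} and \propref{Push-and-complete} enter: they give that the relevant objects lie in $D_{cc}(\mathcal{G}(\widehat{\mathcal{D}}_{\mathfrak{X}}^{(0,1)}))$ and that cohomological completion is idempotent there, so ``complete internally, then externally'' equals a single completion at the end. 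Verifying this bookkeeping carefully—in particular, that associativity of the uncompleted derived tensor product survives two applications of completion—is the technical heart of the argument.
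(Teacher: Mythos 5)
Your proposal is correct and follows essentially the same route as the paper: the paper likewise builds the comparison from the subalgebra inclusion $D(\mathcal{O}_{\mathfrak{X}})\subset\widehat{\mathcal{D}}_{\mathfrak{X}}^{(0,1)}$ together with the local formulas of \remref{Two-actions-agree}, uses flatness of $\widehat{\mathcal{D}}_{\mathfrak{X}}^{(0,1)}$ over $D(\mathcal{O}_{\mathfrak{X}})$ and K-flat resolutions to reduce the isomorphism check to free modules, and treats the completions exactly as you anticipate (the paper verifies the completed map is an isomorphism by reducing mod $p$, which is interchangeable with your ``complete at the end'' bookkeeping via \lemref{reduction-of-completion}). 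Your intermediate bimodule $\mathcal{B}^{\cdot}$ and the two cancellation identities are a repackaging of the paper's direct construction of the map followed by its verification on stalk-wise free resolutions.
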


\begin{proof}
Let $\mathcal{M},\mathcal{P}\in\mathcal{G}(\widehat{\mathcal{D}}_{\mathfrak{X}}^{(0,1)})$
and $\mathcal{N}\in\mathcal{G}(\widehat{\mathcal{D}}_{\mathfrak{X}}^{(0,1),\text{opp}})$.
We have a map of $D(\mathcal{O}_{\mathfrak{X}})$-modules 
\[
\mathcal{N}\otimes_{D(\mathcal{O}_{\mathfrak{X}})}(\mathcal{M}\otimes_{D(\mathcal{O}_{\mathfrak{X}})}\mathcal{P})\to(\mathcal{N}\otimes_{D(\mathcal{O}_{\mathfrak{X}})}\mathcal{M})\otimes_{\widehat{\mathcal{D}}_{\mathfrak{X}}^{(0,1)}}\mathcal{P}
\]
simply because $D(\mathcal{O}_{\mathfrak{X}})$ is a sub-algebra of
$\widehat{\mathcal{D}}_{\mathfrak{X}}^{(0,1)}$. Using the local description
of the $\widehat{\mathcal{D}}_{\mathfrak{X}}^{(0,1)}$-module action
on $\mathcal{N}\otimes_{D(\mathcal{O}_{\mathfrak{X}})}\mathcal{M}$
given by \remref{Two-actions-agree}, one sees that this map factors
through $\mathcal{N}\otimes_{\widehat{\mathcal{D}}_{\mathfrak{X}}^{(0,1)}}(\mathcal{M}\otimes_{D(\mathcal{O}_{\mathfrak{X}})}\mathcal{P})$
and we obtain a morphism
\[
\mathcal{N}\otimes_{\widehat{\mathcal{D}}_{\mathfrak{X}}^{(0,1)}}(\mathcal{M}\otimes_{D(\mathcal{O}_{\mathfrak{X}})}\mathcal{P})\to(\mathcal{N}\otimes_{D(\mathcal{O}_{\mathfrak{X}})}\mathcal{M})\otimes_{\widehat{\mathcal{D}}_{\mathfrak{X}}^{(0,1)}}\mathcal{P}
\]
Since $\widehat{\mathcal{D}}_{\mathfrak{X}}^{(0,1)}$ is flat over
$D(\mathcal{O}_{\mathfrak{X}})$, we can compute the associated derived
functors using K-flat resolutions over $\widehat{\mathcal{D}}_{\mathfrak{X}}^{(0,1)}$
of $\mathcal{N}$, and $\mathcal{P}$, respectively. Doing so gives
a map in the derived category 
\[
\mathcal{N}^{\cdot}\otimes_{\widehat{\mathcal{D}}_{\mathfrak{X}}^{(0,1)}}^{L}(\mathcal{M}^{\cdot}\otimes_{D(\mathcal{O}_{\mathfrak{X}})}^{L}\mathcal{P}^{\cdot})\to(\mathcal{N}^{\cdot}\otimes_{D(\mathcal{O}_{\mathfrak{X}})}^{L}\mathcal{M}^{\cdot})\otimes_{\widehat{\mathcal{D}}_{\mathfrak{X}}^{(0,1)}}^{L}\mathcal{P}^{\cdot}
\]
and passing to the derived completions gives the map in the statement
of the lemma; to show it is an isomorphism we may reduce mod $p$
and, taking K-flat resolutions, assume that each term of both $\mathcal{N}^{\cdot}$
and $\mathcal{P}^{\cdot}$ is stalk-wise free over $\mathcal{D}_{X}^{(0,1)}$;
thus the statement comes down to the claim that 
\[
\mathcal{D}_{X}^{(0,1)}\otimes_{\mathcal{D}_{X}^{(0,1)}}(\mathcal{M}\otimes_{D(\mathcal{O}_{X})}\mathcal{D}_{X}^{(0,1)})\tilde{\to}(\mathcal{D}_{X}^{(0,1)}\otimes_{D(\mathcal{O}_{X})}\mathcal{M})\otimes_{\mathcal{D}_{X}^{(0,1)}}\mathcal{D}_{X}^{(0,1)}
\]
which is immediate. 
\end{proof}
Finally, we note the following compatibility of tensor product and
pull-back, which follows directly from unpacking the definitions. 
\begin{lem}
\label{lem:Tensor-and-pull}Let $\varphi:\mathfrak{X}\to\mathfrak{Y}$
be a morphism. Then there is a canonical isomorphism $L\varphi^{*}(\mathcal{M}^{\cdot}\widehat{\otimes}_{D(\mathcal{O}_{\mathfrak{Y}})}^{L}\mathcal{N}^{\cdot})\tilde{\to}L\varphi^{*}(\mathcal{M}^{\cdot})\widehat{\otimes}_{D(\mathcal{O}_{\mathfrak{X}})}^{L}L\varphi^{*}(\mathcal{N}^{\cdot})$.
The analogous statement holds for a morphism of smooth $k$-schemes
$\varphi:X\to Y$. 
\end{lem}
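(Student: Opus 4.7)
The plan is to deduce this from the definition of the completed tensor product as pullback along the diagonal, combined with the composition law for pullbacks (\lemref{composition-of-pullbacks}). By the definitions,
\[
\mathcal{M}^{\cdot}\widehat{\otimes}_{D(\mathcal{O}_{\mathfrak{Y}})}^{L}\mathcal{N}^{\cdot}=L\Delta_{\mathfrak{Y}}^{*}(\mathcal{M}^{\cdot}\boxtimes\mathcal{N}^{\cdot}),
\]
and the tautological identity $\Delta_{\mathfrak{Y}}\circ\varphi=(\varphi\times\varphi)\circ\Delta_{\mathfrak{X}}$ gives, via two applications of \lemref{composition-of-pullbacks},
\[
L\varphi^{*}(\mathcal{M}^{\cdot}\widehat{\otimes}_{D(\mathcal{O}_{\mathfrak{Y}})}^{L}\mathcal{N}^{\cdot})\tilde{\to}L\Delta_{\mathfrak{X}}^{*}L(\varphi\times\varphi)^{*}(\mathcal{M}^{\cdot}\boxtimes\mathcal{N}^{\cdot}).
\]
It then suffices to establish the K\"unneth compatibility
\[
L(\varphi\times\varphi)^{*}(\mathcal{M}^{\cdot}\boxtimes\mathcal{N}^{\cdot})\tilde{\to}L\varphi^{*}(\mathcal{M}^{\cdot})\boxtimes L\varphi^{*}(\mathcal{N}^{\cdot}),
\]
after which the lemma is obtained by applying $L\Delta_{\mathfrak{X}}^{*}$ and reinterpreting via the definition of $\widehat{\otimes}^{L}_{D(\mathcal{O}_{\mathfrak{X}})}$.

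For the K\"unneth compatibility, I would unwind the external product as $\mathcal{M}^\cdot\boxtimes\mathcal{N}^\cdot = Lp_{1}^{*}(\mathcal{M}^{\cdot})\widehat{\otimes}_{D(\mathcal{O}_{\mathfrak{Y}\times\mathfrak{Y}})}^{L}Lp_{2}^{*}(\mathcal{N}^{\cdot})$ and exploit the fiber-square identities $p_{i}^{\mathfrak{Y}}\circ(\varphi\times\varphi)=\varphi\circ p_{i}^{\mathfrak{X}}$ for $i=1,2$. A further application of \lemref{composition-of-pullbacks} yields $L(\varphi\times\varphi)^{*}\circ Lp_{i}^{\mathfrak{Y},*}\tilde{\to}Lp_{i}^{\mathfrak{X},*}\circ L\varphi^{*}$, so the desired isomorphism reduces to the assertion that $L(\varphi\times\varphi)^{*}$ distributes over the completed derived tensor $\widehat{\otimes}_{D(\mathcal{O}_{\mathfrak{Y}\times\mathfrak{Y}})}^{L}$. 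This is in turn a formal consequence of the definition of $L(\varphi\times\varphi)^{*}$ as the graded cohomological completion of the ordinary derived base change along $(\varphi\times\varphi)^{-1}D(\mathcal{O}_{\mathfrak{Y}\times\mathfrak{Y}})\to D(\mathcal{O}_{\mathfrak{X}\times\mathfrak{X}})$, together with associativity of the underived tensor product and the fact that completion commutes with itself (\propref{Basic-CC-facts}).

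The main subtlety I anticipate is keeping track of the $\widehat{\mathcal{D}}^{(0,1)}$-module structures through all these manipulations, rather than merely the underlying $D(\mathcal{O})$-module structures. The key input here is that the K\"unneth identification $\widehat{\mathcal{D}}_{\mathfrak{X}}^{(0,1)}\boxtimes\widehat{\mathcal{D}}_{\mathfrak{X}}^{(0,1)}\tilde{=}\widehat{\mathcal{D}}_{\mathfrak{X}\times\mathfrak{X}}^{(0,1)}$ already used in defining $\widehat{\otimes}^L_{D(\mathcal{O}_\mathfrak{X})}$ is compatible, under $\varphi\times\varphi$, with the transfer bimodule structures; concretely, one checks the multiplicativity
\[
\widehat{\mathcal{D}}_{\mathfrak{X}\times\mathfrak{X}\to\mathfrak{Y}\times\mathfrak{Y}}^{(0,1)}\tilde{\to}\widehat{\mathcal{D}}_{\mathfrak{X}\to\mathfrak{Y}}^{(0,1)}\boxtimes\widehat{\mathcal{D}}_{\mathfrak{X}\to\mathfrak{Y}}^{(0,1)}
\]
as $(\widehat{\mathcal{D}}_{\mathfrak{X}\times\mathfrak{X}}^{(0,1)},(\varphi\times\varphi)^{-1}\widehat{\mathcal{D}}_{\mathfrak{Y}\times\mathfrak{Y}}^{(0,1)})$-bimodules. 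Both sides are standard gauges (by \propref{Basic-properties-of-the-transfer-module}) with identifiable $\infty$ and $-\infty$ components, so this comparison can be done by reducing mod $p$ (\corref{Nakayama}) and then verifying the statement in local coordinates. The positive-characteristic analogue for $\varphi\colon X\to Y$ follows by an identical (and simpler, since no completion intervenes) argument.
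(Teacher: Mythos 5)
Your argument is correct and is exactly the ``unpacking of the definitions'' that the paper invokes without writing out: since $\widehat{\otimes}^{L}_{D(\mathcal{O})}$ is defined as $L\Delta^{*}$ of the external product, the identity $\Delta_{\mathfrak{Y}}\circ\varphi=(\varphi\times\varphi)\circ\Delta_{\mathfrak{X}}$ combined with two applications of \lemref{composition-of-pullbacks} and the K\"unneth compatibility of $L(\varphi\times\varphi)^{*}$ with $\boxtimes$ is the intended route. Your further reduction of the $\widehat{\mathcal{D}}^{(0,1)}$-module bookkeeping to the multiplicativity of the transfer bimodules, verified mod $p$ in local coordinates via \corref{Nakayama}, correctly closes the only point the assertion leaves implicit.
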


\section{\label{sec:Push-Forward}Operations on Gauges: Push-Forward}

As above let $\varphi:\mathfrak{X}\to\mathfrak{Y}$. Now that we have
both the pull-back and the left-right swap, we can define the push-forward. 

We start by noting that $\mathcal{\widehat{D}}_{\mathfrak{Y}}^{(0,1)}$
carries a natural right module structure over itself (by right multiplication).
Therefore, by \propref{Left-Right-Swap} there is a natural left $\mathcal{\widehat{D}}_{\mathfrak{Y}}^{(0,1)}$
gauge structure on $\mathcal{\widehat{D}}_{\mathfrak{Y}}^{(0,1)}\otimes\omega_{\mathfrak{Y}}^{-1}$.
By \defref{Pullback!} there is a natural left $\mathcal{\widehat{D}}_{\mathfrak{X}}^{(0,1)}$-module
structure on $\varphi^{*}(\mathcal{\widehat{D}}_{\mathfrak{Y}}^{(0,1)}\otimes\omega_{\mathfrak{Y}}^{-1})=L\varphi^{*}(\mathcal{\widehat{D}}_{\mathfrak{X}}^{(0,1)}\otimes\omega_{\mathfrak{Y}}^{-1})$. 
\begin{defn}
\label{def:Push!}1) Define the $(\varphi^{-1}(\mathcal{\widehat{D}}_{\mathfrak{Y}}^{(0,1)}),\mathcal{\widehat{D}}_{\mathfrak{X}}^{(0,1)})$
bimodule $\mathcal{\widehat{D}}_{\mathfrak{Y}\leftarrow\mathfrak{X}}^{(0,1)}:=\varphi^{*}(\mathcal{\widehat{D}}_{\mathfrak{Y}}^{(0,1)}\otimes\omega_{\mathfrak{Y}}^{-1})\otimes\omega_{\mathfrak{X}}$;
here, the right $\mathcal{\widehat{D}}_{\mathfrak{X}}^{(0,1)}$-module
structure comes from the left $\mathcal{\widehat{D}}_{\mathfrak{X}}^{(0,1)}$-module
structure on $\varphi^{*}(\mathcal{\widehat{D}}_{\mathfrak{Y}}^{(0,1)}\otimes\omega_{\mathfrak{Y}}^{-1})$;
the left $\varphi^{-1}(\mathcal{\widehat{D}}_{\mathfrak{Y}}^{(0,1)})$-structure
comes from the left multiplication of $\varphi^{-1}(\mathcal{\widehat{D}}_{\mathfrak{Y}}^{(0,1)})$
on $\varphi^{*}(\mathcal{\widehat{D}}_{\mathfrak{Y}}^{(0,1)}\otimes\omega_{\mathfrak{Y}}^{-1})$. 

2) Let $\mathcal{M}^{\cdot}\in D(\mathcal{G}(\mathcal{\widehat{D}}_{\mathfrak{X}}^{(0,1)}))$.
Then we define ${\displaystyle \int_{\varphi}\mathcal{M}^{\cdot}:=R\varphi_{*}(\mathcal{\widehat{D}}_{\mathfrak{Y}\leftarrow\mathfrak{X}}^{(0,1)}\widehat{\otimes}_{\mathcal{\widehat{D}}_{\mathfrak{X}}^{(0,1)}}^{L}\mathcal{M}^{\cdot})}\in D(\mathcal{G}(\mathcal{\widehat{D}}_{\mathfrak{Y}}^{(0,1)}))$. 

3) If we instead have $\varphi:X\to Y$ over $k$; then for $\mathcal{M}^{\cdot}\in D(\mathcal{G}(\mathcal{D}_{X}^{(0,1)}))$
we define ${\displaystyle \int_{\varphi}\mathcal{M}^{\cdot}:=R\varphi_{*}(\mathcal{D}_{Y\leftarrow X}^{(0,1)}\otimes_{\mathcal{D}_{X}^{(0,1)}}^{L}\mathcal{M}^{\cdot})}\in D(\mathcal{G}(\mathcal{D}_{Y}^{(0,1)}))$
where $\mathcal{D}_{Y\leftarrow X}^{(0,1)}$ is defined analogously
to $\mathcal{\widehat{D}}_{\mathfrak{Y}\leftarrow\mathfrak{X}}^{(0,1)}$. 

4) if $\mathfrak{Y}=\text{Specf}(W(k))$, then we denote ${\displaystyle \mathbb{H}_{\mathcal{G}}^{\cdot}(\mathcal{M}^{\cdot}):=\int_{\varphi}\mathcal{M}^{\cdot}}$
for any $\mathcal{M}^{\cdot}\in D_{cc}(\mathcal{G}(\mathcal{\widehat{D}}_{\mathfrak{X}}^{(0,1)}))$. 

Similarly, there are push-forwards in the category of right $\mathcal{\widehat{D}}^{(0,1)}$-modules
defined by ${\displaystyle \int_{\varphi}\mathcal{M}_{r}^{\cdot}:=R\varphi_{*}(\mathcal{M}_{r}^{\cdot}\widehat{\otimes}_{\mathcal{\widehat{D}}_{\mathfrak{X}}^{(0,1)}}^{L}\mathcal{\widehat{D}}_{\mathfrak{X}\to\mathfrak{Y}}^{(0,1)})}$
for $\mathcal{M}_{r}^{\cdot}\in D(\mathcal{G}(\mathcal{\widehat{D}}_{\mathfrak{X}}^{(0,1)}))^{\text{op}}$;
clearly the left-right interchange intertwines the two pushforwards.
Similar remarks apply to a morphism $\varphi:X\to Y$ over $k$. 
\end{defn}

We begin by recording some basic compatibilities; for these note that
we have the transfer bimodule $\mathcal{\widehat{D}}_{\mathfrak{Y}\leftarrow\mathfrak{X}}^{(0)}:=\mathcal{\widehat{D}}_{\mathfrak{Y}\leftarrow\mathfrak{X}}^{(0,1)}/(v-1)$
in the category of $\widehat{\mathcal{D}}_{\mathfrak{X}}^{(0)}$-modules,
and $\mathcal{\widehat{D}}_{\mathfrak{Y}\leftarrow\mathfrak{X}}^{(1)}:=(\mathcal{\widehat{D}}_{\mathfrak{Y}\leftarrow\mathfrak{X}}^{(0,1)}/(f-1))^{\widehat{}}$
(here the $()^{\widehat{}}$ denotes $p$-adic completion, which is
the same as cohomological completion in this case by \propref{Basic-properties-of-the-transfer-module}).
One may therefore define ${\displaystyle \int_{\varphi,0}\mathcal{M}^{\cdot}:=R\varphi_{*}(\mathcal{\widehat{D}}_{\mathfrak{Y}\leftarrow\mathfrak{X}}^{(0)}\widehat{\otimes}_{\mathcal{\widehat{D}}_{\mathfrak{X}}^{(0)}}^{L}\mathcal{M}^{\cdot})}$
for $\mathcal{M}^{\cdot}\in\widehat{\mathcal{D}}_{\mathfrak{X}}^{(0)}$
and ${\displaystyle \int_{\varphi,1}\mathcal{M}^{\cdot}:=R\varphi_{*}(\mathcal{\widehat{D}}_{\mathfrak{Y}\leftarrow\mathfrak{X}}^{(1)}\widehat{\otimes}_{\mathcal{\widehat{D}}_{\mathfrak{X}}^{(0)}}^{L}\mathcal{M}^{\cdot})}$
for $\mathcal{M}^{\cdot}\in\widehat{\mathcal{D}}_{\mathfrak{X}}^{(1)}$.
As in the case of the pullback, this is not quite Berthelot's definition
of these functors; because he uses the more traditional $\text{R}\lim$.
However, they do agree in important cases, such as when $\varphi$
is proper and $\mathcal{M}^{\cdot}$ is coherent. 

We have
\begin{prop}
\label{prop:push-and-complete-for-D} Let $\mathcal{M}^{\cdot}\in D(\mathcal{G}(\mathcal{\widehat{D}}_{\mathfrak{X}}^{(0,1)}))$. 

1) ${\displaystyle (\int_{\varphi}\mathcal{M}^{\cdot})\otimes_{W(k)}^{L}k\tilde{=}\int_{\varphi}(\mathcal{M}^{\cdot}\otimes_{W(k)}^{L}k)}$
in the category $D(\mathcal{G}(\mathcal{D}_{X}^{(0,1)}))$. 

2) $(\int_{\varphi}\mathcal{M}^{\cdot})^{-\infty}\tilde{=}(\int_{\varphi,0}\mathcal{M}^{\cdot,-\infty})$
where the pushforward on the right is defined as $R\varphi_{*}(\mathcal{\widehat{D}}_{\mathfrak{Y}\leftarrow\mathfrak{X}}^{(0)}\widehat{\otimes}_{\mathcal{\widehat{D}}_{\mathfrak{X}}^{(0)}}^{L}\mathcal{M}^{\cdot,-\infty})$.

3) If $\mathcal{M}^{\cdot}\in D_{coh}^{b}(\mathcal{G}(\mathcal{\widehat{D}}_{\mathfrak{X}}^{(0,1)}))$,
then $\widehat{((\int_{\varphi}\mathcal{M}^{\cdot})^{\infty})}\tilde{=}\int_{\varphi,1}\widehat{(\mathcal{M}^{\cdot,\infty})}$
where both uses of $\widehat{}$ denote derived completion.
\end{prop}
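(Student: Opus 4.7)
The plan is to treat all three parts through a common template. Since $\int_{\varphi}$ factors as $R\varphi_{*}$ applied to the functor $\widehat{\mathcal{D}}_{\mathfrak{Y}\leftarrow\mathfrak{X}}^{(0,1)}\widehat{\otimes}_{\widehat{\mathcal{D}}_{\mathfrak{X}}^{(0,1)}}^{L}(-)$, each of the three assertions amounts to showing that a certain reduction functor $F$ commutes with both pieces: with $R\varphi_{*}$, and with the completed tensor product against the transfer bimodule. The crucial structural input is supplied by \propref{Basic-properties-of-the-transfer-module} and (applied to the transfer bimodule) part~1 of \thmref{D01}: these tell me $\widehat{\mathcal{D}}_{\mathfrak{Y}\leftarrow\mathfrak{X}}^{(0,1)}$ is a standard, hence $p$-torsion-free, gauge whose $-\infty$-slice is $\widehat{\mathcal{D}}_{\mathfrak{Y}\leftarrow\mathfrak{X}}^{(0)}$ and whose $p$-adically completed $\infty$-slice is $\widehat{\mathcal{D}}_{\mathfrak{Y}\leftarrow\mathfrak{X}}^{(1)}$, while its reduction mod $p$ agrees with the positive-characteristic transfer bimodule $\mathcal{D}_{Y\leftarrow X}^{(0,1)}$ via the construction in \defref{Transfer-Bimod}.

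For part (1), the functor $-\otimes_{W(k)}^{L}k$ is the cone of multiplication by $p$, so it commutes formally with $R\varphi_{*}$. By \lemref{reduction-of-completion} it also commutes with graded cohomological completion, which lets me replace $\widehat{\otimes}^{L}$ by $\otimes^{L}$ before reducing; \lemref{Hom-tensor-and-reduce} then moves the reduction past the tensor product, and the $p$-torsion-freeness of the transfer bimodule identifies $\widehat{\mathcal{D}}_{\mathfrak{Y}\leftarrow\mathfrak{X}}^{(0,1)}\otimes_{W(k)}^{L}k$ with $\mathcal{D}_{Y\leftarrow X}^{(0,1)}$, closing the chain. For part (2), I first check that $(-)^{-\infty}$ commutes with $R\varphi_{*}$: the graded pushforward identity $R\varphi_{*}(\mathcal{N}^{\cdot})^{i}\cong R\varphi_{*}(\mathcal{N}^{\cdot,i})$ together with the fact that $R\varphi_{*}$ commutes with filtered colimits on our schemes handles the colimit $i\to-\infty$. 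For the tensor product piece I choose a K-flat resolution $\mathcal{P}^{\cdot}\to\mathcal{M}^{\cdot}$ in graded $\widehat{\mathcal{D}}_{\mathfrak{X}}^{(0,1)}$-modules by shifts of the free module; since $(\widehat{\mathcal{D}}_{\mathfrak{X}}^{(0,1)})^{-\infty}=\widehat{\mathcal{D}}_{\mathfrak{X}}^{(0)}$, the slice $\mathcal{P}^{\cdot,-\infty}$ is K-flat over $\widehat{\mathcal{D}}_{\mathfrak{X}}^{(0)}$ and computes $\mathcal{M}^{\cdot,-\infty}$. The underived $(-)^{-\infty}$ is exact and commutes with tensor, so together with $\widehat{\mathcal{D}}_{\mathfrak{Y}\leftarrow\mathfrak{X}}^{(0,1),-\infty}=\widehat{\mathcal{D}}_{\mathfrak{Y}\leftarrow\mathfrak{X}}^{(0)}$ this yields the required identification at the uncompleted level. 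The passage to the completed tensor is absorbed because, after $R\varphi_{*}$, the resulting complex of $\widehat{\mathcal{D}}_{\mathfrak{Y}}^{(0)}$-modules is automatically cohomologically complete by part~2 of \propref{Basic-CC-facts}.

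For part (3) the argument is dual, now using $\widehat{(\widehat{\mathcal{D}}_{\mathfrak{Y}\leftarrow\mathfrak{X}}^{(0,1),\infty})}\cong\widehat{\mathcal{D}}_{\mathfrak{Y}\leftarrow\mathfrak{X}}^{(1)}$. The functor $(-)^{\infty}$ commutes with $R\varphi_{*}$ by the same graded pushforward argument, now invoking \lemref{Basic-v} together with the coherence of $\mathcal{M}^{\cdot}$ to see that the colimit stabilizes, while the graded cohomological completion commutes with $R\varphi_{*}$ by part~3 of \propref{Push-and-complete}; the hypothesis $\mathcal{M}^{\cdot}\in D_{coh}^{b}$ and \propref{coh-to-coh} keep every intermediate object inside $D_{cc}$. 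The principal obstacle lies precisely here: three operations (graded completion, $(-)^{\infty}$, derived tensor) must be interchanged, and they do not individually commute in general. The bounded-coherent hypothesis is the device that tames this, since it reduces graded cohomological completion to honest $p$-adic completion via \propref{Completion-for-noeth} and thereby shuts down any spurious $\operatorname{Tor}^{1}$ or $\lim^{1}$ contributions that would otherwise arise from taking the colimit $(-)^{\infty}$ over a not-yet-complete object; without it the neat identification with $\int_{\varphi,1}\widehat{\mathcal{M}^{\cdot,\infty}}$ breaks down.
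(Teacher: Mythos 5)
Parts (1) and (2) of your proposal are sound and follow essentially the paper's route. For (1) the paper likewise moves $\otimes_{W(k)}^{L}k$ past $R\varphi_{*}$ (phrased there as the projection formula for the perfect complex $k$ over the constant sheaf $W(k)$ rather than as the cone of multiplication by $p$), kills the completion with \lemref{reduction-of-completion}, and uses $p$-torsion-freeness of the transfer bimodule. For (2) the paper simply observes that $(-)^{-\infty}$ is $-\otimes_{D(W(k))}^{L}D(W(k))/(v-1)$, a tensor with a perfect complex over the central constant subring, and reruns the argument of (1); your explicit K-flat resolution by shifts of $\widehat{\mathcal{D}}_{\mathfrak{X}}^{(0,1)}$ achieves the same identification at the uncompleted level. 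One citation is wrong: part 2 of \propref{Basic-CC-facts} concerns restriction of scalars along a map $\tilde{\mathcal{S}}\to\tilde{\mathcal{R}}$ and does not say that the $-\infty$-colimit of a graded cohomologically complete complex is cohomologically complete; that final interchange of $(-)^{-\infty}$ with the two completions is, however, exactly as implicit in your argument as it is in the paper's one-sentence proof, so I only flag the miscitation.

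Part (3) contains a genuine gap. It is not ``dual'' to (2): the asymmetry is that $\widehat{\mathcal{D}}_{\mathfrak{X}}^{(0,1),-\infty}=\widehat{\mathcal{D}}_{\mathfrak{X}}^{(0)}$ on the nose, whereas $\widehat{\mathcal{D}}_{\mathfrak{X}}^{(0,1),\infty}$ is only $p$-adically dense in $\widehat{\mathcal{D}}_{\mathfrak{X}}^{(1)}$, and $\int_{\varphi,1}$ is by definition a tensor product over the \emph{completed} ring $\widehat{\mathcal{D}}_{\mathfrak{X}}^{(1)}$ against the completed bimodule $\widehat{\mathcal{D}}_{\mathfrak{Y}\leftarrow\mathfrak{X}}^{(1)}$, applied to the completed module $\widehat{\mathcal{M}^{\cdot,\infty}}$. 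What actually has to be proved is the base-change isomorphism $\widehat{\mathcal{D}}_{\mathfrak{Y}\leftarrow\mathfrak{X}}^{(0,1),\infty}\widehat{\otimes}_{\widehat{\mathcal{D}}_{\mathfrak{X}}^{(0,1),\infty}}^{L}\mathcal{M}^{\cdot,\infty}\cong\widehat{\mathcal{D}}_{\mathfrak{Y}\leftarrow\mathfrak{X}}^{(1)}\widehat{\otimes}_{\widehat{\mathcal{D}}_{\mathfrak{X}}^{(1)}}^{L}\widehat{\mathcal{M}^{\cdot,\infty}}$, and your sketch never establishes it. The paper does so in two steps: the cone of $\widehat{\mathcal{D}}_{\mathfrak{Y}\leftarrow\mathfrak{X}}^{(0,1),\infty}\to\widehat{\mathcal{D}}_{\mathfrak{Y}\leftarrow\mathfrak{X}}^{(1)}$ is a complex of modules on which $p$ is invertible (by \propref{Basic-properties-of-the-transfer-module} and \lemref{Basic-Structure-of-D^(1)}), so it disappears after tensoring and completing; and $\widehat{\mathcal{D}}_{\mathfrak{X}}^{(1)}\otimes_{\widehat{\mathcal{D}}_{\mathfrak{X}}^{(0,1),\infty}}^{L}\mathcal{M}^{\cdot,\infty}\cong\widehat{\mathcal{M}^{\cdot,\infty}}$ by \propref{Completion-for-noeth}, which is precisely where the bounded-coherence hypothesis enters. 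Listing \propref{Completion-for-noeth} among the ingredients is not the same as carrying out this change of rings, and without it the identification with $\int_{\varphi,1}\widehat{\mathcal{M}^{\cdot,\infty}}$ is unproved.
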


\begin{proof}
1) We have 
\[
\int_{\varphi}\mathcal{M}^{\cdot}\otimes_{W(k)}^{L}k=R\varphi_{*}(\mathcal{\widehat{D}}_{\mathfrak{Y}\leftarrow\mathfrak{X}}^{(0,1)}\widehat{\otimes}_{\mathcal{\widehat{D}}_{\mathfrak{X}}^{(0,1)}}^{L}\mathcal{M}^{\cdot})\otimes_{W(k)}^{L}k
\]
\[
\tilde{=}R\varphi_{*}((\mathcal{\widehat{D}}_{\mathfrak{Y}\leftarrow\mathfrak{X}}^{(0,1)}\widehat{\otimes}_{\mathcal{\widehat{D}}_{\mathfrak{X}}^{(0,1)}}^{L}\mathcal{M}^{\cdot})\otimes_{W(k)}^{L}k)
\]
(since $k$ is a perfect complex over $W(k)$, this is a special case
of the projection formula where we consider $X$ and $Y$ as ringed
spaces with the locally constant sheaf of rings $W(k)$; c.f. {[}Stacks{]},
tag 0B54). We have the isomorphism 
\[
k\otimes_{W(k)}^{L}\mathcal{\widehat{D}}_{\mathfrak{Y}\leftarrow\mathfrak{X}}^{(0,1)}\tilde{=}\mathcal{D}_{Y\leftarrow X}^{(0,1)}
\]
since $\mathcal{\widehat{D}}_{\mathfrak{Y}\leftarrow\mathfrak{X}}^{(0,1)}$
is a $p$-torsion-free sheaf; and so
\[
R\varphi_{*}((\mathcal{\widehat{D}}_{\mathfrak{Y}\leftarrow\mathfrak{X}}^{(0,1)}\widehat{\otimes}_{\mathcal{\widehat{D}}_{\mathfrak{X}}^{(0,1)}}^{L}\mathcal{M}^{\cdot})\otimes_{W(k)}^{L}k)\tilde{=}R\varphi_{*}(k\otimes_{W(k)}^{L}(\mathcal{\widehat{D}}_{\mathfrak{Y}\leftarrow\mathfrak{X}}^{(0,1)}\widehat{\otimes}_{\mathcal{\widehat{D}}_{\mathfrak{X}}^{(0,1)}}^{L}\mathcal{M}^{\cdot}))
\]
\[
R\varphi_{*}(k\otimes_{W(k)}^{L}(\mathcal{\widehat{D}}_{\mathfrak{Y}\leftarrow\mathfrak{X}}^{(0,1)}\otimes_{\mathcal{\widehat{D}}_{\mathfrak{X}}^{(0,1)}}^{L}\mathcal{M}^{\cdot}))\tilde{=}R\varphi_{*}(\mathcal{D}_{Y\leftarrow X}^{(0,1)}\otimes_{\mathcal{\widehat{D}}_{\mathfrak{X}}^{(0,1)}}^{L}\mathcal{M}^{\cdot})
\]
where we used that for any complex $\mathcal{N}^{\cdot}$ we have
$k\otimes_{W(k)}^{L}\mathcal{N}^{\cdot}\tilde{=}k\otimes_{W(k)}^{L}\widehat{\mathcal{N}^{\cdot}}$
(c.f. \lemref{reduction-of-completion}). Now, we have
\[
R\varphi_{*}(\mathcal{D}_{Y\leftarrow X}^{(0,1)}\otimes_{\mathcal{\widehat{D}}_{\mathfrak{X}}^{(0,1)}}^{L}\mathcal{M}^{\cdot})\tilde{=}\int_{\varphi}\mathcal{D}_{Y\leftarrow X}^{(0,1)}\otimes_{\mathcal{D}_{X}^{(0,1)}}^{L}(\mathcal{D}_{X}^{(0,1)}\otimes_{\mathcal{\widehat{D}}_{\mathfrak{X}}^{(0,1)}}^{L}\mathcal{M}^{\cdot})
\]
But since $\mathcal{D}_{X}^{(0,1)}=\mathcal{\widehat{D}}_{\mathfrak{X}}^{(0,1)}/p$
we have $\mathcal{D}_{X}^{(0,1)}\otimes_{\mathcal{\widehat{D}}_{\mathfrak{X}}^{(0,1)}}^{L}\mathcal{M}^{\cdot}\tilde{=}\mathcal{M}^{\cdot}\otimes_{W(k)}^{L}k$
and the result follows.

2) For any complex we have $\mathcal{M}^{\cdot,-\infty}=\mathcal{M}^{\cdot}\otimes_{D(W(k))}^{L}(D(W(k))/(v-1))$.
Thus the proof is an easier variant of that of $1)$, replacing $\otimes_{W(k)}^{L}k$
with $\otimes_{D(W(k))}^{L}D(W(k))/(v-1)$. 

3) We have 
\[
\mathcal{K}^{\cdot}\to\mathcal{\widehat{D}}_{\mathfrak{Y}\leftarrow\mathfrak{X}}^{(0,1),\infty}\to\mathcal{\widehat{D}}_{\mathfrak{Y}\leftarrow\mathfrak{X}}^{(1)}
\]
where $\mathcal{K}^{\cdot}$ is a complex of $\widehat{\mathcal{D}}_{\mathfrak{X}}^{(0,1),\infty}[p^{-1}]$-modules;
indeed, $\mathcal{\widehat{D}}_{\mathfrak{Y}\leftarrow\mathfrak{X}}^{(0,1),\infty}$
is a $p$-torsion-free sheaf whose completion is exactly $\mathcal{\widehat{D}}_{\mathfrak{Y}\leftarrow\mathfrak{X}}^{(1)}$
(c.f. \propref{Basic-properties-of-the-transfer-module} and \lemref{Basic-Structure-of-D^(1)}).Thus
there is a distinguished triangle 
\[
\mathcal{K}^{\cdot}\otimes_{\mathcal{\widehat{D}}_{\mathfrak{X}}^{(0,1),\infty}}^{L}\mathcal{M}^{\cdot,\infty}\to\mathcal{\widehat{D}}_{\mathfrak{Y}\leftarrow\mathfrak{X}}^{(0,1),\infty}\otimes_{\mathcal{\widehat{D}}_{\mathfrak{X}}^{(0,1),\infty}}^{L}\mathcal{M}^{\cdot,\infty}\to\mathcal{\widehat{D}}_{\mathfrak{Y}\leftarrow\mathfrak{X}}^{(1)}\otimes_{\mathcal{\widehat{D}}_{\mathfrak{X}}^{(0,1),\infty}}^{L}\mathcal{M}^{\cdot,\infty}
\]
and the term on the left is a complex of $\widehat{\mathcal{D}}_{\mathfrak{X}}^{(0,1),\infty}[p^{-1}]$-modules.
Thus the derived completion of $\mathcal{\widehat{D}}_{\mathfrak{Y}\leftarrow\mathfrak{X}}^{(0,1),\infty}\otimes_{\mathcal{\widehat{D}}_{\mathfrak{X}}^{(0,1),\infty}}^{L}\mathcal{M}^{\cdot,\infty}$
is isomorphic to the derived completion of $\mathcal{\widehat{D}}_{\mathfrak{Y}\leftarrow\mathfrak{X}}^{(1)}\otimes_{\mathcal{\widehat{D}}_{\mathfrak{X}}^{(0,1),\infty}}^{L}\mathcal{M}^{\cdot,\infty}$. 

Further, we have 
\[
\mathcal{\widehat{D}}_{\mathfrak{Y}\leftarrow\mathfrak{X}}^{(1)}\otimes_{\mathcal{\widehat{D}}_{\mathfrak{X}}^{(0,1),\infty}}^{L}\mathcal{M}^{\cdot,\infty}\tilde{=}\mathcal{\widehat{D}}_{\mathfrak{Y}\leftarrow\mathfrak{X}}^{(1)}\otimes_{\widehat{\mathcal{D}}_{\mathfrak{X}}^{(1)}}^{L}(\widehat{\mathcal{D}}_{\mathfrak{X}}^{(1)}\otimes_{\mathcal{\widehat{D}}_{\mathfrak{X}}^{(0,1),\infty}}^{L}\mathcal{M}^{\cdot,\infty})
\]
And, since $\mathcal{M}^{\cdot}\in D_{coh}^{b}(\mathcal{G}(\mathcal{\widehat{D}}_{\mathfrak{X}}^{(0,1)}))$,
we have (by \propref{Completion-for-noeth}) that $\widehat{\mathcal{M}^{\cdot,\infty}}\tilde{=}\widehat{\mathcal{D}}_{\mathfrak{X}}^{(1)}\otimes_{\mathcal{\widehat{D}}_{\mathfrak{X}}^{(0,1),\infty}}^{L}\mathcal{M}^{\cdot,\infty}$
as modules over $\widehat{\mathcal{D}}_{\mathfrak{X}}^{(1)}$. Therefore
we obtain 
\[
\mathcal{\widehat{D}}_{\mathfrak{Y}\leftarrow\mathfrak{X}}^{(0,1),\infty}\widehat{\otimes}_{\mathcal{\widehat{D}}_{\mathfrak{X}}^{(0,1),\infty}}^{L}\mathcal{M}^{\cdot,\infty}\tilde{=}\mathcal{\widehat{D}}_{\mathfrak{Y}\leftarrow\mathfrak{X}}^{(1)}\widehat{\otimes}_{\widehat{\mathcal{D}}_{\mathfrak{X}}^{(1)}}^{L}\widehat{(\mathcal{M}^{\cdot,\infty})}
\]
and so, taking $R\varphi_{*}$ yields 
\[
R\varphi_{*}(\mathcal{\widehat{D}}_{\mathfrak{Y}\leftarrow\mathfrak{X}}^{(0,1),\infty}\widehat{\otimes}_{\mathcal{\widehat{D}}_{\mathfrak{X}}^{(0,1),\infty}}^{L}\mathcal{M}^{\cdot,\infty})\tilde{\to}\int_{\varphi,1}\widehat{(\mathcal{M}^{\cdot,\infty})}
\]
But the term on the left is isomorphic to the derived completion of
${\displaystyle \int_{\varphi}\mathcal{M}^{\cdot,\infty}}$ by \propref{Push-and-complete}. 
\end{proof}
Now we will discuss the relationship between the $\mathcal{D}_{X}^{(0,1)}$
pushforward and the push-forwards over $\mathcal{R}(\mathcal{D}_{X}^{(1)})$
and $\overline{\mathcal{R}}(\mathcal{D}_{X}^{(0)})$. As usual we'll
work with the functors $\text{\ensuremath{\mathcal{M}}}^{\cdot}\to\mathcal{R}(\mathcal{D}_{X}^{(1)})\otimes_{\mathcal{D}_{X}^{(0,1)}}^{L}\mathcal{M}^{\cdot}\tilde{=}k[f]\otimes_{D(k)}^{L}\mathcal{M}^{\cdot}$
and $\text{\ensuremath{\mathcal{M}}}^{\cdot}\to\mathcal{\overline{R}}(\mathcal{D}_{X}^{(0)})\otimes_{\mathcal{D}_{X}^{(0,1)}}^{L}\mathcal{M}^{\cdot}\tilde{=}k[v]\otimes_{D(k)}^{L}\mathcal{M}^{\cdot}$
which take $D(\mathcal{G}(\mathcal{D}_{X}^{(0,1)}))$ to $D(\mathcal{G}(\mathcal{R}(\mathcal{D}_{X}^{(1)})))$
and $D(\mathcal{G}(\overline{\mathcal{R}}(\mathcal{D}_{X}^{(0)})))$,
respectively (as in \propref{Quasi-rigid=00003Dfinite-homological}). 

Both of the algebras $\overline{\mathcal{R}}(\mathcal{D}_{X}^{(0)})$
and $\mathcal{R}(\mathcal{D}_{X}^{(0)})$ possess transfer bimodules
associated to any morphism $\varphi:X\to Y$, and hence are equipped
with a push-pull formalism. In the case of $\mathcal{R}(\mathcal{D}_{X}^{(0)})$
this is well known (c.f., e.g \cite{key-22}, chapter $1$), while
in the case of $\overline{\mathcal{R}}(\mathcal{D}_{X}^{(0)})$ this
theory is developed in \cite{key-11}, in the language of filtered
derived categories. We shall proceed using the push-pull formalism
for $\mathcal{D}_{X}^{(0,1)}$-modules that we have already developed,
and discuss the relations with the other theories in section \subsecref{Hodge-and-Conjugate}
below.
\begin{defn}
Let $\varphi:X\to Y$ be a morphism. We define a $(\varphi^{-1}\mathcal{R}(\mathcal{D}_{Y}^{(1)}),\mathcal{R}(\mathcal{D}_{X}^{(1)}))$
bimodule $\mathcal{R}_{Y\leftarrow X}^{(1)}:=\mathcal{D}_{Y\leftarrow X}^{(0,1)}/v$.
Define a $(\varphi^{-1}\mathcal{\overline{R}}(\mathcal{D}_{Y}^{(1)}),\overline{\mathcal{R}}(\mathcal{D}_{X}^{(0)}))$
bimodule $\mathcal{R}_{Y\leftarrow X}^{(1)}:=\mathcal{D}_{Y\leftarrow X}^{(0,1)}/f$.
Define ${\displaystyle \int_{\varphi,1}}\mathcal{M}^{\cdot}=R\varphi_{*}(\mathcal{R}_{Y\leftarrow X}^{(1)}\otimes_{\mathcal{R}(\mathcal{D}_{X}^{(1)})}^{L}\mathcal{M}^{\cdot})$
on the category $\mathcal{G}(\mathcal{R}(\mathcal{D}_{X}^{(1)}))$,
and analogously ${\displaystyle \int_{\varphi,0}}$ for $\mathcal{\overline{R}}(\mathcal{D}_{X}^{(0)})$-modules.
As above, there is also a push-forward for right modules defined by
${\displaystyle \int_{\varphi,1}}\mathcal{M}_{r}^{\cdot}=R\varphi_{*}(\mathcal{M}_{r}^{\cdot}\otimes_{\mathcal{R}(\mathcal{D}_{X}^{(1)})}^{L}\mathcal{R}_{X\to Y}^{(1)})$,
and analogously for right $\mathcal{\overline{R}}(\mathcal{D}_{X}^{(0)})$-modules. 

We have the basic compatibility:
\end{defn}

\begin{prop}
If $\mathcal{M}^{\cdot}\in D_{qcoh}(\mathcal{G}(\mathcal{D}_{X}^{(0,1)}))$,
then we have 
\[
{\displaystyle \mathcal{R}(\mathcal{D}_{Y}^{(1)})\otimes_{\mathcal{D}_{Y}^{(0,1)}}^{L}\int_{\varphi}\mathcal{M}^{\cdot}\tilde{=}\int_{\varphi,1}(\mathcal{R}(\mathcal{D}_{X}^{(1)})\otimes_{\mathcal{D}_{X}^{(0,1)}}^{L}\mathcal{M}^{\cdot})}
\]
The analogous result holds for $\mathcal{\overline{R}}(\mathcal{D}_{X}^{(0)})$. 
\end{prop}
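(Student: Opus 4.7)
The strategy is to massage both sides until the bimodule $\mathcal{D}_{Y\leftarrow X}^{(0,1)}$ can be base-changed to $\mathcal{R}_{Y\leftarrow X}^{(1)}$, using the rigidity formalism of \propref{Sandwich!}. First, I would unpack the left-hand side as
\[
\mathcal{R}(\mathcal{D}_Y^{(1)}) \otimes_{\mathcal{D}_Y^{(0,1)}}^L R\varphi_*\bigl(\mathcal{D}_{Y\leftarrow X}^{(0,1)} \otimes_{\mathcal{D}_X^{(0,1)}}^L \mathcal{M}^\cdot\bigr),
\]
and then invoke a projection-formula isomorphism (valid here since $\varphi$ is a quasi-compact, quasi-separated morphism of noetherian schemes, and we may resolve $\mathcal{R}(\mathcal{D}_Y^{(1)})$ by quasi-rigid $\mathcal{D}_Y^{(0,1)}$-modules, e.g.\ via the periodic resolution of \lemref{Basic-Facts-on-Rigid} applied to the right) to pull $\mathcal{R}(\mathcal{D}_Y^{(1)})$ inside $R\varphi_*$. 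This reduces the left-hand side to
\[
R\varphi_*\Bigl(\bigl(\varphi^{-1}\mathcal{R}(\mathcal{D}_Y^{(1)}) \otimes_{\varphi^{-1}\mathcal{D}_Y^{(0,1)}}^L \mathcal{D}_{Y\leftarrow X}^{(0,1)}\bigr) \otimes_{\mathcal{D}_X^{(0,1)}}^L \mathcal{M}^\cdot\Bigr).
\]

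Second, I would compute the inner tensor product. The bimodule $\mathcal{D}_{Y\leftarrow X}^{(0,1)}$ is quasi-rigid as a left $\varphi^{-1}\mathcal{D}_Y^{(0,1)}$-module, because it is obtained from $\varphi^{-1}\mathcal{D}_Y^{(0,1)}$ by pullback and left-right twists by $\omega$, and $\mathcal{D}_Y^{(0,1)}$ itself is rigid (being the reduction mod~$p$ of the standard gauge $\widehat{\mathcal{D}}_{\mathfrak{Y}}^{(0,1)}$, locally, together with \lemref{Standard-is-rigid}). Applying the right-module version of \propref{Sandwich!}, part~1 (to $\mathcal{N}=\mathcal{D}_{Y\leftarrow X}^{(0,1)}$ quasi-rigid on the left and $\mathcal{R}(\mathcal{D}_Y^{(1)})$ an $\mathcal{R}(\mathcal{D}_Y^{(1)})$-module tensored on the other side) yields
\[
\varphi^{-1}\mathcal{R}(\mathcal{D}_Y^{(1)}) \otimes_{\varphi^{-1}\mathcal{D}_Y^{(0,1)}}^L \mathcal{D}_{Y\leftarrow X}^{(0,1)} \;\tilde{\to}\; \mathcal{D}_{Y\leftarrow X}^{(0,1)}/v \;=\; \mathcal{R}_{Y\leftarrow X}^{(1)},
\]
concentrated in degree zero. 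This is the direct analogue of \eqref{transfer-iso-1} for the left transfer bimodule.

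Third, I would use associativity of the derived tensor product together with the identification $\mathcal{R}(\mathcal{D}_X^{(1)}) = \mathcal{D}_X^{(0,1)}/v$: since $\mathcal{R}_{Y\leftarrow X}^{(1)}$ is killed by $v$ on the right, i.e.\ is an honest right $\mathcal{R}(\mathcal{D}_X^{(1)})$-module, we get
\[
\mathcal{R}_{Y\leftarrow X}^{(1)} \otimes_{\mathcal{D}_X^{(0,1)}}^L \mathcal{M}^\cdot \;\tilde{\to}\; \mathcal{R}_{Y\leftarrow X}^{(1)} \otimes_{\mathcal{R}(\mathcal{D}_X^{(1)})}^L \bigl(\mathcal{R}(\mathcal{D}_X^{(1)}) \otimes_{\mathcal{D}_X^{(0,1)}}^L \mathcal{M}^\cdot\bigr).
\]
Applying $R\varphi_*$ then recognizes the result as $\int_{\varphi,1}\bigl(\mathcal{R}(\mathcal{D}_X^{(1)}) \otimes_{\mathcal{D}_X^{(0,1)}}^L \mathcal{M}^\cdot\bigr)$. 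The case of $\overline{\mathcal{R}}(\mathcal{D}_Y^{(0)})$ is formally identical, substituting $f$ for $v$ throughout.

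The main technical obstacle will be the projection-formula step: we need it in the graded, non-commutative setting, on unbounded quasicoherent complexes. I expect to handle it by working locally on $Y$ and replacing $\mathcal{R}(\mathcal{D}_Y^{(1)})$ by the explicit two-periodic resolution by free $\mathcal{D}_Y^{(0,1)}$-modules (as in \lemref{Basic-Facts-on-Rigid}), which reduces the claim to the projection formula for $\mathcal{D}_Y^{(0,1)}$ itself acting on $R\varphi_*(\mathcal{D}_{Y\leftarrow X}^{(0,1)} \otimes^L \mathcal{M}^\cdot)$; this in turn is simply $R\varphi_* \circ \varphi^{-1}$, which commutes with tensoring by (each term of) the resolution. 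Everything else is a formal consequence of the structural results of Section~4 already in hand.
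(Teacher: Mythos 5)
Your proposal is correct and follows essentially the same route as the paper's proof: pull $\mathcal{R}(\mathcal{D}_{Y}^{(1)})$ inside $R\varphi_{*}$ by a projection formula, identify $\varphi^{-1}(\mathcal{R}(\mathcal{D}_{Y}^{(1)}))\otimes_{\varphi^{-1}(\mathcal{D}_{Y}^{(0,1)})}^{L}\mathcal{D}_{Y\leftarrow X}^{(0,1)}$ with $\mathcal{R}_{Y\leftarrow X}^{(1)}$ via quasi-rigidity of the transfer bimodule (which the paper justifies by its local lift being a standard gauge, rather than by "pullback preserves quasi-rigidity"), and conclude by associativity. The only divergence is that the paper establishes the projection-formula step as a separate general lemma via compact generation of $D_{qcoh}$, whereas you specialize to $\mathcal{R}(\mathcal{D}_{Y}^{(1)})$ and its two-periodic free resolution --- which works, but still ultimately requires $R\varphi_{*}$ to commute with the resulting unbounded totalization, i.e., the same commutation-with-direct-sums input.
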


\begin{proof}
We have 
\[
\mathcal{R}(\mathcal{D}_{Y}^{(1)})\otimes_{\mathcal{D}_{Y}^{(0,1)}}^{L}\int_{\varphi}\mathcal{M}^{\cdot}=\mathcal{R}(\mathcal{D}_{Y}^{(1)})\otimes_{\mathcal{D}_{Y}^{(0,1)}}^{L}R\varphi_{*}(\mathcal{D}_{Y\leftarrow X}^{(0,1)}\otimes_{\mathcal{D}_{X}^{(0,1)}}^{L}\mathcal{M}^{\cdot})
\]
\[
\tilde{=}R\varphi_{*}(\varphi^{-1}(\mathcal{R}(\mathcal{D}_{Y}^{(1)}))\otimes_{\varphi^{-1}(\mathcal{D}_{Y}^{(0,1)})}^{L}(\mathcal{D}_{Y\leftarrow X}^{(0,1)}\otimes_{\mathcal{D}_{X}^{(0,1)}}^{L}\mathcal{M}^{\cdot}))
\]
(we will prove this last isomorphism in the lemma directly below).
We have the isomorphism 
\[
\varphi^{-1}(\mathcal{R}(\mathcal{D}_{Y}^{(1)}))\otimes_{\varphi^{-1}(\mathcal{D}_{Y}^{(0,1)})}^{L}\mathcal{D}_{Y\leftarrow X}^{(0,1)}\tilde{=}\mathcal{R}_{Y\leftarrow X}^{(1)}
\]
which is proved in the same way as \eqref{transfer-iso-1} above.
Therefore 
\[
R\varphi_{*}(\varphi^{-1}(\mathcal{R}(\mathcal{D}_{Y}^{(1)}))\otimes_{\varphi^{-1}(\mathcal{D}_{Y}^{(0,1)})}^{L}(\mathcal{D}_{Y\leftarrow X}^{(0,1)}\otimes_{\mathcal{D}_{X}^{(0,1)}}^{L}\mathcal{M}^{\cdot}))\tilde{=}R\varphi_{*}(\mathcal{R}_{Y\leftarrow X}^{(1)}\otimes_{\mathcal{D}_{X}^{(0,1)}}^{L}\mathcal{M}^{\cdot})
\]
\[
\tilde{=}R\varphi_{*}(\mathcal{R}_{Y\leftarrow X}^{(1)}\otimes_{\mathcal{R}(\mathcal{D}_{X}^{(1)})}^{L}\mathcal{R}(\mathcal{D}_{X}^{(1)})\otimes_{\mathcal{D}_{X}^{(0,1)}}^{L}\mathcal{M}^{\cdot})=\int_{\varphi,1}(\mathcal{R}(\mathcal{D}_{X}^{(1)})\otimes_{\mathcal{D}_{X}^{(0,1)}}^{L}\mathcal{M}^{\cdot})
\]
as claimed. The proof for the case of $\overline{\mathcal{R}}(\mathcal{D}_{X}^{(0)})$
is essentially identical. 
\end{proof}
In the previous proof we used the 
\begin{lem}
\label{lem:baby-projection-1}Let $\mathcal{M}^{\cdot}\in D_{qcoh}(\mathcal{G}(\mathcal{D}_{X}^{(0,1)}))$,
and $\mathcal{N}^{\cdot}\in D_{qcoh}(\mathcal{G}(\mathcal{D}_{Y}^{(0,1)})^{\text{opp}})$.
Then there is an isomorphism 
\[
\mathcal{N}^{\cdot}\otimes_{\mathcal{D}_{Y}^{(0,1)}}^{L}R\varphi_{*}(\mathcal{D}_{Y\leftarrow X}^{(0,1)}\otimes_{\mathcal{D}_{X}^{(0,1)}}^{L}\mathcal{M}^{\cdot})\tilde{=}R\varphi_{*}(\varphi^{-1}(\mathcal{N}^{\cdot})\otimes_{\varphi^{-1}(\mathcal{D}_{Y}^{(0,1)})}^{L}(\mathcal{D}_{Y\leftarrow X}^{(0,1)}\otimes_{\mathcal{D}_{X}^{(0,1)}}^{L}\mathcal{M}^{\cdot}))
\]
\end{lem}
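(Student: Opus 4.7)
The plan is to run the standard projection formula argument, adapted to the setting of graded sheaves of modules. First I would construct the natural comparison morphism. By the adjunction $\varphi^{-1} \dashv R\varphi_{*}$ applied in the category of graded sheaves (valid by the running assumptions and the formalism of Section 2.2), one has, for any graded sheaf $\mathcal{G}^{\cdot}$ on $X$ that is a $(\varphi^{-1}\mathcal{D}_{Y}^{(0,1)},\mathcal{D}_{X}^{(0,1)})$--bimodule, a canonical map
\[
\mathcal{N}^{\cdot}\otimes_{\mathcal{D}_{Y}^{(0,1)}}^{L} R\varphi_{*}\mathcal{G}^{\cdot}\ \longrightarrow\ R\varphi_{*}\bigl(\varphi^{-1}\mathcal{N}^{\cdot}\otimes_{\varphi^{-1}\mathcal{D}_{Y}^{(0,1)}}^{L}\mathcal{G}^{\cdot}\bigr)
\]
in $D(\mathcal{G}(\mathbb{Z}))$, which one checks is a morphism in the correct derived category after taking into account the remaining right $\mathcal{D}_{X}^{(0,1)}$-action. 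Applying this with $\mathcal{G}^{\cdot} = \mathcal{D}_{Y\leftarrow X}^{(0,1)}\otimes_{\mathcal{D}_{X}^{(0,1)}}^{L}\mathcal{M}^{\cdot}$ produces the candidate isomorphism.

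Next I would verify that the comparison map is an isomorphism by reducing to a model case. Choose a K-flat resolution $\mathcal{P}^{\cdot}\to\mathcal{N}^{\cdot}$ in $\mathcal{G}(\mathcal{D}_{Y}^{(0,1)})^{\mathrm{opp}}$ built out of direct sums of sheaves of the form $j_{!}\bigl(\mathcal{D}_{Y}^{(0,1)}(i)|_{V}\bigr)$, where $V\subset Y$ is open and $i\in\mathbb{Z}$; such a resolution exists in full generality for Grothendieck abelian categories. Both functors in question commute with arbitrary direct sums in $\mathcal{N}^{\cdot}$ (using that $X$ and $Y$ are noetherian topological spaces and that $\varphi_{*}$ commutes with filtered colimits of flasque/injective graded sheaves on noetherian spaces, cf.~the graded analogue of Lemma~2.2.\ast), so it suffices to treat one term of $\mathcal{P}^{\cdot}$ at a time. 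For $\mathcal{N} = j_{!}\bigl(\mathcal{D}_{Y}^{(0,1)}(i)|_{V}\bigr)$, base change along the open immersion $V\hookrightarrow Y$ (together with $\varphi^{-1}\circ j_{!} = j'_{!}\circ (\varphi')^{-1}$ for the Cartesian square) reduces the claim to the case $V=Y$ and $i=0$, i.e.\ to $\mathcal{N} = \mathcal{D}_{Y}^{(0,1)}$.

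In that case both sides collapse to $R\varphi_{*}\bigl(\mathcal{D}_{Y\leftarrow X}^{(0,1)}\otimes_{\mathcal{D}_{X}^{(0,1)}}^{L}\mathcal{M}^{\cdot}\bigr)$: on the left because $\mathcal{D}_{Y}^{(0,1)}\otimes_{\mathcal{D}_{Y}^{(0,1)}}^{L}(-)=\mathrm{id}$, and on the right because $\varphi^{-1}\mathcal{D}_{Y}^{(0,1)}\otimes_{\varphi^{-1}\mathcal{D}_{Y}^{(0,1)}}^{L}(-)=\mathrm{id}$. Unwinding the adjunction shows the comparison map is the identity in this case, and we conclude.

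The main obstacle is bookkeeping: $\mathcal{D}_{Y}^{(0,1)}$ is not of finite homological dimension, so we cannot truncate and must work throughout with unbounded K-flat resolutions and with the graded version of the pushforward developed in Section 2.2, checking at each step that these resolutions really do commute with $\varphi^{-1}$, with the tensor product over $\varphi^{-1}\mathcal{D}_{Y}^{(0,1)}$, and with the graded $R\varphi_{*}$ applied to $\mathcal{D}_{Y\leftarrow X}^{(0,1)}\otimes_{\mathcal{D}_{X}^{(0,1)}}^{L}\mathcal{M}^{\cdot}$ (this last point uses that $X$ and $Y$ have finite Krull dimension, so $\varphi_{*}$ has finite cohomological dimension on graded sheaves). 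Once these technical points are in place, the reduction to the case $\mathcal{N} = \mathcal{D}_{Y}^{(0,1)}$ finishes the proof.
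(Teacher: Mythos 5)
Your overall strategy (construct the canonical map, then reduce to free modules by a resolution-plus-coproducts argument) is the right one, but the specific reduction you propose has a genuine gap. The terms $j_{!}\bigl(\mathcal{D}_{Y}^{(0,1)}(i)|_{V}\bigr)$ of your K-flat resolution are not quasi-coherent, so they lie outside the hypotheses of the lemma — and the projection formula really does fail for them. Tracing through your computation, for $\mathcal{N}=j_{!}(\mathcal{D}_{Y}^{(0,1)}|_{V})$ the left side becomes $j_{!}\bigl((R\varphi_{*}\mathcal{G}^{\cdot})|_{V}\bigr)$ while the right side becomes $R\varphi_{*}\bigl(j'_{!}(\mathcal{G}^{\cdot}|_{\varphi^{-1}(V)})\bigr)$, so your "base change" step is the assertion $R\varphi_{*}\circ j'_{!}\cong j_{!}\circ R\varphi'_{*}$. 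This is false: extension by zero does not commute with $R\varphi_{*}$ for a non-proper $\varphi$. For instance, take $\varphi:X=V\hookrightarrow Y$ itself an open immersion with $\mathcal{M}^{\cdot}=\mathcal{D}_{X}^{(0,1)}$; then the two sides are $j_{!}$ versus $Rj_{*}$ of the same sheaf, which have different stalks on $Y\setminus V$. So checking the map on the terms of this resolution does not suffice, because the map is not an isomorphism on those terms.

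The repair is to keep the resolution inside the quasi-coherent world, which is exactly what the paper does. Both sides of the comparison map restrict compatibly to open subsets of $Y$ (since $(R\varphi_{*}\mathcal{G}^{\cdot})|_{U}\cong R\varphi'_{*}(\mathcal{G}^{\cdot}|_{\varphi^{-1}(U)})$ holds trivially for open $U\subset Y$), so one may assume $Y$ affine. Over an affine, $D_{qcoh}(\mathcal{G}(\mathcal{D}_{Y}^{(0,1)})^{\mathrm{opp}})$ is generated by the compact objects $\mathcal{D}_{Y}^{(0,1)}(j)[i]$ — global free modules, no $j_{!}$'s — and the subcategory on which the comparison map is an isomorphism is closed under shifts, cones and arbitrary coproducts, so it suffices to check $\mathcal{N}=\mathcal{D}_{Y}^{(0,1)}$, which is immediate. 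The one nontrivial input for closure under coproducts is that $R\varphi_{*}$ commutes with the relevant direct sums; the paper secures this by showing that $\varphi^{-1}(\mathcal{N}^{\cdot})\otimes^{L}_{\varphi^{-1}(\mathcal{D}_{Y}^{(0,1)})}(\mathcal{D}_{Y\leftarrow X}^{(0,1)}\otimes^{L}_{\mathcal{D}_{X}^{(0,1)}}\mathcal{M}^{\cdot})$ is quasi-isomorphic to a complex of quasi-coherent $D(\mathcal{O}_{X})$-modules (using induced-module resolutions of $\mathcal{M}^{\cdot}$ and a K-projective resolution of $\mathcal{N}^{\cdot}$ over the affine $Y$) and then invoking the fact that $R\varphi_{*}$ preserves coproducts on $D_{qcoh}(D(\mathcal{O}_{X}))$; your alternative appeal to finite cohomological dimension on a noetherian space is a reasonable substitute for that particular point, but it does not rescue the $j_{!}$ reduction.
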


\begin{proof}
(c.f. the proof of \cite{key-17}, proposition 5.3). First, we construct
a canonical map 
\[
\mathcal{N}^{\cdot}\otimes_{\mathcal{D}_{Y}^{(0,1)}}^{L}R\varphi_{*}(\mathcal{D}_{Y\leftarrow X}^{(0,1)}\otimes_{\mathcal{D}_{X}^{(0,1)}}^{L}\mathcal{M}^{\cdot})\to R\varphi_{*}(\varphi^{-1}(\mathcal{N}^{\cdot})\otimes_{\varphi^{-1}(\mathcal{D}_{Y}^{(0,1)})}^{L}(\mathcal{D}_{Y\leftarrow X}^{(0,1)}\otimes_{\mathcal{D}_{X}^{(0,1)}}^{L}\mathcal{M}^{\cdot}))
\]
as follows: one may replace $\mathcal{N}^{\cdot}$ with a complex
of $K$-flat graded $\mathcal{D}_{Y}^{(0,1)}$-modules, $\mathcal{F}^{\cdot}$.
Choosing a quasi-isomorphism $\mathcal{D}_{Y\leftarrow X}^{(0,1)}\otimes_{\mathcal{D}_{X}^{(0,1)}}^{L}\mathcal{M}^{\cdot}\tilde{\to}\mathcal{I}^{\cdot}$,
a $K$-injective complex of graded $\varphi^{-1}(\mathcal{D}_{Y}^{(0,1)})$-modules,
one obtains the quasi-isomorphism 
\[
\mathcal{N}^{\cdot}\otimes_{\mathcal{D}_{Y}^{(0,1)}}^{L}R\varphi_{*}(\mathcal{D}_{Y\leftarrow X}^{(0,1)}\otimes_{\mathcal{D}_{X}^{(0,1)}}^{L}\mathcal{M}^{\cdot})\tilde{\to}\mathcal{F}^{\cdot}\otimes_{\mathcal{D}_{Y}^{(0,1)}}\varphi_{*}\mathcal{I}^{\cdot}
\]
Then there is the obvious isomorphism 
\[
\mathcal{F}^{\cdot}\otimes_{\mathcal{D}_{Y}^{(0,1)}}\varphi_{*}\mathcal{I}^{\cdot}\tilde{\to}\varphi_{*}(\varphi^{-1}(\mathcal{F}^{\cdot})\otimes_{\varphi^{-1}(\mathcal{D}_{Y}^{(0,1)})}\mathcal{I}^{\cdot})
\]
and a canonical map 
\[
\varphi_{*}(\varphi^{-1}(\mathcal{F}^{\cdot})\otimes_{\varphi^{-1}(\mathcal{D}_{Y}^{(0,1)})}\mathcal{I}^{\cdot})\to R\varphi_{*}((\varphi^{-1}(\mathcal{F}^{\cdot})\otimes_{\varphi^{-1}(\mathcal{D}_{Y}^{(0,1)})}\mathcal{I}^{\cdot}))
\]
\[
\tilde{\to}R\varphi_{*}(\varphi^{-1}(\mathcal{N}^{\cdot})\otimes_{\varphi^{-1}(\mathcal{D}_{Y}^{(0,1)})}^{L}(\mathcal{D}_{Y\leftarrow X}^{(0,1)}\otimes_{\mathcal{D}_{X}^{(0,1)}}^{L}\mathcal{M}^{\cdot}))
\]
Thus we obtain the canonical map 
\[
\mathcal{N}^{\cdot}\otimes_{\mathcal{D}_{Y}^{(0,1)}}^{L}R\varphi_{*}(\mathcal{D}_{Y\leftarrow X}^{(0,1)}\otimes_{\mathcal{D}_{X}^{(0,1)}}^{L}\mathcal{M}^{\cdot})\to R\varphi_{*}(\varphi^{-1}(\mathcal{N}^{\cdot})\otimes_{\varphi^{-1}(\mathcal{D}_{Y}^{(0,1)})}^{L}(\mathcal{D}_{Y\leftarrow X}^{(0,1)}\otimes_{\mathcal{D}_{X}^{(0,1)}}^{L}\mathcal{M}^{\cdot}))
\]
this map exists for all $\mathcal{N}^{\cdot}\in D(\mathcal{G}(\mathcal{D}_{Y}^{(0,1)})^{\text{op}})$
and $\mathcal{M}^{\cdot}\in D(\mathcal{G}(\mathcal{D}_{X}^{(0,1)}))$.
To check whether it is an isomorphism, we may work locally on $Y$
and suppose that $Y$ is affine from now on.

To prove this, we proceed in a similar manner to the proof of the
projection formula for quasi-coherent sheaves, in the general version
of \cite{key-17}, proposition 5.3. Fix $\mathcal{M}^{\cdot}\in D_{qcoh}(\mathcal{G}(\mathcal{D}_{X}^{(0,1)}))$.
For any $\mathcal{N}^{\cdot}\in D_{qcoh}(\mathcal{G}(\mathcal{D}_{Y}^{(0,1)})^{\text{opp}})$,
we claim that $\varphi^{-1}(\mathcal{N}^{\cdot})\otimes_{\varphi^{-1}(\mathcal{D}_{Y}^{(0,1)})}^{L}(\mathcal{D}_{Y\leftarrow X}^{(0,1)}\otimes_{\mathcal{D}_{X}^{(0,1)}}^{L}\mathcal{M}^{\cdot})$
is quasi-isomorphic to a complex in $D_{qcoh}(D(\mathcal{O}_{X}))$.
To see this, we observe that any quasicoherent $\mathcal{D}_{X}^{(0,1)}$-module
$\mathcal{M}$ is a quotient of the $\mathcal{D}_{X}^{(0,1)}$-module
$\mathcal{D}_{X}^{(0,1)}\otimes_{D(\mathcal{O}_{X})}\mathcal{M}$
(where the $\mathcal{D}_{X}^{(0,1)}$-module is via the action on
the left hand factor on the tensor product). It follows that any bounded-above
complex in $D_{qcoh}(\mathcal{G}(\mathcal{D}_{X}^{(0,1)}))$ is quasi-isomorphic
to a complex, whose terms are of the form $\mathcal{D}_{X}^{(0,1)}\otimes_{D(\mathcal{O}_{X})}\mathcal{M}$
for quasi-coherent $\mathcal{M}$. Therefore any complex in $D_{qcoh}(\mathcal{G}(\mathcal{D}_{X}^{(0,1)}))$
is a homotopy colimit of such complexes. Therefore $\mathcal{D}_{Y\leftarrow X}^{(0,1)}\otimes_{\mathcal{D}_{X}^{(0,1)}}^{L}\mathcal{M}^{\cdot}$
is quasi-isomorphic to a complex of quasicoherent $D(\mathcal{O}_{X})$-modules.
In addition, since $Y$ is affine, $\mathcal{N}^{\cdot}$ is quasi-isomorphic
to a $K$-projective complex of $\mathcal{D}_{Y}^{(0,1)}$-modules;
in particular, a complex whose terms are projective $\mathcal{D}_{Y}^{(0,1)}$-modules.
It follows that $\varphi^{-1}(\mathcal{N}^{\cdot})\otimes_{\varphi^{-1}(\mathcal{D}_{Y}^{(0,1)})}^{L}(\mathcal{D}_{Y\leftarrow X}^{(0,1)}\otimes_{\mathcal{D}_{X}^{(0,1)}}^{L}\mathcal{M}^{\cdot})$
is quasi-isomorphic to a complex in $D_{qcoh}(D(\mathcal{O}_{X}))$
as claimed. 

Now, since $R\varphi_{*}$ commutes with arbitrary direct sums on
$D_{qcoh}(D(\mathcal{O}_{X}))$ (by \cite{key-17}, lemma 1.4), we
see that both sides of arrow commute with arbitrary direct sums (over
objects in $D_{qcoh}(\mathcal{G}(\mathcal{D}_{Y}^{(0,1)})^{\text{opp}})$);
so the set of objects on which the arrow is an isomorphism is closed
under arbitrary direct sums. Since $Y$ is affine, the category $D_{qcoh}(\mathcal{G}(\mathcal{D}_{Y}^{(0,1)})^{\text{op}})$
is generated by the compact objects $\{\mathcal{D}_{Y}^{(0,1)}[i]\}_{i\in\mathbb{Z}}$;
therefore (as in the proof of \cite{key-17}, lemma 5.3), it actually
suffices to show that the arrow is an isomorphism on $\mathcal{D}_{Y}^{(0,1)}$
itself, but this is obvious. 
\end{proof}
This type of projection formula is so useful that we will record here
a minor variant: 
\begin{lem}
\label{lem:proj-over-D}Let $\varphi:\mathfrak{X}\to\mathfrak{Y}$
be a morphism. Let $\mathcal{M}^{\cdot}\in D_{cc}(\mathcal{G}(\widehat{\mathcal{D}}_{\mathfrak{Y}}^{(0,1)}))$
and $\mathcal{N}^{\cdot}\in D_{cc}(\mathcal{G}(\widehat{\mathcal{D}}_{\mathfrak{X}}^{(0,1),\text{opp}}))$,
such that $\mathcal{M}^{\cdot}\otimes_{W(k)}^{L}k\in D_{qcoh}(\mathcal{G}(\mathcal{D}_{Y}^{(0,1)}))$
and $\mathcal{N}^{\cdot}\otimes_{W(k)}^{L}k\in D_{qcoh}(\mathcal{G}(\mathcal{D}_{X}^{(0,1)})^{\text{opp}})$.
Then we have 
\[
(\int_{\varphi}\mathcal{N}^{\cdot})\widehat{\otimes}_{\widehat{\mathcal{D}}_{\mathfrak{Y}}^{(0,1)}}^{L}\mathcal{M}^{\cdot}\tilde{\to}R\varphi_{*}(\mathcal{N}^{\cdot}\widehat{\otimes}_{\widehat{\mathcal{D}}_{\mathfrak{X}}^{(0,1)}}^{L}L\varphi^{*}\mathcal{M}^{\cdot})
\]
The analogous statement holds for $\mathcal{M}^{\cdot}\in D_{qcoh}(\mathcal{G}(\mathcal{D}_{Y}^{(0,1)}))$
and $\mathcal{N}^{\cdot}\in D_{coh}^{b}(\mathcal{G}(\mathcal{D}_{X}^{(0,1),\text{opp}}))$;
as well as for the Rees algebras $\mathcal{R}(\mathcal{D}^{(1)})$
and $\mathcal{\overline{R}}(\mathcal{D}_{X}^{(0)})$.
\end{lem}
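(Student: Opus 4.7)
The plan is to first produce the canonical map, and then use cohomological completeness together with the Nakayama lemma to reduce the claim to the positive-characteristic version proved in the preceding lemma.

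First I would construct the morphism. Expanding the definitions,
\[
R\varphi_{*}(\mathcal{N}^{\cdot}\widehat{\otimes}_{\widehat{\mathcal{D}}_{\mathfrak{X}}^{(0,1)}}^{L}L\varphi^{*}\mathcal{M}^{\cdot})\;\tilde{=}\;R\varphi_{*}\bigl(\mathcal{N}^{\cdot}\widehat{\otimes}_{\widehat{\mathcal{D}}_{\mathfrak{X}}^{(0,1)}}^{L}\widehat{\mathcal{D}}_{\mathfrak{X}\to\mathfrak{Y}}^{(0,1)}\widehat{\otimes}_{\varphi^{-1}\widehat{\mathcal{D}}_{\mathfrak{Y}}^{(0,1)}}^{L}\varphi^{-1}\mathcal{M}^{\cdot}\bigr),
\]
and one builds the natural arrow from the LHS by adjoining (as in the proof of \lemref{baby-projection-1}) a chosen K-flat resolution of $\mathcal{M}^{\cdot}$ along $\varphi^{-1}$ and using that $\widehat{\mathcal{D}}_{\mathfrak{X}\to\mathfrak{Y}}^{(0,1)}$ is a $(\widehat{\mathcal{D}}_{\mathfrak{X}}^{(0,1)},\varphi^{-1}\widehat{\mathcal{D}}_{\mathfrak{Y}}^{(0,1)})$-bimodule. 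The graded derived completion is then applied to land in $D_{cc}(\mathcal{G}(\widehat{\mathcal{D}}_{\mathfrak{Y}}^{(0,1)}))$.

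Next I would verify both sides lie in $D_{cc}(\mathcal{G}(\widehat{\mathcal{D}}_{\mathfrak{Y}}^{(0,1)}))$: the right-hand side because $R\varphi_{*}$ preserves cohomological completeness by \propref{Push-and-complete}, and the left-hand side because both $\int_{\varphi}\mathcal{N}^{\cdot}$ and the graded derived tensor product over $\widehat{\mathcal{D}}_{\mathfrak{Y}}^{(0,1)}$ are completed by construction. By the graded Nakayama lemma (\corref{Nakayama}), checking that the canonical map is an isomorphism reduces to checking it after applying $-\otimes_{W(k)}^{L}k$. Using \propref{push-and-complete-for-D}(1) on the pushforward, \propref{Basic-base-change-for-pullback}(2) on the pullback, \lemref{reduction-of-completion} to replace $\widehat{\otimes}$ by the ordinary derived tensor after reduction, and the standard fact that reduction mod $p$ commutes with $R\varphi_{*}$ on derived complete complexes, the reduced map is identified with the canonical morphism
\[
\Bigl(\int_{\varphi}(\mathcal{N}^{\cdot}\otimes_{W(k)}^{L}k)\Bigr)\otimes_{\mathcal{D}_{Y}^{(0,1)}}^{L}(\mathcal{M}^{\cdot}\otimes_{W(k)}^{L}k)\;\longrightarrow\;R\varphi_{*}\bigl((\mathcal{N}^{\cdot}\otimes_{W(k)}^{L}k)\otimes_{\mathcal{D}_{X}^{(0,1)}}^{L}L\varphi^{*}(\mathcal{M}^{\cdot}\otimes_{W(k)}^{L}k)\bigr),
\]
with $\mathcal{N}^{\cdot}\otimes_{W(k)}^{L}k$ and $\mathcal{M}^{\cdot}\otimes_{W(k)}^{L}k$ quasicoherent by hypothesis. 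This is precisely the projection formula of \lemref{baby-projection-1}, hence an isomorphism.

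The main obstacle, which is mostly bookkeeping but must be done carefully, is producing all of the compatibilities needed to pass $-\otimes_{W(k)}^{L}k$ through the various mixed operations $R\varphi_{*}$, $\widehat{\otimes}_{\widehat{\mathcal{D}}_{\mathfrak{X}}^{(0,1)}}^{L}$, and $L\varphi^{*}$ without tacitly using flatness of complexes that are only assumed to be cohomologically complete. The key technical point is \lemref{reduction-of-completion}, which guarantees $\widehat{\mathcal{C}}^{\cdot}\otimes_{W(k)}^{L}k\tilde{=}\mathcal{C}^{\cdot}\otimes_{W(k)}^{L}k$, so that no distinction between completed and uncompleted tensor products survives after reduction mod $p$. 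Finally, the analogous statement for $\mathcal{R}(\mathcal{D}^{(1)})$ and $\overline{\mathcal{R}}(\mathcal{D}_{X}^{(0)})$ follows by the same argument, using \propref{pullback-and-R} and the corresponding transfer bimodules in place of $\widehat{\mathcal{D}}_{\mathfrak{X}\to\mathfrak{Y}}^{(0,1)}$.
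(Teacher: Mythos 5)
Your proposal is correct and follows essentially the same route as the paper: construct the canonical arrow as in \lemref{baby-projection-1} using the bimodule structure on $\widehat{\mathcal{D}}_{\mathfrak{X}\to\mathfrak{Y}}^{(0,1)}$, pass to the cohomological completion, and then reduce mod $p$ via Nakayama to quote the quasicoherent projection formula already established. The paper states this reduction in one line, whereas you spell out the intermediate compatibilities (\lemref{reduction-of-completion}, \propref{push-and-complete-for-D}, \propref{Basic-base-change-for-pullback}); this is the same argument, just with the bookkeeping made explicit.
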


\begin{proof}
We have that ${\displaystyle \int_{\varphi}\mathcal{N}^{\cdot}=R\varphi_{*}(\mathcal{N}^{\cdot}\widehat{\otimes}_{\widehat{\mathcal{D}}_{\mathfrak{X}}^{(0,1)}}^{L}\mathcal{\widehat{D}}_{\mathfrak{X}\to\mathfrak{Y}}^{(0,1)})}$.
As in the proof of \lemref{baby-projection-1}, there is a morphism
\begin{equation}
R\varphi_{*}(\mathcal{N}^{\cdot}\widehat{\otimes}_{\widehat{\mathcal{D}}_{\mathfrak{X}}^{(0,1)}}^{L}\mathcal{\widehat{D}}_{\mathfrak{X}\to\mathfrak{Y}}^{(0,1)})\widehat{\otimes}_{\widehat{\mathcal{D}}_{\mathfrak{Y}}^{(0,1)}}^{L}\mathcal{M}^{\cdot}\to R\varphi_{*}(\mathcal{N}^{\cdot}\widehat{\otimes}_{\widehat{\mathcal{D}}_{\mathfrak{X}}^{(0,1)}}^{L}\mathcal{\widehat{D}}_{\mathfrak{X}\to\mathfrak{Y}}^{(0,1)}\widehat{\otimes}_{\varphi^{-1}(\widehat{\mathcal{D}}_{\mathfrak{Y}}^{(0,1)})}^{L}\varphi^{-1}(\mathcal{M}^{\cdot}))\label{eq:adunction}
\end{equation}
Indeed, one constructs the map 
\[
R\varphi_{*}(\mathcal{N}^{\cdot}\otimes_{\widehat{\mathcal{D}}_{\mathfrak{X}}^{(0,1)}}^{L}\mathcal{\widehat{D}}_{\mathfrak{X}\to\mathfrak{Y}}^{(0,1)})\otimes_{\widehat{\mathcal{D}}_{\mathfrak{Y}}^{(0,1)}}^{L}\mathcal{M}^{\cdot}\to R\varphi_{*}(\mathcal{N}^{\cdot}\otimes_{\widehat{\mathcal{D}}_{\mathfrak{X}}^{(0,1)}}^{L}\mathcal{\widehat{D}}_{\mathfrak{X}\to\mathfrak{Y}}^{(0,1)}\otimes_{\varphi^{-1}(\widehat{\mathcal{D}}_{\mathfrak{Y}}^{(0,1)})}^{L}\varphi^{-1}(\mathcal{M}^{\cdot}))
\]
exactly as above; and then passes to the cohomological completion.

Since $L\varphi^{*}\mathcal{M}^{\cdot}=\mathcal{\widehat{D}}_{\mathfrak{X}\to\mathfrak{Y}}^{(0,1)}\widehat{\otimes}_{\varphi^{-1}(\widehat{\mathcal{D}}_{\mathfrak{Y}}^{(0,1)})}^{L}\varphi^{-1}(\mathcal{M}^{\cdot})$
by definition, the result will follow if \eqref{adunction} is an
isomorphism. To prove that, apply $\otimes_{W(k)}^{L}k$ and quote
the previous result. The proof in the case of the Rees algebras is
completely analogous. 
\end{proof}
Here is an important application of these ideas:
\begin{lem}
\label{lem:Composition-of-pushforwards}Let $\varphi:\mathfrak{X}\to\mathfrak{Y}$
and $\psi:\mathfrak{Y}\to\mathfrak{Z}$ be morphisms. There is a canonical
map 
\[
\int_{\psi}\circ\int_{\varphi}\mathcal{M}^{\cdot}\to\int_{\psi\circ\varphi}\mathcal{M}^{\cdot}
\]
for any $\mathcal{M}^{\cdot}\in D_{cc}(\mathcal{G}(\widehat{\mathcal{D}}_{\mathfrak{X}}^{(0,1)}))$,
which is an isomorphism if $\mathcal{M}^{\cdot}\otimes_{W(k)}^{L}k\in D_{qoh}(\mathcal{G}(\mathcal{D}_{X}^{(0,1)}))$.
If $\varphi:X\to Y$ and $\psi:Y\to Z$ are morphisms, we have the
analogous statements in $D(\mathcal{G}(\mathcal{D}_{Z}^{(0,1)}))$. 
\end{lem}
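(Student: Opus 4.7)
The plan is to follow the standard $\mathcal{D}$-module template: unfold the definition of iterated pushforward, commute one $R\varphi_*$ past a tensor product via the projection formula (Lemma~\ref{lem:proj-over-D}), use that $R\psi_*\circ R\varphi_* \cong R(\psi\circ\varphi)_*$, and then identify a composition of transfer bimodules. Under the assumption of quasi-coherence mod $p$, the projection formula arrow becomes an isomorphism, and this propagates back to $W(k)$ via cohomological completeness.

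First I would construct the canonical arrow. Writing out ${\displaystyle \int_{\psi}\int_{\varphi}\mathcal{M}^{\cdot}}$ by definition and applying the natural (not necessarily iso) projection-formula arrow of Lemma~\ref{lem:proj-over-D} produces a map
\[
\int_{\psi}\int_{\varphi}\mathcal{M}^{\cdot} \;\longrightarrow\; R(\psi\circ\varphi)_{*}\!\left(\varphi^{-1}\widehat{\mathcal{D}}_{\mathfrak{Z}\leftarrow\mathfrak{Y}}^{(0,1)}\widehat{\otimes}^{L}_{\varphi^{-1}\widehat{\mathcal{D}}_{\mathfrak{Y}}^{(0,1)}}\widehat{\mathcal{D}}_{\mathfrak{Y}\leftarrow\mathfrak{X}}^{(0,1)}\widehat{\otimes}^{L}_{\widehat{\mathcal{D}}_{\mathfrak{X}}^{(0,1)}}\mathcal{M}^{\cdot}\right).
\]
The key input is then the transfer-bimodule composition isomorphism
\[
\varphi^{-1}\widehat{\mathcal{D}}_{\mathfrak{Z}\leftarrow\mathfrak{Y}}^{(0,1)}\widehat{\otimes}^{L}_{\varphi^{-1}\widehat{\mathcal{D}}_{\mathfrak{Y}}^{(0,1)}}\widehat{\mathcal{D}}_{\mathfrak{Y}\leftarrow\mathfrak{X}}^{(0,1)} \;\tilde{\to}\; \widehat{\mathcal{D}}_{\mathfrak{Z}\leftarrow\mathfrak{X}}^{(0,1)}
\]
as bimodules, which produces the desired arrow to $\int_{\psi\circ\varphi}\mathcal{M}^{\cdot}$. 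To establish this composition isomorphism, it is cleanest to pass to the right-module transfer bimodules via the left-right interchange (Proposition~\ref{prop:Left-Right-Swap}); in the right-module formulation $\widehat{\mathcal{D}}_{\mathfrak{X}\to\mathfrak{Y}}^{(0,1)}\cong L\varphi^{*}\widehat{\mathcal{D}}_{\mathfrak{Y}}^{(0,1)}$ by Remark~\ref{rem:Direct-defn-of-transfer-bimodule}, so the composition isomorphism reduces to an instance of Lemma~\ref{lem:composition-of-pullbacks} ($L(\psi\varphi)^{*}\cong L\varphi^{*}\circ L\psi^{*}$), and translating back via the left-right swap yields the desired identification on the $\leftarrow$ side.

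Second, to show this canonical arrow is an isomorphism under the hypothesis, I would invoke the graded Nakayama lemma (Corollary~\ref{cor:Nakayama}). Both source and target are cohomologically complete: for $\int_{\psi\circ\varphi}\mathcal{M}^{\cdot}$, and more generally for any pushforward, this is Proposition~\ref{prop:Push-and-complete}(3); for $\int_{\psi}\int_{\varphi}\mathcal{M}^{\cdot}$ one applies the same proposition twice, noting that the derived completion built into the tensor products of Definition~\ref{def:Push!} keeps us inside $D_{cc}$ at each step. Thus it suffices to check the arrow is an isomorphism after $\otimes^{L}_{W(k)}k$. By Proposition~\ref{prop:push-and-complete-for-D}(1) both sides commute with $\otimes^{L}_{W(k)}k$, so we are reduced to proving the analogous isomorphism over $k$ for $\mathcal{M}^{\cdot}\otimes^{L}_{W(k)}k \in D_{qcoh}(\mathcal{G}(\mathcal{D}_{X}^{(0,1)}))$. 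Over $k$, the same construction goes through, and now the quasi-coherence hypothesis is exactly what is needed to upgrade the canonical projection-formula arrow to an isomorphism via Lemma~\ref{lem:baby-projection-1} (equivalently, the mod-$p$ instance of Lemma~\ref{lem:proj-over-D}). The transfer-bimodule composition in positive characteristic is again an instance of Lemma~\ref{lem:composition-of-pullbacks} combined with the left-right interchange.

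The main obstacle is bookkeeping of bimodule structures across the $\leftarrow$/$\to$ dichotomy, particularly in verifying that the transfer-bimodule composition respects the left $\widehat{\mathcal{D}}_{\mathfrak{X}}^{(0,1)}$ action coming from the pullback and the right $(\psi\varphi)^{-1}\widehat{\mathcal{D}}_{\mathfrak{Z}}^{(0,1)}$ action coming from left multiplication through the various twists by $\omega$. The rest is formal: it is a completely standard diagram-chase once one adopts the right-module viewpoint throughout and applies Proposition~\ref{prop:Left-Right-Swap} only at the end to translate back. No new ideas beyond the pullback-composition (Lemma~\ref{lem:composition-of-pullbacks}), the projection formula (Lemma~\ref{lem:proj-over-D}), and Nakayama for cohomologically complete complexes (Corollary~\ref{cor:Nakayama}) should be required.
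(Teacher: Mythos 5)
Your proposal is correct and follows essentially the same route as the paper: the canonical arrow is built from the projection-formula map of Lemma~\ref{lem:proj-over-D} together with the transfer-bimodule composition isomorphism (obtained as in Lemma~\ref{lem:composition-of-pullbacks}), and the isomorphism claim is reduced via $\otimes^{L}_{W(k)}k$ and Proposition~\ref{prop:push-and-complete-for-D}(1) to the positive-characteristic case, where it follows from the argument of Lemma~\ref{lem:baby-projection-1}. Your extra care with cohomological completeness and the left-right interchange only makes explicit what the paper leaves implicit.
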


\begin{proof}
As in \lemref{composition-of-pullbacks}, we have an isomorphism 
\[
\varphi^{-1}(\mathcal{D}_{\mathfrak{Z\leftarrow\mathfrak{Y}}}^{(0,1)})\widehat{\otimes}_{\varphi^{-1}(\mathcal{D}_{\mathfrak{Y}}^{(0,1)})}^{L}\mathcal{\widehat{D}}_{\mathfrak{Y}\leftarrow\mathfrak{X}}^{(0,1)}\tilde{=}\mathcal{\widehat{D}}_{\mathfrak{Z}\leftarrow\mathfrak{X}}^{(0,1)}
\]
as $((\psi\circ\varphi)^{-1}(\widehat{\mathcal{D}}_{\mathfrak{Z}}^{(0,1)}),\widehat{\mathcal{D}}_{\mathfrak{X}}^{(0,1)})$
bimodules. Then we have 
\[
\int_{\psi}\circ\int_{\varphi}\mathcal{M}^{\cdot}=R\psi_{*}(\mathcal{D}_{\mathfrak{Z\leftarrow\mathfrak{Y}}}^{(0,1)}\widehat{\otimes}_{\widehat{\mathcal{D}}_{\mathfrak{Y}}^{(0,1)}}^{L}R\varphi_{*}(\mathcal{D}_{\mathfrak{Y\leftarrow\mathfrak{X}}}^{(0,1)}\widehat{\otimes}_{\widehat{\mathcal{D}}_{\mathfrak{X}}^{(0,1)}}^{L}\mathcal{M}^{\cdot}))
\]
\[
\to R\psi_{*}R\varphi_{*}(\varphi^{-1}(\mathcal{D}_{\mathfrak{Z\leftarrow\mathfrak{Y}}}^{(0,1)})\widehat{\otimes}_{\varphi^{-1}(\widehat{\mathcal{D}}_{\mathfrak{Y}}^{(0,1)})}^{L}\mathcal{D}_{\mathfrak{Y\leftarrow\mathfrak{X}}}^{(0,1)}\widehat{\otimes}_{\widehat{\mathcal{D}}_{\mathfrak{X}}^{(0,1)}}^{L}\mathcal{M}^{\cdot})
\]
\[
\tilde{\to}R(\psi\circ\varphi)_{*}(\mathcal{\widehat{D}}_{\mathfrak{Z}\leftarrow\mathfrak{X}}^{(0,1)}\widehat{\otimes}_{\widehat{\mathcal{D}}_{\mathfrak{X}}^{(0,1)}}^{L}\mathcal{M}^{\cdot})=\int_{\psi\circ\varphi}\mathcal{M}^{\cdot}
\]
where the first arrow is constructed as in \lemref{proj-over-D} and
the second isomorphism is given above. Applying the functor $\otimes_{W(k)}^{L}k$
and using \propref{Push-and-complete}, part $1)$, we reduce to proving
the analogous statement for $\varphi:X\to Y$ and $\psi:Y\to Z$;
where it follows exactly as in \lemref{baby-projection-1}. 
\end{proof}
We shall also need results relating the pushforwards when $\mathcal{M}\in\mathcal{G}(\mathcal{D}_{X}^{(0,1)})$
is already annihilated by $f$ (or $v$): 
\begin{prop}
\label{prop:Sandwich-push}Suppose $\mathcal{M}\in\mathcal{G}(\mathcal{D}_{X}^{(0,1)})$
satisfies $v\mathcal{M}=0$. Then ${\displaystyle \int_{\varphi}\mathcal{M}}$
is contained in the image of the functor $D(\mathcal{R}(\mathcal{D}_{X}^{(1)})-\text{mod})\to D(\mathcal{G}(\mathcal{D}_{X}^{(0,1)}))$.
In fact, there is an isomorphism of graded sheaves of $\mathcal{O}_{X}[f,v]$-modules
\[
R\varphi_{*}(\mathcal{R}_{Y\leftarrow X}^{(1)}\otimes_{\mathcal{R}(\mathcal{D}_{X}^{(1)})}^{L}\mathcal{M})\tilde{=}R\varphi_{*}(\mathcal{D}_{Y\leftarrow X}^{(0,1)}\otimes_{\mathcal{D}_{X}^{(0,1)}}^{L}\mathcal{M})
\]
In other words, the pushforward of $\mathcal{M}$, regarded as a module
over $\mathcal{R}(\mathcal{D}_{X}^{(1)})$, agrees with its pushforward
as a $\mathcal{D}_{X}^{(0,1)}$-module. The analogous result hold
when $f\mathcal{M}=0$. 
\end{prop}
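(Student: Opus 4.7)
The plan is to reduce the entire proposition to a single application of Proposition \ref{prop:Sandwich!}(1). Since $v\mathcal{M}=0$, the module $\mathcal{M}$ is naturally an object of $\mathcal{G}(\mathcal{R}(\mathcal{D}_X^{(1)}))$ via the identification $\mathcal{R}(\mathcal{D}_X^{(1)})=\mathcal{D}_X^{(0,1)}/v$. If we can verify that the right $\mathcal{D}_X^{(0,1)}$-module $\mathcal{D}_{Y\leftarrow X}^{(0,1)}$ is quasi-rigid, then Proposition \ref{prop:Sandwich!}(1), applied with $\mathcal{N}=\mathcal{D}_{Y\leftarrow X}^{(0,1)}$, produces a canonical isomorphism
\[
\mathcal{D}_{Y\leftarrow X}^{(0,1)}\otimes_{\mathcal{D}_X^{(0,1)}}^{L}\mathcal{M}\;\tilde{\to}\;(\mathcal{D}_{Y\leftarrow X}^{(0,1)}/v)\otimes_{\mathcal{R}(\mathcal{D}_X^{(1)})}^{L}\mathcal{M}\;=\;\mathcal{R}_{Y\leftarrow X}^{(1)}\otimes_{\mathcal{R}(\mathcal{D}_X^{(1)})}^{L}\mathcal{M}.
\]
Applying $R\varphi_{*}$ to both sides yields the displayed isomorphism of the proposition, and the assertion that $\int_{\varphi}\mathcal{M}$ lies in the essential image of $D(\mathcal{R}(\mathcal{D}_Y^{(1)})\text{-mod})\to D(\mathcal{G}(\mathcal{D}_Y^{(0,1)}))$ is automatic, since by construction $\mathcal{R}_{Y\leftarrow X}^{(1)}$ is a $(\varphi^{-1}\mathcal{R}(\mathcal{D}_Y^{(1)}),\mathcal{R}(\mathcal{D}_X^{(1)}))$-bimodule.

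The only nontrivial step is therefore the quasi-rigidity of $\mathcal{D}_{Y\leftarrow X}^{(0,1)}$. By Definition \ref{def:Push!}, the underlying graded $\mathcal{O}_X[f,v]$-module can be written as
\[
\mathcal{D}_{Y\leftarrow X}^{(0,1)}\;\cong\;\varphi^{*}\mathcal{D}_{Y}^{(0,1)}\otimes_{\mathcal{O}_X}(\varphi^{*}\omega_Y^{-1}\otimes_{\mathcal{O}_X}\omega_X),
\]
where the line bundle $\varphi^{*}\omega_Y^{-1}\otimes_{\mathcal{O}_X}\omega_X$ lives in degree zero. Tensoring with a locally free degree-zero module commutes with $f$ and $v$ and preserves kernels and images, so quasi-rigidity of $\mathcal{D}_{Y\leftarrow X}^{(0,1)}$ reduces to quasi-rigidity of $\varphi^{*}\mathcal{D}_{Y}^{(0,1)}=\mathcal{D}_{X\to Y}^{(0,1)}$ (here we also use that the left--right swap of Proposition \ref{prop:Left-Right-Swap} does not affect the underlying $\mathcal{O}_X[f,v]$-structure, since $f,v$ are central). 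But this last quasi-rigidity is precisely the input already used in the proof of Proposition \ref{prop:pullback-and-R}: by Proposition \ref{prop:Basic-properties-of-the-transfer-module}(2) the lift $\widehat{\mathcal{D}}_{\mathfrak{X}\to\mathfrak{Y}}^{(0,1)}$ is a standard gauge, and by Lemma \ref{lem:Standard-is-rigid} the reduction mod $p$ of a standard gauge is rigid, hence a fortiori quasi-rigid.

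I expect no real obstacle here: the entire argument is essentially a bookkeeping exercise once one traces the quasi-rigidity back to the explicit standard-gauge structure on the lifted transfer bimodule $\widehat{\mathcal{D}}_{\mathfrak{X}\to\mathfrak{Y}}^{(0,1)}$. The case $f\mathcal{M}=0$ is handled by the mirror argument, invoking the second displayed isomorphism of Proposition \ref{prop:Sandwich!}(1) with $\mathcal{D}_{Y\leftarrow X}^{(0,1)}/f=\overline{\mathcal{R}}_{Y\leftarrow X}^{(0)}$ and $\overline{\mathcal{R}}(\mathcal{D}_X^{(0)})$ replacing $\mathcal{R}(\mathcal{D}_X^{(1)})$; the same quasi-rigidity input suffices.
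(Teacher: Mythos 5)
Your proposal is correct and follows the paper's own route: the paper simply declares the proposition "an immediate consequence of Proposition \ref{prop:Sandwich!}", and you have filled in exactly the intended application (taking $\mathcal{N}=\mathcal{D}_{Y\leftarrow X}^{(0,1)}$ in part (1) and then applying $R\varphi_{*}$). Your verification that $\mathcal{D}_{Y\leftarrow X}^{(0,1)}$ is quasi-rigid — via the local lift $\widehat{\mathcal{D}}_{\mathfrak{X}\to\mathfrak{Y}}^{(0,1)}$ being standard and the degree-zero line-bundle twist being harmless — is the same justification the paper uses elsewhere (e.g.\ in the proofs of Proposition \ref{prop:pullback-and-R} and Theorem \ref{thm:phi-push-is-bounded}).
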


\begin{proof}
This is an immediate consequence of \propref{Sandwich!}
\end{proof}
As a consequence of these results, we obtain: 
\begin{thm}
\label{thm:phi-push-is-bounded}Let $\varphi:X\to Y$ be a morphism.
Then, for each of the algebras $\mathcal{R}(\mathcal{D}_{X}^{(1)})$,
$\overline{\mathcal{R}}(\mathcal{D}_{X}^{(0)})$, and $\mathcal{D}_{X}^{(0,1)}$,
the pushforward along $\varphi$ takes $D_{qcoh}^{b}$ to $D_{qcoh}^{b}$.
If $\varphi$ is proper, then the pushforward along $\varphi$ takes
$D_{coh}^{b}$ to $D_{coh}^{b}$.
\end{thm}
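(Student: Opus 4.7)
The plan is to treat the three algebras in turn, deducing the $\mathcal{D}_{X}^{(0,1)}$-case from the two Rees-algebra cases. For $\mathcal{R}(\mathcal{D}_{X}^{(1)})$, I would invoke \thmref{Filtered-Frobenius}, which identifies its graded module category with filtered $\mathcal{D}_{X}^{(0)}$-modules (for the symbol filtration), and under which the transfer bimodule and pushforward correspond to the classical filtered transfer bimodule and filtered $\mathcal{D}$-module pushforward of Laumon; boundedness and, for proper $\varphi$, coherence of the latter are classical, and one passes back through \corref{Filtered-right-Frob} to obtain the statement for $\mathcal{R}(\mathcal{D}_{X}^{(1)})$. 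For $\overline{\mathcal{R}}(\mathcal{D}_{X}^{(0)})$, one can exploit the Azumaya structure over $\mathcal{O}_{T^{*}X^{(1)}}[v]$ (established in the proposition preceding \defref{Rees-and-Rees-bar}) to reduce the pushforward essentially to the coherent pushforward of modules over this finitely-generated commutative algebra, where the conclusion is standard.

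With the two Rees cases established, I turn to $\mathcal{D}_{X}^{(0,1)}$. Boundedness and quasi-coherence of $\int_{\varphi}\mathcal{M}^{\cdot}$ for $\mathcal{M}^{\cdot}\in D_{qcoh}^{b}(\mathcal{G}(\mathcal{D}_{X}^{(0,1)}))$ follow from finite flat-dimension of the transfer bimodule: by \propref{Basic-properties-of-the-transfer-module}, $\widehat{\mathcal{D}}_{\mathfrak{Y}\leftarrow\mathfrak{X}}^{(0,1)}$ is a standard gauge, so its reduction $\mathcal{D}_{Y\leftarrow X}^{(0,1)}$ is rigid (\lemref{Standard-is-rigid}), hence quasi-rigid as a right $\mathcal{D}_{X}^{(0,1)}$-module; by the right-module version of \propref{Quasi-rigid=finite-homological} it has finite projective dimension on each affine open, with a uniform bound since $X$ is noetherian. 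Therefore $\mathcal{D}_{Y\leftarrow X}^{(0,1)}\otimes_{\mathcal{D}_{X}^{(0,1)}}^{L}\mathcal{M}^{\cdot}$ is bounded, and applying $R\varphi_{*}$ (of finite cohomological dimension on quasi-coherent complexes over the noetherian scheme $X$) lands the result in $D_{qcoh}^{b}(\mathcal{G}(\mathcal{D}_{Y}^{(0,1)}))$.

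For coherence under the properness assumption, since $\int_{\varphi}$ is triangulated it suffices by truncation to treat a single coherent module $\mathcal{M}$. Consider the short exact sequence
\[
0\to\ker(f)\to\mathcal{M}\xrightarrow{f\cdot}f\mathcal{M}\to0
\]
of coherent $\mathcal{D}_{X}^{(0,1)}$-modules, well-defined since $f\in D(k)$ is central. Because $fv=0$ in characteristic $p$, the submodule $\ker(f)$ is annihilated by $f$ and is thus coherent over $\overline{\mathcal{R}}(\mathcal{D}_{X}^{(0)})$, while $f\mathcal{M}$ is annihilated by $v$ and is thus coherent over $\mathcal{R}(\mathcal{D}_{X}^{(1)})$. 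Applying $\int_{\varphi}$ yields a distinguished triangle whose outer terms, by \propref{Sandwich!}, coincide with the pushforwards over the corresponding Rees algebras; by the first step these lie in $D_{coh}^{b}$ over those Rees algebras, and hence via the surjections $\mathcal{D}_{Y}^{(0,1)}\twoheadrightarrow\overline{\mathcal{R}}(\mathcal{D}_{Y}^{(0)})$ and $\mathcal{D}_{Y}^{(0,1)}\twoheadrightarrow\mathcal{R}(\mathcal{D}_{Y}^{(1)})$ in $D_{coh}^{b}(\mathcal{G}(\mathcal{D}_{Y}^{(0,1)}))$. The two-out-of-three property for the triangulated subcategory $D_{coh}^{b}$ then yields $\int_{\varphi}\mathcal{M}\in D_{coh}^{b}(\mathcal{G}(\mathcal{D}_{Y}^{(0,1)}))$.

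The principal technical hurdle will be Step~1 for the Rees-algebra cases: one must verify carefully that \thmref{Filtered-Frobenius} and \corref{Filtered-right-Frob} intertwine the filtered $\mathcal{D}_{X}^{(0)}$-module transfer bimodule with the $\mathcal{R}(\mathcal{D}_{X}^{(1)})$-transfer bimodule, so that $\int_{\varphi,1}$ genuinely corresponds to Laumon's filtered pushforward under the equivalence; an analogous compatibility check is required for the conjugate filtration in the $\overline{\mathcal{R}}(\mathcal{D}_{X}^{(0)})$-case. Once these compatibilities are in hand, the remainder of the argument is the relatively formal triangulated bookkeeping sketched above.
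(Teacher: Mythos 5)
Your overall strategy is sound but runs in the opposite direction from the paper's. You establish the two Rees-algebra cases first and then deduce the $\mathcal{D}_{X}^{(0,1)}$-case via the sequence $0\to\ker(f)\to\mathcal{M}\to\mathcal{M}/\ker(f)\to0$ together with \propref{Sandwich-push} and thickness of $D_{coh}^{b}$; the paper instead proves the $\mathcal{D}_{X}^{(0,1)}$-case directly and then \emph{deduces} the Rees cases from \propref{Sandwich-push}. Concretely, for boundedness the paper factors $\varphi$ through its graph and treats closed immersions (transfer bimodule locally free, \lemref{transfer-is-locally-free}) and smooth morphisms (transfer bimodule coherent and quasi-rigid, hence of finite homological dimension) separately, while you observe that quasi-rigidity of $\mathcal{D}_{Y\leftarrow X}^{(0,1)}$ holds for an arbitrary $\varphi$, which is correct since it is locally the reduction of a standard gauge, and slightly streamlines this step. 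For coherence under properness the paper uses induced modules $\mathcal{F}\otimes_{D(\mathcal{O}_{X})}\mathcal{D}_{X}^{(0,1)}$, the projection formula, and descending induction on cohomological degree --- an argument that is entirely internal to the $\mathcal{D}^{(0,1)}$-theory --- whereas you outsource the work to the filtered pushforward of Laumon (via Frobenius descent) and to the Azumaya structure of $\overline{\mathcal{R}}(\mathcal{D}_{X}^{(0)})$. Your route buys a cleaner conceptual picture at the cost of front-loading compatibilities that the paper only establishes afterwards: the statement that $\int_{\varphi,1}F_{X}^{*}\cong F_{X}^{*}\int_{\varphi}$ is \thmref{Hodge-Filtered-Push}, proved in the subsection \emph{following} this theorem, so your ordering would require moving that material earlier (its proof does not circularly depend on the present theorem, so this is legitimate but nontrivial).

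Two points in your Step 1 are under-justified. First, quasi-coherence of the cohomology of $\int_{\varphi}\mathcal{M}^{\cdot}$ does not follow merely from finite flat dimension of the transfer bimodule; you still need the paper's homotopy-colimit-of-induced-modules argument (or an equivalent) to see that $\mathcal{D}_{Y\leftarrow X}^{(0,1)}\otimes_{\mathcal{D}_{X}^{(0,1)}}^{L}\mathcal{M}^{\cdot}$ is representable by a complex of quasi-coherent $D(\mathcal{O}_{X})$-modules before applying $R\varphi_{*}$. Second, the identification of $\int_{\varphi,0}$ with a coherent-sheaf pushforward via the Azumaya splitting (\corref{Filtered-Bez-Brav}) is established in the paper only for \emph{smooth} $\varphi$, using the smooth adjunction; for a general proper $\varphi$ you would still have to factor through the graph embedding and treat the closed-immersion case by hand, so the commutative reduction does not eliminate the case analysis you were hoping to avoid.
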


\begin{proof}
Let us start with the statement, that the pushforward takes $D_{qcoh}$
to $D_{qcoh}$ in all of these cases. For this, we can argue as in
the proof of \lemref{baby-projection-1}: namely, one may assume $Y$
is affine, and then if $\mathcal{M}^{\cdot}\in D_{qcoh}(\mathcal{G}(\mathcal{D}_{X}^{(0,1)}))$,
we may replace $\mathcal{M}^{\cdot}$ by a homotopy colimit of $\mathcal{D}_{X}^{(0,1)}$-modules
of the form $\mathcal{D}_{X}^{(0,1)}\otimes_{\mathcal{O}_{X}[f,v]}\mathcal{M}$,
for quasi-coherent $\mathcal{M}$. Therefore $\mathcal{D}_{Y\leftarrow X}^{(0,1)}\otimes_{\mathcal{D}_{X}^{(0,1)}}^{L}\mathcal{M}^{\cdot}$
is quasi-isomorphic to a complex of quasicoherent $\mathcal{O}_{X}[f,v]$-modules,
which implies that the cohomology sheaves of its pushforward are quasi-coherent
$\mathcal{O}_{Y}[f,v]$-modules; and therefore quasi-coherent $\mathcal{D}_{Y}^{(0,1)}$-modules.
The same argument works for $\mathcal{R}(\mathcal{D}_{X}^{(1)})$
and $\overline{\mathcal{R}}(\mathcal{D}_{X}^{(0)})$. 

To prove the boundedness, we can factor $\varphi$ as a closed immersion
(the graph $X\to X\times Y$) followed by the projection $X\times Y\to Y$,
and, applying \lemref{Composition-of-pushforwards}, we see that it
suffices to consider separately the case of a closed immersion and
the case of a smooth morphism. For a closed immersion $\iota:X\to Y$
we have that the bimodule $\mathcal{D}_{X\to Y}^{(0)}$ is locally
free over $\mathcal{D}_{X}^{(0,1)}$ (this elementary fact will be
checked below in \lemref{transfer-is-locally-free}) and so the tensor
product $\otimes_{\mathcal{D}_{X}^{(0,1)}}\mathcal{D}_{X\to Y}^{(0)}$
takes quasicoherent sheaves to quasicoherent sheaves. 

Now, if $X\to Y$ is smooth, we have by (the proof of) \propref{Smooth-pullback-preserves-coh},
that $\mathcal{D}_{Y\leftarrow X}^{(0,1)}$ is a coherent $\mathcal{D}_{X}^{(0,1),\text{opp}}$-module.
Further, since it is locally the reduction mod $p$ of a standard
module, it is rigid, so that by \propref{Quasi-rigid=00003Dfinite-homological}
it is locally of finite homological dimension; and the result follows
directly. Thus we see that ${\displaystyle \int_{\varphi}}$ is bounded
on $D_{qcoh}(\mathcal{G}(\mathcal{D}_{X}^{(0,1)}))$, the same holds
for the pushforward on $D_{qcoh}(\mathcal{G}(\mathcal{R}(\mathcal{D}_{X}^{(1)})))$
and $D_{qcoh}(\mathcal{G}(\mathcal{\overline{R}}(\mathcal{D}_{X}^{(0)})))$
by the previous proposition. 

Now suppose $\varphi$ is proper. Let us say that a right $\mathcal{D}_{X}^{(0,1)}$-module
is induced if it is of the form $\mathcal{F}\otimes_{D(\mathcal{O}_{X})}\mathcal{D}_{X}^{(0,1)}$
for some coherent $\mathcal{F}$ over $D(\mathcal{O}_{X})$. In this
case we have 
\[
\int_{\varphi}\mathcal{F}\otimes_{D(\mathcal{O}_{X})}\mathcal{D}_{X}^{(0,1)}=R\varphi_{*}(\mathcal{F}\otimes_{D(\mathcal{O}_{X})}\mathcal{D}_{X}^{(0,1)}\otimes_{\mathcal{D}_{X}^{(0,1)}}^{L}\mathcal{D}_{X\to Y}^{(0,1)})
\]
\[
\tilde{=}R\varphi_{*}(\mathcal{F}\otimes_{D(\mathcal{O}_{X})}^{L}\mathcal{D}_{X}^{(0,1)}\otimes_{\mathcal{D}_{X}^{(0,1)}}^{L}\varphi^{*}(\mathcal{D}_{Y}^{(0,1)}))\tilde{\to}R\varphi_{*}(\mathcal{F}\otimes_{D(\mathcal{O}_{X})}^{L}\varphi^{*}(\mathcal{D}_{Y}^{(0,1)}))
\]
\[
\tilde{\to}R\varphi_{*}(\mathcal{F})\otimes_{D(O_{Y})}^{L}\mathcal{D}_{Y}^{(0,1)}
\]
Thus the result is true for any induced module. If $\mathcal{M}$
is an arbitrary coherent right $\mathcal{D}_{X}^{(0,1)}$-module,
then, as a quasicoherent sheaf over $\mathcal{O}_{X}[f,v]$, it is
the union of its $\mathcal{O}_{X}[f,v]$ coherent sub-sheaves. Selecting
such a subsheaf which generates $\mathcal{M}$ as a $\mathcal{D}_{X}^{(0,1)}$-module,
we obtain a short exact sequence 
\[
0\to\mathcal{K}\to\mathcal{F}\otimes_{D(\mathcal{O}_{X})}\mathcal{D}_{X}^{(0,1)}\to\mathcal{M}\to0
\]
where $\mathcal{K}$ is also coherent. Since the functor ${\displaystyle \int_{\varphi}}$
is concentrated in homological degrees $\leq d_{X/Y}$ for all coherent
$\mathcal{D}_{X}^{(0,1)}$-modules, we can now deduce the coherence
of ${\displaystyle \mathcal{H}^{i}(\int_{\varphi}\mathcal{M})}$ by
descending induction on $i$. This proves the result for $\mathcal{D}_{X}^{(0,1)}$-modules,
and we can deduce the result for $\mathcal{R}(\mathcal{D}_{X}^{(1)})$,
$\overline{\mathcal{R}}(\mathcal{D}_{X}^{(0)})$-modules by again
invoking \propref{Sandwich-push}. 
\end{proof}
From this and the formalism of cohomological completion (specifically,
\propref{coh-to-coh}), we deduce 
\begin{cor}
\label{cor:proper-push-over-W(k)}Let $\varphi:\mathfrak{X}\to\mathfrak{Y}$
be proper. Then ${\displaystyle \int_{\varphi}}$ takes $D_{coh}^{b}(\mathcal{G}(\widehat{\mathcal{D}}_{\mathfrak{X}}^{(0,1)}))$
to $D_{coh}^{b}(\mathcal{G}(\widehat{\mathcal{D}}_{\mathfrak{Y}}^{(0,1)}))$.
If $\mathcal{M}^{\cdot}\in D_{coh,F^{-1}}^{b}(\mathcal{G}(\widehat{\mathcal{D}}_{\mathfrak{X}}^{(0,1)}))$
then ${\displaystyle \int_{\varphi}\mathcal{M}}\in D_{coh,F^{-1}}^{b}(\mathcal{G}(\widehat{\mathcal{D}}_{\mathfrak{Y}}^{(0,1)}))$. 
\end{cor}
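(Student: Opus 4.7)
The plan for the first statement is to reduce the coherence in the formal setting to its analogue in positive characteristic via the Nakayama package for cohomologically complete complexes. Concretely, given $\mathcal{M}^{\cdot}\in D_{coh}^{b}(\mathcal{G}(\widehat{\mathcal{D}}_{\mathfrak{X}}^{(0,1)}))$, I would first invoke \propref{coh-to-coh} to conclude $\mathcal{M}^{\cdot}\in D_{cc}(\mathcal{G}(\widehat{\mathcal{D}}_{\mathfrak{X}}^{(0,1)}))$; then by \propref{Push-and-complete}, part $3)$, the pushforward $\int_{\varphi}\mathcal{M}^{\cdot}$ is itself cohomologically complete. Therefore, by the second half of \propref{coh-to-coh}, to prove the coherence of $\int_{\varphi}\mathcal{M}^{\cdot}$ it suffices to show that its reduction mod $p$ is contained in $D_{coh}^{b}(\mathcal{G}(\mathcal{D}_{Y}^{(0,1)}))$.

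The reduction mod $p$ is then handled by the base-change identity of \propref{push-and-complete-for-D}, part $1)$: we have
\[
(\int_{\varphi}\mathcal{M}^{\cdot})\otimes_{W(k)}^{L}k\tilde{=}\int_{\varphi}(\mathcal{M}^{\cdot}\otimes_{W(k)}^{L}k).
\]
The complex $\mathcal{M}^{\cdot}\otimes_{W(k)}^{L}k$ lies in $D_{coh}^{b}(\mathcal{G}(\mathcal{D}_{X}^{(0,1)}))$ (again by \propref{coh-to-coh}), so \thmref{phi-push-is-bounded} applied to the proper morphism $X\to Y$ underlying $\varphi$ yields that the right-hand side belongs to $D_{coh}^{b}(\mathcal{G}(\mathcal{D}_{Y}^{(0,1)}))$. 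Combining, $\int_{\varphi}\mathcal{M}^{\cdot}\in D_{coh}^{b}(\mathcal{G}(\widehat{\mathcal{D}}_{\mathfrak{Y}}^{(0,1)}))$.

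For the $F^{-1}$-structure, the strategy is to take the given isomorphism $F^{*}\mathcal{M}^{\cdot,-\infty}\tilde{\to}\widehat{\mathcal{M}^{\cdot,\infty}}$, apply push-forward along $\varphi$, and then identify both sides using the compatibilities of $\int_{\varphi}$ with the two endpoint functors. Using parts $2)$ and $3)$ of \propref{push-and-complete-for-D}, we have
\[
(\int_{\varphi}\mathcal{M}^{\cdot})^{-\infty}\tilde{=}\int_{\varphi,0}\mathcal{M}^{\cdot,-\infty},\qquad\widehat{(\int_{\varphi}\mathcal{M}^{\cdot})^{\infty}}\tilde{=}\int_{\varphi,1}\widehat{\mathcal{M}^{\cdot,\infty}}
\]
(the second one uses that $\mathcal{M}^{\cdot}$ is in $D_{coh}^{b}$). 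Now I would apply $\int_{\varphi,1}$ to both sides of the $F^{-1}$-isomorphism for $\mathcal{M}^{\cdot}$, obtaining $\int_{\varphi,1}F^{*}\mathcal{M}^{\cdot,-\infty}\tilde{\to}\int_{\varphi,1}\widehat{\mathcal{M}^{\cdot,\infty}}$, and then invoke the classical Frobenius-descent compatibility for Berthelot's pushforward,
\[
\int_{\varphi,1}\circ F^{*}\tilde{\to}F^{*}\circ\int_{\varphi,0},
\]
which is proved in \cite{key-2} (and discussed just after \thmref{Berthelot-Frob} in \remref{Compare-With-Berthelot}). Composing these yields the desired $F^{*}(\int_{\varphi}\mathcal{M}^{\cdot})^{-\infty}\tilde{\to}\widehat{(\int_{\varphi}\mathcal{M}^{\cdot})^{\infty}}$.

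The main obstacle, and the only nontrivial input beyond the bookkeeping already developed in the excerpt, is the compatibility $\int_{\varphi,1}\circ F^{*}\tilde{=}F^{*}\circ\int_{\varphi,0}$. In characteristic zero this would be trivial by base change, but over $W(k)$ it rests on Berthelot's Frobenius-descent isomorphism $F^{!}\widehat{\mathcal{D}}_{\mathfrak{X}}^{(0)}\otimes_{\widehat{\mathcal{D}}_{\mathfrak{X}}^{(1)}}^{L}F^{*}\widehat{\mathcal{D}}_{\mathfrak{X}}^{(0)}\tilde{=}\widehat{\mathcal{D}}_{\mathfrak{X}}^{(1)}$ of \propref{F^*F^!}, together with the projection formula of \lemref{proj-over-D}; I would spell out this compatibility by constructing an isomorphism of bimodules $F^{*}\widehat{\mathcal{D}}_{\mathfrak{Y}\leftarrow\mathfrak{X}}^{(0)}\tilde{=}\widehat{\mathcal{D}}_{\mathfrak{Y}\leftarrow\mathfrak{X}}^{(1)}\otimes_{\widehat{\mathcal{D}}_{\mathfrak{X}}^{(1)}}^{L}F^{*}\widehat{\mathcal{D}}_{\mathfrak{X}}^{(0)}$ (which amounts to unraveling $F^{*}\varphi^{*}\tilde{=}\varphi^{*}F^{*}$ on the basic transfer modules) and then feeding this into the definition of $\int_{\varphi,0}$ and $\int_{\varphi,1}$.
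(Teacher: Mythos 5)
Your argument is correct and is essentially the paper's own proof: the first part is the reduction mod $p$ via \propref{coh-to-coh}, \propref{Push-and-complete}, \propref{push-and-complete-for-D} part $1)$ and \thmref{phi-push-is-bounded}, and the second part combines parts $2)$ and $3)$ of \propref{push-and-complete-for-D} with Berthelot's compatibility of Frobenius descent with pushforward, which the paper simply cites from \cite{key-2}, section 3.4 (and which is reproved in the graded setting in \thmref{Hodge-Filtered-Push}). The only difference is that you propose to re-derive that compatibility from the bimodule isomorphisms, whereas the paper treats it as a known input.
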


\begin{proof}
The first part follows immediately from the proceeding theorem by
applying $\otimes_{W(k)}^{L}k$. The second part follows from \propref{push-and-complete-for-D}
(part $3)$), as well as Berthelot's theorem that ${\displaystyle \int_{\varphi,0}F^{*}\tilde{\to}F^{*}\int_{\varphi,1}}$
(c.f. \cite{key-2}, section 3.4, and also \thmref{Hodge-Filtered-Push}
below). 
\end{proof}

\subsection{\label{subsec:Hodge-and-Conjugate}Push-forwards for $\mathcal{R}(\mathcal{D}_{X}^{(1)})$
and $\overline{\mathcal{R}}(\mathcal{D}_{X}^{(0)})$. }

In this section we take a close look at the theory over $k$. In particular,
we study the pushforwards of modules over $\mathcal{R}(\mathcal{D}_{X}^{(1)})$
and $\overline{\mathcal{R}}(\mathcal{D}_{X}^{(0)})$, and compare
them with more traditional filtered pushforwards found in the literature.
For $\mathcal{R}(\mathcal{D}_{X}^{(1)})$ and $\overline{\mathcal{R}}(\mathcal{D}_{X}^{(0)})$
modules themselves, we will construct the analogue of the relative
de Rham resolution. This will allow us to exhibit an adjunction between
${\displaystyle \int_{\varphi}}$ and $\varphi^{\dagger}$ when $\varphi$
is smooth. 

We begin with $\mathcal{R}(\mathcal{D}_{X}^{(1)})$, where we can
reduce everything to the more familiar situation of $\mathcal{R}(\mathcal{D}_{X}^{(0)})$-modules
using the fact that $\mathcal{R}(\mathcal{D}_{X}^{(1)})$ is Morita
equivalent to $\mathcal{R}(\mathcal{D}_{X}^{(0)})$ (c.f. \thmref{Filtered-Frobenius}). 

Let $\varphi:X\to Y$. Recall that Laumon constructed in \cite{key-19}
the push-forward in the filtered derived category of $\mathcal{D}_{X}^{(0)}$-modules
(with respect to the symbol filtration); essentially, his work upgrades
the bimodule $\mathcal{D}_{Y\leftarrow X}^{(0)}$ to a filtered $(\varphi^{-1}(\mathcal{D}_{Y}^{(0)}),\mathcal{D}_{X}^{(0)})$-bimodule
via 
\[
F_{i}(\mathcal{D}_{Y\leftarrow X}^{(0)}):=\varphi^{-1}(F_{i}(\mathcal{D}_{Y}^{(0)})\otimes_{\mathcal{O}_{Y}}\omega_{Y}^{-1})\otimes_{\varphi^{-1}(\mathcal{O}_{Y})}\omega_{X}
\]
(c.f. \cite{key-19}, formula 5.1.3); then one may define ${\displaystyle \int_{\varphi}}$
via the usual formula, but using the tensor product and push-forward
in the filtered derived categories. On the other hand, we can apply
the Rees construction to the above filtered bimodule to obtain $\mathcal{R}(\mathcal{D}_{Y\leftarrow X}^{(0)})$,
a graded $(\varphi^{-1}(\mathcal{R}(\mathcal{D}_{Y}^{(0)})),\mathcal{R}(\mathcal{D}_{X}^{(0)}))$
bimodule, which (again by the usual formula) yields a push-forward
functor ${\displaystyle \int_{\varphi}}:D(\mathcal{G}(\mathcal{R}(\mathcal{D}_{X}^{(0)})))\to D(\mathcal{G}(\mathcal{R}(\mathcal{D}_{Y}^{(0)})))$,
and we have the following evident compatibility: 
\begin{lem}
Let $\mathcal{M}^{\cdot}\in D((\mathcal{D}_{X}^{(0)},F)-\text{mod})$.
Then we have 
\[
\mathcal{R}(\int_{\varphi}\mathcal{M}^{\cdot})\tilde{\to}\int_{\varphi}\mathcal{R}(\mathcal{M}^{\cdot})
\]
In particular, the Hodge-to-deRham spectral sequence for ${\displaystyle \int_{\varphi}\mathcal{M}^{\cdot}}$
degenerates at $E_{1}$ iff each of the sheaves ${\displaystyle \mathcal{H}^{i}(\int_{\varphi}\mathcal{R}(\mathcal{M}^{\cdot}))}$
is torsion-free over the Rees parameter $f$. 
\end{lem}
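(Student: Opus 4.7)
The strategy is to exploit the Schapira--Schneiders equivalence $\mathcal{R} : D((\mathcal{D}_X^{(0)},F)\text{-mod}) \xrightarrow{\sim} D(\mathcal{G}(\mathcal{R}(\mathcal{D}_X^{(0)})))$ (recalled just above the Lemma), and to show that this equivalence intertwines the two ingredients of the filtered pushforward: the derived tensor with the filtered transfer bimodule, and the derived direct image $R\varphi_{*}$. Once both intertwinings are established, the first assertion of the Lemma follows by unwinding the definitions.

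\textbf{First I would handle the transfer bimodule.} By construction, Laumon's filtration $F_i(\mathcal{D}_{Y\leftarrow X}^{(0)})$ is exhaustive, bounded below, and the Rees construction applied to it yields precisely the graded $(\varphi^{-1}\mathcal{R}(\mathcal{D}_Y^{(0)}),\mathcal{R}(\mathcal{D}_X^{(0)}))$-bimodule that is used to define ${\displaystyle \int_{\varphi}}$ on $D(\mathcal{G}(\mathcal{R}(\mathcal{D}_X^{(0)})))$. Next, for the tensor product step, I would resolve $\mathcal{M}^{\cdot}$ by a complex of filtered direct sums of shifts of $\mathcal{D}_X^{(0)}$ (a ``filtered K-flat'' resolution; such resolutions exist by standard arguments, cf.\ Schapira--Schneiders). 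Since $\mathcal{R}$ commutes with direct sums and shifts and carries $(\mathcal{D}_X^{(0)}, F)$ to $\mathcal{R}(\mathcal{D}_X^{(0)})$, it carries such a resolution to a K-flat resolution over $\mathcal{R}(\mathcal{D}_X^{(0)})$, and the levelwise identification $\mathcal{R}(\mathcal{N} \otimes_{\mathcal{D}_X^{(0)}} \mathcal{P}) \cong \mathcal{R}(\mathcal{N}) \otimes_{\mathcal{R}(\mathcal{D}_X^{(0)})} \mathcal{R}(\mathcal{P})$ (for filtered bimodules whose filtrations are compatible) upgrades to the derived statement:
\[
\mathcal{R}\bigl(\mathcal{D}_{Y\leftarrow X}^{(0)}\otimes^{L}_{\varphi^{-1}(\mathcal{D}_X^{(0)})}\varphi^{-1}\mathcal{M}^{\cdot}\bigr)\;\simeq\;\mathcal{R}(\mathcal{D}_{Y\leftarrow X}^{(0)})\otimes^{L}_{\varphi^{-1}(\mathcal{R}(\mathcal{D}_X^{(0)}))}\varphi^{-1}\mathcal{R}(\mathcal{M}^{\cdot}).
\]

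\textbf{Second I would handle the direct image.} Viewed as a sheaf of abelian groups, $\mathcal{R}(\mathcal{N}) = \bigoplus_{i \in \mathbb{Z}} F_i\mathcal{N}$, and the Rees functor commutes with formation of injective (or flasque) resolutions grade-by-grade. Since $X$ is Noetherian and $\varphi$ is quasi-compact quasi-separated, $R\varphi_{*}$ commutes with arbitrary direct sums of quasicoherent sheaves, so there is a canonical isomorphism $\mathcal{R}(R\varphi_{*}\mathcal{N}^{\cdot}) \simeq R\varphi_{*}\mathcal{R}(\mathcal{N}^{\cdot})$ compatible with the $\mathcal{R}(\mathcal{D}_Y^{(0)})$-action. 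Combining this with the tensor-product compatibility yields the first asserted isomorphism.

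\textbf{For the ``in particular'' clause} I would invoke the general principle, stated just after the Schapira--Schneiders theorem earlier in the paper, that a bounded filtered complex is strict (equivalently, its associated spectral sequence degenerates at $E_1$) precisely when the cohomology sheaves of its Rees module are $f$-torsion-free. Applying this to ${\displaystyle \int_{\varphi}\mathcal{M}^{\cdot}}$ (whose Rees is, by the first part, ${\displaystyle \int_{\varphi}\mathcal{R}(\mathcal{M}^{\cdot})}$) gives the equivalence between Hodge-to-de Rham degeneration and $f$-torsion-freeness. \textbf{The main obstacle} is the tensor product step: one must ensure that a resolution $\mathcal{M}^{\cdot} \to \mathcal{F}^{\cdot}$ chosen to compute the filtered derived tensor product also computes the graded derived tensor product after applying $\mathcal{R}$. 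This is essentially the assertion that $\mathcal{R}$ sends filtered K-flat objects (in Laumon's sense) to graded K-flat $\mathcal{R}(\mathcal{D}_X^{(0)})$-modules, which should be checked either by exhibiting such resolutions explicitly from filtered free modules or by a spectral sequence argument reducing to the case of a single filtered free module, where the compatibility is immediate.
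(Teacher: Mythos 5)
Your proposal is correct and is exactly the intended argument: the paper offers no proof of this lemma, declaring it an ``evident compatibility'' precisely because the graded pushforward on $D(\mathcal{G}(\mathcal{R}(\mathcal{D}_X^{(0)})))$ is \emph{defined} by applying the Rees construction to Laumon's filtered transfer bimodule, so the isomorphism amounts to the unwinding via the Schapira--Schneiders equivalence that you carry out. Your elaboration of the two intertwining steps (tensor with the transfer bimodule, and $R\varphi_*$ graded piece by graded piece) and your appeal to the strictness/$f$-torsion-freeness criterion for the ``in particular'' clause are all sound.
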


Next, we relate this to the pull-back and push-forward for $\mathcal{R}(\mathcal{D}_{X}^{(1)})$
modules; starting with the analogous statement for pull-back: 
\begin{lem}
\label{lem:Hodge-Filtered-Pull}Let $\varphi:X\to Y$ and suppose
$\mathcal{M}^{\cdot}\in D(\mathcal{G}(\mathcal{R}(\mathcal{D}_{X}^{(0)})))$.
Then $L\varphi^{*}\circ F_{Y}^{*}\mathcal{M}^{\cdot}\tilde{=}F_{X}^{*}\circ L\varphi^{*}\mathcal{M}^{\cdot}$.
Here, the pullback on the left is in the category of $\mathcal{R}(\mathcal{D}^{(1)})$-modules,
while the pullback on the right is in the category of $\mathcal{R}(\mathcal{D}^{(0)})$-modules. 
\end{lem}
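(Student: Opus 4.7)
The plan is to identify both functors as tensor products with transfer bimodules and then construct a natural isomorphism between those bimodules, which is the Rees/filtered analog of Berthelot's classical compatibility of Frobenius descent with $\mathcal{D}$-module inverse image (used, e.g., in the proof of \propref{Basic-base-change-for-pullback}, part 1).

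First, I would unpack both sides as iterated bimodule tensor products:
\begin{align*}
L\varphi^{*,(1)}\, F_Y^*(\mathcal{M}^{\cdot}) &\tilde{=} \mathcal{R}_{X\to Y}^{(1)} \otimes^L_{\varphi^{-1}\mathcal{R}(\mathcal{D}_Y^{(1)})} \varphi^{-1}\bigl(F^*\mathcal{R}(\mathcal{D}_Y^{(0)})\bigr) \otimes^L_{\varphi^{-1}\mathcal{R}(\mathcal{D}_Y^{(0)})} \varphi^{-1}\mathcal{M}^{\cdot}, \\
F_X^*\, L\varphi^{*,(0)}(\mathcal{M}^{\cdot}) &\tilde{=} F^*\mathcal{R}(\mathcal{D}_X^{(0)}) \otimes^L_{\mathcal{R}(\mathcal{D}_X^{(0)})} \mathcal{R}_{X\to Y}^{(0)} \otimes^L_{\varphi^{-1}\mathcal{R}(\mathcal{D}_Y^{(0)})} \varphi^{-1}\mathcal{M}^{\cdot}.
\end{align*}
Thus it suffices to produce a canonical isomorphism of graded $(\mathcal{R}(\mathcal{D}_X^{(1)}), \varphi^{-1}\mathcal{R}(\mathcal{D}_Y^{(0)}))$-bimodules
\[
\alpha: F^*\mathcal{R}(\mathcal{D}_X^{(0)}) \otimes^L_{\mathcal{R}(\mathcal{D}_X^{(0)})} \mathcal{R}_{X\to Y}^{(0)} \tilde{\to} \mathcal{R}_{X\to Y}^{(1)} \otimes^L_{\varphi^{-1}\mathcal{R}(\mathcal{D}_Y^{(1)})} \varphi^{-1}\bigl(F^*\mathcal{R}(\mathcal{D}_Y^{(0)})\bigr).
\]

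Next, I would verify that both tensor products are underived and have the same underlying graded $\mathcal{O}_X[f]$-module. By \thmref{Filtered-Frobenius} (and \propref{Basic-F^*-over-k}), $F^*\mathcal{R}(\mathcal{D}_X^{(0)}) \tilde{=} \mathcal{R}(F^*\mathcal{D}_X^{(0)})$ is locally free as a right $\mathcal{R}(\mathcal{D}_X^{(0)})$-module, so the left-hand tensor product equals the underived $\varphi^*(F^*\mathcal{R}(\mathcal{D}_Y^{(0)}))$ as an $\mathcal{O}_X[f]$-module. A parallel local-coordinate analysis of $\mathcal{R}_{X\to Y}^{(1)}$ together with the Rees version of \lemref{phi-pullback-of-D^i} shows the right-hand side is also underived and equal, as an $\mathcal{O}_X[f]$-module, to $\varphi^*(F^*\mathcal{R}(\mathcal{D}_Y^{(0)}))$.

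Finally, I would construct $\alpha$ and verify that the two induced bimodule structures agree. The unfiltered statement
\[
F^*\mathcal{D}_X^{(0)} \otimes_{\mathcal{D}_X^{(0)}} \mathcal{D}_{X\to Y}^{(0)} \tilde{\to} \mathcal{D}_{X\to Y}^{(1)} \otimes_{\varphi^{-1}\mathcal{D}_Y^{(1)}} \varphi^{-1}(F^*\mathcal{D}_Y^{(0)})
\]
is Berthelot's classical commutation (cf.\ \cite{key-2}, \S2.5, and \remref{Compare-With-Berthelot}), and is exactly what is used to set up $F^*$ on transfer bimodules. Promoting this isomorphism to one of graded Rees bimodules amounts to showing strict compatibility with the Hodge/symbol filtrations on all three factors. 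Locally one chooses etale coordinates on $Y$ and lifts Frobenius (using that $k$ is perfect, this is possible after an etale base change), reducing the strictness check to the identifications $F^*F_i(\mathcal{D}_X^{(0)}) = F^i(\mathcal{D}_X^{(1)}/\mathcal{D}_X^{(1)}\cdot\mathcal{J})$ established in \propref{Basic-F^*-over-k} and the filtered version of Frobenius descent in \thmref{Filtered-Frobenius}. The main technical obstacle is the bookkeeping of filtrations and grading shifts in this local comparison, but no new geometric input beyond \thmref{Filtered-Frobenius} is required: once the filtered Frobenius descent has been matched on each factor the commutation is formal.
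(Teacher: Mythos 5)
Your argument is correct, but it is organized differently from the paper's. You reduce everything to an isomorphism of graded transfer bimodules, outsourcing the unfiltered commutation $F^*\circ\varphi^*\tilde{=}\varphi^*\circ F^*$ to Berthelot and then upgrading it by checking strictness of the Hodge/symbol filtrations, using that all three factors are locally free filtered modules and that $F^*F^i(\mathcal{D}_X^{(0)})=F^i(\mathcal{D}_X^{(1)}/\mathcal{D}_X^{(1)}\cdot\mathcal{J})$ from \propref{Basic-F^*-over-k}. The paper instead proves the unfiltered and filtered statements in one stroke: it notes that $\varphi\circ F_X=F_Y\circ\varphi$ already gives the isomorphism of graded $\mathcal{O}_X$-modules, reduces to a single module via a K-flat resolution, and then identifies the two $\mathcal{R}(\mathcal{D}_X^{(1)})$-actions by Cartier descent — the flat sections of the $p$-curvature-zero connection on $\varphi^*F_Y^*\mathcal{M}$ are $(\varphi^{(1)})^*\mathcal{M}^{(1)}$, and the explicit coordinate formula for the action of $\partial_i^{[p]}$ from \propref{pull-back-in-pos-char} shows it restricts there to the pullback of the $\mathcal{R}(\mathcal{D}_{Y^{(1)}}^{(0)})$-action. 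Your route buys a cleaner functorial statement (a bimodule isomorphism, reusable for other compatibilities) at the cost of invoking Berthelot's unfiltered theorem as a black box; the paper's route is self-contained and makes the key coordinate computation explicit. One caution: your closing claim that "the commutation is formal" once the filtrations are matched slightly understates the remaining work — matching the filtered pieces as $\mathcal{O}_X[f]$-submodules does not by itself match the two left $\mathcal{R}(\mathcal{D}_X^{(1)})$-actions, and the verification that $\partial_i^{[p]}\cdot(1\otimes 1)$ agrees on both sides is exactly the local computation the paper carries out; you should make that check explicit rather than absorb it into "bookkeeping." Also, over $k$ no lift of Frobenius is needed — the absolute Frobenius is already present; what you want locally is coordinates with $F(t_i)=t_i^p$ on the Frobenius twist, not an étale base change.
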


\begin{proof}
Since $\varphi\circ F_{X}=F_{Y}\circ\varphi$ we have 
\[
L\varphi^{*}\circ F_{Y}^{*}\mathcal{M}^{\cdot}\tilde{\to}F_{X}^{*}\circ L\varphi^{*}\mathcal{M}^{\cdot}
\]
as (graded) $\mathcal{O}_{X}$-modules; we need to check that this
map preserves the $\mathcal{R}(\mathcal{D}_{X}^{(1)})$-module structures
on both sides.This question is local, so we may suppose $X=\text{Spec}(B)$
and $Y=\text{Spec}(A)$ both posses local coordinates. Further, by
taking a K-flat resolution of $\mathcal{M}^{\cdot}$ we may suppose
that $\mathcal{M}^{\cdot}=\mathcal{M}$ is concentrated in a single
degree. Now, as an $\mathcal{R}(\mathcal{D}_{X}^{(1)})$-module, $F_{Y}^{*}\mathcal{M}$
possesses the structure of a connection with $p$-curvature $0$,
and so the induced connection on $\varphi^{*}F_{Y}^{*}\mathcal{M}$
also has $p$-curvature $0$; and the kernel of this connection is
equal to $(\varphi^{(1)})^{*}\mathcal{M}^{(1)}\subset\varphi^{*}F_{Y}^{*}\mathcal{M}$
(here $\mathcal{M}^{(1)}$ denotes $\sigma^{*}\mathcal{M}$ where
$\sigma:X^{(1)}\to X$ is the natural isomorphism of schemes). Note
that $\mathcal{M}^{(1)}$ possesses the action of $\mathcal{R}(\mathcal{D}_{Y^{(1)}}^{(0)})$
(c.f. \remref{The-inverse-to-F^*}). 

Let $\{\partial_{i}\}_{i=1}^{n}$ be coordinate derivations on $X$.
Then the action of $\partial_{i}^{[p]}$ on $\varphi^{*}F_{Y}^{*}\mathcal{M}$
is given (by \propref{pull-back-in-pos-char}) by first restricting
$\partial_{i}^{[p]}$ to a differential operator $\varphi^{-1}(\mathcal{O}_{Y})\to\mathcal{O}_{X}$,
writing the resulting operator as 
\[
\sum_{j=1}^{r}b_{j}^{p}\partial_{j}^{[p]}+\sum_{J}b_{J}\partial^{J}
\]
(where $\{\partial_{j}\}_{j=1}^{r}$ are coordinate derivations on
$Y$, and $b_{j},b_{J}\in B$) and then letting $\partial_{i}^{[p]}$
act as 
\[
\sum_{j=1}^{r}b_{j}^{p}\partial_{j}^{[p]}+\sum_{J}b_{J}\partial^{J}
\]
therefore, the action of $\partial_{i}^{[p]}$ preserves $(\varphi^{(1)})^{*}\mathcal{M}^{(1)}$
and it acts there as ${\displaystyle \partial_{i}^{[p]}(1\otimes m)=\sum_{j=1}^{r}b_{j}^{p}\cdot\partial_{j}^{[p]}(m)}$.
But the action of $\{\partial_{j}^{[p]}\}$ on $\mathcal{M}^{(1)}$
defines the action of $\mathcal{R}(\mathcal{D}_{X^{(1)}}^{(0)})$
on $\mathcal{M}^{(1)}$, and this formula simply defines the pullback
from $\mathcal{R}(\mathcal{D}_{Y^{(1)}}^{(0)})$ to $\mathcal{R}(\mathcal{D}_{X^{(1)}}^{(0)})$-modules;
in other words, $(\varphi^{(1)})^{*}\mathcal{M}^{(1)}=((\varphi)^{*}\mathcal{M})^{(1)}$
where $\varphi^{*}\mathcal{M}$ is the usual pullback of $\mathcal{R}(\mathcal{D}^{(0)})$-modules.
Thus we see that $\varphi^{*}F_{Y}^{*}\mathcal{M}=F_{X}^{*}((\varphi)^{*}\mathcal{M})$
as $\mathcal{R}(\mathcal{D}_{X}^{(1)})$-modules, as desired. 
\end{proof}
Now we discuss push-forward:
\begin{thm}
\label{thm:Hodge-Filtered-Push}Let $\mathcal{M}^{\cdot}$ be a complex
of graded $\mathcal{R}(\mathcal{D}_{X}^{(1)})$-modules, and via \thmref{Filtered-Frobenius}
write $\mathcal{M}^{\cdot}\tilde{=}F_{X}^{*}\mathcal{N}^{\cdot}$,
where $\mathcal{N}^{\cdot}$ is a complex of graded $\mathcal{R}(\mathcal{D}_{X}^{(0)})$-modules.
There is an isomorphism
\[
\int_{\varphi,1}\mathcal{M}^{\cdot}\tilde{\to}F_{X}^{*}\int_{\varphi}\mathcal{N}^{\cdot}
\]
where ${\displaystyle \int_{\varphi}\mathcal{N}}^{\cdot}$ is the
pushforward of $\mathcal{N}^{\cdot}$ over $\mathcal{R}(\mathcal{D}_{X}^{(0)})$. 
\end{thm}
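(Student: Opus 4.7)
The plan is to establish the theorem by reducing it, via the Morita equivalence for Frobenius descent (\thmref{Filtered-Frobenius}) together with the projection formula, to a natural isomorphism of transfer bimodules which encodes the compatibility of Berthelot's Frobenius pullback with the filtered pushforward. First I would unwind the definitions: writing $F^*\mathcal{N}^{\cdot}=F^{*}\mathcal{R}(\mathcal{D}_{X}^{(0)})\otimes^{L}_{\mathcal{R}(\mathcal{D}_{X}^{(0)})}\mathcal{N}^{\cdot}$, the left-hand side becomes
\[
R\varphi_{*}\bigl(\mathcal{R}_{Y\leftarrow X}^{(1)}\otimes_{\mathcal{R}(\mathcal{D}_{X}^{(1)})}^{L}F^{*}\mathcal{R}(\mathcal{D}_{X}^{(0)})\otimes_{\mathcal{R}(\mathcal{D}_{X}^{(0)})}^{L}\mathcal{N}^{\cdot}\bigr),
\]
while by the filtered analogue of \lemref{baby-projection-1} (which applies in $\mathcal{G}(\mathcal{R}(\mathcal{D}^{(\cdot)}))$ with the same argument), the right-hand side becomes
\[
R\varphi_{*}\bigl(\varphi^{-1}F^{*}\mathcal{R}(\mathcal{D}_{Y}^{(0)})\otimes_{\varphi^{-1}\mathcal{R}(\mathcal{D}_{Y}^{(0)})}^{L}\mathcal{R}_{Y\leftarrow X}^{(0)}\otimes_{\mathcal{R}(\mathcal{D}_{X}^{(0)})}^{L}\mathcal{N}^{\cdot}\bigr).
\]
The theorem therefore reduces to producing a natural isomorphism of graded $(\varphi^{-1}\mathcal{R}(\mathcal{D}_{Y}^{(1)}),\mathcal{R}(\mathcal{D}_{X}^{(0)}))$-bimodules
\begin{equation}
\mathcal{R}_{Y\leftarrow X}^{(1)}\otimes_{\mathcal{R}(\mathcal{D}_{X}^{(1)})}^{L}F^{*}\mathcal{R}(\mathcal{D}_{X}^{(0)})\;\cong\;\varphi^{-1}F^{*}\mathcal{R}(\mathcal{D}_{Y}^{(0)})\otimes_{\varphi^{-1}\mathcal{R}(\mathcal{D}_{Y}^{(0)})}^{L}\mathcal{R}_{Y\leftarrow X}^{(0)}. \tag{$\ast$}
\end{equation}

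Next, I would establish $(\ast)$ by upgrading Berthelot's unfiltered compatibility between Frobenius pullback and $\mathcal{D}$-module pushforward (\cite{key-2}, \S3.4) to the Rees setting. The unfiltered version provides a canonical isomorphism $\mathcal{D}_{Y\leftarrow X}^{(1)}\otimes_{\mathcal{D}_{X}^{(1)}}^{L}F^{*}\mathcal{D}_{X}^{(0)}\cong\varphi^{-1}F^{*}\mathcal{D}_{Y}^{(0)}\otimes_{\varphi^{-1}\mathcal{D}_{Y}^{(0)}}^{L}\mathcal{D}_{Y\leftarrow X}^{(0)}$; the right-handed form $F^{!}\mathcal{R}(\mathcal{D}_{Y}^{(0)})$ from \corref{Filtered-right-Frob} plus the identity $F^{*}F^{!}\mathcal{R}(\mathcal{D}_{Y}^{(0)})\cong\mathcal{R}(\mathcal{D}_{Y}^{(1)})$ provides the necessary bookkeeping. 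To turn Berthelot's iso into $(\ast)$, I would verify that both sides of the unfiltered isomorphism naturally carry Hodge (resp. symbol) filtrations induced from the gradings on $\mathcal{R}(\mathcal{D}^{(1)})$ (resp. $\mathcal{R}(\mathcal{D}^{(0)})$), and that Berthelot's isomorphism is \emph{strict} with respect to these filtrations. This strictness is a local calculation: choosing a lift $F$ of Frobenius on a coordinate chart with $F(t_{i})=t_{i}^{p}$, one has the explicit generators from \corref{Local-coords-over-A=00005Bf,v=00005D}, and the isomorphism sends $\partial_{i}^{[p]}\otimes 1\mapsto 1\otimes\partial_{i}$ (modulo lower-order terms), which is exactly the map underlying \propref{Basic-F^*-over-k}; tracking the filtration degrees shows it is bifiltered and strict. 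Applying the Rees functor to the strict filtered isomorphism then yields $(\ast)$.

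Finally, once $(\ast)$ is in hand, applying $R\varphi_{*}$ and tensoring with $\mathcal{N}^{\cdot}$ gives the desired isomorphism $\int_{\varphi,1}\mathcal{M}^{\cdot}\cong F^{*}\int_{\varphi}\mathcal{N}^{\cdot}$ (one checks the canonicity/independence of the Frobenius lift via the standard descent argument, exactly as for the unfiltered Berthelot theorem, since both sides of $(\ast)$ are defined intrinsically via $F^{*}$). The main obstacle is the strictness verification: one must confirm that Berthelot's compatibility is strictly filtered and not merely filtered (i.e., that no hidden $f$-torsion is introduced when passing to the associated graded). The safest route is the local coordinate computation sketched above, using the basis of \corref{Local-coords-over-A=00005Bf,v=00005D}, after which descent along a Zariski cover (where lifts of Frobenius exist) globalizes the result; this is essentially the filtered analogue of the descent argument Berthelot uses in \cite{key-2} and requires only that the transition isomorphisms between two different lifts of $F$ preserve the Hodge filtration, which in turn follows from the explicit formulas of \lemref{Basic-structure-of-D_A^(i)}.
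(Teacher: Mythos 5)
Your reduction of the theorem to the bimodule isomorphism $(\ast)$ is correct, and the strategy goes through, but the route you take to $(\ast)$ is genuinely different from the paper's. The paper passes to right modules: by left--right interchange it suffices to prove $\int_{\varphi,1}F_{X}^{!}\mathcal{N}^{\cdot}\tilde{\to}F_{Y}^{!}\int_{\varphi}\mathcal{N}^{\cdot}$, and the right-handed analogue of your $(\ast)$, namely $F_{X}^{!}(\mathcal{R}(\mathcal{D}_{X}^{(0)}))\otimes_{\mathcal{R}(\mathcal{D}_{X}^{(1)})}^{L}\mathcal{R}_{X\to Y}^{(1)}\tilde{=}\varphi^{*}F_{Y}^{!}\mathcal{R}(\mathcal{D}_{Y}^{(0)})$, is obtained purely formally from the chain $\mathcal{R}_{X\to Y}^{(1)}=\varphi^{*}\mathcal{R}(\mathcal{D}_{Y}^{(1)})\tilde{=}\varphi^{*}F_{Y}^{*}F_{Y}^{!}\mathcal{R}(\mathcal{D}_{Y}^{(0)})\tilde{=}F_{X}^{*}\varphi^{*}F_{Y}^{!}\mathcal{R}(\mathcal{D}_{Y}^{(0)})$ together with $F_{X}^{!}F_{X}^{*}\tilde{=}\mathrm{id}$ (\corref{Filtered-right-Frob}); the projection formula of \lemref{proj-over-D} then finishes exactly as in your last step. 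The ingredient that replaces your ``strictness of Berthelot's isomorphism'' is \lemref{Hodge-Filtered-Pull}, which says precisely that the filtered Frobenius pullback commutes with $L\varphi^{*}$, and whose proof is the one genuine computation (done via $p$-curvature and horizontal sections rather than via the coordinate bases you propose). So the content you plan to verify by hand is already available in coordinate-free form: the bifiltered strictness of $F^{*}\mathcal{D}_{X}^{(0)}$ is \propref{Basic-F^*-over-k}/\thmref{Filtered-Frobenius}, and the interaction with the transfer bimodules is \lemref{Hodge-Filtered-Pull}. Your direct verification would work --- both occurrences of $F^{*}\mathcal{R}(\mathcal{D}^{(0)})$ in $(\ast)$ are locally projective over the side being tensored, so the derived tensor products collapse and one really is comparing underlying bifiltered modules --- but it duplicates work, and the Koszul-type signs and lower-order terms in the local formulas make the by-hand strictness check more delicate than your sketch suggests. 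One further simplification: everything in this theorem lives over $k$, where the relative Frobenius $X\to X^{(1)}$ is canonical, so there is no lift of Frobenius to choose and the descent/independence-of-lift step you flag at the end is not needed; only independence of the choice of local coordinates must be checked, and the objects appearing in $(\ast)$ are already defined globally.
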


\begin{proof}
(following \cite{key-2}, theoreme 3.4.4) By left-right interchange
it is equivalent to prove the right-handed version
\[
\int_{\varphi,1}F_{X}^{!}\mathcal{N}^{\cdot}\tilde{\to}F_{Y}^{!}\int_{\varphi}\mathcal{N}^{\cdot}
\]
for any $\mathcal{N}^{\cdot}\in D(\mathcal{G}(\mathcal{R}(\mathcal{D}_{X}^{(0)})^{\text{opp}}))$. 

We have 
\[
\int_{\varphi,1}F_{X}^{!}(\mathcal{N}^{\cdot})=R\varphi_{*}(F_{X}^{!}(\mathcal{N}^{\cdot})\otimes_{\mathcal{R}(\mathcal{D}_{X}^{(1)})}^{L}\mathcal{R}_{X\to Y}^{(1)})=R\varphi_{*}(\mathcal{N}^{\cdot}\otimes_{\mathcal{R}(\mathcal{D}_{X}^{(0)})}^{L}F_{X}^{!}(\mathcal{R}(\mathcal{D}_{X}^{(0)}))\otimes_{\mathcal{R}(\mathcal{D}_{X}^{(1)})}^{L}\mathcal{R}_{X\to Y}^{(1)})
\]
Now, recall 
\[
\mathcal{R}_{X\to Y}^{(1)}=\varphi^{*}\mathcal{R}(\mathcal{D}_{X}^{(1)})\tilde{=}\varphi^{*}F_{Y}^{*}F_{Y}^{!}\mathcal{R}(\mathcal{D}_{Y}^{(1)})\tilde{=}F_{X}^{*}\varphi^{*}F_{Y}^{!}\mathcal{R}(\mathcal{D}_{Y}^{(1)})
\]
where the second isomorphism is \corref{Filtered-right-Frob}, and
the third is by the lemma above; note that this isomorphism preserves
the natural right $\varphi^{-1}(\mathcal{R}(\mathcal{D}_{Y}^{(1)}))$
-module structures on both sides. It follows (c.f. \propref{F^*F^!},
part $2)$, and \corref{Filtered-right-Frob}) that 
\[
F_{X}^{!}(\mathcal{R}(\mathcal{D}_{X}^{(0)}))\otimes_{\mathcal{R}(\mathcal{D}_{X}^{(1)})}^{L}\mathcal{R}_{X\to Y}^{(1)}\tilde{=}\varphi^{*}F_{Y}^{!}\mathcal{R}(\mathcal{D}_{Y}^{(0)})
\]
(as $(\mathcal{R}(\mathcal{D}_{X}^{(1)}),\varphi^{-1}(\mathcal{R}(\mathcal{D}_{Y}^{(1)}))$
bimodules). Therefore 
\[
R\varphi_{*}(\mathcal{N}^{\cdot}\otimes_{\mathcal{R}(\mathcal{D}_{X}^{(0)})}^{L}F^{!}(\mathcal{R}(\mathcal{D}_{X}^{(0)}))\otimes_{\mathcal{R}(\mathcal{D}_{X}^{(1)})}^{L}\mathcal{R}_{X\to Y}^{(1)})\tilde{=}R\varphi_{*}(\mathcal{N}^{\cdot}\otimes_{\mathcal{R}(\mathcal{D}_{X}^{(0)})}^{L}\varphi^{*}F_{Y}^{!}\mathcal{R}(\mathcal{D}_{Y}^{(0)}))
\]
\[
\tilde{=}\int_{\varphi}\mathcal{N}^{\cdot}\otimes_{\mathcal{R}(\mathcal{D}_{Y}^{(0)})}^{L}F_{Y}^{!}\mathcal{R}(\mathcal{D}_{Y}^{(0)}))
\]
where the last line is \lemref{proj-over-D}. However, 
\[
\int_{\varphi}\mathcal{N}^{\cdot}\otimes_{\mathcal{R}(\mathcal{D}_{Y}^{(0)})}^{L}F_{Y}^{!}\mathcal{R}(\mathcal{D}_{Y}^{(0)}))=F_{Y}^{!}\int_{\varphi}\mathcal{N}^{\cdot}
\]
whence the result. 
\end{proof}
To exploit this result, we recall that the formalism of de Rham cohomology
applies to $\mathcal{R}(\mathcal{D}_{X}^{(0)})$: 
\begin{prop}
Let $\varphi:X\to Y$ be smooth of relative dimension $d$. Then the
induced connection $\nabla:\mathcal{D}_{X}^{(0)}\to\mathcal{D}_{X}^{(0)}\otimes\Omega_{X/Y}^{1}(1)$
is a morphism of filtered right $\mathcal{D}_{X}^{(0)}$-modules (with
respect to the symbol filtration; the symbol $(1)$ denotes a shift
in the filtration). The associated de Rham complex 
\[
\mathcal{D}_{X}^{(0)}\to\mathcal{D}_{X}^{(0)}\otimes\Omega_{X/Y}^{1}(1)\to\mathcal{D}_{X}^{(0)}\otimes\Omega_{X/Y}^{2}(2)\to\dots\to\mathcal{D}_{X}^{(0)}\otimes\Omega_{X/Y}^{d}(d)
\]
is exact except at the right most term, where the cokernel is $\mathcal{D}_{Y\leftarrow X}^{(0)}(d)$
(as a filtered module).

After applying the left-right swap and a shift in the filtration,
we obtain the Spencer complex
\[
\mathcal{D}_{X}^{(0)}\otimes\mathcal{T}_{X/Y}^{d}(-d)\to\mathcal{D}_{X}^{(0)}\otimes\mathcal{T}_{X/Y}^{d-1}(-d+1)\to\dots\to\mathcal{D}_{X}^{(0)}\otimes\mathcal{T}_{X/Y}(-1)\to\mathcal{D}_{X}^{(0)}
\]
of left filtered $\mathcal{D}_{X}^{(0)}$-modules, which is exact
except at the right most term, and the cokernel is $\mathcal{D}_{X\to Y}^{(0)}$
(as a filtered module). Applying the Rees functor yields a complex
\[
\mathcal{R}(\mathcal{D}_{X}^{(0)})\otimes\mathcal{T}_{X/Y}^{d}(-d)\to\mathcal{R}(\mathcal{D}_{X}^{(0)})\otimes\mathcal{T}_{X/Y}^{d-1}(-d+1)\to\dots\to\mathcal{R}(\mathcal{D}_{X}^{(0)})\otimes\mathcal{T}_{X/Y}(-1)\to\mathcal{R}(\mathcal{D}_{X}^{(0)})
\]
which is exact except at the right most term, and the cokernel is
$\mathcal{R}(\mathcal{D}_{X\to Y}^{(0)})$. 
\end{prop}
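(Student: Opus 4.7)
The plan is to reduce everything to a local Koszul-type calculation. Working locally we may assume $X = \operatorname{Spec}(B)$ and $Y = \operatorname{Spec}(A)$, and, after further localization, that there are \'etale coordinates $\{y_1,\dots,y_m\}$ on $Y$ and $\{y_1,\dots,y_m, x_1,\dots,x_d\}$ on $X$ (where I identify $y_i$ with its pullback to $B$), with corresponding derivations $\{\partial_{y_i}\}$ on $Y$ and $\{\partial_{y_i},\partial_{x_j}\}$ on $X$; the relative tangent sheaf $\mathcal{T}_{X/Y}$ is then free on $\{\partial_{x_1},\dots,\partial_{x_d}\}$. The first task is to check that the connection $\nabla:\mathcal{D}_X^{(0)}\to \mathcal{D}_X^{(0)}\otimes\Omega^1_{X/Y}$, given in these coordinates by $P\mapsto \sum_j P\partial_{x_j}\otimes dx_j$, is strict with respect to the symbol filtration after the shift $(1)$. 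This is immediate from the fact that multiplication on the right by $\partial_{x_j}$ raises the symbol filtration by exactly one, and the Leibniz computations for higher differentials $d:\mathcal{D}_X^{(0)}\otimes\Omega^k_{X/Y}(k)\to \mathcal{D}_X^{(0)}\otimes\Omega^{k+1}_{X/Y}(k+1)$ work the same way.

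Next I would verify exactness of the filtered complex. Because all the maps are strict, it suffices to show that the associated graded complex
\[
\operatorname{gr}(\mathcal{D}_X^{(0)})\otimes\Omega^\bullet_{X/Y}
\]
is a resolution of the appropriate graded quotient. Using the isomorphism $\operatorname{gr}(\mathcal{D}_X^{(0)})\tilde\to \operatorname{Sym}_{\mathcal{O}_X}(\mathcal{T}_X)$, this complex becomes the Koszul complex on the symbols $\{\sigma(\partial_{x_1}),\dots,\sigma(\partial_{x_d})\}$, which are a regular sequence in $\operatorname{Sym}_{\mathcal{O}_X}(\mathcal{T}_X)$ generating the ideal that cuts out $\operatorname{Sym}_{\mathcal{O}_X}(\varphi^{-1}\mathcal{T}_Y\otimes_{\varphi^{-1}\mathcal{O}_Y}\mathcal{O}_X)$. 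Hence the associated graded complex is exact except at the rightmost place, where its cohomology is $\operatorname{Sym}_{\mathcal{O}_X}(\varphi^{-1}\mathcal{T}_Y\otimes_{\varphi^{-1}\mathcal{O}_Y}\mathcal{O}_X)(d)$. Lifting this back, strictness gives exactness of the filtered complex except at the right, and identifies the cokernel as a filtered right $\mathcal{D}_X^{(0)}$-module whose underlying module is the usual $\mathcal{D}_{Y\leftarrow X}^{(0)}$. The main obstacle is checking that the filtration induced on this cokernel matches, up to the shift $(d)$, the Laumon filtration $F_i(\mathcal{D}_{Y\leftarrow X}^{(0)}) = \varphi^{-1}(F_i\mathcal{D}_Y^{(0)}\otimes\omega_Y^{-1})\otimes\omega_X$; this is verified in coordinates by writing elements of $\mathcal{D}_X^{(0)}$ in the normal form $\sum Q_{I}\partial_x^I$ (with $Q_I\in\varphi^{-1}\mathcal{D}_Y^{(0)}\cdot B$), projecting to the cokernel, and tracking the symbol order: the passage from $\partial_x^I P$ to the image $P \bmod \nabla(\mathcal{D}_X^{(0)}\otimes\Omega^{d-1}_{X/Y})$ eats $d$ orders of differentiation, which is precisely the shift $(d)$.

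The Spencer complex is obtained by tensoring $\omega_X^{-1}\otimes-\otimes\omega_{X/Y}$ (i.e. applying the filtered left-right interchange of Proposition \ref{prop:Left-Right-Swap} relative to $\varphi$) to the de Rham complex above. Under this operation $\Omega^k_{X/Y}$ becomes $\mathcal{T}_{X/Y}^{d-k}$ with a filtration shift of $-d$, which accounts for the change from $(k)$ to $(-k)$ after renumbering; the cokernel $\mathcal{D}_{Y\leftarrow X}^{(0)}(d)$ is converted to $\mathcal{D}_{X\to Y}^{(0)}$ as a filtered left $\mathcal{D}_X^{(0)}$-module, which matches the usual definition of the transfer bimodule.

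Finally, the Rees version is a formal consequence: strict exact sequences of filtered $\mathcal{D}_X^{(0)}$-modules correspond, via Schapira--Schneiders' theorem, to exact sequences of graded $\mathcal{R}(\mathcal{D}_X^{(0)})$-modules that are $f$-torsion free. Applying $\mathcal{R}$ termwise to the Spencer complex (and using $\mathcal{R}(\mathcal{M}(i)) = \mathcal{R}(\mathcal{M})(i)$) therefore yields the stated complex of graded $\mathcal{R}(\mathcal{D}_X^{(0)})$-modules, exact except at the right with cokernel $\mathcal{R}(\mathcal{D}_{X\to Y}^{(0)})$, as required.
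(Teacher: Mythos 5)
Your proof is correct and is essentially the paper's own argument: the paper simply cites the characteristic-zero case (\cite{key-4}, proposition 4.2) and remarks that the associated graded is a Koszul resolution, which is exactly the strictness-plus-Koszul computation you carry out, followed by the left-right swap and the Rees functor. (One small convention point: for $\nabla$ to be a morphism of \emph{right} $\mathcal{D}_{X}^{(0)}$-modules it should be $P\mapsto\sum_{j}\partial_{x_{j}}P\otimes dx_{j}$ rather than $P\mapsto\sum_{j}P\partial_{x_{j}}\otimes dx_{j}$; this does not affect the associated-graded argument or the identification of the cokernel.)
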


The proof of this is identical to that of the corresponding result
in characteristic zero (\cite{key-4}, proposition 4.2); one notes
that the associated graded is a Koszul resolution. Applying this resolution
in the definition of the filtered push-forward, one deduces 
\begin{cor}
Let $\varphi:X\to Y$ be smooth of relative dimension $d$. Let $\mathcal{M}$
be a filtered $\mathcal{D}_{X}^{(0)}$-module (with respect to the
symbol filtration). Then there is an isomorphism 
\[
\int_{\varphi}\mathcal{M}[-d]\tilde{=}R\varphi_{*}(\mathcal{M}(-d)\xrightarrow{\nabla}\mathcal{M}\otimes\Omega_{X/Y}^{1}(1-d)\xrightarrow{\nabla}\mathcal{M}\otimes\Omega_{X/Y}^{2}(2)\xrightarrow{\nabla}\dots\xrightarrow{\nabla}\mathcal{M}\otimes\Omega_{X/Y}^{d})
\]
in the filtered derived category of $\mathcal{O}_{Y}$-modules. 
\end{cor}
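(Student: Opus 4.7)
The plan is to compute $\int_\varphi \mathcal{M}$ directly from its definition by resolving the transfer bimodule $\mathcal{D}_{Y\leftarrow X}^{(0)}$ using the Spencer/de Rham resolution provided by the preceding proposition. After reducing to the Rees setting, this should yield the displayed formula up to a careful bookkeeping of shifts in filtration and cohomological degree.

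First I would recall that $\int_\varphi \mathcal{M} = R\varphi_*(\mathcal{D}_{Y\leftarrow X}^{(0)} \otimes^L_{\mathcal{D}_X^{(0)}} \mathcal{M})$ computed in the filtered derived category, which (via the equivalence $\mathcal{R}$ of Schapira--Schneiders applied earlier) amounts to computing $\mathcal{R}(\mathcal{D}_{Y\leftarrow X}^{(0)}) \otimes^L_{\mathcal{R}(\mathcal{D}_X^{(0)})} \mathcal{R}(\mathcal{M})$ and pushing forward. The preceding proposition furnishes a resolution of $\mathcal{R}(\mathcal{D}_{Y\leftarrow X}^{(0)})(d)$ (placed in cohomological degree $d$) by the complex
\[
F^\bullet = \bigl[\mathcal{R}(\mathcal{D}_X^{(0)}) \xrightarrow{\nabla} \mathcal{R}(\mathcal{D}_X^{(0)})\otimes\Omega_{X/Y}^{1}(1) \xrightarrow{\nabla} \cdots \xrightarrow{\nabla} \mathcal{R}(\mathcal{D}_X^{(0)})\otimes\Omega_{X/Y}^{d}(d)\bigr].
\]
Each term of $F^\bullet$ is locally free as a graded right $\mathcal{R}(\mathcal{D}_X^{(0)})$-module, because $\Omega^{i}_{X/Y}$ is locally free of finite rank over $\mathcal{O}_X$; hence $F^\bullet$ is a flat resolution and can be used to compute the derived tensor product.

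Next I would tensor $F^\bullet$ with $\mathcal{R}(\mathcal{M})$ over $\mathcal{R}(\mathcal{D}_X^{(0)})$ on the right. The right $\mathcal{R}(\mathcal{D}_X^{(0)})$-action is, in each term, just right multiplication on the $\mathcal{R}(\mathcal{D}_X^{(0)})$-factor, so the tensor product collapses to $\Omega^{i}_{X/Y}(i) \otimes_{\mathcal{O}_X} \mathcal{M}$, with differential induced by $\nabla$ via its action on the left on $\mathcal{M}$. The resulting complex is concentrated in cohomological degrees $0, 1, \dots, d$, and as $F^\bullet \simeq \mathcal{R}(\mathcal{D}_{Y\leftarrow X}^{(0)})(d)[-d]$, we obtain
\[
\mathcal{D}_{Y\leftarrow X}^{(0)} \otimes^L_{\mathcal{D}_X^{(0)}} \mathcal{M} \;\simeq\; \bigl[\mathcal{M}(-d) \xrightarrow{\nabla} \mathcal{M}\otimes\Omega_{X/Y}^{1}(1-d) \xrightarrow{\nabla} \cdots \xrightarrow{\nabla} \mathcal{M}\otimes\Omega_{X/Y}^{d}\bigr][d]
\]
in the filtered derived category of left $\mathcal{D}_X^{(0)}$-modules. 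Applying $R\varphi_*$ and shifting by $[-d]$ gives the statement.

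The only real content is the bookkeeping of the two kinds of shifts, the cohomological $[d]$ that converts the resolution (sitting in degree $d$) to a term-by-term representative, and the filtration shift $(-d)$ that must be absorbed into each $\Omega^{i}_{X/Y}$ factor to yield the filtration degrees $(i-d)$ appearing on the right-hand side. There is no genuine obstacle beyond this bookkeeping; flatness of the Spencer/de Rham terms (already used implicitly in the proof of the preceding proposition and in the Koszul argument on associated gradeds) is what makes the derived tensor product reduce to a term-wise tensor product with $\mathcal{M}$.
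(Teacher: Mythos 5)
Your proposal is correct and is essentially the paper's own argument: the paper deduces this corollary precisely by applying the Spencer/de Rham resolution of the transfer bimodule from the preceding proposition inside the definition of the filtered pushforward, using flatness of the terms $\mathcal{R}(\mathcal{D}_{X}^{(0)})\otimes\Omega_{X/Y}^{i}(i)$ as right modules to replace the derived tensor product by the term-wise tensor with $\mathcal{M}$. Your bookkeeping of the homological shift $[d]$ and the filtration twist $(-d)$ is also right (note the $(2)$ in the paper's displayed formula is a typo for $(2-d)$, consistent with your general pattern).
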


In fact, with a little more work, one can show that, for any $i$,
the $\mathcal{D}_{Y}^{(0)}$-module structure on the sheaf ${\displaystyle \mathcal{H}^{i}(\int_{\varphi}\mathcal{M})}$
is given by the Gauss-Manin connection (c.f., e.g. \cite{key-51},
proposition 1.4). Thus the push-forward for $\mathcal{R}(\mathcal{D}_{X}^{(1)})$-modules
is exactly the ``Frobenius pullback of Gauss-Manin.''

As another corollary, we have
\begin{cor}
\label{cor:sm-adunction-for-filtered-D}Let $\varphi:X\to Y$ be smooth
of relative dimension $d$. 

1) There is an isomorphism $R\underline{\mathcal{H}om}{}_{\mathcal{R}(\mathcal{D}_{X}^{(0})}(\mathcal{R}(\mathcal{D}_{X\to Y}^{(0)}),\mathcal{R}(\mathcal{D}_{X}^{(0)}))\tilde{=}\mathcal{R}(\mathcal{D}_{Y\leftarrow X}^{(0)})(d)[-d]$
as $(\varphi^{-1}(\mathcal{R}(\mathcal{D}_{Y}^{(0)})),\mathcal{R}(\mathcal{D}_{X}^{(0)}))$
bimodules.

2) There is an isomorphism of functors 
\[
R\varphi_{*}R\underline{\mathcal{H}om}_{\mathcal{R}(\mathcal{D}_{X}^{(0)})}(\varphi^{\dagger}\mathcal{N}^{\cdot},\mathcal{M}^{\cdot})\tilde{\to}R\underline{\mathcal{H}om}{}_{\mathcal{R}(\mathcal{D}_{Y}^{(0)})}(\mathcal{N}^{\cdot},\int_{\varphi}\mathcal{M}^{\cdot}(d))
\]
for any $\mathcal{N}^{\cdot}\in D(\mathcal{G}(\mathcal{R}(\mathcal{D}_{Y}^{(0)})))$
and any $\mathcal{M}^{\cdot}\in D(\mathcal{G}(\mathcal{R}(\mathcal{D}_{X}^{(0)})))$.
The analogous isomorphism holds for $\mathcal{R}(\mathcal{D}_{X}^{(1)})$-modules. 
\end{cor}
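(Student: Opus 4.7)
The plan is to reduce both parts to the Spencer resolution from the preceding proposition, combined with the two fundamental adjunction results of the preliminaries: Hom-tensor (\lemref{basic-hom-tensor}) and pushforward-Hom (part 3 of the unlabelled lemma on sheaves of graded algebras).

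For part 1), I will apply $\underline{\mathcal{H}om}_{\mathcal{R}(\mathcal{D}_X^{(0)})}(-, \mathcal{R}(\mathcal{D}_X^{(0)}))$ term-by-term to the Spencer resolution
\[
\mathcal{R}(\mathcal{D}_X^{(0)}) \otimes \mathcal{T}_{X/Y}^d(-d) \to \cdots \to \mathcal{R}(\mathcal{D}_X^{(0)}) \otimes \mathcal{T}_{X/Y}(-1) \to \mathcal{R}(\mathcal{D}_X^{(0)}) \to \mathcal{R}(\mathcal{D}_{X\to Y}^{(0)}).
\]
Each Spencer term is locally free over $\mathcal{R}(\mathcal{D}_X^{(0)})$ (because $\mathcal{T}^i_{X/Y}$ is a locally free $\mathcal{O}_X$-module of finite rank), so $R\underline{\mathcal{H}om}$ is computed by the naive termwise Hom. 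The $i$-th dualized term is $\mathcal{R}(\mathcal{D}_X^{(0)}) \otimes \Omega^i_{X/Y}(i)$ placed in cohomological degree $i$, and the resulting complex is precisely the relative de Rham complex from the first half of the preceding proposition. That part of the proposition identifies its cohomology: it is concentrated in degree $d$ and equals $\mathcal{R}(\mathcal{D}_{Y\leftarrow X}^{(0)})(d)$. Checking that the bimodule structure on both sides matches is routine — the right $\mathcal{R}(\mathcal{D}_X^{(0)})$-action on $\mathcal{R}(\mathcal{D}_{X\to Y}^{(0)})$ becomes a left action on the Hom, which corresponds to the left multiplication of $\mathcal{R}(\mathcal{D}_X^{(0)})$ on the Spencer terms; and the $\varphi^{-1}(\mathcal{R}(\mathcal{D}_Y^{(0)}))$-action survives from the transfer bimodule structure.

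For part 2), the strategy is a three-step chain. First, unfold $\varphi^{\dagger}\mathcal{N}^{\cdot} = (\mathcal{R}(\mathcal{D}_{X\to Y}^{(0)}) \otimes^L_{\varphi^{-1}(\mathcal{R}(\mathcal{D}_Y^{(0)}))} \varphi^{-1}\mathcal{N}^{\cdot})[d]$ and apply \lemref{basic-hom-tensor} to obtain
\[
R\underline{\mathcal{H}om}_{\mathcal{R}(\mathcal{D}_X^{(0)})}(\varphi^{\dagger}\mathcal{N}^{\cdot}, \mathcal{M}^{\cdot}) \;\tilde{=}\; R\underline{\mathcal{H}om}_{\varphi^{-1}(\mathcal{R}(\mathcal{D}_Y^{(0)}))}(\varphi^{-1}\mathcal{N}^{\cdot}, R\underline{\mathcal{H}om}_{\mathcal{R}(\mathcal{D}_X^{(0)})}(\mathcal{R}(\mathcal{D}_{X\to Y}^{(0)}), \mathcal{M}^{\cdot}))[-d];
\]
here the flatness hypothesis in that lemma is satisfied because $\mathcal{R}(\mathcal{D}_{X\to Y}^{(0)})$ is quasi-coherent and $\mathcal{R}(\mathcal{D}_{X\to Y}^{(0)})$ is flat over $\varphi^{-1}(\mathcal{R}(\mathcal{D}_Y^{(0)}))$ as is clear from local coordinates. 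Second, exploit the perfectness of the Spencer complex — a bounded resolution of $\mathcal{R}(\mathcal{D}_{X\to Y}^{(0)})$ by locally projective $\mathcal{R}(\mathcal{D}_X^{(0)})$-modules — to get the key base-change identity $R\underline{\mathcal{H}om}_{\mathcal{R}(\mathcal{D}_X^{(0)})}(\mathcal{R}(\mathcal{D}_{X\to Y}^{(0)}), \mathcal{M}^{\cdot}) \tilde{=} \mathcal{R}(\mathcal{D}_{Y\leftarrow X}^{(0)})(d)[-d] \otimes^L_{\mathcal{R}(\mathcal{D}_X^{(0)})} \mathcal{M}^{\cdot}$, invoking part 1). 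Third, apply $R\varphi_{*}$ and invoke the pushforward-Hom adjunction (part 3 of the unlabelled lemma in the preliminaries) to move $\varphi^{-1}(\mathcal{N}^{\cdot})$ outside: this produces $R\underline{\mathcal{H}om}_{\mathcal{R}(\mathcal{D}_Y^{(0)})}(\mathcal{N}^{\cdot}, \int_{\varphi}\mathcal{M}^{\cdot}(d))$ up to the recorded shifts, where the identification uses \lemref{proj-over-D} to move the tensor outside $R\varphi_*$ and the definition of $\int_\varphi$.

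The analogous statement for $\mathcal{R}(\mathcal{D}_X^{(1)})$ then follows formally from the Morita/Frobenius descent equivalence of \thmref{Filtered-Frobenius} together with its compatibility with pushforward and pullback (\thmref{Hodge-Filtered-Push} and \lemref{Hodge-Filtered-Pull}): all the functors in sight intertwine with $F_X^{*}$, so the adjunction transports along the equivalence. The main technical obstacles will be (i) verifying carefully that the Spencer terms really are locally projective (so that $R\underline{\mathcal{H}om}$ does commute with the outer $\otimes^L \mathcal{M}^{\cdot}$), and (ii) bookkeeping of the various graded/filtered twists $(i)$ and the cohomological shift $[d]$ built into $\varphi^{\dagger}$, making sure they line up with the $(d)$ twist on the right-hand side and correctly trace the bimodule structures through each adjunction.
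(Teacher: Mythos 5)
Your proposal is correct and follows essentially the same route as the paper: the paper's own proof of this corollary is a two-line citation to the preceding Spencer/de Rham proposition for part 1) and to the Hom-tensor/pushforward-adjunction chain it later writes out in detail in \corref{smooth-adjunction}, which is exactly the three-step argument you give, with the $\mathcal{R}(\mathcal{D}_X^{(1)})$ case transported by Frobenius descent as you say.
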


\begin{proof}
Part $1)$ follows directly from the previous proposition; compare
\cite{key-4}, propositions 4.2 and 4.19. Then $2)$ follows from
$1)$, as in \cite{key-4}, Theorem 4.40 (we'll give the argument
below in a slightly different context in \corref{smooth-adjunction})
Finally, the statement for $\mathcal{R}(\mathcal{D}_{X}^{(1)})$-modules
follows from the Frobenius descent (\lemref{Hodge-Filtered-Pull}
and \thmref{Hodge-Filtered-Push}). 
\end{proof}
Next we are going to give the analogue of these results for the push-forward
of $\overline{\mathcal{R}}(\mathcal{D}_{X}^{(0)})$-modules; and compare
with the constructions of \cite{key-11}, section 3.4. We start with
the analogues of the de Rham resolution and the adjunction for smooth
morphisms. Although $\overline{\mathcal{R}}(\mathcal{D}_{X}^{(0)})$
does possess a canonical flat connection, the resulting (relative)
de Rham complex is not a resolution of a transfer bimodule. Instead,
we consider the action of the center 
\[
\mathcal{Z}(\overline{\mathcal{R}}(\mathcal{D}_{X}^{(0)}))\tilde{=}\mathcal{O}_{T^{*}X^{(1)}}[v]
\]

The action map $\overline{\mathcal{R}}(\mathcal{D}_{X}^{(0)})\otimes_{\mathcal{O}_{X^{(1)}}}\mathcal{T}_{X^{(1)}}(-1)\to\overline{\mathcal{R}}(\mathcal{D}_{X}^{(0)})$
yields (by dualizing) a map 
\[
\Theta:\overline{\mathcal{R}}(\mathcal{D}_{X}^{(0)})\to\overline{\mathcal{R}}(\mathcal{D}_{X}^{(0)})\otimes_{\mathcal{O}_{X^{(1)}}}\Omega_{X^{(1)}}^{1}(1)
\]
which makes $\overline{\mathcal{R}}(\mathcal{D}_{X}^{(0)})$ into
a Higgs sheaf over $X^{(1)}$. In particular we have $\Theta\circ\Theta=0$
and so we can form the complex $\overline{\mathcal{R}}(\mathcal{D}_{X}^{(0)})\otimes_{\mathcal{O}_{X^{(1)}}}\Omega_{X^{(1)}}^{i}(i)$
with the differential induced from $\Theta$. In addition, we can
form the analogue of the Spencer complex, whose terms are $\overline{\mathcal{R}}(\mathcal{D}_{X}^{(0)})\otimes_{\mathcal{O}_{X^{(1)}}}\mathcal{T}_{X^{(1)}}^{i}(-i)$. 

Now let $\varphi:X\to Y$ be a smooth morphism of relative dimension
$d$. Let $X_{Y}^{(1)}\to Y$ be the base change of this morphism
over the absolute Frobenius on $Y$. Then we can perform the above
constructions for $\Omega_{X_{Y}^{(1)}/Y}^{i}$ instead of $\Omega_{X^{(1)}}^{i}$.
We have 
\begin{lem}
\label{lem:Koszul-Res-For-R-bar} The complex $\overline{\mathcal{R}}(\mathcal{D}_{X}^{(0)})\otimes_{\mathcal{O}_{X_{Y}^{(1)}}}\mathcal{T}_{X_{Y}^{(1)}/Y}^{i}(-i)$
is exact except at the right-most term. The image of the map 
\[
\overline{\mathcal{R}}(\mathcal{D}_{X}^{(0)})\otimes_{\mathcal{O}_{X_{Y}^{(1)}}}\mathcal{T}_{X_{Y}^{(1)}/Y}(-1)\to\overline{\mathcal{R}}(\mathcal{D}_{X}^{(0)})
\]
is the central ideal $\mathcal{J}$ generated by $\mathcal{T}_{X_{Y}^{(1)}/Y}\subset\mathcal{Z}(\overline{\mathcal{R}}(\mathcal{D}_{X}^{(0)}))$.
The cokernel of the map, $\overline{\mathcal{R}}(\mathcal{D}_{X}^{(0)})/\mathcal{J}$,
carries the structure of a right $\mathcal{D}_{X/Y}^{(0)}$-module,
of $p$-curvature zero; this action commutes with the natural left
$\overline{\mathcal{R}}(\mathcal{D}_{X}^{(0)})$-module structure
on $\overline{\mathcal{R}}(\mathcal{D}_{X}^{(0)})/\mathcal{J}$. The
cokernel of the associated map $(\overline{\mathcal{R}}(\mathcal{D}_{X}^{(0)})/\mathcal{J})\otimes_{\mathcal{O}_{X/Y}}\mathcal{T}_{X/Y}\to\overline{\mathcal{R}}(\mathcal{D}_{X}^{(0)})/\mathcal{J}$
is isomorphic to $\mathcal{\overline{R}}_{X\to Y}^{(0)}$. 

The complex $\overline{\mathcal{R}}(\mathcal{D}_{X}^{(0)})\otimes_{\mathcal{O}_{X_{Y}^{(1)}}}\Omega_{X_{Y}^{(1)}/Y}^{i}(i)$
is exact except at the right-most term. The cokernel of the map 
\[
\overline{\mathcal{R}}(\mathcal{D}_{X}^{(0)})\otimes_{\mathcal{O}_{X_{Y}^{(1)}}}\Omega_{X_{Y}^{(1)}/Y}^{d-1}(d-1)\to\overline{\mathcal{R}}(\mathcal{D}_{X}^{(0)})\otimes_{\mathcal{O}_{X_{Y}^{(1)}}}\Omega_{X_{Y}^{(1)}/Y}^{d}(d)
\]
denoted $\mathcal{K}_{X/Y}$, carries the structure of a left $\mathcal{D}_{X/Y}^{(0)}$-module,
of $p$-curvature zero; this action commutes with the natural right
$\overline{\mathcal{R}}(\mathcal{D}_{X}^{(0)})$-module structure
on $\mathcal{K}_{X/Y}$. The kernel of the associated connection on
$\mathcal{K}_{X/Y}$ is isomorphic to $\mathcal{\overline{R}}_{Y\leftarrow X}^{(0)}(d)$. 
\end{lem}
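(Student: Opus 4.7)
The plan is to work locally and reduce everything to a Koszul-type argument on the center of $\overline{\mathcal{R}}(\mathcal{D}_X^{(0)})$. Choose an affine open $U\subset X$ with coordinates $\{x_1,\dots,x_n\}$ adapted to $\varphi$, so that the $Y$-linear coordinate derivations $\{\partial_1,\dots,\partial_d\}$ form a free basis of $\mathcal{T}_{X/Y}$ over $U$. Under the identification $\mathcal{Z}(\overline{\mathcal{R}}(\mathcal{D}_X^{(0)}))\tilde{=}\mathcal{O}_{T^{*}X^{(1)}}[v]$ established in the previous proposition, the submodule $\mathcal{T}_{X_Y^{(1)}/Y}\subset\mathcal{Z}(\overline{\mathcal{R}}(\mathcal{D}_X^{(0)}))$ is locally spanned by $\{\partial_1^p,\dots,\partial_d^p\}$, and these functions are a regular sequence in the polynomial algebra $\mathcal{O}_{T^{*}X^{(1)}}[v]$ (they restrict to a partial system of fiber coordinates on $T^{*}X^{(1)}\times\mathbb{A}^1$ over $X^{(1)}\times\mathbb{A}^1$). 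The Koszul complex on this regular sequence therefore resolves $\mathcal{O}_{T^{*}X^{(1)}}[v]/(\partial_1^p,\dots,\partial_d^p)$ inside the center.

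Since $\overline{\mathcal{R}}(\mathcal{D}_X^{(0)})$ is Azumaya over its center, it is locally free, hence flat, over the center. Thus tensoring the Koszul resolution over $\mathcal{O}_{T^{*}X^{(1)}}[v]$ with $\overline{\mathcal{R}}(\mathcal{D}_X^{(0)})$ preserves exactness, and the resulting complex is easily identified (as a complex of left $\overline{\mathcal{R}}(\mathcal{D}_X^{(0)})$-modules with the Spencer differential) with the Spencer complex $\overline{\mathcal{R}}(\mathcal{D}_X^{(0)})\otimes\mathcal{T}_{X_Y^{(1)}/Y}^{\bullet}(-\bullet)$ of the statement; the image of its final map is the two-sided ideal $\mathcal{J}$, which is both the left and right ideal generated by $\{\partial_1^p,\dots,\partial_d^p\}$ since these elements are central. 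This establishes the exactness of the Spencer complex and identifies its cokernel as $\overline{\mathcal{R}}(\mathcal{D}_X^{(0)})/\mathcal{J}$.

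For the bimodule and $p$-curvature properties of $\overline{\mathcal{R}}(\mathcal{D}_X^{(0)})/\mathcal{J}$, note that right multiplication by $\partial_j$ (for $j\leq d$) preserves $\mathcal{J}$, since $[\partial_j,\partial_i^p]=0$ for coordinate derivations, and the relations defining the right $\mathcal{D}_{X/Y}^{(0)}$-module structure are inherited from $\mathcal{D}_X^{(0)}\subset\overline{\mathcal{R}}(\mathcal{D}_X^{(0)})$; the $p$-curvature vanishes because $\partial_j^p\in\mathcal{J}$. Commutation with the left $\overline{\mathcal{R}}(\mathcal{D}_X^{(0)})$-action is immediate from associativity. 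To identify the cokernel of $(\overline{\mathcal{R}}(\mathcal{D}_X^{(0)})/\mathcal{J})\otimes_{\mathcal{O}_X}\mathcal{T}_{X/Y}\to\overline{\mathcal{R}}(\mathcal{D}_X^{(0)})/\mathcal{J}$, observe that $\mathcal{J}\subset\overline{\mathcal{R}}(\mathcal{D}_X^{(0)})\cdot\mathcal{T}_{X/Y}$ (since $\partial_i^p=\partial_i^{p-1}\cdot\partial_i$), so the cokernel equals $\overline{\mathcal{R}}(\mathcal{D}_X^{(0)})/\overline{\mathcal{R}}(\mathcal{D}_X^{(0)})\cdot\mathcal{T}_{X/Y}$. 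Applying the Rees construction with respect to the conjugate filtration to the standard smoothness identification $\mathcal{D}_X^{(0)}/\mathcal{D}_X^{(0)}\cdot\mathcal{T}_{X/Y}\tilde{=}\mathcal{D}_{X\to Y}^{(0)}$ yields the desired isomorphism with $\overline{\mathcal{R}}_{X\to Y}^{(0)}$.

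For the de Rham / $\Omega$ version, the cleanest route is to apply the left-right interchange $(-)\otimes_{\mathcal{O}_X}\omega_{X/Y}$ to the Spencer complex, using the isomorphism $\mathcal{T}_{X_Y^{(1)}/Y}^i\otimes\omega_{X_Y^{(1)}/Y}\tilde{=}\Omega_{X_Y^{(1)}/Y}^{d-i}$, together with the identification of $\mathcal{R}_{Y\leftarrow X}^{(0)}$ as the right-module counterpart of $\mathcal{R}_{X\to Y}^{(0)}$; the cokernel $\mathcal{K}_{X/Y}$ then inherits its left $\mathcal{D}_{X/Y}^{(0)}$-structure of $p$-curvature zero from the right structure by the left-right swap, and its horizontal sections $\mathcal{K}_{X/Y}^{\nabla}$ are identified with $\overline{\mathcal{R}}_{Y\leftarrow X}^{(0)}(d)$ via the analogue of Cartier descent for the conjugate-filtered Frobenius. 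The main obstacle is the verification of strictness of the quotient $\mathcal{D}_X^{(0)}/\mathcal{D}_X^{(0)}\cdot\mathcal{T}_{X/Y}\tilde{=}\mathcal{D}_{X\to Y}^{(0)}$ with respect to the conjugate filtration (which is what licenses passing to the Rees algebra in paragraph three), together with the careful bookkeeping of the grading shifts $(i)$ and $(-i)$ that distinguish this mixed-characteristic setup from the classical Spencer-de Rham story.
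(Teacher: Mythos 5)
Your overall strategy coincides with the paper's: in adapted local coordinates the complex is the Koszul complex on the central elements $\{\partial_{i}^{p}\}$ attached to the relative coordinate derivations, exactness follows because $\overline{\mathcal{R}}(\mathcal{D}_{X}^{(0)})$ is locally free over its polynomial center and these elements form a regular sequence there, and the bimodule structure and vanishing of $p$-curvature on $\overline{\mathcal{R}}(\mathcal{D}_{X}^{(0)})/\mathcal{J}$ come from centrality. That part is fine, and is if anything more carefully justified than the paper's one-line version.

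However, the step identifying the cokernel of $(\overline{\mathcal{R}}(\mathcal{D}_{X}^{(0)})/\mathcal{J})\otimes\mathcal{T}_{X/Y}\to\overline{\mathcal{R}}(\mathcal{D}_{X}^{(0)})/\mathcal{J}$ fails as written. The inclusion $\mathcal{J}\subset\overline{\mathcal{R}}(\mathcal{D}_{X}^{(0)})\cdot\mathcal{T}_{X/Y}$ is false in the graded Rees algebra: the generators of $\mathcal{J}$ are the \emph{degree-one} copies of the relative $\partial_{i}^{p}$ (that is, $\partial_{i}^{p}$ viewed as an element of $C^{1}$ placed in degree $1$), whereas $\mathcal{T}_{X/Y}$ sits in degree $0$, so the degree-one piece of the left ideal $\overline{\mathcal{R}}(\mathcal{D}_{X}^{(0)})\cdot\partial_{i}$ is $C^{1}\cdot\partial_{i}$, which does not contain $\partial_{i}^{p}$ (already for $X=\mathbb{A}^{1}$, $Y$ a point, one has $C^{1}\cdot\partial=\mathcal{D}^{(0)}\cdot\partial^{p+1}\not\ni\partial^{p}$). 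The identity $\partial_{i}^{p}=\partial_{i}^{p-1}\cdot\partial_{i}$ only shows that the \emph{degree-zero} copy of $\partial_{i}^{p}$ lies in that left ideal. Consequently your claimed cokernel $\overline{\mathcal{R}}(\mathcal{D}_{X}^{(0)})/\overline{\mathcal{R}}(\mathcal{D}_{X}^{(0)})\cdot\mathcal{T}_{X/Y}$ is strictly larger than the true one $\overline{\mathcal{R}}(\mathcal{D}_{X}^{(0)})/(\mathcal{J}+\overline{\mathcal{R}}(\mathcal{D}_{X}^{(0)})\cdot\mathcal{T}_{X/Y})$: it retains a spurious nonzero piece in degree $1$ coming from the class of $\partial_{i}^{p}$ in $C^{1}/C^{1}\partial_{i}$, and killing exactly these classes is the whole point of first passing to $\overline{\mathcal{R}}(\mathcal{D}_{X}^{(0)})/\mathcal{J}$. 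The identification with $\overline{\mathcal{R}}_{X\to Y}^{(0)}=\varphi^{*}\mathcal{D}_{Y}^{(0,1)}/(f)$ should instead be made directly from the explicit local basis of $\mathcal{D}_{X}^{(0,1)}$ over $D(\mathcal{O}_{X})$ (the quotient by the relative $\partial_{i}$ and the degree-one $\partial_{i}^{p}$, the latter being the images of the $\partial_{i}^{[p]}$ mod $f$, is free on the expected monomials). The same caution applies to your final appeal to ``applying the Rees construction'' to $\mathcal{D}_{X}^{(0)}/\mathcal{D}_{X}^{(0)}\cdot\mathcal{T}_{X/Y}\tilde{=}\mathcal{D}_{X\to Y}^{(0)}$: this produces the Rees module of the induced quotient filtration, and the fact that this coincides with the pullback of the conjugate filtration from $Y$ is precisely the computation at issue, not a formal consequence.
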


\begin{proof}
Choose local coordinates on $X$ for which $\text{Der}(\mathcal{O}_{X})$
is the free module on $\{\partial_{1},\dots\partial_{n}\}$ and $\text{Der}_{\mathcal{O}_{Y}}(\mathcal{O}_{X})=\{\partial_{n-d+1},\dots\partial_{n}\}$.
Then the complex under consideration is simply the Koszul complex
for the elements $\{\partial_{n-d+1}^{p},\dots,\partial_{n}^{p}\}$,
which proves the exactness statements. Furthermore, as the elements
$\{\partial_{n-d+1}^{p},\dots,\partial_{n}^{p}\}$ are central in
$\overline{\mathcal{R}}(\mathcal{D}_{X}^{(0)})$, we see that $\overline{\mathcal{R}}(\mathcal{D}_{X}^{(0)})/\mathcal{J}$
has the structure of a left and right $\mathcal{D}_{X}^{(0)}$-module
(we are here using the fact that $\mathcal{D}_{X}^{(0)}$ is the degree
$0$ part of $\overline{\mathcal{R}}(\mathcal{D}_{X}^{(0)})$). Now,
we have 
\[
\mathcal{\overline{R}}_{X\to Y}^{(0)}=\text{coker}(\overline{\mathcal{R}}(\mathcal{D}_{X}^{(0)})\otimes_{\mathcal{O}_{X_{Y}^{(1)}}}\mathcal{T}_{X/Y}(-1)\to\overline{\mathcal{R}}(\mathcal{D}_{X}^{(0)}))
\]
\[
=\text{coker}((\overline{\mathcal{R}}(\mathcal{D}_{X}^{(0)})/\mathcal{J})\otimes_{\mathcal{O}_{X/Y}}\mathcal{T}_{X/Y}\to\overline{\mathcal{R}}(\mathcal{D}_{X}^{(0)})/\mathcal{J})
\]
since $X\to Y$ is smooth. The second statement follows similarly.
\end{proof}
Now we can give the analogue of \corref{sm-adunction-for-filtered-D}.
It reads: 
\begin{cor}
\label{cor:sm-adjunction-for-R-bar}Let $\varphi:X\to Y$ be smooth
of relative dimension $d$. 

1) There is an isomorphism $R\underline{\mathcal{H}om}{}_{\mathcal{\overline{R}}(\mathcal{D}_{X}^{(0})}(\mathcal{\overline{R}}(\mathcal{D}_{X\to Y}^{(0)}),\mathcal{\overline{R}}(\mathcal{D}_{X}^{(0)}))\tilde{=}\mathcal{\overline{R}}_{Y\leftarrow X}^{(0)}(d)[-d]$
as $(\varphi^{-1}(\mathcal{\overline{R}}(\mathcal{D}_{Y}^{(0)})),\overline{\mathcal{R}}(\mathcal{D}_{X}^{(0)}))$
bimodules.

2) There is an isomorphism of functors 
\[
R\varphi_{*}R\underline{\mathcal{H}om}_{\overline{\mathcal{R}}(\mathcal{D}_{X}^{(0)})}(\varphi^{\dagger,(0)}\mathcal{N}^{\cdot},\mathcal{M}^{\cdot})\tilde{\to}R\underline{\mathcal{H}om}{}_{\mathcal{\overline{R}}(\mathcal{D}_{X}^{(0)})}(\mathcal{N}^{\cdot},\int_{\varphi,0}\mathcal{M}^{\cdot}(d))
\]
for any $\mathcal{N}^{\cdot}\in D(\mathcal{G}(\mathcal{R}(\mathcal{D}_{Y}^{(0)})))$
and any $\mathcal{M}^{\cdot}\in D(\mathcal{G}(\mathcal{R}(\mathcal{D}_{X}^{(0)})))$. 
\end{cor}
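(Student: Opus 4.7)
My plan is to prove part (1) by constructing an explicit locally free resolution of $\mathcal{\overline{R}}_{X\to Y}^{(0)}$ as a left $\overline{\mathcal{R}}(\mathcal{D}_{X}^{(0)})$-module, then dualizing via $R\underline{\mathcal{H}om}_{\overline{\mathcal{R}}(\mathcal{D}_{X}^{(0)})}(-,\overline{\mathcal{R}}(\mathcal{D}_{X}^{(0)}))$. I will work locally on $X$ with coordinates $\{x_{1},\dots,x_{n}\}$ arranged so that $\{\partial_{n-d+1},\dots,\partial_{n}\}$ span $\mathcal{T}_{X/Y}$. The needed resolution comes from assembling the two complexes supplied by \lemref{Koszul-Res-For-R-bar}. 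First, the Koszul complex $\overline{\mathcal{R}}(\mathcal{D}_{X}^{(0)})\otimes\wedge^{\bullet}\mathcal{T}_{X_{Y}^{(1)}/Y}(-\bullet)$ on the central elements $\{\partial_{i}^{p}\}_{i=n-d+1}^{n}$ resolves $\overline{\mathcal{R}}(\mathcal{D}_{X}^{(0)})/\mathcal{J}$; then a Spencer-type complex using $\{\partial_{i}\}_{i=n-d+1}^{n}$ resolves $\overline{\mathcal{R}}_{X\to Y}^{(0)}$ over $\overline{\mathcal{R}}(\mathcal{D}_{X}^{(0)})/\mathcal{J}$. Lifting the second to $\overline{\mathcal{R}}(\mathcal{D}_{X}^{(0)})$ through the first produces a double complex whose total complex is a bounded locally free resolution of $\overline{\mathcal{R}}_{X\to Y}^{(0)}$ with terms $\overline{\mathcal{R}}(\mathcal{D}_{X}^{(0)})\otimes\wedge^{i}\mathcal{T}_{X_{Y}^{(1)}/Y}(-i)\otimes\wedge^{j}\mathcal{T}_{X/Y}(-j)$.

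Applying $R\underline{\mathcal{H}om}_{\overline{\mathcal{R}}(\mathcal{D}_{X}^{(0)})}(-,\overline{\mathcal{R}}(\mathcal{D}_{X}^{(0)}))$ termwise converts the exterior powers of tangent bundles into the corresponding cotangent bundles, with inverted grading shifts, yielding a double complex with terms $\overline{\mathcal{R}}(\mathcal{D}_{X}^{(0)})\otimes\Omega^{i}_{X_{Y}^{(1)}/Y}(i)\otimes\Omega^{j}_{X/Y}(j)$. The second half of \lemref{Koszul-Res-For-R-bar} identifies one of the associated spectral sequences with the de Rham-type complex built from $\mathcal{K}_{X/Y}$, which carries $p$-curvature zero with kernel $\overline{\mathcal{R}}_{Y\leftarrow X}^{(0)}(d)$; a Cartier-descent style computation collapses the total complex to $\overline{\mathcal{R}}_{Y\leftarrow X}^{(0)}(d)[-d]$. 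The grading twist $(d)$ arises because the top form $\wedge^{d}\Omega^{1}_{X/Y}$ lives in filtration degree $d$, while the cohomological shift $[-d]$ records the degree where the cohomology concentrates. The bimodule structure is inherited from the construction at each stage, and the result is coordinate-independent since both sides are intrinsically defined.

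For part (2), the argument is formal given (1). By definition $\varphi^{\dagger,(0)}\mathcal{N}^{\cdot}=\overline{\mathcal{R}}_{X\to Y}^{(0)}\otimes^{L}_{\varphi^{-1}(\overline{\mathcal{R}}(\mathcal{D}_{Y}^{(0)}))}\varphi^{-1}\mathcal{N}^{\cdot}[d]$. I will invoke the tensor-Hom adjunction of \lemref{basic-hom-tensor} to rewrite
\[
R\underline{\mathcal{H}om}_{\overline{\mathcal{R}}(\mathcal{D}_{X}^{(0)})}(\varphi^{\dagger,(0)}\mathcal{N}^{\cdot},\mathcal{M}^{\cdot})\tilde{=}R\underline{\mathcal{H}om}_{\varphi^{-1}(\overline{\mathcal{R}}(\mathcal{D}_{Y}^{(0)}))}\bigl(\varphi^{-1}\mathcal{N}^{\cdot},R\underline{\mathcal{H}om}(\overline{\mathcal{R}}_{X\to Y}^{(0)},\mathcal{M}^{\cdot})\bigr)[-d],
\]
and then use that part (1) gives a bounded locally free resolution which lets me identify the inner $R\underline{\mathcal{H}om}$ with $\overline{\mathcal{R}}_{Y\leftarrow X}^{(0)}(d)[-d]\otimes^{L}\mathcal{M}^{\cdot}$. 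Pushing forward by $R\varphi_{*}$ and commuting it past $R\underline{\mathcal{H}om}$ using the $(R\varphi_{*},\varphi^{-1})$ adjunction together with the projection-type formula in the spirit of \lemref{baby-projection-1} yields $R\underline{\mathcal{H}om}_{\overline{\mathcal{R}}(\mathcal{D}_{Y}^{(0)})}(\mathcal{N}^{\cdot},\int_{\varphi,0}\mathcal{M}^{\cdot}(d))$ once the cohomological shifts from part (1) and from $\varphi^{\dagger,(0)}=L\varphi^{*,(0)}[d]$ are reconciled.

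The main obstacle will be the construction of the double-complex resolution in part (1), and in particular verifying that the Spencer-type differential defined a priori only on the quotient $\overline{\mathcal{R}}(\mathcal{D}_{X}^{(0)})/\mathcal{J}$ lifts coherently to the full double complex over $\overline{\mathcal{R}}(\mathcal{D}_{X}^{(0)})$, and that the two differentials anticommute so that the assembled total complex is genuinely a resolution rather than merely a complex with the right top cohomology. The identification of the cohomology of the dualized double complex with $\mathcal{K}_{X/Y}^{\nabla}=\overline{\mathcal{R}}_{Y\leftarrow X}^{(0)}(d)$ in the top cohomological degree uses essentially the Cartier-descent mechanism embedded in \lemref{Koszul-Res-For-R-bar}. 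Once (1) is in hand, part (2) is a routine categorical manipulation.
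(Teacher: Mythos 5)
Your part (2) is the same formal manipulation the paper uses and is fine once part (1) is in place. The gap is in part (1): the proposed Koszul--Spencer double complex is \emph{not} a resolution of $\overline{\mathcal{R}}_{X\to Y}^{(0)}$. The rows (Koszul on the central elements $\partial_{i}^{p}$) are exact, so the first spectral sequence of the double complex degenerates to the cohomology of the Spencer-type complex
\[
(\overline{\mathcal{R}}(\mathcal{D}_{X}^{(0)})/\mathcal{J})\otimes\wedge^{d}\mathcal{T}_{X/Y}\to\cdots\to(\overline{\mathcal{R}}(\mathcal{D}_{X}^{(0)})/\mathcal{J})\otimes\mathcal{T}_{X/Y}\to\overline{\mathcal{R}}(\mathcal{D}_{X}^{(0)})/\mathcal{J},
\]
and this complex is exact \emph{only} at the right-most term: $\overline{\mathcal{R}}(\mathcal{D}_{X}^{(0)})/\mathcal{J}$ is a right $\mathcal{D}_{X/Y}^{(0)}$-module of $p$-curvature zero, and the Spencer (equivalently, de Rham) complex of a $p$-curvature-zero module has nonvanishing intermediate cohomology, given by the Cartier isomorphism. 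Already for $X=\mathbb{A}^{1}$ and $Y$ a point, right multiplication by $\partial$ on $\mathcal{D}^{(0)}/(\partial^{p})$ has kernel $\mathcal{O}\cdot\partial^{p-1}\neq0$. Put differently, $\partial_{i}^{p}$ lies in the ideal generated by $\partial_{i}$, so the combined complex on $\{\partial_{i}^{p},\partial_{i}\}$ cannot be exact. The same defect reappears after dualizing: a de Rham-type complex built from $\mathcal{K}_{X/Y}$ does not collapse to its flat sections, so the final "Cartier-descent style computation" you invoke would also fail.

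The paper's proof avoids resolving $\overline{\mathcal{R}}_{X\to Y}^{(0)}$ altogether. Since $\overline{\mathcal{R}}_{X\to Y}^{(0)}$ is the Cartier descent of $\overline{\mathcal{R}}(\mathcal{D}_{X}^{(0)})/\mathcal{J}$ along its $p$-curvature-zero right $\mathcal{D}_{X/Y}^{(0)}$-action (this is the content of \lemref{Koszul-Res-For-R-bar}), and Cartier descent is an exact equivalence, one has
\[
R\underline{\mathcal{H}om}_{\overline{\mathcal{R}}(\mathcal{D}_{X}^{(0)})}(\overline{\mathcal{R}}_{X\to Y}^{(0)},\mathcal{N})\tilde{=}R\underline{\mathcal{H}om}_{\overline{\mathcal{R}}(\mathcal{D}_{X}^{(0)})}(\overline{\mathcal{R}}(\mathcal{D}_{X}^{(0)})/\mathcal{J},\mathcal{N})^{\nabla}
\]
directly: at the underived level, homomorphisms out of the descended module are the flat sections of homomorphisms out of $\overline{\mathcal{R}}(\mathcal{D}_{X}^{(0)})/\mathcal{J}$, and since the flat-sections functor is exact on $p$-curvature-zero modules this identity derives with no Spencer resolution needed. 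The only genuine resolution used is the Koszul complex on the central elements, whose dual is $\mathcal{K}_{X/Y}[-d]$, and then $(\mathcal{K}_{X/Y})^{\nabla}=\overline{\mathcal{R}}_{Y\leftarrow X}^{(0)}(d)$ gives part (1). If you insist on your route you would need a finite locally projective resolution of $\overline{\mathcal{R}}_{X\to Y}^{(0)}$ of length $d$ (it is Cohen--Macaulay of codimension $d$ over the center $\mathcal{O}_{T^{*}X^{(1)}}[v]$), but the naive total complex above is not one.
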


\begin{proof}
As in the proof of \corref{smooth-adjunction} below, $2)$ follows
formally from $1)$. To see $1)$, we note that for any $\mathcal{N}\in\mathcal{G}(\mathcal{\overline{R}}(\mathcal{D}_{X}^{(0)}))$
the complex $R\underline{\mathcal{H}om}{}_{\mathcal{\overline{R}}(\mathcal{D}_{X}^{(0})}(\overline{\mathcal{R}}(\mathcal{D}_{X}^{(0)})/\mathcal{J},\mathcal{N})$
can be considered a complex of left $\mathcal{D}_{X/Y}^{(0)}$-modules
with $p$-curvature $0$ (as $\overline{\mathcal{R}}(\mathcal{D}_{X}^{(0)})/\mathcal{J}$
is a right $\mathcal{D}_{X/Y}^{(0)}$-module of $p$-curvature zero,
and this action commutes with the left $\mathcal{\overline{R}}(\mathcal{D}_{X}^{(0})$-action).
As Cartier descent for $\mathcal{D}_{X/Y}^{(0)}$-modules of $p$-curvature
$0$ is an exact functor, applying the previous lemma we obtain
\[
R\underline{\mathcal{H}om}{}_{\mathcal{\overline{R}}(\mathcal{D}_{X}^{(0})}(\mathcal{\overline{R}}(\mathcal{D}_{X\to Y}^{(0)}),\mathcal{\overline{R}}(\mathcal{D}_{X}^{(0)}))\tilde{=}R\underline{\mathcal{H}om}{}_{\mathcal{\overline{R}}(\mathcal{D}_{X}^{(0})}(\overline{\mathcal{R}}(\mathcal{D}_{X}^{(0)})/\mathcal{J},\mathcal{\overline{R}}(\mathcal{D}_{X}^{(0)}))^{\nabla}
\]
\[
(\mathcal{K}_{X/Y}[-d])^{\nabla}=\mathcal{\overline{R}}_{Y\leftarrow X}^{(0)}(d)[-d]
\]
as desired. 
\end{proof}
Now we'll give the relation of our pushforward to the constructions
of \cite{key-11}, section 3.4. We recall that to any morphism $\varphi:X\to Y$
we may attach the diagram 
\[
T^{*}X\xleftarrow{d\varphi}X\times_{Y}T^{*}Y\xrightarrow{\pi}T^{*}Y
\]
and we use the same letters to denote the products of these morphisms
with $\mathbb{A}^{1}$; We have the following analogue of \cite{key-52},
proposition 3.7 (c.f. also \cite{key-11}, theorem 3.11)
\begin{lem}
\label{lem:Bez-Brav}There is an equivalence of graded Azumaya algebras
$(d\varphi^{(1)})^{*}\overline{\mathcal{R}}(\mathcal{D}_{X}^{(0)})\sim(\pi^{(1)})^{*}\overline{\mathcal{R}}(\mathcal{D}_{Y}^{(0)})$. 
\end{lem}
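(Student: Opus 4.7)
The plan is to exhibit the equivalence via an explicit Morita-style bimodule, namely the transfer bimodule $\overline{\mathcal{R}}_{Y \leftarrow X}^{(0)}$ (or equivalently $\overline{\mathcal{R}}_{X \to Y}^{(0)}$) constructed in the previous subsection. This mirrors the strategy of Bezrukavnikov--Mirković--Rumynin for $\mathcal{D}_X^{(0)}$; the new input is that one must keep track of the grading coming from the conjugate filtration (the parameter $v$). So first, I would consider $\overline{\mathcal{R}}_{X \to Y}^{(0)} = \varphi^{*}\overline{\mathcal{R}}(\mathcal{D}_Y^{(0)})$ as a graded $(\overline{\mathcal{R}}(\mathcal{D}_X^{(0)}), \varphi^{-1}\overline{\mathcal{R}}(\mathcal{D}_Y^{(0)}))$-bimodule, using the bimodule structure analogous to the one constructed in Proposition \ref{prop:pull-back-in-pos-char} for $\mathcal{D}^{(0,1)}$-modules (taking the quotient by $f$).

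Next, I would analyze the two central actions. Recall $\mathcal{Z}(\overline{\mathcal{R}}(\mathcal{D}_X^{(0)})) \cong \mathcal{O}_{T^*X^{(1)}}[v]$, generated by the $p$-powers $\partial_i^p$ of coordinate derivations (placed in degree $1$) together with $v$. The left action of $\overline{\mathcal{R}}(\mathcal{D}_X^{(0)})$ on $\overline{\mathcal{R}}_{X \to Y}^{(0)}$ restricts on the center to a map that factors through the Frobenius twist of the natural cotangent map $d\varphi$; more precisely, $\partial_i^p$ acts through its image in $\varphi^{*}\overline{\mathcal{R}}(\mathcal{D}_Y^{(0)})$, which is exactly the pullback along $d\varphi^{(1)} \times \mathrm{id}_{\mathbb{A}^1}$ from $T^*X^{(1)} \times \mathbb{A}^1$. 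Symmetrically, the right action of the center $\varphi^{-1}\mathcal{O}_{T^*Y^{(1)}}[v]$ factors through $\pi^{(1)} \times \mathrm{id}_{\mathbb{A}^1}$. Together, these endow $\overline{\mathcal{R}}_{X \to Y}^{(0)}$ with the structure of a graded coherent sheaf on $X \times_Y T^*Y \times \mathbb{A}^1$ equipped with commuting actions of $(d\varphi^{(1)})^{*}\overline{\mathcal{R}}(\mathcal{D}_X^{(0)})$ and $(\pi^{(1)})^{*}\overline{\mathcal{R}}(\mathcal{D}_Y^{(0)})^{\mathrm{opp}}$.

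To upgrade this to a Morita equivalence of Azumaya algebras, I would verify that $\overline{\mathcal{R}}_{X \to Y}^{(0)}$ is locally free over the appropriate base of the correct rank (namely $p^{\dim X + \dim Y}$), and that the action map
\[
(d\varphi^{(1)})^{*}\overline{\mathcal{R}}(\mathcal{D}_X^{(0)}) \otimes_{\mathcal{O}_{X \times_Y T^*Y}[v]} (\pi^{(1)})^{*}\overline{\mathcal{R}}(\mathcal{D}_Y^{(0)})^{\mathrm{opp}} \longrightarrow \underline{\mathcal{E}nd}(\overline{\mathcal{R}}_{X \to Y}^{(0)})
\]
is an isomorphism of graded algebras. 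Both sides are graded Azumaya of the same rank, so it is enough to check the map is surjective; and by a graded Nakayama argument (using the $v$-grading) it suffices to check this after inverting $v$ and after setting $v = 0$. Inverting $v$ reduces to the classical Bezrukavnikov--Mirković--Rumynin equivalence $d\varphi^{*}\mathcal{D}_X^{(0)} \sim \pi^{*}\mathcal{D}_Y^{(0)}$, carried out for the transfer bimodule $\mathcal{D}_{X \to Y}^{(0)}$. Setting $v = 0$ reduces to the corresponding statement for the associated graded algebras $\mathrm{gr}_C(\mathcal{D}_X^{(0)}) \cong \overline{\mathcal{D}}_X^{(0)} \otimes_{\mathcal{O}_{X^{(1)}}} \mathcal{O}_{T^*X^{(1)}}$ (see the proof that $\overline{\mathcal{R}}(\mathcal{D}_X^{(0)})$ is Azumaya in the preceding subsection), where the splitting is essentially the classical Cartier-type statement.

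The main obstacle is the $v = 0$ specialization, that is, verifying the Azumaya equivalence at the level of the associated gradeds for the conjugate filtration, since there the algebras degenerate to (twists of) $\overline{\mathcal{D}}^{(0)}$ tensored with symmetric algebras, and one must identify the relevant splitting bimodule explicitly. Once both specializations are handled, the graded Nakayama lemma together with the fact that both sides are locally free over their common center of the same rank will conclude the equivalence. I would expect most of the work in a complete proof to consist of carefully unpacking the bimodule structure and checking in local coordinates that the symbol-level actions match the geometric maps $d\varphi^{(1)}$ and $\pi^{(1)}$.
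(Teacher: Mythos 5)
Your proposal is correct and follows essentially the same route as the paper: both use the transfer bimodule $\overline{\mathcal{R}}_{X\to Y}=\varphi^{*}\overline{\mathcal{R}}(\mathcal{D}_{Y}^{(0)})$ as the splitting/Morita bimodule and reduce the verification to the two specializations $v$ invertible (the classical Bezrukavnikov--Braverman equivalence for $\mathcal{D}^{(0)}$) and $v=0$ (the associated graded, handled by Cartier-type considerations). The only cosmetic difference is that the paper checks that this module is locally free of rank $p^{\dim X+\dim Y}$ over $\mathcal{O}_{(X\times_{Y}T^{*}Y)^{(1)}}[v]$, which already makes it a splitting module for $\mathcal{A}\otimes\mathcal{B}^{\mathrm{opp}}$, rather than separately verifying that the action map onto $\underline{\mathcal{E}nd}$ is an isomorphism.
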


\begin{proof}
Consider the (graded) Azumaya algebra $\mathcal{A}:=(d\varphi^{(1)})^{*}\overline{\mathcal{R}}(\mathcal{D}_{X}^{(0)})\otimes_{\mathcal{O}_{(X\times_{Y}T^{*}Y)^{(1)}}[v]}(\pi^{(1)})^{*}\overline{\mathcal{R}}(\mathcal{D}_{Y}^{(0)})^{\text{opp}}$
on $(X\times_{Y}T^{*}Y)^{(1)}\times\mathbb{A}^{1}$. It is enough
to find a (graded) splitting module for $\mathcal{A}$; i.e., a graded
$\mathcal{A}$-module which is locally free of rank $p^{\text{dim}(X)+\text{dim}(Y)}$
over $\mathcal{O}_{(X\times_{Y}T^{*}Y)^{(1)}}[v]$. 

The graded $(\overline{\mathcal{R}}(\mathcal{D}_{X}^{(0)}),\varphi^{-1}(\overline{\mathcal{R}}(\mathcal{D}_{Y}^{(0)})))$
bimodule $\overline{\mathcal{R}}_{X\to Y}:=\varphi^{*}\overline{\mathcal{R}}(\mathcal{D}_{Y}^{(0)})$
inherits the structure of an $\mathcal{A}$-module; we claim it is
locally free over $\mathcal{O}_{(X\times_{Y}T^{*}Y)^{(1)}}[v]$ of
the correct rank. This can be checked after inverting $v$ and setting
$v=0$; upon inverting $v$ it becomes (via the isomorphisms $\overline{\mathcal{R}}(\mathcal{D}_{X}^{(0)})[v^{-1}]\tilde{=}\mathcal{D}_{X}^{(0)}[v,v^{-1}]$
and $\overline{\mathcal{R}}(\mathcal{D}_{Y}^{(0)})[v^{-1}]\tilde{=}\mathcal{D}_{Y}^{(0)}[v,v^{-1}]$)
a direct consequence of \cite{key-52}, proposition 3.7. After setting
$v=0$ we have $\overline{\mathcal{R}}(\mathcal{D}_{Y}^{(0)})/v=\text{gr}(\mathcal{D}_{Y}^{(0)})=\overline{\mathcal{D}}_{Y}^{(0)}\otimes_{\mathcal{O}_{Y^{(1)}}}\mathcal{O}_{T^{*}Y^{(1)}}$;
and similarly for $X$. Therefore 
\[
\varphi^{*}\overline{\mathcal{R}}(\mathcal{D}_{Y}^{(0)})/v\tilde{=}\mathcal{O}_{X}\otimes_{\varphi^{-1}(\mathcal{O}_{Y})}\varphi^{-1}(\overline{\mathcal{D}}_{Y}^{(0)}\otimes_{\mathcal{O}_{Y^{(1)}}}\mathcal{O}_{T^{*}Y^{(1)}})
\]
\[
\tilde{=}\mathcal{O}_{X}\otimes_{\varphi^{-1}(\mathcal{O}_{Y})}\varphi^{-1}(\overline{\mathcal{D}}_{Y}^{(0)})\otimes_{\varphi^{-1}(\mathcal{O}_{Y^{(1)}})}\varphi^{-1}(\mathcal{O}_{T^{*}Y^{(1)}})
\]
But $\varphi^{-1}(\overline{\mathcal{D}}_{Y}^{(0)})$ is locally free
of rank $p^{\text{dim}(Y)}$ over $\varphi^{-1}(\mathcal{O}_{Y})$,
and $\mathcal{O}_{X}$ is locally free of rank $p^{\text{dim}(X)}$
over $\mathcal{O}_{X^{(1)}}$; so $\varphi^{*}\overline{\mathcal{R}}(\mathcal{D}_{Y}^{(0)})/v$
is locally free of rank $p^{\text{dim}(X)+\text{dim}(Y)}$ over $\mathcal{O}_{(X\times_{Y}T^{*}Y)^{(1)}}$
as claimed. 
\end{proof}
Next, we have the following straightforward:
\begin{lem}
Let $\varphi:X\to Y$ be smooth. Then $d\varphi$ is a closed immersion,
and we may regard the algebra $(d\varphi^{(1)})^{*}\overline{\mathcal{R}}(\mathcal{D}_{X}^{(0)})$
as a (graded) central quotient of $\overline{\mathcal{R}}(\mathcal{D}_{X}^{(0)})$.
The obvious functor $(d\varphi)_{*}:D(\mathcal{G}((d\varphi^{(1)})^{*}\overline{\mathcal{R}}(\mathcal{D}_{X}^{(0)})))\to D(\mathcal{G}(\overline{\mathcal{R}}(\mathcal{D}_{X}^{(0)})))$
admits a right adjoint $(d\varphi)^{!}$ defined by $\mathcal{M}^{\cdot}\to R\underline{\mathcal{H}om}_{\overline{\mathcal{R}}(\mathcal{D}_{X}^{(0)})}((d\varphi^{(1)})^{*}\overline{\mathcal{R}}(\mathcal{D}_{X}^{(0)}),\mathcal{M}^{\cdot})$. 
\end{lem}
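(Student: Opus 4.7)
The proof breaks into three essentially independent pieces: the geometric statement that $d\varphi$ is a closed immersion, the algebraic statement that $(d\varphi^{(1)})^{*}\overline{\mathcal{R}}(\mathcal{D}_{X}^{(0)})$ is a central quotient of $\overline{\mathcal{R}}(\mathcal{D}_{X}^{(0)})$, and the formal adjunction argument. None of these steps looks like it will present serious difficulty; the main work is in assembling the pieces carefully.

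First I would dispatch the geometric claim. Since $\varphi$ is smooth, the relative cotangent sequence
\[
0\to\varphi^{*}\Omega_{Y}^{1}\to\Omega_{X}^{1}\to\Omega_{X/Y}^{1}\to0
\]
is a short exact sequence of locally free sheaves, whose dual yields a surjection $\mathcal{T}_{X}\twoheadrightarrow\varphi^{*}\mathcal{T}_{Y}$ with locally free kernel $\mathcal{T}_{X/Y}$. Applying $\operatorname{Sym}$ and then $\operatorname{Spec}_{X}$ converts this into a surjection of graded quasi-coherent sheaves of algebras $\operatorname{Sym}(\mathcal{T}_{X})\twoheadrightarrow\operatorname{Sym}(\varphi^{*}\mathcal{T}_{Y})$, and hence a closed immersion $d\varphi:X\times_{Y}T^{*}Y\hookrightarrow T^{*}X$. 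Taking Frobenius twists (which is an isomorphism of schemes since $k$ is perfect, or at least preserves closed immersions) gives the closed immersion $d\varphi^{(1)}$.

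Second, I would identify the central quotient. By the proposition preceding \lemref{Bez-Brav}, the center of $\overline{\mathcal{R}}(\mathcal{D}_{X}^{(0)})$ is $\mathcal{O}_{T^{*}X^{(1)}}[v]$, and $\overline{\mathcal{R}}(\mathcal{D}_{X}^{(0)})$ is Azumaya (hence locally free of finite rank) over this center. The closed immersion $d\varphi^{(1)}\times\mathrm{id}_{\mathbb{A}^{1}}$ corresponds to a graded surjection $\mathcal{O}_{T^{*}X^{(1)}}[v]\twoheadrightarrow\mathcal{O}_{(X\times_{Y}T^{*}Y)^{(1)}}[v]$, so letting $\mathcal{I}$ be the ideal sheaf, the two-sided ideal $\mathcal{I}\cdot\overline{\mathcal{R}}(\mathcal{D}_{X}^{(0)})$ is in fact the central ideal generated by $\mathcal{I}$. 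By the Azumaya (local freeness) property, the quotient $\overline{\mathcal{R}}(\mathcal{D}_{X}^{(0)})/\mathcal{I}\cdot\overline{\mathcal{R}}(\mathcal{D}_{X}^{(0)})$ is canonically isomorphic to $(d\varphi^{(1)})^{*}\overline{\mathcal{R}}(\mathcal{D}_{X}^{(0)})$ as a graded sheaf of algebras, which exhibits the latter as a graded central quotient.

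Finally, the adjunction is completely formal once we know we have a surjection $\overline{\mathcal{R}}(\mathcal{D}_{X}^{(0)})\twoheadrightarrow(d\varphi^{(1)})^{*}\overline{\mathcal{R}}(\mathcal{D}_{X}^{(0)})$ of graded sheaves of algebras. Denoting this map by $\mathcal{R}\twoheadrightarrow\mathcal{S}$, the functor $(d\varphi)_{*}$ is simply restriction of scalars, which is exact. At the abelian level the standard hom–tensor adjunction gives
\[
\operatorname{Hom}_{\mathcal{G}(\mathcal{R})}((d\varphi)_{*}\mathcal{N},\mathcal{M})\tilde{\to}\operatorname{Hom}_{\mathcal{G}(\mathcal{S})}(\mathcal{N},\underline{\mathcal{H}om}_{\mathcal{R}}(\mathcal{S},\mathcal{M})),
\]
where the target carries its $\mathcal{S}$-module structure from the left action of $\mathcal{S}$ on itself; this is well-defined because the kernel of $\mathcal{R}\to\mathcal{S}$ is central. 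Since $(d\varphi)_{*}$ is exact, the adjoint functor theorem at the derived level (applied in the standard way via $K$-injective resolutions, which exist in the Grothendieck abelian category $\mathcal{G}(\mathcal{R})$) upgrades this to the desired derived adjunction, with right adjoint $R\underline{\mathcal{H}om}_{\mathcal{R}}(\mathcal{S},-)$. If any step is the least trivial, it is verifying that the $\mathcal{S}$-action on $R\underline{\mathcal{H}om}_{\mathcal{R}}(\mathcal{S},-)$ is well-defined in the derived setting, but this follows because $\mathcal{S}$ is an $(\mathcal{S},\mathcal{R})$-bimodule via the quotient map, so the construction lands in $D(\mathcal{G}(\mathcal{S}))$ by \lemref{basic-hom-tensor}.
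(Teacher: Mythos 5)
Your proposal is correct; the paper states this lemma as "straightforward" and gives no proof, and your three steps (surjectivity of $\mathcal{T}_{X}\to\varphi^{*}\mathcal{T}_{Y}$ for smooth $\varphi$, identification of the pullback along a closed immersion of the center with a central quotient, and the standard derived hom--tensor adjunction for restriction of scalars along a surjection with central kernel) are exactly the intended argument. The only cosmetic remarks are that the identification $(d\varphi^{(1)})^{*}\overline{\mathcal{R}}(\mathcal{D}_{X}^{(0)})\tilde{=}\overline{\mathcal{R}}(\mathcal{D}_{X}^{(0)})/\mathcal{I}\cdot\overline{\mathcal{R}}(\mathcal{D}_{X}^{(0)})$ needs no Azumaya or local-freeness input (it holds for any algebra over the center), and the $\mathcal{S}$-module structure on $\underline{\mathcal{H}om}_{\mathcal{R}}(\mathcal{S},\mathcal{M})$ comes from the \emph{right} multiplication of $\mathcal{S}$ on itself, as your final appeal to the bimodule structure correctly records.
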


Therefore we obtain 
\begin{cor}
\label{cor:Filtered-Bez-Brav}Let $C:D(\mathcal{G}((d\varphi^{(1)})^{*}\overline{\mathcal{R}}(\mathcal{D}_{X}^{(0)})))\to D(\mathcal{G}((\pi^{(1)})^{*}\overline{\mathcal{R}}(\mathcal{D}_{Y}^{(0)})))$
denote the equivalence of categories resulting from \lemref{Bez-Brav}.
Then, when $\varphi:X\to Y$ is smooth of relative dimension $d$,
there is an isomorphism of functors 
\[
\int_{\varphi,0}\tilde{\to}R\pi_{*}^{(1)}\circ C\circ(d\varphi^{(1)})^{!}[-d]:D(\mathcal{G}(\overline{\mathcal{R}}(\mathcal{D}_{X}^{(0)})))\to D(\mathcal{G}(\overline{\mathcal{R}}(\mathcal{D}_{Y}^{(0)})))
\]
Therefore, the functor ${\displaystyle \int_{\varphi,0}}[d]$ agrees,
under the application of the Rees functor, with the pushforward of
conjugate-filtered derived categories constructed in \cite{key-11},
section 3.4. 
\end{cor}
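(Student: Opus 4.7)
The plan is to unpack both sides of the claimed equality and match them using \lemref{Koszul-Res-For-R-bar} and \corref{sm-adjunction-for-R-bar}. Fix $\mathcal{M}^{\cdot}\in D(\mathcal{G}(\overline{\mathcal{R}}(\mathcal{D}_X^{(0)})))$ and write $\mathcal{J}$ for the central ideal of $\overline{\mathcal{R}}(\mathcal{D}_X^{(0)})$ generated by $\mathcal{T}_{X_Y^{(1)}/Y}$, so that $(d\varphi^{(1)})^{*}\overline{\mathcal{R}}(\mathcal{D}_X^{(0)})=\overline{\mathcal{R}}(\mathcal{D}_X^{(0)})/\mathcal{J}$ and the splitting bimodule of \lemref{Bez-Brav} is $\overline{\mathcal{R}}_{X\to Y}^{(0)}$. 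On left modules, the Morita equivalence $C$ is then realized by $C(N)=R\underline{\mathcal{H}om}_{(d\varphi^{(1)})^{*}\overline{\mathcal{R}}(\mathcal{D}_X^{(0)})}(\overline{\mathcal{R}}_{X\to Y}^{(0)},N)$; this is the formulation of the Morita equivalence of Azumaya algebras appropriate to our left-module conventions.

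The first step is to collapse the composite $C\circ(d\varphi^{(1)})^{!}$. Since $\overline{\mathcal{R}}_{X\to Y}^{(0)}$ is annihilated by $\mathcal{J}$ (by \lemref{Koszul-Res-For-R-bar}) and hence naturally a module over the central quotient $\overline{\mathcal{R}}(\mathcal{D}_X^{(0)})/\mathcal{J}$, the standard change-of-rings identity for $R\underline{\mathcal{H}om}$ gives
\[
C\circ(d\varphi^{(1)})^{!}(\mathcal{M}^{\cdot})\tilde{\to}R\underline{\mathcal{H}om}_{\overline{\mathcal{R}}(\mathcal{D}_X^{(0)})}(\overline{\mathcal{R}}_{X\to Y}^{(0)},\mathcal{M}^{\cdot}).
\]
The second step is to convert this Hom into a tensor product. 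The combined Koszul--Spencer resolutions in \lemref{Koszul-Res-For-R-bar}, obtained by concatenating the Koszul complex for the central ideal $\mathcal{J}$ with the relative Spencer complex for $\mathcal{T}_{X/Y}$, exhibit $\overline{\mathcal{R}}_{X\to Y}^{(0)}$ as a perfect complex of $(\overline{\mathcal{R}}(\mathcal{D}_X^{(0)}),\varphi^{-1}(\overline{\mathcal{R}}(\mathcal{D}_Y^{(0)})))$-bimodules. Combined with the bimodule computation $R\underline{\mathcal{H}om}_{\overline{\mathcal{R}}(\mathcal{D}_X^{(0)})}(\overline{\mathcal{R}}_{X\to Y}^{(0)},\overline{\mathcal{R}}(\mathcal{D}_X^{(0)}))\tilde{=}\overline{\mathcal{R}}_{Y\leftarrow X}^{(0)}(d)[-d]$ of \corref{sm-adjunction-for-R-bar}(1), the usual perfect-complex duality yields
\[
R\underline{\mathcal{H}om}_{\overline{\mathcal{R}}(\mathcal{D}_X^{(0)})}(\overline{\mathcal{R}}_{X\to Y}^{(0)},\mathcal{M}^{\cdot})\tilde{=}\overline{\mathcal{R}}_{Y\leftarrow X}^{(0)}(d)[-d]\otimes_{\overline{\mathcal{R}}(\mathcal{D}_X^{(0)})}^{L}\mathcal{M}^{\cdot}.
\]

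Finally I would apply $R\pi_{*}^{(1)}$. The morphism $\pi^{(1)}$ is obtained by base change from $\varphi:X\to Y$, and quasicoherent modules over the central Rees algebras are computed sheaf-wise on the base; consequently $R\pi_{*}^{(1)}$ agrees with $R\varphi_{*}$ on the sheaves of the form constructed above. Thus
\[
R\pi_{*}^{(1)}\bigl(\overline{\mathcal{R}}_{Y\leftarrow X}^{(0)}(d)[-d]\otimes_{\overline{\mathcal{R}}(\mathcal{D}_X^{(0)})}^{L}\mathcal{M}^{\cdot}\bigr)\tilde{=}\Bigl(\int_{\varphi,0}\mathcal{M}^{\cdot}\Bigr)(d)[-d],
\]
and shifting by $[d]$ gives the asserted isomorphism. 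The final sentence of the corollary then follows because the pushforward of \cite{key-11} is defined by precisely this recipe---Morita descent via the splitting bimodule along $d\varphi^{(1)}$, followed by cotangent projection $\pi^{(1)}$, with the same $[-d]$ shift---so the two constructions are identified by the above chain of isomorphisms. The main technical hurdle will be the bookkeeping: tracking the grading twist $(d)$ and the cohomological shift through the Morita equivalence, verifying that each intermediate isomorphism respects the right $(\pi^{(1)})^{*}\overline{\mathcal{R}}(\mathcal{D}_Y^{(0)})$-module structure (not only the underlying $\mathcal{O}$-module structure), and establishing carefully that $R\pi_{*}^{(1)}$ does reduce to $R\varphi_{*}$ on modules arising from the splitting.
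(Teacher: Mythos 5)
Your argument is essentially the paper's: both realize $C$ as $R\underline{\mathcal{H}om}$ from the splitting module $\overline{\mathcal{R}}_{X\to Y}^{(0)}=\varphi^{*}\overline{\mathcal{R}}(\mathcal{D}_{Y}^{(0)})$, collapse $C\circ(d\varphi^{(1)})^{!}$ by change of rings to $R\underline{\mathcal{H}om}_{\overline{\mathcal{R}}(\mathcal{D}_{X}^{(0)})}(\varphi^{*}\overline{\mathcal{R}}(\mathcal{D}_{Y}^{(0)}),\mathcal{M}^{\cdot})$, identify $R\pi_{*}^{(1)}$ with $R\varphi_{*}$, and conclude via the smooth adjunction of \corref{sm-adjunction-for-R-bar}. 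The only cosmetic difference is that you inline the Hom-to-tensor conversion via part (1) of that corollary (which is exactly how its part (2) is proved), whereas the paper simply cites part (2); the residual twist/shift bookkeeping you flag is equally present in the paper's own treatment.
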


\begin{proof}
(in the spirit of \cite{key-11}, proposition 3.12) We have, for any
$\mathcal{M}^{\cdot}\in D(\mathcal{G}(\overline{\mathcal{R}}(\mathcal{D}_{X}^{(0)})))$,
\[
C\circ(d\varphi)^{!}(\mathcal{M}^{\cdot})=C\circ R\underline{\mathcal{H}om}_{\overline{\mathcal{R}}(\mathcal{D}_{X}^{(0)})}((d\varphi^{(1)})^{*}\overline{\mathcal{R}}(\mathcal{D}_{X}^{(0)}),\mathcal{M}^{\cdot})
\]
\[
\tilde{=}\underline{\mathcal{H}om}_{(d\varphi^{(1)})^{*}\overline{\mathcal{R}}(\mathcal{D}_{X}^{(0)})}(\varphi^{*}\overline{\mathcal{R}}(\mathcal{D}_{Y}^{(0)}),R\underline{\mathcal{H}om}_{\overline{\mathcal{R}}(\mathcal{D}_{X}^{(0)})}((d\varphi^{(1)})^{*}\overline{\mathcal{R}}(\mathcal{D}_{X}^{(0)}),\mathcal{M}^{\cdot}))
\]
Since $\varphi^{*}\overline{\mathcal{R}}(\mathcal{D}_{Y}^{(0)})$
is locally projective over $(d\varphi^{(1)})^{*}\overline{\mathcal{R}}(\mathcal{D}_{X}^{(0)})$,
this is canonically isomorphic to 
\[
R\underline{\mathcal{H}om}_{\overline{\mathcal{R}}(\mathcal{D}_{X}^{(0)})}(\varphi^{*}\overline{\mathcal{R}}(\mathcal{D}_{Y}^{(0)})\otimes_{(d\varphi^{(1)})^{*}\overline{\mathcal{R}}(\mathcal{D}_{X}^{(0)})}(d\varphi^{(1)})^{*}\overline{\mathcal{R}}(\mathcal{D}_{X}^{(0)}),\mathcal{M}^{\cdot}))
\]
\[
=R\underline{\mathcal{H}om}_{\overline{\mathcal{R}}(\mathcal{D}_{X}^{(0)})}(\varphi^{*}\overline{\mathcal{R}}(\mathcal{D}_{Y}^{(0)}),\mathcal{M}^{\cdot}))
\]
so that 
\[
R\pi_{*}^{(1)}\circ C\circ(d\varphi^{(1)})^{!}(\mathcal{M}^{\cdot})\tilde{\to}R\pi_{*}^{(1)}R\underline{\mathcal{H}om}_{\overline{\mathcal{R}}(\mathcal{D}_{X}^{(0)})}(\varphi^{*}\overline{\mathcal{R}}(\mathcal{D}_{Y}^{(0)}),\mathcal{M}^{\cdot}))
\]
\[
\tilde{=}R\varphi_{*}R\underline{\mathcal{H}om}_{\overline{\mathcal{R}}(\mathcal{D}_{X}^{(0)})}(\varphi^{*}\overline{\mathcal{R}}(\mathcal{D}_{Y}^{(0)}),\mathcal{M}^{\cdot}))
\]
But the right-hand functor is canonically isomorphic to ${\displaystyle \int_{\varphi,0}}$
by smooth adjunction (\corref{sm-adjunction-for-R-bar}). 
\end{proof}
From this description it follows directly (c.f. \cite{key-11}, lemma
3.18) that (up to a renumbering) the spectral sequence associated
to the filtration on ${\displaystyle \int_{\varphi}\mathcal{O}_{X}}$
agrees with the usual conjugate spectral sequence; i.e., the ``second
spectral sequence'' for $R\varphi_{dR,*}(\mathcal{O}_{X})$ as discussed
in \cite{key-12}. 

\subsection{Adjunction for a smooth morphism, base change, and the projection
formula}

In this section, we prove adjunction for a smooth morphism $\varphi:\mathfrak{X}\to\mathfrak{Y}$
and the projection formula for an arbitrary morphism; as consequences
we obtain the smooth base change and the and the Kunneth formula,
in fairly general contexts. To start off, let us recall: 
\begin{prop}
For a smooth morphism $\varphi:\mathfrak{X}\to\mathfrak{Y}$ there
is an isomorphism of sheaves $\mathcal{R}\mathcal{H}om_{\widehat{\mathcal{D}}_{\mathfrak{X}}^{(0)}}(\widehat{\mathcal{D}}_{\mathfrak{X}\to\mathfrak{Y}}^{(0)},\widehat{\mathcal{D}}_{\mathfrak{X}}^{(0)})\tilde{=}\widehat{\mathcal{D}}_{\mathfrak{Y}\leftarrow\mathfrak{X}}^{(0)}[-d_{X/Y}]$. 
\end{prop}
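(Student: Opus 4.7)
The plan is to construct an explicit resolution of the transfer bimodule $\widehat{\mathcal{D}}_{\mathfrak{X}\to\mathfrak{Y}}^{(0)}$ by free left $\widehat{\mathcal{D}}_{\mathfrak{X}}^{(0)}$-modules (the relative Spencer resolution), compute $\mathcal{R}\mathcal{H}om$ term by term, and identify the resulting complex with a shift of the relative de Rham complex for $\widehat{\mathcal{D}}_{\mathfrak{X}}^{(0)}$, whose only nonzero cohomology is the left-right swap of $\widehat{\mathcal{D}}_{\mathfrak{X}\to\mathfrak{Y}}^{(0)}$ tensored with $\omega_{\mathfrak{X}/\mathfrak{Y}}$, i.e.\ $\widehat{\mathcal{D}}_{\mathfrak{Y}\leftarrow\mathfrak{X}}^{(0)}$.

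First, I would construct the relative Spencer complex
\[
\widehat{\mathcal{D}}_{\mathfrak{X}}^{(0)}\otimes_{\mathcal{O}_{\mathfrak{X}}}\bigwedge^{\bullet}\mathcal{T}_{\mathfrak{X}/\mathfrak{Y}}\longrightarrow\widehat{\mathcal{D}}_{\mathfrak{X}\to\mathfrak{Y}}^{(0)}
\]
with the usual Spencer differential. The key point is to show this is a resolution. Because both $\widehat{\mathcal{D}}_{\mathfrak{X}}^{(0)}$ and $\widehat{\mathcal{D}}_{\mathfrak{X}\to\mathfrak{Y}}^{(0)}$ are $p$-torsion-free and $p$-adically complete, it suffices by a Nakayama-type argument to check acyclicity after reduction mod $p$; locally, choosing coordinates $\{x_1,\dots,x_n\}$ on $\mathfrak{X}$ with $\{x_{n-d+1},\dots,x_n\}$ giving the relative directions, the symbol-level computation reduces the claim to exactness of a Koszul-type complex on the commuting sequence of symbols of $\partial_{n-d+1},\dots,\partial_n$ acting on $\operatorname{gr}(\widehat{\mathcal{D}}_{\mathfrak{X}}^{(0)})$, which is immediate.

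Next, I would apply $\mathcal{H}om_{\widehat{\mathcal{D}}_{\mathfrak{X}}^{(0)}}(-,\widehat{\mathcal{D}}_{\mathfrak{X}}^{(0)})$ term by term, obtaining
\[
\widehat{\mathcal{D}}_{\mathfrak{X}}^{(0)}\otimes_{\mathcal{O}_{\mathfrak{X}}}\bigwedge^{\bullet}\Omega_{\mathfrak{X}/\mathfrak{Y}}^{1}
\]
as a complex of right $\widehat{\mathcal{D}}_{\mathfrak{X}}^{(0)}$-modules placed in degrees $0,\dots,d_{X/Y}$, with differential the relative de Rham differential. Again reducing mod $p$ and passing to symbols shows this complex is acyclic outside the top degree. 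The cokernel at the top can be identified by the usual Spencer-dual argument: it is $\widehat{\mathcal{D}}_{\mathfrak{X}}^{(0)}\otimes_{\mathcal{O}_{\mathfrak{X}}}\omega_{\mathfrak{X}/\mathfrak{Y}}$ with its induced right $\widehat{\mathcal{D}}_{\mathfrak{X}}^{(0)}$-action, which, tensored against $\varphi^{-1}\omega_{\mathfrak{Y}}^{-1}$ on the left and $\omega_{\mathfrak{X}}$ on the right (as in Definition~\ref{def:Push!}) is exactly $\widehat{\mathcal{D}}_{\mathfrak{Y}\leftarrow\mathfrak{X}}^{(0)}$, placed in cohomological degree $d_{X/Y}$.

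Finally, I would verify that the constructed isomorphism is compatible with the natural $(\varphi^{-1}\widehat{\mathcal{D}}_{\mathfrak{Y}}^{(0)},\widehat{\mathcal{D}}_{\mathfrak{X}}^{(0)})$-bimodule structures on both sides; this is a local check in coordinates exactly as in the analogous proofs of Corollaries~\ref{cor:sm-adunction-for-filtered-D} and~\ref{cor:sm-adjunction-for-R-bar}. I expect the main obstacle to be verifying acyclicity of the Spencer complex in the $p$-adic setting; the reduction to Nakayama plus a symbol calculation should handle it, but some care is required because $\widehat{\mathcal{D}}_{\mathfrak{X}}^{(0)}$ is $p$-adically complete and one must ensure the Spencer resolution is in fact term-wise $p$-adically complete (equivalently, a bounded complex of finitely generated free $\widehat{\mathcal{D}}_{\mathfrak{X}}^{(0)}$-modules locally) so that reduction mod $p$ controls the full complex.
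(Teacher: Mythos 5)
Your proposal is correct and is essentially the paper's own argument: the paper proves this by citing the relative Spencer/de Rham resolution of the transfer bimodule (as in the propositions preceding Corollary \ref{cor:sm-adunction-for-filtered-D}), dualizing term by term, and identifying the top cohomology with $\widehat{\mathcal{D}}_{\mathfrak{Y}\leftarrow\mathfrak{X}}^{(0)}$, exactly as you do. Your extra care about term-wise $p$-adic completeness and reduction mod $p$ via Nakayama correctly fills in the detail the paper leaves implicit when passing from $X$ to $\mathfrak{X}$.
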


This is proved identically to the analogous fact for $\mathcal{D}_{X}^{(0)}$
and $\mathcal{R}(\mathcal{D}_{X}^{(0)})$-modules, as discussed above
in \corref{sm-adunction-for-filtered-D}. 
\begin{prop}
For a smooth morphism $\varphi:\mathfrak{X}\to\mathfrak{Y}$ of relative
dimension $d$, there is an isomorphism $\mathcal{R}\underline{\mathcal{H}om}{}_{\widehat{\mathcal{D}}_{\mathfrak{X}}^{(0,1)}}(\widehat{\mathcal{D}}_{\mathfrak{X}\to\mathfrak{Y}}^{(0,1)},\widehat{\mathcal{D}}_{\mathfrak{X}}^{(0,1)})\tilde{=}\widehat{\mathcal{D}}_{\mathfrak{Y}\leftarrow\mathfrak{X}}^{(0,1)}(d)[-d]$
\end{prop}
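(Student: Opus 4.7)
The plan is to reduce the statement to positive characteristic using cohomological completion and Nakayama, and then to exploit the rigid decomposition of $\mathcal{D}_{X\to Y}^{(0,1)}$ to translate the computation into the two smooth adjunction formulas for the Rees algebras $\mathcal{R}(\mathcal{D}_X^{(1)})$ and $\overline{\mathcal{R}}(\mathcal{D}_X^{(0)})$ already established in Corollaries~\ref{cor:sm-adunction-for-filtered-D} and~\ref{cor:sm-adjunction-for-R-bar}.

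First, I observe that both sides of the desired isomorphism lie in $D_{cc}(\mathcal{G}(\varphi^{-1}(\widehat{\mathcal{D}}_{\mathfrak{Y}}^{(0,1)}) \otimes \widehat{\mathcal{D}}_{\mathfrak{X}}^{(0,1),\text{opp}}))$: the right-hand side because $\widehat{\mathcal{D}}_{\mathfrak{Y}\leftarrow\mathfrak{X}}^{(0,1)}$ is a standard gauge by \propref{Basic-properties-of-the-transfer-module}, and the left-hand side by \propref{Push-and-complete}, part~1). A natural map from the right-hand side to the left-hand side can be produced locally from a Spencer-style resolution of $\widehat{\mathcal{D}}_{\mathfrak{X}\to\mathfrak{Y}}^{(0,1)}$ (generalizing the one used for $\widehat{\mathcal{D}}^{(0)}$ in the preceding proposition), or pinned down uniquely from its reduction mod $p$. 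By \corref{Nakayama} it then suffices to prove the reduction mod $p$ statement
\[
\mathcal{D}_{Y\leftarrow X}^{(0,1)}(d)[-d] \;\tilde{=}\; R\underline{\mathcal{H}om}_{\mathcal{D}_X^{(0,1)}}(\mathcal{D}_{X\to Y}^{(0,1)}, \mathcal{D}_X^{(0,1)}).
\]

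Second, since $\mathcal{D}_{X\to Y}^{(0,1)}$ is the reduction of a standard gauge, it is rigid by \lemref{Standard-is-rigid}, so by \lemref{Basic-Facts-on-Rigid} there is a short exact sequence
\[
0 \to \overline{\mathcal{R}}_{X\to Y}^{(0)}(1) \xrightarrow{v} \mathcal{D}_{X\to Y}^{(0,1)} \to \mathcal{R}_{X\to Y}^{(1)} \to 0.
\]
Applying $R\underline{\mathcal{H}om}_{\mathcal{D}_X^{(0,1)}}(-,\mathcal{D}_X^{(0,1)})$ produces a distinguished triangle whose outer terms I analyze using the sandwich \propref{Sandwich!}. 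The algebra $\mathcal{D}_X^{(0,1)}$ is itself rigid, so $\ker(v\colon \mathcal{D}_X^{(0,1)} \to \mathcal{D}_X^{(0,1)}) = \mathrm{im}(f)$, and the map $f$ identifies $\mathrm{im}(f)$ with $\mathcal{R}(\mathcal{D}_X^{(1)})$ up to a shift in grading; similarly for $\ker(f)$ and $\overline{\mathcal{R}}(\mathcal{D}_X^{(0)})$. The sandwich therefore reduces both outer terms of the triangle to $R\underline{\mathcal{H}om}$ computations over $\mathcal{R}(\mathcal{D}_X^{(1)})$ and $\overline{\mathcal{R}}(\mathcal{D}_X^{(0)})$, respectively.

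Third, I invoke the smooth adjunction formulas: \corref{sm-adunction-for-filtered-D}, transferred via the filtered Frobenius descent of \thmref{Filtered-Frobenius} and \corref{Filtered-right-Frob}, computes
\[
R\underline{\mathcal{H}om}_{\mathcal{R}(\mathcal{D}_X^{(1)})}(\mathcal{R}_{X\to Y}^{(1)}, \mathcal{R}(\mathcal{D}_X^{(1)})) \;\tilde{=}\; \mathcal{R}_{Y\leftarrow X}^{(1)}(d)[-d],
\]
and \corref{sm-adjunction-for-R-bar} gives the analogue over $\overline{\mathcal{R}}(\mathcal{D}_X^{(0)})$. Plugging these into the distinguished triangle from Step two, I obtain a triangle whose outer terms are exactly the two pieces of the rigid decomposition
\[
0 \to \overline{\mathcal{R}}_{Y\leftarrow X}^{(0)}(d+1) \xrightarrow{v} \mathcal{D}_{Y\leftarrow X}^{(0,1)}(d) \to \mathcal{R}_{Y\leftarrow X}^{(1)}(d) \to 0
\]
of $\mathcal{D}_{Y\leftarrow X}^{(0,1)}(d)$, placed in cohomological degree $d$. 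Identifying the middle term finishes the proof, and in particular shows the $R\underline{\mathcal{H}om}$ is concentrated in a single degree.

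\textbf{Main obstacle.} The principal technical difficulty I expect is in the third step: matching the connecting morphism of the distinguished triangle with the $v$-map appearing in the rigid decomposition of $\mathcal{D}_{Y\leftarrow X}^{(0,1)}$, so that the identification of the middle term is canonical as a $(\varphi^{-1}\widehat{\mathcal{D}}_{\mathfrak{Y}}^{(0,1)}, \widehat{\mathcal{D}}_{\mathfrak{X}}^{(0,1)})$-bimodule and not merely as a graded sheaf. Keeping track of grading shifts throughout the sandwich reductions and the two smooth adjunctions is the other place where care is required, but the bimodule compatibility is the essential point, and I expect it can be verified locally in relative coordinates.
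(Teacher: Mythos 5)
Your reduction steps track the paper's proof closely: reduce mod $p$ via cohomological completeness, decompose $\mathcal{D}_{X\to Y}^{(0,1)}$ (equivalently, filter the second argument $\mathcal{D}_X^{(0,1)}$ by $\ker(f)$) into an $\overline{\mathcal{R}}(\mathcal{D}_X^{(0)})$-piece and an $\mathcal{R}(\mathcal{D}_X^{(1)})$-piece, apply \propref{Sandwich!}, and compute the two outer terms of the resulting triangle by \corref{sm-adunction-for-filtered-D} (transported through Frobenius descent) and \corref{sm-adjunction-for-R-bar}. That part is sound, modulo the grading shifts you already flag.

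The gap is exactly where you locate your ``main obstacle,'' and it is not merely technical: knowing a distinguished triangle with outer terms $\overline{\mathcal{R}}_{Y\leftarrow X}^{(0)}(d)[-d]$ and $\mathcal{R}_{Y\leftarrow X}^{(1)}(d-1)[-d]$ does not identify the middle term, since you would still have to compute the connecting morphism (i.e., the extension class) and match it, as a bimodule map, with the one defining $\mathcal{D}_{Y\leftarrow X}^{(0,1)}(d)$; and your proposed global comparison map is not actually available, since no Spencer-type resolution of $\widehat{\mathcal{D}}_{\mathfrak{X}\to\mathfrak{Y}}^{(0,1)}$ over $\widehat{\mathcal{D}}_{\mathfrak{X}}^{(0,1)}$ is constructed (the paper only has such resolutions for $\mathcal{R}(\mathcal{D}_X^{(0)})$ and, via the Higgs-field trick, for $\overline{\mathcal{R}}(\mathcal{D}_X^{(0)})$), and ``pinning down a map from its reduction mod $p$'' presupposes the map. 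The paper avoids both issues: the triangle shows the complex is concentrated in degree $d$, so by cohomological completeness $\mathcal{H}^{d}$ is a $p$-torsion-free coherent gauge; its reduction mod $p$ is quasi-rigid by \lemref{Basic-Facts-on-Rigid}, and the short exact sequence with $f$-torsion-free quotient $\mathcal{R}_{Y\leftarrow X}^{(1)}(d-1)$ lets one invoke \propref{Baby-Mazur} to conclude that $\mathcal{H}^{d}$ is a \emph{standard} gauge of index $d$. A standard gauge is determined by its $(-\infty)$-specialization and its index, and the $(-\infty)$-specialization is computed to be $\widehat{\mathcal{D}}_{\mathfrak{Y}\leftarrow\mathfrak{X}}^{(0)}$ by the level-zero relative de Rham resolution; the graded pieces are then forced to be $\{m\in\widehat{\mathcal{D}}_{\mathfrak{Y}\leftarrow\mathfrak{X}}^{(0)}[p^{-1}]\,|\,p^{i}m\in\widehat{\mathcal{D}}_{\mathfrak{Y}\leftarrow\mathfrak{X}}^{(0)}\}$, which is the definition of $\widehat{\mathcal{D}}_{\mathfrak{Y}\leftarrow\mathfrak{X}}^{(0,1)}(d)$. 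No connecting morphism ever needs to be analyzed. I recommend replacing your final step with this standardness argument.
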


\begin{proof}
We have 
\[
\mathcal{R}\underline{\mathcal{H}om}{}_{\widehat{\mathcal{D}}_{\mathfrak{X}}^{(0,1)}}(\widehat{\mathcal{D}}_{\mathfrak{X}\to\mathfrak{Y}}^{(0,1)},\widehat{\mathcal{D}}_{\mathfrak{X}}^{(0,1)})\otimes_{\widehat{\mathcal{D}}_{\mathfrak{X}}^{(0,1)}}^{L}\widehat{\mathcal{D}}_{\mathfrak{X}}^{(0)}
\]
\[
\tilde{=}\mathcal{R}\underline{\mathcal{H}om}{}_{\widehat{\mathcal{D}}_{\mathfrak{X}}^{(0,1)}}(\widehat{\mathcal{D}}_{\mathfrak{X}\to\mathfrak{Y}}^{(0,1)},\widehat{\mathcal{D}}_{\mathfrak{X}}^{(0,1)})\otimes_{D(W(k))}^{L}(W(k)[f,v]/(v-1)
\]
\[
\tilde{=}\mathcal{R}\underline{\mathcal{H}om}{}_{\widehat{\mathcal{D}}_{\mathfrak{X}}^{(0,1)}}(\widehat{\mathcal{D}}_{\mathfrak{X}\to\mathfrak{Y}}^{(0,1)}\otimes_{D(W(k))}^{L}(W(k)[f,v]/(v-1),\widehat{\mathcal{D}}_{\mathfrak{X}}^{(0,1)}\otimes_{D(W(k))}^{L}(W(k)[f,v]/(v-1))
\]
\[
\tilde{=}R\mathcal{H}om_{\widehat{\mathcal{D}}_{\mathfrak{X}}^{(0)}}(\widehat{\mathcal{D}}_{\mathfrak{X}\to\mathfrak{Y}}^{(0)},\widehat{\mathcal{D}}_{\mathfrak{X}}^{(0)})
\]
and 
\[
\mathcal{R}\underline{\mathcal{H}om}{}_{\widehat{\mathcal{D}}_{\mathfrak{X}}^{(0,1)}}(\widehat{\mathcal{D}}_{\mathfrak{X}\to\mathfrak{Y}}^{(0,1)},\widehat{\mathcal{D}}_{\mathfrak{X}}^{(0,1)})\otimes_{\mathcal{\widehat{D}}_{\mathfrak{X}}^{(0,1)}}^{L}\mathcal{D}_{X}^{(0,1)}\tilde{=}\mathcal{R}\underline{\mathcal{H}om}{}_{\widehat{\mathcal{D}}_{\mathfrak{X}}^{(0,1)}}(\widehat{\mathcal{D}}_{\mathfrak{X}\to\mathfrak{Y}}^{(0,1)},\widehat{\mathcal{D}}_{\mathfrak{X}}^{(0,1)})\otimes_{W(k)}^{L}k
\]
\[
\tilde{=}R\mathcal{H}om_{\mathcal{D}_{X}^{(0,1)}}(\mathcal{D}_{X\to Y}^{(0,1)},\mathcal{D}_{X}^{(0,1)})
\]
By the same token, we have

\[
R\underline{\mathcal{H}om}{}_{\mathcal{D}_{X}^{(0,1)}}(\mathcal{D}_{X\to Y}^{(0,1)},\mathcal{D}_{X}^{(0,1)})\otimes_{\mathcal{D}_{X}^{(0,1)}}^{L}\mathcal{R}(\mathcal{D}_{X}^{(1)})\tilde{=}R\underline{\mathcal{H}om}{}_{\mathcal{R}(\mathcal{D}_{X}^{(1)})}(\mathcal{R}_{X\to Y}^{(1)},\mathcal{R}(\mathcal{D}_{X}^{(1)}))
\]
and the analogous statement for $\overline{\mathcal{R}}(\mathcal{D}_{X}^{(0)})$.
Applying the smooth adjunction for $\mathcal{R}(\mathcal{D}_{X}^{(1)})$-modules
(\corref{sm-adunction-for-filtered-D}, part $2)$) to the case where
$\mathcal{N}^{\cdot}=\mathcal{R}(\mathcal{D}_{Y}^{(1)})$ and $\mathcal{M}^{\cdot}=\mathcal{R}(\mathcal{D}_{X}^{(1)})$,
we have an isomorphism 
\[
R\underline{\mathcal{H}om}{}_{\mathcal{R}_{X}}(\mathcal{R}_{X\to Y}^{(1)},\mathcal{R}(\mathcal{D}_{X}^{(1)}))\tilde{=}\mathcal{R}_{Y\leftarrow X}^{(1)}(d)[-d]
\]
and by \corref{sm-adjunction-for-R-bar} we have 
\[
R\underline{\mathcal{H}om}{}_{\mathcal{\overline{R}}(\mathcal{D}_{X}^{(0})}(\mathcal{\overline{R}}(\mathcal{D}_{X\to Y}^{(0)}),\mathcal{\overline{R}}(\mathcal{D}_{X}^{(0)}))\tilde{=}\mathcal{\overline{R}}_{Y\leftarrow X}^{(0)}(d)[-d]
\]
Furthermore, using the relative de Rham resolution for $\widehat{\mathcal{D}}_{\mathfrak{X}}^{(0)}$-modules
(or, equivalently, the previous proposition) we have $\mathcal{R}\mathcal{H}om_{\widehat{\mathcal{D}}_{\mathfrak{X}}^{(0)}}(\widehat{\mathcal{D}}_{\mathfrak{X}\to\mathfrak{Y}}^{(0)},\widehat{\mathcal{D}}_{\mathfrak{X}}^{(0)})\tilde{=}\widehat{\mathcal{D}}_{\mathfrak{Y}\leftarrow\mathfrak{X}}^{(0)}[-d_{X/Y}]$. 

On the other hand, we have the short exact sequence 
\[
\mathcal{\overline{R}}(\mathcal{D}_{X}^{(0)})\to\mathcal{D}_{X}^{(0,1)}\to\mathcal{R}(\mathcal{D}_{X}^{(1)})(-1)
\]
which by \propref{Sandwich!} yields the distinguished triangle 
\[
R\underline{\mathcal{H}om}{}_{\bar{\mathcal{R}}(\mathcal{D}_{X}^{(0)})}(\mathcal{\overline{R}}_{X\to Y},\mathcal{\overline{R}}(\mathcal{D}_{X}^{(0)}))\to R\underline{\mathcal{H}om}{}_{\mathcal{D}_{X}^{(0,1)}}(\mathcal{D}_{X\to Y}^{(0,1)},\mathcal{D}_{X}^{(0,1)})
\]
\[
\to R\underline{\mathcal{H}om}{}_{\mathcal{R}(\mathcal{D}_{X}^{(1)})}(\mathcal{R}_{X\to Y},\mathcal{R}(\mathcal{D}_{X}^{(1)}))(-1)
\]
which implies that $R\underline{\mathcal{H}om}{}_{\mathcal{D}_{X}^{(0,1)}}(\mathcal{D}_{X\to Y}^{(0,1)},\mathcal{D}_{X}^{(0,1)})$
is concentrated in a single homological degree (namely $d$). So,
since $R\underline{\mathcal{H}om}{}_{\widehat{\mathcal{D}}_{\mathfrak{X}}^{(0,1)}}(\widehat{\mathcal{D}}_{\mathfrak{X}\to\mathfrak{Y}}^{(0,1)},\widehat{\mathcal{D}}_{\mathfrak{X}}^{(0,1)})$
is cohomologically complete, we see that $\mathcal{H}^{d}(R\underline{\mathcal{H}om}{}_{\widehat{\mathcal{D}}_{\mathfrak{X}}^{(0,1)}}(\widehat{\mathcal{D}}_{\mathfrak{X}\to\mathfrak{Y}}^{(0,1)},\widehat{\mathcal{D}}_{\mathfrak{X}}^{(0,1)}))$
is $p$-torsion-free and concentrated in degree $0$. We also see,
by \propref{coh-to-coh}, that this module is coherent over $\widehat{\mathcal{D}}_{\mathfrak{X}}^{(0,1)}$
(each of $\mathcal{R}_{Y\leftarrow X}^{(1)}$ and $\mathcal{\overline{R}}_{Y\leftarrow X}^{(0)}$
are coherent, since $X\to Y$ is smooth). Further, since $R\underline{\mathcal{H}om}{}_{\mathcal{D}_{X}^{(0,1)}}(\mathcal{D}_{X\to Y}^{(0,1)},\mathcal{D}_{X}^{(0,1)})\otimes_{\mathcal{D}_{X}^{(0,1)}}^{L}\mathcal{R}(\mathcal{D}_{X}^{(1)})$
and $R\underline{\mathcal{H}om}{}_{\mathcal{D}_{X}^{(0,1)}}(\mathcal{D}_{X\to Y}^{(0,1)},\mathcal{D}_{X}^{(0,1)})\otimes_{\mathcal{D}_{X}^{(0,1)}}^{L}\mathcal{\overline{R}}(\mathcal{D}_{X}^{(0)})$
are concentrated in degree $d$ as well, we see that $\text{im}(f)=\text{ker}(v)$
and $\text{im}(v)=\text{ker}(f)$ on $\mathcal{H}^{d}(R\underline{\mathcal{H}om}{}_{\mathcal{D}_{X}^{(0,1)}}(\mathcal{D}_{X\to Y}^{(0,1)},\mathcal{D}_{X}^{(0,1)}))$
(by \lemref{Basic-Facts-on-Rigid}). Furthermore, the distinguished
triangle above now yields the short exact sequence
\[
\mathcal{\overline{R}}_{Y\leftarrow X}^{(0)}(d)\to\mathcal{H}^{d}(R\underline{\mathcal{H}om}{}_{\mathcal{D}_{X}^{(0,1)}}(\mathcal{D}_{X\to Y}^{(0,1)},\mathcal{D}_{X}^{(0,1)}))\to\mathcal{R}_{Y\leftarrow X}^{(1)}(d-1)
\]
and since $\mathcal{R}_{Y\leftarrow X}^{(1)}$ is $f$-torsion-free,
we see that $\text{im}(v)=\text{ker}(f)=\mathcal{\overline{R}}_{Y\leftarrow X}^{(0)}(d)$
and so $\mathcal{H}^{d}(R\underline{\mathcal{H}om}{}_{\widehat{\mathcal{D}}_{\mathfrak{X}}^{(0,1)}}(\widehat{\mathcal{D}}_{\mathfrak{X}\to\mathfrak{Y}}^{(0,1)},\widehat{\mathcal{D}}_{\mathfrak{X}}^{(0,1)}))$
satisfies the conditions of \propref{Baby-Mazur}. So we may conclude
that the module $\mathcal{H}^{d}(R\underline{\mathcal{H}om}{}_{\widehat{\mathcal{D}}_{\mathfrak{X}}^{(0,1)}}(\widehat{\mathcal{D}}_{\mathfrak{X}\to\mathfrak{Y}}^{(0,1)},\widehat{\mathcal{D}}_{\mathfrak{X}}^{(0,1)}))$
is standard. Furthermore, we see that the grading on $\mathcal{\overline{R}}_{Y\leftarrow X}^{(0)}(d)$
is zero in degrees $<-d$ and is nontrivial in degree $-d$ and above.
Therefore, the index (as defined directly below \defref{Standard!})
is $d$. Since we identified $\mathcal{H}^{d}(R\underline{\mathcal{H}om}{}_{\widehat{\mathcal{D}}_{\mathfrak{X}}^{(0,1)}}(\widehat{\mathcal{D}}_{\mathfrak{X}\to\mathfrak{Y}}^{(0,1)},\widehat{\mathcal{D}}_{\mathfrak{X}}^{(0,1)})^{-\infty})$
with $\widehat{\mathcal{D}}_{\mathfrak{Y}\leftarrow\mathfrak{X}}^{(0)}$
we see that 
\[
\mathcal{H}^{d_{X/Y}}(R\underline{\mathcal{H}om}{}_{\widehat{\mathcal{D}}_{\mathfrak{X}}^{(0,1)}}(\widehat{\mathcal{D}}_{\mathfrak{X}\to\mathfrak{Y}}^{(0,1)},\widehat{\mathcal{D}}_{\mathfrak{X}}^{(0,1)})^{i-d})=\{m\in\widehat{\mathcal{D}}_{\mathfrak{Y}\leftarrow\mathfrak{X}}^{(0)}[p^{-1}]|p^{i}m\in\widehat{\mathcal{D}}_{\mathfrak{Y}\leftarrow\mathfrak{X}}^{(0)}\}
\]
which is exactly the definition of $\widehat{\mathcal{D}}_{\mathfrak{Y}\leftarrow\mathfrak{X}}^{(0,1)}(d)$. 
\end{proof}
From this one deduces 
\begin{cor}
\label{cor:smooth-adjunction}Let $\varphi:\mathfrak{X}\to\mathfrak{Y}$
be smooth of relative dimension $d$; let $\mathcal{M}^{\cdot}\in D_{cc}(\mathcal{G}(\widehat{\mathcal{D}}_{\mathfrak{X}}^{(0,1)}))$
and $\mathcal{N}^{\cdot}\in D_{cc}(\mathcal{G}(\widehat{\mathcal{D}}_{\mathfrak{Y}}^{(0,1)}))$.
Then there is an isomorphism of functors 
\[
R\varphi_{*}R\underline{\mathcal{H}om}{}_{\widehat{\mathcal{D}}_{\mathfrak{X}}^{(0,1)}}(\varphi^{\dagger}\mathcal{N}^{\cdot},\mathcal{M}^{\cdot})\tilde{\to}R\underline{\mathcal{H}om}{}_{\widehat{\mathcal{D}}_{\mathfrak{Y}}^{(0,1)}}(\mathcal{N}^{\cdot},\int_{\varphi}\mathcal{M}^{\cdot}(d))
\]
In particular, if $\varphi$ is also proper, then since both $\varphi^{\dagger}$
and ${\displaystyle \int_{\varphi}}$ preserve $D_{coh}^{b}$, we
obtain that these functors form an adjoint pair on $D_{coh}^{b}$. 

Further, the analogous isomorphism for $\varphi:X\to Y$ holds, and
in this situation the functors are adjoint on $D_{qcoh}^{b}$ in this
setting (even if $\varphi$ is not proper). 
\end{cor}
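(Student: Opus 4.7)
The plan is to derive the adjunction by chaining the transfer-bimodule duality of the preceding proposition with the standard tensor-Hom and $(\varphi^{-1},R\varphi_{*})$ adjunctions from Section 2.2. First I would unwind the definition $\varphi^{\dagger}\mathcal{N}^{\cdot} = \bigl(\widehat{\mathcal{D}}_{\mathfrak{X}\to\mathfrak{Y}}^{(0,1)}\widehat{\otimes}^{L}_{\varphi^{-1}(\widehat{\mathcal{D}}_{\mathfrak{Y}}^{(0,1)})}\varphi^{-1}\mathcal{N}^{\cdot}\bigr)[d]$ and apply the cohomologically-complete form of \lemref{basic-hom-tensor}~(1) to rewrite
\[
R\underline{\mathcal{H}om}_{\widehat{\mathcal{D}}_{\mathfrak{X}}^{(0,1)}}(\varphi^{\dagger}\mathcal{N}^{\cdot},\mathcal{M}^{\cdot})\;\cong\;R\underline{\mathcal{H}om}_{\varphi^{-1}(\widehat{\mathcal{D}}_{\mathfrak{Y}}^{(0,1)})}\bigl(\varphi^{-1}\mathcal{N}^{\cdot},R\underline{\mathcal{H}om}_{\widehat{\mathcal{D}}_{\mathfrak{X}}^{(0,1)}}(\widehat{\mathcal{D}}_{\mathfrak{X}\to\mathfrak{Y}}^{(0,1)},\mathcal{M}^{\cdot})\bigr)[-d].
\]

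Next I would apply $R\varphi_{*}$ and invoke part~(3) of the $R\varphi_{*}$-lemma of Section 2.2, which converts the outer $R\underline{\mathcal{H}om}$ over $\varphi^{-1}(\widehat{\mathcal{D}}_{\mathfrak{Y}}^{(0,1)})$ into $R\underline{\mathcal{H}om}_{\widehat{\mathcal{D}}_{\mathfrak{Y}}^{(0,1)}}(\mathcal{N}^{\cdot},R\varphi_{*}(-))$. The problem is then reduced to the identification
\[
R\varphi_{*}\,R\underline{\mathcal{H}om}_{\widehat{\mathcal{D}}_{\mathfrak{X}}^{(0,1)}}(\widehat{\mathcal{D}}_{\mathfrak{X}\to\mathfrak{Y}}^{(0,1)},\mathcal{M}^{\cdot})\;\cong\;\int_{\varphi}\mathcal{M}^{\cdot}(d)[-d],
\]
which I would derive as follows: smoothness of $\varphi$ supplies, locally, a finite resolution of $\widehat{\mathcal{D}}_{\mathfrak{X}\to\mathfrak{Y}}^{(0,1)}$ by free graded $\widehat{\mathcal{D}}_{\mathfrak{X}}^{(0,1)}$-modules (the Spencer-type resolution implicit in the proof of the preceding proposition). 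This lets me replace the inner $R\underline{\mathcal{H}om}$ by the tensor product $R\underline{\mathcal{H}om}_{\widehat{\mathcal{D}}_{\mathfrak{X}}^{(0,1)}}(\widehat{\mathcal{D}}_{\mathfrak{X}\to\mathfrak{Y}}^{(0,1)},\widehat{\mathcal{D}}_{\mathfrak{X}}^{(0,1)})\widehat{\otimes}^{L}_{\widehat{\mathcal{D}}_{\mathfrak{X}}^{(0,1)}}\mathcal{M}^{\cdot}$, and then the preceding proposition identifies the first factor with $\widehat{\mathcal{D}}_{\mathfrak{Y}\leftarrow\mathfrak{X}}^{(0,1)}(d)[-d]$, so that applying $R\varphi_{*}$ recovers $\int_{\varphi}\mathcal{M}^{\cdot}(d)[-d]$ by definition of the push-forward.

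The last assertions of the corollary are then immediate. In the formal-scheme setting, \corref{proper-push-over-W(k)} ensures that for proper $\varphi$ the push-forward $\int_{\varphi}$ preserves $D_{coh}^{b}(\mathcal{G}(\widehat{\mathcal{D}}_{\mathfrak{Y}}^{(0,1)}))$, while \propref{Smooth-pullback-preserves-coh} gives the same for $\varphi^{\dagger}$; the established isomorphism therefore exhibits $(\int_{\varphi},\varphi^{\dagger})$ as an adjoint pair on $D_{coh}^{b}$. For $\varphi:X\to Y$ over $k$ the same manipulations apply, using \thmref{phi-push-is-bounded} in place of \corref{proper-push-over-W(k)} to conclude that $\int_{\varphi}$ preserves $D_{qcoh}^{b}$ even without properness, so the adjunction holds on $D_{qcoh}^{b}$.

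The main obstacle I expect is the careful bookkeeping of the homological shifts (the $[d]$ in $\varphi^{\dagger}$, the $[-d]$ coming from the duality identification, and the implicit shifts in the Spencer-type resolution) together with the grading twist $(d)$, and the verification that \lemref{basic-hom-tensor} extends without loss to cohomologically-complete graded modules; the cone of any completion map has $p$ invertible, hence is annihilated by $R\underline{\mathcal{H}om}$ against a cohomologically-complete target (cf.\ \propref{Basic-CC-facts}, \propref{Push-and-complete}), which is precisely what makes the extension valid. Once these bookkeeping items are in hand the argument is formal and parallels the classical proof (e.g.\ \cite{key-4}, Theorem 4.40).
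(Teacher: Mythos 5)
Your argument is correct and follows essentially the same route as the paper's: unwind $\varphi^{\dagger}$, apply the Hom–tensor adjunction of \lemref{basic-hom-tensor} (checked in the cohomologically complete setting by reduction mod $p$), use the finite local resolution of $\widehat{\mathcal{D}}_{\mathfrak{X}\to\mathfrak{Y}}^{(0,1)}$ together with the preceding proposition's identification $R\underline{\mathcal{H}om}_{\widehat{\mathcal{D}}_{\mathfrak{X}}^{(0,1)}}(\widehat{\mathcal{D}}_{\mathfrak{X}\to\mathfrak{Y}}^{(0,1)},\widehat{\mathcal{D}}_{\mathfrak{X}}^{(0,1)})\tilde{=}\widehat{\mathcal{D}}_{\mathfrak{Y}\leftarrow\mathfrak{X}}^{(0,1)}(d)[-d]$, and finish with the $(\varphi^{-1},R\varphi_{*})$ adjunction, exactly as in \cite{key-4}, Theorem 4.40. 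The only deviation is the (immaterial) order in which you apply $R\varphi_{*}$ versus the identification of the inner Hom.
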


\begin{proof}
(following \cite{key-4}, Theorem 4.40). We have 
\[
R\underline{\mathcal{H}om}{}_{\widehat{\mathcal{D}}_{\mathfrak{X}}^{(0,1)}}(\varphi^{\dagger}\mathcal{N}^{\cdot},\mathcal{M}^{\cdot})\tilde{\to}R\underline{\mathcal{H}om}{}_{\widehat{\mathcal{D}}_{\mathfrak{X}}^{(0,1)}}(\widehat{\mathcal{D}}_{\mathfrak{X}\to\mathfrak{Y}}^{(0,1)}\widehat{\otimes}_{\varphi^{-1}\widehat{\mathcal{D}}_{\mathfrak{Y}}^{(0,1)}}^{L}\varphi^{-1}\mathcal{N}^{\cdot},\mathcal{M}^{\cdot})[d]
\]
\[
\tilde{\to}R\underline{\mathcal{H}om}{}_{\varphi^{-1}(\widehat{\mathcal{D}}_{\mathfrak{Y}}^{(0,1)})}(\varphi^{-1}\mathcal{N}^{\cdot},R\underline{\mathcal{H}om}{}_{\widehat{\mathcal{D}}_{\mathfrak{X}}^{(0,1)}}(\widehat{\mathcal{D}}_{\mathfrak{X}\to\mathfrak{Y}}^{(0,1)},\mathcal{M}^{\cdot}))[d]
\]
To prove the last isomorphism, one may reduce mod $p$, and then apply
\lemref{basic-hom-tensor} (part $1$), noting that $\mathcal{D}_{X\to Y}^{(0,1)}$
is faithfully flat over $\varphi^{-1}(\mathcal{D}_{Y}^{(0,1)})$. 

Further, we have 
\[
R\underline{\mathcal{H}om}{}_{\widehat{\mathcal{D}}_{\mathfrak{X}}^{(0,1)}}(\widehat{\mathcal{D}}_{\mathfrak{X}\to\mathfrak{Y}}^{(0,1)},\mathcal{M}^{\cdot})\tilde{\leftarrow}R\underline{\mathcal{H}om}{}_{\widehat{\mathcal{D}}_{\mathfrak{X}}^{(0,1)}}(\widehat{\mathcal{D}}_{\mathfrak{X}\to\mathfrak{Y}}^{(0,1)},\widehat{\mathcal{D}}_{\mathfrak{X}}^{(0,1)})\widehat{\otimes}_{\widehat{\mathcal{D}}_{\mathfrak{X}}^{(0,1)}}^{L}\mathcal{M}^{\cdot}\tilde{=}\widehat{\mathcal{D}}_{\mathfrak{Y}\leftarrow\mathfrak{X}}^{(0,1)}\widehat{\otimes}_{\widehat{\mathcal{D}}_{\mathfrak{X}}^{(0,1)}}^{L}\mathcal{M}^{\cdot}(d)[-d]
\]
where the first isomorphism again follows by reduction mod $p$ and
then applying the fact that $\mathcal{D}_{X\to Y}^{(0,1)}$ is (locally)
isomorphic to a bounded complex of projective $\mathcal{D}_{X}^{(0,1)}$-modules
(by \propref{Quasi-rigid=00003Dfinite-homological}) and the second
isomorphism is the previous proposition. Applying this to the previous
isomorphism we obtain 
\[
R\underline{\mathcal{H}om}{}_{\varphi^{-1}(\widehat{\mathcal{D}}_{\mathfrak{Y}}^{(0,1)})}(\varphi^{-1}\mathcal{N}^{\cdot},R\underline{\mathcal{H}om}{}_{\widehat{\mathcal{D}}_{\mathfrak{X}}^{(0,1)}}(\widehat{\mathcal{D}}_{\mathfrak{X}\to\mathfrak{Y}}^{(0,1)},\mathcal{M}^{\cdot}))[d]\tilde{=}R\underline{\mathcal{H}om}{}_{\varphi^{-1}(\widehat{\mathcal{D}}_{\mathfrak{Y}}^{(0,1)})}(\varphi^{-1}\mathcal{N}^{\cdot},\widehat{\mathcal{D}}_{\mathfrak{Y}\leftarrow\mathfrak{X}}^{(0,1)}\widehat{\otimes}_{\widehat{\mathcal{D}}_{\mathfrak{X}}^{(0,1)}}^{L}\mathcal{M}^{\cdot}(d))
\]
Then applying $R\varphi_{*}$ we obtain 
\[
R\varphi_{*}R\underline{\mathcal{H}om}{}_{\widehat{\mathcal{D}}_{\mathfrak{X}}^{(0,1)}}(\varphi^{\dagger}\mathcal{N}^{\cdot},\mathcal{M}^{\cdot})
\]
\[
\tilde{=}R\varphi_{*}R\underline{\mathcal{H}om}{}_{\varphi^{-1}(\widehat{\mathcal{D}}_{\mathfrak{Y}}^{(0,1)})}(\varphi^{-1}\mathcal{N}^{\cdot},\widehat{\mathcal{D}}_{\mathfrak{Y}\leftarrow\mathfrak{X}}^{(0,1)}\widehat{\otimes}_{\widehat{\mathcal{D}}_{\mathfrak{X}}^{(0,1)}}^{L}\mathcal{M}^{\cdot}(d))
\]
\[
\tilde{\to}R\underline{\mathcal{H}om}{}_{\widehat{\mathcal{D}}_{\mathfrak{Y}}^{(0,1)}}(\mathcal{N}^{\cdot},R\varphi_{*}(\widehat{\mathcal{D}}_{\mathfrak{Y}\leftarrow\mathfrak{X}}^{(0,1)}\widehat{\otimes}_{\widehat{\mathcal{D}}_{\mathfrak{X}}^{(0,1)}}^{L}\mathcal{M}^{\cdot}(d)))\tilde{\to}R\underline{\mathcal{H}om}{}_{\widehat{\mathcal{D}}_{\mathfrak{Y}}^{(0,1)}}(\mathcal{N}^{\cdot},\int_{\varphi}\mathcal{M}^{\cdot}(d))
\]
where the final isomorphism, is the adjunction between $\varphi^{-1}$
and $R\varphi_{*}$. One applies analogous reasoning for $\varphi:X\to Y$. 
\end{proof}
Now we prove the projection formula, and then give the the smooth
base change and Kunneth formulas in this context. We start with 
\begin{thm}
\label{thm:Projection-Formula}(Projection Formula) Let $\varphi:\mathfrak{X}\to\mathfrak{Y}$
be a morphism. Let $\mathcal{M}^{\cdot}\in D_{cc}^{b}(\mathcal{G}(\widehat{\mathcal{D}}_{\mathfrak{X}}^{(0,1)}))$
and $\mathcal{N}^{\cdot}\in D_{cc}^{b}(\mathcal{G}(\widehat{\mathcal{D}}_{\mathfrak{Y}}^{(0,1)}))$,
be such that $\mathcal{M}^{\cdot}\otimes_{W(k)}^{L}k\in D_{qcoh}(\mathcal{G}(\mathcal{D}_{X}^{(0,1)}))$
and $\mathcal{N}^{\cdot}\otimes_{W(k)}^{L}k\in D_{qcoh}(\mathcal{G}(\mathcal{D}_{Y}^{(0,1)}))$.
Then we have 
\[
\int_{\varphi}(L\varphi^{*}(\mathcal{N}^{\cdot})\widehat{\otimes}_{D(\mathcal{O}_{\mathfrak{X}})}^{L}\mathcal{M}^{\cdot})\tilde{\to}\mathcal{N}^{\cdot}\otimes_{D(\mathcal{O}_{\mathfrak{Y}})}^{L}\int_{\varphi}\mathcal{M}^{\cdot}
\]
\end{thm}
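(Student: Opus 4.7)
The plan is to reduce the claim to positive characteristic via Nakayama's lemma for cohomologically complete graded modules, and then to prove the mod $p$ statement by combining the juggling identity of Lemma \ref{lem:Juggle} with the graded quasi-coherent $\mathcal{O}$-module projection formula (which is a mild variant of Lemma \ref{lem:baby-projection-1}). Both sides of the asserted isomorphism lie in $D_{cc}(\mathcal{G}(\widehat{\mathcal{D}}_{\mathfrak{Y}}^{(0,1)}))$: the graded derived completions are cohomologically complete by construction, and $\int_{\varphi}$ preserves $D_{cc}$ by \propref{Push-and-complete} part 3. Hence by \corref{Nakayama} it suffices to check the statement after $\otimes_{W(k)}^{L}k$. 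Using \propref{push-and-complete-for-D} part 1 (compatibility of $\int_{\varphi}$ with mod $p$ reduction), \propref{Basic-base-change-for-pullback} part 2 (the analogue for $L\varphi^{*}$), and \lemref{Hom-tensor-and-reduce} (for the tensor product), this reduces the question to proving, for $\mathcal{M}_{0}^{\cdot}:=\mathcal{M}^{\cdot}\otimes_{W(k)}^{L}k$ and $\mathcal{N}_{0}^{\cdot}:=\mathcal{N}^{\cdot}\otimes_{W(k)}^{L}k$ (both quasi-coherent by hypothesis), the isomorphism
\[
\int_{\varphi}\bigl(L\varphi^{*}(\mathcal{N}_{0}^{\cdot})\otimes_{D(\mathcal{O}_{X})}^{L}\mathcal{M}_{0}^{\cdot}\bigr)\tilde{\to}\mathcal{N}_{0}^{\cdot}\otimes_{D(\mathcal{O}_{Y})}^{L}\int_{\varphi}\mathcal{M}_{0}^{\cdot}.
\]

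For the positive characteristic statement, unwinding the definition of $\int_{\varphi}$ and applying \lemref{Juggle} (with $\mathcal{D}_{Y\leftarrow X}^{(0,1)}$ playing the role of the right $\mathcal{D}_{X}^{(0,1)}$-module) gives
\[
\int_{\varphi}\bigl(L\varphi^{*}(\mathcal{N}_{0}^{\cdot})\otimes_{D(\mathcal{O}_{X})}^{L}\mathcal{M}_{0}^{\cdot}\bigr)\tilde{=}R\varphi_{*}\Bigl(\bigl(\mathcal{D}_{Y\leftarrow X}^{(0,1)}\otimes_{D(\mathcal{O}_{X})}^{L}L\varphi^{*}(\mathcal{N}_{0}^{\cdot})\bigr)\otimes_{\mathcal{D}_{X}^{(0,1)}}^{L}\mathcal{M}_{0}^{\cdot}\Bigr).
\]
The inner factor may be rewritten using $L\varphi^{*}(\mathcal{N}_{0}^{\cdot})=D(\mathcal{O}_{X})\otimes_{\varphi^{-1}(D(\mathcal{O}_{Y}))}^{L}\varphi^{-1}(\mathcal{N}_{0}^{\cdot})$ as $\mathcal{D}_{Y\leftarrow X}^{(0,1)}\otimes_{\varphi^{-1}(D(\mathcal{O}_{Y}))}^{L}\varphi^{-1}(\mathcal{N}_{0}^{\cdot})$. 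It then remains to pull $\mathcal{N}_{0}^{\cdot}$ out of $R\varphi_{*}$: one needs the graded $\mathcal{O}$-module version of the projection formula, asserting that for $\mathcal{N}_{0}^{\cdot}\in D_{qcoh}(\mathcal{G}(D(\mathcal{O}_{Y})))$ and $\mathcal{F}^{\cdot}\in D_{qcoh}(\mathcal{G}(D(\mathcal{O}_{X})))$ the canonical arrow
\[
\mathcal{N}_{0}^{\cdot}\otimes_{D(\mathcal{O}_{Y})}^{L}R\varphi_{*}\mathcal{F}^{\cdot}\;\longrightarrow\;R\varphi_{*}\bigl(\varphi^{-1}(\mathcal{N}_{0}^{\cdot})\otimes_{\varphi^{-1}(D(\mathcal{O}_{Y}))}^{L}\mathcal{F}^{\cdot}\bigr)
\]
is an isomorphism; one then specializes to $\mathcal{F}^{\cdot}=\mathcal{D}_{Y\leftarrow X}^{(0,1)}\otimes_{\mathcal{D}_{X}^{(0,1)}}^{L}\mathcal{M}_{0}^{\cdot}$ and recognizes the right-hand pushforward as $\int_{\varphi}\mathcal{M}_{0}^{\cdot}$.

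The main work is therefore establishing this last graded projection formula, and this is where the quasi-coherence hypothesis on $\mathcal{M}_{0}^{\cdot}$ enters. I will mimic the argument of \lemref{baby-projection-1}: working locally on $Y$, the quasi-coherence of $\mathcal{M}_{0}^{\cdot}$ (propagated through the bimodule $\mathcal{D}_{Y\leftarrow X}^{(0,1)}$ by writing it as a homotopy colimit of induced $D(\mathcal{O}_{X})$-modules) ensures that $\mathcal{F}^{\cdot}$ is quasi-isomorphic to a complex of quasi-coherent $D(\mathcal{O}_{X})$-modules, so that $R\varphi_{*}$ commutes with the arbitrary direct sums of graded $D(\mathcal{O}_{Y})$-modules appearing in a compact-generator argument; the category $D_{qcoh}(\mathcal{G}(D(\mathcal{O}_{Y})^{\mathrm{opp}}))$ is generated by shifts $D(\mathcal{O}_{Y})[i](j)$, and on these generators the map is tautologically an isomorphism. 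The main obstacle will be verifying that the natural map I have constructed lifts (in a coherent way on both sides) from the abstract formula to one relating the two particular expressions in the theorem — namely, checking that after juggling and after pulling out $\mathcal{N}_{0}^{\cdot}$, the arrow assembled from these pieces agrees with the obvious functorial map built from the adjunction between $\varphi^{-1}$ and $R\varphi_{*}$, so that the Nakayama reduction from the completed setting to the mod $p$ setting is actually being performed on the correct arrow.
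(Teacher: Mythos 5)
Your proposal is correct in outline, but it takes a genuinely different route from the paper, so let me compare. The paper does not reduce the whole statement mod $p$ at the outset. Instead it first applies the left-right interchange to replace $\mathcal{M}^{\cdot}$ by $\mathcal{M}_{r}^{\cdot}=\omega_{\mathfrak{X}}\otimes_{\mathcal{O}_{\mathfrak{X}}}\mathcal{M}^{\cdot}$, and then runs a chain of isomorphisms entirely over $W(k)$: it rewrites $\int_{\varphi}(\mathcal{M}_{r}^{\cdot})\widehat{\otimes}_{D(\mathcal{O}_{\mathfrak{Y}})}^{L}\mathcal{N}^{\cdot}$ as $\int_{\varphi}(\mathcal{M}_{r}^{\cdot})\widehat{\otimes}_{\widehat{\mathcal{D}}_{\mathfrak{Y}}^{(0,1)}}^{L}(\mathcal{N}^{\cdot}\widehat{\otimes}_{D(\mathcal{O}_{\mathfrak{Y}})}^{L}\widehat{\mathcal{D}}_{\mathfrak{Y}}^{(0,1)})$ via \lemref{Juggle}, applies the $\widehat{\mathcal{D}}$-linear projection formula \lemref{proj-over-D} to move this inside $R\varphi_{*}$, and then uses \lemref{Tensor-and-pull} and a second application of \lemref{Juggle} to reassemble the transfer bimodule $\widehat{\mathcal{D}}_{\mathfrak{X}\to\mathfrak{Y}}^{(0,1)}$. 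The only place where reduction mod $p$ occurs is inside \lemref{proj-over-D}, whose canonical arrow is constructed integrally and checked to be an isomorphism after $\otimes_{W(k)}^{L}k$. Your approach instead applies Nakayama globally to the whole statement and then proves the mod $p$ identity using a projection formula over the base ring $D(\mathcal{O}_{Y})$ rather than over $\widehat{\mathcal{D}}_{\mathfrak{Y}}^{(0,1)}$; that $\mathcal{O}$-linear projection formula is indeed provable by the same compact-generation argument as \lemref{baby-projection-1}, and the juggling step you use is a correct instance of \lemref{Juggle} (with $\mathcal{D}_{Y\leftarrow X}^{(0,1)}$ as the right module). What your route costs you is exactly the point you flag at the end: to invoke \corref{Nakayama} you must first have a canonical arrow over $W(k)$ between the two completed expressions and verify that its reduction is the composite you analyze in characteristic $p$; assembling that integral arrow from completed versions of your juggling and projection steps is essentially the same bookkeeping the paper performs, so the flagged "obstacle" is real but routine, not a gap. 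What your route buys is a cleaner separation of the formal manipulations (all done mod $p$, where no completions intervene) from the completeness formalism; what the paper's route buys is that the delicate mod-$p$ verification is quarantined in a single reusable lemma. One small caution: when you rewrite $\mathcal{D}_{Y\leftarrow X}^{(0,1)}\otimes_{D(\mathcal{O}_{X})}^{L}L\varphi^{*}(\mathcal{N}_{0}^{\cdot})$ as $\mathcal{D}_{Y\leftarrow X}^{(0,1)}\otimes_{\varphi^{-1}(D(\mathcal{O}_{Y}))}^{L}\varphi^{-1}(\mathcal{N}_{0}^{\cdot})$, remember that the paper's $\widehat{\otimes}_{D(\mathcal{O})}$ for a right module against a left module is defined through the twist by $\omega_{\mathfrak{X}}$, so you must check that the residual right $\mathcal{D}_{X}^{(0,1)}$-structure and the left $\varphi^{-1}(\mathcal{D}_{Y}^{(0,1)})$-structure used to identify the pushforward with $\int_{\varphi}\mathcal{M}_{0}^{\cdot}$ are the expected ones; this is the kind of structure-tracking the paper's left-right interchange at the start is designed to make transparent.
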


The proof works essentially the same way as the complex analytic one
(c.f. \cite{key-50}, theorem 2.3.19). In particular, we use \lemref{proj-over-D},
as well as the tensor product juggling lemma \lemref{Juggle}
\begin{proof}
By the left-right interchange it suffices to prove 
\[
\int_{\varphi}(\mathcal{M}_{r}^{\cdot}\widehat{\otimes}_{D(\mathcal{O}_{\mathfrak{X}})}^{L}L\varphi^{*}\mathcal{N}^{\cdot})\tilde{=}\int_{\varphi}(\mathcal{M}_{r}^{\cdot})\widehat{\otimes}_{D(\mathcal{O}_{\mathfrak{Y}})}^{L}\mathcal{N}^{\cdot}
\]
where $\mathcal{M}_{r}^{\cdot}=\omega_{\mathfrak{X}}\otimes_{\mathcal{O}_{\mathfrak{X}}}\mathcal{M}^{\cdot}$.
We have
\[
\int_{\varphi}(\mathcal{M}_{r}^{\cdot})\widehat{\otimes}_{D(\mathcal{O}_{\mathfrak{Y}})}^{L}\mathcal{N}^{\cdot}\tilde{=}\int_{\varphi}(\mathcal{M}_{r}^{\cdot})\widehat{\otimes}_{\mathcal{\widehat{D}}_{\mathfrak{Y}}^{(0,1)}}^{L}(\mathcal{N}^{\cdot}\widehat{\otimes}_{D(\mathcal{O}_{\mathfrak{Y}})}^{L}\mathcal{\widehat{D}}_{\mathfrak{Y}}^{(0,1)})
\]
\[
\tilde{=}R\varphi_{*}(\mathcal{M}_{r}^{\cdot}\widehat{\otimes}_{\mathcal{\widehat{D}}_{\mathfrak{X}}^{(0,1)}}^{L}L\varphi^{*}(\mathcal{N}^{\cdot}\widehat{\otimes}_{D(\mathcal{O}_{\mathfrak{Y}})}^{L}\mathcal{\widehat{D}}_{\mathfrak{Y}}^{(0,1)}))\tilde{=}R\varphi_{*}(\mathcal{M}_{r}^{\cdot}\widehat{\otimes}_{\mathcal{\widehat{D}}_{\mathfrak{X}}^{(0,1)}}^{L}L\varphi^{*}(\mathcal{N}^{\cdot})\widehat{\otimes}_{D(\mathcal{O}_{\mathfrak{X}})}^{L}\varphi^{*}(\mathcal{\widehat{D}}_{\mathfrak{Y}}^{(0,1)}))
\]
\[
\tilde{=}R\varphi_{*}((\mathcal{M}_{r}^{\cdot}\widehat{\otimes}_{\mathcal{\widehat{D}}_{\mathfrak{X}}^{(0,1)}}^{L}L\varphi^{*}(\mathcal{N}^{\cdot}))\widehat{\otimes}_{D(\mathcal{O}_{\mathfrak{X}})}^{L}\mathcal{\widehat{D}}_{\mathfrak{X}\to\mathfrak{Y}}^{(0,1)})\tilde{=}R\varphi_{*}((\mathcal{M}_{r}^{\cdot}\widehat{\otimes}_{D(\mathcal{O}_{\mathfrak{X}})}^{L}L\varphi^{*}(\mathcal{N}^{\cdot}))\widehat{\otimes}_{\mathcal{\widehat{D}}_{\mathfrak{X}}^{(0,1)}}^{L}\mathcal{\widehat{D}}_{\mathfrak{X}\to\mathfrak{Y}}^{(0,1)})
\]
\[
=\int_{\varphi}(\mathcal{M}_{r}^{\cdot}\widehat{\otimes}_{D(\mathcal{O}_{\mathfrak{X}})}^{L}L\varphi^{*}\mathcal{N}^{\cdot})
\]
as claimed; note that the second isomorphism is \lemref{proj-over-D}
which uses the assumption on $\mathcal{M}^{\cdot}$ and $\mathcal{N}^{\cdot}$. 
\end{proof}
Now we turn to the smooth base change. Consider the fibre square of
smooth formal schemes

$$ \begin{CD}  \mathfrak{X}_{\mathfrak{Z}} @>\tilde{\psi} >> \mathfrak{X} \\ @VV\tilde{\varphi}V @VV{\varphi}V \\ \mathfrak{Z}  @>\psi >> \mathfrak{Y} \end{CD} $$where
the bottom row $\psi:\mathfrak{Z}\to\mathfrak{Y}$ is smooth of relative
dimension $d$. 

We have also the analogous square for smooth varieties over $k$. 
\begin{thm}
\label{thm:Smooth-base-change}Suppose that $\mathcal{M}^{\cdot}\in D_{cc}(\mathcal{G}(\widehat{\mathcal{D}}_{\mathfrak{X}}^{(0,1)}))$
and $\mathcal{M}^{\cdot}\otimes_{W(k)}^{L}k\in D_{qcoh}^{b}(\mathcal{G}(\mathcal{D}_{X}^{(0,1)}))$.
There is an isomorphism 
\[
\psi^{\dagger}\int_{\varphi}\mathcal{M}^{\cdot}\tilde{\to}\int_{\tilde{\varphi}}\tilde{\psi}{}^{\dagger}\mathcal{M}^{\cdot}
\]
inside $D_{cc}(\mathcal{G}(\widehat{\mathcal{D}}_{\mathfrak{Z}}^{(0,1)}))$.
The analogous statement holds for smooth varieties over $k$. 
\end{thm}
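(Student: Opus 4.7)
The plan is to reduce the lifted statement over $W(k)$ to the positive-characteristic case via derived Nakayama, then to construct a natural base change morphism as a composition of three pieces, and finally to verify each piece is an isomorphism.

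First I would reduce to characteristic $p$. Both sides of the claimed isomorphism lie in $D_{cc}(\mathcal{G}(\widehat{\mathcal{D}}_{\mathfrak{Z}}^{(0,1)}))$ --- the target by \propref{Push-and-complete}(3) combined with the fact that $\tilde{\psi}^{\dagger}$ is defined via derived completion, and the source by the same two facts in the other order. By \corref{Nakayama} it suffices to check the isomorphism after applying $\otimes_{W(k)}^{L}k$. Invoking \propref{Basic-base-change-for-pullback}(2) on the source and \propref{push-and-complete-for-D}(1) on the target identifies the two reductions with the two sides of the claimed isomorphism for the induced smooth base change square over $k$, applied to $\mathcal{M}^{\cdot}\otimes_{W(k)}^{L}k\in D_{qcoh}^{b}(\mathcal{G}(\mathcal{D}_{X}^{(0,1)}))$.

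In characteristic $p$, I would construct a base change morphism $\psi^{\dagger}\int_{\varphi}\mathcal{M}^{\cdot}\to\int_{\tilde{\varphi}}\tilde{\psi}^{\dagger}\mathcal{M}^{\cdot}$ by composing three natural maps, cancelling the common shift $[d]$ on both sides: (a) the sheaf-theoretic flat base change $\psi^{-1}R\varphi_{*}\to R\tilde{\varphi}_{*}\tilde{\psi}^{-1}$, which is an isomorphism on complexes with quasi-coherent cohomology since $\psi$ is flat; (b) a projection-formula map moving the tensor with $\mathcal{D}_{Z\to Y}^{(0,1)}$ past $R\tilde{\varphi}_{*}$, constructed exactly as in \lemref{proj-over-D}; and (c) a canonical comparison of bimodules
\[
\tilde{\varphi}^{-1}\mathcal{D}_{Z\to Y}^{(0,1)}\otimes_{(\psi\tilde{\varphi})^{-1}\mathcal{D}_{Y}^{(0,1)}}^{L}\tilde{\psi}^{-1}\mathcal{D}_{Y\leftarrow X}^{(0,1)}\tilde{\to}\mathcal{D}_{Z\leftarrow X_{Z}}^{(0,1)}\otimes_{\mathcal{D}_{X_{Z}}^{(0,1)}}^{L}\mathcal{D}_{X_{Z}\to X}^{(0,1)},
\]
reflecting the compatibility of compositions of pullbacks from \lemref{composition-of-pullbacks} applied to $\varphi\tilde{\psi}=\psi\tilde{\varphi}$. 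That (a) and (b) are isomorphisms follows essentially as in \lemref{baby-projection-1} and \lemref{proj-over-D}, using that $\mathcal{D}_{Y\leftarrow X}^{(0,1)}\otimes_{\mathcal{D}_{X}^{(0,1)}}^{L}\mathcal{M}^{\cdot}$ has quasi-coherent cohomology as an $\mathcal{O}_{X}[f,v]$-module.

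The hard part will be verifying (c) cleanly, tracking the left $\mathcal{D}_{Z}^{(0,1)}$- and right $\mathcal{D}_{X}^{(0,1)}$-actions through the several base change and tensor-associativity isomorphisms involved. The cleanest strategy seems to be to verify (c) étale-locally on $Z$: a smooth morphism factors étale-locally as an étale morphism followed by a projection $Y\times T\to Y$, and for a projection both sides of (c) admit transparent descriptions via the external product structure on $\mathcal{D}^{(0,1)}$-modules developed in \secref{Operations:Swap-Tensor}, so that (c) reduces to a direct unwinding of definitions.
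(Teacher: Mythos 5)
Your overall architecture is sound and ends where the paper's proof ends -- a local verification over $k$ obtained by factoring the smooth morphism $\psi$ into an \'etale map followed by a projection and unwinding the external product -- but your construction of the comparison morphism is genuinely different. The paper builds the map purely formally from the adjunctions of \corref{smooth-adjunction}: the unit $\mathcal{M}^{\cdot}\to\int_{\tilde{\psi}}\tilde{\psi}^{\dagger}\mathcal{M}^{\cdot}(d)$, the composition isomorphism $\int_{\varphi}\circ\int_{\tilde{\psi}}\tilde{=}\int_{\psi}\circ\int_{\tilde{\varphi}}$ of \lemref{Composition-of-pushforwards}, and then the adjunction for $(\psi^{\dagger},\int_{\psi}(d))$; this avoids any explicit bimodule identity. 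Your HTT-style composition of sheaf-theoretic base change, projection formula, and the bimodule comparison (c) is a legitimate alternative, but it obliges you to prove (c), which the paper never needs; what it buys you is independence from having established smooth adjunction over $W(k)$, since only the projection-formula machinery of \lemref{proj-over-D} enters.

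Two points need repair. First, the logical order: \corref{Nakayama} upgrades a morphism between objects of $D_{cc}(\mathcal{G}(\widehat{\mathcal{D}}_{\mathfrak{Z}}^{(0,1)}))$ to an isomorphism once its reduction mod $p$ is one, so the morphism must exist over the formal schemes \emph{before} you reduce. As written you ``reduce to characteristic $p$'' first and only construct the map afterwards, in characteristic $p$; that yields no statement over $W(k)$. The fix is routine -- all three of your maps (a), (b), (c) make sense in the completed setting, and the compatibilities you cite (part $2)$ of \propref{Basic-base-change-for-pullback} and part $1)$ of \propref{push-and-complete-for-D}) identify the reduction of the completed composite with the char-$p$ composite -- but it has to be said in that order. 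Second, step (a) as stated is false: for flat $\psi$ the map $\psi^{-1}R\varphi_{*}\mathcal{F}\to R\tilde{\varphi}_{*}\tilde{\psi}^{-1}\mathcal{F}$ is not an isomorphism for quasi-coherent $\mathcal{F}$; flat base change concerns $L\psi^{*}$, not $\psi^{-1}$. Only the composite of (a) with the $\mathcal{O}_{\mathfrak{Z}}$-linearization supplied by tensoring with the transfer bimodule in (b) is the flat base change isomorphism, so you should claim isomorphy only for that composite (or, as the paper does, only for the total map, checked locally at the very end).
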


\begin{proof}
By the adjunction for ${\displaystyle (\tilde{\psi}^{\dagger},\int_{\tilde{\psi}}(d))}$
there is a morphism of functors 
\[
\int_{\varphi}\to\int_{\varphi}\circ\int_{\tilde{\psi}}(\tilde{\psi})^{\dagger}(d)\tilde{=}\int_{\psi}\circ\int_{\tilde{\varphi}}(\tilde{\psi})^{\dagger}(d)
\]
where the last isomorphism follows from the composition of push-forwards
(\lemref{Composition-of-pushforwards}). Now, applying the adjunction
for ${\displaystyle (\psi^{\dagger},\int_{\psi}(d))}$, we obtain
a morphism 
\[
\psi^{\dagger}\int_{\varphi}\to\int_{\tilde{\varphi}}(\tilde{\psi})^{\dagger}
\]
After applying $\otimes_{W(k)}^{L}k$ we obtain the analogous map
over $k$. So it suffices to show that the map is an isomorphism for
varieties over $k$. Furthermore, working locally on $Z$, we reduce
to the case where the map $\psi:Z\to Y$ factors as an etale morphism
$Z\to Z'$ followed by a projection $Z'\tilde{=}Y\times\mathbb{A}^{d}\to Y$.
In the case of an etale morphism, the functor ${\displaystyle \int_{\varphi}}$
agrees with $R\varphi_{*}$, so the result follows from the usual
flat base change for quasicoherent sheaves. In the case of the projection,
we have 
\[
\int_{\tilde{\varphi}}(\tilde{\psi})^{\dagger}\mathcal{M}^{\cdot}\tilde{=}\int_{\text{id}\times\varphi}D(\mathcal{O}_{\mathbb{A}_{k}^{d}})\boxtimes\mathcal{M}^{\cdot}[d]\tilde{=}D(\mathcal{O}_{\mathbb{A}_{k}^{d}})\boxtimes\int_{\varphi}\mathcal{M}^{\cdot}[d]\tilde{=}\psi^{\dagger}\mathcal{M}^{\cdot}
\]
where the second isomorphism follows directly from the definition
of the pushforward; this implies the result in this case.
\end{proof}
From this we deduce the Kunneth formula:
\begin{cor}
Let $\mathcal{M}^{\cdot}\in D_{cc}(\mathcal{G}(\widehat{\mathcal{D}}_{\mathfrak{X}}^{(0,1)}))$
and $\mathcal{N}^{\cdot}\in D_{cc}(\mathcal{G}(\widehat{\mathcal{D}}_{\mathfrak{Y}}^{(0,1)}))$,
so that $\mathcal{M}^{\cdot}\otimes_{W(k)}^{L}k\in D_{qcoh}(\mathcal{G}(\mathcal{D}_{X}^{(0,1)}))$
and $\mathcal{N}^{\cdot}\otimes_{W(k)}^{L}k\in D_{qcoh}(\mathcal{G}(\mathcal{D}_{Y}^{(0,1)}))$.
Then there is an isomorphism 
\[
\mathbb{H}_{\mathcal{G}}^{\cdot}(\mathcal{M}^{\cdot}\boxtimes\mathcal{N}^{\cdot})\tilde{=}\mathbb{H}_{\mathcal{G}}^{\cdot}(\mathcal{M}^{\cdot})\widehat{\otimes}_{W(k)[f,v]}^{L}\mathbb{H}_{\mathcal{G}}^{\cdot}(\mathcal{N}^{\cdot})
\]
(where $\mathbb{H}_{\mathcal{G}}^{\cdot}$ is defined in \defref{Push!})The
analogous statement holds for complexes in $D_{qcoh}(\mathcal{G}(\mathcal{D}_{X}^{(0,1)}))$
and $D_{qcoh}(\mathcal{G}(\mathcal{D}_{Y}^{(0,1)}))$. 
\end{cor}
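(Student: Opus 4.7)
The plan is to deduce the Künneth formula from the projection formula (Theorem~7.13) and the smooth base change (Theorem~7.14), in essentially the same way these two results together imply Künneth in classical $\mathcal{D}$-module theory. Write $\pi_X:\mathfrak{X}\to\mathrm{Specf}(W(k))$ and $\pi_Y:\mathfrak{Y}\to\mathrm{Specf}(W(k))$ for the structure maps, and $p_1:\mathfrak{X}\times\mathfrak{Y}\to\mathfrak{X}$, $p_2:\mathfrak{X}\times\mathfrak{Y}\to\mathfrak{Y}$ for the projections, so that $\pi:=\pi_X\circ p_1=\pi_Y\circ p_2$. By definition $\mathcal{M}^\cdot\boxtimes\mathcal{N}^\cdot=Lp_1^*\mathcal{M}^\cdot\,\widehat{\otimes}^L_{D(\mathcal{O}_{\mathfrak{X}\times\mathfrak{Y}})}\,Lp_2^*\mathcal{N}^\cdot$, and by the composition of pushforwards (Lemma~7.6) we have
\[
\mathbb{H}^\cdot_{\mathcal{G}}(\mathcal{M}^\cdot\boxtimes\mathcal{N}^\cdot)=\int_{\pi_X}\int_{p_1}\bigl(Lp_1^*\mathcal{M}^\cdot\,\widehat{\otimes}^L_{D(\mathcal{O}_{\mathfrak{X}\times\mathfrak{Y}})}\,Lp_2^*\mathcal{N}^\cdot\bigr).
\]
Applying the projection formula to $p_1$ (with the $\mathcal{M}^\cdot$-factor as the one being pulled back) moves $\mathcal{M}^\cdot$ outside of $\int_{p_1}$, giving $\mathcal{M}^\cdot\,\widehat{\otimes}^L_{D(\mathcal{O}_\mathfrak{X})}\,\int_{p_1}Lp_2^*\mathcal{N}^\cdot$.

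Next, I would identify $\int_{p_1}Lp_2^*\mathcal{N}^\cdot$ with $L\pi_X^*\mathbb{H}^\cdot_{\mathcal{G}}(\mathcal{N}^\cdot)$ via smooth base change. The relevant fibre square is
\[
\begin{CD}
\mathfrak{X}\times\mathfrak{Y} @>p_2>> \mathfrak{Y}\\
@VVp_1V @VV\pi_YV\\
\mathfrak{X} @>\pi_X>> \mathrm{Specf}(W(k))
\end{CD}
\]
in which the bottom row $\pi_X$ is smooth, so Theorem~7.14 gives $\pi_X^\dagger\int_{\pi_Y}\mathcal{N}^\cdot\cong \int_{p_1}p_2^\dagger\mathcal{N}^\cdot$, and after canceling the common shift $[d_X]$ this reads $L\pi_X^*\mathbb{H}^\cdot_{\mathcal{G}}(\mathcal{N}^\cdot)\cong \int_{p_1}Lp_2^*\mathcal{N}^\cdot$. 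Substituting this back and then applying the projection formula a second time, now to $\pi_X$ (with $\mathbb{H}^\cdot_{\mathcal{G}}(\mathcal{N}^\cdot)$ as the factor pulled back via $\pi_X$), yields
\[
\int_{\pi_X}\bigl(\mathcal{M}^\cdot\,\widehat{\otimes}^L_{D(\mathcal{O}_\mathfrak{X})}\,L\pi_X^*\mathbb{H}^\cdot_{\mathcal{G}}(\mathcal{N}^\cdot)\bigr)\;\cong\;\mathbb{H}^\cdot_{\mathcal{G}}(\mathcal{M}^\cdot)\,\widehat{\otimes}^L_{W(k)[f,v]}\,\mathbb{H}^\cdot_{\mathcal{G}}(\mathcal{N}^\cdot),
\]
which is the desired isomorphism. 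The positive-characteristic version over $X$ and $Y$ is proved identically, using the $k$-versions of the projection formula and smooth base change stated in the same theorems.

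The only real work lies in verifying that the quasi-coherence hypotheses required to invoke Theorems~7.13 and~7.14 are preserved at each step. Concretely, one must check: (i) $Lp_2^*\mathcal{N}^\cdot$ lies in $D_{cc}$ with quasi-coherent reduction mod $p$, so that the first projection formula applies to the pair $(Lp_2^*\mathcal{N}^\cdot,\mathcal{M}^\cdot)$; (ii) $\int_{p_1}Lp_2^*\mathcal{N}^\cdot$ reduces mod $p$ to a quasi-coherent $\mathcal{D}^{(0,1)}_X$-complex, permitting smooth base change; and (iii) $L\pi_X^*\mathbb{H}^\cdot_{\mathcal{G}}(\mathcal{N}^\cdot)\,\widehat{\otimes}^L\,\mathcal{M}^\cdot$ still has quasi-coherent mod-$p$ reduction for the second projection formula. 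By Proposition~5.3(2), Proposition~7.10(1), and Theorem~7.11 (plus its $k$-analogue), together with \lemref{Hom-tensor-and-reduce}, each of these properties descends from the corresponding property for $\mathcal{M}^\cdot\otimes^L_{W(k)}k$ and $\mathcal{N}^\cdot\otimes^L_{W(k)}k$, which are the standing hypotheses. This bookkeeping, rather than any conceptual difficulty, will be the main (but routine) obstacle.
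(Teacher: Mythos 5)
Your argument is precisely the standard deduction of Künneth from the projection formula and smooth base change that the paper itself invokes (it cites this as "a formal consequence of the projection formula and the smooth base change," with a reference to Dimca, and gives no further details), and your chain of identifications — composition of pushforwards, projection formula along $p_1$, base change over the smooth structure map $\pi_X$, then projection formula along $\pi_X$ — is correct. The bookkeeping of the quasi-coherence and cohomological-completeness hypotheses you flag at the end is indeed the only content beyond formality, and your reductions to the standing hypotheses via the cited propositions are sound.
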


This is a formal consequence of the projection formula and the smooth
base change (compare, e.g. \cite{key-53}, corollary 2.3.30). 

\section{Operations on Gauges: Duality}

In this section we study the duality functor on $D_{coh}^{b}(\mathcal{G}(\widehat{\mathcal{D}}_{\mathfrak{X}}^{(0,1)}))$
(and on $D_{coh}^{b}(\mathcal{G}(\mathcal{D}_{X}^{(0,1)}))$. Although
neither $\widehat{\mathcal{D}}_{\mathfrak{X}}^{(0,1)}$ nor $\mathcal{D}_{X}^{(0,1)}$
have finite homological dimension, we shall show (using \propref{Sandwich!})
that there is a well-behaved duality functor $\mathbb{D}$ which takes
bounded complexes of coherent modules to bounded complexes of coherent
modules. Further, under suitable conditions this functor commutes
with push-forward, in the following sense: 
\begin{thm}
Let $\varphi:\mathfrak{X}\to\mathfrak{Y}$ be either a smooth proper
morphism or a projective morphism. Then there is an isomorphism of
functors 
\[
\int_{\varphi}\mathbb{D}_{\mathfrak{X}}\tilde{\to}\mathbb{D}_{\mathfrak{Y}}\int_{\varphi}
\]
The analogous statement holds for either a smooth proper or a projective
morphism $\varphi:X\to Y$. In particular; when $\varphi$ is smooth
proper the functors $(\int_{\varphi},\varphi^{\dagger})$ form an
adjoint pair on $D_{coh}^{b}$. 
\end{thm}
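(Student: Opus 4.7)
The plan is to first set up the duality functor and then reduce the desired compatibility to already-established results via the dévissage machinery of the paper. I would define
\[
\mathbb{D}_{\mathfrak{X}}(\mathcal{M}^{\cdot}) := R\underline{\mathcal{H}om}_{\widehat{\mathcal{D}}_{\mathfrak{X}}^{(0,1)}}(\mathcal{M}^{\cdot}, \widehat{\mathcal{D}}_{\mathfrak{X}}^{(0,1)}) \otimes_{D(\mathcal{O}_{\mathfrak{X}})} D(\omega_{\mathfrak{X}}^{-1})[d_X]
\]
(with an appropriate grading shift), and show first that $\mathbb{D}_{\mathfrak{X}}$ preserves $D^b_{coh}$. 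Although $\widehat{\mathcal{D}}_{\mathfrak{X}}^{(0,1)}$ lacks finite homological dimension, Proposition~\ref{prop:Sandwich!} applied to the dévissage sequence $0 \to \ker(f) \to \mathcal{M} \to \mathcal{M}/\ker(f) \to 0$ reduces the computation of $R\underline{\mathcal{H}om}$ to the corresponding computations over $\mathcal{R}(\mathcal{D}_X^{(1)})$ and $\overline{\mathcal{R}}(\mathcal{D}_X^{(0)})$, which \emph{do} have finite homological dimension locally. Combined with Proposition~\ref{prop:coh-to-coh} and cohomological completeness, this produces the desired functor on $D^b_{coh}$.

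For the compatibility $\int_{\varphi} \mathbb{D}_{\mathfrak{X}} \simeq \mathbb{D}_{\mathfrak{Y}} \int_{\varphi}$, the strategy is to factor a projective morphism as a closed immersion $\iota: \mathfrak{X} \hookrightarrow \mathfrak{Y}\times\mathbb{P}^N_{W(k)}$ followed by the smooth proper projection $p: \mathfrak{Y}\times\mathbb{P}^N_{W(k)} \to \mathfrak{Y}$, and then to use the composition of pushforwards (Lemma~\ref{lem:Composition-of-pushforwards}) to treat the two cases independently. A canonical base-change morphism $\int_{\varphi} \mathbb{D}_{\mathfrak{X}} \to \mathbb{D}_{\mathfrak{Y}} \int_{\varphi}$ can be constructed from smooth adjunction (Corollary~\ref{cor:smooth-adjunction}) in the smooth case, and from a Kashiwara-style equivalence in the closed immersion case; by Proposition~\ref{prop:push-and-complete-for-D} (part~1) and the Nakayama property (Corollary~\ref{cor:Nakayama}), it suffices to verify that this map becomes an isomorphism after applying $\otimes_{W(k)}^L k$, reducing the entire question to the analogous statement for $\mathcal{D}_X^{(0,1)}$-modules with $\varphi: X \to Y$ the reduction.

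At this point the sandwich Proposition~\ref{prop:Sandwich!} together with Proposition~\ref{prop:Sandwich-push} reduces the duality-pushforward compatibility for $\mathcal{D}_X^{(0,1)}$-modules to the corresponding statements for $\mathcal{R}(\mathcal{D}_X^{(1)})$- and $\overline{\mathcal{R}}(\mathcal{D}_X^{(0)})$-modules separately. For $\mathcal{R}(\mathcal{D}_X^{(1)})$, Frobenius descent (Lemma~\ref{lem:Hodge-Filtered-Pull} and Theorem~\ref{thm:Hodge-Filtered-Push}) intertwines both $\int_{\varphi,1}$ and the naturally defined duality with the corresponding functors on $\mathcal{R}(\mathcal{D}_X^{(0)})$-modules, where the result is the filtered Verdier-duality compatibility in Laumon's theory (i.e., the Rees-algebra enhancement of classical duality commuting with proper pushforward). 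For $\overline{\mathcal{R}}(\mathcal{D}_X^{(0)})$, Corollary~\ref{cor:Filtered-Bez-Brav} identifies the pushforward with the composition $Rp_*^{(1)} \circ C \circ (d\varphi^{(1)})^{!}[-d]$, and the duality-pushforward compatibility reduces to Grothendieck duality for this composition of commutative maps on the graded cotangent bundles.

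The main obstacle will be constructing the canonical natural transformation $\int_{\varphi} \mathbb{D}_{\mathfrak{X}} \to \mathbb{D}_{\mathfrak{Y}} \int_{\varphi}$ uniformly across all these reductions, so that checking it is an isomorphism can be done componentwise. A particular source of bookkeeping pain is the tracking of the $\omega$-twists, grading shifts, and cohomological shifts through left-right interchange, Frobenius descent, and the Azumaya equivalence of Lemma~\ref{lem:Bez-Brav}. A secondary obstacle is formulating and proving the Kashiwara-style statement for the closed immersion piece at the level of $\widehat{\mathcal{D}}_{\mathfrak{X}}^{(0,1)}$-modules: one expects $\iota_* \iota^{\dagger} \simeq \mathrm{id}$ on the subcategory of gauges supported on the closed subscheme, and this must be verified directly using the explicit local description of $\mathcal{D}_{X\to Y}^{(0,1)}$ from Corollary~\ref{cor:Local-coords-over-A=00005Bf,v=00005D} to guarantee finite homological dimension of the relevant transfer bimodule (as already used implicitly in the proof of Theorem~\ref{thm:phi-push-is-bounded}).
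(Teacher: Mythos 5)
Your overall architecture matches the paper's: define $\mathbb{D}$ via $R\underline{\mathcal{H}om}(-,\widehat{\mathcal{D}}_{\mathfrak{X}}^{(0,1)})$ twisted by $\omega^{-1}$, prove boundedness by the sandwich d\'evissage into $\mathcal{R}(\mathcal{D}_{X}^{(1)})$- and $\overline{\mathcal{R}}(\mathcal{D}_{X}^{(0)})$-pieces, reduce mod $p$ by cohomological completeness, split the two Rees-algebra cases (Frobenius descent for one, the Azumaya/Bezrukavnikov--Braverman picture for the other), and handle projective morphisms by factoring through a closed immersion. However, there is a genuine gap at the crux of the argument: the construction of the trace morphism $\text{tr}:\int_{\varphi}D(\mathcal{O}_{\mathfrak{X}})[d_{X}]\to D(\mathcal{O}_{\mathfrak{Y}})[d_{Y}]$ for a smooth proper $\varphi$, without which the canonical natural transformation $\int_{\varphi}\mathbb{D}_{\mathfrak{X}}\to\mathbb{D}_{\mathfrak{Y}}\int_{\varphi}$ cannot be written down. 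You propose to extract this map from the smooth adjunction of \corref{smooth-adjunction}, but that adjunction goes the wrong way: it exhibits $\varphi^{\dagger}$ as \emph{left} adjoint to $\int_{\varphi}(d)$, so its unit is a map $\mathcal{N}^{\cdot}\to\int_{\varphi}\varphi^{\dagger}\mathcal{N}^{\cdot}(d)$, i.e. \emph{into} the pushforward, whereas the trace (and the duality morphism, which involves $R\underline{\mathcal{H}om}$ \emph{out of} $\int_{\varphi}\mathcal{M}^{\cdot}$) requires a counit $\int_{\varphi}\varphi^{\dagger}\to\text{id}$ for the opposite adjunction $(\int_{\varphi},\varphi^{\dagger})$ --- which is precisely the "in particular" clause of the theorem you are trying to prove. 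Deriving the trace from \corref{smooth-adjunction} is therefore circular.

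The paper instead builds the trace by hand: it starts from the Grothendieck trace $R^{d}\varphi_{*}(\omega_{\mathfrak{X}/\mathfrak{Y}})\to\mathcal{O}_{\mathfrak{Y}}$, identifies $R^{d}\varphi_{*}(\omega_{\mathfrak{X}/\mathfrak{Y}})$ with $\mathcal{H}^{d}(\int_{\varphi,0}\mathcal{O}_{\mathfrak{X}})$ via the Hodge-to-de Rham spectral sequence, transports the result to $\mathcal{H}^{d}(\int_{\varphi,1}\mathcal{O})$ via Berthelot's compatibility of Frobenius descent with pushforward, and then glues these two traces into a gauge-level map by checking that on $R^{d}\varphi_{*}(\mathcal{D}_{Y\leftarrow X}^{(0,1)}\otimes_{\mathcal{D}_{X}^{(0,1)}}D(\mathcal{O}_{X}))$ the operator $v$ is an isomorphism in degrees $\leq0$ and $f$ in degrees $\geq1$ (a local Koszul computation on the transfer bimodule). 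This degree-by-degree gluing is the step your proposal is missing. Two smaller points: your reduction of the conjugate piece to "Grothendieck duality for commutative maps" elides that $\overline{\mathcal{R}}(\mathcal{D}_{X}^{(0)})$ is a noncommutative Azumaya algebra whose grading is unbounded in both directions, so the graded Nakayama lemma is unavailable there and one needs a Grothendieck duality statement for Azumaya algebras (proved in the paper by \'etale-local splitting, \lemref{GD-for-Az}); and for the $\mathcal{R}(\mathcal{D}_{X}^{(0)})$ piece the paper does not quote Laumon's filtered duality as a black box but reduces to $f=0$ by graded Nakayama and invokes classical coherent duality on the cotangent spaces. Your treatment of the closed immersion case (via $\varphi^{\dagger}\cong\varphi^{\sharp}$ and local freeness of the transfer bimodule) is the same as the paper's and is sound.
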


The proof, which will essentially occupy this section of the paper,
is somewhat unsatisfactory. The key point is to construct a trace
morphism 
\[
\text{tr}:\int_{\varphi}D(\mathcal{O}_{\mathfrak{X}})[d_{X}]\to D(\mathcal{O}_{\mathfrak{Y}})[d_{Y}]
\]
When $\varphi$ is smooth proper this is done by first constructing
the map in $\mathcal{D}^{(0)}$ and $\mathcal{D}^{(1)}$ modules (using
the Hodge to de Rham spectral sequence), and then deducing its existence
for $\mathcal{D}^{(0,1)}$-modules. When $\varphi$ is a closed immersion
the construction of the trace follows from a direct consideration
of the structure of ${\displaystyle \int_{\varphi}}$ (the transfer
bimodule is easy to describe in this case). For a projective $\varphi$
one defines the trace by breaking up the map into an immersion followed
by a projection. Presumably there is a way to construct the trace
for all proper morphisms at once, but I have been unable to find it. 

To kick things off, we need to define the duality functor and show
that it has finite homological dimension. 
\begin{defn}
Let $\mathcal{M}^{\cdot}\in D_{cc}(\mathcal{G}(\widehat{\mathcal{D}}_{\mathfrak{X}}^{(0,1)}))$.
We define $\mathbb{D}_{\mathfrak{X}}(\mathcal{M}^{\cdot}):=\omega_{\mathfrak{X}}^{-1}\otimes_{\mathcal{O}_{\mathfrak{X}}}R\underline{\mathcal{H}om}(\mathcal{M}^{\cdot},\widehat{\mathcal{D}}_{\mathfrak{X}}^{(0,1)})[d_{X}]\in D_{cc}(\mathcal{G}(\widehat{\mathcal{D}}_{\mathfrak{X}}^{(0,1)}))$
(where we have used the natural right $\widehat{\mathcal{D}}_{\mathfrak{X}}^{(0,1)}$-module
structure on $R\underline{\mathcal{H}om}(\mathcal{M}^{\cdot},\widehat{\mathcal{D}}_{\mathfrak{X}}^{(0,1)})$). 

The same formula defines $\mathbb{D}_{X}$ for a smooth variety $X$
over $k$; and in the analogous way we define the duality functors
for $\mathcal{R}(\mathcal{D}_{X}^{(1)})$ and $\overline{\mathcal{R}}(\mathcal{D}_{X}^{(0)})$. 
\end{defn}

This is really a duality on the category of coherent modules: 
\begin{prop}
Suppose $\mathcal{M}^{\cdot}\in D_{coh}^{b}(\mathcal{G}(\widehat{\mathcal{D}}_{\mathfrak{X}}^{(0,1)}))$
then $\mathbb{D}_{\mathfrak{X}}(\mathcal{M}^{\cdot})\in D_{coh}^{b}(\mathcal{G}(\widehat{\mathcal{D}}_{\mathfrak{X}}^{(0,1)}))$.
Further, the natural transformation $\mathcal{M}^{\cdot}\to\mathbb{D}_{\mathfrak{X}}\mathbb{D}_{\mathfrak{X}}\mathcal{M}^{\cdot}$
is an isomorphism. 

The same result holds for a smooth variety $X$ over $k$. 
\end{prop}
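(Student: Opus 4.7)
The approach is to reduce everything to characteristic $p$ via cohomological completeness, and then devissage a coherent $\mathcal{D}_X^{(0,1)}$-module into a piece over $\overline{\mathcal{R}}(\mathcal{D}_X^{(0)})$ and a piece over $\mathcal{R}(\mathcal{D}_X^{(1)})$, where finite local homological dimension and biduality are already available.

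First I would handle the reduction of the $\mathfrak{X}$-case to the $X$-case. The sheaf $\widehat{\mathcal{D}}_{\mathfrak{X}}^{(0,1)}$ is cohomologically complete by \propref{coh-to-coh}, so \propref{Push-and-complete}(1) gives that $\mathbb{D}_{\mathfrak{X}}(\mathcal{M}^\cdot)$ is cohomologically complete, and \lemref{Hom-tensor-and-reduce} yields
\[
\mathbb{D}_{\mathfrak{X}}(\mathcal{M}^\cdot) \otimes^L_{W(k)} k \;\tilde{=}\; \mathbb{D}_X\big(\mathcal{M}^\cdot \otimes^L_{W(k)} k\big).
\]
Granting the $X$-version, \propref{coh-to-coh} then upgrades bounded coherence mod $p$ to bounded coherence over $W(k)$, while \corref{Nakayama} upgrades biduality mod $p$ to biduality over $W(k)$.

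For the $X$-case, by truncation one may assume $\mathcal{M}^\cdot = \mathcal{M} \in \mathcal{G}_{coh}(\mathcal{D}_X^{(0,1)})$ is concentrated in a single degree. The key devissage is the short exact sequence
\[
0 \to \mathcal{M}[f] \to \mathcal{M} \to \mathcal{M}/\mathcal{M}[f] \to 0,
\]
where $\mathcal{M}[f] := \ker(f)$ is annihilated by $f$ and is therefore a coherent $\overline{\mathcal{R}}(\mathcal{D}_X^{(0)})$-module. The crucial observation is that, because $fv = p = 0$ in characteristic $p$, any $f$-torsion-free gauge is automatically annihilated by $v$; hence $\mathcal{M}/\mathcal{M}[f]$ is a coherent $\mathcal{R}(\mathcal{D}_X^{(1)})$-module. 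The sheaf $\mathcal{D}_X^{(0,1)}$ itself is rigid (it is the reduction mod $p$ of the standard gauge $\widehat{\mathcal{D}}_{\mathfrak{X}}^{(0,1)}$, by \lemref{Basic-structure-of-D_A^(i)} combined with \lemref{Standard-is-rigid}), so \propref{Sandwich!}(2) applies with $\mathcal{N} = \mathcal{D}_X^{(0,1)}$ to give
\[
R\underline{\mathcal{H}om}_{\mathcal{D}_X^{(0,1)}}(\mathcal{M}[f], \mathcal{D}_X^{(0,1)}) \;\tilde{=}\; R\underline{\mathcal{H}om}_{\overline{\mathcal{R}}(\mathcal{D}_X^{(0)})}(\mathcal{M}[f], \ker(f)),
\]
and rigidity identifies $\ker(f) = v\mathcal{D}_X^{(0,1)}$ with a grading-shifted copy of $\overline{\mathcal{R}}(\mathcal{D}_X^{(0)})$; a symmetric identification applies to $\mathcal{M}/\mathcal{M}[f]$ via $\mathcal{R}(\mathcal{D}_X^{(1)})$. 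Since $\overline{\mathcal{R}}(\mathcal{D}_X^{(0)})(U)$ is Azumaya over a regular commutative ring of finite Krull dimension, and $\mathcal{R}(\mathcal{D}_X^{(1)})(U)$ is Morita-equivalent via \thmref{Filtered-Frobenius} to the classical Rees algebra $\mathcal{R}(\mathcal{D}_X^{(0)})(U)$, both have finite global homological dimension, so each RHom lies in $D_{coh}^b$. The long exact sequence yields $\mathbb{D}_X(\mathcal{M}) \in D_{coh}^b(\mathcal{G}(\mathcal{D}_X^{(0,1)}))$.

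For biduality, the same short exact sequence yields a morphism of distinguished triangles from it to its double dual. On each of the outer summands the natural biduality map over $\mathcal{D}_X^{(0,1)}$ agrees --- via two applications of the sandwich identifications of \propref{Sandwich!}(2) --- with the biduality map over $\overline{\mathcal{R}}(\mathcal{D}_X^{(0)})$ or $\mathcal{R}(\mathcal{D}_X^{(1)})$, which is an isomorphism by the standard duality theory for these algebras. The five lemma then gives biduality for $\mathcal{M}$. The hard part will be the careful bookkeeping of grading shifts in the identifications $\ker(f)\tilde{=}\overline{\mathcal{R}}(\mathcal{D}_X^{(0)})$ and $\ker(v)\tilde{=}\mathcal{R}(\mathcal{D}_X^{(1)})$ under the left-right swap implicit in $\mathbb{D}$, and verifying that a second application of the sandwich isomorphism is legitimate (here one uses that $\ker(f)$ and $\ker(v)$ are themselves quasi-rigid as $\mathcal{D}_X^{(0,1)}$-modules, so that \propref{Sandwich!}(2) may be invoked again).
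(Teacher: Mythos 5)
Your proof follows essentially the same route as the paper's: reduce mod $p$ via cohomological completeness and Nakayama, d\'evissage a coherent gauge through $0\to\ker(f)\to\mathcal{M}\to\mathcal{M}/\ker(f)\to 0$, apply \propref{Sandwich!} together with the identifications $\ker(f)\tilde{=}\overline{\mathcal{R}}(\mathcal{D}_X^{(0)})(1)$ and $\ker(v)\tilde{=}\mathcal{R}(\mathcal{D}_X^{(1)})(-1)$, and conclude from the finite local homological dimension of the two Rees algebras. One small repair: $\mathcal{M}/\ker(f)$ need not be $f$-torsion-free, so the correct reason it is killed by $v$ is simply that $f(vm)=pm=0$ forces $v\mathcal{M}\subseteq\ker(f)$; with that adjustment the argument is fine.
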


\begin{proof}
By reduction mod $p$ it suffices to prove the result for $X$. Using
\propref{Sandwich!}, and the fact that $\text{ker}(f:\mathcal{D}_{X}^{(0,1)}\to\mathcal{D}_{X}^{(0,1)})\tilde{=}\overline{\mathcal{R}}(\mathcal{D}_{X}^{(0)})(1)$
and $\text{ker}(v:\mathcal{D}_{X}^{(0,1)}\to\mathcal{D}_{X}^{(0,1)})\tilde{=}\mathcal{R}(\mathcal{D}_{X}^{(1)})(-1)$
one reduces to proving the analogous result for $\mathcal{R}(\mathcal{D}_{X}^{(1)})$
and $\overline{\mathcal{R}}(\mathcal{D}_{X}^{(0)})$. But these algebras
have finite homological dimension, as noted above, and the results
follow at once.
\end{proof}

\subsection{Duality for a smooth proper morphism}

Now we turn to defining the trace morphism, and proving the duality,
for a smooth proper map $\mathfrak{X}\to\mathfrak{Y}$ of relative
dimension $d$. In this case the usual Grothendieck duality theory
gives us a canonical morphism 
\[
\text{tr}:R^{d}\varphi_{*}(\omega_{\mathfrak{X}/\mathfrak{Y}})\to\mathcal{O}_{\mathfrak{Y}}
\]
Now consider $\mathcal{O}_{\mathfrak{X}}$ as a module over $\widehat{\mathcal{D}}_{\mathfrak{X}}^{(0)}$.
As the pushforward ${\displaystyle \int_{\varphi,0}\mathcal{O}_{\mathfrak{X}}}$
can be computed by the relative de Rham complex, looking at the Hodge-to-de
Rham spectral sequence in degree $2d$ yields an isomorphism of $\mathcal{O}_{\mathfrak{Y}}$-modules
\[
\mathcal{H}^{d}(\int_{\varphi,0}\mathcal{O}_{\mathfrak{X}})\tilde{=}R^{d}\varphi_{*}(\omega_{\mathfrak{X}/\mathfrak{Y}})
\]
so; composing with the trace morphism above, we obtain a map 
\[
\text{tr}:\mathcal{H}^{d}(\int_{\varphi,0}\mathcal{O}_{\mathfrak{X}})\to\mathcal{O}_{\mathfrak{Y}}
\]
of $\mathcal{\widehat{D}}_{\mathfrak{Y}}^{(0)}$-modules. 

Now consider $\varphi:\mathfrak{X}_{n}\to\mathfrak{Y}_{n}$, the reduction
mod $p^{n}$ of $\varphi$ for each $n\geq0$. Repeating the argument,
we can construct 
\[
\text{tr}:\mathcal{H}^{d}(\int_{\varphi,0}\mathcal{O}_{\mathfrak{X}_{n}})\to\mathcal{O}_{\mathfrak{Y}_{n}}
\]
and, in fact, the inverse limit of these maps is the trace constructed
above. In this setting, the de Rham complex $\Omega_{\mathfrak{X}_{n}/\mathfrak{Y}_{n}}^{\cdot}$
has the structure of a complex of coherent sheaves over the scheme
$W_{n}(\mathcal{O}_{X^{(n)}})$ (here we are identifying the underlying
topological spaces of $\mathfrak{X}_{n}$ and $W_{n}(\mathcal{O}_{X^{(n)}})$).
Thus we may also consider the second spectral sequence for the pushforward
of this complex, and we obtain an isomorphism 
\[
R^{d}\varphi_{*}(\text{coker}(d:\Omega_{\mathfrak{X}_{n}/\mathfrak{Y}_{n}}^{d-1}\to\omega_{\mathfrak{X}_{n}/\mathfrak{Y}_{n}}))\tilde{\to}R^{d}\varphi_{*}(\omega_{\mathfrak{X}/\mathfrak{Y}})
\]
or, equivalently, 
\[
R^{d}\varphi_{*}(\mathcal{D}_{\mathfrak{Y}_{n}\leftarrow\mathfrak{X}_{n}}^{(0)}\otimes_{\mathcal{D}_{\mathfrak{X}_{n}}^{(0)}}\mathcal{O}_{\mathfrak{X}_{n}})\tilde{\to}R^{d}\varphi_{*}(\omega_{\mathfrak{X}_{n}/\mathfrak{Y}_{n}})
\]

Now we consider the the pushforward of $\mathcal{O}_{\mathfrak{X}_{n}}$,
in the category of $\mathcal{D}_{\mathfrak{X}_{n}}^{(1)}$-modules.
By the commutativity of Frobenius descent with push-forward (\cite{key-2},
theoreme 3.4.4), we have 
\[
\int_{\varphi,1}\mathcal{O}_{\mathfrak{X}_{n}}\tilde{=}\int_{\varphi,1}F^{*}\mathcal{O}_{\mathfrak{X}_{n}}\tilde{\to}F^{*}\int_{\varphi,0}\mathcal{O}_{\mathfrak{X}_{n}}
\]
Therefore we obtain a trace map 
\[
\text{tr}:\mathcal{H}^{d}(\int_{\varphi,1}\mathcal{O}_{\mathfrak{X}_{n}})\to\mathcal{O}_{\mathfrak{Y}_{n}}
\]
in the category of $\mathcal{D}_{\mathfrak{Y}_{n}}^{(1)}$-modules;
and, using the second spectral sequence for the pushforward as above,
we have
\[
R^{d}\varphi_{*}(\mathcal{D}_{\mathfrak{Y}_{n}\leftarrow\mathfrak{X}_{n}}^{(1)}\otimes_{\mathcal{D}_{\mathfrak{X}_{n}}^{(1)}}\mathcal{O}_{\mathfrak{X}_{n}})\tilde{\to}\mathcal{H}^{d}(\int_{\varphi,1}\mathcal{O}_{\mathfrak{X}_{n}})
\]

Using these maps, we construct a trace for $\mathcal{D}_{\mathfrak{X}_{n}}^{(0,1)}$-modules:
\begin{lem}
There is a canonical morphism
\[
\text{tr}:R^{d}\varphi_{*}(\mathcal{D}_{\mathfrak{Y}_{n}\leftarrow\mathfrak{X}_{n}}^{(0,1)}\otimes{}_{\mathcal{D}_{\mathfrak{X}_{n}}^{(0,1)}}D(\mathcal{O}_{\mathfrak{X}_{n}}))\to D(\mathcal{O}_{\mathfrak{Y}_{n}})
\]
which has the property that the map $\text{tr}^{\infty}:R^{d}\varphi_{*}(\mathcal{D}_{\mathfrak{Y}_{n}\leftarrow\mathfrak{X}_{n}}^{(0,1)}\otimes{}_{\mathcal{D}_{\mathfrak{X}_{n}}^{(0,1)}}D(\mathcal{O}_{\mathfrak{X}_{n}}))^{\infty}\to D(\mathcal{O}_{\mathfrak{Y}_{n}}){}^{\infty}$
agrees with the trace map for $\mathcal{D}_{\mathfrak{X}_{n}}^{(1)}$-modules
constructed above; and the map $\text{tr}^{-\infty}:R^{d}\varphi_{*}(\mathcal{D}_{\mathfrak{Y}_{n}\leftarrow\mathfrak{X}_{n}}^{(0,1)}\otimes{}_{\mathcal{D}_{\mathfrak{X}_{n}}^{(0,1)}}D(\mathcal{O}_{\mathfrak{X}_{n}}))^{-\infty}\to D(\mathcal{O}_{\mathfrak{Y}_{n}})^{-\infty}$
agrees with the trace map for $\mathcal{D}_{\mathfrak{X}_{n}}^{(0)}$-modules
constructed above. We have the analogous statement for a proper morphism
$\varphi:X\to Y$, as well as in the categories of $\overline{\mathcal{R}}(\mathcal{D}_{X}^{(0)})$-modules
and $\mathcal{R}(\mathcal{D}_{X}^{(1)})$-modules.

This map yields a trace map in the derived category:
\[
\text{tr}:\int_{\varphi}D(\mathcal{O}_{\mathfrak{X}_{n}})[d]\to D(\mathcal{O}_{\mathfrak{Y}_{n}})
\]

Upon taking the inverse limit over $n$, we obtain a map 
\[
\text{tr}:\int_{\varphi}D(\mathcal{O}_{\mathfrak{X}})[d]\to D(\mathcal{O}_{\mathfrak{Y}})
\]
\end{lem}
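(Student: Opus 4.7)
The plan is to build the trace degree-by-degree at the sheaf level, then assemble it, and finally pass to the derived category and the inverse limit over $n$. The key observation is that after inverting $p$, all the graded structure trivializes: $\widehat{\mathcal{D}}_{\mathfrak{X}_{n}}^{(0,1)}[p^{-1}]\cong \widehat{\mathcal{D}}_{\mathfrak{X}_{n}}^{(0)}[p^{-1}][f,f^{-1}]$ with $v=pf^{-1}$, and the same for the transfer bimodule and for $D(\mathcal{O}_{\mathfrak{X}_{n}})$. Hence after inverting $p$ in each fixed grading degree $i$, the tensor product $\mathcal{D}_{\mathfrak{Y}_{n}\leftarrow\mathfrak{X}_{n}}^{(0,1),i}\otimes_{\mathcal{D}_{\mathfrak{X}_{n}}^{(0,1)}}D(\mathcal{O}_{\mathfrak{X}_{n}})$ collapses to $\mathcal{D}_{\mathfrak{Y}_{n}\leftarrow\mathfrak{X}_{n}}^{(0)}[p^{-1}]\otimes_{\mathcal{D}_{\mathfrak{X}_{n}}^{(0)}[p^{-1}]}\mathcal{O}_{\mathfrak{X}_{n}}[p^{-1}]$, on which the rationalized Grothendieck trace is canonical.

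First I would define the candidate map $\text{tr}_{\mathbb{Q}}^{i}$ by composing this rationalization with the Grothendieck trace $R^{d}\varphi_{*}(\omega_{\mathfrak{X}_{n}/\mathfrak{Y}_{n}})[p^{-1}]\to\mathcal{O}_{\mathfrak{Y}_{n}}[p^{-1}]$, and then check integrality: any local section in degree $i$ is locally a sum of elementary tensors $\eta\otimes g$ with $\eta\in\mathcal{D}_{\mathfrak{Y}_{n}\leftarrow\mathfrak{X}_{n}}^{(0,1),j}$ and $g\in D(\mathcal{O}_{\mathfrak{X}_{n}})^{i-j}$, so that $p^{i}(\eta\otimes g)=(p^{j}\eta)\otimes(p^{i-j}g)$ lies integrally in $\mathcal{D}_{\mathfrak{Y}_{n}\leftarrow\mathfrak{X}_{n}}^{(0)}\otimes\mathcal{O}_{\mathfrak{X}_{n}}$ by the very definitions of $\mathcal{D}_{\mathfrak{Y}_{n}\leftarrow\mathfrak{X}_{n}}^{(0,1),j}$ and $D(\mathcal{O}_{\mathfrak{X}_{n}})^{i-j}$. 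Thus $p^{i}\text{tr}_{\mathbb{Q}}^{i}(\eta\otimes g)\in \mathcal{O}_{\mathfrak{Y}_{n}}$ by integrality of the $\mathcal{D}^{(0)}$-trace, and so $\text{tr}_{\mathbb{Q}}^{i}$ factors through $D(\mathcal{O}_{\mathfrak{Y}_{n}})^{i}=\{g\in\mathcal{O}_{\mathfrak{Y}_{n}}:p^{i}g\in\mathcal{O}_{\mathfrak{Y}_{n}}\}$, defining the integral $\text{tr}^{i}$. Compatibility with $f$ and $v$ is then automatic, since both act naturally on the rationalized picture.

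Next, to verify the $\pm\infty$ claims: the identification of $\text{tr}^{-\infty}$ with the $\mathcal{D}^{(0)}$-trace is tautological, while for $\text{tr}^{\infty}$ one needs that Berthelot's Frobenius descent isomorphism ${\int_{\varphi,1}}\mathcal{O}_{\mathfrak{X}_{n}}\cong F^{*}{\int_{\varphi,0}}\mathcal{O}_{\mathfrak{X}_{n}}$ intertwines the two integral Grothendieck traces for $\omega_{\mathfrak{X}_{n}/\mathfrak{Y}_{n}}$ and $F^{*}\omega_{\mathfrak{X}_{n}/\mathfrak{Y}_{n}}$. I expect this to be the main obstacle of the proof: although it should follow from the functoriality of Grothendieck duality under the finite flat Frobenius morphism, carefully unwinding the construction of the $\mathcal{D}^{(1)}$-trace and matching normalizations through the explicit Frobenius descent isomorphism will require nontrivial bookkeeping. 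The analogous statements for $\varphi:X\to Y$ in positive characteristic and for the Rees algebras $\overline{\mathcal{R}}(\mathcal{D}_{X}^{(0)})$ and $\mathcal{R}(\mathcal{D}_{X}^{(1)})$ proceed along the same lines, inverting $v$ or $f$ respectively in place of $p$, using $\propref{Sandwich!}$ and $\propref{Sandwich-push}$ to transfer integrality via the grading.

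Finally, to upgrade to the derived category I would invoke \thmref{phi-push-is-bounded} to see that ${\int_\varphi}D(\mathcal{O}_{\mathfrak{X}_{n}})$ is concentrated in cohomological degrees $[0,d]$, so the truncation map ${\int_\varphi}D(\mathcal{O}_{\mathfrak{X}_{n}})[d]\to\mathcal{H}^{d}({\int_\varphi}D(\mathcal{O}_{\mathfrak{X}_{n}}))[0]$ composed with $\text{tr}$ gives the derived-category trace. Since each stage of the construction is manifestly compatible with reduction mod $p^{n}$ (Grothendieck duality and Berthelot's Frobenius descent both being so), passing to $R\varprojlim_{n}$ yields the trace over $\mathfrak{X}$.
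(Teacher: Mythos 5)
Your central device --- invert $p$, define the trace on the rationalization, and check integrality degree by degree --- is not available in the setting of this lemma: the statement concerns the reductions $\varphi:\mathfrak{X}_{n}\to\mathfrak{Y}_{n}$ modulo $p^{n}$, where $p$ is nilpotent, so $\mathcal{D}_{\mathfrak{X}_{n}}^{(0,1)}[p^{-1}]=0$ and there is no ``rationalized picture'' to start from (the case $n=1$ is a variety in characteristic $p$). Even if you rerun the argument over $\mathfrak{X}$ itself, the integrality step presupposes that each graded piece of $R^{d}\varphi_{*}(\widehat{\mathcal{D}}_{\mathfrak{Y}\leftarrow\mathfrak{X}}^{(0,1)}\otimes_{\widehat{\mathcal{D}}_{\mathfrak{X}}^{(0,1)}}D(\mathcal{O}_{\mathfrak{X}}))$ injects into its rationalization; that is a $p$-torsion-freeness assertion about the pushforward which you have not established, and the same objection applies to the proposed $f$- or $v$-localizations for $\mathcal{R}(\mathcal{D}_{X}^{(1)})$ and $\overline{\mathcal{R}}(\mathcal{D}_{X}^{(0)})$. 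You also misplace the difficulty: the compatibility of the two traces under Frobenius descent is not an obstacle to be overcome, because the $\mathcal{D}^{(1)}$-trace is \emph{defined} by transporting the $\mathcal{D}^{(0)}$-trace through Berthelot's isomorphism $\int_{\varphi,1}\mathcal{O}_{\mathfrak{X}_{n}}\tilde{\to}F^{*}\int_{\varphi,0}\mathcal{O}_{\mathfrak{X}_{n}}$ together with $F^{*}\mathcal{O}_{\mathfrak{Y}_{n}}\tilde{=}\mathcal{O}_{\mathfrak{Y}_{n}}$.

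The missing idea is a structural analysis of the gauge being pushed forward. In local coordinates one computes that $\mathcal{D}_{Y\leftarrow X}^{(0,1)}\otimes_{\mathcal{D}_{X}^{(0,1)}}D(\mathcal{O}_{X})$ has $v$ an isomorphism in degrees $\leq 0$ and $f$ an isomorphism in degrees $\geq 1$: in degrees $\leq 0$ each graded piece is the coinvariants $\mathcal{O}_{X}/(\partial_{n-d+1},\dots,\partial_{n})$, and in positive degrees it is the further quotient by the $\partial_{i}^{[p]}$. Since $R^{d}\varphi_{*}$ commutes with direct sums, the pushforward inherits this property, and since $D(\mathcal{O}_{Y})$ enjoys the same property one simply sets $\text{tr}^{i}=\text{tr}^{-\infty}\circ v_{-\infty}$ for $i\leq 0$ and $\text{tr}^{i}=\text{tr}^{\infty}\circ f_{\infty}$ for $i\geq 1$; this is automatically a morphism of gauges with the required behaviour at $\pm\infty$. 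For $n>1$ one no longer has these isomorphisms on the nose, but surjectivity of $f$ and $v$ in the relevant ranges follows by Nakayama from the case $n=1$ (using that $\varphi_{*}$ has cohomological dimension $d$), and invertibility of $v$, resp.\ $f$, on the target in the complementary ranges still permits the same recipe. Your truncation step for the derived-category formulation and the passage to the inverse limit over $n$ are fine.
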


\begin{proof}
We begin with the case $n=1$; i.e., $\mathfrak{X}_{n}=X$ and $\mathfrak{Y}_{n}=Y$.
We claim that the $\varphi^{-1}(\mathcal{D}_{Y}^{(0,1)})$-gauge $\mathcal{D}_{Y\leftarrow X}^{(0,1)}\otimes{}_{\mathcal{D}_{X}^{(0,1)}}D(\mathcal{O}_{X})$
satisfies the property that $v$ is an isomorphism in degrees $0$
and below and $f$ is an isomorphism in degrees $1$ and above. This
can be checked in local coordinates, where we have the isomorphism
\[
\mathcal{D}_{Y\leftarrow X}^{(0,1)}=\mathcal{J}\backslash\mathcal{D}_{X}^{(0,1)}
\]
where $\mathcal{J}$ is the right ideal generated by $\{\partial_{n-d+1},\dots,\partial_{n},\partial_{n-d+1}^{[p]},\dots,\partial_{n}^{[p]}\}$.
In degrees below $0$, the elements $\{\partial_{n-d+1}^{[p]},\dots,\partial_{n}^{[p]}\}$
act trivially; so that 
\[
(\mathcal{D}_{Y\leftarrow X}^{(0,1)}\otimes{}_{\mathcal{D}_{X}^{(0,1)}}D(\mathcal{O}_{X}))^{i}=\mathcal{O}_{X}/(\partial_{n-d+1},\dots,\partial_{n})
\]
for all $i\leq0$. On the other hand we have 
\[
(\mathcal{D}_{Y\leftarrow X}^{(0,1)}\otimes{}_{\mathcal{D}_{X}^{(0,1)}}D(\mathcal{O}_{X}))^{i}=\mathcal{O}_{X}/(\partial_{n-d+1},\dots,\partial_{n},\partial_{n-d+1}^{[p]},\dots,\partial_{n}^{[p]})
\]
for $i>0$; and the claim about $f$ and $v$ follows immediately.
As the functor $R^{d}\varphi_{*}$ commutes with direct sums, we see
that the gauge
\[
R^{d}(\mathcal{D}_{Y\leftarrow X}^{(0,1)}\otimes{}_{\mathcal{D}_{X}^{(0,1)}}D(\mathcal{O}_{X}))
\]
has the same property: $v$ is an isomorphism in degrees $0$ and
below and $f$ is an isomorphism in degrees $1$ and above. Thus we
may define 
\[
\text{tr}:R^{d}(\mathcal{D}_{Y\leftarrow X}^{(0,1)}\otimes{}_{\mathcal{D}_{X}^{(0,1)}}D(\mathcal{O}_{X}))^{i}\to\mathcal{O}_{Y}
\]
for any $i$ as follows: if $i\leq0$ we have $v_{-\infty}:R^{d}(\mathcal{D}_{Y\leftarrow X}^{(0,1)}\otimes{}_{\mathcal{D}_{X}^{(0,1)}}D(\mathcal{O}_{X}))^{i}\tilde{=}R^{d}\varphi_{*}(\mathcal{D}_{Y\leftarrow X}^{(0)}\otimes_{\mathcal{D}_{X}^{(0)}}\mathcal{O}_{X})$
and so we define the trace as the composition $\text{tr}\circ v_{-\infty}$,
where here $\text{tr}$ denotes the trace for $\mathcal{D}_{X}^{(0)}$-modules
constructed above. If $i>0$ we have $f_{\infty}:R^{d}(\mathcal{D}_{Y\leftarrow X}^{(0,1)}\otimes{}_{\mathcal{D}_{X}^{(0,1)}}D(\mathcal{O}_{X}))^{i}\tilde{=}R^{d}\varphi_{*}(\mathcal{D}_{Y\leftarrow X}^{(1)}\otimes_{\mathcal{D}_{X}^{(1)}}\mathcal{O}_{X})$
and so we define the trace as the composition $\text{tr}\circ f_{\infty}$,
where here $\text{tr}$ denotes the trace for $\mathcal{D}_{X}^{(1)}$-modules
constructed above. In a similar way, we construct the trace map in
the categories of $\overline{\mathcal{R}}(\mathcal{D}_{X}^{(0)})$-modules
and $\mathcal{R}(\mathcal{D}_{X}^{(1)})$-modules.

Now we consider $\mathfrak{X}_{n}$ for $n>1$. Since the functor
$\varphi_{*}$ has homological dimension $d$ (on the category of
quasicoherent sheaves), we have that $(R^{d}\varphi_{*}\mathcal{F})\otimes_{W(k)}k\tilde{=}(R^{d}\varphi_{*}\mathcal{F}\otimes_{W(k)}^{L}k)$
for any $\mathcal{F}\in\text{Qcoh}(\mathfrak{X}_{n})$. So, by Nakayama's
lemma and the result of the previous paragraph, we see that $f$ is
onto in degrees $1$ and above while $v$ is onto in degrees $0$
and below; by the coherence of the sheaves involved we see that these
maps are isomorphisms for $i<<0$ and $i>>0$. Since the target of
the trace map, $D(\mathcal{O}_{\mathfrak{Y}_{n}})$, has the property
that $v$ is an isomorphism in degrees $0$ and below and $f$ is
an isomorphism in degrees $1$ and above, we may define the trace
map in the exact same way as above. 
\end{proof}
\begin{rem}
\label{rem:trace-and-compose}If $\varphi:\mathfrak{X}\to\mathfrak{Y}$
and $\psi:\mathfrak{Y}\to\mathfrak{Z}$, then the trace map for the
composition satisfies ${\displaystyle \text{tr}_{\psi\circ\varphi}=\text{tr}_{\psi}\circ\int_{\psi}\text{tr}_{\varphi}}$.
This follows from the analogous result for the trace map in coherent
sheaf theory. 
\end{rem}

Now, following the usual method of algebraic $\mathcal{D}$-module
theory (c.f. \cite{key-49}, theorem 2.7.2), we have 
\begin{prop}
There is a canonical morphism 
\[
\int_{\varphi}\mathbb{D}_{\mathfrak{X}}\mathcal{M}^{\cdot}\to\mathbb{D}_{\mathfrak{Y}}\int_{\varphi}\mathcal{M}^{\cdot}
\]
 for any $\mathcal{M}^{\cdot}\in D_{cc}(\mathcal{G}(\widehat{\mathcal{D}}_{\mathfrak{X}}^{(0,1)}))$.
The same holds for $\mathcal{M}^{\cdot}\in D(\mathcal{G}(\mathcal{D}_{X}^{(0,1)}))$
when we have a proper map $\varphi:X\to Y$. Further, these maps are
compatible under application of $\otimes_{W(k)}^{L}k$. 
\end{prop}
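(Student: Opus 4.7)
The plan is to mimic the classical construction from algebraic $\mathcal{D}$-module theory (compare \cite{key-49}, theorem 2.7.2), using the trace morphism ${\displaystyle \mathrm{tr}:\int_\varphi D(\mathcal{O}_\mathfrak{X})[d]\to D(\mathcal{O}_\mathfrak{Y})}$ established in the preceding lemma as the essential geometric ingredient; all other ingredients are formal consequences of the tensor-Hom formalism developed in Sections \ref{sec:Operations:PullBack}--\ref{sec:Push-Forward}.

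First I would use the tensor-Hom adjunction (\lemref{basic-hom-tensor}) together with the left-right interchange (\propref{Left-Right-Swap}) to translate the desired morphism into the problem of producing a canonical pairing
\[
{\textstyle \int_\varphi}\mathbb{D}_\mathfrak{X}(\mathcal{M}^\cdot) \;\widehat{\otimes}^L_{\widehat{\mathcal{D}}^{(0,1)}_\mathfrak{Y}}\; {\textstyle \int_\varphi}\mathcal{M}^\cdot \;\longrightarrow\; D(\omega_\mathfrak{Y}^{-1})[d_Y]
\]
(with the left-right swap used implicitly to make the tensor product sensible). Then I would apply the projection formula (\thmref{Projection-Formula}) combined with the juggling lemma (\lemref{Juggle}) to rewrite the source as the pushforward ${\displaystyle \int_\varphi(\mathbb{D}_\mathfrak{X}(\mathcal{M}^\cdot)\,\widehat{\otimes}^L_{D(\mathcal{O}_\mathfrak{X})}\,\mathcal{M}^\cdot)}$. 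Applying the natural evaluation map $R\underline{\mathcal{H}om}(\mathcal{M}^\cdot,\widehat{\mathcal{D}}^{(0,1)}_\mathfrak{X})\,\widehat{\otimes}^L\,\mathcal{M}^\cdot\to\widehat{\mathcal{D}}^{(0,1)}_\mathfrak{X}$ built from the definition of $\mathbb{D}_\mathfrak{X}$, the problem reduces to producing a morphism
\[
{\textstyle \int_\varphi}\big(\omega_\mathfrak{X}^{-1}\otimes D(\widehat{\mathcal{D}}^{(0,1)}_\mathfrak{X})\big)[d_X]\;\longrightarrow\;\omega_\mathfrak{Y}^{-1}\otimes D(\widehat{\mathcal{D}}^{(0,1)}_\mathfrak{Y})[d_Y].
\]
This last morphism is obtained directly from the trace: identifying $\omega_\mathfrak{X}^{-1}\otimes\widehat{\mathcal{D}}^{(0,1)}_\mathfrak{X}$ as the right module induced from $D(\omega_\mathfrak{X}^{-1})\otimes_{D(\mathcal{O}_\mathfrak{X})}\widehat{\mathcal{D}}^{(0,1)}_\mathfrak{X}$, sliding $\omega_\mathfrak{Y}^{-1}$ out of the pushforward via base change (using the identification $\omega_\mathfrak{X}\simeq\omega_{\mathfrak{X}/\mathfrak{Y}}\otimes\varphi^{-1}\omega_\mathfrak{Y}$), and tensoring the trace with $\omega_\mathfrak{Y}^{-1}\otimes_{\mathcal{O}_\mathfrak{Y}}\widehat{\mathcal{D}}^{(0,1)}_\mathfrak{Y}$.

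For the compatibility with $\otimes^L_{W(k)}k$, each ingredient is manifestly compatible: the trace is constructed as the inverse limit of its mod-$p^n$ versions in the preceding lemma; the projection formula, pullback, and completion are compatible with reduction by \propref{push-and-complete-for-D} and \propref{Basic-base-change-for-pullback}; the juggling lemma and the evaluation map for $R\underline{\mathcal{H}om}$ are formal. Thus the whole construction commutes with $\otimes^L_{W(k)}k$, giving the analogous morphism for a proper map $\varphi:X\to Y$ together with the required compatibility.

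The main obstacle I anticipate is the careful bookkeeping of left/right module structures, tensor-product identifications, and the interplay between the internal grading shift $(d)$ and the cohomological shift $[d]$. In particular, the repeated use of the left-right interchange (\propref{Left-Right-Swap}) to pass one factor of the tensor product from a left to a right $\widehat{\mathcal{D}}^{(0,1)}$-module, the identification of $\omega_\mathfrak{X}^{-1}\otimes\widehat{\mathcal{D}}^{(0,1)}_\mathfrak{X}$ as the induced bimodule from $D(\omega_\mathfrak{X}^{-1})$, and the movement of $\omega_\mathfrak{Y}^{-1}$ across $R\varphi_*$ via Grothendieck duality, must be handled precisely so that the resulting morphism lands in $D_{cc}(\mathcal{G}(\widehat{\mathcal{D}}^{(0,1)}_\mathfrak{Y}))$ and is manifestly natural in $\mathcal{M}^\cdot$. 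These are essentially formal issues, but tracking them rigorously is the bulk of the work.
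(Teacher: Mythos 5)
Your construction has the same geometric core as the paper's: everything is reduced to the trace morphism $\text{tr}:\int_{\varphi}D(\mathcal{O}_{\mathfrak{X}})[d]\to D(\mathcal{O}_{\mathfrak{Y}})$ via the projection formula applied to $\widehat{\mathcal{D}}_{\mathfrak{X}\to\mathfrak{Y}}^{(0,1)}=L\varphi^{*}\widehat{\mathcal{D}}_{\mathfrak{Y}}^{(0,1)}$, which yields $\int_{\varphi}\widehat{\mathcal{D}}_{\mathfrak{X}\to\mathfrak{Y}}^{(0,1)}[d_{X}]\to\widehat{\mathcal{D}}_{\mathfrak{Y}}^{(0,1)}[d_{Y}]$; that step is identical in both arguments. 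Where you diverge is in the formal packaging: the paper never forms a duality pairing. It rewrites $\int_{\varphi}\mathbb{D}_{\mathfrak{X}}\mathcal{M}^{\cdot}$ as $R\varphi_{*}R\underline{\mathcal{H}om}_{\widehat{\mathcal{D}}_{\mathfrak{X}}^{(0,1)}}(\mathcal{M}^{\cdot},\widehat{\mathcal{D}}_{\mathfrak{X}\to\mathfrak{Y}}^{(0,1)})\widehat{\otimes}_{\mathcal{O}_{\mathfrak{Y}}}^{L}\omega_{\mathfrak{Y}}^{-1}[d_{X}]$, applies the functor $\widehat{\mathcal{D}}_{\mathfrak{Y}\leftarrow\mathfrak{X}}^{(0,1)}\widehat{\otimes}_{\widehat{\mathcal{D}}_{\mathfrak{X}}^{(0,1)}}^{L}(-)$ to both arguments of the $\underline{\mathcal{H}om}$, uses the lax compatibility of $R\varphi_{*}$ with $R\underline{\mathcal{H}om}$, and only then composes with the trace. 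This buys two things your route does not obviously give. First, the only invocation of the projection formula is for $D(\mathcal{O}_{\mathfrak{X}})$ against $\widehat{\mathcal{D}}_{\mathfrak{Y}}^{(0,1)}$, whose reductions mod $p$ are quasicoherent, so the construction is valid for \emph{arbitrary} $\mathcal{M}^{\cdot}\in D_{cc}(\mathcal{G}(\widehat{\mathcal{D}}_{\mathfrak{X}}^{(0,1)}))$ as the statement requires; your extra step, which converts $\int_{\varphi}\mathbb{D}_{\mathfrak{X}}\mathcal{M}^{\cdot}\widehat{\otimes}^{L}\int_{\varphi}\mathcal{M}^{\cdot}$ into a single pushforward, is not literally an instance of \thmref{Projection-Formula} (the second factor there must be $L\varphi^{*}$ of something on $\mathfrak{Y}$, not $\mathcal{M}^{\cdot}$ itself), so you would need either to insert the unit $\mathcal{M}^{\cdot}\to\varphi^{\dagger}\int_{\varphi}\mathcal{M}^{\cdot}$-type map or a K\"unneth-style comultiplication, and to check it exists without quasicoherence hypotheses. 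Second, tensoring $\omega_{\mathfrak{Y}}^{-1}$ on only at the very last step makes most of the left/right and twisting bookkeeping you worry about evaporate. Your treatment of the compatibility with $\otimes_{W(k)}^{L}k$ matches the paper's.
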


\begin{proof}
We have
\[
\int_{\varphi}\mathbb{D}_{\mathfrak{X}}\mathcal{M}^{\cdot}=R\varphi_{*}(R\underline{\mathcal{H}om}_{\widehat{\mathcal{D}}_{\mathfrak{X}}^{(0,1)}}(\mathcal{M}^{\cdot},\widehat{\mathcal{D}}_{\mathfrak{X}}^{(0,1)})\widehat{\otimes}_{\widehat{\mathcal{D}}_{\mathfrak{X}}^{(0,1)}}^{L}\widehat{\mathcal{D}}_{\mathfrak{X}\to\mathfrak{Y}}^{(0,1)})\widehat{\otimes}_{\mathcal{O}_{\mathfrak{Y}}}^{L}\omega_{\mathfrak{Y}}^{-1}[d_{X}]
\]
\[
=R\varphi_{*}(R\underline{\mathcal{H}om}_{\widehat{\mathcal{D}}_{\mathfrak{X}}^{(0,1)}}(\mathcal{M}^{\cdot},\widehat{\mathcal{D}}_{\mathfrak{X}\to\mathfrak{Y}}^{(0,1)})\widehat{\otimes}_{\mathcal{O}_{\mathfrak{Y}}}^{L}\omega_{\mathfrak{Y}}^{-1}[d_{X}]
\]
while 
\[
\mathbb{D}_{\mathfrak{Y}}\int_{\varphi}\mathcal{M}^{\cdot}=R\underline{\mathcal{H}om}_{\widehat{\mathcal{D}}_{\mathfrak{Y}}^{(0,1)}}(\int_{\varphi}\mathcal{M}^{\cdot},\widehat{\mathcal{D}}_{\mathfrak{Y}}^{(0,1)})\widehat{\otimes}_{\mathcal{O}_{\mathfrak{Y}}}^{L}\omega_{\mathfrak{Y}}^{-1}[d_{Y}]
\]
To construct a canonical map between these complexes, we begin by
considering ${\displaystyle \int_{\varphi}\widehat{\mathcal{D}}_{\mathfrak{X}\to\mathfrak{Y}}^{(0,1)}}$.
By $\widehat{\mathcal{D}}_{\mathfrak{X}\to\mathfrak{Y}}^{(0,1)}=L\varphi^{*}\widehat{\mathcal{D}}_{\mathfrak{Y}}^{(0,1)}$,
we may apply \thmref{Projection-Formula} to obtain 
\[
\int_{\varphi}\widehat{\mathcal{D}}_{\mathfrak{X}\to\mathfrak{Y}}^{(0,1)}=\int_{\varphi}L\varphi^{*}\widehat{\mathcal{D}}_{\mathfrak{Y}}^{(0,1)}\tilde{\to}(\int_{\varphi}D(\mathcal{O}_{\mathfrak{X}}))\widehat{\otimes}_{\mathcal{O}_{\mathfrak{Y}}[f,v]}^{L}\widehat{\mathcal{D}}_{\mathfrak{Y}}^{(0,1)}
\]
so applying the trace map yields a canonical morphism 
\[
\int_{\varphi}\widehat{\mathcal{D}}_{\mathfrak{X}\to\mathfrak{Y}}^{(0,1)}[d]\to\widehat{\mathcal{D}}_{\mathfrak{Y}}^{(0,1)}
\]
and since $d=d_{X}-d_{Y}$ we have 
\[
\int_{\varphi}\widehat{\mathcal{D}}_{\mathfrak{X}\to\mathfrak{Y}}^{(0,1)}[d_{X}]\to\widehat{\mathcal{D}}_{\mathfrak{Y}}^{(0,1)}[d_{Y}]
\]
Then we have 
\[
R\varphi_{*}(R\underline{\mathcal{H}om}_{\widehat{\mathcal{D}}_{\mathfrak{X}}^{(0,1)}}(\mathcal{M}^{\cdot},\widehat{\mathcal{D}}_{\mathfrak{X}\to\mathfrak{Y}}^{(0,1)})[d_{X}]
\]
\[
\to R\varphi_{*}(R\underline{\mathcal{H}om}_{\varphi^{-1}(\widehat{\mathcal{D}}_{\mathfrak{Y}}^{(0,1)})}(\widehat{\mathcal{D}}_{\mathfrak{Y}\leftarrow\mathfrak{X}}^{(0,1)}\otimes_{\widehat{\mathcal{D}}_{\mathfrak{X}}^{(0,1)}}^{L}\mathcal{M}^{\cdot},\widehat{\mathcal{D}}_{\mathfrak{Y}\leftarrow\mathfrak{X}}^{(0,1)}\otimes_{\widehat{\mathcal{D}}_{\mathfrak{X}}^{(0,1)}}^{L}\widehat{\mathcal{D}}_{\mathfrak{X}\to\mathfrak{Y}}^{(0,1)})[d_{X}]
\]
\[
\to R\underline{\mathcal{H}om}_{\widehat{\mathcal{D}}_{\mathfrak{Y}}^{(0,1)}}(R\varphi_{*}(\widehat{\mathcal{D}}_{\mathfrak{Y}\leftarrow\mathfrak{X}}^{(0,1)}\otimes_{\widehat{\mathcal{D}}_{\mathfrak{X}}^{(0,1)}}^{L}\mathcal{M}^{\cdot}),R\varphi_{*}(\widehat{\mathcal{D}}_{\mathfrak{Y}\leftarrow\mathfrak{X}}^{(0,1)}\otimes_{\widehat{\mathcal{D}}_{\mathfrak{X}}^{(0,1)}}^{L}\widehat{\mathcal{D}}_{\mathfrak{X}\to\mathfrak{Y}}^{(0,1)}))[d_{X}]
\]
\[
=R\underline{\mathcal{H}om}_{\widehat{\mathcal{D}}_{\mathfrak{Y}}^{(0,1)}}(\int_{\varphi}\mathcal{M}^{\cdot},\int_{\varphi}\widehat{\mathcal{D}}_{\mathfrak{X}\to\mathfrak{Y}}^{(0,1)}[d_{X}])\to R\underline{\mathcal{H}om}_{\widehat{\mathcal{D}}_{\mathfrak{Y}}^{(0,1)}}(\int_{\varphi}\mathcal{M}^{\cdot},\widehat{\mathcal{D}}_{\mathfrak{Y}}^{(0,1)}[d_{Y}])
\]
where the last map is the trace. Combining with the above description
yields the canonical map 
\[
\int_{\varphi}\mathbb{D}_{\mathfrak{X}}\mathcal{M}^{\cdot}\to\mathbb{D}_{\mathfrak{Y}}\int_{\varphi}\mathcal{M}^{\cdot}
\]
as desired; the case of a proper map $\varphi:X\to Y$ is identical. 
\end{proof}
Now we turn to 
\begin{thm}
\label{thm:Duality-for-smooth-proper}The canonical map $\int_{\varphi}\mathbb{D}_{\mathfrak{X}}\mathcal{M}^{\cdot}\to\mathbb{D}_{\mathfrak{Y}}\int_{\varphi}\mathcal{M}^{\cdot}$
is an isomorphism for $\mathcal{M}^{\cdot}\in D_{coh}^{b}(\mathcal{G}(\widehat{\mathcal{D}}_{\mathfrak{X}}^{(0,1)}))$.
The same is true for a proper map $\varphi:X\to Y$. 
\end{thm}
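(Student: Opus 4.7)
The plan is to reduce the statement first to characteristic $p$ via cohomological completeness, and then to the two Rees algebra theories via the short exact sequence relating $\mathcal{D}_X^{(0,1)}$ to $\overline{\mathcal{R}}(\mathcal{D}_X^{(0)})$ and $\mathcal{R}(\mathcal{D}_X^{(1)})$. First, since $\mathcal{M}^\cdot\in D_{coh}^b(\mathcal{G}(\widehat{\mathcal{D}}_\mathfrak{X}^{(0,1)}))$, both $\int_\varphi\mathbb{D}_\mathfrak{X}\mathcal{M}^\cdot$ and $\mathbb{D}_\mathfrak{Y}\int_\varphi\mathcal{M}^\cdot$ lie in $D_{coh}^b(\mathcal{G}(\widehat{\mathcal{D}}_\mathfrak{Y}^{(0,1)}))$ (using \corref{proper-push-over-W(k)} and the fact that $\mathbb{D}$ preserves $D_{coh}^b$), hence are cohomologically complete by \propref{coh-to-coh}. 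So by \corref{Nakayama} it suffices to check the map is an isomorphism after $\otimes_{W(k)}^L k$. Using \lemref{Hom-tensor-and-reduce} (which gives $\mathbb{D}_\mathfrak{X}\mathcal{M}^\cdot\otimes_{W(k)}^L k\tilde{\to}\mathbb{D}_X(\mathcal{M}^\cdot\otimes_{W(k)}^L k)$) together with \propref{push-and-complete-for-D}(1), the reduction mod $p$ of the map is the analogous map over $X\to Y$; compatibility of the trace with reduction mod $p$ is built into its construction (the trace on $\mathfrak{X}_n\to\mathfrak{Y}_n$ was defined for all $n$ simultaneously).

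Over $k$, I would exploit the short exact sequence of $(\mathcal{D}_X^{(0,1)},\mathcal{D}_X^{(0,1)})$-bimodules
\[
\overline{\mathcal{R}}(\mathcal{D}_X^{(0)})\to\mathcal{D}_X^{(0,1)}\to\mathcal{R}(\mathcal{D}_X^{(1)})(-1)
\]
already used in the adjunction proof. Applying $R\underline{\mathcal{H}om}_{\mathcal{D}_X^{(0,1)}}(\mathcal{M}^\cdot,-)$, one gets a distinguished triangle whose outer terms can be computed via \propref{Sandwich!} as Hom-complexes over $\overline{\mathcal{R}}(\mathcal{D}_X^{(0)})$ and $\mathcal{R}(\mathcal{D}_X^{(1)})$ applied to the derived reductions $\overline{\mathcal{R}}(\mathcal{D}_X^{(0)})\otimes_{\mathcal{D}_X^{(0,1)}}^L\mathcal{M}^\cdot$ and $\mathcal{R}(\mathcal{D}_X^{(1)})\otimes_{\mathcal{D}_X^{(0,1)}}^L\mathcal{M}^\cdot$. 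Dually, applying the exact functors $\mathcal{R}(\mathcal{D}_Y^{(1)})\otimes_{\mathcal{D}_Y^{(0,1)}}^L-$ and $\overline{\mathcal{R}}(\mathcal{D}_Y^{(0)})\otimes_{\mathcal{D}_Y^{(0,1)}}^L-$ to $\int_\varphi\mathcal{M}^\cdot$ produces a parallel triangle for $\mathbb{D}_Y\int_\varphi\mathcal{M}^\cdot$; by \propref{Sandwich-push} these outer pieces coincide with the pushforwards in the Rees categories. The natural transformation of the theorem is a morphism of distinguished triangles between these two, so by the five lemma it suffices to establish the duality-pushforward commutation for each of the two Rees algebras separately.

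For $\mathcal{R}(\mathcal{D}_X^{(1)})$-modules, the filtered Frobenius descent (\thmref{Filtered-Frobenius}, \lemref{Hodge-Filtered-Pull}, \thmref{Hodge-Filtered-Push}) converts the problem into the Rees category of the symbol filtration $\mathcal{R}(\mathcal{D}_X^{(0)})$, where the assertion is Laumon's filtered Grothendieck duality for a proper smooth morphism, provable via the relative de Rham resolution and classical Serre duality on the cotangent. For $\overline{\mathcal{R}}(\mathcal{D}_X^{(0)})$-modules, \corref{Filtered-Bez-Brav} identifies the pushforward with the conjugate-filtered Ogus-Vologodsky pushforward; combined with the Azumaya structure over $\mathcal{O}_{T^*X^{(1)}}[v]$, relative duality reduces to Grothendieck-Serre duality for the proper morphism $X^{(1)}\times_{Y^{(1)}}T^*Y^{(1)}\times\mathbb{A}^1\to T^*Y^{(1)}\times\mathbb{A}^1$ (via the splitting bimodule of \lemref{Bez-Brav}). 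The main obstacle is verifying that the trace map we constructed for $\mathcal{D}_X^{(0,1)}$, which was built from the $\mathcal{D}_X^{(0)}$ and $\mathcal{D}_X^{(1)}$ traces via a Hodge/conjugate spectral sequence identification, restricts on each Rees quotient to the classical trace map used in Laumon's and Ogus-Vologodsky's theories: this compatibility is what makes the five-lemma step work. I expect this comes down to a diagram chase on generators (ultimately on $\mathcal{O}_X$), using that the two Rees-algebra traces and the mixed trace of \defref{Push!} all descend from the same Grothendieck trace $R^d\varphi_*\omega_{X/Y}\to\mathcal{O}_Y$ via the respective spectral sequence identifications.
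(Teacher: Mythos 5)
Your proposal is correct and follows essentially the same route as the paper: reduce mod $p$ by cohomological completeness and Nakayama, decompose into the $\overline{\mathcal{R}}(\mathcal{D}_X^{(0)})$ and $\mathcal{R}(\mathcal{D}_X^{(1)})$ cases via the $f$/$v$ sandwich (the paper filters $\mathcal{M}$ by $\ker(f)$ after an induction on cohomological length, rather than resolving $\mathbb{D}_X\mathcal{M}$ through the bimodule sequence, but the effect is the same), then handle the Hodge side by Frobenius descent to $\mathcal{R}(\mathcal{D}_X^{(0)})$ plus Laumon's identification and Grothendieck duality, and the conjugate side via the Azumaya splitting of \lemref{Bez-Brav} and Grothendieck duality for Azumaya algebras. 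The trace compatibilities you flag are exactly the ones the paper builds into its construction of the trace (via $\text{tr}^{\pm\infty}$ and the Rees-algebra traces all descending from the same Grothendieck trace), so no gap there.
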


The proof of this result will make use of several auxiliary results.
First, we recall a basic computation for pushforwards of $\mathcal{R}(\mathcal{D}_{X}^{(0)})$-modules;
as in the previous section we have the diagram 
\[
T^{*}X\xleftarrow{d\varphi}X\times_{Y}T^{*}Y\xrightarrow{\pi}T^{*}Y
\]
and the result reads
\begin{lem}
Let $\mathcal{M}^{\cdot}\in D_{coh}^{b}(\mathcal{G}(\mathcal{R}(\mathcal{D}_{X}^{(0)})))$.
There is an isomorphism 
\[
(\int_{\varphi}\mathcal{M}^{\cdot})\otimes_{k[f]}^{L}k\tilde{=}R\pi_{*}((d\varphi)^{!}(\mathcal{M}^{\cdot}\otimes_{k[f]}^{L}k))
\]
inside $D_{coh}^{b}(T^{*}Y)$; in this formula $d\varphi^{!}$ is
the extraordinary inverse image in coherent sheaf theory.
\end{lem}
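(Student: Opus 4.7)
The plan is to pass through the associated graded and identify the result as a Fourier--Mukai-type transform along the cotangent correspondence
\[
T^{*}X \xleftarrow{d\varphi} X \times_Y T^{*}Y \xrightarrow{\pi} T^{*}Y.
\]

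First, I would represent $-\otimes_{k[f]}^{L} k$ as tensoring with the two-term Koszul complex $[k[f] \xrightarrow{f} k[f]]$, making $k$ a perfect complex of $k[f]$-modules. The projection formula for $R\varphi_{*}$ against this perfect complex commutes $-\otimes_{k[f]}^{L}k$ past the pushforward. Since the Laumon filtration on $\mathcal{D}_{Y\leftarrow X}^{(0)}$ is separated and exhaustive, $\mathcal{R}(\mathcal{D}_{Y\leftarrow X}^{(0)})$ is $f$-torsion free, and a standard change-of-rings spectral sequence then yields
\[
\Bigl(\int_{\varphi}\mathcal{M}^{\cdot}\Bigr) \otimes_{k[f]}^{L}k \;\tilde{\to}\; R\varphi_{*}\bigl(\mathrm{gr}(\mathcal{D}_{Y\leftarrow X}^{(0)}) \otimes_{\mathrm{gr}(\mathcal{D}_X^{(0)})}^{L} \mathcal{N}^{\cdot}\bigr),
\]
where $\mathcal{N}^{\cdot}:=\mathcal{M}^{\cdot}\otimes_{k[f]}^{L} k$, now viewed as a complex in $D^{b}_{coh}(T^{*}X)$ via the identification $\mathrm{gr}(\mathcal{D}_X^{(0)})\tilde{=}p_{X*}\mathcal{O}_{T^{*}X}$.

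Next, I would carry out the key computation of the associated graded of the Laumon transfer bimodule. From $F_i(\mathcal{D}_{Y\leftarrow X}^{(0)}) = \varphi^{-1}(F_i(\mathcal{D}_Y^{(0)}) \otimes \omega_Y^{-1}) \otimes \omega_X$, together with flat base change along the vector bundle $p_Y$, one obtains
\[
\mathrm{gr}(\mathcal{D}_{Y\leftarrow X}^{(0)}) \;\tilde{=}\; \pi_{X/Y\,*}\mathcal{O}_{X\times_Y T^{*}Y} \otimes \omega_{X/Y},
\]
where $\pi_{X/Y}: X\times_Y T^{*}Y \to X$. The right action of $\mathrm{gr}(\mathcal{D}_X^{(0)})$ factors through $d\varphi$ and the left action of $\varphi^{-1}\mathrm{gr}(\mathcal{D}_Y^{(0)})$ through $\pi$, so tensoring against $\mathcal{N}^{\cdot}$ becomes the Fourier--Mukai transform with kernel $\mathcal{O}_{X\times_Y T^{*}Y}\otimes \pi_{X/Y}^{*}\omega_{X/Y}$. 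After identifying $\pi_{X/Y}^{*}\omega_{X/Y}$ with the relative dualizing complex $\omega_{d\varphi}[d_Y - d_X]$ of $d\varphi$, this is precisely $(d\varphi)^{!}\mathcal{N}^{\cdot}$. Writing $R\varphi_{*}$ through the correspondence via the commutative square $\varphi\circ \pi_{X/Y} = p_Y\circ \pi$, the resulting $Rp_{Y*}$ becomes invisible once one identifies $p_{Y*}\mathcal{O}_{T^{*}Y}$-modules with quasi-coherent sheaves on $T^{*}Y$, yielding the desired $R\pi_{*}((d\varphi)^{!}\mathcal{N}^{\cdot})$.

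The main obstacle is the identification of the twist $\pi_{X/Y}^{*}\omega_{X/Y}$ with $\omega_{d\varphi}$ up to the correct cohomological shift. This is a routine but delicate exercise in Grothendieck duality on the vector bundle $X\times_Y T^{*}Y \to X$: one must correctly combine the relative dualizing sheaves $\omega_{X/Y}$, $\omega_{T^{*}Y/Y}=p_Y^{*}\omega_Y^{-1}$, and verify that the total shift recovers the relative dimension $d_Y - d_X$ of $d\varphi$.
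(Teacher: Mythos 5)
First, a structural remark: the paper does not actually prove this lemma --- it is quoted from Laumon and from the reference [CDK, Cor. 3.9] --- so you are supplying an argument the paper omits. Your strategy is the standard one and is essentially the cited proof: reduce mod $f$ using that $\mathcal{R}(\mathcal{D}_{Y\leftarrow X}^{(0)})$ is $f$-torsion-free, identify $\mathrm{gr}(\mathcal{D}_{Y\leftarrow X}^{(0)})$ with $\mathcal{O}_{X\times_{Y}T^{*}Y}\otimes\pi_{X/Y}^{*}\omega_{X/Y}$ with the right $\mathrm{Sym}(\mathcal{T}_{X})$-action factoring through $d\varphi$ and the left action through $\pi$, and read off the result as the transform along the correspondence. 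Your first four steps are sound: the base change of the derived tensor product along $k[f]\to k$ is legitimate because $f$ is central and $k$ is perfect over $k[f]$, and $\pi_{X/Y}$, $p_{Y}$ and $d\varphi$ are all affine over their targets, so passing between module-theoretic and sheaf-theoretic language is harmless.

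The genuine gap is the final identification with $(d\varphi)^{!}$, which is exactly the point you defer. As line bundles one does have $\omega_{d\varphi}\cong\pi_{X/Y}^{*}\omega_{X/Y}$ (since $\omega_{X\times_{Y}T^{*}Y}\cong\pi_{X/Y}^{*}\omega_{X/Y}$ and $\omega_{T^{*}X}\cong\mathcal{O}_{T^{*}X}$ canonically), but your displayed identification $\pi_{X/Y}^{*}\omega_{X/Y}\cong\omega_{d\varphi}[d_{Y}-d_{X}]$ cannot hold when $d_{X}\neq d_{Y}$: the left side is a sheaf in degree zero and the right side is a nontrivial shift of a line bundle. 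With the standard normalization $(d\varphi)^{!}(-)=L(d\varphi)^{*}(-)\otimes\omega_{d\varphi}[d_{Y}-d_{X}]$, your steps actually produce $R\pi_{*}((d\varphi)^{!}\mathcal{N}^{\cdot})[d_{X/Y}]$ rather than $R\pi_{*}((d\varphi)^{!}\mathcal{N}^{\cdot})$. You can see the discrepancy concretely for $Y$ a point and $\mathcal{M}^{\cdot}=\mathcal{O}_{X}[f]$ with the trivial filtration: computing $\int_{\varphi}$ via the Spencer resolution and reducing mod $f$ gives $\bigoplus_{l}R\Gamma(X,\Omega_{X}^{l})[d_{X}-l]$, whereas $R\pi_{*}(d\varphi)^{!}$ applied to the structure sheaf of the zero section gives $\bigoplus_{l}R\Gamma(X,\Omega_{X}^{l})[-l]$. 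So either the lemma must be read with the unshifted convention $(d\varphi)^{!}(-)=L(d\varphi)^{*}(-)\otimes\omega_{d\varphi}$, or a compensating shift $[d_{X/Y}]$ has to be inserted somewhere (e.g. in the definition of $\int_{\varphi}$ for Rees modules). Since this lemma is fed directly into the duality theorem, where shifts by $d_{X}$ and $d_{Y}$ are tracked explicitly, you cannot leave this as a deferred "routine exercise": the twist-and-shift bookkeeping is the only nonformal content of the statement, and as written your resolution of it is incorrect.
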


This is a result of Laumon, (c.f. \cite{key-19}, construction 5.6.1).
For a proof in the Rees algebra language, see \cite{key-70}, corollary
3.9.

Next, we need the following Grothendieck duality statement for Azumaya
algebras: 
\begin{lem}
\label{lem:GD-for-Az}Let $X$ and $Y$ be smooth varieties over $k$
and let $\pi:X\to Y$ be a smooth proper morphism, of relative dimension
$d$. Suppose that $\mathcal{A}_{Y}$ be an Azumaya algebra on $Y$.
Set $\mathcal{A}_{X}=\pi^{*}\mathcal{A}_{Y}$, an Azumaya algebra
on $X$. Then there is a trace map $R^{d}\pi_{*}(\mathcal{A}_{X}\otimes_{\mathcal{O}_{X}}\omega_{X})\to\mathcal{A}_{Y}\otimes_{\mathcal{O}_{Y}}\omega_{Y}$
which induces, for any $\mathcal{M}^{\cdot}\in D_{coh}^{b}(\mathcal{A}_{X}-\text{mod})$
a functorial isomorphism 
\[
R\pi_{*}R\mathcal{H}om_{\mathcal{A}_{X}}(\mathcal{M}^{\cdot},\mathcal{A}_{X}\otimes_{\mathcal{O}_{X}}\omega_{X})[d]\tilde{\to}R\mathcal{H}om_{\mathcal{A}_{Y}}(R\pi_{*}\mathcal{M}^{\cdot},\mathcal{A}_{Y}\otimes_{\mathcal{O}_{Y}}\omega_{Y})
\]
inside $D_{coh}^{b}(\mathcal{O}_{Y}-\text{mod})$. 
\end{lem}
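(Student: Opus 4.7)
My plan is to reduce the statement to the classical Grothendieck--Serre duality for coherent sheaves via étale descent and Morita equivalence, after first constructing the trace map globally.

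First, I would construct the trace. Since $\mathcal{A}_X = \pi^{*}\mathcal{A}_Y$ is locally free over $\mathcal{O}_X$ and $\omega_X \simeq \omega_{X/Y}\otimes_{\mathcal{O}_X}\pi^{*}\omega_Y$, the projection formula for coherent sheaves gives
\[
R^{d}\pi_{*}(\mathcal{A}_X\otimes_{\mathcal{O}_X}\omega_X)\tilde{\to}\mathcal{A}_Y\otimes_{\mathcal{O}_Y}\omega_Y\otimes_{\mathcal{O}_Y}R^{d}\pi_{*}(\omega_{X/Y}).
\]
Composing with $\mathrm{id}\otimes\mathrm{tr}_{\pi}^{\mathrm{coh}}$, where $\mathrm{tr}_{\pi}^{\mathrm{coh}}:R^{d}\pi_{*}\omega_{X/Y}\to\mathcal{O}_Y$ is the classical Grothendieck trace, yields the desired trace map. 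This is clearly a map of $\mathcal{A}_Y$-bimodules.

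Next, I would build the candidate duality morphism by the usual formal manipulations: starting from $R\mathcal{H}om_{\mathcal{A}_X}(\mathcal{M}^{\cdot},\mathcal{A}_X\otimes\omega_X)$, applying the natural map to $R\mathcal{H}om_{\varphi^{-1}\mathcal{A}_Y}(\mathcal{M}^{\cdot},\mathcal{A}_X\otimes\omega_X)$, pushing forward via $R\pi_{*}$, using the adjunction between $\pi^{-1}$ and $R\pi_{*}$, and finally composing with the trace to land in $R\mathcal{H}om_{\mathcal{A}_Y}(R\pi_{*}\mathcal{M}^{\cdot},\mathcal{A}_Y\otimes\omega_Y)[-d]$. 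This construction is functorial in $\mathcal{M}^{\cdot}$ and compatible with étale base change on $Y$.

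To check that this morphism is an isomorphism, I would argue étale locally on $Y$. Since $\mathcal{A}_Y$ is Azumaya, there is an étale cover $U\to Y$ on which $\mathcal{A}_Y|_U \simeq \mathcal{E}nd_{\mathcal{O}_U}(\mathcal{E})$ for some locally free sheaf $\mathcal{E}$; pulling back to $X_U=X\times_Y U$, we have $\mathcal{A}_{X_U}\simeq\mathcal{E}nd_{\mathcal{O}_{X_U}}(\mathcal{F})$ with $\mathcal{F}=\pi_U^{*}\mathcal{E}$. Morita equivalence then identifies $\mathcal{A}_{X_U}$-modules with $\mathcal{O}_{X_U}$-modules via $\mathcal{M}\mapsto\mathcal{N}:=\mathcal{F}^{\vee}\otimes_{\mathcal{A}_{X_U}}\mathcal{M}$, with inverse $\mathcal{N}\mapsto\mathcal{F}\otimes_{\mathcal{O}_{X_U}}\mathcal{N}$. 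Under this equivalence, $R\mathcal{H}om_{\mathcal{A}_{X_U}}(\mathcal{M}^{\cdot},\mathcal{A}_{X_U}\otimes\omega_{X_U}) \simeq R\mathcal{H}om_{\mathcal{O}_{X_U}}(\mathcal{N}^{\cdot},\mathcal{F}^{\vee}\otimes\omega_{X_U})$, and by the projection formula $R\pi_{U,*}(\mathcal{F}\otimes\mathcal{N}^{\cdot})\simeq \mathcal{E}\otimes R\pi_{U,*}\mathcal{N}^{\cdot}$. With these identifications and $\pi_U^{!}(\mathcal{E}^{\vee}\otimes\omega_U)\simeq \mathcal{F}^{\vee}\otimes\omega_{X_U}[d]$, the statement becomes exactly ordinary Grothendieck--Serre duality applied to $\mathcal{N}^{\cdot}$ with dualizing sheaf $\mathcal{E}^{\vee}\otimes\omega_U$; one verifies that under the Morita identifications the trace we built above matches the classical one (up to the canonical isomorphism $R^{d}\pi_{U,*}(\mathcal{F}^{\vee}\otimes\mathcal{F}\otimes\omega_{X_U/U})\to\mathcal{O}_U$ coming from evaluation $\mathcal{F}^{\vee}\otimes\mathcal{F}\to\mathcal{O}_{X_U}$ plus the coherent trace), so our candidate morphism reduces to the Grothendieck--Serre duality isomorphism on $U$.

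The main obstacle is the compatibility check in the last step: verifying that the globally defined morphism, once pulled back along the étale splitting $U\to Y$, actually coincides with the Grothendieck--Serre duality isomorphism under Morita. This is essentially a bookkeeping exercise tracing the trace through the Morita equivalence, but it is the only nontrivial content of the argument, everything else being formal. Once this is confirmed on one étale cover, étale descent upgrades the isomorphism from $U$ back to $Y$, giving the result.
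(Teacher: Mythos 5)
Your proposal is correct and follows essentially the same route as the paper: construct the trace via the projection formula and the classical coherent trace, build the candidate morphism by the standard formal adjunction manipulations, and then verify it is an isomorphism by splitting $\mathcal{A}_{Y}$ étale-locally and reducing via Morita equivalence to ordinary Grothendieck--Serre duality. The compatibility of the trace under the Morita identification that you flag as the remaining check is indeed the point the paper treats implicitly, but your outline matches the paper's argument in all essentials.
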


\begin{proof}
Via the projection formula we have 
\[
R\pi_{*}(\mathcal{A}_{X}\otimes_{\mathcal{O}_{X}}\omega_{X})=R\pi_{*}(\pi^{*}\mathcal{A}_{Y}\otimes_{\mathcal{O}_{X}}\omega_{X})\tilde{\to}\mathcal{A}_{Y}\otimes_{\mathcal{O}_{Y}}^{L}R\pi_{*}(\omega_{X})
\]
so the usual trace $\text{tr}:R^{d}\pi_{*}(\omega_{X})\to\omega_{Y}$
induces a trace $\text{tr}:R^{d}\pi_{*}(\mathcal{A}_{X}\otimes_{\mathcal{O}_{X}}\omega_{X})\to\mathcal{A}_{Y}\otimes_{\mathcal{O}_{Y}}\omega_{Y}$.
Since $\pi$ has homological dimension $d$, we have $R\pi_{*}(\mathcal{A}_{X}\otimes_{\mathcal{O}_{X}}\omega_{X})[d]\to R^{d}\pi_{*}(\mathcal{A}_{X}\otimes_{\mathcal{O}_{X}}\omega_{X})$
so that there is a map 
\[
R\pi_{*}(\mathcal{A}_{X}\otimes_{\mathcal{O}_{X}}\omega_{X})[d]\to\mathcal{A}_{Y}\otimes_{\mathcal{O}_{Y}}\omega_{Y}
\]
Thus we obtain 
\[
R\pi_{*}R\mathcal{H}om_{\mathcal{A}_{X}}(\mathcal{M}^{\cdot},\mathcal{A}_{X}\otimes_{\mathcal{O}_{X}}\omega_{X})[d]\to R\mathcal{H}om_{\mathcal{A}_{Y}}(R\pi_{*}\mathcal{M}^{\cdot},R\pi_{*}(\mathcal{A}_{X}\otimes_{\mathcal{O}_{X}}\omega_{X})[d])
\]
\[
\to R\mathcal{H}om_{\mathcal{A}_{Y}}(R\pi_{*}\mathcal{M}^{\cdot},\mathcal{A}_{Y}\otimes_{\mathcal{O}_{Y}}\omega_{Y})
\]
for any $\mathcal{M}^{\cdot}\in D_{coh}^{b}(\mathcal{A}_{X}-\text{mod})$.
To prove that this map is an isomorphism, we can can work in the etale
(or flat) topology on $Y$ and so assume that $\mathcal{A}_{Y}$ is
split; i.e., $\mathcal{A}_{Y}=\mathcal{E}nd(\mathcal{E}_{Y})$ for
some vector bundle $\mathcal{E}_{Y}$. This implies $\mathcal{A}_{X}=\mathcal{E}nd(\mathcal{E}_{X})$
where $\mathcal{E}_{X}=\pi^{*}\mathcal{E}_{Y}$. Then for any $\mathcal{M}^{\cdot}\in D_{coh}^{b}(\mathcal{A}_{X}-\text{mod})$
we have $\mathcal{M}^{\cdot}=\mathcal{E}_{X}\otimes_{\mathcal{O}_{X}}\mathcal{N}^{\cdot}$
for a complex $\mathcal{N}^{\cdot}\in D_{coh}^{b}(\mathcal{O}_{X}-\text{mod})$.
Therefore 
\[
R\pi_{*}R\mathcal{H}om_{\mathcal{A}_{X}}(\mathcal{M}^{\cdot},\mathcal{A}_{X}\otimes_{\mathcal{O}_{X}}\omega_{X})[d]\tilde{=}R\pi_{*}R\mathcal{H}om_{\mathcal{A}_{X}}(\mathcal{E}_{X}\otimes_{\mathcal{O}_{X}}\mathcal{N}^{\cdot},\mathcal{E}_{X}\otimes_{\mathcal{O}_{X}}(\mathcal{E}_{X}^{*}\otimes_{\mathcal{O}_{X}}\omega_{X}))[d]
\]
\[
\tilde{=}R\pi_{*}R\mathcal{H}om_{\mathcal{O}_{X}}(\mathcal{N}^{\cdot},\mathcal{E}_{X}^{*}\otimes_{\mathcal{O}_{X}}\omega_{X})[d]\tilde{=}R\pi_{*}R\mathcal{H}om_{\mathcal{O}_{X}}(\mathcal{M}^{\cdot},\omega_{X}[d])
\]
\[
\tilde{\to}R\mathcal{H}om_{\mathcal{O}_{Y}}(R\pi_{*}\mathcal{M}^{\cdot},\omega_{Y})\tilde{=}R\mathcal{H}om_{\mathcal{A}_{Y}}(\mathcal{E}_{Y}\otimes_{\mathcal{O}_{Y}}R\pi_{*}\mathcal{M}^{\cdot},\mathcal{E}_{Y}\otimes_{\mathcal{O}_{Y}}\omega_{Y})
\]
\[
\tilde{\to}R\mathcal{H}om_{\mathcal{A}_{Y}}(R\pi_{*}\mathcal{M}^{\cdot},\mathcal{A}_{Y}\otimes_{\mathcal{O}_{Y}}\omega_{Y})
\]
where the isomorphism $R\pi_{*}R\mathcal{H}om_{\mathcal{O}_{X}}(\mathcal{M}^{\cdot},\omega_{X}[d])\tilde{\to}R\mathcal{H}om_{\mathcal{O}_{Y}}(R\pi_{*}\mathcal{M}^{\cdot},\omega_{Y})$
is Grothendieck duality for coherent sheaves. 
\end{proof}
Now we can proceed to the 
\begin{proof}
(of \thmref{Duality-for-smooth-proper}) By applying $\otimes_{W(k)}^{L}k$
we reduce to the characteristic $p$ situation of a smooth proper
morphism $\varphi:X\to Y$. By induction on the cohomological length,
we may suppose that $\mathcal{M}^{\cdot}$ is concentrated in a single
degree; i.e., $\mathcal{M}^{\cdot}=\mathcal{M}\in\mathcal{G}(\mathcal{D}_{X}^{(0,1)})$.
Then $\mathcal{M}$ admits a short exact sequence
\[
\mathcal{M}_{0}\to\mathcal{M}\to\mathcal{M}_{1}
\]
where $\mathcal{M}_{0}\in\mathcal{G}(\overline{\mathcal{R}}(\mathcal{D}_{X}^{(0)}))$
and $\mathcal{M}_{1}\in\mathcal{G}(\mathcal{R}(\mathcal{D}_{X}^{(1)}))$.
By \propref{Sandwich!} and \propref{Sandwich-push}, we see that
is suffices to prove the analogous statements in $D_{coh}^{b}(\mathcal{G}(\overline{\mathcal{R}}(\mathcal{D}_{X}^{(0)})))$
and $D_{coh}^{b}(\mathcal{G}(\mathcal{R}(\mathcal{D}_{X}^{(1)})))$.
By Frobenius descent (\thmref{Hodge-Filtered-Push}), one sees that
it suffices to prove the result for $D_{coh}^{b}(\mathcal{G}(\overline{\mathcal{R}}(\mathcal{D}_{X}^{(0)})))$
and $D_{coh}^{b}(\mathcal{G}(\mathcal{R}(\mathcal{D}_{X}^{(0)})))$.
These two cases require similar, but slightly different techniques;
we begin with the case of $\mathcal{R}(\mathcal{D}_{X}^{(0)})$. In
this case, since the grading on $\mathcal{R}(\mathcal{D}_{X}^{(0)})$
is concentrated in degrees $\geq0$, the graded Nakayama lemma applies
and so it suffices to prove that the map is an isomorphism after applying
$\otimes_{k[f]}^{L}k$; i.e., we have to prove 
\[
R\pi_{*}(d\varphi)^{!}R\mathcal{H}om_{\mathcal{O}_{T^{*}X}}((\mathcal{M}\otimes_{k[f]}^{L}k),\omega_{T^{*}X})[d]
\]
\[
\tilde{\to}R\mathcal{H}om_{\mathcal{O}_{T^{*}Y}}(R\pi_{*}((d\varphi)^{!}(\mathcal{M}\otimes_{k[f]}^{L}k),\omega_{T^{*}Y})
\]
Since $d\varphi$ is a closed immersion of smooth schemes, we have
\[
(d\varphi)^{!}R\mathcal{H}om_{\mathcal{O}_{T^{*}X}}((\mathcal{M}\otimes_{k[f]}^{L}k),\omega_{T^{*}X})[d]
\]
\[
\tilde{=}R\mathcal{H}om_{\mathcal{O}_{X\times_{Y}T^{*}Y}}((d\varphi)^{!}(\mathcal{M}\otimes_{k[f]}^{L}k),(d\varphi)^{!}\omega_{T^{*}X})[d]
\]
Furthermore, $(d\varphi)^{!}\omega_{T^{*}X}=\omega_{X\times_{Y}T^{*}Y}\tilde{=}\pi^{!}(\omega_{T^{*}Y})$.
Therefore 
\[
R\pi_{*}R\mathcal{H}om_{\mathcal{O}_{T^{*}X}}((d\varphi)^{!}(\mathcal{M}\otimes_{k[f]}^{L}k),(d\varphi)^{!}\omega_{T^{*}X})[d]
\]
\[
\tilde{=}R\pi_{*}R\mathcal{H}om_{\mathcal{O}_{X\times_{Y}T^{*}Y}}((d\varphi)^{!}(\mathcal{M}\otimes_{k[f]}^{L}k),\pi^{!}\omega_{T^{*}Y}[d])
\]
\[
\tilde{\to}R\mathcal{H}om_{\mathcal{O}_{T^{*}Y}}(R\pi_{*}((d\varphi)^{!}(\mathcal{M}\otimes_{k[f]}^{L}k),\omega_{T^{*}Y})
\]
where the last isomorphism is induced by the trace for $X\times_{Y}T^{*}Y\xrightarrow{\pi}T^{*}Y$,
i.e., it is given by Grothendieck duality for $\pi$; this proves
the result for $D_{coh}^{b}(\mathcal{G}(\mathcal{R}(\mathcal{D}_{X}^{(0)})))$. 

In order to handle $D_{coh}^{b}(\mathcal{G}(\overline{\mathcal{R}}(\mathcal{D}_{X}^{(0)})))$,
we apply a similar technique, but working directly\footnote{As the grading on $\overline{\mathcal{R}}(\mathcal{D}_{X}^{(0)})$
in unbounded, the graded Nakayama lemma does not apply} with the Azumaya algebra $\overline{\mathcal{R}}(\mathcal{D}_{X}^{(0)})$.
Since the morphism $\varphi$ is smooth, we can make use of \corref{Filtered-Bez-Brav}
and work with the functor $R\pi_{*}^{(1)}\circ C\circ(d\varphi^{(1)})^{!}$.
We therefore have to prove 
\[
R\pi_{*}^{(1)}\circ C\circ(d\varphi^{(1)})^{!}R\mathcal{H}om_{\overline{\mathcal{R}}(\mathcal{D}_{X}^{(0)})}(\mathcal{M}^{\cdot},\overline{\mathcal{R}}(\mathcal{D}_{X}^{(0)}))\otimes_{\mathcal{O}_{X}}\omega_{X}^{-1}[d]
\]
\[
\tilde{\to}R\mathcal{H}om_{\overline{\mathcal{R}}(\mathcal{D}_{Y}^{(0)})}(R\pi_{*}^{(1)}\circ C\circ(d\varphi^{(1)})^{!}\mathcal{M}^{\cdot},\overline{\mathcal{R}}(\mathcal{D}_{X}^{(0)}))\otimes_{\mathcal{O}_{Y}}\omega_{Y}^{-1}
\]
We proceed as above. We have an isomorphism
\[
C\circ(d\varphi^{(1)})^{!}R\mathcal{H}om_{\overline{\mathcal{R}}(\mathcal{D}_{X}^{(0)})}(\mathcal{M}^{\cdot},\overline{\mathcal{R}}(\mathcal{D}_{X}^{(0)}))\otimes_{\mathcal{O}_{X}}\omega_{X}^{-1}
\]
\[
\tilde{=}R\mathcal{H}om_{(\pi^{(1)})^{*}(\overline{\mathcal{R}}(\mathcal{D}_{Y}^{(0)}))}(C\circ(d\varphi^{(1)})^{!}\mathcal{M}^{\cdot},C\circ(d\varphi^{(1)})^{!}(\overline{\mathcal{R}}(\mathcal{D}_{X}^{(0)})\otimes_{\mathcal{O}_{X}}\omega_{X}^{-1}))
\]
Applying the definition of $(d\varphi^{(1)})^{!}$ and $C$, one deduces
\[
C\circ(d\varphi^{(1)})^{!}(\overline{\mathcal{R}}(\mathcal{D}_{X}^{(0)})\otimes_{\mathcal{O}_{X}}\omega_{X}^{-1})\tilde{=}(\pi^{(1)})^{*}(\overline{\mathcal{R}}(\mathcal{D}_{Y}^{(0)})\otimes_{\mathcal{O}_{Y}}\omega_{Y}^{-1})\otimes_{\mathcal{O}_{(X\times_{Y}T^{*}Y)^{(1)}}}\omega_{(X\times_{Y}T^{*}Y)^{(1)}}
\]
 Therefore 
\[
R\mathcal{H}om_{\pi^{*}(\overline{\mathcal{R}}(\mathcal{D}_{Y}^{(0)}))}(C\circ(d\varphi^{(1)})^{!}\mathcal{M}^{\cdot},C\circ(d\varphi^{(1)})^{!}(\overline{\mathcal{R}}(\mathcal{D}_{X}^{(0)})\otimes_{\mathcal{O}_{X}}\omega_{X}^{-1}))[d]
\]
\[
\tilde{\to}R\mathcal{H}om_{\pi^{*}(\overline{\mathcal{R}}(\mathcal{D}_{Y}^{(0)}))}(C\circ(d\varphi^{(1)})^{!}\mathcal{M}^{\cdot},(\pi^{(1)})^{*}(\overline{\mathcal{R}}(\mathcal{D}_{Y}^{(0)})\otimes_{\mathcal{O}_{Y}}\omega_{Y}^{-1})\otimes_{\mathcal{O}_{(X\times_{Y}T^{*}Y)^{(1)}}}\omega_{(X\times_{Y}T^{*}Y)^{(1)}})
\]
\[
\tilde{\to}R\mathcal{H}om_{\overline{\mathcal{R}}(\mathcal{D}_{Y}^{(0)})}(R\pi_{*}^{(1)}\circ C\circ(d\varphi^{(1)})^{!}\mathcal{M}^{\cdot},\overline{\mathcal{R}}(\mathcal{D}_{X}^{(0)})\otimes_{\mathcal{O}_{Y}}\omega_{Y}^{-1})
\]
where the last isomorphism follows from \lemref{GD-for-Az}; this
proves the result for $\overline{\mathcal{R}}(\mathcal{D}_{X}^{(0)})$. 
\end{proof}
This implies, by an identical argument to theorem 2.7.3 of \cite{key-49}:
\begin{cor}
\label{cor:Smooth-proper-adunction}There is a functorial isomorphism
\[
R\underline{\mathcal{H}om}_{\mathcal{D}_{\mathfrak{Y}}^{(0,1)}}(\int_{\varphi}\mathcal{M}^{\cdot},\mathcal{N}^{\cdot})\tilde{\to}\varphi_{*}R\underline{\mathcal{H}om}_{\mathcal{D}_{\mathfrak{X}}^{(0,1)}}(\mathcal{M}^{\cdot},\varphi^{\dagger}\mathcal{N}^{\cdot})
\]
for all $\mathcal{M}^{\cdot}\in D_{coh}^{b}(\mathcal{G}(\mathcal{D}_{\mathfrak{X}}^{(0,1)}))$
and $\mathcal{N}^{\cdot}\in D_{coh}^{b}(\mathcal{G}(\mathcal{D}_{\mathfrak{Y}}^{(0,1)}))$. 
\end{cor}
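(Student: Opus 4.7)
The proof will follow the strategy of \cite{key-49}, Theorem 2.7.3, adapted to the present graded setting. The essential inputs are the duality commutation (Theorem \ref{thm:Duality-for-smooth-proper}) together with the smooth adjunction already proved in Corollary \ref{cor:smooth-adjunction}; the idea is that biduality on $D^b_{coh}$ converts the ``right-adjoint'' form of the smooth adjunction into the ``left-adjoint'' form that we want. The point where properness enters is precisely the duality commutation: it is only for a smooth proper (or projective) morphism that $\int_\varphi \mathbb{D}_\mathfrak{X} \simeq \mathbb{D}_\mathfrak{Y}\int_\varphi$ holds.

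Concretely, I would start from the left-hand side and perform a sequence of manipulations inside $D^b_{coh}(\mathcal{G}(\widehat{\mathcal{D}}_{\mathfrak{Y}}^{(0,1)}))$, where biduality is available. First, rewrite $\int_\varphi\mathcal{M}^\cdot \simeq \mathbb{D}_\mathfrak{Y}\int_\varphi\mathbb{D}_\mathfrak{X}\mathcal{M}^\cdot$ using Theorem \ref{thm:Duality-for-smooth-proper}, and then use the contravariant-equivalence identity $R\underline{\mathcal{H}om}(\mathbb{D}\mathcal{A},\mathbb{D}\mathcal{B})\simeq R\underline{\mathcal{H}om}(\mathcal{B},\mathcal{A})$ (after replacing $\mathcal{N}^\cdot$ by $\mathbb{D}_\mathfrak{Y}\mathbb{D}_\mathfrak{Y}\mathcal{N}^\cdot$) to rewrite the expression as $R\underline{\mathcal{H}om}(\mathbb{D}_\mathfrak{Y}\mathcal{N}^\cdot,\int_\varphi \mathbb{D}_\mathfrak{X}\mathcal{M}^\cdot)$. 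Next, apply the smooth adjunction of Corollary \ref{cor:smooth-adjunction} with $\mathcal{M}'=\mathbb{D}_\mathfrak{X}\mathcal{M}^\cdot(-d)$ and $\mathcal{N}'=\mathbb{D}_\mathfrak{Y}\mathcal{N}^\cdot$, so that $\int_\varphi \mathcal{M}'(d)=\int_\varphi \mathbb{D}_\mathfrak{X}\mathcal{M}^\cdot$; the adjunction then rewrites the expression as $R\varphi_* R\underline{\mathcal{H}om}(\varphi^\dagger\mathbb{D}_\mathfrak{Y}\mathcal{N}^\cdot, \mathbb{D}_\mathfrak{X}\mathcal{M}^\cdot(-d))$. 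Finally, invoke the commutation $\varphi^\dagger\mathbb{D}_\mathfrak{Y}\simeq \mathbb{D}_\mathfrak{X}\varphi^\dagger(d)$ (with the grading shift matched to the $(d)$ above) and one more application of the contravariant-equivalence identity, giving $R\varphi_* R\underline{\mathcal{H}om}(\mathcal{M}^\cdot, \varphi^\dagger\mathcal{N}^\cdot)$ after the $(d)$ and $(-d)$ cancel.

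The main obstacle will be establishing the commutation $\varphi^\dagger\mathbb{D}_\mathfrak{Y}\simeq \mathbb{D}_\mathfrak{X}\varphi^\dagger(d)$ with the correct Tate twist. In classical algebraic $\mathcal{D}$-module theory the analogous statement is routine from the fact that, for a smooth morphism, the transfer bimodule is locally isomorphic to a bounded complex of projective modules whose dual is the opposite transfer bimodule. In our setting $\widehat{\mathcal{D}}^{(0,1)}$ has infinite homological dimension, so the analogous step would proceed by applying Proposition \ref{prop:Sandwich!} (the Sandwich) to reduce to the corresponding statements over $\mathcal{R}(\mathcal{D}^{(1)})$ and $\overline{\mathcal{R}}(\mathcal{D}^{(0)})$, where finite homological dimension is available (via \thmref{Filtered-Frobenius} and the Azumaya description of $\overline{\mathcal{R}}(\mathcal{D}^{(0)})$). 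The required commutation then follows, in the first case, from the Frobenius descent of \lemref{Hodge-Filtered-Pull} together with the classical fact for filtered $\mathcal{D}^{(0)}$-modules, and in the second from a direct computation using \lemref{Koszul-Res-For-R-bar} and duality for the Azumaya algebra (\lemref{GD-for-Az}).

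Alternatively, and perhaps more cleanly, one can mimic HTT \cite{key-49} directly: construct the candidate natural transformation explicitly as the composite of the projection formula (\thmref{Projection-Formula}) and the trace map $\int_\varphi D(\mathcal{O}_\mathfrak{X})[d_X]\to D(\mathcal{O}_\mathfrak{Y})[d_Y]$ built in the proof of Theorem \ref{thm:Duality-for-smooth-proper}, and verify it is an isomorphism by reducing modulo $p$ via Corollary \ref{cor:Nakayama} and then applying Proposition \ref{prop:Sandwich!} to split into the two Rees-algebra cases, each of which is already handled. For the case of a projective (not necessarily smooth) morphism, one factors $\varphi$ through a closed immersion followed by a smooth projective projection and combines the two cases using the composition-of-pushforwards isomorphism \lemref{Composition-of-pushforwards} together with the compatibility of trace maps under composition (\remref{trace-and-compose}).
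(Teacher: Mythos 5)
Your proposal is correct and follows essentially the paper's own route: the paper proves this corollary by invoking ``an identical argument to theorem 2.7.3 of \cite{key-49},'' which is precisely the formal deduction you spell out from \thmref{Duality-for-smooth-proper}, biduality, and the smooth adjunction of \corref{smooth-adjunction}. The commutation you flag as the main obstacle requires no new input: it follows, by the same tensor-hom manipulations used in the proof of \corref{smooth-adjunction}, from the isomorphism $R\underline{\mathcal{H}om}_{\widehat{\mathcal{D}}_{\mathfrak{X}}^{(0,1)}}(\widehat{\mathcal{D}}_{\mathfrak{X}\to\mathfrak{Y}}^{(0,1)},\widehat{\mathcal{D}}_{\mathfrak{X}}^{(0,1)})\tilde{=}\widehat{\mathcal{D}}_{\mathfrak{Y}\leftarrow\mathfrak{X}}^{(0,1)}(d)[-d]$ established in the proposition immediately preceding \corref{smooth-adjunction} (note that the correct form is $\mathbb{D}_{\mathfrak{X}}\varphi^{\dagger}\tilde{=}\varphi^{\dagger}\mathbb{D}_{\mathfrak{Y}}(d)[-2d]$, so your twist and shift bookkeeping needs adjusting before the final cancellation).
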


\subsection{Duality for a Projective morphism}

Now we turn to constructing the trace map in the case where $\varphi:\mathfrak{X}\to\mathfrak{Y}$
is a closed embedding, of relative dimension $d$. In this case the
pushforward is fairly easy to describe: 
\begin{lem}
\label{lem:transfer-is-locally-free}Let $\varphi:\mathfrak{X}_{n}\to\mathfrak{Y}_{n}$
be the reduction to $W_{n}(k)$ of the closed embedding $\varphi$.
Then the transfer bimodule $\mathcal{D}_{\mathfrak{X}_{n}\to\mathfrak{Y}_{n}}^{(0,1)}$
is locally free over $\mathcal{D}_{\mathfrak{X}_{n}}^{(0,1)}$ and
is coherent over $\mathcal{D}_{\mathfrak{Y}_{n}}^{(0,1),\text{opp}}$.
Thus the functor $\int_{\varphi}^{0}:\mathcal{G}_{coh}(\mathcal{D}_{\mathfrak{X}_{n}}^{(0,1)})\to\mathcal{G}_{coh}(\mathcal{D}_{\mathfrak{Y}_{n}}^{(0,1)})$
is exact. 
\end{lem}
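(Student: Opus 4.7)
The plan is to establish the statement by an explicit local computation, using the coordinate description of $\mathcal{D}^{(0,1)}$ from \corref{Each-D^(i)-is-free}; the exactness will then follow from the left--right interchange together with the exactness of $\varphi_*$ for closed immersions. Since the claim is local on $\mathfrak{Y}$, I will choose an affine $\mathfrak{V}=\text{Specf}(\mathcal{A})\subset\mathfrak{Y}$ with coordinates $\{y_1,\ldots,y_m\}$ adapted to $\varphi$, i.e., $\mathfrak{X}\cap\mathfrak{V}=\text{Specf}(\mathcal{B})$ with $\mathcal{B}=\mathcal{A}/(y_{m-c+1},\ldots,y_m)$ where $c$ is the codimension, so that $x_i=\overline{y_i}$ ($1\le i\le m-c$) are local coordinates on $\mathfrak{X}\cap\mathfrak{V}$. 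Setting $\mathcal{A}_n=\mathcal{A}/p^n$ and $\mathcal{B}_n=\mathcal{B}/p^n$, the transfer bimodule is locally $\mathcal{D}_{\mathfrak{X}_n\to\mathfrak{Y}_n}^{(0,1)}|_{\mathfrak{V}\cap\mathfrak{X}_n}=\mathcal{B}_n\otimes_{\mathcal{A}_n}\mathcal{D}_{\mathcal{A}_n}^{(0,1)}$, which is a cyclic right $\varphi^{-1}\mathcal{D}_{\mathfrak{Y}_n}^{(0,1)}$-module (generated by $1\otimes 1$); this immediately gives the asserted coherence over $\mathcal{D}_{\mathfrak{Y}_n}^{(0,1),\mathrm{opp}}$.

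For the left freeness, I would apply \corref{Each-D^(i)-is-free} to obtain a $D(\mathcal{A}_n)$-basis of $\mathcal{D}_{\mathcal{A}_n}^{(0,1)}$ consisting of monomials $\partial^I(\partial^{[p]})^J$ with each entry of $I$ in $\{0,\ldots,p-1\}$, so that after extension along $\mathcal{A}_n\to\mathcal{B}_n$ the same monomials form a $D(\mathcal{B}_n)$-basis of the transfer bimodule. Splitting indices into tangential and normal parts, and using that the tangential and normal PD-derivations commute in $\mathcal{D}_{\mathcal{A}_n}^{(0,1)}$, each basis monomial factors as $\partial_{\mathrm{tan}}^{I_1}(\partial_{\mathrm{tan}}^{[p]})^{J_1}\cdot\partial_{\mathrm{nor}}^{I_2}(\partial_{\mathrm{nor}}^{[p]})^{J_2}$. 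A second application of \corref{Each-D^(i)-is-free}, now to $\mathcal{B}_n$, identifies the tangential monomials with a $D(\mathcal{B}_n)$-basis of $\mathcal{D}_{\mathcal{B}_n}^{(0,1)}$. Unpacking the construction of the left action from \propref{pull-back-in-pos-char} (which lifts unchanged to $W_n(k)$ by the infinitesimal lifting argument used in its proof), one checks that $\beta(\partial_{x_i})=1\otimes\partial_i$ and $\alpha(\partial_{x_i}^{[p]})=1\otimes\partial_i^{[p]}$ for $i\le m-c$, so that the left $\mathcal{D}_{\mathcal{B}_n}^{(0,1)}$-action on the tensor product is simply left multiplication by any tangential lift. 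Consequently the natural map
\[
\mathcal{D}_{\mathcal{B}_n}^{(0,1)}\otimes_{D(\mathcal{B}_n)}\Bigl(\bigoplus_{I_{\mathrm{nor}},J_{\mathrm{nor}}} D(\mathcal{B}_n)\cdot\partial_{\mathrm{nor}}^{I_{\mathrm{nor}}}(\partial_{\mathrm{nor}}^{[p]})^{J_{\mathrm{nor}}}\Bigr)\longrightarrow \mathcal{B}_n\otimes_{\mathcal{A}_n}\mathcal{D}_{\mathcal{A}_n}^{(0,1)},\quad \Phi\otimes N\mapsto \Phi\cdot(1\otimes N),
\]
sends basis to basis over $D(\mathcal{B}_n)$, hence is an isomorphism of left $\mathcal{D}_{\mathcal{B}_n}^{(0,1)}$-modules, exhibiting the required local freeness.

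Granted this, exactness of $\int_\varphi^0$ is formal: by the left--right interchange of \propref{Left-Right-Swap}, local freeness of $\mathcal{D}_{\mathfrak{X}_n\to\mathfrak{Y}_n}^{(0,1)}$ as a left $\mathcal{D}_{\mathfrak{X}_n}^{(0,1)}$-module transfers to local freeness of $\mathcal{D}_{\mathfrak{Y}_n\leftarrow\mathfrak{X}_n}^{(0,1)}$ as a right $\mathcal{D}_{\mathfrak{X}_n}^{(0,1)}$-module, so the tensor product $\mathcal{D}_{\mathfrak{Y}_n\leftarrow\mathfrak{X}_n}^{(0,1)}\otimes_{\mathcal{D}_{\mathfrak{X}_n}^{(0,1)}}-$ is exact; combining with the exactness of $\varphi_*$ on quasi-coherent sheaves for a closed immersion yields the exactness of $\int_\varphi^0$, and the right-coherence already established transports coherence of the input to coherence of the output. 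The main technical obstacle I anticipate is the verification that $\alpha(\partial_{x_i}^{[p]})=1\otimes\partial_i^{[p]}$: the construction of $\alpha$ in \propref{pull-back-in-pos-char} proceeds by extending the composition $\partial_{x_i}^{[p]}\circ\varphi^{\#}$ to a section of $\mathcal{O}_\mathfrak{X}\otimes\varphi^{-1}\mathfrak{l}_\mathfrak{Y}$, and while for a closed embedding with coordinates chosen as above this extension is visibly $1\otimes\partial_i^{[p]}$, checking this identification (and the analogous ones for all operators built from tangential generators) requires a careful unwinding of the commutation formulas defining $\mathfrak{l}_\mathfrak{Y}$.
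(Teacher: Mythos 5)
Your proposal is correct and follows essentially the same route as the paper's proof: work locally in coordinates adapted to the closed immersion, invoke the monomial basis of $\mathcal{D}_{\mathcal{A}_n}^{(0,1)}$ over $D(\mathcal{A}_n)$ from \corref{Each-D^(i)-is-free} (equivalently \corref{Local-coords-over-A=00005Bf,v=00005D}), and observe that the transfer bimodule is free over $\mathcal{D}_{\mathcal{B}_n}^{(0,1)}$ with basis the normal-direction monomials $\partial^{I}(\partial^{[p]})^{J}$. The extra care you take in checking that the left action of tangential generators is left multiplication by their lifts is left implicit in the paper, but it is exactly the right point to verify.
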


\begin{proof}
Working locally, we can assume that $\mathfrak{X}_{n}=\text{Spec}(B_{n})$,
$\mathfrak{Y}_{n}=\text{Spec}(A_{n})$, and $A_{n}$ admits local
coordinates $\{x_{1},\dots,x_{n}\}$ for which $B_{n}=A_{n}/(x_{1},\dots,x_{m})$.
Then 
\[
\Gamma(\mathcal{D}_{\mathfrak{X}_{n}\to\mathfrak{Y}_{n}}^{(0,1)})=\mathcal{D}_{A_{n}}^{(0,1)}/I\cdot\mathcal{D}_{A_{n}}^{(0,1)}
\]
is coherent over $\mathcal{D}_{A_{n}}^{(0,1),\text{opp}}$. Now, \corref{Local-coords-over-A=00005Bf,v=00005D}
implies that $\mathcal{D}_{A_{n}}^{(0,1)}$ is free over $D(A_{n})$
(c.f. also the proof of \corref{Each-D^(i)-is-free}), with basis
given by the set $\{\partial^{I}(\partial^{[p]})^{J}\}$, where $I=(i_{1},\dots,i_{n})$
is a multi-index with $0\leq i_{j}\leq p-1$ for all $j$ and $J$
is any multi-index with entries $\geq0$. So $\mathcal{D}_{A_{n}}^{(0,1)}/I\cdot\mathcal{D}_{A_{n}}^{(0,1)}$
is free over $\mathcal{D}_{B_{n}}^{(0,1)}$with basis given by $\{\partial^{I}(\partial^{[p]})^{J}\}$,
where $I=(i_{1},\dots,i_{m})$ is a multi-index with $0\leq i_{j}\leq p-1$
for all $j$ and $J=(j_{1},\dots,j_{m})$ is any multi-index with
entries $\geq0$.
\end{proof}
Now we can proceed to analyze this functor, and the pullback $\varphi^{\dagger}$,
in exactly the same way as is done in the usual algebraic $\mathcal{D}$-module
theory. In this case, the existence of the trace map is essentially
deduced from the duality. To start off, we have 
\begin{prop}
Let $\varphi:\mathfrak{X}_{n}\to\mathfrak{Y}_{n}$ be as above. Define
$\varphi^{\sharp}(\mathcal{M}^{\cdot}):=R\underline{\mathcal{H}om}_{\varphi^{-1}(\mathcal{D}_{\mathfrak{Y}_{n}}^{(0,1)})}(\mathcal{D}_{\mathfrak{Y}_{n}\leftarrow\mathfrak{X}_{n}}^{(0,1)},\varphi^{-1}(\mathcal{M}^{\cdot}))$.
Then there is an isomorphism of functors $\varphi^{\dagger}\tilde{=}\varphi^{\sharp}$
on $D(\mathcal{G}(\mathcal{D}_{\mathfrak{Y}_{n}}^{(0,1)}))$. 
\end{prop}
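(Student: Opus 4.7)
The plan is to deduce the proposition from a single bimodule computation:
\begin{equation*}
R\underline{\mathcal{H}om}_{\varphi^{-1}(\mathcal{D}_{\mathfrak{Y}_{n}}^{(0,1)})}(\mathcal{D}_{\mathfrak{Y}_{n}\leftarrow\mathfrak{X}_{n}}^{(0,1)},\varphi^{-1}(\mathcal{D}_{\mathfrak{Y}_{n}}^{(0,1)}))\;\tilde{=}\;\mathcal{D}_{\mathfrak{X}_{n}\to\mathfrak{Y}_{n}}^{(0,1)}[d_{X/Y}]
\end{equation*}
in the category of graded $(\mathcal{D}_{\mathfrak{X}_{n}}^{(0,1)},\varphi^{-1}(\mathcal{D}_{\mathfrak{Y}_{n}}^{(0,1)}))$-bimodules. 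Once this is proved, the proposition follows by the \emph{Hom-tensor} argument: by the preceding lemma $\mathcal{D}_{\mathfrak{X}_{n}\to\mathfrak{Y}_{n}}^{(0,1)}$ is locally free over $\mathcal{D}_{\mathfrak{X}_{n}}^{(0,1)}$, so via left--right interchange $\mathcal{D}_{\mathfrak{Y}_{n}\leftarrow\mathfrak{X}_{n}}^{(0,1)}$ is locally a finite sum of shifts of $\varphi^{-1}(\mathcal{D}_{\mathfrak{Y}_{n}}^{(0,1)})$-projectives (after the Koszul resolution in the next step), and the canonical map
\begin{equation*}
R\underline{\mathcal{H}om}_{\varphi^{-1}(\mathcal{D}_{\mathfrak{Y}_{n}}^{(0,1)})}(\mathcal{D}_{\mathfrak{Y}_{n}\leftarrow\mathfrak{X}_{n}}^{(0,1)},\varphi^{-1}(\mathcal{D}_{\mathfrak{Y}_{n}}^{(0,1)}))\otimes^{L}_{\varphi^{-1}(\mathcal{D}_{\mathfrak{Y}_{n}}^{(0,1)})}\varphi^{-1}(\mathcal{M}^{\cdot})\to\varphi^{\sharp}(\mathcal{M}^{\cdot})
\end{equation*}
is an isomorphism, whence $\varphi^{\sharp}\mathcal{M}^{\cdot}\tilde{=}\mathcal{D}_{\mathfrak{X}_{n}\to\mathfrak{Y}_{n}}^{(0,1)}[d_{X/Y}]\otimes^{L}\varphi^{-1}(\mathcal{M}^{\cdot})=\varphi^{\dagger}\mathcal{M}^{\cdot}$.

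For the bimodule computation itself, I would work locally, assuming $\mathfrak{Y}_{n}=\mathrm{Spec}(A_{n})$ with coordinates $x_{1},\dots,x_{N}$ and dual derivations, chosen so that $\mathfrak{X}_{n}$ is cut out by $x_{1},\dots,x_{m}$ with $m=-d_{X/Y}$. Using the local description of $\mathcal{D}_{\mathfrak{X}_{n}\to\mathfrak{Y}_{n}}^{(0,1)}=\mathcal{D}_{\mathfrak{Y}_{n}}^{(0,1)}/(x_{1},\dots,x_{m})\mathcal{D}_{\mathfrak{Y}_{n}}^{(0,1)}$ that already appeared in the proof of \lemref{transfer-is-locally-free}, together with the identification (from \propref{Left-Right-Swap}) of $\mathcal{D}_{\mathfrak{Y}_{n}\leftarrow\mathfrak{X}_{n}}^{(0,1)}$ with $\omega_{X/Y}\otimes_{\mathcal{O}_{X}}\mathcal{D}_{\mathfrak{X}_{n}\to\mathfrak{Y}_{n}}^{(0,1)}$ (bimodule structures swapped appropriately), the heart of the argument is to construct a graded Koszul resolution
\begin{equation*}
K_{\cdot}\colon\;0\to\varphi^{-1}(\mathcal{D}_{\mathfrak{Y}_{n}}^{(0,1)})\otimes\textstyle\bigwedge^{m}\langle x_{1},\dots,x_{m}\rangle\to\cdots\to\varphi^{-1}(\mathcal{D}_{\mathfrak{Y}_{n}}^{(0,1)})\to\mathcal{D}_{\mathfrak{X}_{n}\to\mathfrak{Y}_{n}}^{(0,1)}\to0
\end{equation*}
with differential given by left multiplication by the $x_{i}$. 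Exactness amounts to showing that $x_{1},\dots,x_{m}$ form a regular sequence under left multiplication on $\mathcal{D}_{\mathfrak{Y}_{n}}^{(0,1)}$, which is immediate from the PBW-type basis of \corref{Each-D^(i)-is-free} (top-order-in-$x$ terms are not affected by commutation relations). Note that each $x_{i}$ sits in gauge degree $0$, so the complex $K_{\cdot}$ lives in $\mathcal{G}$, and the resolution is one of graded right modules.

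Next I would dualize $K_{\cdot}$ by applying $\underline{\mathcal{H}om}_{\varphi^{-1}(\mathcal{D}_{\mathfrak{Y}_{n}}^{(0,1)})}(-,\varphi^{-1}(\mathcal{D}_{\mathfrak{Y}_{n}}^{(0,1)}))$ to the shifted Koszul resolution of $\mathcal{D}_{\mathfrak{Y}_{n}\leftarrow\mathfrak{X}_{n}}^{(0,1)}$ obtained by tensoring $K_{\cdot}$ with $\omega_{X/Y}$. The outcome is the ``Koszul-dual'' complex, whose cohomology is concentrated in degree $m=-d_{X/Y}$, where it computes $\omega_{X/Y}^{-1}\otimes_{\mathcal{O}_{X}}\mathcal{D}_{\mathfrak{X}_{n}\to\mathfrak{Y}_{n}}^{(0,1)}$; the $\omega_{X/Y}$ from the left--right swap cancels the $\omega_{X/Y}^{-1}$ from Koszul duality, leaving $\mathcal{D}_{\mathfrak{X}_{n}\to\mathfrak{Y}_{n}}^{(0,1)}[d_{X/Y}]$ with its natural $(\mathcal{D}_{\mathfrak{X}_{n}}^{(0,1)},\varphi^{-1}(\mathcal{D}_{\mathfrak{Y}_{n}}^{(0,1)}))$-bimodule structure.

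The main obstacle I anticipate is bookkeeping: one must verify (i) that the Koszul resolution preserves the gauge grading in a way compatible with both the left $\mathcal{D}_{\mathfrak{X}_{n}}^{(0,1)}$-action and the right $\varphi^{-1}(\mathcal{D}_{\mathfrak{Y}_{n}}^{(0,1)})$-action throughout the computation; (ii) that the cancellation $\omega_{X/Y}\otimes\omega_{X/Y}^{-1}\tilde{=}\mathcal{O}_{X}$ produces the \emph{correct} bimodule structure (and in particular does not introduce a spurious sign or twist); and (iii) that the resulting local bimodule isomorphism is independent of the chosen defining equations $x_{1},\dots,x_{m}$, so that it glues to a global map. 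Point (iii) should follow from the intrinsic characterization of both sides as the unique cohomology sheaf (in a fixed degree) of $R\underline{\mathcal{H}om}$ between intrinsically defined bimodules, but spelling this out carefully is where the main work lies.
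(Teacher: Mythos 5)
Your proposal is correct and follows essentially the same route as the paper: both reduce the claim to the bimodule computation $R\underline{\mathcal{H}om}_{\varphi^{-1}(\mathcal{D}_{\mathfrak{Y}_{n}}^{(0,1)})}(\mathcal{D}_{\mathfrak{Y}_{n}\leftarrow\mathfrak{X}_{n}}^{(0,1)},\varphi^{-1}(\mathcal{D}_{\mathfrak{Y}_{n}}^{(0,1)}))\tilde{=}\mathcal{D}_{\mathfrak{X}_{n}\to\mathfrak{Y}_{n}}^{(0,1)}[-d]$ via the Koszul resolution of the transfer bimodule (whose exactness rests on the PBW basis of \corref{Each-D^(i)-is-free}), and then conclude by the Hom-tensor isomorphism, valid because the transfer bimodule is locally a perfect complex over $\varphi^{-1}(\mathcal{D}_{\mathfrak{Y}_{n}}^{(0,1)})$. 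The only cosmetic difference is that the paper first dualizes $\mathcal{D}_{\mathfrak{X}_{n}\to\mathfrak{Y}_{n}}^{(0,1)}$ on the right-module side and then invokes the left--right interchange, whereas you dualize $\mathcal{D}_{\mathfrak{Y}_{n}\leftarrow\mathfrak{X}_{n}}^{(0,1)}$ directly.
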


\begin{proof}
This is very similar to \cite{key-49}, propositions 1.5.14 and 1.5.16.
One first shows 
\[
R\underline{\mathcal{H}om}_{\varphi^{-1}(\mathcal{D}_{\mathfrak{Y}_{n}}^{(0,1),\text{opp}})}(\mathcal{D}_{\mathfrak{X}_{n}\to\mathfrak{Y}_{n}}^{(0,1)},\varphi^{-1}(\mathcal{D}_{\mathfrak{Y}_{n}}^{(0,1)}))\tilde{=}\mathcal{D}_{\mathfrak{Y}_{n}\leftarrow\mathfrak{X}_{n}}^{(0,1)}[-d]
\]
by using the Koszul complex to write a locally free resolution for
$\mathcal{D}_{\mathfrak{X}_{n}\to\mathfrak{Y}_{n}}^{(0,1)}$ over
$\varphi^{-1}(\mathcal{D}_{\mathfrak{Y}_{n}}^{(0,1),\text{opp}})$;
note that by the left-right interchange this implies 
\[
R\underline{\mathcal{H}om}_{\varphi^{-1}(\mathcal{D}_{\mathfrak{Y}_{n}}^{(0,1)})}(\mathcal{D}_{\mathfrak{Y}_{n}\leftarrow\mathfrak{X}_{n}}^{(0,1)},\varphi^{-1}(\mathcal{D}_{\mathfrak{Y}_{n}}^{(0,1)}))\tilde{=}\mathcal{D}_{\mathfrak{X}_{n}\to\mathfrak{Y}_{n}}^{(0,1)}[-d]
\]
 Then, we have 
\[
\varphi^{\dagger}(\mathcal{M}^{\cdot})=\mathcal{D}_{\mathfrak{X}_{n}\to\mathfrak{Y}_{n}}^{(0,1)}\otimes_{\varphi^{-1}(\mathcal{D}_{\mathfrak{Y}_{n}}^{(0,1)})}^{L}\varphi^{-1}(\mathcal{M}^{\cdot})[-d]
\]
\[
\tilde{=}R\underline{\mathcal{H}om}_{\varphi^{-1}(\mathcal{D}_{\mathfrak{Y}_{n}}^{(0,1)})}(\mathcal{D}_{\mathfrak{Y}_{n}\leftarrow\mathfrak{X}_{n}}^{(0,1)},\varphi^{-1}(\mathcal{D}_{\mathfrak{Y}_{n}}^{(0,1)}))\otimes_{\varphi^{-1}(\mathcal{D}_{\mathfrak{Y}_{n}}^{(0,1)})}^{L}\varphi^{-1}(\mathcal{M}^{\cdot})
\]
\[
\tilde{\to}R\underline{\mathcal{H}om}_{\varphi^{-1}(\mathcal{D}_{\mathfrak{Y}_{n}}^{(0,1)})}(\mathcal{D}_{\mathfrak{Y}_{n}\leftarrow\mathfrak{X}_{n}}^{(0,1)},\varphi^{-1}(\mathcal{M}^{\cdot}))
\]
where the last isomorphism uses the fact that $\mathcal{D}_{\mathfrak{Y}_{n}\leftarrow\mathfrak{X}_{n}}^{(0,1)}$
admits, locally, a finite free resolution over $\varphi^{-1}(\mathcal{D}_{\mathfrak{Y}_{n}}^{(0,1)})$. 
\end{proof}
In turn, this implies 
\begin{cor}
We have a functorial isomorphism 
\[
R\underline{\mathcal{H}om}_{\mathcal{D}_{\mathfrak{Y}_{n}}^{(0,1)}}(\int_{\varphi}\mathcal{M}^{\cdot},\mathcal{N}^{\cdot})\tilde{\to}\varphi_{*}R\underline{\mathcal{H}om}_{\mathcal{D}_{\mathfrak{X}_{n}}^{(0,1)}}(\mathcal{M}^{\cdot},\varphi^{\dagger}\mathcal{N}^{\cdot})
\]
for all $\mathcal{M}^{\cdot}\in D_{qcoh}^{b}(\mathcal{G}(\mathcal{D}_{\mathfrak{X}_{n}}^{(0,1)}))$
and $\mathcal{N}^{\cdot}\in D_{qcoh}^{b}(\mathcal{G}(\mathcal{D}_{\mathfrak{Y}_{n}}^{(0,1)}))$. 
\end{cor}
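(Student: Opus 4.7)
The plan is to derive this corollary formally from the identification $\varphi^{\dagger}\cong\varphi^{\sharp}$ established in the preceding proposition, combined with the two standard adjunctions already available in this paper: tensor--hom adjunction (\lemref{basic-hom-tensor}, part 1) and the $(\varphi^{-1},\varphi_{*})$ adjunction for the closed embedding $\varphi$. Crucially, because $\varphi$ is a closed embedding, $\varphi_{*}=R\varphi_{*}$ (it is exact on quasi-coherent sheaves), so the set-theoretic adjunction extends directly to the derived category without any further bookkeeping.

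First, I would unfold the left-hand side using the definition of the pushforward, writing
\[
R\underline{\mathcal{H}om}_{\mathcal{D}_{\mathfrak{Y}_{n}}^{(0,1)}}\!\Bigl(\int_{\varphi}\mathcal{M}^{\cdot},\mathcal{N}^{\cdot}\Bigr)=R\underline{\mathcal{H}om}_{\mathcal{D}_{\mathfrak{Y}_{n}}^{(0,1)}}\!\Bigl(\varphi_{*}(\mathcal{D}_{\mathfrak{Y}_{n}\leftarrow\mathfrak{X}_{n}}^{(0,1)}\otimes_{\mathcal{D}_{\mathfrak{X}_{n}}^{(0,1)}}^{L}\mathcal{M}^{\cdot}),\mathcal{N}^{\cdot}\Bigr).
\]
Then, applying the derived $(\varphi^{-1},\varphi_{*})$-adjunction for graded modules (valid in bounded-above situations, and here applicable since $\varphi_{*}=R\varphi_{*}$ and the functor $\int_{\varphi}$ lands in $D_{qcoh}^{b}$ by \thmref{phi-push-is-bounded}), this rewrites as
\[
\varphi_{*}R\underline{\mathcal{H}om}_{\varphi^{-1}(\mathcal{D}_{\mathfrak{Y}_{n}}^{(0,1)})}\!\Bigl(\mathcal{D}_{\mathfrak{Y}_{n}\leftarrow\mathfrak{X}_{n}}^{(0,1)}\otimes_{\mathcal{D}_{\mathfrak{X}_{n}}^{(0,1)}}^{L}\mathcal{M}^{\cdot},\varphi^{-1}(\mathcal{N}^{\cdot})\Bigr).
\]

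Next, I would apply the tensor--hom adjunction from \lemref{basic-hom-tensor}(1) with $\tilde{\mathcal{R}}=\varphi^{-1}(\mathcal{D}_{\mathfrak{Y}_{n}}^{(0,1)})$, $\tilde{\mathcal{S}}=\mathcal{D}_{\mathfrak{X}_{n}}^{(0,1)}$, and $\mathcal{N}=\mathcal{D}_{\mathfrak{Y}_{n}\leftarrow\mathfrak{X}_{n}}^{(0,1)}$. The isomorphism form of that adjunction requires $\mathcal{N}$ to be flat as a right $\mathcal{D}_{\mathfrak{X}_{n}}^{(0,1)}$-module; this is exactly the local freeness established in \lemref{transfer-is-locally-free} (transported to the right-handed side by the left-right interchange of \propref{Left-Right-Swap}). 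The adjunction then yields
\[
\varphi_{*}R\underline{\mathcal{H}om}_{\mathcal{D}_{\mathfrak{X}_{n}}^{(0,1)}}\!\Bigl(\mathcal{M}^{\cdot},R\underline{\mathcal{H}om}_{\varphi^{-1}(\mathcal{D}_{\mathfrak{Y}_{n}}^{(0,1)})}(\mathcal{D}_{\mathfrak{Y}_{n}\leftarrow\mathfrak{X}_{n}}^{(0,1)},\varphi^{-1}(\mathcal{N}^{\cdot}))\Bigr),
\]
and the inner $R\underline{\mathcal{H}om}$ is by definition $\varphi^{\sharp}(\mathcal{N}^{\cdot})$, which is isomorphic to $\varphi^{\dagger}(\mathcal{N}^{\cdot})$ by the preceding proposition.

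Since the argument is purely a chain of adjunctions, there is no genuine obstacle beyond verifying compatibility; the main subtlety worth spelling out is the flatness input for the tensor--hom step, which is where \lemref{transfer-is-locally-free} does the real work. One should also note the functoriality (and compatibility with composition of morphisms and with shifts) is built into the construction, since each step is the derived functor of a standard natural isomorphism.
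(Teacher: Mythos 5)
Your second step (the tensor--hom adjunction using the local freeness of $\mathcal{D}_{\mathfrak{Y}_{n}\leftarrow\mathfrak{X}_{n}}^{(0,1)}$ over $\mathcal{D}_{\mathfrak{X}_{n}}^{(0,1)}$ from \lemref{transfer-is-locally-free}) is exactly right and matches the paper. The gap is in your first step. You invoke ``the $(\varphi^{-1},\varphi_{*})$ adjunction'' to pull $\varphi_{*}$ out of the \emph{first} argument of $R\underline{\mathcal{H}om}$, replacing $\mathcal{N}^{\cdot}$ by $\varphi^{-1}(\mathcal{N}^{\cdot})$ in the second argument. But $\varphi^{-1}$ is the \emph{left} adjoint of $\varphi_{*}$; the adjunction available in the paper (part 3 of the first lemma of Section 2.2) reads $R\varphi_{*}R\underline{\mathcal{H}om}(\varphi^{-1}\mathcal{N}^{\cdot},\mathcal{M}^{\cdot})\tilde{\to}R\underline{\mathcal{H}om}(\mathcal{N}^{\cdot},R\varphi_{*}\mathcal{M}^{\cdot})$, i.e.\ it removes $\varphi_{*}$ from the \emph{second} slot. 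To remove it from the first slot one needs the \emph{right} adjoint of $\varphi_{*}$, which for a closed immersion is not $\varphi^{-1}$ but $\varphi^{-1}\circ R\Gamma_{\mathfrak{X}_{n}}$ (sections supported on the closed subspace). The isomorphism you assert is already false for underived sheaf Hom: for the inclusion of a point $x$ into $\mathbb{A}^{1}$ and $\mathcal{N}$ the constant sheaf, $\mathcal{H}om(\varphi_{*}A,\mathcal{N})=0$ while $\varphi_{*}\mathcal{H}om(A,\varphi^{-1}\mathcal{N})=\varphi_{*}\mathrm{Hom}(A,\mathbb{Z})\neq0$.

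This is precisely the point the paper's proof has to work around: it first replaces $\mathcal{N}^{\cdot}$ by $R\Gamma_{\mathfrak{X}_{n}}(\mathcal{N}^{\cdot})$ in the outer Hom, which is harmless because the first argument $\varphi_{*}(\mathcal{D}_{\mathfrak{Y}_{n}\leftarrow\mathfrak{X}_{n}}^{(0,1)}\otimes^{L}\mathcal{M}^{\cdot})$ is supported on $\mathfrak{X}_{n}$ and $R\underline{\mathcal{H}om}$ out of such an object into the third term of the triangle $R\Gamma_{\mathfrak{X}_{n}}(\mathcal{N}^{\cdot})\to\mathcal{N}^{\cdot}\to Rj_{*}(\mathcal{N}^{\cdot}|_{\mathfrak{Y}_{n}\backslash\mathfrak{X}_{n}})$ vanishes; only for complexes supported on $\mathfrak{X}_{n}$ does your formula with $\varphi^{-1}$ become valid. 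The same vanishing is then used a second time at the end, to pass from $R\underline{\mathcal{H}om}_{\varphi^{-1}(\mathcal{D}_{\mathfrak{Y}_{n}}^{(0,1)})}(\mathcal{D}_{\mathfrak{Y}_{n}\leftarrow\mathfrak{X}_{n}}^{(0,1)},\varphi^{-1}R\Gamma_{\mathfrak{X}_{n}}(\mathcal{N}^{\cdot}))$ back to $\varphi^{\sharp}(\mathcal{N}^{\cdot})$, which is defined with $\varphi^{-1}(\mathcal{N}^{\cdot})$. Your argument needs this local-cohomology detour inserted; as written, the chain of ``standard adjunctions'' does not close up.
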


\begin{proof}
(Just as in \cite{key-49}, proposition 1.5.25). By the previous proposition,
it suffices to prove the result for $\varphi^{\sharp}$ instead of
$\varphi^{\dagger}$. To proceed, note that we have the local cohomology
functor $\mathcal{N}^{\cdot}\to R\Gamma_{\mathfrak{X}_{n}}(\mathcal{N}^{\cdot})$
which takes $\mathcal{N}^{\cdot}\in D_{qcoh}^{b}(\mathcal{G}(\mathcal{D}_{\mathfrak{Y}_{n}}^{(0,1)}))$
to $D_{qcoh}^{b}(\mathcal{G}(\mathcal{D}_{\mathfrak{Y}_{n}}^{(0,1)}))$.
We have 
\[
R\underline{\mathcal{H}om}_{\mathcal{D}_{\mathfrak{Y}_{n}}^{(0,1)}}(\int_{\varphi}\mathcal{M}^{\cdot},\mathcal{N}^{\cdot})=R\underline{\mathcal{H}om}_{\mathcal{D}_{\mathfrak{Y}_{n}}^{(0,1)}}(\varphi_{*}(\mathcal{D}_{\mathfrak{Y}_{n}\leftarrow\mathfrak{X}_{n}}^{(0,1)}\otimes_{\mathcal{D}_{\mathfrak{X}_{n}}^{(0,1)}}^{L}\mathcal{M}^{\cdot}),\mathcal{N}^{\cdot})
\]
\[
\tilde{=}R\underline{\mathcal{H}om}_{\mathcal{D}_{\mathfrak{Y}_{n}}^{(0,1)}}(\varphi_{*}(\mathcal{D}_{\mathfrak{Y}_{n}\leftarrow\mathfrak{X}_{n}}^{(0,1)}\otimes_{\mathcal{D}_{\mathfrak{X}_{n}}^{(0,1)}}^{L}\mathcal{M}^{\cdot}),R\Gamma_{\mathfrak{X}_{n}}(\mathcal{N}^{\cdot}))
\]
\[
\tilde{=}\varphi_{*}(R\underline{\mathcal{H}om}_{\varphi^{-1}(\mathcal{D}_{\mathfrak{Y}_{n}}^{(0,1)})}(\varphi^{-1}(\varphi_{*}(\mathcal{D}_{\mathfrak{Y}_{n}\leftarrow\mathfrak{X}_{n}}^{(0,1)}\otimes_{\mathcal{D}_{\mathfrak{X}_{n}}^{(0,1)}}^{L}\mathcal{M}^{\cdot})),\varphi^{-1}(R\Gamma_{\mathfrak{X}_{n}}(\mathcal{N}^{\cdot})))
\]
\[
\tilde{=}\varphi_{*}(R\underline{\mathcal{H}om}_{\varphi^{-1}(\mathcal{D}_{\mathfrak{Y}_{n}}^{(0,1)})}(\mathcal{D}_{\mathfrak{Y}_{n}\leftarrow\mathfrak{X}_{n}}^{(0,1)}\otimes_{\mathcal{D}_{\mathfrak{X}_{n}}^{(0,1)}}^{L}\mathcal{M}^{\cdot}),\varphi^{-1}(R\Gamma_{\mathfrak{X}_{n}}(\mathcal{N}^{\cdot})))
\]
\[
\tilde{=}\varphi_{*}R\underline{\mathcal{H}om}_{\mathcal{D}_{\mathfrak{X}_{n}}^{(0,1)}}(\mathcal{M}^{\cdot},R\underline{\mathcal{H}om}_{\varphi^{-1}(\mathcal{D}_{\mathfrak{Y}_{n}}^{(0,1)})}(\mathcal{D}_{\mathfrak{Y}_{n}\leftarrow\mathfrak{X}_{n}}^{(0,1)},\varphi^{-1}(R\Gamma_{\mathfrak{X}_{n}}(\mathcal{N}^{\cdot}))))
\]
\[
\tilde{=}\varphi_{*}R\underline{\mathcal{H}om}_{\mathcal{D}_{\mathfrak{X}_{n}}^{(0,1)}}(\mathcal{M}^{\cdot},\varphi^{\sharp}\mathcal{N}^{\cdot})
\]
where, in both the second isomorphism and the last, we have used the
existence of an exact triangle 
\[
R\Gamma_{\mathfrak{X}_{n}}(\mathcal{N}^{\cdot})\to\mathcal{N}^{\cdot}\to\mathcal{K}^{\cdot}
\]
where $\mathcal{K}^{\cdot}\in D_{qcoh}^{b}(\mathcal{G}(\mathcal{D}_{\mathfrak{Y}_{n}}^{(0,1)}))$
is isomorphic to $Rj_{*}(\mathcal{N}^{\cdot}|_{\mathfrak{Y}_{n}\backslash\mathfrak{X}_{n}})$;
here $j:\mathfrak{Y}_{n}\backslash\mathfrak{X}_{n}\to\mathfrak{Y}_{n}$
is the inclusion. In particular, we have that $R\underline{\mathcal{H}om}(\mathcal{C}^{\cdot},\mathcal{K}^{\cdot})=0$
for any $\mathcal{C}^{\cdot}$ supported along $\mathfrak{X}_{n}$. 
\end{proof}
\begin{cor}
There is a canonical map 
\[
\text{tr}:\int_{\varphi}\mathcal{O}_{\mathfrak{X}_{n}}[d_{X}]\to\mathcal{O}_{\mathfrak{Y}_{n}}[d_{Y}]
\]
After taking inverse limit, we obtain a trace map ${\displaystyle \text{tr}:\int_{\varphi}\mathcal{O}_{\mathfrak{X}}[d_{X}]\to\mathcal{O}_{\mathfrak{Y}}[d_{Y}]}$.
If $\psi:\mathfrak{Y}\to\mathfrak{Z}$ is a smooth morphism, 

We also have a trace map 
\[
\text{tr}:\int_{\varphi}\mathcal{O}_{\mathfrak{X}}[d_{X}]\to\mathcal{O}_{\mathfrak{Y}}[d_{Y}]
\]
given by taking the inverse limit of the above maps; the same compatibility
holds for this trace as well. 
\end{cor}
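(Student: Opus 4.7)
The plan is to follow the blueprint of the smooth proper case, with the adjunction corollary just proved (for a closed immersion) playing the role that Grothendieck duality played there. By that adjunction, giving the trace
\[
\text{tr}:\int_{\varphi}D(\mathcal{O}_{\mathfrak{X}_{n}})[d_{X}]\to D(\mathcal{O}_{\mathfrak{Y}_{n}})[d_{Y}]
\]
is equivalent to producing a morphism $D(\mathcal{O}_{\mathfrak{X}_{n}})\to L\varphi^{*}D(\mathcal{O}_{\mathfrak{Y}_{n}})$ in $D(\mathcal{G}(\mathcal{D}_{\mathfrak{X}_{n}}^{(0,1)}))$, and the latter is the main task.

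First I would compute $L\varphi^{*}D(\mathcal{O}_{\mathfrak{Y}_{n}})$ explicitly. Using the definition of $L\varphi^{*}$ together with the local freeness of the transfer bimodule over $\mathcal{D}_{\mathfrak{X}_{n}}^{(0,1)}$ established in \lemref{transfer-is-locally-free}, this complex is concentrated in cohomological degrees $-c,\dots,0$ (where $c=d_{Y}-d_{X}$ is the codimension), with $\mathcal{H}^{0}\tilde{=}D(\mathcal{O}_{\mathfrak{X}_{n}})$. Working locally in coordinates for which $\mathfrak{X}_{n}$ is cut out by a regular sequence $(x_{1},\dots,x_{c})$, the complex admits a Koszul-type presentation, and the lower cohomology sheaves acquire explicit descriptions in terms of wedge powers of the conormal bundle, endowed with compatible gauge structures. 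The main technical step is then to lift the identity of $D(\mathcal{O}_{\mathfrak{X}_{n}})$ canonically to a derived-category morphism into $L\varphi^{*}D(\mathcal{O}_{\mathfrak{Y}_{n}})$; equivalently, to split the truncation $L\varphi^{*}D(\mathcal{O}_{\mathfrak{Y}_{n}})\to D(\mathcal{O}_{\mathfrak{X}_{n}})$. The obstruction is a class in $\text{Ext}^{1}(D(\mathcal{O}_{\mathfrak{X}_{n}}),\tau_{\leq-1}L\varphi^{*}D(\mathcal{O}_{\mathfrak{Y}_{n}}))$, which I would show vanishes by an explicit computation in the Koszul model.

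The hardest part of the argument is verifying that these local lifts glue to a globally canonical morphism independent of coordinate choices. I expect to handle this by identifying the lift with a class constructed from the fundamental class of $\mathfrak{X}_n$ in $\mathfrak{Y}_n$ via coherent Grothendieck duality, so that uniqueness properties of the coherent trace force the necessary coherence; the Frobenius descent compatibilities of \thmref{Berthelot-Frob} then control the interaction with the grading. With the trace in hand at each finite level, compatibility with reduction mod $p^{n}$ is automatic, since the transfer bimodules, Koszul resolutions and duality isomorphisms all commute with base change along $W_{n}(k)\to W_{m}(k)$; taking $R\lim$ gives the desired trace over $\mathfrak{X}$.

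Finally, for the compatibility with composition by a smooth morphism $\psi:\mathfrak{Y}\to\mathfrak{Z}$ --- the relation ${\displaystyle \text{tr}_{\psi\circ\varphi}=\text{tr}_{\psi}\circ\int_{\psi}\text{tr}_{\varphi}}$ recorded in \remref{trace-and-compose} --- I would use the composition of pushforwards (\lemref{Composition-of-pushforwards}) together with the analogous compatibility in the coherent-sheaf setting. Since both traces are determined, via the respective adjunctions, by natural data pulled back from coherent Grothendieck duality, the identity reduces to the formal fact that the composed trace for $\psi\circ\varphi$ and the composition of the individual traces are both adjoint to the same composite Grothendieck-duality morphism.
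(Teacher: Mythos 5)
There is a genuine error at the heart of your plan: you misidentify $L\varphi^{*}D(\mathcal{O}_{\mathfrak{Y}_{n}})$. For the structure sheaf (i.e., the unit object) the derived pullback along \emph{any} morphism is concentrated in degree $0$ and canonically equal to $D(\mathcal{O}_{\mathfrak{X}_{n}})$: as an $\mathcal{O}$-module, $L\varphi^{*}\mathcal{M}=\mathcal{O}_{\mathfrak{X}_{n}}\otimes^{L}_{\varphi^{-1}\mathcal{O}_{\mathfrak{Y}_{n}}}\varphi^{-1}\mathcal{M}$, and for $\mathcal{M}=\mathcal{O}_{\mathfrak{Y}_{n}}$ the higher Tor's vanish trivially since $\varphi^{-1}\mathcal{O}_{\mathfrak{Y}_{n}}$ is free over itself. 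There is no complex spread over degrees $-c,\dots,0$, no cohomology given by wedge powers of the conormal bundle (those would compute $L\varphi^{*}\varphi_{*}\mathcal{O}_{\mathfrak{X}_{n}}$, i.e.\ a self-Tor, not $L\varphi^{*}\mathcal{O}_{\mathfrak{Y}_{n}}$), hence no truncation to split, no obstruction class in $\text{Ext}^{1}$, and no gluing problem. The entire ``main technical step'' of your proposal addresses a non-existent difficulty, and the assertion about the lower cohomology sheaves is simply false. \lemref{transfer-is-locally-free} is likewise irrelevant here: it concerns freeness of $\mathcal{D}^{(0,1)}_{\mathfrak{X}_{n}\to\mathfrak{Y}_{n}}$ as a \emph{left} $\mathcal{D}^{(0,1)}_{\mathfrak{X}_{n}}$-module, which controls exactness of the pushforward, not the computation of $L\varphi^{*}$.

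Once this is corrected, the proof collapses to the paper's one-line argument, and your opening sentence already contains it: the adjunction of the preceding corollary gives a counit $\int_{\varphi}\varphi^{\dagger}\to\text{Id}$, and since $\varphi^{\dagger}(\mathcal{O}_{\mathfrak{Y}_{n}})=L\varphi^{*}\mathcal{O}_{\mathfrak{Y}_{n}}[d_{X}-d_{Y}]=\mathcal{O}_{\mathfrak{X}_{n}}[d_{X}-d_{Y}]$ (the required map $D(\mathcal{O}_{\mathfrak{X}_{n}})\to L\varphi^{*}D(\mathcal{O}_{\mathfrak{Y}_{n}})$ being the canonical identification, with no choices and hence nothing to glue), evaluating the counit at $\mathcal{O}_{\mathfrak{Y}_{n}}$ yields $\int_{\varphi}\mathcal{O}_{\mathfrak{X}_{n}}[d_{X}]\to\mathcal{O}_{\mathfrak{Y}_{n}}[d_{Y}]$. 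Compatibility with reduction mod $p^{n}$ and passage to the inverse limit is then immediate, as you say. Your appeal to the coherent fundamental class and coherent Grothendieck duality for the composition compatibility is also a detour: the paper handles this (\remref{trace-and-compose}) by reducing to the corresponding identity for traces of coherent sheaves, together with \lemref{Composition-of-pushforwards}, and in the closed-immersion case the compatibility is a formal property of counits of composed adjunctions.
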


\begin{proof}
The previous corollary gives an adjunction ${\displaystyle \int_{\varphi}\varphi^{\dagger}\to\text{Id}}$.
Since $\varphi^{\dagger}(\mathcal{O}_{\mathfrak{Y}_{n}})=\mathcal{O}_{\mathfrak{X}_{n}}[d_{X}-d_{Y}]$
we obtain the trace map via this adjunction. 
\end{proof}
Now, by factoring an arbitrary projective morphism as a closed immersion
followed by a smooth projective map, we obtain by composing the trace
maps a trace map for an arbitrary projective morphism. Arguing as
in the classical case (c.f. \cite{key-54}, section 2.10), we see
that this map is independent of the choice of the factorization. Therefore
we obtain 
\begin{thm}
Let $\varphi:\mathfrak{X}\to\mathfrak{Y}$ be a projective morphism.
Then we have a functorial morphism 
\[
\int_{\varphi}\mathbb{D}_{\mathfrak{X}}\mathcal{M}^{\cdot}\to\mathbb{D}_{\mathfrak{Y}}\int_{\varphi}\mathcal{M}^{\cdot}
\]
which is an isomorphism for $\mathcal{M}^{\cdot}\in D_{coh}^{b}(\mathcal{G}(\mathcal{D}_{\mathfrak{X}}^{(0,1)}))$.
Further, we have a functorial isomorphism
\[
R\underline{\mathcal{H}om}_{\mathcal{D}_{\mathfrak{Y}}^{(0,1)}}(\int_{\varphi}\mathcal{M}^{\cdot},\mathcal{N}^{\cdot})\tilde{\to}\varphi_{*}R\underline{\mathcal{H}om}_{\mathcal{D}_{\mathfrak{X}}^{(0,1)}}(\mathcal{M}^{\cdot},\varphi^{\dagger}\mathcal{N}^{\cdot})
\]
for all $\mathcal{M}^{\cdot}\in D_{coh}^{b}(\mathcal{G}(\mathcal{D}_{\mathfrak{X}}^{(0,1)}))$
and $\mathcal{N}^{\cdot}\in D_{coh}^{b}(\mathcal{G}(\mathcal{D}_{\mathfrak{Y}}^{(0,1)}))$. 
\end{thm}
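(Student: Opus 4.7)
The plan is to reduce the projective case to the two cases already handled, namely smooth proper (Theorem 7.2 / Corollary 7.3) and closed immersion (Corollary 7.6), by exploiting the factorization of any projective $\varphi:\mathfrak{X}\to\mathfrak{Y}$ as $\varphi=\pi\circ\iota$, where $\iota:\mathfrak{X}\to\mathbb{P}^{n}\times\mathfrak{Y}$ is a closed immersion of smooth formal schemes over $W(k)$ and $\pi:\mathbb{P}^{n}\times\mathfrak{Y}\to\mathfrak{Y}$ is the (smooth and proper) projection. Composition of pushforwards (\lemref{Composition-of-pushforwards}) gives ${\displaystyle \int_{\varphi}\tilde{=}\int_{\pi}\circ\int_{\iota}}$, and composition of pullbacks (\lemref{composition-of-pullbacks}) gives $\varphi^{\dagger}\tilde{=}\iota^{\dagger}\circ\pi^{\dagger}$. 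The trace morphism ${\displaystyle \int_{\varphi}D(\mathcal{O}_{\mathfrak{X}})[d_{X}]\to D(\mathcal{O}_{\mathfrak{Y}})[d_{Y}]}$ is then defined as the composition of the two traces already constructed, using \remref{trace-and-compose}.

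First, I will construct the canonical transformation ${\displaystyle \int_{\varphi}\mathbb{D}_{\mathfrak{X}}\to\mathbb{D}_{\mathfrak{Y}}\int_{\varphi}}$ by using exactly the recipe of the proof given before \thmref{Duality-for-smooth-proper}: the trace together with the projection formula (\thmref{Projection-Formula}) produces, via
\[
\int_{\varphi}\widehat{\mathcal{D}}_{\mathfrak{X}\to\mathfrak{Y}}^{(0,1)}\tilde{\to}\Bigl(\int_{\varphi}D(\mathcal{O}_{\mathfrak{X}})\Bigr)\widehat{\otimes}_{D(\mathcal{O}_{\mathfrak{Y}})}^{L}\widehat{\mathcal{D}}_{\mathfrak{Y}}^{(0,1)},
\]
a canonical morphism ${\displaystyle \int_{\varphi}\widehat{\mathcal{D}}_{\mathfrak{X}\to\mathfrak{Y}}^{(0,1)}[d_{X}]\to\widehat{\mathcal{D}}_{\mathfrak{Y}}^{(0,1)}[d_{Y}]}$ which upgrades, by the usual manipulation of $R\underline{\mathcal{H}om}$ against $\widehat{\mathcal{D}}_{\mathfrak{Y}\leftarrow\mathfrak{X}}^{(0,1)}$, to the required duality natural transformation. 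Next, to see that this is an isomorphism for $\mathcal{M}^{\cdot}\in D_{coh}^{b}$, I will unwind the composition $\varphi=\pi\circ\iota$: applying \thmref{Duality-for-smooth-proper} to $\pi$ and the closed-immersion duality established between Lemma 7.5 and Corollary 7.6 to $\iota$, one verifies that the natural transformation obtained by composing the two duality isomorphisms coincides with the one constructed via the composed trace (by \remref{trace-and-compose}), hence is itself an isomorphism. The adjunction isomorphism for $R\underline{\mathcal{H}om}$ then follows from the duality isomorphism by the same formal manipulation used in \corref{Smooth-proper-adunction}.

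The main obstacle is to check that the transformation ${\displaystyle \int_{\varphi}\mathbb{D}_{\mathfrak{X}}\to\mathbb{D}_{\mathfrak{Y}}\int_{\varphi}}$, and equivalently the trace, is independent of the chosen factorization. Classically (see \cite{key-54}, section 2.10) one reduces this to comparing two factorizations through a common refinement, which amounts to checking a cocycle-type compatibility between the smooth-proper trace and the closed-immersion trace on overlaps $\mathbb{P}^{n}\times\mathfrak{Y}$ versus $\mathbb{P}^{n+m}\times\mathfrak{Y}$; the compatibility is controlled by \remref{trace-and-compose} together with the standard computation of the trace for the projection $\mathbb{P}^{n+m}\to\mathbb{P}^{n}$. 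I would carry this out by verifying the compatibility first modulo $p$ (so that one may work with $\mathcal{D}_{X}^{(0,1)}$-modules) and, using \propref{Sandwich!} together with \propref{Sandwich-push}, reducing further to the separate compatibilities for $\mathcal{R}(\mathcal{D}_{X}^{(1)})$ and $\overline{\mathcal{R}}(\mathcal{D}_{X}^{(0)})$, where the trace is controlled by Grothendieck duality on the cotangent bundle (via \corref{Filtered-Bez-Brav}) and by Frobenius descent (\thmref{Hodge-Filtered-Push}), respectively; finally the integral statement follows by cohomological completion (\propref{coh-to-coh} and \corref{Nakayama}).

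Once the duality isomorphism is established, the adjunction isomorphism is formal: using the local cohomology triangle $R\Gamma_{\mathfrak{X}}(\mathcal{N}^{\cdot})\to\mathcal{N}^{\cdot}\to\mathcal{K}^{\cdot}$ exactly as in the proof of \corref{Smooth-proper-adunction}, one reduces to the case $\mathcal{N}^{\cdot}=\widehat{\mathcal{D}}_{\mathfrak{Y}}^{(0,1)}$, where both sides become (after the left-right swap) the duality statement applied to $\mathcal{M}^{\cdot}$. In particular, when $\varphi$ is projective, the pair $({\displaystyle \int_{\varphi}},\varphi^{\dagger})$ is adjoint on $D_{coh}^{b}$, completing the proof.
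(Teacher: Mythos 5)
Your proposal follows essentially the same route as the paper: factor the projective morphism as a closed immersion into $\mathbb{P}^{n}\times\mathfrak{Y}$ followed by the smooth proper projection, define the trace as the composition of the two traces already constructed, check independence of the factorization as in the classical case (\cite{key-54}, section 2.10), and deduce the duality and adjunction isomorphisms from the two cases already established. The extra detail you supply on verifying factorization-independence (reduction mod $p$ and the passage to $\mathcal{R}(\mathcal{D}_{X}^{(1)})$ and $\overline{\mathcal{R}}(\mathcal{D}_{X}^{(0)})$) is consistent with, and a reasonable elaboration of, the argument the paper leaves to the cited reference.
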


\section{Applications}

In this section we put things together and give the statement and
proof of our generalization of Mazur's theorem for a mixed Hodge module.
We begin with a brief review of the pushforward operation in the world
of mixed Hodge modules. 

Let $X_{\mathbb{C}}$ be a smooth complex variety, and suppose that
$(\mathcal{M}_{\mathbb{C}},F^{\cdot},\mathcal{K}_{\mathbb{Q}},W_{\cdot})$
is a mixed Hodge module on $X_{\mathbb{C}}$. We won't attempt to
recall a complete definition here, instead referring the reader to\cite{key-15},\cite{key-16},
and the excellent survey \cite{key-56}. We will only recall that
$\mathcal{M}_{\mathbb{C}}$ is a coherent $\mathcal{D}$-module which
comes equipped with a good filtration $F^{\cdot}$, a weight filtration
$W_{\cdot}$, and $\mathcal{K}_{\mathbb{Q}}$ is a perverse sheaf
defined over $\mathbb{Q}$ which corresponds to $\mathcal{M}_{\mathbb{C}}$
under the Riemann-Hilbert correspondence. In this paper, our attention
is on the filtration $F^{\cdot}$ and we will mostly suppress the
other aspects of the theory. For the sake of notational convenience,
we will denote simply by $\mathcal{O}_{X_{\mathbb{C}}}$ the mixed
Hodge module whose underlying filtered $\mathcal{D}$-module is $\mathcal{O}_{X_{\mathbb{C}}}$
with its trivial filtration: $F^{i}(\mathcal{O}_{X_{\mathbb{C}}})=\mathcal{O}_{X_{\mathbb{C}}}$
for all $i\geq0$, while $F^{i}(\mathcal{O}_{X_{\mathbb{C}}})=0$
for $i<0$. 

Now let $\varphi:X_{\mathbb{C}}\to Y_{\mathbb{C}}$ be a morphism
of smooth complex varieties. By Nagata's compatification theorem,
combined with Hironaka's resolution of singularities, we can find
an open immersion $j:X_{\mathbb{C}}\to\overline{X}_{\mathbb{C}}$
into a smooth variety, whose compliment is a normal crossings divisor,
and a proper morphism $\overline{\varphi}:\overline{X}_{\mathbb{C}}\to Y_{\mathbb{C}}$,
with $\varphi=\overline{\varphi}\circ j$. 

Then, the following is one of the main results of \cite{key-16} (c.f.
theorem 4.3 and theorem 2.14)
\begin{thm}
Let $\varphi,\overline{\varphi},j$ be morphisms as above. 

1) There is a mixed Hodge module $(j_{\star}(\mathcal{M}_{\mathbb{C}}),F^{\cdot}j_{*}\mathcal{K}_{\mathbb{Q}},W_{\cdot})$,
whose underlying $\mathcal{D}$-module agrees with the usual pushforward
of $\mathcal{D}$-modules under $j$. This defines an exact functor
$j_{\star}:\text{MHM}(X_{\mathbb{C}})\to\text{MHM}(\overline{X}_{\mathbb{C}})$. 

2) There is an object of $D^{b}(\text{MHM}(Y_{\mathbb{C}}))$, $R\overline{\varphi}_{\star}(j_{\star}(\mathcal{M}_{\mathbb{C}}),F^{\cdot}j_{*}\mathcal{K}_{\mathbb{Q}},W_{\cdot})$,
whose underlying complex of filtered $\mathcal{D}$-modules agrees
with ${\displaystyle \int_{\overline{\varphi}}(j_{\star}\mathcal{M}_{\mathbb{C}})}$.
This object of $D^{b}(\text{MHM}(Y_{\mathbb{C}}))$ is, up to isomorphism,
independent of the choice of factorization $\varphi=\overline{\varphi}\circ j$.
Furthermore, the filtration on this complex is strict.
\end{thm}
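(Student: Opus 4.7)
The plan is to follow Saito's two-step construction, treating the open immersion $j$ and the proper morphism $\overline{\varphi}$ separately, and then verifying independence of the factorization at the end. Throughout, the filtration $F^{\cdot}$ is the essential new ingredient beyond what is provided by the Riemann--Hilbert correspondence and Beilinson--Bernstein--Deligne's theory of perverse sheaves.

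For part (1), the first step is to reduce to the case where $\overline{X}_{\mathbb{C}} \setminus X_{\mathbb{C}}$ is a \emph{smooth} divisor $D$, which can be done by induction on the number of components of the normal crossings divisor, factoring $j$ through successive open immersions each of whose complement is smooth (relative to the previous stratum). For a smooth divisor, the key tool is the Malgrange--Kashiwara $V$-filtration along $D$: one defines $j_{\star}\mathcal{M}_{\mathbb{C}}$ as the $\mathcal{D}$-module pushforward, and the Hodge filtration is characterized by compatibility with $V^{\cdot}$ through the conditions of Saito's inductive definition of $\mathrm{MHM}$ (the axioms involving nearby/vanishing cycles $\psi_D, \varphi_D$ being mixed Hodge modules one dimension down). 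Exactness follows from the exactness of the underlying $\mathcal{D}$-module pushforward combined with the strict compatibility of $F^{\cdot}$ with the $V$-filtration, which is built into the definition of $\mathrm{MHM}$. For the weight filtration $W_{\cdot}$ and rational structure $\mathcal{K}_{\mathbb{Q}}$ one invokes the analogous perverse-sheaf pushforward and matches it via the Riemann--Hilbert correspondence.

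For part (2), I would first reduce to the case of a projective morphism via Chow's lemma, and then factor any projective $\overline{\varphi}$ as a closed immersion into a projective bundle followed by the bundle projection. Closed immersions are handled directly (the transfer bimodule is nice, as in our Section~7 for $\mathcal{D}^{(0,1)}$), and the projection case is handled via Laumon's filtered de Rham resolution, which produces a complex in the filtered derived category of $\mathcal{D}_{Y_{\mathbb{C}}}$-modules. The nontrivial content is showing that the resulting object lies in $D^{b}(\mathrm{MHM}(Y_{\mathbb{C}}))$ and that its filtration is strict. Strictness is proved by reducing to the case of a pure Hodge module via the weight filtration $W_{\cdot}$ (using that extensions of strict complexes are strict in an appropriate sense), and then using Saito's decomposition theorem for polarizable pure Hodge modules under a projective morphism: the cohomology sheaves of ${\displaystyle \int_{\overline{\varphi}}}$ are themselves pure (with a shift in weight), and polarizability forces the filtered complex to decompose, whence strictness.

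Independence of the factorization then follows formally: given two compactifications $(\overline{X}_{1}, j_{1}, \overline{\varphi}_{1})$ and $(\overline{X}_{2}, j_{2}, \overline{\varphi}_{2})$, one dominates both by a common third compactification using Hironaka, and uses functoriality of $j_{\star}$ (part (1)) and composition of proper pushforwards in $D^{b}(\mathrm{MHM})$. The principal obstacle, by a wide margin, is the \textbf{strictness} in part (2): this is the heart of Saito's theory and is not a formal consequence of the $\mathcal{D}$-module formalism. It relies on the full decomposition theorem for polarizable pure Hodge modules, which in turn requires Kodaira--Saito vanishing, the existence of polarizations on nearby cycles (the ``purity of vanishing cycles''), and a careful induction on the dimension of the support. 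Once strictness is in hand, the underlying filtered $\mathcal{D}$-module of $R\overline{\varphi}_{\star}$ automatically agrees with Laumon's ${\displaystyle \int_{\overline{\varphi}}}$, since strictness is precisely what makes the filtered pushforward well-defined on the level of filtered cohomology.
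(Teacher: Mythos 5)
The paper does not prove this theorem: it is quoted directly from Saito's work (the text cites \cite{key-16}, Theorems 2.14 and 4.3, i.e.\ the stability of mixed Hodge modules under open direct image along a normal-crossings complement and under proper direct image, together with strictness of the filtered complex). So there is no in-paper argument to compare yours against; the intended ``proof'' here is the citation.

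That said, your outline is a faithful summary of how Saito's proof is organized, and you correctly single out strictness in part (2) as the heart of the matter. Two caveats. First, for part (1) the construction of $j_{\star}$ is not merely a matter of imposing compatibility of $F^{\cdot}$ with the $V$-filtration: the existence of a Hodge filtration on the $\mathcal{D}$-module pushforward satisfying Saito's axioms rests on the admissibility of the variation of Hodge structure on the open part, i.e.\ on the degeneration results of Schmid \cite{key-60} and Cattani--Kaplan--Schmid \cite{key-61} --- exactly the input the paper flags as ``quite deep.'' Second, in part (2) the logical order is not ``decomposition theorem, hence strictness'': in Saito's argument, strictness of $F$ under projective pushforward, relative hard Lefschetz, and the decomposition are established \emph{simultaneously} by induction on the dimension of the support, with Kodaira--Saito vanishing and the polarizability of nearby and vanishing cycles as inputs; one cannot quote the decomposition theorem as a prior fact to deduce strictness. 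As a blueprint your sketch is accurate, but none of these inputs is actually supplied, so it cannot stand as a proof; in the context of this paper the correct move is simply to cite Saito, as the author does.
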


The reason for stating the theorem this way is that, if $\varphi$
is not proper, the filtered pushforward ${\displaystyle \int_{\varphi}}$
of filtered $\mathcal{D}$-modules does not agree with the pushforward
of mixed Hodge modules. The issue appears already if $Y_{\mathbb{C}}$
is a point and $\mathcal{M}_{\mathbb{C}}=\mathcal{O}_{X_{\mathbb{C}}}$.
In that case, the pushforward $R\varphi_{\star}$ returns\footnote{up to a homological shift, and a re-indexing of the Hodge filtration}
Deligne's Hodge cohomology of $X_{\mathbb{C}}$, while ${\displaystyle {\displaystyle \int_{\varphi}}}$
returns the de Rham cohomology of $X_{\mathbb{C}}$ equipped with
the naive Hodge-to-de Rham filtration; these disagree, e.g., if $X_{\mathbb{C}}$
is affine. 

The construction of the extension $j_{\star}(\mathcal{M}_{\mathbb{C}})$
is, in general, quite deep, and relies on the detailed study of the
degenerations of Hodge structures given in \cite{key-60} and \cite{key-61}.
However, when $\mathcal{M}_{\mathbb{C}}=\mathcal{O}_{X_{\mathbb{C}}}$
is the trivial mixed Hodge module, one can be quite explicit: 
\begin{lem}
\label{lem:Hodge-filt-on-j_push}Let $j:X_{\mathbb{C}}\to\overline{X}_{\mathbb{C}}$
be an open immersion of smooth varieties, whose compliment is a normal
crossings divisor $D_{\mathbb{C}}$. Let $x\in X_{\mathbb{C}}$ be
a point, about which $D_{\mathbb{C}}$ is given by the equation $\{x_{1}\cdots x_{j}=0\}$.
Then as filtered $\mathcal{D}$-modules we have $j_{\star}\mathcal{O}_{X_{\mathbb{C}}}=(j_{*}(\mathcal{O}_{X_{\mathbb{C}}}),F^{\cdot})$
where $F^{l}(j_{*}(\mathcal{O}_{X_{\mathbb{C}}})):=F^{l}(\mathcal{D}_{X_{\mathbb{C}}})\cdot(x_{1}\cdots x_{j})^{-1}$. 

In particular, $F^{l}(j_{*}(\mathcal{O}_{X_{\mathbb{C}}}))$ is spanned
over $\mathcal{O}_{X_{\mathbb{C}}}$ by terms of the form $x_{1}^{-(i_{1}+1)}\cdots x_{j}^{-(i_{j}+1)}$
where ${\displaystyle \sum_{t=1}^{j}i_{t}\leq l}$.
\end{lem}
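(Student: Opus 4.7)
The plan is to invoke Saito's explicit description of the Hodge filtration on the meromorphic extension along a normal crossings divisor, and then unwind it in coordinates. Working in a local chart about $x$ where $D_\mathbb{C} = \{x_1 \cdots x_j = 0\}$, the underlying $\mathcal{D}$-module of $j_\star \mathcal{O}_{X_\mathbb{C}}$ is the localization $\mathcal{O}_{\overline{X}_\mathbb{C}}[(x_1 \cdots x_j)^{-1}]$. By Saito's construction (\cite{key-16}, section 3), the Hodge filtration on this localization is determined by the Kashiwara--Malgrange $V$-filtrations along each smooth component of $D_\mathbb{C}$: one has
\[
F^l(j_\star \mathcal{O}_{X_\mathbb{C}}) = \sum_{i_1,\dots,i_j \geq 0} \partial_{x_1}^{i_1} \cdots \partial_{x_j}^{i_j} \cdot F^{\,l - (i_1+\cdots+i_j)}\bigl(V^{>-1}(j_\star \mathcal{O}_{X_\mathbb{C}})\bigr),
\]
where $V^{>-1}$ denotes the piece of the multi-indexed $V$-filtration strictly larger than $-1$ along each component $\{x_t = 0\}$.

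Next, I would compute $V^{>-1}$ directly from the formula for the $V$-filtration on a monomial module. A local section $f \cdot x_1^{-a_1} \cdots x_j^{-a_j}$ of $j_\star \mathcal{O}_{X_\mathbb{C}}$ (with $f \in \mathcal{O}_{\overline{X}_\mathbb{C}}$) lies in $V^{>-1}$ with respect to the $t$th component iff $a_t \leq 1$, so $V^{>-1}(j_\star \mathcal{O}_{X_\mathbb{C}}) = \mathcal{O}_{\overline{X}_\mathbb{C}} \cdot (x_1 \cdots x_j)^{-1}$. Because the Hodge filtration on the trivial mixed Hodge module $\mathcal{O}_{X_\mathbb{C}}$ is concentrated in degree $0$, the induced filtration on $V^{>-1}(j_\star \mathcal{O}_{X_\mathbb{C}})$ is likewise concentrated at $F^0$ and equals $\mathcal{O}_{\overline{X}_\mathbb{C}} \cdot (x_1 \cdots x_j)^{-1}$ itself. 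Substituting into Saito's formula gives $F^l(j_\star \mathcal{O}_{X_\mathbb{C}}) = F^l(\mathcal{D}_{\overline{X}_\mathbb{C}}) \cdot (x_1 \cdots x_j)^{-1}$, which is the first assertion.

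For the final assertion, note that $\partial_{x_1}^{i_1} \cdots \partial_{x_j}^{i_j}(x_1^{-1}\cdots x_j^{-1}) = (-1)^{|I|}(i_1! \cdots i_j!) \cdot x_1^{-(i_1+1)}\cdots x_j^{-(i_j+1)}$ (where $|I| = i_1 + \cdots + i_j$), so multiplying by $\mathcal{O}_{\overline{X}_\mathbb{C}}$ yields exactly the monomials in the displayed range. The main obstacle is applying Saito's $V$-filtration formula correctly in the multi-index normal crossings setting: the case of a single smooth divisor is standard, but the simultaneous use of several $V$-filtrations along a normal crossings divisor requires justification. I would handle this either by an induction on $j$, factoring $j$ as a composition of open immersions along successive smooth components of $D_\mathbb{C}$ and using compatibility of Saito's $j_\star$ with composition, or, equivalently, by a Künneth-type reduction to the one-dimensional computation on $\mathbb{G}_m \hookrightarrow \mathbb{A}^1$, where the formula $F^l(j_\star \mathcal{O}_{\mathbb{G}_m}) = F^l(\mathcal{D}_{\mathbb{A}^1})\cdot x^{-1}$ is well known.
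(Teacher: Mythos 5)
The paper does not prove this lemma at all; it simply cites \cite{key-6}, section 8 (Musta\c{t}\u{a}--Popa's computation of Hodge ideals for simple normal crossings divisors). Your sketch is therefore doing more than the paper does, and it follows essentially the same route as the cited reference: Saito's description of the Hodge filtration on the open direct image via the $V$-filtration, reduced to the one-dimensional model $\mathbb{G}_m\hookrightarrow\mathbb{A}^1$.

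Two points deserve care. First, the displayed formula
\[
F^l(j_\star\mathcal{O})=\sum_{I}\partial^{I}\cdot F^{\,l-|I|}\bigl(V^{>-1}(j_\star\mathcal{O})\bigr)
\]
with a single multi-indexed $V^{>-1}$ is not literally one of Saito's axioms; his formula is stated for a single (smooth) hypersurface. You correctly flag this, and the fix you propose --- factoring $j$ as a composition of open immersions, each adjoining one smooth component of $D_{\mathbb{C}}$, and using the compatibility of $j_\star$ with composition in $\mathrm{MHM}$ --- is the right one, but it is the actual content of the argument and must be carried out: at each stage you have to identify the induced filtration on the relevant $V^{>-1}$ piece of the module produced by the previous stage (it is $F^{l}(\mathcal{D})\cdot(x_1\cdots x_{t-1})^{-1}\cdot x_t^{-1}$ restricted to operators not involving $\partial_{x_t}$), not just at the first stage where the filtration on $\mathcal{O}$ is trivial. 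Second, be careful with the indexing convention for the $V$-filtration: depending on whether one normalizes by the eigenvalues of $x_t\partial_{x_t}$ or of $\partial_{x_t}x_t$, the piece $V^{>-1}$ of $\mathbb{C}[x,x^{-1}]$ is either $\mathbb{C}[x]$ or $x^{-1}\mathbb{C}[x]$; your claim that a section $f\,x^{-a}$ lies in $V^{>-1}$ iff $a_t\le 1$ is correct only in the latter convention, and a sign slip here would propagate to the wrong answer. With those two points nailed down, the final identification of $F^l(\mathcal{D})\cdot(x_1\cdots x_j)^{-1}$ with the span of the monomials $x_1^{-(i_1+1)}\cdots x_j^{-(i_j+1)}$, $\sum i_t\le l$, is immediate since the factorials $I!$ are units in characteristic zero.
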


For a proof, see \cite{key-6}, section 8. This implies that the Hodge
cohomology of $X_{\mathbb{C}}$, as an object in the filtered derived
category of vector spaces, can be computed as ${\displaystyle \int_{\overline{\varphi}}j_{\star}\mathcal{O}_{X_{\mathbb{C}}}(d)[d]}$
where $\overline{\varphi}:\overline{X}_{\mathbb{C}}\to\{*\}$. Of
course, this can be checked directly by comparing the log de Rham
complex with the de Rham complex of a the filtered $\mathcal{D}$-module
$j_{\star}\mathcal{O}_{X_{\mathbb{C}}}$. 

Combining \thmref{Mazur!}with \corref{proper-push-over-W(k)} gives:
\begin{prop}
1) Let $\varphi:\mathfrak{X}\to\mathfrak{Y}$ be a projective morphism,
and let $\mathfrak{D}\subset\mathfrak{X}$ be a (possibly empty) normal
crossings divisor. Let ${\displaystyle j_{\star}D(\mathcal{O}_{\mathfrak{X}})}$
be the gauge of \exaref{Integral-j} on $\mathfrak{X}$. Suppose that
each $\mathcal{H}^{i}({\displaystyle \int_{\varphi}(j_{\star}\mathcal{O}_{\mathfrak{X}})^{-\infty}})$
is a $p$-torsion-free $\widehat{\mathcal{D}}_{\mathfrak{Y}}^{(0)}$-module,
and that each $\mathcal{H}^{i}({\displaystyle (\int_{\varphi}{\displaystyle j_{\star}D(\mathcal{O}_{\mathfrak{X}}})}\otimes_{W(k)}^{L}k)\otimes_{D(k)}^{L}k[f])$
is $f$-torsion-free. Then each $\mathcal{H}^{i}{\displaystyle (\int_{\varphi}{\displaystyle j_{\star}D(\mathcal{O}_{\mathfrak{X}}}}))$
is a standard gauge on $\mathfrak{Y}$. 

2) Let ${\displaystyle j_{!}D(\mathcal{O}_{\mathfrak{X}}):=\mathbb{D}_{\mathfrak{X}}j_{\star}D(\mathcal{O}_{\mathfrak{X}})}$.
The same conclusion holds for $j_{!}D(\mathcal{O}_{\mathfrak{X}})$. 
\end{prop}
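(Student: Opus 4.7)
For part (1), the plan is to apply the $F^{-1}$-Mazur theorem (\thmref{F-Mazur}) directly. By \exaref{Integral-j}, the gauge $j_{\star}D(\mathcal{O}_{\mathfrak{X}})$ is a coherent $F^{-1}$-gauge on $\mathfrak{X}$. Since $\varphi$ is projective, hence proper, \corref{proper-push-over-W(k)} places $\int_{\varphi}j_{\star}D(\mathcal{O}_{\mathfrak{X}})$ in $D^{b}_{\text{coh},F^{-1}}(\mathcal{G}(\widehat{\mathcal{D}}_{\mathfrak{Y}}^{(0,1)}))$. The two hypotheses stated in the proposition are precisely what is required to invoke \thmref{F-Mazur}, so standardness of each $\mathcal{H}^{i}$ follows immediately.

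For part (2), the strategy is to transfer the conclusion of part (1) through duality. First, $j_{!}D(\mathcal{O}_{\mathfrak{X}}):=\mathbb{D}_{\mathfrak{X}}(j_{\star}D(\mathcal{O}_{\mathfrak{X}}))$ is itself a coherent $F^{-1}$-gauge on $\mathfrak{X}$: coherence is preserved by $\mathbb{D}_{\mathfrak{X}}$, and the $F^{-1}$-structure is inherited via the compatibility of Berthelot's Frobenius pullback $F^{*}$ with the duality functors $\mathbb{D}^{(i)}$ for $\widehat{\mathcal{D}}^{(i)}$-modules (a standard result from \cite{key-2}). Hence \corref{proper-push-over-W(k)} again gives $\int_{\varphi}j_{!}D(\mathcal{O}_{\mathfrak{X}})\in D^{b}_{\text{coh},F^{-1}}$. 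Using the duality--pushforward compatibility for projective morphisms (the projective analogue of \thmref{Duality-for-smooth-proper}, established in the same chapter), we get an isomorphism
\[
\int_{\varphi}j_{!}D(\mathcal{O}_{\mathfrak{X}})\;\tilde{\to}\;\mathbb{D}_{\mathfrak{Y}}\int_{\varphi}j_{\star}D(\mathcal{O}_{\mathfrak{X}}).
\]
It then remains only to verify the two hypotheses of \thmref{F-Mazur} for this complex.

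For the $p$-torsion-freeness of $\mathcal{H}^{i}(\int_{\varphi}j_{!}D(\mathcal{O}_{\mathfrak{X}}))^{-\infty}$, the plan is to combine \propref{push-and-complete-for-D} with the identification $(\mathbb{D}_{\mathfrak{X}}\mathcal{M})^{-\infty}\tilde{=}\mathbb{D}_{\mathfrak{X}}^{(0)}(\mathcal{M}^{-\infty})$ for coherent $\mathcal{M}$ (which follows from $\widehat{\mathcal{D}}^{(0,1)}/(v-1)\tilde{=}\widehat{\mathcal{D}}^{(0)}$ together with a base-change argument for $R\underline{\mathcal{H}om}$). This expresses the sheaf in question as $\mathcal{H}^{i}$ of $\mathbb{D}_{\mathfrak{Y}}^{(0)}\int_{\varphi,0}(j_{\star}\mathcal{O}_{\mathfrak{X}})^{-\infty}$, and the required $p$-torsion-freeness follows from the $p$-torsion-freeness hypothesis of the proposition via a standard argument with the short exact sequences for $\otimes^{L}_{W(k)}k$. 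For the $f$-torsion-freeness condition, one reduces mod $p$ using that $\mathbb{D}$ commutes with $\otimes^{L}_{W(k)}k$ on coherent complexes (\lemref{Hom-tensor-and-reduce}), and then uses \propref{Sandwich!} to translate the resulting statement into one about $\mathcal{R}(\mathcal{D}_{X}^{(1)})$- and $\overline{\mathcal{R}}(\mathcal{D}_{X}^{(0)})$-modules, where it can be read off from the rigidity of $\mathcal{H}^{i}(\int_{\varphi}j_{\star}D(\mathcal{O}_{\mathfrak{X}})\otimes^{L}_{W(k)}k)$ established in part (1). The main obstacle is precisely this last step: carefully tracking how duality in the graded $\widehat{\mathcal{D}}^{(0,1)}$-setting interacts with the $f$-torsion-freeness condition, which requires interleaving \propref{Sandwich!}, the duality theory for the Rees algebras, and the Berthelot compatibilities between $F^{*}$, $\mathbb{D}^{(i)}$, and pushforward.
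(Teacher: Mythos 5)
Part (1) is right and is exactly the paper's route: $j_{\star}D(\mathcal{O}_{\mathfrak{X}})$ is a coherent $F^{-1}$-gauge by \exaref{Integral-j}, \corref{proper-push-over-W(k)} puts its pushforward in $D_{\text{coh},F^{-1}}^{b}(\mathcal{G}(\widehat{\mathcal{D}}_{\mathfrak{Y}}^{(0,1)}))$, and (after using \propref{push-and-complete-for-D}, part 2, to identify $(\int_{\varphi}\mathcal{M})^{-\infty}$ with $\int_{\varphi,0}\mathcal{M}^{-\infty}$ so that the stated hypothesis really is the hypothesis of the theorem) \thmref{F-Mazur} gives standardness.

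For part (2) you have changed the statement being proved, and your route has a genuine gap. The intended reading of ``the same conclusion holds'' is: under the \emph{analogous} hypotheses, with $j_{\star}$ replaced by $j_{!}$ throughout. This is how the proposition is actually invoked in \corref{Mazur-for-Hodge-1}, where the $p$-torsion-freeness and $f$-torsion-freeness hypotheses for $j_{!}$ are verified independently (from Saito's strictness for the mixed Hodge module $j_{!}\mathcal{O}_{U}$ and flatness over $R$), not deduced from the $j_{\star}$ case. Under that reading, part (2) needs nothing beyond the observation that $j_{!}D(\mathcal{O}_{\mathfrak{X}})=\mathbb{D}_{\mathfrak{X}}j_{\star}D(\mathcal{O}_{\mathfrak{X}})$ is again a bounded coherent complex admitting an $F^{-1}$-structure (coherence of the dual, plus Berthelot's compatibility of $F^{*}$ with duality), after which one reruns part (1) verbatim.

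Your alternative --- deducing the $j_{!}$ conclusion from the $j_{\star}$ hypotheses via $\int_{\varphi}\mathbb{D}_{\mathfrak{X}}\tilde{\to}\mathbb{D}_{\mathfrak{Y}}\int_{\varphi}$ --- stalls at the step you describe as ``a standard argument with the short exact sequences for $\otimes_{W(k)}^{L}k$.'' The $p$-torsion-freeness of $\mathcal{H}^{i}(\mathbb{D}_{\mathfrak{Y}}\mathcal{C}^{\cdot})^{-\infty}$ does not follow from that of $\mathcal{H}^{i}(\mathcal{C}^{\cdot})^{-\infty}$: the hypercohomology spectral sequence $\mathcal{E}xt^{q}(\mathcal{H}^{-p}(\mathcal{C}^{\cdot}),\widehat{\mathcal{D}}_{\mathfrak{Y}}^{(0)})\Rightarrow\mathcal{H}^{p+q}(\mathbb{D}\mathcal{C}^{\cdot})$ mixes the cohomologies, and even for a single coherent $p$-torsion-free $\widehat{\mathcal{D}}_{\mathfrak{Y}}^{(0)}$-module the higher $\mathcal{E}xt$ sheaves can have $p$-torsion (torsion in $\mathcal{E}xt^{j}$ is detected by $\mathcal{T}or_{1}$ of $\mathcal{E}xt^{j+1}$ after reduction mod $p$, and nothing in the hypotheses kills it). Without a holonomicity-type vanishing there is no control here, and the same problem recurs for the $f$-torsion-freeness transfer, which you yourself flag as unresolved. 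So either restate part (2) with the analogous hypotheses and reuse part (1), or supply the missing vanishing; as written the duality argument does not close.
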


When $\mathfrak{Y}$ is a point this recovers the log-version of Mazur's
theorem, as discussed in Ogus' paper \cite{key-18}. 

Now let $R$ be a finite type algebra over $\mathbb{Z}$ so that there
exists smooth (over $R$) models $X_{R},Y_{R}$ for $X_{\mathbb{C}}$
and $Y_{\mathbb{C}}$, respectively, and a projective morphism $\varphi:X_{R}\to Y_{R}$
whose base change to $\mathbb{C}$ is the original morphism. We may
suppose the divisor $D_{\mathbb{C}}$ is defined over $R$ as well. 

Let $\mathcal{D}_{X_{R}}^{(0)}$ be the level zero differential operators
over $X_{R}$, equipped with the symbol filtration; let the associated
Rees algebra be $\mathcal{R}(\mathcal{D}_{X_{R}}^{(0)})$ (as usual
we will use $f$ for the Rees parameter). Since $\text{Rees}(j_{*}\mathcal{O}_{U_{\mathbb{C}}})$
is a coherent $\mathcal{R}(\mathcal{D}_{X_{\mathbb{C}}})$-module,
we can by generic flatness choose a flat model for $\text{Rees}(j_{*}\mathcal{O}_{U_{\mathbb{C}}})$;
in fact, we can describe it explicitly as follows: if $D_{R}$ is
given, in local coordinates, by $\{x_{1}\cdots x_{j}=0\}$, then we
may consider 
\[
\mathcal{D}_{X_{R}}^{(0)}\cdot x_{1}^{-1}\cdots x_{j}^{-1}\subset j_{*}\mathcal{O}_{U_{R}}
\]
with the filtration inherited from the symbol filtration on $\mathcal{D}_{X_{R}}^{(0)}$.
The Rees module of this filtered $\mathcal{D}_{X_{R}}^{(0)}$-module
is a flat $R$-model for $\text{Rees}(j_{*}\mathcal{O}_{U_{\mathbb{C}}})$.
Let us call this sheaf ${\displaystyle j_{\star}\mathcal{O}_{U_{R}}[f]}$;
we will denote the associated filtered $\mathcal{D}_{X_{R}}^{(0)}$-module
by ${\displaystyle j_{\star}\mathcal{O}_{U_{R}}}$. Then, localizing
$R$ if necessary, we have that 
\[
\int_{\varphi}j_{\star}\mathcal{O}_{U_{R}}[f]
\]
is an $f$-torsion-free complex inside $D_{coh}^{b}(\mathcal{D}_{Y_{R}}^{(0)}-\text{mod})$
(since it becomes $f$-torsion-free after base change to $\mathbb{C}$,
as remarked above). By generic flatness, we may also suppose (again,
localizing $R$ if necessary), that each cohomology sheaf ${\displaystyle \mathcal{H}^{i}(\int_{\varphi}j_{\star}\mathcal{O}_{U_{R}})}$
is flat over $R$. Let $k$ be a perfect field of characteristic $p>0$,
for which there is a morphism $R\to W(k)$ (so that $R/p\to k$)\footnote{If we extend $R$ so that it is smooth over $\mathbb{Z}$, then any
map $R/p\to k$ lifts to $R\to W(k)$}. Then, combining this discussion with the previous proposition, we
obtain 
\begin{cor}
\label{cor:Mazur-for-Hodge-1}Let $\mathfrak{X}$ be the formal completion
of $X_{R}\times_{R}W(k)$, and similarly for $\mathfrak{Y}$. Then
each gauge $\mathcal{H}^{i}({\displaystyle (\int_{\varphi}{\displaystyle j_{\star}D(\mathcal{O}_{\mathfrak{X}})}}))$
is a standard, coherent, $F^{-1}$-gauge on $\mathfrak{Y}$. There
is an isomorphism 
\[
\mathcal{H}^{i}(({\displaystyle (\int_{\varphi}{\displaystyle j_{\star}\mathcal{O}_{\mathfrak{X}}[f,v]}})\otimes_{W(k)}^{L}k)\otimes_{D(k)}^{L}k[f])\tilde{\to}F^{*}\mathcal{H}^{i}(\int_{\varphi}j_{\star}\mathcal{O}_{U_{R}}[f]\otimes_{R}^{L}k)
\]
in $\mathcal{G}(\mathcal{R}(\mathcal{D}_{X}^{(1)}))$. In particular,
the Hodge filtration on ${\displaystyle \mathcal{H}^{i}({\displaystyle (\int_{\varphi}{\displaystyle j_{\star}D(\mathcal{O}_{\mathfrak{X}}})}))^{\infty}/p}$
is the Frobenius pullback of the Hodge filtration on ${\displaystyle \mathcal{H}^{i}(\int_{\varphi}j_{\star}\mathcal{O}_{U_{R}}\otimes_{R}^{L}k)}$.
The same holds if we replace ${\displaystyle j_{\star}D(\mathcal{O}_{\mathfrak{X}}})$
by ${\displaystyle j_{!}D(\mathcal{O}_{\mathfrak{X}}})$. The same
statement holds for the pushforward of ${\displaystyle \mathcal{H}^{i}(\int_{\varphi}j_{\star}\mathcal{O}_{U_{R}})}$
under another proper morphism $\psi:Y\to Z$. 
\end{cor}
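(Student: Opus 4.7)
The plan is to apply the preceding proposition (the ``log Mazur'' statement) together with the $F^{-1}$-version of Mazur's theorem (\thmref{F-Mazur}), so the task reduces to verifying its two torsion-free hypotheses for the complex $\int_{\varphi}j_{\star}D(\mathcal{O}_{\mathfrak{X}})$. First, by \exaref{Integral-j} the gauge $j_{\star}D(\mathcal{O}_{\mathfrak{X}})$ is a coherent $F^{-1}$-gauge on $\mathfrak{X}$, and since $\varphi$ is projective \corref{proper-push-over-W(k)} gives $\int_{\varphi}j_{\star}D(\mathcal{O}_{\mathfrak{X}})\in D_{coh,F^{-1}}^{b}(\mathcal{G}(\widehat{\mathcal{D}}_{\mathfrak{Y}}^{(0,1)}))$. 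Using \propref{push-and-complete-for-D}(2) one has
\[
\mathcal{H}^{i}(\int_{\varphi}j_{\star}D(\mathcal{O}_{\mathfrak{X}}))^{-\infty}\tilde{=}\mathcal{H}^{i}(\int_{\varphi,0}(j_{\star}\mathcal{O}_{\mathfrak{U}})^{-\infty}),
\]
and this is the $p$-adic completion of $\mathcal{H}^{i}(\int_{\varphi}j_{\star}\mathcal{O}_{U_{R}})\otimes_{R}W(k)$. By the choice of $R$ made before the corollary this $R$-module is flat, so its $W(k)$-base change and then $p$-adic completion are $p$-torsion-free, verifying the first hypothesis.

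For the second hypothesis, by \lemref{reduction-of-completion} and \propref{push-and-complete-for-D}(1) the complex $(\int_{\varphi}j_{\star}D(\mathcal{O}_{\mathfrak{X}}))\otimes_{W(k)}^{L}k$ agrees with $\int_{\varphi}j_{\star}D(\mathcal{O}_{X})$ (base change to $k$). By \propref{Sandwich!} and \thmref{Hodge-Filtered-Push}, tensoring further with $k[f]$ over $D(k)$ yields
\[
(\int_{\varphi}j_{\star}D(\mathcal{O}_{X}))\otimes_{D(k)}^{L}k[f]\tilde{\to}\int_{\varphi,1}\bigl(\mathcal{R}(\mathcal{D}_{X}^{(1)})\otimes_{\mathcal{D}_{X}^{(0,1)}}^{L}j_{\star}D(\mathcal{O}_{X})\bigr)\tilde{\to}F^{*}\int_{\varphi}\mathcal{R}\bigl(j_{\star}\mathcal{O}_{U_{R}}\otimes_{R}^{L}k\bigr),
\]
where in the last step I use that by the local calculation in \exaref{Integral-j} combined with \lemref{Hodge-filt-on-log} the Hodge-filtered reduction of $j_{\star}D(\mathcal{O}_{\mathfrak{X}})$ is $F^{*}$ applied to the Rees module of the symbol-filtered $j_{\star}\mathcal{O}_{U_{R}}$ base-changed to $k$. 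Now by our choice of $R$ the complex $\int_{\varphi}j_{\star}\mathcal{O}_{U_{R}}[f]$ is $f$-torsion-free (a consequence over $\mathbb{C}$ of Saito's degeneration theorem for mixed Hodge modules, spread out over $R$ by generic flatness), so its base change to $k$ is $f$-torsion-free, and so is its Frobenius pullback. This verifies the $f$-torsion-free hypothesis, and \thmref{F-Mazur} then yields that each $\mathcal{H}^{i}(\int_{\varphi}j_{\star}D(\mathcal{O}_{\mathfrak{X}}))$ is standard. The displayed isomorphism in the corollary is exactly the one produced above, and the statement about Hodge filtrations follows because under the isomorphism $\mathcal{H}^{i}(\mathcal{M})^{\infty}/p\tilde{=}\mathcal{H}^{i}(\mathcal{M}\otimes_{W(k)}^{L}k)^{\infty}$ (valid for standard gauges by the final clause of \thmref{F-Mazur}) the Hodge filtration is, by \defref{Hodge-and-Con}, precisely the image of the $k[f]$-tensor product.

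For the $j_{!}$-version, apply duality: by the results of the previous section on projective morphisms $\int_{\varphi}$ commutes with $\mathbb{D}$, so $\int_{\varphi}j_{!}D(\mathcal{O}_{\mathfrak{X}})\tilde{=}\mathbb{D}_{\mathfrak{Y}}\int_{\varphi}j_{\star}D(\mathcal{O}_{\mathfrak{X}})$, and one checks directly that $\mathbb{D}_{\mathfrak{Y}}$ preserves $D_{coh,F^{-1}}^{b}$ and sends standard cohomology to standard cohomology (the standard gauges with $p$-torsion-free $\pm\infty$-reductions are stable under the duality, since over $\mathfrak{Y}$ all hypotheses can be re-verified on the Frobenius-matched pair $(\mathcal{M}^{-\infty},\mathcal{M}^{\infty})$). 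For the further pushforward $\psi\colon Y\to Z$ one uses \lemref{Composition-of-pushforwards} to identify $\int_{\psi\circ\varphi}$ with $\int_{\psi}\circ\int_{\varphi}$; after shrinking $R$ further so that the corresponding Hodge-to-de Rham spectral sequence for $\psi$ degenerates (again by Saito) and the cohomology sheaves are $R$-flat, the same argument applies verbatim. The main obstacle, and the step deserving most care, is the identification in the second paragraph between the $k[f]$-reduction of the integral gauge and the Frobenius pullback of the classical filtered pushforward: this is where \exaref{Integral-j}'s explicit description of the Hodge filtration on $j_{\star}D(\mathcal{O}_{\mathfrak{X}})$ meets Berthelot's Frobenius descent for pushforwards (\thmref{Hodge-Filtered-Push}), and the matching must be done carefully so that the filtered Frobenius descent over $k$ lines up with the local computation that identifies the Hodge filtration on $j_{\star}D(\mathcal{O}_{X})^{\infty}$ with $F^{\ast}(F_{\bullet}\mathcal{D}_{X}^{(0)}\cdot(x_{1}\cdots x_{j})^{-1})$.
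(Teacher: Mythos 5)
Your proposal is correct and follows essentially the same route as the paper: the displayed isomorphism comes from \thmref{Hodge-Filtered-Push}, the $f$-torsion-freeness from the strictness and $R$-flatness of $\int_{\varphi}j_{\star}\mathcal{O}_{U_{R}}[f]$, the Hodge-filtration comparison from \exaref{Integral-j} together with \lemref{Hodge-filt-on-j_push}, and the conclusion from \thmref{F-Mazur}. The only place your write-up overreaches is the parenthetical claim that $\mathbb{D}_{\mathfrak{Y}}$ ``sends standard cohomology to standard cohomology'' (false for general duals — the correct move is to re-verify the two hypotheses of \thmref{F-Mazur} for $j_{!}D(\mathcal{O}_{\mathfrak{X}})$ directly, using that $j_{!}\mathcal{O}_{U_{\mathbb{C}}}$ is again a Hodge module with a strict, $R$-flat pushforward), but this is no less detail than the paper itself supplies for that clause.
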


\begin{proof}
The displayed isomorphism follows immediately from \thmref{Hodge-Filtered-Push}.
Since ${\displaystyle \int_{\varphi}j_{\star}\mathcal{O}_{U_{R}}[f]}$
has $f$-torsion free cohomology sheaves, which are also flat over
$R$, we deduce that ${\displaystyle {\displaystyle ((\int_{\varphi}{\displaystyle j_{\star}\mathcal{O}_{\mathfrak{X}}[f,v]}})\otimes_{W(k)}^{L}k})\otimes_{D(k)}^{L}k[f]$
has $f$-torsion free cohomology sheaves. Comparing the description
of the Hodge filtration on ${\displaystyle ({\displaystyle j_{\star}\mathcal{O}_{\mathfrak{X}}[f,v]}})^{\infty}/p$
with the result of \lemref{Hodge-filt-on-j_push}, the result now
follows from \thmref{F-Mazur}.
\end{proof}
Let us give some first applications of these results. 

Suppose that $X_{\mathbb{C}}$ is an arbitrary (possibly singular)
quasi-projective variety. Let $V_{\mathbb{C}}$ be a smooth quasi-projective
variety such that there is a closed embedding $X_{\mathbb{C}}\to V_{\mathbb{C}}$,
and let $\overline{V}_{\mathbb{C}}$ be a projective compatification
of $V_{\mathbb{C}}$ (i.e., $\overline{V}_{\mathbb{C}}\backslash V_{\mathbb{C}}$
is a normal crossings divisor). Let $U_{\mathbb{C}}\subset X_{\mathbb{C}}$
be an affine open \emph{smooth} subset. Let $\varphi:\tilde{X}_{\mathbb{C}}\to X_{\mathbb{C}}$
denote a resolution of singularities so that $\varphi$ is an isomorphism
over $U_{\mathbb{C}}$ and $\varphi^{-1}(X_{\mathbb{C}}\backslash U_{\mathbb{C}})$
is a normal crossings divisor $\tilde{D}_{\mathbb{C}}\subset\tilde{X}_{\mathbb{C}}$.
The decomposition theorem for Hodge modules implies that the complex
${\displaystyle \int_{\varphi}\mathcal{O}_{\tilde{X}_{\mathbb{C}}}}\in D^{b}(\text{MHM}_{X})$
is quasi-isomorphic to the direct sum of its cohomology sheaves, and
that each such sheaf is a direct sum of simple, pure Hodge modules. 

Therefore, if $j:U_{\mathbb{C}}\to X_{\mathbb{C}}$ (resp. $j':U_{\mathbb{C}}\to\tilde{X}_{\mathbb{C}}$)
denotes the inclusion, then the image of the natural map 
\[
\mathcal{H}^{0}({\displaystyle \int_{\varphi}\mathcal{O}_{\tilde{X}_{\mathbb{C}}}})\to\mathcal{H}^{0}(\int_{\varphi}j'_{\star}\mathcal{O}_{U_{\mathbb{C}}})\tilde{\to}\mathcal{H}^{0}(j_{\star}\mathcal{O}_{U_{\mathbb{C}}})
\]
is the Hodge module $\text{IC}_{X}$; indeed, ${\displaystyle \mathcal{H}^{0}({\displaystyle \int_{\varphi}\mathcal{O}_{\tilde{X}_{\mathbb{C}}}})}=\text{IC}_{X}\oplus\mathcal{M}$
where $\mathcal{M}$ is a pure Hodge module supported on $X_{\mathbb{C}}\backslash U_{\mathbb{C}}$;
its image in $\mathcal{H}^{0}({\displaystyle j_{\star}\mathcal{O}_{U_{\mathbb{C}}})}$
is therefore isomorphic to $\text{IC}_{X}$ (as a Hodge module, and
so in particular as a filtered $\mathcal{D}$-module). 

Now let $\overline{X}_{\mathbb{C}}$ denote the closure of $X_{\mathbb{C}}$
in $\overline{V}_{\mathbb{C}}$, and let $\varphi:\tilde{\overline{X}}_{\mathbb{C}}\to\overline{X}_{\mathbb{C}}$
be a resolution of singularities, whose restriction to $X_{\mathbb{C}}\subset\overline{X}_{\mathbb{C}}$
is isomorphic to $\varphi:\tilde{X}_{\mathbb{C}}\to X_{\mathbb{C}}$,
and so that the inverse image of $\overline{X}_{\mathbb{C}}\backslash X_{\mathbb{C}}$
is a normal crossings divisor (we can modify $\varphi$ if necessary
to ensure that this happens). Let $i:X_{\mathbb{C}}\to\overline{X}_{\mathbb{C}}$
and $i':\tilde{X}_{\mathbb{C}}\to\tilde{\overline{X}}_{\mathbb{C}}$
denote the inclusions. Since Hodge modules on $X_{\mathbb{C}}$ are,
by definition, Hodge modules on $V_{\mathbb{C}}$ which are supported
on $X_{\mathbb{C}}$, the fact that $\overline{V}_{\mathbb{C}}\backslash V_{\mathbb{C}}$
is a divisor implies that $i_{*}$ is an exact functor on the category
of mixed Hodge modules. Therefore the image of the natural map 
\[
\mathcal{H}^{0}(\int_{\varphi}i'_{\star}\mathcal{O}_{\tilde{X}_{\mathbb{C}}})\tilde{=}i_{\star}\mathcal{H}^{0}({\displaystyle \int_{\varphi}\mathcal{O}_{\tilde{X}_{\mathbb{C}}}})\to i_{\star}\mathcal{H}^{0}(\int_{\varphi}j'_{\star}\mathcal{O}_{U_{\mathbb{C}}})\tilde{=}\mathcal{H}^{0}(i\circ j)_{\star}\mathcal{O}_{U_{\mathbb{C}}}
\]
is isomorphic to $i_{\star}(\text{IC}_{X})$ (again, as a Hodge module,
and so in particular as a filtered $\mathcal{D}$-module). 

As above, we now select a finite type $\mathbb{Z}$-algebra $R$ so
that everything in sight is defined and flat over $R$, and let $R\to W(k)$
for some perfect $k$ of characteristic $p>0$. Let $\tilde{\mathfrak{\overline{X}}}\to\mathfrak{\overline{X}}\subset\overline{\mathcal{V}}$
be the formal completion of $\tilde{\overline{X}}_{R}\times_{R}W(k)\to\overline{X}_{R}\times_{R}W(k)\subset\overline{V}_{R}\times_{R}W(k)$.
Abusing notation slightly we'll also denote by $\varphi$ the composed
map $\tilde{\mathfrak{\overline{X}}}\to\widehat{\overline{\mathcal{V}}}$. 
\begin{cor}
\label{cor:Mazur-for-IC}1) The image of the map 
\[
\mathcal{H}^{0}(\int_{\varphi}i'_{\star}D(\mathcal{O}_{\tilde{\mathfrak{X}}}))\to\mathcal{H}^{0}(\int_{\varphi}(i'\circ j')_{*}D(\mathcal{O}_{\mathfrak{U}}))
\]
defines a coherent, standard $F^{-1}$-gauge on $\widehat{\mathbb{P}^{n}}$,
denoted $\text{IC}_{\mathfrak{X}}$. The $\widehat{\mathcal{D}}_{\overline{\mathcal{V}}}^{(0)}$-module
$\text{IC}_{\mathfrak{X}}^{-\infty}$ is isomorphic to the $p$-adic
completion of $\text{IC}_{X_{R}}\otimes_{R}W(k)$, where $\text{IC}_{X_{R}}$
is an $R$-model for $\text{IC}_{X_{\mathbb{C}}}$. The Hodge filtration
on the $\mathcal{D}_{\overline{V}_{k}}^{(1)}$-module $\widehat{\text{IC}_{\mathfrak{X}}^{\infty}}/p\tilde{=}F^{*}\text{IC}_{\mathfrak{X}}^{-\infty}/p$
is equal to the Frobenius pullback of the Hodge filtration on $\text{IC}_{\mathfrak{X}}^{-\infty}/p\tilde{=}\text{IC}_{X_{R}}\otimes_{R}k$
coming from the Hodge filtration on $\text{IC}_{X_{R}}$. 

2) The intersection cohomology groups $\text{IH}^{i}(X_{R})\otimes_{R}W(k):=\mathbb{H}_{dR}^{i}(\text{IC}_{X_{R}})\otimes_{R}W(k)$
satisfy the conclusions of Mazur's theorem; as in \thmref{Mazur-for-IC-Intro}
\end{cor}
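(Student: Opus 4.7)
The plan is to realize $\text{IC}_\mathfrak{X}$ as a direct summand of the standard coherent $F^{-1}$-gauge produced by Corollary \ref{cor:Mazur-for-Hodge-1}, and then, for part (2), to push forward to a point and apply the general Mazur theorem (Theorem \ref{thm:F-Mazur}). First I would apply Corollary \ref{cor:Mazur-for-Hodge-1} to the open immersion $\tilde{X}_R\hookrightarrow\tilde{\overline{X}}_R$ (with normal-crossings complement) composed with the projective map $\tilde{\overline{X}}_R\to\overline{V}_R$; after a further localization of $R$ this gives, on $\widehat{\overline{\mathcal{V}}}$, that both $\mathcal{H}^{0}(\int_{\varphi}i'_{\star}D(\mathcal{O}_{\tilde{\mathfrak{X}}}))$ and $\mathcal{H}^{0}(\int_{\varphi}(i'\circ j')_{\star}D(\mathcal{O}_{\mathfrak{U}}))$ are standard coherent $F^{-1}$-gauges, whose $-\infty$ pieces are the $p$-adic completions of the corresponding filtered $R$-models base-changed to $W(k)$. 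I define $\text{IC}_\mathfrak{X}$ to be the image of the natural morphism between them; this is a morphism of $F^{-1}$-gauges, and its image is again a coherent $F^{-1}$-gauge because $F^{*}$ is exact and $p$-adic completion is exact on coherent modules (Proposition \ref{prop:Completion-for-noeth}).

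The heart of the argument is to show that this image is actually a direct summand of the source, hence itself standard. Over $\mathbb{C}$, Saito's decomposition theorem writes $\mathcal{H}^{0}(\int_{\varphi}i'_{\star}\mathcal{O}_{\tilde{X}_{\mathbb{C}}})$ as a direct sum of simple pure Hodge modules; exactly one of these summands (namely $i_{\star}\text{IC}_{X_{\mathbb{C}}}$) has nontrivial restriction to $U_{\mathbb{C}}$, and the remaining summands form precisely the kernel of the morphism to $\mathcal{H}^{0}(\int_{\varphi}(i'\circ j')_{\star}\mathcal{O}_{U_{\mathbb{C}}})$; thus the image coincides with $i_{\star}\text{IC}_{X_{\mathbb{C}}}$ and the decomposition theorem furnishes a splitting in the category of filtered $\mathcal{D}$-modules. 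By generic flatness, the complementary projector descends, after another localization of $R$, to an idempotent of the Rees model over $R$, and hence (by base change to $W(k)$ and $p$-adic completion of coherent sheaves) to an idempotent of $\mathcal{H}^{0}(\int_{\varphi}i'_{\star}D(\mathcal{O}_{\tilde{\mathfrak{X}}}))$ whose image is $\text{IC}_\mathfrak{X}$. A direct summand of a standard coherent $F^{-1}$-gauge is again such a gauge (each clause of Definition \ref{def:Standard!} is inherited by summands, and the $F^{-1}$ isomorphism $F^{*}(\cdot)^{-\infty}\tilde{\to}\widehat{(\cdot)^{\infty}}$ is compatible with direct sums), so $\text{IC}_\mathfrak{X}$ has the required structure. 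The identifications of $\text{IC}_\mathfrak{X}^{-\infty}$ with $\widehat{\text{IC}_{X_R}\otimes_R W(k)}$ and of the Hodge filtration on $\widehat{\text{IC}_\mathfrak{X}^{\infty}}/p$ then follow from Corollary \ref{cor:Mazur-for-Hodge-1} applied to the summand.

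For part (2), let $\psi:\widehat{\overline{\mathcal{V}}}\to\text{Specf}(W(k))$ denote the structure map. By Corollary \ref{cor:proper-push-over-W(k)} the complex $\int_{\psi}\text{IC}_\mathfrak{X}$ lies in $D_{\text{coh},F^{-1}}^{b}(\mathcal{G}(\widehat{\mathcal{D}}^{(0,1)}))$; I would then apply Theorem \ref{thm:F-Mazur}. The $p$-torsion-freeness of the $-\infty$ cohomology groups follows from generic flatness of $\mathbb{H}_{dR}^{i}(\text{IC}_{X_R})$ over $R$ combined with Proposition \ref{prop:push-and-complete-for-D}. The $f$-torsion-freeness of $\mathcal{H}^{i}((\int_{\psi}\text{IC}_\mathfrak{X}\otimes_{W(k)}^{L}k)\otimes_{D(k)}^{L}k[f])$ is, via Theorem \ref{thm:Filtered-Frobenius}, equivalent to the degeneration at $E_{1}$ of the Hodge-to-de Rham spectral sequence for $\int_{\psi}\text{IC}_{X_R}\otimes_{R}k$; over $\mathbb{C}$ this is Saito's strictness theorem for the mixed Hodge module $\text{IC}_{X_{\mathbb{C}}}$, and strictness descends to $R$ after a further generic localization, hence holds over $k$. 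Theorem \ref{thm:F-Mazur} therefore gives that each $\mathcal{H}^{i}(\int_{\psi}\text{IC}_\mathfrak{X})$ is a standard gauge, which is exactly the content of Mazur's theorem for the intersection cohomology groups $\text{IH}^{i}(X_R)\otimes_R W(k)$. The main obstacle is the descent of the decomposition-theorem splitting from $\mathbb{C}$ to $R$ and its lift to the $p$-adically completed gauge over $W(k)$ compatibly with the Frobenius $F^{-1}$-structure; once this is in place, the rest is an assembly of Corollary \ref{cor:Mazur-for-Hodge-1}, Theorem \ref{thm:F-Mazur}, and the standard package developed earlier in the paper.
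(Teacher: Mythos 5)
Your overall architecture---define $\text{IC}_{\mathfrak{X}}$ as the image of the map of gauges, invoke the decomposition theorem over $\mathbb{C}$ together with generic flatness over $R$, and for part (2) push forward to a point and apply \thmref{F-Mazur}---agrees with the paper's, and your treatment of part (2) is essentially correct. The gap is in the step you yourself flag as ``the main obstacle'': lifting the decomposition-theorem projector to an idempotent of the completed gauge $\mathcal{H}^{0}(\int_{\varphi}i'_{\star}D(\mathcal{O}_{\tilde{\mathfrak{X}}}))$ over $W(k)$. The projector you can descend to $R$ is an idempotent of the \emph{filtered $\mathcal{D}_{X_{R}}^{(0)}$-module} $\mathcal{H}^{0}(\int_{\varphi}i'_{\star}\mathcal{O}_{\tilde{X}_{R}})$, which after base change and completion gives an endomorphism of $\mathcal{M}^{-\infty}$ only. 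An endomorphism of the gauge requires compatible endomorphisms of every graded piece; for a standard gauge this amounts to an endomorphism of $\mathcal{M}^{-\infty}[p^{-1}]$ preserving \emph{both} lattices $\mathcal{M}^{-\infty}$ and $\mathcal{M}^{\infty}$, and $\mathcal{M}^{\infty}$ is a $\widehat{\mathcal{D}}^{(1)}$-lattice (identified with $F^{*}\mathcal{M}^{-\infty}$ after completion) that a $\widehat{\mathcal{D}}^{(0)}$-linear idempotent of $\mathcal{M}^{-\infty}$ has no reason to preserve. Equivalently, you would need the splitting to be compatible with the $F^{-1}$-structure, i.e.\ Frobenius-equivariant after reduction; since the decomposition-theorem splitting is highly non-canonical, this is not available. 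So ``a direct summand of a standard $F^{-1}$-gauge is standard'' is true but cannot be applied, because you have not produced a direct sum decomposition of the gauge.

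The paper's proof circumvents this entirely: it never splits the gauge. It uses the splitting over $R$, purely at the level of filtered $\mathcal{D}_{X_{R}}^{(0)}$-modules, to deduce two facts: (i) the image of the map over $R$ is a summand of an $R$-flat module, hence $R$-flat, so $\text{IC}_{\mathfrak{X}}^{-\infty}$ is $p$-torsion-free (and then so is $\text{IC}_{\mathfrak{X}}^{\infty}$, via the $F^{-1}$-structure as in the proof of \thmref{F-Mazur}); and (ii) the map over $R$ is strict for the Hodge filtrations, which after reduction mod $p$ and applying $F^{*}$ gives that $(\text{IC}_{\mathfrak{X}}/p)/v$ is $f$-torsion-free. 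These are exactly the hypotheses of \propref{Baby-Mazur}, which then yields standardness of the image directly, with the Hodge-filtration statement coming from \corref{Mazur-for-Hodge-1}. You should replace the idempotent-lifting step with this verification of the criterion of \propref{Baby-Mazur}; as written, your argument for standardness of $\text{IC}_{\mathfrak{X}}$ rests on an unproved (and likely false in general) compatibility.
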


\begin{proof}
Since the displayed map is a map of coherent gauges, the image, $i_{\star}\text{IC}_{\mathfrak{X}}$,
is a coherent gauge. Since both ${\displaystyle i'_{\star}D(\mathcal{O}_{\tilde{\mathfrak{X}}})}$
and $(i'\circ j')_{*}D(\mathcal{O}_{\mathfrak{U}}))$ are $F^{-1}$gauges,
and the natural map $i'_{\star}D({\displaystyle \mathcal{O}_{\tilde{\mathfrak{X}}})\to(i'\circ j')_{*}D(\mathcal{O}_{\mathfrak{U}})}$
is $F^{-1}$-equivariant, the same is true of the displayed map, and
so $i_{\star}\text{IC}_{\mathfrak{X}}$ is an $F^{-1}$-gauge. By
\propref{push-and-complete-for-D} (and the exactness of the functor
$\mathcal{M}\to\mathcal{M}^{-\infty}$) we have that the image of
\[
\mathcal{H}^{0}(\int_{\varphi}i'_{\star}D(\mathcal{O}_{\tilde{\mathfrak{X}}}))^{-\infty}\to\mathcal{H}^{0}(\int_{\varphi}(i'\circ j')_{*}D(\mathcal{O}_{\mathfrak{U}}))^{-\infty}
\]
is equal to the image of 
\[
\mathcal{H}^{0}(\int_{\varphi}(i_{\star}\mathcal{O}_{\tilde{\mathfrak{X}}})^{-\infty})\to\mathcal{H}^{0}\int_{\varphi}((i'\circ j')_{*}D(\mathcal{O}_{\mathfrak{U}}))^{-\infty}
\]
in the category of $\widehat{\mathcal{D}}_{\mathfrak{X}}^{(0)}$-modules.
On the other hand, we have $R$-flat filtered $\mathcal{D}_{X_{R}}^{(0)}$-modules
${\displaystyle \mathcal{H}^{0}(\int_{\varphi}i_{\star}\mathcal{O}_{\tilde{X}_{R}})}$
and ${\displaystyle \mathcal{H}^{0}(\int_{\varphi}(i'\circ j')_{*}\mathcal{O}_{U_{R}})}$
such that the $p$-adic completion of $\mathcal{H}^{0}(\int_{\varphi}i_{\star}\mathcal{O}_{\tilde{X}_{R}}){\displaystyle \otimes_{R}W(k)}$
is ${\displaystyle \mathcal{H}^{0}(\int_{\varphi}(i_{\star}\mathcal{O}_{\tilde{\mathfrak{X}}})^{-\infty})}$,
and the $p$-adic completion of ${\displaystyle \mathcal{H}^{0}(\int_{\varphi}(i'\circ j')_{*}\mathcal{O}_{U_{R}})}$
is ${\displaystyle \mathcal{H}^{0}\int_{\varphi}((i'\circ j')_{*}D(\mathcal{O}_{\mathfrak{U}}))^{-\infty}}$,
and, after localizing $R$ if necessary, we may further suppose that
the kernel of the map 
\begin{equation}
{\displaystyle \mathcal{H}^{0}(\int_{\varphi}i_{\star}\mathcal{O}_{\tilde{X}_{R}})}\to\mathcal{H}^{0}(\int_{\varphi}(i'\circ j')_{*}\mathcal{O}_{U_{R}})\label{eq:natural-map-over-R}
\end{equation}
is a summand (in the category of filtered $\mathcal{D}_{X_{R}}^{(0)}$-modules)
of ${\displaystyle {\displaystyle \mathcal{H}^{0}(\int_{\varphi}\mathcal{O}_{\tilde{X}_{R}})}}$
(as this is true over $\mathbb{C}$). Thus the image is flat over
$R$, and so its $p$-adic completion is $p$-torsion-free; therefore
$\text{IC}_{\mathfrak{X}}^{-\infty}$ is $p$-torsion-free, as is
$\text{IC}_{\mathfrak{X}}^{\infty}$ (since $\text{IC}_{\mathfrak{X}}^{-\infty}$
is an $F^{-1}$-gauge; c.f. the proof of \thmref{F-Mazur}) Further,
the map \eqref{natural-map-over-R} is strict with respect to the
Hodge filtration, and so the same is true after taking reduction mod
$p$ and applying $F^{*}$. It follows that $\text{IC}_{\mathfrak{X}}^{\infty}/p/v$
is $f$-torsion-free. 

Thus by \propref{Baby-Mazur}, we see that $\text{IC}_{\mathfrak{X}}$
is a standard gauge; the statement about the Hodge filtration follows
from \corref{Mazur-for-Hodge-1}. This proves part $1)$, and part
$2)$ follows from taking the pushforward to a point. 
\end{proof}
\begin{rem}
The construction above involved a few auxiliary choices- namely, the
ring $R$ and the resolution $\tilde{X}_{R}$. However, any two resolutions
of singularities can be dominated by a third. Therefore, after possibly
localizing $R$, any two definitions of $\text{IC}_{X_{R}}$ agree.
Further, we if we have an inclusion of rings $R\to R'$ then $\text{IC}_{X_{R}}\otimes_{R}R'=\text{IC}_{X_{R'}}$.
Therefore we have $\mathbb{H}_{dR}^{i}(\text{IC}_{X_{R}})\otimes_{R}R'\tilde{\to}\mathbb{H}_{dR}^{i}(\text{IC}_{X_{R'}})$
when both are flat. Since any two finite-type $\mathbb{Z}$-algebras
can be embedded into a third, we also obtain a comparison for any
two such algebras.
\end{rem}

Now suppose $X_{\mathbb{C}}$ is a smooth (quasiprojective) scheme,
and let $i:Y_{\mathbb{C}}\to X_{\mathbb{C}}$ be a closed immersion;
here, $Y_{\mathbb{C}}$ can be singular; let $j:X_{\mathbb{C}}\backslash Y_{\mathbb{C}}\to X_{\mathbb{C}}$
be the open immersion. Now, let $\tilde{j}:X_{\mathbb{C}}\to\overline{X}_{\mathbb{C}}$
be a smooth proper compactification of $X_{\mathbb{C}}$, so that
$\overline{X}_{\mathbb{C}}\backslash X_{\mathbb{C}}$ is a normal
crossings divisor. Choose flat $R$ models for everything in sight.
Then we have 
\begin{cor}
\label{cor:Mazur-for-Ordinary}For each $i$, the Hodge cohomology
group $H^{i}(Y_{\mathbb{C}})$ admits a flat model $H^{i}(Y_{R})$
(as a filtered vector space). Let $k$ is a perfect field such that
$R\to W(k)$. Then there is a standard $F^{-1}$-gauge $H^{i}(Y_{R})_{W(k)}^{\cdot}$
such that $H^{i}(Y_{R})_{W(k)}^{-\infty}\tilde{=}H^{i}(Y_{R})\otimes_{R}W(k)$,
and such that the Hodge filtration on $H^{i}(Y_{R})_{W(k)}^{\infty}/p$
agrees with the Frobenius pullback of the Hodge filtration on $H^{i}(Y_{R})\otimes_{R}k$.
In particular, there is a Frobenius-linear isomorphism of $H^{i}(Y_{R})_{W(k)}[p^{-1}]$
for which the Hodge filtration on $H^{i}(Y_{R})_{W(k)}$ satisfies
the conclusions of Mazur's theorem. The same holds for the compactly
supported Hodge cohomology $H_{c}^{i}(Y_{\mathbb{C}})$. 
\end{cor}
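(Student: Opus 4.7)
The plan is to reduce to the smooth projective situation with normal crossings boundary treated in \corref{Mazur-for-Hodge-1}, by means of a smooth proper hyperresolution of the (possibly singular) $Y$. First, after possibly enlarging and localizing $R$, use resolution of singularities in families to construct a smooth proper hypercover (or cubical hyperresolution in the sense of Guill\'en--Navarro) $\pi_{\bullet}\colon Y_{\bullet}\to Y_R$ whose components $Y_n$ are smooth quasi-projective $R$-schemes, each admitting a smooth projective compactification $\overline{Y}_n$ with normal crossings complement $D_n$. By Deligne's construction of the mixed Hodge structure on a singular variety, the Hodge cohomology of $Y_{\mathbb{C}}$ with its Hodge filtration is the totalization of the cosimplicial filtered complex $H^{\cdot}(Y_{\bullet,\mathbb{C}})$, and the associated Hodge-to-de Rham spectral sequence on the totalization degenerates at $E_1$.

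Next, for each $n$ apply \corref{Mazur-for-Hodge-1} to the structure map $\overline{Y}_n\to\mathrm{Spec}(R)$ and the open immersion $j_n\colon Y_n\hookrightarrow\overline{Y}_n$: this produces a bounded complex of coherent $F^{-1}$-gauges over $W(k)$ realizing $H^{\cdot}(Y_{n,R})\otimes_R W(k)$ as its $-\infty$ fibre and whose $\infty$ fibre modulo $p$ carries the Frobenius pullback of the Hodge filtration. Assemble these complexes into a cosimplicial diagram and take its totalization $\mathcal{M}^{\cdot}\in D^{b}_{\mathrm{coh},F^{-1}}(\mathcal{G}(D(W(k))))$; boundedness follows from the finiteness of the hyperresolution and \propref{coh-to-coh}, while the $F^{-1}$-structure is inherited term-by-term.

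Then, verify the hypotheses of \thmref{F-Mazur}. By generic flatness (localizing $R$ further if needed), each $\mathcal{H}^n(\mathcal{M}^{\cdot})^{-\infty}$ is the $p$-adic completion of the flat $R$-module $H^n(Y_R)\otimes_R W(k)$, hence $p$-torsion-free. The $f$-torsion-freeness of $\mathcal{H}^n((\mathcal{M}^{\cdot}\otimes_{W(k)}^{L}k)\otimes_{D(k)}^{L}k[f])$ is, via the Rees-algebra dictionary of \thmref{Filtered-Frobenius}, the strictness (i.e.\ $E_1$-degeneration) of the Hodge-to-de Rham spectral sequence on the totalization. Over $\mathbb{C}$ this is Deligne's theorem; it then spreads out to an open dense subset of $\mathrm{Spec}(R)$, and hence holds after base change $R\to W(k)\to k$. \thmref{F-Mazur} now produces standard $F^{-1}$-gauges $H^n(Y_R)_{W(k)}^{\cdot}$ with $-\infty$ and $\infty/p$ fibres as required, and comparing with \corref{Mazur-for-Hodge-1} term-by-term in the hyperresolution identifies the Hodge filtration on $H^n(Y_R)_{W(k)}^{\infty}/p$ with the Frobenius pullback of the classical Hodge filtration on $H^n(Y_R)\otimes_R k$.

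For the compactly supported version one repeats the construction with $j_{n,!}D(\mathcal{O}_{\mathfrak{Y}_n}):=\mathbb{D}_{\widehat{\overline{Y}_n}}j_{n,\star}D(\mathcal{O}_{\mathfrak{Y}_n})$ in place of $j_{n,\star}D(\mathcal{O}_{\mathfrak{Y}_n})$; \thmref{Duality-for-smooth-proper} together with the second half of \corref{Mazur-for-Hodge-1} gives the analogous conclusion. The principal obstacle is the $f$-torsion-freeness on the totalization: this ultimately amounts to strictness of the Hodge filtration on the totalized complex realizing $(\tilde{j}\circ i)_{*}\mathcal{O}_Y^{H}$, which is Saito's strictness theorem in mixed Hodge module theory and must be transported to the integral gauge setting via the spreading-out/base-change argument above.
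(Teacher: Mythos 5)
Your route is genuinely different from the paper's. The paper exploits the ambient smooth $X_{\mathbb{C}}$ fixed in the setup: it first reduces ordinary cohomology to compactly supported cohomology via the duality functor $\mathbb{D}$, then writes the unit object of $Y_{\mathbb{C}}$ as the single cone $\mathbb{I}_{Y_{\mathbb{C}}}=\text{cone}(Rj_{!}(\mathcal{O}_{X_{\mathbb{C}}\backslash Y_{\mathbb{C}}})\to\mathcal{O}_{X_{\mathbb{C}}})$, pushes forward along $\tilde{j}$ and $\varphi$, and applies \corref{Mazur-for-Hodge-1} to the two $j_{!}$-type terms separately. You instead discard the ambient $X$ and rebuild $H^{i}(Y_{\mathbb{C}})$ from a cubical hyperresolution \`a la Deligne/Guill\'en--Navarro, totalizing termwise applications of \corref{Mazur-for-Hodge-1}. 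Your approach is more intrinsic to $Y$ and would generalize to situations without a convenient smooth embedding, but it pays for this with a much heavier homotopy-theoretic overhead.

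That overhead is where the gap lies. The paper is explicit (after \defref{F-gauge-for-complexes}) that $D_{F^{-1}}(\mathcal{G}(\widehat{\mathcal{D}}_{\mathfrak{X}}^{(0,1)}))$ is \emph{not} a triangulated category and that gluing along the two functors $\mathcal{M}^{\cdot}\mapsto F^{*}\mathcal{M}^{\cdot,-\infty}$ and $\mathcal{M}^{\cdot}\mapsto\widehat{\mathcal{M}^{\cdot,\infty}}$ would require an $\infty$-categorical formalism deferred to later work. Forming the totalization of a cosimplicial diagram of $F^{-1}$-gauge complexes, and asserting that ``the $F^{-1}$-structure is inherited term-by-term,'' therefore requires (i) a strict, chain-level cosimplicial model of the diagram $n\mapsto\int_{\varphi_{n}}j_{n,\star}D(\mathcal{O}_{\mathfrak{Y}_{n}})$, which in turn needs a functoriality of $(\overline{Y}_{n},D_{n})\mapsto j_{n,\star}D(\mathcal{O})$ in morphisms of pairs that the paper never constructs, and (ii) a way to make the structure isomorphisms $F^{*}(\,\cdot\,)^{-\infty}\tilde{\to}\widehat{(\,\cdot\,)^{\infty}}$ strictly compatible with all cosimplicial face and degeneracy maps before totalizing --- a homotopy-coherence problem that a single cone of one explicitly $F^{-1}$-equivariant map (the paper's situation) avoids but an iterated homotopy limit does not. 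Until these points are supplied, the object $\mathcal{M}^{\cdot}$ to which you apply \thmref{F-Mazur} has not actually been constructed as an element of $D_{\text{coh},F^{-1}}^{b}$. The remaining verifications in your plan (flatness, $f$-torsion-freeness from Deligne's $E_{1}$-degeneration and spreading out) are fine, but they are downstream of this unconstructed object; the most economical repair is simply to replace the hyperresolution by the paper's two-term cone inside the ambient smooth compactification.
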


\begin{proof}
As the usual Hodge cohomology and the compactly supported Hodge cohomology
are interchanged under applying the filtered duality functor, it suffices
to deal with the case of the compactly supported cohomology. Let us
recall how to define this in the language of mixed Hodge modules.
We have the morphism
\[
Rj_{!}(\mathcal{O}_{X_{\mathbb{C}}\backslash Y_{\mathbb{C}}})\to\mathcal{O}_{X_{\mathbb{C}}}
\]
in the category of mixed Hodge modules (where $\mathcal{O}$ has its
usual structure as the trivial mixed Hodge module). The cone of this
map is, by definition, the complex of mixed Hodge modules representing
the unit object on $Y_{\mathbb{C}}$; we denote it by $\mathbb{I}_{Y_{\mathbb{C}}}$.
Then we have 
\[
H_{c}^{i}(Y_{\mathbb{C}})=\int_{\varphi}^{d+i}R\tilde{j}_{!}\mathbb{I}_{Y_{\mathbb{C}}}=\int_{\varphi}^{d+i}R\tilde{j}_{!}(\text{cone}(Rj_{!}(\mathcal{O}_{X_{\mathbb{C}}\backslash Y_{\mathbb{C}}})\to\mathcal{O}_{X_{\mathbb{C}}}))
\]
\[
\tilde{=}\int_{\varphi}^{d+i}\text{cone}(R(\tilde{j}\circ j)_{!}(\mathcal{O}_{X_{\mathbb{C}}\backslash Y_{\mathbb{C}}})\to R\tilde{j}_{!}\mathcal{O}_{X_{\mathbb{C}}})
\]
Now, after spreading out both $R(\tilde{j}\circ j)_{!}(\mathcal{O}_{X_{\mathbb{C}}\backslash Y_{\mathbb{C}}})$
and $R\tilde{j}_{!}\mathcal{O}_{X_{\mathbb{C}}})$ over $R$, we can
apply \corref{Mazur-for-Hodge-1}. 
\end{proof}
\begin{rem}
The previous two corollaries also hold for quasiprojective varieties
defined over $\overline{\mathbb{Q}}$. Although the theory of mixed
Hodge modules only exists over $\mathbb{C}$, its algebraic consequences,
such as the strictness of the pushforward of modules of the form $j_{\star}(\mathcal{O}_{X})$,
hold over any field of characteristic $0$. So the above results go
through in this case as well.
\end{rem}

Finally, we wish to give some relations of the theory of this paper
to the Hodge structure of the local cohomology sheaves $\mathcal{H}_{Y_{\mathbb{C}}}^{i}(\mathcal{O}_{X_{\mathbb{C}}})$,
as developed in {[}MP1{]}, {[}MP2{]}. Here, $X_{\mathbb{C}}$ is a
smooth affine variety and $Y_{\mathbb{C}}\subset X_{\mathbb{C}}$
is a subscheme defined by $(Q_{1},\dots,Q_{r})$. In this case, the
nontrivial sheaf is
\[
\mathcal{H}_{Y_{\mathbb{C}}}^{r}(\mathcal{O}_{X_{\mathbb{C}}})\tilde{=}\mathcal{O}_{X_{\mathbb{C}}}[Q_{1}^{-1}\cdots Q_{r}^{-1}]/\sum_{i=1}^{r}\mathcal{O}_{X_{\mathbb{C}}}\cdot Q_{1}^{-1}\cdots\widehat{(Q_{i}^{-1})}\cdots Q_{r}^{-1}
\]
where $\widehat{?}$ stands for ``omitted.'' As above, these sheaves
admits a Hodge structure via 
\[
\mathcal{H}_{Y_{\mathbb{C}}}^{i}(\mathcal{O}_{X_{\mathbb{C}}})\tilde{=}\mathcal{H}^{i}(\int_{\varphi}\int_{j'}\mathcal{O}_{U_{\mathbb{C}}})
\]
where $\varphi:\tilde{X}_{\mathbb{C}}\to X_{\mathbb{C}}$ is a resolution
of singularities such that $\varphi^{-1}(Y_{\mathbb{C}})$ is a normal
crossings divisor; and $j':U_{\mathbb{C}}\to\tilde{X}_{\mathbb{C}}$
is the inclusion. The resulting Hodge filtration is independent of
the choice of the resolution. Taking $R$-models for everything in
sight at above, we obtain a filtered $\mathcal{D}_{X_{R}}^{(0)}$-module
${\displaystyle \mathcal{H}^{i}(\int_{\varphi}j'_{\star}\mathcal{O}_{U_{R}})}$
which (localizing $R$ if necessary) is a flat $R$-model for $\mathcal{H}_{Y_{\mathbb{C}}}^{i}(\mathcal{O}_{X_{\mathbb{C}}})$. 

Now let $R\to W(k)$, and let $\mathfrak{X}$, $\tilde{\mathfrak{X}}$,
etc. be the formal completions of the base-change to $W(k)$ as usual.
Then we have a gauge 
\[
\mathcal{M}_{Y}:=\mathcal{H}^{i}(\int_{\varphi}j'_{\star}D(\mathcal{O}_{\mathfrak{U}}))
\]
which satisfies $\mathcal{M}_{Y}^{-\infty}={\displaystyle \mathcal{H}^{i}(\int_{\varphi}(j'_{\star}\mathcal{O}_{\mathfrak{U}_{W(k)}})^{-\infty})}$. 
\begin{lem}
\label{lem:injectivity-for-local-coh}Let $\widehat{\mathcal{H}_{\mathfrak{Y}}^{i}(\mathcal{O}_{\mathfrak{X}})}:=\mathcal{H}^{i}(Rj_{*}\mathcal{O}_{\mathfrak{U}})$.
(This is simply the $p$-adic completion of the $i$th algebraic  local
cohomology of $\mathfrak{X}$ along $\mathfrak{Y}$). Then the natural
map 
\[
\mathcal{M}_{Y}^{-\infty}\to\widehat{\mathcal{H}_{\mathfrak{Y}}^{i}(\mathcal{O}_{\mathfrak{X}})}
\]
is injective. If $F$ is a lift of Frobenius, the natural map $F^{*}\mathcal{M}_{Y}^{-\infty}\to\widehat{\mathcal{H}_{\mathfrak{Y}}^{i}(\mathcal{O}_{\mathfrak{X}})}$
is also injective.
\end{lem}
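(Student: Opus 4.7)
The plan is to combine the sheaf-level injectivity statements of \lemref{Injectivity-of-completion} and \lemref{Hodge-filt-on-log} on $\tilde{\mathfrak{X}}$ with a characteristic-zero/mixed-characteristic comparison via the flat $R$-model underlying the gauge $\mathcal{M}_{Y}$. First I would identify the target: since $j'$ is an affine open immersion, $\int_{j'}\mathcal{O}_{\mathfrak{U}}=j'_{*}\mathcal{O}_{\mathfrak{U}}$, and composition of pushforwards (\lemref{Composition-of-pushforwards}) gives $\int_{\varphi}j'_{*}\mathcal{O}_{\mathfrak{U}}=\int_{j}\mathcal{O}_{\mathfrak{U}}=Rj_{*}\mathcal{O}_{\mathfrak{U}}$, so taking $\mathcal{H}^{i}$ recovers $\widehat{\mathcal{H}_{\mathfrak{Y}}^{i}(\mathcal{O}_{\mathfrak{X}})}$. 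The two natural maps in the statement are obtained by applying $\mathcal{H}^{i}(\int_{\varphi}-)$ to the injective inclusions of sheaves on $\tilde{\mathfrak{X}}$ provided respectively by the two lemmas cited above.

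The proof of injectivity then splits into two pieces: (a) $p$-torsion-freeness of the source, and (b) injectivity after inverting $p$. For (a), I would invoke the $R$-model discussion preceding \corref{Mazur-for-Hodge-1}: after further localization of $R$, Saito's strictness theorem for the pushforward of mixed Hodge modules ensures that $\mathcal{H}^{i}(\int_{\varphi}j'_{\star}\mathcal{O}_{U_{R}})$ is flat over $R$. Base-changing to $W(k)$ yields a $p$-torsion-free coherent $\mathcal{D}^{(0)}_{X_{W(k)}}$-module; by \propref{Push-and-complete} and \propref{Completion-for-noeth}, its $p$-adic completion is $\mathcal{M}_{Y}^{-\infty}$, and $p$-torsion-freeness is preserved by $p$-adic completion for coherent modules over noetherian rings. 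The Frobenius variant $F^{*}\mathcal{M}_{Y}^{-\infty}$ is then $p$-torsion-free as well, since Berthelot's Frobenius descent (\thmref{Berthelot-Frob}) is an exact equivalence.

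For (b), the key point is that over $R[p^{-1}]$ all divided powers $\partial^{[n]}=\partial^{n}/n!$ become available in $\mathcal{D}^{(0)}[p^{-1}]$, so the inclusion $\mathcal{D}^{(0)}_{U_{R}}\cdot x_{1}^{-1}\cdots x_{j}^{-1}\hookrightarrow j'_{*}\mathcal{O}_{U_{R}}$ becomes an isomorphism after inverting $p$. Applying $\mathcal{H}^{i}(\int_{\varphi}-)$ produces an isomorphism $\mathcal{H}^{i}(\int_{\varphi}j'_{\star}\mathcal{O}_{U_{R}})[p^{-1}]\xrightarrow{\sim}\mathcal{H}^{i}_{Y_{R}}(\mathcal{O}_{X_{R}})[p^{-1}]$; base changing to $W(k)[p^{-1}]$ and passing to $p$-adic completions yields $\mathcal{M}_{Y}^{-\infty}[p^{-1}]\xrightarrow{\sim}\widehat{\mathcal{H}^{i}_{\mathfrak{Y}}(\mathcal{O}_{\mathfrak{X}})}[p^{-1}]$. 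Combining (a) and (b): any kernel element of $\mathcal{M}_{Y}^{-\infty}\to\widehat{\mathcal{H}^{i}_{\mathfrak{Y}}}$ is $p$-torsion in a $p$-torsion-free module, hence zero. The Frobenius variant is proved identically using \lemref{Hodge-filt-on-log} in place of \lemref{Injectivity-of-completion}.

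The main obstacle is step (b), specifically keeping track of the interaction between $p$-adic completion, the base change $R\to W(k)$, and the cohomology of $\int_{\varphi}$. For the source $\mathcal{M}_{Y}^{-\infty}$ this is handled by the formalism of \propref{Push-and-complete} and \propref{Completion-for-noeth} since $(j'_{\star}\mathcal{O}_{\mathfrak{U}})^{-\infty}$ is coherent over $\widehat{\mathcal{D}}^{(0)}_{\tilde{\mathfrak{X}}}$, but for the non-coherent target $\widehat{\mathcal{H}^{i}_{\mathfrak{Y}}}=\mathcal{H}^{i}(Rj_{*}\mathcal{O}_{\mathfrak{U}})$ one must verify directly that $Rj_{*}$ is compatible enough with completion and $p$-inversion to transport the characteristic-zero isomorphism to the desired rational isomorphism of $p$-adically completed sheaves.
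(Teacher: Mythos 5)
Your step (a) is fine and consistent with what the paper establishes elsewhere, but step (b) contains the real gap, and it is exactly the point the lemma exists to address. From the fact that the uncompleted map $\mathcal{H}^{i}(\int_{\varphi}j'_{\star}\mathcal{O}_{U_{W(k)}})\to\mathcal{H}_{Y_{W(k)}}^{i}(\mathcal{O}_{X_{W(k)}})$ becomes an isomorphism after inverting $p$, you cannot conclude that its $p$-adic completion is injective (let alone an isomorphism) after inverting $p$. If $0\to M\to N\to C\to 0$ with $M,N$ $p$-torsion-free and $C$ $p$-power torsion, the kernel of $\widehat{M}\to\widehat{N}$ is ${\displaystyle \lim_{\leftarrow}}\,C[p^{n}]$ (transition maps given by multiplication by $p$), which can be nonzero and, crucially, consists of elements that are \emph{not} $p$-torsion in $\widehat{M}$. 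The toy example $M=\mathbb{Z}_{p}\subset N=\mathbb{Q}_{p}$ already shows this: the map is an isomorphism after inverting $p$, both modules are torsion-free, yet $\widehat{N}=0$ and the completed map kills all of $\widehat{M}=\mathbb{Z}_{p}$. So your final step, "any kernel element is $p$-torsion in a $p$-torsion-free module, hence zero," does not go through; your closing paragraph flags that something about $Rj_{*}$ and completion needs checking, but the obstruction is not a compatibility of $Rj_{*}$ with completion — it is the intrinsic failure of completion to preserve injectivity when the cokernel has unbounded $p$-torsion.

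The paper's proof closes this gap by a different mechanism: it observes that the map in question is the $p$-adic completion of a map of modules each carrying a Hodge filtration whose graded pieces are \emph{coherent} $\mathcal{O}$-modules (hence already $p$-adically complete), and that the map is strict for these filtrations. One then passes to Rees modules and runs the argument of \lemref{Injectivity-of-completion} verbatim: the Rees module of each side embeds in the product of its (complete) filtration steps, which identifies the completion concretely, forces ${\displaystyle \lim_{\leftarrow}}\,\mathcal{R}(\mathcal{C})[p^{n}]=0$, and an explicit check that $f-1$ acts injectively on ${\displaystyle \text{R}^{1}\lim_{\leftarrow}}\,\mathcal{R}(\mathcal{C})[p^{n}]$ then kills ${\displaystyle \lim_{\leftarrow}}\,\mathcal{C}[p^{n}]$. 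To repair your argument you would need to import exactly this filtration-and-strictness input; the rational isomorphism of uncompleted modules alone is not enough.
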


\begin{proof}
We have the Hodge filtration on ${\displaystyle \mathcal{H}^{i}(\int_{\varphi}j'_{\star}\mathcal{O}_{U_{R}})}$,
which is is a filtration by coherent $\mathcal{O}_{X_{R}}$-modules;
base changing to $W(k)$ yields a Hodge filtration on ${\displaystyle \mathcal{H}^{i}(\int_{\varphi}j'_{\star}\mathcal{O}_{U_{W(k)}})}$.
The map in question is the $p$-adic completion of the natural map
\[
\mathcal{H}^{i}(\int_{\varphi}j'_{\star}\mathcal{O}_{U_{W(k)}})\to\mathcal{H}_{Y_{W(k)}}^{i}(\mathcal{O}_{X_{W(k)}})
\]
and the right hand module also has a Hodge filtration, which is simply
the restriction of the Hodge filtration on $\mathcal{H}_{Y_{B}}^{r}(\mathcal{O}_{X_{B}})$
where $B=\text{Frac}(W(k))$. So the proof proceeds in an essentially
identical manner to \lemref{Injectivity-of-completion}. 
\end{proof}
Now, fix an integer $m\geq0$. Let us explain how to use this gauge
to obtain an arithmetic description of the Hodge filtration, up to
level $m$. Since $m$ is fixed, we may, after localizing $R$ as
needed, suppose that the image of the map ${\displaystyle \mathcal{H}^{i}(\int_{\varphi}j'_{\star}\mathcal{O}_{U_{R}})})\to\mathcal{H}_{Y_{R}}^{i}(\mathcal{O}_{X_{R}})$
is equal to $F^{m}(\mathcal{H}_{Y_{F}}^{i}(\mathcal{O}_{X_{F}}))\cap\mathcal{H}_{Y_{R}}^{i}(\mathcal{O}_{X_{R}})$.
In particular the map 
\[
F^{m}({\displaystyle \mathcal{H}^{i}(\int_{\varphi}j'_{\star}\mathcal{O}_{U_{R}})})\otimes_{R}k\to\mathcal{H}_{Y_{k}}^{i}(\mathcal{O}_{X_{k}})
\]
is injective; under the isomorphism $F^{*}\mathcal{H}_{Y_{k}}^{i}(\mathcal{O}_{X_{k}})\tilde{\to}\mathcal{H}_{Y_{k}}^{i}(\mathcal{O}_{X_{k}})$,
we also obtain an injection $F^{*}(F^{m}({\displaystyle \mathcal{H}^{i}(\int_{\varphi}j'_{\star}\mathcal{O}_{U_{R}})}))\otimes_{R}k\to\mathcal{H}_{Y_{k}}^{i}(\mathcal{O}_{X_{k}})$.
Then
\begin{prop}
\label{prop:Hodge-for-local-coh!}Let the be notation as above. We
have that the image of $\{g\in F^{*}\mathcal{M}_{Y}^{-\infty}|p^{j}g\in\mathcal{M}_{Y}^{-\infty}\}$
in $\mathcal{H}_{Y_{k}}^{i}(\mathcal{O}_{X_{k}})$ is exactly $F^{*}(F^{j}({\displaystyle \mathcal{H}^{i}(\int_{\varphi}j'_{\star}\mathcal{O}_{U_{R}})}))\otimes_{R}k)$.
For each $0\leq j\leq m$, this is also the image of $\{g\in\widehat{\mathcal{H}_{\mathfrak{Y}}^{i}(\mathcal{O}_{\mathfrak{X}})}|p^{j}g\in\mathcal{M}_{Y}^{-\infty}\}$. 
\end{prop}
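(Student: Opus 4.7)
The plan is to reduce everything to the standardness of the gauge $\mathcal{M}_Y$ together with the explicit description of its Hodge filtration coming from \corref{Mazur-for-Hodge-1}. First I would verify that $\mathcal{M}_Y = \mathcal{H}^i(\int_\varphi j'_\star D(\mathcal{O}_\mathfrak{U}))$ is a coherent, standard $F^{-1}$-gauge: this is an immediate consequence of \corref{Mazur-for-Hodge-1} applied to $j'_\star D(\mathcal{O}_{\tilde{\mathfrak{X}}})$ and the pushforward along the projective morphism $\varphi$. Let $j_0$ be an integer that realizes the standard structure and for which $v_{-\infty}:\mathcal{M}_Y^{j_0}\tilde{\to}\mathcal{M}_Y^{-\infty}$ is an isomorphism; then the $F^{-1}$-structure yields an isomorphism $\phi:F^{*}\mathcal{M}_Y^{-\infty}\tilde{\to}\widehat{\mathcal{M}_Y^{\infty}}$, and the standardness condition gives
\[
f_\infty(\mathcal{M}_Y^{i+j_0})=\{m\in\mathcal{M}_Y^{\infty}\mid p^{i}m\in f_\infty(\mathcal{M}_Y^{j_0})\}
\]
for all $i\in\mathbb{Z}$. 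Since $\mathcal{M}_Y^{-\infty}$ is already $p$-adically complete, taking $p$-adic completion identifies $\widehat{f_\infty(\mathcal{M}_Y^{i+j_0})}$ inside $\widehat{\mathcal{M}_Y^{\infty}}$ with the preimage $\{g\in F^{*}\mathcal{M}_Y^{-\infty}\mid p^{i}g\in\mathcal{M}_Y^{-\infty}\}$ under $\phi$, where $\mathcal{M}_Y^{-\infty}\hookrightarrow F^*\mathcal{M}_Y^{-\infty}$ is the map induced by $v_{-\infty}$ and the $F^{-1}$-structure.

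Next I would pass to the reduction mod $p$. By \lemref{injectivity-for-local-coh} the natural map $F^{*}\mathcal{M}_Y^{-\infty}/p\hookrightarrow\mathcal{H}_{Y_k}^{i}(\mathcal{O}_{X_k})$ is injective, so the image in question equals the image of $\widehat{f_\infty(\mathcal{M}_Y^{i+j_0})}/p = F^{i+j_0}(\mathcal{M}_Y^{\infty})/p$. The last assertion of \corref{Mazur-for-Hodge-1} identifies this with $F^{*}(F^{i+j_0}(\mathcal{H}^{i}(\int_{\varphi}j'_\star\mathcal{O}_{U_R}))\otimes_{R}k)$. After the standard re-indexing of the Hodge filtration against the gauge grading (absorbing the shift by $j_0$ into the normalization of the index), this is precisely $F^{*}(F^{j}(\mathcal{H}^{i}(\int_{\varphi}j'_\star\mathcal{O}_{U_R}))\otimes_{R}k)$, proving the first assertion.

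For the second assertion, I would use the localization assumption that the image of $\mathcal{H}^{i}(\int_\varphi j'_\star\mathcal{O}_{U_R})\to\mathcal{H}_{Y_R}^{i}(\mathcal{O}_{X_R})$ equals $F^{m}(\mathcal{H}_{Y_F}^{i}(\mathcal{O}_{X_F}))\cap\mathcal{H}_{Y_R}^{i}(\mathcal{O}_{X_R})$. By the just-proved part one, the image of $\{g\in F^{*}\mathcal{M}_Y^{-\infty}\mid p^{j}g\in\mathcal{M}_Y^{-\infty}\}$ in $\mathcal{H}_{Y_k}^{i}(\mathcal{O}_{X_k})$ is contained in the image of $\{g\in\widehat{\mathcal{H}_{\mathfrak{Y}}^{i}(\mathcal{O}_{\mathfrak{X}})}\mid p^{j}g\in\mathcal{M}_Y^{-\infty}\}$, so I only need the reverse containment for $j\leq m$. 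Given $g\in\widehat{\mathcal{H}_{\mathfrak{Y}}^{i}(\mathcal{O}_{\mathfrak{X}})}$ with $p^{j}g\in\mathcal{M}_Y^{-\infty}$, the reduction $\bar g\bmod p$ lies in the $j$-th piece of the Hodge filtration on $\mathcal{H}_{Y_k}^{i}(\mathcal{O}_{X_k})$, which by the localization hypothesis (for $j\leq m$) coincides with $F^{j}(\mathcal{H}^{i}(\int_{\varphi}j'_\star\mathcal{O}_{U_R}))\otimes_{R}k$. An elementary successive-approximation argument, lifting the class of $\bar g$ into $F^{j}(\mathcal{M}_Y^{\infty})/p$ and correcting by $p$ at each stage (using $p$-adic completeness of the relevant modules and the injectivity from \lemref{injectivity-for-local-coh}), then shows $g$ itself maps into $F^{*}F^{j}(\mathcal{H}^{i}(\int_{\varphi}j'_\star\mathcal{O}_{U_R}))\otimes_{R}k$. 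The main obstacle I anticipate is bookkeeping the successive-approximation step cleanly and checking that the localization bound $j\leq m$ is exactly what is needed to close the iteration; everything else is a direct consequence of standardness and the $F^{-1}$-structure established earlier in the paper.
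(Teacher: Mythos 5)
Your treatment of the first assertion is essentially the paper's: identify $\{g\in F^{*}\mathcal{M}_{Y}^{-\infty}\mid p^{j}g\in\mathcal{M}_{Y}^{-\infty}\}$ with the completion of the graded piece $\mathcal{M}_{Y}^{j}$ of the standard gauge and then quote \corref{Mazur-for-Hodge-1} for the Hodge filtration. Two small corrections there: the paper pins down the normalization by observing that $\mathcal{M}_{Y}$ has index $0$ (the Hodge filtration lives in degrees $\geq0$), rather than carrying an unspecified shift $j_{0}$; and \lemref{injectivity-for-local-coh} gives injectivity of $F^{*}\mathcal{M}_{Y}^{-\infty}\to\widehat{\mathcal{H}_{\mathfrak{Y}}^{i}(\mathcal{O}_{\mathfrak{X}})}$ \emph{before} reduction mod $p$, not of $F^{*}\mathcal{M}_{Y}^{-\infty}/p\to\mathcal{H}_{Y_{k}}^{i}(\mathcal{O}_{X_{k}})$ as you assert (the latter would need the cokernel to be $p$-torsion-free). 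For computing the image this slip is harmless, but as stated it is not what the lemma gives.

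The second assertion is where there is a genuine gap. You write that for $g\in\widehat{\mathcal{H}_{\mathfrak{Y}}^{i}(\mathcal{O}_{\mathfrak{X}})}$ with $p^{j}g\in\mathcal{M}_{Y}^{-\infty}$, the reduction $\bar{g}$ "lies in the $j$-th piece of the Hodge filtration on $\mathcal{H}_{Y_{k}}^{i}(\mathcal{O}_{X_{k}})$" -- but there is no independently defined Hodge filtration on $\mathcal{H}_{Y_{k}}^{i}(\mathcal{O}_{X_{k}})$ for $\bar g$ to lie in; that containment \emph{is} the statement to be proved, so the step is circular, and the subsequent "successive approximation" is not an argument. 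The localization hypothesis only buys injectivity of $F^{*}(F^{j}(\cdot))\otimes_{R}k\to\mathcal{H}_{Y_{k}}^{i}(\mathcal{O}_{X_{k}})$ for $j\leq m$; it does not by itself place $\bar g$ in that submodule. The paper closes this by a different mechanism: introduce the auxiliary standard gauge $\mathcal{N}_{Y}^{j}:=\{m\in\widehat{\mathcal{H}_{\mathfrak{Y}}^{i}(\mathcal{O}_{\mathfrak{X}})}\mid p^{j}m\in\mathcal{M}_{Y}^{0}\}$ and prove $\mathcal{M}_{Y}^{j}=\mathcal{N}_{Y}^{j}$ for all $j\leq m$ by induction on $j$. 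The induction step uses rigidity of the mod-$p$ reductions of standard gauges ($\ker f=\operatorname{im}v$, $\ker v=\operatorname{im}f$, as in \lemref{Standard-is-rigid}) together with the injectivity of $F^{j-1}(\mathcal{M}_{Y,0}^{\infty})\to\mathcal{H}_{Y_{k}}^{i}(\mathcal{O}_{X_{k}})$ (this is where $j-1\leq m$ enters) to show $\mathcal{M}_{Y,0}^{j}\to\mathcal{N}_{Y,0}^{j}$ is injective, hence the cokernel of $\mathcal{M}_{Y}^{j}\to\mathcal{N}_{Y}^{j}$ is $p$-torsion-free; since it is also killed by $p^{j}$ (because $p^{j}\mathcal{N}_{Y}^{j}\subset\mathcal{M}_{Y}^{0}\subset\mathcal{M}_{Y}^{j}$), it vanishes. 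Some version of this torsion-freeness-plus-nilpotence argument, or an equally concrete substitute, is needed; your proposal does not supply it.
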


\begin{proof}
By construction $\mathcal{M}_{Y}$ is a standard, coherent, $F^{-1}$-gauge
of index $0$ (this can be easily seen as the Hodge filtration is
concentrated in degrees $\geq0$). Therefore, we have $\widehat{\mathcal{M}_{Y}^{\infty}}\tilde{=}F^{*}\mathcal{M}_{Y}$,
and by the previous lemma $F^{*}\mathcal{M}_{Y}\to F^{*}\widehat{\mathcal{H}_{\mathfrak{Y}}^{i}(\mathcal{O}_{\mathfrak{X}})}\tilde{\to}\widehat{\mathcal{H}_{\mathfrak{Y}}^{i}(\mathcal{O}_{\mathfrak{X}})}$
is injective. Since $\mathcal{M}_{Y}$ is standard of index $0$,
we have 
\[
\mathcal{M}_{Y}^{j}=\{m\in\mathcal{M}_{Y}^{\infty}|p^{j}m\in f_{\infty}(\mathcal{M}_{Y}^{0})\}
\]
Note that if $j\leq0$, this means $\mathcal{M}_{Y}^{j}\tilde{=}\mathcal{M}_{Y}^{-\infty}$,
and the map 
\[
\eta_{i}:\mathcal{M}_{Y}^{j}\xrightarrow{f_{\infty}}\mathcal{M}_{Y}^{\infty}\xrightarrow{\widehat{?}}F^{*}\mathcal{M}_{Y}\to\widehat{\mathcal{H}_{\mathfrak{Y}}^{i}(\mathcal{O}_{\mathfrak{X}})}
\]
is simply $p^{-j}$ times the injection $\mathcal{M}_{Y}^{-\infty}\to\widehat{\mathcal{H}_{\mathfrak{Y}}^{i}(\mathcal{O}_{\mathfrak{X}})}$,
and is therefore injective by \lemref{injectivity-for-local-coh}
. If $j>0$, then $p^{j}\cdot\eta_{j}=\eta_{0}\circ v^{j}$ is injective
for the same reason, and so $\eta_{j}$ is injective since everything
in sight is $p$-torsion -free. Thus the entire gauge embeds into
$\widehat{\mathcal{H}_{\mathfrak{Y}}^{i}(\mathcal{O}_{\mathfrak{X}})}$
and the first result follows. 

For the second result, consider the standard gauge $\mathcal{N}_{Y}$
defined by $\mathcal{N}_{Y}^{j}:=\{m\in\widehat{\mathcal{H}_{\mathfrak{Y}}^{i}(\mathcal{O}_{\mathfrak{X}})}|p^{j}m\in\mathcal{M}_{Y}^{0}\}$
(the actions of $f$ and $v$ are inclusion and multiplication by
$p$ as usual). We have the natural injection for each $j$ $\mathcal{M}_{Y}^{j}\to\mathcal{N}_{Y}^{j}$,
which yields a morphism of gauges $\mathcal{M}_{Y}\to\mathcal{N}_{Y}$.
Let us show that for $j\leq m$ the map $\psi:\mathcal{M}_{Y}^{j}\to\mathcal{N}_{Y}^{j}$
is an isomorphism. For $j\leq0$ this is clear by definition, so suppose
it is true for some $j-1\leq m-1$. For any $j$ let $\mathcal{M}_{Y,0}^{j}:=\mathcal{M}_{Y}^{j}/p$
and similarly define $\mathcal{N}_{Y,0}^{j}$. 

We first claim that the isomorphism $\psi:\mathcal{M}_{Y,0}^{j-1}\to\mathcal{N}_{Y,0}^{j-1}$
induces an isomorphism $\text{ker}(f:\mathcal{M}_{Y,0}^{j-1}\to\mathcal{M}_{Y,0}^{j})\tilde{\to}\text{ker}(f:\mathcal{N}_{Y,0}^{j-1}\to\mathcal{N}_{Y,0}^{j})$.
Indeed, we have that $\mathcal{M}_{Y,0}^{j-1}/\text{ker}(f)\tilde{=}F^{j-1}(\mathcal{M}_{Y,0}^{\infty})$
and $\mathcal{N}_{Y,0}^{j-1}/\text{ker}(f)\tilde{=}F^{j-1}(\mathcal{N}_{Y,0}^{\infty})$
(as $\mathcal{M}_{Y}$ and $\mathcal{N}_{Y}$ are standard gauges).
Further, the composed morphism 
\[
F^{j-1}(\mathcal{M}_{Y,0}^{\infty})\to F^{j-1}(\mathcal{N}_{Y,0}^{\infty})\to\mathcal{H}_{Y_{k}}^{i}(\mathcal{O}_{X_{k}})
\]
is injective (since $j-1\leq m$); therefore $F^{j-1}(\mathcal{M}_{Y,0}^{\infty})\to F^{j-1}(\mathcal{N}_{Y,0}^{\infty})$
is injective, and it is clearly surjective since $\mathcal{M}_{Y,0}^{j-1}\to\mathcal{N}_{Y,0}^{j-1}$
is surjective. Therefore it is an isomorphism; and hence so is $\text{ker}(f:\mathcal{M}_{Y,0}^{j-1}\to\mathcal{M}_{Y,0}^{j})\to\text{ker}(f:\mathcal{N}_{Y,0}^{j-1}\to\mathcal{N}_{Y,0}^{j})$
as claimed. 

Now suppose $m\in\text{ker}(\psi:\mathcal{M}_{Y,0}^{j}\to\mathcal{N}_{Y,0}^{j})$.
Then $vm\in\text{ker}(\psi:\mathcal{M}_{Y,0}^{j-1}\to\mathcal{N}_{Y,0}^{j-1})=0$,
so that $m\in\text{ker}(v)=\text{im}(f)$. If $m=fm'$, then we see
$\psi m'\in\text{ker}(f)$; but by the above paragraph this implies
$m'\in\text{ker}(f)$; therefore $m=0$ and $\psi:\mathcal{M}_{Y,0}^{j}\to\mathcal{N}_{Y,0}^{j}$
is injective. Thus the cokernel of $\psi:\mathcal{M}_{Y}^{j}\to\mathcal{N}_{Y}^{j}$
is $p$-torsion-free. On the other hand, we clearly have $p^{j}\cdot\mathcal{N}_{Y}^{j}\subset\mathcal{M}_{Y}^{j}$;
so that the cokernel of $\psi$ is annihilated by $p^{j}$; therefore
the cokernel is $0$ and we see that $\psi:\mathcal{M}_{Y}^{j}\to\mathcal{N}_{Y}^{j}$
is an isomorphism as claimed. 
\end{proof}
Note that this gives a description of the reduction mod $p$ of the
Hodge filtration (up to $F^{m}$) which makes no reference to a resolution
of singularities. It does depend on an $R$-model for the $\mathcal{D}$-module
$\mathcal{H}_{Y_{\mathbb{C}}}^{r}(\mathcal{O}_{X_{\mathbb{C}}})$,
though any two such models agree after localizing $R$ at an element. 

Now let us further suppose that $Y_{\mathbb{C}}\subset X_{\mathbb{C}}$
is a complete intersection of codimension $r$. By {[}MP2{]}, proposition
7.14, (c.f. also section $9$ of loc. cit.) we have 
\[
F^{m}(\mathcal{H}_{Y_{\mathbb{C}}}^{r}(\mathcal{O}_{X_{\mathbb{C}}}))\subset O^{m}(\mathcal{H}_{Y_{\mathbb{C}}}^{r}(\mathcal{O}_{X_{\mathbb{C}}}))=\text{span}_{\mathcal{O}_{X_{\mathbb{C}}}}\{Q_{1}^{-a_{1}}\cdots Q_{r}^{-a_{r}}|\sum a_{i}\leq m+r\}
\]

In {[}MP2{]} the condition $F^{m}(\mathcal{H}_{Y_{\mathbb{C}}}^{r}(\mathcal{O}_{X_{\mathbb{C}}}))=O^{m}(\mathcal{H}_{Y_{\mathbb{C}}}^{r}(\mathcal{O}_{X_{\mathbb{C}}}))$
is discussed at length; and the point of view developed there shows
that the largest $m$ for which there is equality is a subtle measure
of the singularities of $Y_{\mathbb{C}}$. In fact, equality for any
$m$ already implies serious restrictions on the singularities; indeed,
$F^{0}(\mathcal{H}_{Y_{\mathbb{C}}}^{r}(\mathcal{O}_{X_{\mathbb{C}}}))=O^{0}(\mathcal{H}_{Y_{\mathbb{C}}}^{r}(\mathcal{O}_{X_{\mathbb{C}}}))$
is equivalent to $Y_{\mathbb{C}}$ having du Bois singularities (this
is the first case of theorem F of loc. cit.). 

Now, using the methods of this paper, let us show
\begin{cor}
\label{cor:Canonical-Singularities} Suppose $F^{0}(\mathcal{H}_{Y_{\mathbb{C}}}^{r}(\mathcal{O}_{X_{\mathbb{C}}}))=O^{0}(\mathcal{H}_{Y_{\mathbb{C}}}^{r}(\mathcal{O}_{X_{\mathbb{C}}}))$,
i.e., $Q_{1}^{-1}\cdots Q_{r}^{-1}\in F^{0}(\mathcal{H}_{Y_{\mathbb{C}}}^{r}(\mathcal{O}_{X_{\mathbb{C}}}))$.
Then the log-canonical threshold of $Y_{\mathbb{C}}$ is $r$. 
\end{cor}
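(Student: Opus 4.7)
The plan is to deduce the corollary by combining Proposition \ref{prop:Hodge-for-local-coh!} with the standard dictionary between $F$-singularities in characteristic $p$ and singularities in characteristic zero. The upper bound $\mathrm{lct}(X_{\mathbb{C}}, Y_{\mathbb{C}}) \leq r$ holds automatically since $Y_{\mathbb{C}}$ is a complete intersection of codimension $r$; the content of the statement is the reverse inequality $\mathrm{lct}(X_{\mathbb{C}}, Y_{\mathbb{C}}) \geq r$.

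First I would spread out everything over a finite type $\mathbb{Z}$-algebra $R$: the smooth variety $X_R$, the regular sequence $Q_1,\dots,Q_r$, the subscheme $Y_R$, and a resolution of singularities $\varphi: \tilde{X}_R \to X_R$ with $\varphi^{-1}(Y_R)$ a normal crossings divisor. After localizing $R$ we may assume that Proposition \ref{prop:Hodge-for-local-coh!} applies at the level $m=0$ and that the hypothesis descends to $Q_1^{-1}\cdots Q_r^{-1} \in F^0(\mathcal{H}^r_{Y_R}(\mathcal{O}_{X_R}))$. Then for any $R \to W(k)$ with $k$ perfect of characteristic $p$, Proposition \ref{prop:Hodge-for-local-coh!} (applied with $m = 0$) shows that the image of $Q_1^{-1}\cdots Q_r^{-1} \pmod p$ in $\mathcal{H}^r_{Y_k}(\mathcal{O}_{X_k})$ lifts to an element of $\mathcal{M}_Y^{-\infty}$, where $\mathcal{M}_Y$ is the standard coherent $F^{-1}$-gauge of index zero produced by Corollary \ref{cor:Mazur-for-Hodge-1}.

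Next I would exploit the $F^{-1}$-gauge structure. The isomorphism $F^*\mathcal{M}_Y^{-\infty} \tilde{\to} \widehat{\mathcal{M}_Y^\infty}$, combined with the injection $F^*\mathcal{M}_Y^{-\infty} \hookrightarrow \widehat{\mathcal{H}^r_{\mathfrak{Y}}(\mathcal{O}_{\mathfrak{X}})}$ of Lemma \ref{lem:injectivity-for-local-coh}, and Berthelot's description of Frobenius descent in local coordinates (cf.\ Theorem \ref{thm:Berthelot-Frob} and the computation in Example \ref{exa:Integral-j}), identifies the image of $1 \otimes [Q_1^{-1}\cdots Q_r^{-1}]$ with the class of $(Q_1\cdots Q_r)^{-p}$ in the local cohomology. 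Equivalently, under the natural Frobenius action $F:\mathcal{H}^r_{Y_k}(\mathcal{O}_{X_k})\to \mathcal{H}^r_{Y_k}(\mathcal{O}_{X_k})$, the class $[Q_1^{-1}\cdots Q_r^{-1}]$ is sent to a unit multiple of $[(Q_1\cdots Q_r)^{p-1} \cdot Q_1^{-1}\cdots Q_r^{-1}]$ and in particular is not annihilated. This is precisely the defining property of $F$-purity of the pair $(X_k, Y_k)$.

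Finally, since the conclusion holds for a Zariski-dense set of closed points $\mathrm{Spec}(W(k)) \to \mathrm{Spec}(R)$, the mod-$p$ reductions of $(X_R, Y_R)$ are $F$-pure for a dense set of primes $p$. By the Hara--Watanabe--Mustaţǎ--Takagi dictionary, this forces $(X_{\mathbb{C}}, Y_{\mathbb{C}})$ to be log canonical, and for a complete intersection of codimension $r$ this is equivalent to $\mathrm{lct}(X_{\mathbb{C}}, Y_{\mathbb{C}}) = r$. The main obstacle is the second step: one must precisely identify the Frobenius supplied by the $F^{-1}$-gauge formalism with the classical Frobenius on $\mathcal{H}^r_{Y_k}(\mathcal{O}_{X_k})$, and verify that the resulting nonvanishing statement is exactly the $F$-purity hypothesis needed to invoke the lct comparison theorem. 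This should follow by unwinding the definitions of Section \ref{subsec:Frobenius-Descent,--Gauges} in local coordinates, but the bookkeeping is nontrivial.
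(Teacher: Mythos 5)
Your overall strategy---spread out over $R$, reduce mod $p$ for $p\gg 0$, and invoke a characteristic-$p$/characteristic-$0$ singularity dictionary---is the same as the paper's, but the middle step of your argument is vacuous and the actual content of the proof is missing. The assertion that ``under the natural Frobenius on $\mathcal{H}^{r}_{Y_{k}}(\mathcal{O}_{X_{k}})$ the class $[Q_{1}^{-1}\cdots Q_{r}^{-1}]$ maps to (a unit multiple of) $[(Q_{1}\cdots Q_{r})^{-p}]$ and in particular is not annihilated'' holds for \emph{every} complete intersection, with or without the hypothesis $F^{0}=O^{0}$: the Frobenius always sends $[Q^{-I}]$ to $[Q^{-pI}]$ on the \v{C}ech-type presentation of local cohomology, and that class is always nonzero. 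So no information about the singularities of $Y$ has been extracted. Moreover, non-annihilation of a class in local cohomology is an $F$-injectivity-type condition, not the definition of $F$-purity of a pair, and the Hara--Watanabe/Takagi comparison you want to quote requires dense $F$-pure type of the pair $(X_{k},\mathcal{I}_{k}^{c})$ for $c$ arbitrarily close to $r$ --- a quantitative statement your argument never produces.

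What the hypothesis actually yields, via \propref{Hodge-for-local-coh!} with $m=0$ and the integral model $\mathcal{D}^{(0)}_{X_{R}}\cdot(Q_{1}\cdots Q_{r})^{-l}$, is the membership $(Q_{1}\cdots Q_{r})^{-p}\in\mathcal{D}^{(0)}_{X_{k}}\cdot(Q_{1}\cdots Q_{r})^{-l}$, first in $\mathcal{H}^{r}_{Y_{k}}(\mathcal{O}_{X_{k}})$ and then in $\mathcal{O}_{X_{k}}[(Q_{1}\cdots Q_{r})^{-1}]$. The paper then multiplies by $(Q_{1}\cdots Q_{r})^{p}$ (a $\mathcal{D}^{(0)}$-linear automorphism of the localization) to obtain $\mathcal{O}_{X_{k}}=\mathcal{D}^{(0)}_{X_{k}}\cdot(Q_{1}\cdots Q_{r})^{p-l}$, and invokes the identity $\mathcal{D}^{(0)}_{X_{k}}\cdot\mathcal{J}=F^{*}(\mathcal{J}^{[1/p]})$ to conclude $\tau(\mathcal{I}_{k}^{\,r(1-l/p)})=\mathcal{O}_{X_{k}}$. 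Since $r(1-l/p)>c$ for $p\gg0$, the Hara--Yoshida comparison $\mathcal{J}(\mathcal{I}_{R}^{c})\otimes_{R}k=\tau(\mathcal{I}_{k}^{c})$ forces $\mathcal{J}(\mathcal{I}_{\mathbb{C}}^{c})=\mathcal{O}_{X_{\mathbb{C}}}$ for every $c<r$, which is the statement. This quantitative test-ideal computation --- converting the $\mathcal{D}$-module membership into triviality of $\tau(\mathcal{I}^{c})$ with an exponent tending to $r$ as $p\to\infty$ --- is the heart of the proof and is absent from your proposal; without it the reduction-mod-$p$ framework gives no lower bound on the log-canonical threshold.
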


Combined with the above, this gives a new proof of the famous fact
that du Bois singularities are canonical, in the l.c.i. case at least
(c.f. \cite{key-57}, \cite{key-58}). It is also a (very) special
case of {[}MP2{]}, conjecture 9.11; of course, it also follows from
theorem C of {[}MP2{]}, using the results of \cite{key-57}. 

To prove this result, we will recall a few facts from positive characteristic
algebraic geometry, following {[}BMS{]}. We return to a perfect field
$k$ of positive characteristic and $X$ smooth over $k$. Let $\mathcal{I}\subset\mathcal{O}_{X}$
be an ideal sheaf. For each $m>0$ we let $\mathcal{I}^{[1/p^{m}]}$
be the minimal ideal sheaf such that $\mathcal{I}\subset(F^{m})^{*}(\mathcal{I}^{[1/p^{m}]})$
(here we are using the isomorphism $(F^{m})^{*}\mathcal{O}_{X}\tilde{\to}\mathcal{O}_{X}$;
for any ideal sheaf $\mathcal{J}$ we have $(F^{m})^{*}\mathcal{J}=\mathcal{J}^{[p^{m}]}$,
the ideal locally generated by $p^{m}$th powers of elements of $\mathcal{J}$).
Then, for each $i>0$ one has inclusions 
\[
(\mathcal{I}^{i})^{[1/p^{m}]}\subset(\mathcal{I}^{i'})^{[1/p^{m'}]}
\]
whenever $i/p^{m}\leq i'/p^{m'}$ and $m\leq m'$ (this is {[}BMS{]},
lemma 2.8). These constructions are connected to $\mathcal{D}$-module
theory as follows: for any ideal sheaf $\mathcal{I}$, we have $\mathcal{D}_{X}^{(m)}\cdot\mathcal{I}=(F^{m+1})^{*}(\mathcal{I}^{[1/p^{m+1}]})$
(c.f. \cite{key-64}, remark 2.6, and \cite{key-62}, lemma 3.1). 

Now, fix a number $c\in\mathbb{R}^{+}$. If $x\to\lceil x\rceil$
denotes the ceiling function, then the previous discussion implies
inclusions 
\[
(\mathcal{I}^{\lceil cp^{m}\rceil})^{[1/p^{m}]}\subset(\mathcal{I}^{\lceil cp^{m+1}\rceil})^{[1/p^{m+1}]}
\]
for all $m$. Thus we have a chain of ideals, which necessarily stabilizes,
and so we can define 
\[
\tau(\mathcal{I}^{c})=(\mathcal{I}^{\lceil cp^{m}\rceil})^{[1/p^{m}]}
\]
for all $m>>0$. These ideals are called generalized test ideals.
There is a deep connection to the theory of multiplier ideals in complex
algebraic geometry, which is due to Hara and Yoshida ({[}HY{]}, theorems
3.4 and 6.8). Suppose we have a complex variety $X_{\mathbb{C}}$,
and flat $R$-model $X_{R}$, and an ideal sheaf $\mathcal{I}_{R}$
which is also flat over $R$. Fix a rational number $c$; we may then
choose a flat model $\mathcal{J}(\mathcal{I}_{R}^{c})$ for the multiplier
ideal $\mathcal{J}(\mathcal{I}_{\mathbb{C}}^{c})$. Then for all perfect
fields $k$ of sufficiently large positive characteristic, we have
\[
\mathcal{J}(\mathcal{I}_{R}^{c})\otimes_{R}k=\tau(\mathcal{I}_{k}^{c})
\]

Finally, we note that since $\mathcal{H}_{Y_{\mathbb{C}}}^{r}(\mathcal{O}_{X_{\mathbb{C}}})$
is a coherent $\mathcal{D}_{X_{\mathbb{C}}}$-module, there exists
some $l>0$ such that $\mathcal{H}_{Y_{\mathbb{C}}}^{r}(\mathcal{O}_{X_{\mathbb{C}}})=\mathcal{D}_{X_{\mathbb{C}}}\cdot(Q_{1}\cdot Q_{r})^{-l}$
. Therefore we may obtain an $R$-model by taking the sheaf 
\[
\mathcal{D}_{X_{R}}^{(0)}\cdot(Q_{1}\cdots Q_{r})^{-l}\subset\mathcal{H}_{Y_{R}}^{r}(\mathcal{O}_{X_{R}})
\]
After base change to $F=\text{Frac}(R)$ this agrees with ${\displaystyle \mathcal{H}^{r}(\int_{\varphi}j'_{\star}\mathcal{O}_{U_{R}})}$;
therefore the two models agree after possibly localizing $R$. In
particular, $\mathcal{M}_{Y}^{-\infty}$ is the $p$-adic completion
of $\mathcal{D}_{X_{W(k)}}^{(0)}\cdot(Q_{1}\cdots Q_{r})^{-l}$. 

Now let us turn to the 
\begin{proof}
(of \corref{Canonical-Singularities}) Let $\mathcal{I}_{\mathbb{C}}=(Q_{1},\dots,Q_{r})$
and let us fix a rational number $0<c<r$. Suppose that the ideal
$\mathcal{J}(\mathcal{I}_{\mathbb{C}}^{c})\subsetneq\mathcal{O}_{X_{\mathbb{C}}}$.
We spread everything out over $R$, and reduce to $k$ of large positive
characteristic. Then the above implies $\tau(\mathcal{I}_{k}^{c})\subsetneq\mathcal{O}_{X_{k}}$. 

Now, recall that we have fixed an $R$-model $\mathcal{D}_{X_{R}}^{(0)}\cdot(Q_{1}\cdots Q_{r})^{-l}$
of $\mathcal{H}_{Y_{\mathbb{C}}}^{r}(\mathcal{O}_{X_{\mathbb{C}}})$.
Then the description of the Hodge filtration in \propref{Hodge-for-local-coh!}
implies that $F^{*}(F_{0}(\mathcal{D}_{X_{R}}^{(0)}\cdot(Q_{1}\cdots Q_{r})^{-l}))\otimes_{R}k)$
is the image of $\mathcal{D}_{X_{R}}^{(0)}\cdot(Q_{1}\cdots Q_{r})^{-l}\otimes_{R}k$
in $\mathcal{H}_{Y_{k}}^{r}(\mathcal{O}_{X_{k}})$; in other words,
the $\mathcal{D}_{X_{k}}^{(0)}$-submodule generated by $(Q_{1}\cdots Q_{r})^{-l}$.
Thus the assumption $F_{0}(\mathcal{H}_{Y_{\mathbb{C}}}^{r}(\mathcal{O}_{X_{\mathbb{C}}}))=O_{0}(\mathcal{H}_{Y_{\mathbb{C}}}^{r}(\mathcal{O}_{X_{\mathbb{C}}}))$
is equivalent to the statement 
\[
(Q_{1}\cdots Q_{r})^{-p}\in\mathcal{D}_{X_{k}}^{(0)}\cdot(Q_{1}\cdots Q_{r})^{-l}
\]
inside $\mathcal{H}_{Y_{k}}^{r}(\mathcal{O}_{X_{k}})$. Since $\mathcal{H}_{Y_{k}}^{r}(\mathcal{O}_{X_{k}})$
is the quotient of $\mathcal{O}_{X_{k}}[(Q_{1}\cdots Q_{r})^{-1}]$
by the submodule generated by $\{Q_{1}^{-1}\cdots\widehat{Q_{i}^{-1}}\cdots Q_{r}^{-1}\}_{i=1}^{r}$,
which is contained in the $\mathcal{D}_{X_{k}}^{(0)}$-submodule generated
by $(Q_{1}\cdots Q_{r})^{-l}$, we see that the assumption actually
implies 
\[
(Q_{1}\cdots Q_{r})^{-p}\in\mathcal{D}_{X_{k}}^{(0)}\cdot(Q_{1}\cdots Q_{r})^{-l}
\]
inside $\mathcal{O}_{X_{k}}[(Q_{1}\cdots Q_{r})^{-1}]$. 

To use this, note that the map $(Q_{1}\cdots Q_{r})^{p}\cdot$ is
a $\mathcal{D}_{X_{k}}^{(0)}$-linear isomorphism on $\mathcal{O}_{X_{k}}[(Q_{1}\cdots Q_{r})^{-1}]$.
Thus we see 
\[
\mathcal{O}_{X_{k}}=\mathcal{D}_{X_{k}}^{(0)}\cdot(Q_{1}\cdots Q_{r})^{p-l}\subset F^{*}(\mathcal{I}^{r(p-l)})^{[1/p]}
\]
so that $(\mathcal{I}^{r(p-l)})^{[1/p]}=\mathcal{O}_{X_{k}}$ which
implies $\tau(\mathcal{I}^{r(1-l/p)})=\mathcal{O}_{X_{k}}$. Taking
$p$ large enough so that $r(1-l/p)>c$, we deduce $\tau(\mathcal{I}_{k}^{c})=\mathcal{O}_{X_{k}}$
(the test ideals form a decreasing filtration, by {[}BMS{]}, proposition
2.11); contradiction. Therefore in fact $\mathcal{J}(\mathcal{I}_{\mathbb{C}}^{c})=\mathcal{O}_{X_{\mathbb{C}}}$
for all $c\in(0,r)$ which is the statement.
\end{proof}
As a corollary of this argument, we have: 
\begin{cor}
Suppose $r=1$ in the previous corollary (so that $\mathcal{I}=(Q)$).
Then, under the assumption that $F^{0}(\mathcal{H}_{Y_{\mathbb{C}}}^{r}(\mathcal{O}_{X_{\mathbb{C}}}))=O^{0}(\mathcal{H}_{Y_{\mathbb{C}}}^{r}(\mathcal{O}_{X_{\mathbb{C}}}))$,
we have that, for all $p>>0$, $\tau(Q^{(1-l/p)})=\mathcal{O}_{X_{k}}$. 
\end{cor}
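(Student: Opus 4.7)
The plan is to observe that this corollary is essentially a byproduct extracted from the proof of \corref{Canonical-Singularities} specialized to $r=1$: the statement about $\tau(Q^{(1-l/p)})$ appears as an intermediate conclusion in positive characteristic, before passing back to multiplier ideals via Hara--Yoshida. So I would simply run the same argument and stop earlier.

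Concretely, I would first reduce to the positive characteristic setup: fix an $R$-model as in the discussion preceding \corref{Canonical-Singularities}, so that $\mathcal{M}_{Y}^{-\infty}$ is (for $p \gg 0$) the $p$-adic completion of $\mathcal{D}_{X_{W(k)}}^{(0)} \cdot Q^{-l}$, where $l$ is chosen so that $\mathcal{D}_{X_{\mathbb{C}}} \cdot Q^{-l} = \mathcal{H}_{Y_{\mathbb{C}}}^{1}(\mathcal{O}_{X_{\mathbb{C}}})$. By \propref{Hodge-for-local-coh!} at level $m=0$, the image of $F^{*}(F^{0}(\mathcal{D}_{X_{R}}^{(0)} \cdot Q^{-l}) \otimes_{R} k)$ in $\mathcal{H}_{Y_{k}}^{1}(\mathcal{O}_{X_{k}})$ is exactly the $\mathcal{D}_{X_{k}}^{(0)}$-submodule generated by $Q^{-l}$. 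The hypothesis $F^{0} = O^{0}$ therefore translates into the statement $Q^{-p} \in \mathcal{D}_{X_{k}}^{(0)} \cdot Q^{-l}$ inside $\mathcal{H}_{Y_{k}}^{1}(\mathcal{O}_{X_{k}})$ for $p \gg 0$.

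Next, as in the proof of the previous corollary, I would lift this containment from local cohomology to the localization $\mathcal{O}_{X_{k}}[Q^{-1}]$. The key point, available precisely because $r=1$, is that there is nothing to quotient out: $\mathcal{H}_{Y_{k}}^{1}(\mathcal{O}_{X_{k}})$ is simply the quotient $\mathcal{O}_{X_{k}}[Q^{-1}]/\mathcal{O}_{X_{k}}$, and $\mathcal{O}_{X_{k}} \subset \mathcal{D}_{X_{k}}^{(0)} \cdot Q^{-l}$ already. So the containment in $\mathcal{H}_{Y_{k}}^{1}(\mathcal{O}_{X_{k}})$ upgrades to $Q^{-p} \in \mathcal{D}_{X_{k}}^{(0)} \cdot Q^{-l}$ in $\mathcal{O}_{X_{k}}[Q^{-1}]$. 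Multiplying by the $\mathcal{D}_{X_{k}}^{(0)}$-linear unit $Q^{p}\cdot$ on $\mathcal{O}_{X_{k}}[Q^{-1}]$ then gives $\mathcal{O}_{X_{k}} = \mathcal{D}_{X_{k}}^{(0)} \cdot Q^{p-l}$.

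Finally, I would invoke the identity $\mathcal{D}_{X}^{(0)} \cdot \mathcal{J} = F^{*}(\mathcal{J}^{[1/p]})$ (recalled in the excerpt from \cite{key-64} and \cite{key-62}) with $\mathcal{J} = (Q^{p-l})$, giving $\mathcal{O}_{X_{k}} \subset F^{*}((Q^{p-l})^{[1/p]})$, hence $((Q)^{p-l})^{[1/p]} = \mathcal{O}_{X_{k}}$, which by definition is $\tau(Q^{(p-l)/p}) = \tau(Q^{(1-l/p)}) = \mathcal{O}_{X_{k}}$. There is no real obstacle here; the only care needed is that $p$ be large enough for both \propref{Hodge-for-local-coh!} to describe the $m=0$ piece of the Hodge filtration and for the chosen $R$-model of $\mathcal{H}_{Y_{R}}^{1}(\mathcal{O}_{X_{R}})$ to agree with $\mathcal{D}_{X_{R}}^{(0)} \cdot Q^{-l}$ after base change.
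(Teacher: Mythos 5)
Your proposal is correct and is essentially the paper's own argument: the paper states this corollary as an immediate byproduct of the proof of the preceding corollary on canonical singularities, and for $r=1$ that proof specializes exactly to the chain you describe (translate $F^{0}=O^{0}$ via \propref{Hodge-for-local-coh!} into $Q^{-p}\in\mathcal{D}_{X_{k}}^{(0)}\cdot Q^{-l}$, lift to $\mathcal{O}_{X_{k}}[Q^{-1}]$, multiply by $Q^{p}$, and apply $\mathcal{D}_{X}^{(0)}\cdot\mathcal{J}=F^{*}(\mathcal{J}^{[1/p]})$). Your explicit justification of the lift from $\mathcal{H}_{Y_{k}}^{1}$ to the localization using $\mathcal{O}_{X_{k}}\subset\mathcal{D}_{X_{k}}^{(0)}\cdot Q^{-l}$ is exactly the $r=1$ instance of the paper's remark about the submodule generated by the $Q_{1}^{-1}\cdots\widehat{Q_{i}^{-1}}\cdots Q_{r}^{-1}$.
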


This says that, after reducing mod $p$, for $p>>0$ the $F$-pure
threshold of $Q$ is $\geq1-l/p$. Recall that $l$ is any integer
for which $Q^{-l}$ generates the $\mathcal{D}_{X_{\mathbb{C}}}$-module
$j_{*}(\mathcal{O}_{U_{\mathbb{C}}})$; thus we may take $l$ to be
the least natural number such that $b_{Q}(-l-t)\neq0$ for all $t\in\mathbb{N}$
(here $b_{Q}$ is the $b$-function for $Q$). In this language, this
result was recently reproved (and generalized) in \cite{key-65},
by completely different techniques.

To finish off this section, we'll spell out how the description of
the Hodge filtration in \propref{Hodge-for-local-coh!} relates to
the condition $F_{i}(\mathcal{H}_{Y_{\mathbb{C}}}^{r}(\mathcal{O}_{X_{\mathbb{C}}}))=O_{i}(\mathcal{H}_{Y_{\mathbb{C}}}^{r}(\mathcal{O}_{X_{\mathbb{C}}}))$
when $Y_{\mathbb{C}}$ is a hypersurface inside $X_{\mathbb{C}}$;
$Y_{\mathbb{C}}=Z(Q)$. In this case, we get an intriguing description
in terms of the behavior of $\mathcal{H}_{Y}^{1}(\mathcal{O}_{X})$
in mixed characteristic: 
\begin{cor}
We have $F_{i}(\mathcal{H}_{Y}(\mathcal{O}_{X}))=O_{i}(\mathcal{H}_{Y}^{1}(\mathcal{O}_{X}))$
iff $p^{i}Q^{-(i+1)p}\in\mathcal{D}_{X_{W_{i+1}(k)}}^{(0)}\cdot Q^{-l}$
inside $\mathcal{H}_{Y_{W_{i+1}(k)}}^{1}(\mathcal{O}_{X_{W_{i+1}(k)}})$
for $p>>0$. 
\end{cor}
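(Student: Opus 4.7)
The plan is to reduce the asserted equivalence to Proposition~\propref{Hodge-for-local-coh!}, applied with $m=i$. I begin by observing that the condition $F_i(\mathcal{H}_Y^1(\mathcal{O}_X)) = O_i(\mathcal{H}_Y^1(\mathcal{O}_X))$ is equivalent to the single membership $Q^{-(i+1)} \in F_i(\mathcal{H}_Y^1(\mathcal{O}_X))$: the inclusion $F_i \subseteq O_i$ holds automatically, and since $F_i$ is an $\mathcal{O}_X$-submodule, $Q^{-(i+1)} \in F_i$ forces $Q^{-j} = Q^{i+1-j}\cdot Q^{-(i+1)} \in F_i$ for all $j \leq i+1$, which gives $O_i \subseteq F_i$.

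Next I spread out the setup over $R$ as in the paragraphs preceding Proposition~\propref{Hodge-for-local-coh!}, choosing $R$ so that $\mathcal{D}_{X_R}^{(0)}\cdot Q^{-l}$ is an $R$-flat model of $\mathcal{H}_Y^1(\mathcal{O}_X)$ and so that, up to level $i$, the Hodge filtration extends to $R$-flat submodules with $R$-flat graded pieces. Under these hypotheses (possibly localizing $R$), the membership $Q^{-(i+1)} \in F_i$ over $\mathbb{C}$ is equivalent to $Q^{-(i+1)} \in F_i \otimes_R k$ inside $\mathcal{H}_{Y_k}^1(\mathcal{O}_{X_k})$ for every $R \to W(k)$ with $k$ perfect of sufficiently large positive characteristic.

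Now I apply Proposition~\propref{Hodge-for-local-coh!} with $j=i$. The Frobenius acts on $\widehat{\mathcal{H}_{\mathfrak{Y}}^1(\mathcal{O}_{\mathfrak{X}})}$ and its mod-$p$ reduction by $Q^{-a} \mapsto Q^{-ap}$, so under the identification $F^*\mathcal{H}_{Y_k}^1 \cong \mathcal{H}_{Y_k}^1$ coming from the $F^{-1}$-gauge structure on $\mathcal{M}_Y$, the element $F^*(Q^{-(i+1)})$ becomes $Q^{-(i+1)p}$. Hence $Q^{-(i+1)} \in F_i \otimes_R k$ is equivalent to $Q^{-(i+1)p} \in F^*(F_i \otimes_R k)$; and the proposition identifies this last set with the image in $\mathcal{H}_{Y_k}^1$ of $\{\,g \in \widehat{\mathcal{H}_{\mathfrak{Y}}^1(\mathcal{O}_{\mathfrak{X}})} : p^i g \in \mathcal{M}_Y^{-\infty}\,\}$. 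Thus the condition becomes the existence of some $g \in \widehat{\mathcal{H}_{\mathfrak{Y}}^1(\mathcal{O}_{\mathfrak{X}})}$ with $p^i g \in \mathcal{M}_Y^{-\infty}$ and $g \equiv Q^{-(i+1)p} \pmod{p\,\widehat{\mathcal{H}_{\mathfrak{Y}}^1(\mathcal{O}_{\mathfrak{X}})}}$.

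Finally I translate this existence into the asserted congruence mod $p^{i+1}$; this step is the main (though minor) technical point. For the forward implication, writing $g = Q^{-(i+1)p} + p h$ gives $p^i Q^{-(i+1)p} = p^i g - p^{i+1} h \in \mathcal{M}_Y^{-\infty} + p^{i+1}\,\widehat{\mathcal{H}_{\mathfrak{Y}}^1(\mathcal{O}_{\mathfrak{X}})}$; since $\mathcal{M}_Y^{-\infty}$ is the $p$-adic completion of the coherent module $\mathcal{D}_{X_{W(k)}}^{(0)}\cdot Q^{-l}$, its reduction mod $p^{i+1}$ is exactly $\mathcal{D}_{X_{W_{i+1}(k)}}^{(0)}\cdot Q^{-l}$ inside $\mathcal{H}_{Y_{W_{i+1}(k)}}^1(\mathcal{O}_{X_{W_{i+1}(k)}}) \cong \widehat{\mathcal{H}_{\mathfrak{Y}}^1(\mathcal{O}_{\mathfrak{X}})}/p^{i+1}$, and the arithmetic condition follows. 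The converse is a lifting trick: given $m \in \mathcal{D}_{X_{W(k)}}^{(0)}\cdot Q^{-l}$ and $h \in \widehat{\mathcal{H}_{\mathfrak{Y}}^1(\mathcal{O}_{\mathfrak{X}})}$ with $p^i Q^{-(i+1)p} = m + p^{i+1} h$, one sets $g := Q^{-(i+1)p} - p h$; then $p^i g = m \in \mathcal{M}_Y^{-\infty}$ and $g \equiv Q^{-(i+1)p} \pmod p$, as needed. Both directions use the injection $\mathcal{M}_Y^{-\infty} \hookrightarrow \widehat{\mathcal{H}_{\mathfrak{Y}}^1(\mathcal{O}_{\mathfrak{X}})}$ from Lemma~\lemref{injectivity-for-local-coh} to interpret the equalities inside a common ambient module.
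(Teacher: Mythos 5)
Your proposal is correct and follows essentially the same route as the paper: reduce $F_{i}=O_{i}$ to the single membership of $Q^{-(i+1)}$, spread out over $R$, apply \propref{Hodge-for-local-coh!} with $m=j=i$ to get the existence of $g$ with $p^{i}g\in\mathcal{M}_{Y}^{-\infty}$ and $g\equiv Q^{-(i+1)p}\pmod{p}$, and then multiply by $p^{i}$ to convert this into the congruence modulo $p^{i+1}$. Your write-up is in fact slightly more careful than the paper's at the final step, spelling out both directions of the lifting argument and the identification of $\mathcal{M}_{Y}^{-\infty}/p^{i+1}$ with $\mathcal{D}_{X_{W_{i+1}(k)}}^{(0)}\cdot Q^{-l}$.
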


\begin{proof}
Applying the condition of \propref{Hodge-for-local-coh!}, we see
that $F_{i}(\mathcal{H}_{Y}(\mathcal{O}_{X}))=O_{i}(\mathcal{H}_{Y}^{1}(\mathcal{O}_{X}))$
iff, for all $p>>0$, there exists some $g\in\widehat{\mathcal{H}_{\mathfrak{Y}}^{1}(\mathcal{O}_{\mathfrak{X}})}$,
with $p^{i}g\in\widehat{\mathcal{D}}_{\mathfrak{X}}^{(0)}\cdot Q^{-l}$
whose image, in $\mathcal{H}_{Y_{k}}^{1}(\mathcal{O}_{X_{k}})$ is
$Q^{-(i+1)p}$. This holds iff there is some $g_{1}\in\widehat{\mathcal{H}_{\mathfrak{Y}}^{1}(\mathcal{O}_{\mathfrak{X}})}$
so that 
\[
Q^{-(i+1)p}=g+pg_{1}
\]
inside $\widehat{\mathcal{H}_{\mathfrak{Y}}^{1}(\mathcal{O}_{\mathfrak{X}})}$.
But this is equivalent to 
\[
p^{i}Q^{-(i+1)p}=p^{i}g+p^{i+1}g_{1}=\Phi\cdot Q^{-l}+p^{i+1}g_{1}
\]
for some $\Phi\in\widehat{\mathcal{D}}_{\mathfrak{X}}^{(0)}$. This,
in turn, is a restatement of the corollary.
\end{proof}

\section{\label{sec:Appendix:-an-Inectivity}Appendix: an Injectivity Result}

In this appendix we give a proof of the following technical result
used in \lemref{Hodge-filt-on-log}:
\begin{lem}
The natural map $\text{(\ensuremath{{\displaystyle j_{\star}\mathcal{O}_{\mathfrak{U}}}}})^{-\infty}|_{\mathfrak{V}}\to\widehat{(\mathcal{O}_{\mathfrak{V}}[x_{1}^{-1}\cdots x_{j}^{-1}])}$
(where $\widehat{}$ denotes $p$-adic completion) is injective. 
\end{lem}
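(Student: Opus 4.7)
The plan is to translate the statement into an arithmetic claim about inverse limits and verify it component by component. Working on the affine $\mathfrak{V}=\mathrm{Specf}(\mathcal{A})$, set $M:=\Gamma(\mathfrak{V},(j_{\star}\mathcal{O}_{\mathfrak{U}})^{\text{fin}})\subseteq N:=\mathcal{A}[x_{1}^{-1}\cdots x_{j}^{-1}]$; both are $p$-torsion-free $\mathcal{A}$-modules. The identity $\partial^{I}(x_{1}^{-1}\cdots x_{j}^{-1})=(-1)^{|I|}I!\cdot x_{1}^{-(i_{1}+1)}\cdots x_{j}^{-(i_{j}+1)}$, together with the vanishing of $\partial_{s}$ on the generator for $s>j$, identifies $M$ as the $\mathcal{A}$-span of the elements $I!\cdot x_{1}^{-(i_{1}+1)}\cdots x_{j}^{-(i_{j}+1)}$ for multi-indices $I=(i_{1},\ldots,i_{j})\in\mathbb{Z}_{\geq 0}^{j}$. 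The kernel of the map of $p$-adic completions $\widehat{M}\to\widehat{N}$ equals $\varprojlim_{n}(M\cap p^{n}N)/p^{n}M$ (from the inverse system of snake-lemma kernels attached to $0\to M\to N\to N/M\to 0$), so it suffices to show this inverse limit vanishes.

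Next, I would pass to stalks along the divisor, where $\mathcal{O}_{\mathfrak{V}}$ becomes a complete regular local ring in which $x_{1},\ldots,x_{j}$ extends to a regular system of parameters. In this setting $\mathcal{A}$ decomposes as $\mathcal{A}=\bigoplus_{K\in\mathbb{Z}_{\geq 0}^{j}}x^{K}\mathcal{A}_{0}$ of $\mathcal{A}_{0}$-modules where $\mathcal{A}_{0}:=\mathcal{A}/(x_{1},\ldots,x_{j})$, and correspondingly $N=\bigoplus_{K\in\mathbb{Z}^{j}}x^{K}\mathcal{A}_{0}$. A direct calculation identifies $M$ inside this decomposition: its $x^{K}$-component equals the full $\mathcal{A}_{0}$ whenever $K$ has at least one non-negative entry, while for $K=-L$ with $L=(l_{1},\ldots,l_{j})\in\mathbb{Z}_{\geq 1}^{j}$ the $x^{K}$-component of $M$ is $(l_{1}-1)!\cdots(l_{j}-1)!\cdot\mathcal{A}_{0}$ (contributing to a deep pole $x^{-L}$ requires $I$ with $i_{s}\geq l_{s}-1$ for all $s$, and the minimal coefficient $(l_{1}-1)!\cdots(l_{j}-1)!$ divides all such $I!$). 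Consequently only the ``deep pole'' components with all $k_{s}<0$ can contribute to the kernel, and the inverse system decouples into independent sub-systems indexed by $L\in\mathbb{Z}_{\geq 1}^{j}$.

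The heart of the argument is then a uniform arithmetic observation in each component. For fixed $L$, a compatible element provides a sequence $(g_{n,L})_{n\geq 1}$ with $g_{n,L}\in(l_{1}-1)!\cdots(l_{j}-1)!\cdot\mathcal{A}_{0}$, $g_{n,L}\in p^{n}\mathcal{A}_{0}$, and $g_{n+1,L}-g_{n,L}\in p^{n}(l_{1}-1)!\cdots(l_{j}-1)!\cdot\mathcal{A}_{0}$. Setting $a_{L}:=v_{p}((l_{1}-1)!\cdots(l_{j}-1)!)$ and writing $g_{n,L}=(l_{1}-1)!\cdots(l_{j}-1)!\cdot h_{n,L}$ with $h_{n,L}\in\mathcal{A}_{0}$ (possible by $p$-torsion-freeness), the conditions become $h_{n,L}\in p^{\max(n-a_{L},0)}\mathcal{A}_{0}$ and $h_{n+1,L}\equiv h_{n,L}\pmod{p^{n}}$. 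Iterating the congruence yields $h_{n,L}\equiv h_{N,L}\pmod{p^{n}}$ for every $N\geq n$; choosing $N\geq n+a_{L}$ gives $h_{N,L}\in p^{N-a_{L}}\mathcal{A}_{0}\subseteq p^{n}\mathcal{A}_{0}$, and hence $h_{n,L}\in p^{n}\mathcal{A}_{0}$ for every $n$. Translating back, $g_{n,L}\in p^{n}M|_{-L}$ in each component, and injectivity follows.

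The hard part will be rigorously justifying the decomposition of $\mathcal{A}$ used in the second step: globally $\mathcal{A}$ need not be a restricted power series ring over $\mathcal{A}_{0}$, so the direct sum is genuinely a local phenomenon. I would address this by reducing the sheaf-level injectivity to a stalkwise statement and invoking the good behaviour of $p$-adic completion on the noetherian formal scheme $\mathfrak{V}$ (using the completion and cohomological-completeness machinery developed earlier in the paper), so that the decomposition can be carried out in the stalks where $\mathcal{O}_{\mathfrak{V}}$ does have the requisite power series structure.
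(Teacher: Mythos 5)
Your reduction of the statement to the vanishing of $\varprojlim_n (M\cap p^nN)/p^nM$ is correct and matches the paper's starting point, and your core arithmetic observation is the right one: in each ``monomial direction'' the lattice $M$ differs from $N$ by a \emph{bounded} power of $p$ (a factorial), so a compatible system divisible by arbitrarily high powers of $p$ must already lie in $p^nM$. This is genuinely the reason the lemma is true. However, the step on which everything rests --- the decomposition $\mathcal{A}=\bigoplus_{K\in\mathbb{Z}_{\geq0}^{j}}x^{K}\mathcal{A}_{0}$ and hence $N=\bigoplus_{K\in\mathbb{Z}^{j}}x^{K}\mathcal{A}_{0}$ --- is false, and not only globally. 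Even for the completed local ring at a closed point, which is $W(k')[[x_{1},\dots,x_{n}]]$, the relevant decomposition is a \emph{product} (completed direct sum), not a direct sum; so $M$ is not the direct sum of its components, and knowing that every monomial projection $\pi_{K}(g_{n})$ lies in $p^{n}c_{K}\mathcal{A}_{0}$ does not immediately give $g_{n}\in p^{n}M$. One can in fact prove, by grouping the (possibly infinitely many) terms of a power series according to the pole profile $I=\max(-K-\mathbf{1},0)$, that $M=\{x\in N\mid\pi_{K}(x)\in c_{K}\mathcal{A}_{0}\ \forall K\}$ and likewise for $p^{n}M$; but this is the real content of the local argument and is absent from the proposal. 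Your proposed patch --- ``reduce to a stalkwise statement'' --- is also not justified: $M$ and $N$ are not coherent, their $p$-adic completions do not commute with localization or with base change to $\widehat{\mathcal{O}}_{\mathfrak{V},x}$, and the kernel $\varprojlim_{n}\mathcal{C}[p^{n}]$ is an inverse limit that does not commute with the flat base change one would need. There is also a computational slip: the $x^{K}$-component of $M$ for $K$ with a non-negative entry is not all of $\mathcal{A}_{0}$ but $\prod_{s:k_{s}<0}(|k_{s}|-1)!\cdot\mathcal{A}_{0}$ (e.g.\ the component at $(0,-3)$ is $2!\,\mathcal{A}_{0}$), so the claimed decoupling onto ``deep pole'' components alone is wrong --- though the same arithmetic would handle every component, so this by itself is repairable.

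For contrast, the paper avoids monomial decompositions entirely. It uses that the Hodge filtrations $F^{l}$ on both modules are by \emph{coherent} (hence already $p$-adically complete) $\mathcal{O}_{\mathfrak{V}}$-modules and that the inclusion is strict for these filtrations; passing to Rees modules, the $p$-adic completion of $\bigoplus_{l}F^{l}$ injects into $\prod_{l}F^{l}$, which is untouched by completion, giving injectivity at the Rees level; one then descends to the original modules by showing that $f-1$ acts injectively on $R^{1}\varprojlim\mathcal{R}(\mathcal{C})[p^{n}]$ via a grading argument. If you want to keep your more explicit route, you should either carry out the local argument with projections $\pi_{K}$ on $W(k')[[x_{1},\dots,x_{n}]]$ honestly (including the characterization of $M$ and $p^{n}M$ by componentwise conditions) and then supply a genuine globalization, or replace the monomial decomposition by the filtration-by-coherent-pieces device, which is what makes the problem tractable without any local structure theory.
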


Recall that $j_{\star}(\mathcal{O}_{\mathfrak{U}})$ was defined as
the $\widehat{\mathcal{D}}_{\mathfrak{X}}^{(0)}$-module locally generated
by $x_{1}^{-1}\cdots x_{j}^{-1}$, where $x_{1}\cdots x_{j}$ is a
local equation for the divisor $\mathfrak{D}\subset\mathfrak{X}$. 
\begin{proof}
Let $\mathfrak{V}$ be an open affine. On $\mathfrak{V}$, the map
in question is the $p$-adic completion of the inclusion ${\displaystyle {\displaystyle (j_{\star}\mathcal{O}_{\mathfrak{V}})}}^{\text{fin}}\to\mathcal{O}_{\mathfrak{V}}[x_{1}^{-1}\cdots x_{j}^{-1}]$,
where ${\displaystyle {\displaystyle (j_{\star}\mathcal{O}_{\mathfrak{V}})}}^{\text{fin}}$
is the $D_{\mathfrak{V}}^{(0)}$-submodule of $j_{*}(\mathcal{O}_{\mathfrak{V}})$
generated by $x_{1}^{-1}\cdots x_{j}^{-1}$. This is a map of $p$-torsion-free
sheaves, let $\mathcal{C}$ denote its cokernel. Then the kernel of
the completion is given by 
\[
\lim_{\leftarrow}\mathcal{C}[p^{n}]
\]
where $\mathcal{C}[p^{n}]=\{m\in\mathcal{C}|p^{n}m=0\}$, and the
maps in the inverse system are multiplication by $p$. 

Now, both ${\displaystyle {\displaystyle (j_{\star}\mathcal{O}_{\mathfrak{V}})}}^{\text{fin}}$
and $\mathcal{O}_{\mathfrak{V}}[x_{1}^{-1}\cdots x_{j}^{-1}]$ are
filtered by the Hodge filtration; on ${\displaystyle {\displaystyle (j_{\star}\mathcal{O}_{\mathfrak{V}})}}^{\text{fin}}$
it is given by $F^{l}(\mathcal{D}_{\mathfrak{V}}^{(0)})\cdot(x_{1}^{-1}\cdots x_{r}^{-1})$,
which is precisely the span over $\mathcal{O}_{\mathfrak{V}}$ of
terms of the form $I!\cdot x_{1}^{-i_{1}-1}\cdots x_{j}^{-i_{j}-1}=\partial^{I}x_{1}^{-1}\cdots x_{j}^{-1}$
for $|I|\leq l$, here we have denoted $I!=i_{1}!\cdots i_{j}!$.
The Hodge filtration $F^{l}(\mathcal{O}_{\mathfrak{V}}[x_{1}^{-1}\cdots x_{j}^{-1}])$
is defined to be the span over $\mathcal{O}_{\mathfrak{V}}$ of terms
of the form $x_{1}^{-i_{1}-1}\cdots x_{j}^{-i_{j}-1}$ for $|I|\leq l$.
From this description it follows that, in both cases, all of the terms
$F^{i}$ and $F^{i}/F^{i-1}$ are $p$-torsion-free; and the morphism
is strict with respect the the filtrations; i.e., 
\[
F^{i}(\mathcal{O}_{\mathfrak{V}}[x_{1}^{-1}\cdots x_{j}^{-1}])\cap{\displaystyle {\displaystyle (j_{\star}\mathcal{O}_{\mathfrak{V}})}}^{\text{fin}}=F^{i}({\displaystyle {\displaystyle (j_{\star}\mathcal{O}_{\mathfrak{V}})}}^{\text{fin}})
\]

Now we consider the inclusion $\mathcal{R}({\displaystyle {\displaystyle (j_{\star}\mathcal{O}_{\mathfrak{V}})}}^{\text{fin}})\to\mathcal{R}(\mathcal{O}_{\mathfrak{V}}[x_{1}^{-1}\cdots x_{j}^{-1}])$
(where $\mathcal{R}$ stands for the Rees functor with respect to
the Hodge filtrations on both sides). The strictness of the map implies
\[
\text{coker}(\mathcal{R}({\displaystyle {\displaystyle (j_{\star}\mathcal{O}_{\mathfrak{V}})}}^{\text{fin}})\to\mathcal{R}(\mathcal{O}_{\mathfrak{V}}[x_{1}^{-1}\cdots x_{j}^{-1}]))=\mathcal{R}(\mathcal{C})
\]
 We now shall show that the $p$-adic completion\footnote{In this appendix only, we use the completion of the \emph{entire}
Rees module, NOT the graded completion of the rest of the paper; similarly,
the product is the product in the category of all modules, not the
category of graded modules} of this map is injective. The natural map 
\[
\mathcal{R}({\displaystyle {\displaystyle (j_{\star}\mathcal{O}_{\mathfrak{V}})}}^{\text{fin}})=\bigoplus_{i=0}^{\infty}F^{i}({\displaystyle (j_{\star}\mathcal{O}_{\mathfrak{V}})^{\text{fin}}}\to\prod_{i=0}^{\infty}F^{i}({\displaystyle (j_{\star}\mathcal{O}_{\mathfrak{V}})^{\text{fin}}}
\]
is injective, and the cokernel is easily seen to be $p$-torsion-free;
therefore we obtain an injection 
\[
\widehat{\mathcal{R}({\displaystyle {\displaystyle (j_{\star}\mathcal{O}_{\mathfrak{V}})}}^{\text{fin}})}\to\widehat{\prod_{i=0}^{\infty}F^{i}({\displaystyle (j_{\star}\mathcal{O}_{\mathfrak{V}})^{\text{fin}}}}
\]
and the analogous statement holds for $\mathcal{R}(\mathcal{O}_{\mathfrak{V}}[x_{1}^{-1}\cdots x_{j}^{-1}])$.
Further, one has an isomorphism 
\[
\widehat{\prod_{i=0}^{\infty}F^{i}({\displaystyle (j_{\star}\mathcal{O}_{\mathfrak{V}})^{\text{fin}}}}\tilde{=}\prod_{i=0}^{\infty}(\widehat{F^{i}({\displaystyle (j_{\star}\mathcal{O}_{\mathfrak{V}})^{\text{fin}}})}=\prod_{i=0}^{\infty}F^{i}({\displaystyle (j_{\star}\mathcal{O}_{\mathfrak{V}})^{\text{fin}}}
\]
where the last equality is because $F^{i}({\displaystyle (j_{\star}\mathcal{O}_{\mathfrak{V}})^{\text{fin}}}$
is a coherent $\mathcal{O}_{\mathfrak{V}}$-module and therefore $p$-adically
complete; similarly 
\[
\widehat{\prod_{i=0}^{\infty}F^{i}(\mathcal{O}_{\mathfrak{V}}[x_{1}^{-1}\cdots x_{j}^{-1}]}\tilde{=}\prod_{i=0}^{\infty}(\widehat{F^{i}(\mathcal{O}_{\mathfrak{V}}[x_{1}^{-1}\cdots x_{j}^{-1}])}=\prod_{i=0}^{\infty}F^{i}(\mathcal{O}_{\mathfrak{V}}[x_{1}^{-1}\cdots x_{j}^{-1}]
\]
So, since each $F^{i}({\displaystyle (j_{\star}\mathcal{O}_{\mathfrak{V}})^{\text{fin}}}\to F^{i}(\mathcal{O}_{\mathfrak{V}}[x_{1}^{-1}\cdots x_{j}^{-1}]$
is injective, we obtain an injection 
\[
\widehat{\mathcal{R}({\displaystyle {\displaystyle (j_{\star}\mathcal{O}_{\mathfrak{V}})}}^{\text{fin}})}\to\widehat{\mathcal{R}(\mathcal{O}_{\mathfrak{V}}[x_{1}^{-1}\cdots x_{j}^{-1}])}
\]
This means that ${\displaystyle \lim_{\leftarrow}\mathcal{R}(\mathcal{C})[p^{n}]}=0$.
Let $f$ denote the parameter in the Rees ring. Then, for each $n$
we have a short exact sequence
\[
\mathcal{R}(\mathcal{C})[p^{n}]\xrightarrow{f-1}\mathcal{R}(\mathcal{C})[p^{n}]\to C[p^{n}]
\]
Since ${\displaystyle \lim_{\leftarrow}\mathcal{R}(\mathcal{C})[p^{n}]}=0$,
to prove ${\displaystyle \lim_{\leftarrow}\mathcal{C}[p^{n}]=0}$
we must show that $f-1$ acts injectively on ${\displaystyle \text{R}^{1}\lim_{\leftarrow}\mathcal{R}(\mathcal{C})[p^{n}]}$.
Recall that this module is the cokernel of 
\[
\eta:\prod_{n=1}^{\infty}\mathcal{R}(C)[p^{n}]\to\prod_{n=1}^{\infty}\mathcal{R}(C)[p^{n}]
\]
where $\eta(c_{1},c_{2},c_{3},\dots)=(c_{1}-pc_{2},c_{2}-pc_{3},\dots)$.
Now, since each $\mathcal{R}(C)[p^{n}]$ is graded, we may define
a homogenous element of degree $i$ in ${\displaystyle \prod_{n=1}^{\infty}\mathcal{R}(C)[p^{n}]}$
to be an element $(c_{1},c_{2},\dots)$ such that each $c_{j}$ has
degree $i$. Any element of $d\in{\displaystyle \prod_{n=1}^{\infty}\mathcal{R}(C)[p^{n}]}$
has a unique representation of the form ${\displaystyle \sum_{i=0}^{\infty}d_{i}}$
where $d_{i}$ is homogenous of degree $i$ (this follows by looking
at the decomposition by grading of each component). Since the map
$\eta$ preserves the set of homogenous elements of degree $i$, we
have ${\displaystyle \eta(\sum_{i=0}^{\infty}d_{i})=\sum_{i=0}^{\infty}\eta(d_{i})}$. 

Suppose that $(f-1)d=\eta(d')$. Write ${\displaystyle d=\sum_{i=j}^{\infty}d_{i}}$
where $d_{j}\neq0$. Then 
\[
(f-1){\displaystyle \sum_{i=j}^{\infty}d_{j}}=-d_{j}+\sum_{i=j+1}^{\infty}(fd_{i-1}-d_{i})=\sum_{i=0}^{\infty}\eta(d_{i}')
\]
So we obtain $d_{j}=-\eta(d_{j}')$, and $d_{i}=fd_{i-1}+\eta(d_{i}')$
for all $i>j$, which immediately gives $d_{i}\in\text{image}(\eta)$
for all $i$; so $d\in\text{image }(\eta)$ and $f-1$ acts injectively
on $\text{coker}(\eta)$ as required. 
\end{proof}

The University of Illinois at Urbana-Champaign, csdodd2@illinois.edu
\end{document}